\newcommand{\red}[1]{\textcolor{red}{#1}}
\newcommand{\black}[1]{\textcolor{black}{#1}}
\newdimen\secnumwidth
\renewcommand{\tocsection}[3]{%
\indentlabel{\@ifnotempty{#2}{\ignorespaces#1\space \hbox to\secnumwidth{#2.\hfil}\quad}}#3}
\def\subsection{\@startsection{subsection}{2}%
  \z@{.5\linespacing\@plus.7\linespacing}{-.5em}%
  {\normalfont\bfseries\mathversion{bold}}}
\renewcommand{\@asparaenum@}{%
\expandafter\list\csname label\@enumctr\endcsname{%
\usecounter{\@enumctr}%
\labelwidth\z@
\labelsep.5em
\leftmargin\z@
\parsep\parskip
\itemsep\z@
\topsep\z@
\partopsep\parskip
\itemindent\parindent
\advance\itemindent\labelsep
\def\makelabel##1{\upshape ##1}}}
\theoremstyle{plain}
\newtheorem{theorem}{Theorem}
\newtheorem{proposition}[theorem]{Proposition}
\newtheorem{lemma}[theorem]{Lemma}
\newtheorem{corollary}[theorem]{Corollary}
\newtheorem*{theorem*}{Theorem}
\theoremstyle{definition}
\newtheorem{convention}[theorem]{Convention}
\theoremstyle{remark}
\newtheorem*{remark}{Remark}
\newtheoremstyle{italics}
{}
{}
{}
{}
{\itshape}
{.}
{.5em}
{}
\newtheorem*{firstproof}{First proof of Theorem~\ref{t:exp-log-iH}}
\newtheorem*{secondproof}{Second proof of Theorem~\ref{t:exp-log-iH}}
\newcommand{\pand}{\text{ and }}
\newcommand{\qand}{\quad\text{and}\quad}
\newcommand{\qqand}{\qquad\text{and}\qquad}
\newcommand{\qor}{\quad\text{or}\quad}
\newcommand{\id}{\mathrm{id}}
\newcommand{\onto}{\twoheadrightarrow}
\newcommand{\into}{\hookrightarrow}
\newcommand{\iso}{\cong}
\newcommand{\isoto}{\stackrel{\cong}{\longrightarrow}}
\newcommand{\xyinc}{\ar@{^{(}->}}
\newcommand{\xyrinc}{\ar@{_{(}->}}
\newcommand{\xyonto}{\ar@{->>}}
\newcommand{\xytwo}{\ar@{<->}}
\newcommand{\map}[1]{\xrightarrow{#1}}
\newcommand{\abs}[1]{\lvert#1\rvert} 
\newcommand{\bdot}{\bm\cdot}
\newcommand{\pmat}[1]{\begin{pmatrix}#1\end{pmatrix}} 
\newcommand{\Kb}{\Bbbk}
\newcommand{\Nb}{\mathbb{N}}
\newcommand{\Qb}{\mathbb{Q}}
\newcommand{\Rb}{\mathbb{R}}
\newcommand{\Fb}{\mathbb{F}}
\DeclareMathOperator{\End}{End}
\DeclareMathOperator{\Hom}{Hom}
\DeclareMathOperator{\apode}{\textsc{s}}
\DeclareMathOperator{\area}{sch}
\DeclareMathOperator{\supp}{supp}
\DeclareMathOperator{\dist}{dist}
\DeclareMathOperator{\im}{im}
\DeclareMathOperator{\mx}{\mathrm{max}}
\DeclareMathOperator{\can}{\mathrm{can}}
\DeclareMathOperator{\trace}{\mathrm{tr}}
\newcommand{\Sr}{\mathrm{S}} 
\newcommand{\opp}[1]{\overline{#1}} 
\newcommand{\len}{l} 
\newcommand{\maxflat}{\widehat{I}}
\newcommand{\minflat}{\{I\}}
\newcommand{\ifac}{
\begin{picture}(3,5)(0,0)
\put(0,0){\textup{!}}\put(1.5,4.8){\circle{3}}
\end{picture}
} 
\newcommand{\acyc}[1]{#1 !} 
\newcommand{\eigmult}[1]{k_{#1}} 
\newcommand{\pos}[1]{{#1}_+} 
\newcommand{\act}{\triangleright} 
\newcommand{\facematrix}{\Psi} 
\newcommand{\Fset}{\mathsf{set^{\times}}}
\newcommand{\Veck}{\mathsf{Vec}_\Kb} 
\newcommand{\Set}{\mathsf{Set}}
\newcommand{\Ss}{\mathsf{Sp}} 
\newcommand{\Ssk}{\mathsf{Sp}_\Kb} 
\newcommand{\Mo}[1]{\mathsf{Mon}(#1)} 
\newcommand{\Co}[1]{\mathsf{Comon}(#1)} 
\newcommand{\Bo}[1]{\mathsf{Bimon}(#1)} 
\newcommand{\B}[1]{\mathtt{#1}} 
\newcommand{\BH}{\B{H}} %
\newcommand{\BM}{\B{M}} %
\newcommand{\BP}{\B{P}} %
\newcommand{\BQ}{\B{Q}} %
\newcommand{\BHh}{\Hat{\B{H}}} 
\newcommand{\rP}{\mathrm{P}}
\newcommand{\rQ}{\mathrm{Q}}
\newcommand{\rH}{\mathrm{H}}
\newcommand{\rE}{\mathrm{E}}
\newcommand{\rL}{\mathrm{L}}
\let\rPi=\Pi 
\let\rSig=\Sigma
\newcommand{\rSigh}{\widehat{\rSig}} 
\newcommand{\rG}{\mathrm{G}} 
\newcommand{\rGP}{\mathrm{GP}} 
\newcommand{\SP}{\mathfrak{p}} 
\newcommand{\thh}{\mathbf{h}} 
\newcommand{\tg}{\mathbf{g}}
\newcommand{\tk}{\mathbf{k}}
\newcommand{\tp}{\mathbf{p}} 
\newcommand{\tq}{\mathbf{q}}
\newcommand{\tr}{\mathbf{r}}
\newcommand{\ta}{\mathbf{a}}
\newcommand{\tb}{\mathbf{b}} 
\newcommand{\tc}{\mathbf{c}}
\newcommand{\td}{\mathbf{d}}
\newcommand{\tm}{\mathbf{m}} 
\newcommand{\tone}{\mathbf{1}}
\newcommand{\wU}{\mathbf{U}}
\newcommand{\wX}{\mathbf{X}}
\newcommand{\wE}{\mathbf{E}} 
\newcommand{\wL}{\mathbf{L}} 
\newcommand{\tLie}{\mathbf{Lie}} 
\newcommand{\tPi}{\mathbf{\Pi}} 
\newcommand{\tSig}{\mathbf{\Sigma}} 
\newcommand{\tSigh}{\mathbf{\widehat{\Sigma}}} 
\newcommand{\tG}{\mathbf{G}} 
\newcommand{\tcG}{\mathbf{cG}} 
\newcommand{\tGP}{\mathbf{GP}} 
\let\isoflat=\psi
\let\isolinear=\psi
\let\isograph=\varphi
\newcommand{\GL}{\mathrm{GL}}
\newcommand{\Un}{\mathrm{U}}
\newcommand{\Mat}{\mathrm{M}}
\newcommand{\FC}{\bm{\mathrm{f}}}
\newcommand{\contra}[1]{#1^{\vee}} 
\newcommand{\Kc}{\mathcal{K}}
\newcommand{\Kcb}{\overline{\Kc}}
\newcommand{\cKcb}{\contra{\Kcb}}
\newcommand{\Tc}{\mathcal{T}}
\newcommand{\Tcq}{\Tc_q}
\newcommand{\Sc}{\mathcal{S}}
\newcommand{\Pc}{\mathcal{P}} 
\newcommand{\Qc}{\mathcal{Q}} 
\newcommand{\Uc}{\mathcal{U}} 
\newcommand{\Nc}{\mathcal{N}}
\newcommand{\freelie}{\mathcal{L}ie}
\newcommand{\Jc}{\mathcal{J}}
\newcommand{\iH}{\mathcal{H}} 
\newcommand{\iE}{\mathcal{E}} 
\newcommand{\Ser}{\mathscr{S}} 
\newcommand{\Gls}{\mathscr{G}} 
\newcommand{\Prs}{\mathscr{P}} 
\newcommand{\Exs}{\mathscr{E}} 
\newcommand{\degmap}{\textsc{n}} 
\newcommand{\euler}[1]{\mathtt{E}_{#1}} 
\newcommand{\univh}{\Hat{\mathtt{G}}}
\newcommand{\univ}{\mathtt{G}} 
\newcommand{\dynkin}{\mathtt{D}} 
\newcommand{\pdynkin}{\dynkin} 
\newcommand{\cancur}[1]{\{#1\}} 
\newcommand{\onepar}[1]{\langle #1\rangle} 
\newcommand{\conv}{\ast} 
\newcommand{\type}[2]{\mathsf T_{#1}(#2)}
\newcommand{\ordi}[2]{\mathsf O_{#1}(#2)}
\newcommand{\expo}[2]{\mathsf E_{#1}(#2)}
\DeclareMathAlphabet{\mathpzc}{OT1}{pzc}{m}{it}
\newcommand{\varx}{\mathpzc{x}} 
\def\llb{{[\![}}
\def\rrb{{]\!]}}
\begin{document}

\title[{H}opf monoids in species]{{H}opf monoids in the category of species}

\author[M.~Aguiar]{Marcelo Aguiar}
\address{Department of Mathematics\\
Texas A\&M University\\
College Station, TX 77843}
\email{maguiar@math.tamu.edu}
\urladdr{http://www.math.tamu.edu/~maguiar}
\thanks{Aguiar supported in part by NSF grant DMS-1001935.}

\author[S.~Mahajan]{Swapneel Mahajan}
\address{Department of Mathematics\\
Indian Institute of Technology Mumbai\\
Powai, Mumbai 400 076\\ India}
\email{swapneel@math.iitb.ac.in}
\urladdr{http://www.math.iitb.ac.in/~swapneel}

\subjclass[2010]{Primary 16T30, 18D35, 20B30; 
Secondary 18D10, 20F55}
\date{\today}


\keywords{Species; Hopf monoid; Lie monoid; antipode; Hadamard product; partition; composition;
linear order; simple graph; generalized permutahedron;
Tits product; group-like; primitive; functional calculus;
characteristic operation; Hopf power;
Eulerian idempotent; Dynkin quasi-idempotent; Garsia-Reutenauer idempotent;
Poincar\'e-Birkhoff-Witt; Cartier-Milnor-Moore} 

\begin{abstract}
A Hopf monoid (in Joyal's category of species)
is an algebraic structure akin to that of a Hopf algebra.
We provide a self-contained introduction 
to the theory of Hopf monoids in the category of species. 
Combinatorial structures which compose and decompose give rise to Hopf monoids. 
We study several examples of this nature.
We emphasize the central role played in the theory 
by the Tits algebra of set compositions. 
Its product is tightly knit with the Hopf monoid axioms, and
its elements constitute universal operations on connected Hopf monoids.
We study analogues of the classical Eulerian and Dynkin idempotents and
discuss the Poincar\'e-Birkhoff-Witt and Cartier-Milnor-Moore theorems for Hopf monoids.
\end{abstract}

\maketitle

\tableofcontents

\section*{Introduction}\label{s:intro}

The theory of species and its applications to enumerative combinatorics
was developed by Joyal in~\cite{Joy:1981} and 
further advanced in~\cite{BerLabLer:1998}.
The basic theory of Hopf monoids in species was laid out in~\cite[Part~II]{AguMah:2010}.
The account we provide here includes most of the topics discussed there
and some more recent results from~\cite{AguArd:2012,ABT:2012,AguLau:2012, AguMah:2012}, 
as well as a good number of new results and constructions.

The category of species carries a symmetric monoidal structure, the Cauchy product.
We study the resulting algebraic structures, 
such as monoids, Hopf monoids, and Lie monoids. 
They are analogous to algebras, Hopf algebras, and
Lie algebras (and their graded counterparts).

Species will be denoted by letters such as $\tp$ and $\tq$,
monoids by $\ta$ and $\tb$, comonoids by $\tc$ and $\td$,
bimonoids (and Hopf monoids) by $\thh$ and $\tk$, and Lie monoids by $\tg$. 

A species $\tp$ consists of a family of vector spaces $\tp[I]$ indexed by finite sets $I$
and equivariant with respect to bijections $I\to J$.
A monoid is a species $\ta$ together with a family of product maps
\[
\mu_{S,T} : \ta[S] \otimes \ta[T] \to \ta[I]
\]
indexed by pairs $(S,T)$ of disjoint sets whose union is $I$.
They are subject to the usual axioms (associativity and unitality).
Iterations of these maps yield higher product maps
\[
\mu_{F} : \ta[S_1] \otimes \dots \otimes \ta[S_k] \to \ta[I]
\]
indexed by decompositions $F=(S_1,\dots,S_k)$ of $I$ 
(sequences of pairwise disjoint sets whose union is $I$).
When decompositions $F$ and $G$ are related by refinement, the higher product
maps $\mu_F$ and $\mu_G$ are related by associativity.

The notion of a comonoid in species is defined dually.
The higher coproduct maps $\Delta_F$ are again indexed by set decompositions.
Decompositions of finite sets go on to play a defining role in the theory.

Given two decompositions $F$ and $G$ of the same set $I$,
there is a natural way to refine $F$ using $G$ in order 
to obtain another decomposition $FG$ of $I$.
We refer to $FG$ as the Tits product of $F$ and $G$.
This turns the set of all decompositions of $I$ into a monoid 
(in the usual sense of the term).
We refer to the algebra of this monoid as the Tits algebra of decompositions of $I$.

A bimonoid is a species $\thh$ which is at the same time a monoid and a comonoid,
with the structures linked by the usual compatibility axiom.
The Tits product appears implicitly in this axiom.
This becomes evident when the compatibility 
among the higher product and coproduct maps is spelled out.
The axiom then involves two arbitrary decompositions $F$ and $G$ of $I$,
and their Tits products $FG$ and $GF$.

For any bimonoid $\thh$, the set of all endomorphisms of species of $\thh$ 
is an algebra under convolution. If the identity of $\thh$ is invertible,
then $\thh$ is a Hopf monoid and the inverse its antipode. 
There is a formula for the antipode in terms of the higher (co)product maps
(in which decompositions necessarily enter).

Complete definitions of a bimonoid and Hopf monoid are given in Section~\ref{s:hopf}.
The discussion on higher (co)products is deferred to Section~\ref{s:higher}, with the
groundwork on set decompositions and the Tits product laid out in Section~\ref{s:prelim}.

The category of species supports other operations in addition to the Cauchy product
and one such is the Hadamard product, discussed in Section~\ref{s:hadamard}.
The corresponding internal Hom and duality functors are also studied.
The Hadamard product preserves Hopf monoids. 
This is one aspect in which the theory of Hopf monoids 
differs from that of graded Hopf algebras. 

We pay particular attention to set-theoretic Hopf monoids
(in which the role of vector spaces is played by sets)
and connected Hopf monoids (in which $\thh[\emptyset]=\Kb$, the base field).
These are discussed in Sections~\ref{s:set} and~\ref{s:connected}.
The latter includes a discussion of canonical antipode formulas 
(Takeuchi's and Milnor and Moore's) and the formulation of the \emph{antipode problem},
a question of a general nature and concrete combinatorial interest.
The primitive part $\Pc(\tc)$ of a connected comonoid $\tc$ 
is also introduced at this point.

The free monoid $\Tc(\tq)$ on any species $\tq$ is studied in Section~\ref{s:free}.
We concentrate mainly on the case when $\tq$ is positive: $\tq[\emptyset]=0$.
In this situation, the monoid $\Tc(\tq)$ is connected and carries a canonical
Hopf monoid structure.
Corresponding discussions for commutative monoids and Lie monoids are 
given in Sections~\ref{s:freecom} and~\ref{s:freelie}.
The free commutative monoid on a species $\tq$ is denoted $\Sc(\tq)$
and the universal enveloping monoid of a Lie monoid $\tg$ by $\Uc(\tg)$. 
They are Hopf monoids.
Theorem~\ref{t:freeness} proves that the Hadamard product of two connected Hopf monoids is free, 
provided one of the factors is free.
In the case when both factors are free, 
Theorem~\ref{t:had-free-monoid} provides a basis for the Hadamard product 
in terms of the bases of the factors. 
Similar results hold in the commutative case.

Many examples of Hopf monoids are given in Section~\ref{s:examples}.
Most of them are connected and at least part of their structure is set-theoretic.
They arise from various combinatorial structures 
ranging from linear orders to generalized permutahedra, 
which then get organized into Hopf monoids
and related to one another by means of morphisms of such. 
Solutions to the antipode problem are provided in most cases.
Among these, we highlight those in Theorems~\ref{t:apode-graph}
and~\ref{t:apode-gp}.

Two additional important examples are studied in Section~\ref{s:faces}.
The bimonoid $\tSigh$ has a linear basis $\BH_{F}$ indexed by set decompositions $F$.
It is not connected and in fact not a Hopf monoid. Its companion
$\tSig$ has a linear basis indexed by set compositions (decompositions
into nonempty sets) and is a connected Hopf monoid. They are both free.
There is a surjective morphism of Hopf monoids $\upsilon:\tSigh\to\tSig$ which
removes the empty blocks from a decomposition.
Each space $\tSigh[I]$ and $\tSig[I]$ is an algebra under the Tits product,
and the interplay between this and the Hopf monoid structure is central 
to the theory. This important point was made in~\cite{AguMah:2006,AguMah:2010,PatSch:2006},
and is discussed extensively here.

To a species $\tq$ one may associate a vector space $\Ser(\tq)$ of series,
studied in Section~\ref{s:series}.
A series $s$ of $\tq$ is a collection of elements $s_I\in\tq[I]$, equivariant with
respect to bijections $I\to J$.
If $\ta$ is a monoid, then $\Ser(\ta)$ is an algebra, and
formal functional calculus can be used to substitute any series $s$ of $\ta$ 
with $s_\emptyset=0$ into a formal power series $a(\varx)$ 
to obtain another series $a(s)$ of $\ta$.
In particular, one can define $\exp(s)$ and $\log(s)$, 
the exponential and logarithm of $s$, and
these two operations are inverses of each other in their domains of definition.
If $\thh$ is a connected bimonoid, they restrict to inverse bijections
\[
\xymatrix@C+15pt{
\Prs(\thh) \ar@<0.6ex>[r]^-{\exp} & \ar@<0.6ex>[l]^-{\log} 
\Gls(\thh)
}
\]
between primitive series and group-like series of $\thh$. 
The former constitute a Lie algebra and the latter a group.

The characteristic operations are discussed in Section~\ref{s:action}.
They are analogous to the familiar Hopf powers $h\mapsto h^{(p)}$ 
from the theory of Hopf algebras. 
The role of the exponent $p$ is played by a decomposition $F$,
with the Tits monoid replacing the monoid of natural numbers.
For any decomposition $F=(S_1,\dots,S_k)$ of $I$, we refer to the composite
\[
\thh[I] \map{\Delta_F} \thh[S_1] \otimes \dots \otimes \thh[S_k] \map{\mu_F} \thh[I]
\]
as the characteristic operation of $\BH_{F}$ on $\thh[I]$, where $\thh$ is a bimonoid.
Extending by linearity, 
each element of $\tSigh[I]$ induces a linear endomorphism of $\thh[I]$,
and thus a series of $\tSigh$ induces a species endomorphism of $\thh$. 
If $\thh$ is connected, then the characteristic operations factor through $\upsilon$,
and a series of $\tSig$ induces a species endomorphism of $\thh$. 
Among the most important results in this section we mention Theorems~\ref{t:can-action}
and~\ref{t:act-to-prim}. 
The former states that, when the bimonoid $\thh$ is cocommutative, 
the characteristic operations endow each space $\thh[I]$ with
a left $\tSigh[I]$-module structure, and similarly for $\tSig[I]$ if $\thh$
is connected. 
Under the same hypotheses, the latter theorem states that 
the operation of a primitive element $z\in\Pc(\tSig)[I]$ maps $\thh[I]$ to $\Pc(\thh)[I]$. 
Moreover, if the coefficient of $\BH_{(I)}$ in $z$ is $1$, then
$z$ is an idempotent element of the Tits algebra $\tSig[I]$, and its operation is
a projection onto $\Pc(\thh)[I]$. 
The basis element $\BH_{(I)}$ of $\tSig[I]$ is indexed by the composition with one block. 
The characteristic operations of some important series of $\tSig$
on connected bimonoids are given in Table~\ref{table:euler-log}.

Section~\ref{s:convpower} deals with the analogues of certain classical idempotents. 
They are elements (or series) of $\tSig$.
For each integer $k\geq 0$, the Eulerian idempotent $\euler{k}$ is
a certain series of $\tSig$, with $\euler{1}$ being primitive.
The latter is the logarithm of the group-like series $\univ$ defined by $\univ_I := \BH_{(I)}$. 
Since $\univ$ operates as the identity, $\euler{1}$ operates as its logarithm.
It follows that the logarithm of the identity projects any
cocommutative connected bimonoid onto its primitive part.
This result is obtained both in Corollary~\ref{c:exp-log-iH} and in
Corollary~\ref{c:char-op-1euler}.
Another important primitive series of $\tSig$ is $\dynkin$,
the Dynkin quasi-idempotent. It operates (on products of primitive elements)
as the left bracketing operator. 
The analogue of the classical Dynkin-Specht-Wever theorem is given in Corollary~\ref{c:DSW}. 
This section also discusses the Garsia-Reutenauer idempotents, reviewed below.

\begin{table}[htp]
\caption{}
\label{table:euler-log}
\centering
\renewcommand{\arraystretch}{1.3}
\begin{tabular}{c|c|c}
Series of $\tSig$ & Characteristic operation on $\thh$ &\\\hline
$\euler{1}$ & $\log\id$ & Corollary~\ref{c:char-op-1euler}\\
$\euler{k}$ & $\frac{1}{k!}(\log\id)^{\conv k}$ & Proposition~\ref{p:conv-power-log-id}\\
$\BH_p$ & $\id^{\conv p}$ & Formula~\eqref{e:conv-power-id-conn}\\
$\BH_{-1}$ & $\apode$ (antipode) & Formula~\eqref{e:char-apode}\\ 
$\dynkin$ & $\apode\conv \degmap$ & Corollary~\ref{c:dynkin-action} 
\end{tabular}
\end{table}

The Poincar\'e-Birkhoff-Witt (PBW) and 
Cartier-Milnor-Moore (CMM) theorems for Hopf monoids in species
are discussed in Section~\ref{s:structure}.
It is a nontrivial fact that any Lie monoid $\tg$ embeds in $\Uc(\tg)$.
This is a part of the PBW theorem;
the full result is given in Theorem~\ref{t:pbw}.
In a bimonoid $\thh$, 
the commutator preserves the primitive part $\Pc(\thh)$.
The CMM theorem (Theorem~\ref{t:cmm}) states that the functors
\[
\xymatrix@C+20pt{
\{\text{positive Lie monoids}\} \ar@<0.6ex>[r]^-{\Uc} & \ar@<0.6ex>[l]^-{\Pc} 
\{\text{cocommutative connected Hopf monoids}\}
}
\]
form an adjoint equivalence.

The structure of the Tits algebra $\tSig[I]$ has been studied in detail in the literature.
It follows from works of Bidigare~\cite{Bid:1997} and Brown~\cite{Bro:2000} that
$\tSig[I]$ admits a complete family of orthogonal idempotents indexed by partitions of $I$.
In fact there is a canonical choice for such a family,
which we call the Garsia-Reutenauer idempotents (Theorem~\ref{t:idempotent-family}).
The action of these idempotents provides a canonical decomposition of each space
$\thh[I]$ (Theorem~\ref{t:euler-decomp}).
The Garsia-Reutenauer idempotent indexed by the partition with one block 
is the first Eulerian idempotent, 
hence its action is a projection onto the primitive part.
This leads to the important result that 
for any cocommutative connected Hopf monoid $\thh$,
there is an isomorphism of comonoids between $\Sc(\Pc(\thh))$ and $\thh$
(Theorem~\ref{t:pbw+cmm}).
This is closely related to (and in fact used to deduce) the PBW and CMM theorems.
PBW states that $\Sc(\Pc(\thh))\cong \Uc(\Pc(\thh))$ as comonoids,
and CMM includes the statement that $\Uc(\Pc(\thh))\cong \thh$ as bimonoids.

Section~\ref{s:genfun} collects a number of results on the dimension sequence $\dim_{\Kb} \thh[n]$
of a finite-dimensional connected Hopf monoid $\thh$. The structure imposes conditions
on this sequence in the form of various polynomial inequalities its entries must
satisfy. For example, Theorem~\ref{t:ordi-boolean} states that the Boolean
transform of the sequence must be nonnegative, and Theorem~\ref{t:binomial-set}
that if the Hopf monoid is set-theoretic (and nontrivial),
then the same is true for the binomial transform.
The latter result follows from Theorem~\ref{t:lagrange},
the analogue of Lagrange's theorem for Hopf monoids.

Among the topics discussed in~\cite[Part~II]{AguMah:2010} but left out here,
we mention the notion of cohomology and deformations for set-theoretic
comonoids, the construction of the cofree comonoid on a positive species,
the construction of the free Hopf monoid on a positive comonoid, 
the notion of species with balanced operators, and
the geometric perspective that adopts the braid hyperplane arrangement
as a central object. The latter is particularly important in regard to generalizations
of the theory which we are currently undertaking.

The notion of Hopf monoid parallels the more familiar one
of graded Hopf algebra and the analogy manifests itself throughout our discussion.
Much of the theory we develop has a counterpart
for graded Hopf algebras, with Hopf monoids being richer due to the underlying
species structure.
To illustrate this point, consider the characteristic operations.
For connected Hopf monoids, they are indexed by elements of the Tits algebra,
while for connected graded Hopf algebras they are indexed
by elements of Solomon's descent algebra. 
The symmetric group acts on the former, and the latter is the invariant subalgebra. 
In some instances, the theory for graded Hopf algebras
is not widely available in the literature.
We develop the theory for Hopf monoids only, but hope that this paper is useful
to readers interested in Hopf algebras as well.
Some constructions are specific to the setting of Hopf monoids
and others acquire special form.
There are ways to build graded Hopf algebras from Hopf monoids.
These are studied in~\cite[Part~III]{AguMah:2010},
but not in this paper.

\subsection*{Acknowledgements} 

We warmly thank the editors for their
interest in our work and for their support during the preparation of the
manuscript. We are equally grateful to the referees for a prompt and
detailed report with useful comments and suggestions.

\section{Preliminaries on compositions and partitions}\label{s:prelim}

Decompositions of sets play a prominent role in the theory of Hopf monoids.
Important properties and a good amount of related notation are discussed here.
In spite of this, it is possible and probably advisable to proceed with
the later sections first
and refer to the present one as needed. 

\subsection{Set compositions}\label{ss:compositions}

Let $I$ be a finite set.
A \emph{composition} of $I$ is a finite sequence $(I_1,\ldots,I_k)$
of disjoint nonempty subsets of $I$ such that
\[
I= \bigcup_{i=1}^k I_i.
\]
The subsets $I_i$ are the \emph{blocks} of the composition.
We write $F\vDash I$ to indicate that $F=(I_1,\ldots,I_k)$ is a composition of~$I$.

There is only one composition of the empty set (with no blocks).

Let $\rSig[I]$ denote the set of compositions of $I$.

\subsection{Operations}\label{ss:oper}

Let $F=(I_1,\dots,I_k)$ be a composition of $I$.

The \emph{opposite} of $F$ is the composition
\[
\opp{F} = (I_k,\dots,I_1).
\]

Given a subset $S$ of $I$,
let $i_1<\cdots<i_j$ be the subsequence of $1<\cdots<k$
consisting of those indices $i$ for which $I_i\cap S\neq\emptyset$.
The \emph{restriction} of $F$ to $S$ is the composition of $S$ defined by
\[
F|_S = (I_{i_1}\cap S,\ldots,I_{i_j}\cap S).
\]
Let $T$ be the complement of $S$ in $I$.
We say that $S$ is \emph{$F$-admissible} if for each $i=1,\ldots,k$, either
\[
I_i\subseteq S \qor I_i\subseteq T.
\]
Thus $S$ is $F$-admissible if and only if $T$ is, and
in this case, $F|_S$ and $F|_T$ are complementary subsequences of $F$.

We write
\begin{equation}\label{e:complementary}
I=S\sqcup T
\end{equation}
to indicate that $(S,T)$ is an ordered pair of complementary subsets of $I$,
as above. Either subset $S$ or $T$ may be empty.

Given $I=S\sqcup T$ and compositions $F=(S_1,\dots,S_j)$ of $S$
and $G=(T_1,\dots,T_k)$ of $T$, their \emph{concatenation}
\[
F\cdot G := (S_1,\dots,S_j,T_1,\dots,T_k)
\]
is a composition of $I$. A \emph{quasi-shuffle} of $F$ and $G$ is a
composition $H$ of $I$ such that
$H|_S=F$ and $H|_T=G$.
In particular, each block of $H$ is either a block of $F$, a block of $G$,
or a union of a block of $F$ and a block of $G$. 

\subsection{The Schubert cocycle and distance}\label{ss:schubert}

Let $I=S\sqcup T$ and $F=(I_1,\ldots,I_k)\vDash I$. 
The \emph{Schubert cocycle} is defined by
\begin{equation}\label{e:schubert-comp}
\area_{S,T}(F) := \abs{\{(i,j)\in S\times T \mid \text{$i$ is in a strictly later block of $F$ than $j$}\}}.
\end{equation}
Alternatively,
\[
\area_{S,T}(F) = \sum_{1 \leq i < j \leq k} \abs{I_i \cap T}\,\abs{I_j \cap S}.
\]

Let $F'=(I'_1,\ldots,I'_n)$ be another composition of $I$. The \emph{distance}
between $F$ and $F'$ is
\begin{equation}\label{e:dist}
\dist(F,F') := \sum_{\substack{i<j\\ m>l}}\,\abs{I_i \cap I'_m} \,\abs{I_j \cap I'_l}.
\end{equation}
In the special case when $F$ and $F'$ consist of the same blocks
(possibly listed in different orders), the previous formula simplifies to
\begin{equation}\label{e:dist-supp}
\dist(F,F') = \sum_{(i,j)} \,\abs{I_i}\, \abs{I_j},
\end{equation}
where the sum is over those pairs $(i,j)$ such that $i<j$ and
$I_i$ appears after $I_j$ in $F'$. In particular,
\begin{equation}\label{e:dist-opp}
\dist(F,\opp{F}) = \sum_{1\leq i<j\leq k} \abs{I_i}\, \abs{I_j}.
\end{equation}
Note also that
\[
\area_{S,T}(F) = \dist(F,G),
\]
where $G=(S,T)$.

\subsection{Linear orders}\label{ss:linear-prelim}

When the blocks are singletons, a composition of $I$ amounts to a linear order on $I$.
We write $\ell=i_1\cdots i_n$ for the linear order on $I=\{i_1,\ldots,i_n\}$ for which
$i_1<\cdots<i_n$.

Linear orders are closed under opposition, restriction, and concatenation. 
The opposite of $\ell=i_1\cdots i_n$ is $\opp{\ell}=i_n\cdots i_1$.
If $\ell_1=s_1\cdots s_i$ and $\ell_2=t_1\cdots t_j$, 
their concatenation is 
\[
\ell_1\cdot \ell_2=s_1\cdots s_i\, t_1\cdots t_j.
\]
The restriction $\ell|_S$ of a linear order $\ell$ on $I$ to $S$
is the list consisting of the elements of $S$ written in the order
in which they appear in $\ell$. 
We say that $\ell$ is a \emph{shuffle} of $\ell_1$ and $\ell_2$ if $\ell|_S=\ell_1$ and 
$\ell|_T=\ell_2$.

Replacing a composition $F$ of $I$ for a linear order $\ell$ on $I$ in
equations~\eqref{e:schubert-comp} and~\eqref{e:dist} we obtain
\begin{gather}\label{e:schubert-linear}
\area_{S,T}(\ell) = \abs{\{(i,j)\in S\times T \mid \text{$i>j$
according to $\ell$}\}},\\
\label{e:dist-linear}
\dist(\ell,\ell') = \abs{\{(i,j)\in I\times I \mid \text{$i<j$ according to $\ell$ and $i>j$ according to $\ell'$}\}}.
\end{gather}

\subsection{Refinement}\label{ss:refinement}

The set $\rSig[I]$ is a partial order under \emph{refinement}: 
we say that $G$ refines $F$ and write
$F\leq G$ if each block of $F$ is obtained by merging a number of contiguous blocks of $G$.
The composition $(I)$ is the unique minimum element, and linear orders are the maximal elements. 

Let $F=(I_1,\ldots,I_k)\in \rSig[I]$. 
There is an order-preserving bijection
\begin{equation}\label{e:res-conc}
\{G\in\rSig[I] \mid F\leq G\} \longleftrightarrow \rSig[I_1]\times\cdots\times\rSig[I_k],
\quad G\longmapsto (G|_{I_1},\ldots, G|_{I_k}).
\end{equation}
The inverse is given by concatenation:
\[
 (G_1,\ldots,G_k) \mapsto G_1\cdots G_k.
\]

Set compositions of $I$ are in bijection with flags of subsets of $I$ via
\[
(I_1,\ldots,I_k) \mapsto (\emptyset\subset I_1\subset I_1\cup I_2\subset \cdots \subset
I_1\cup \cdots\cup I_k = I).
\]
Refinement of compositions corresponds to inclusion of flags. In this manner $\rSig[I]$ is
a lower set of the Boolean poset $2^{2^I}$, and hence a meet-semilattice.

\subsection{The Tits product}\label{ss:faceprod}

Let $F=(S_1,\ldots,S_p)$ and $G=(T_1,\ldots,T_q)$ be two compositions of $I$.
Consider the pairwise intersections 
\[
A_{ij} := S_i\cap T_j
\]
for $1\leq i\leq p$, $1\leq j\leq q$. 
A schematic picture is shown below.
\begin{equation}\label{e:pqsets}
\setlength{\unitlength}{.9 pt}
\quad
\begin{gathered}
\begin{picture}(100,90)(20,0)
\put(50,40){\oval(100,80)}
\put(0,55){\dashbox{2}(100,0){}}
\put(0,25){\dashbox{2}(100,0){}}
\put(45,63){$S_1$}
\put(45,8){$S_p$}
\end{picture}
\quad
\begin{picture}(100,90)(10,0)
\put(50,40){\oval(100,80)}
\put(25,0){\dashbox{2}(0,80){}}
\put(75,0){\dashbox{2}(0,80){}}
\put(8,35){$T_1$}
\put(82,35){$T_q$}
\end{picture}
\quad
\begin{picture}(100,90)(0,0)
\put(50,40){\oval(100,80)}
\put(0,55){\dashbox{2}(100,0){}}
\put(0,25){\dashbox{2}(100,0){}}
\put(25,0){\dashbox{2}(0,80){}}
\put(75,0){\dashbox{2}(0,80){}}
\put(6,62){$A_{11}$}
\put(77,62){$A_{1q}$}
\put(6,8){$A_{p1}$}
\put(77,8){$A_{pq}$}
\end{picture}
\end{gathered}
\end{equation}
The \emph{Tits product} $FG$ is the composition
obtained by listing the nonempty intersections $A_{ij}$ in lexicographic order of the indices $(i,j)$:
\begin{equation}\label{e:tits}
FG := (A_{11},\ldots,A_{1q},\ldots, A_{p1},\ldots, A_{pq}),
\end{equation}
where it is understood that empty intersections are removed.

The Tits product is associative and unital; it turns the set $\rSig[I]$
into an ordinary monoid, that we call the \emph{Tits monoid}.
The unit is $(I)$. If $\abs{I}\geq 2$, it is not commutative.
In fact, 
\[
FG = GF \iff \text{$F$ and $G$ admit a common refinement.}
\]
The following properties hold, for all compositions $F$ and $G$.
\begin{itemize}
\item $F \leq FG$.
\item $F\leq G$ $\iff$ $FG=G$.
\item If $G \leq H$, then $FG \leq FH$.
\item If $C$ is a linear order, then $CF=C$ and $FC$ is a linear order.
\item $F^2=F$ and $F \opp{F} = F$.
\item $FGF = FG$.
\end{itemize}

The last property makes the monoid $\rSig[I]$ a \emph{left regular band}. 
Additional properties are given in~\cite[Proposition~10.1]{AguMah:2010}.

\begin{remark}
The product of set compositions may be seen as a special instance of an
operation that is defined in more general settings.
It was introduced by Tits in the context of Coxeter groups and buildings~\cite[Section 2.30]{Tit:1974}.
Bland considered it in the context of oriented matroids~\cite[Section~5, page~62]{Bla:1974}.
It appears in the book by Orlik and Terao on hyperplane arrangements~\cite[Definition~2.21 and Proposition~2.22]{OrLTer:1992}.
A good self-contained account is given by Brown~\cite[Appendices~A and~B]{Bro:2000}.
The case of set compositions arises from the symmetric group (the Coxeter
group of type A), or from the braid hyperplane arrangement.
\end{remark}

\subsection{Length and factorial}\label{ss:factorials}

Let $F=(I_1,\ldots,I_k)$ be a composition of $I$.
The \emph{length} of $F$ is its number of blocks:
\begin{equation}\label{e:Fdeg}
\len(F) := k
\end{equation}
and the \emph{factorial} is
\begin{equation}\label{e:Ffac}
F! := \prod_{i=1}^k \,\abs{I_i}!.
\end{equation}
The latter counts the number of ways of endowing each block of $F$
with a linear order. 

Given a set composition $G$ refining $F$, let 
\begin{equation}\label{e:FGdeg}
\len(G/F) := \prod_{i=1}^k n_i,
\end{equation}
where $n_i:=\len(G|_{I_i})$ is the number of blocks of $G$ that refine 
the $i$-th block of $F$. Let also
\begin{equation}\label{e:degree-fact}
(G/F)! = \prod_{i=1}^k n_i!.
\end{equation}
In particular, $\len(G/(I))=\len(G)$
and if $G$ is a linear order, then $(G/F)!=F!$.

\subsection{Set partitions}\label{ss:partitions}

A \emph{partition} $X$ of $I$ is an unordered collection $X$
of disjoint nonempty subsets of $I$ such that
\[
I = \bigcup_{B\in X} B.
\]
The subsets $B$ of $I$ which belong to $X$ are the \emph{blocks} of $X$.
We write $X\vdash I$ to indicate that $X$ is a partition of $I$.

There is only one partition of the empty set (with no blocks).

Let $\rPi[I]$ denote the set of partitions of $I$.

Given $I=S\sqcup T$ and partitions $X$ of $S$ and $Y$ of $T$, 
their \emph{union} is the partition $X\sqcup Y$ of $I$ 
whose blocks belong to either $X$ or $Y$.

Restriction, quasi-shuffles and admissible subsets are defined for set partitions as
for set compositions (Section~\ref{ss:oper}), disregarding the
order among the blocks.

Refinement is defined for set partitions as well: we set
$X\leq Y$ if each block of $X$ is obtained by merging a number of blocks of $Y$. The partition $\{I\}$ is the unique minimum element and the partition of $I$ into
singletons is the unique maximum element. We denote it by $\maxflat$.
The poset $\rPi[I]$ is a lattice. 

Lengths and factorials for set partitions $X\leq Y$ are defined as for set compositions:
\[
\len(X) := \abs{X}, \quad
X! := \prod_{B\in X} \abs{B}!, \quad
\len(Y/X) := \prod_{B\in X} n_B, \quad
(Y/X)! := \prod_{B\in X} (n_B)!.
\]
Here $n_B:=\len(Y|_B)$ is the number of blocks of $Y$ that refine the block $B$ of $X$.
In particular, $\len(Y/\{I\})=\len(Y)$ and $(\maxflat/X)!=X!$.

The \emph{cyclic factorial} of $X$ is
\begin{equation}\label{e:Xifac}
X\ifac := \prod_{B\in X} (\left\abs{B}-1\right)!.
\end{equation}
It counts the number of ways of endowing each block of $X$
with a cyclic order.

The M\"obius function of $\rPi[I]$ satisfies
\begin{equation}\label{e:mobiusPi}
\mu(X,Y) = (-1)^{\len(Y)-\len(X)}\, \prod_{B\in X} (n_B-1)!
\end{equation}
for $X\leq Y$, with $n_B$ as above. In particular,
\begin{equation}\label{e:partmobiusPi}
\mu(\minflat,X) = (-1)^{\len(X)-1} (\len(X) - 1)!
\qqand
\mu(X,\maxflat) = (-1)^{\abs{I}-\len(X)}\, X\ifac.
\end{equation}

\subsection{Support}\label{ss:support}

The \emph{support} of a composition $F$ of $I$ is the partition
$\supp F$ of $I$ obtained by forgetting the order among the blocks:
if $F=(I_1,\ldots,I_k)$, then 
\[
\supp F :=\{I_1,\ldots,I_k\}.
\]

The support preserves lengths and factorials:
\begin{gather*}
\len(\supp F) = \len(F), \qquad (\supp F)! = F!,\\
\len( \supp G/\supp F) = \len(G/F), \qquad (\supp G/\supp F)! = (G/F)!. 
\end{gather*}

If $F$ refines $G$, then $\supp F$ refines $\supp G$. Thus, the map
$\supp:\rSig[I]\to\rPi[I]$ 
is order-preserving. Meets are not preserved; for example, $(S,T)\wedge (T,S) = (I)$
but $\supp(S,T)=\supp(T,S)$. The support turns the Tits product into the join:
\[
\supp(FG) = (\supp F) \vee (\supp G).
\]
We have
\begin{equation}\label{e:supp-tits}
GF=G \iff \supp F \leq \supp G. 
\end{equation}
Both conditions express the fact that each block of $G$ is contained in a block of $F$.

\subsection{The braid arrangement}\label{ss:braid}

Compositions of $I$ are in bijection with \emph{faces} of the
\emph{braid hyperplane arrangement} in $\Rb^I$. 
Partitions of $I$ are in bijection with \emph{flats}.
Refinement of compositions corresponds to inclusion of faces, meet to
intersection, linear orders to \emph{chambers} (top-dimensional faces), 
and $(I)$ to the central face. 
When $S$ and $T$ are nonempty, the statistic $\area_{S,T}(F)$ counts the number of hyperplanes that separate the face $(S,T)$ from $F$. The factorial $F!$ is the number
of chambers that contain the face $F$. The Tits product $FG$ is, in a sense, the
face containing $F$ that is closest to $G$.

This geometric perspective is expanded in~\cite[Chapter~10]{AguMah:2010}.
It is also the departing point of far-reaching generalizations of the notions
studied in this paper. We intend to make this the subject of future work.

\subsection{Decompositions}\label{ss:decompositions}

A \emph{decomposition} of a finite set $I$ is a finite sequence
$(I_1,\ldots,I_k)$
of disjoint subsets of $I$ whose union is $I$. In this situation, we write
\[
I=I_1\sqcup\cdots\sqcup I_k.
\]
A composition is thus a decomposition in which each subset $I_i$ is nonempty.

Let $\rSigh[I]$ denote the set of decompositions of $I$.
If $\abs{I}=n$, there are $k^n$ decompositions into $k$ subsets.
Therefore, the set $\rSigh[I]$ is countably infinite.

With some care, most of the preceding considerations on compositions
extend to decompositions.
For $I=S\sqcup T$,
decompositions of $S$ may be concatenated with decompositions of $T$.
Let $F=(I_1,\ldots,I_k)$ be a decomposition of $I$. 
The restriction to $S$ is the decomposition
\[
F|_S := (S\cap I_1,\ldots,S\cap I_k)
\]
(the empty intersections are kept).
The set $\rSigh[I]$ is a monoid under a product defined as in~\eqref{e:tits}
(where now all $pq$ intersections are kept). 
The product is associative and unital.
We call it the Tits product on decompositions.

Consider the special case in which $I$ is empty.
There is one decomposition 
\[
\underbrace{(\emptyset,\ldots,\emptyset)}_{p},
\]
of the empty set for each nonnegative integer $p$,
with $p=0$ corresponding to the unique decomposition with no subsets. 
We denote it by $\emptyset^p$. Under concatenation,
\[
\emptyset^p\cdot\emptyset^q = \emptyset^{p+q},
\]
and under the Tits product
\[
\emptyset^p\emptyset^q = \emptyset^{pq}.
\]
Thus,
\begin{equation}\label{e:empty-monoid}
\rSigh[\emptyset]\cong \Nb
\end{equation}
with concatenation corresponding to addition of nonnegative integers
and the Tits product to multiplication.

Given a decomposition $F$, let
$\pos{F}$ denote the composition of $I$ obtained by removing
those subsets $I_i$ which are empty. This operation preserves concatenations,
restrictions, and Tits products:
\begin{equation}\label{e:dec-comp-prod}
\pos{(F\cdot G)} = \pos{F}\cdot\pos{G},\quad
\pos{(F|_S)} = \pos{F}|_S, \quad
\pos{(FG)} = \pos{F} \pos{G}.
\end{equation}

The notion of distance makes sense for decompositions. Since empty subsets
do not contribute to~\eqref{e:dist}, we have
\begin{equation}\label{e:dec-comp-dist}
\dist(F,F') = \dist(\pos{F},\pos{F}').
\end{equation}

The notion of refinement for decompositions deserves special attention.
Let $F=(I_1,\ldots,I_k)$ be a decomposition of a (possibly empty) finite set $I$.
We say that another decomposition $G$ of $I$ refines $F$, and write $F\leq G$, 
if there exists a sequence $\gamma=(G_1,\ldots,G_k)$, with $G_j$ a decomposition
of $I_j$, $j=1,\ldots,k$, such that $G=G_1\cdots G_k$ (concatenation). 
In this situation, we also say that $\gamma$ is a \emph{splitting} of the pair $(F,G)$.

If $F=\emptyset^0$, then $\gamma$ must be the empty sequence, and 
hence $G=\emptyset^0$. Thus, the only decomposition that refines 
$\emptyset^0$ is itself.

If $F$ and $G$ are compositions with $F\leq G$, 
then $(F,G)$ has a unique splitting, in view of~\eqref{e:res-conc}.
In general, however, the factors $G_j$ cannot be determined from the pair $(F,G)$,
and thus the splitting $\gamma$ is not unique.
For example, if
\[
F=\emptyset^2 \pand G=\emptyset^1,
\]
we may choose either
\[
(G_1=\emptyset^1 \pand G_2=\emptyset^0)
\qor
(G_1=\emptyset^0 \pand G_2=\emptyset^1)
\]
to witness that $F\leq G$. Note also that $\emptyset^1\leq\emptyset^0$ (for we may choose $G_1=\emptyset^0$) and in fact
\[
\emptyset^p\leq\emptyset^q
\]
for any $p\in\Nb_+$ and $q\in\Nb$. 

The preceding also shows that
refinement is not an antisymmetric relation. It is reflexive and transitive, though, and
thus a preorder on each set $\rSigh[I]$. 
We have, for $F\neq\emptyset^0$,
\[
F\leq G \iff \pos{F}\leq \pos{G}.
\]

Concatenation defines an order-preserving map
\begin{equation}\label{e:conc-dec}
\rSigh[I_1]\times\cdots\times\rSigh[I_k] \longrightarrow \{G\in\rSigh[I] \mid F\leq G\},
\quad (G_1,\ldots,G_k) \longmapsto G_1\cdots G_k.
\end{equation}
According to the preceding discussion, the map is surjective but not injective.
A sequence $\gamma$ in the fiber of $G$ is a
splitting of $(F,G)$.

\section{Species and Hopf monoids}\label{s:hopf}

Joyal's category of species~\cite{Joy:1981} provides the context for our work.
The Cauchy product furnishes it with a braided monoidal structure. We
are interested in the resulting algebraic structures, particularly that of
a Hopf monoid. This section presents the basic definitions and describes
these structures in concrete terms.

\subsection{Species}\label{ss:species}

Let $\Fset$ denote the category whose objects are finite sets and 
whose morphisms are bijections.
Let $\Kb$ be a field and let $\Veck$ denote the category whose objects are vector spaces over $\Kb$
and whose morphisms are linear maps.

A \emph{(vector) species} is a functor
\[
\Fset \longrightarrow \Veck.
\]

Given a species $\tp$, its value on a finite set $I$ is denoted by $\tp[I]$.
Its value on a bijection $\sigma:I\to J$ is denoted
\[
\tp[\sigma]:=\tp[I]\to\tp[J].
\]
We write $\tp[n]$ for the space $\tp[\{1,\ldots,n\}]$.
The symmetric group $\Sr_n$ acts on $\tp[n]$ by
\[
\sigma\cdot x := \tp[\sigma](x)
\]
for $\sigma\in\Sr_n$, $x\in\tp[n]$.

A morphism between species is a natural transformation of functors.
Let $f:\tp\to\tq$ be a morphism of species. It consists of a
collection of linear maps
\[
f_I: \tp[I]\to\tq[I],
\]
one for each finite set $I$, such that the diagram
\begin{equation}\label{e:mor-sp}
\begin{gathered}
\xymatrix{
\tp[I] \ar[r]^-{f_I} \ar[d]_{\tp[\sigma]} & \tq[I] \ar[d]^{\tq[\sigma]}\\
\tp[J] \ar[r]_-{f_J} & \tq[J]
}
\end{gathered}
\end{equation}
commutes, for each bijection $\sigma:I\to J$. 
Let $\Ssk$ denote the category of species.

A species $\tp$ is said to be \emph{finite-dimensional} if each vector space $\tp[I]$
is finite-dimensional. We do not impose this condition, although most examples
of species we consider are finite-dimensional.

A species $\tp$ is \emph{positive} if $\tp[\emptyset]=0$. The \emph{positive part} of 
a species $\tq$ is the (positive) species $\tq_+$ defined by
\[
\tq_+[I] := \begin{cases}
\tq[I] & \text{ if $I\neq\emptyset$,}\\
0 & \text{ if $I=\emptyset$.}
\end{cases}
\]

Given a vector space $V$, let $\tone_V$ denote the species defined by
\begin{equation}\label{e:tone}
\tone_V[I] := \begin{cases}
V & \text{if $I$ is empty,}\\
0 & \text{otherwise.}
\end{cases}
\end{equation}

\subsection{The Cauchy product}\label{ss:cauchy}

Given species $\tp$ and $\tq$, their \emph{Cauchy product} is the species 
$\tp \bdot \tq$ defined on a finite set $I$ by
\begin{equation}\label{e:cau} 
(\tp \bdot \tq)[I] := \bigoplus_{I = S \sqcup T} \tp[S] \otimes \tq[T].
\end{equation}
The direct sum is over all decompositions $(S,T)$ of $I$,
or equivalently over all subsets $S$ of $I$.
On a bijection $\sigma: I\to J$, the map
$(\tp \bdot \tq)[\sigma]$ is defined to be the direct sum of the maps
\[
\tp[S] \otimes \tq[T]\map{\tp[\sigma|_S]\otimes\tp[\sigma|_T]}
\tp[\sigma(S)] \otimes \tq[\sigma(T)]
\]
over all decompositions $(S,T)$ of $I$, 
where $\sigma|_S$ denotes the restriction of $\sigma$ to $S$.

The Cauchy product turns $\Ssk$ into a monoidal category.
The unit object is the species $\tone_{\Kb}$ as in~\eqref{e:tone}.

Let $q \in \Kb$ be a fixed scalar, possibly zero.
Consider the natural transformation
\[
\beta_q \colon \tp \bdot \tq \to \tq \bdot \tp
\]
which on a finite set $I$ is the direct sum of the maps
\begin{equation}\label{e:braiding}
\tp[S] \otimes \tq[T] \to \tq[T] \otimes \tp[S],
\qquad
x \otimes y \mapsto q^{\abs{S}\abs{T}} y \otimes x
\end{equation}
over all decompositions $(S,T)$ of $I$. The notation $\abs{S}$ stands
for the cardinality of the set $S$.

If $q$ is nonzero, then $\beta_q$ is a (strong) braiding
for the monoidal category $(\Ssk,\bdot)$.
In this case, the inverse braiding is $\beta_{q^{-1}}$,
and $\beta_q$ is a symmetry if and only if $q=\pm 1$.
The natural transformation $\beta_0$ is a lax braiding
for $(\Ssk,\bdot)$.

We consider monoids and comonoids in the monoidal category $(\Ssk,\bdot)$,
and bimonoids and Hopf monoids in the braided monoidal category $(\Ssk,\bdot,\beta_q)$.
We refer to the latter as $q$-\emph{bimonoids} and $q$-\emph{Hopf monoids}.
When $q=1$, we speak simply of \emph{bimonoids} and \emph{Hopf monoids}.
We expand on these notions in the following sections.

\subsection{Monoids}\label{ss:monoids}

The structure of a monoid $\ta$ consists of morphisms of species 
\[
\mu:\ta\bdot\ta\to\ta \qand \iota:\tone_{\Kb}\to\ta
\] 
subject to the familiar associative and unit axioms.
In view of~\eqref{e:cau}, the morphism $\mu$ consists of a collection of linear maps
\[
\mu_{S,T} : \ta[S]\otimes\ta[T] \to \ta[I],
\]
one for each finite set $I$ and each decomposition $(S,T)$ of $I$.
The unit $\iota$ reduces to a linear map
\[
\iota_\emptyset: \Kb \to \ta[\emptyset].
\]
The collection must satisfy the following \emph{naturality} condition. For each
bijection $\sigma:I\to J$, the diagram
\begin{equation}\label{e:nat}
\begin{gathered}
\xymatrix@C+40pt@R+1pc{
\ta[S]\otimes\ta[T] \ar[r]^-{\mu_{S,T}} \ar[d]_{\ta[\sigma|_S]\otimes\ta[\sigma|_T]} & \ta[I] \ar[d]^{\ta[\sigma]}\\
\ta[\sigma(S)]\otimes\ta[\sigma(T)] \ar[r]_-{\mu_{\sigma(S),\sigma(T)}} & \ta[J]
}
\end{gathered}
\end{equation}
commutes.

The associative axiom states that for each decomposition $I=R\sqcup S\sqcup T$, the diagram
\begin{gather}\label{e:assoc}
\begin{gathered}
\xymatrix@R+1pc@C+25pt{
\ta[R]\otimes\ta[S]\otimes\ta[T]\ar[r]^-{\id\otimes\mu_{R,S}}
\ar[d]_{\mu_{R,S}\otimes\id} & \ta[R]\otimes\ta[S\sqcup T]\ar[d]^{\mu_{R,S\sqcup T}}\\
\ta[R\sqcup S]\otimes \ta[T]\ar[r]_-{\mu_{R\sqcup S,T}} & \ta[I]
}
\end{gathered}
\end{gather}
commutes. 

The unit axiom states that for each finite set $I$, the diagrams
\begin{align}\label{e:unit}
&
\begin{gathered}
\xymatrix@=3pc{
\ta[I]\ar@{=}[rd] & \ta[\emptyset]\otimes\ta[I] \ar[l]_-{\mu_{\emptyset,I}}\\
& \Kb\otimes \ta[I] \ar[u]_{\iota_\emptyset\otimes\id_I}
}
\end{gathered}
& &
\begin{gathered}
\xymatrix@=3pc{
\ta[I]\otimes \ta[\emptyset] \ar[r]^-{\mu_{I,\emptyset}} & \ta[I]\ar@{=}[ld]\\
\ta[I] \otimes \Kb\ar[u]^{\id_I\otimes\iota_\emptyset} &
}
\end{gathered}
\end{align}
commute.

We refer to the maps $\mu_{S,T}$ as the \emph{product maps} of the monoid $\ta$.
The following is a consequence of associativity. For any
decomposition $I=S_1\sqcup\cdots\sqcup S_k$ with $k\geq 2$, there is a
unique map
\begin{equation}\label{e:iterated-mu}
\ta[S_1]\otimes\cdots\otimes\ta[S_k] \map{\mu_{S_1,\ldots,S_k}} \ta[I]
\end{equation}
obtained by iterating the product maps $\mu_{S,T}$ in any meaningful way.
We refer to~\eqref{e:iterated-mu} as the \emph{higher product maps} of $\ta$.
We extend the notation to the cases $k=1$ and $k=0$.
In the first case, the only subset in the decomposition is $I$ itself,
and we let the map~\eqref{e:iterated-mu} be the identity of $\ta[I]$.
In the second, $I$ is necessarily $\emptyset$ and the decomposition is
$\emptyset^0$;
we let $\mu_{\emptyset^0}$ be the unit map $\iota_\emptyset$.
Thus, the collection of higher product maps contains among others
the product maps $\mu_{S,T}$ as well as the unit map $\iota_\emptyset$. 

The Cauchy product of two monoids $\ta_1$ and $\ta_2$ is again a monoid.
The product 
\begin{equation}\label{e:cau-mon}
\mu_{S,T}: (\ta_1\bdot\ta_2)[S]\otimes(\ta_1\bdot\ta_2)[T] \to
(\ta_1\bdot\ta_2)[I] 
\end{equation}
is the sum of the following maps
\[
\xymatrix@C+10pt@R+1pc{
(\ta_1[S_1]\otimes\ta_2[S_2])\otimes(\ta_1[T_1]\otimes\ta_2[T_2]) \ar[r]^-{\id\otimes\beta_q\otimes\id} 
\ar@{.>}[rd]& 
(\ta_1[S_1]\otimes\ta_1[T_1])\otimes(\ta_2[S_2]\otimes\ta_2[T_2]) \ar[d]^-{\mu_{S_1,T_1}\otimes\mu_{S_2,T_2}}\\
 & \ta_1[S_1\sqcup T_1] \otimes\ta_2[S_2\sqcup T_2] 
}
\]
over all $S=S_1\sqcup S_2$ and $T=T_1\sqcup T_2$.

If $\ta$ is a monoid, then
$\ta[\emptyset]$ is an algebra with product $\mu_{\emptyset,\emptyset}$
and unit $\iota_\emptyset$.

\subsection{Comonoids}\label{ss:comonoids}

Dually, the structure of a comonoid $\tc$ consists of linear maps
\[
\Delta_{S,T}: \tc[I]\to\tc[S]\otimes\tc[T]
\qand
\epsilon_\emptyset: \tc[\emptyset] \to \Kb
\]
subject to the coassociative and counit axioms, plus naturality.
Given a decomposition $I=S_1\sqcup\cdots\sqcup S_k$, there is a unique map
\begin{equation}\label{e:iterated-delta}
\tc[I] \map{\Delta_{S_1,\ldots,S_k}} \tc[S_1]\otimes\cdots\otimes\tc[S_k]
\end{equation}
obtained by iterating the coproduct maps $\Delta_{S,T}$. 
For $k=1$ this map is defined to be the identity of $\tc[I]$,
and for $k=0$ to be the counit map $\epsilon_\emptyset$.

Comonoids are closed under the Cauchy product. If $\tc$ is a comonoid,
then $\tc[\emptyset]$ is a coalgebra.

\subsection{Commutative monoids}\label{ss:commutative}

A monoid is \emph{$q$-commutative} if the diagram
\begin{equation}\label{e:comm}
\begin{gathered}
\xymatrix@C-15pt{
\ta[S]\otimes\ta[T] \ar[rr]^-{\beta_q} \ar[rd]_{\mu_{S,T}} & &
\ta[T]\otimes\ta[S] \ar[ld]^{\mu_{T,S}}\\
& \ta[I] &
}
\end{gathered}
\end{equation}
commutes for all decompositions $I=S\sqcup T$.
When $q=1$, we speak simply of commutative monoids.
When $q\neq\pm 1$, $q$-commutative monoids are rare:
it follows from~\eqref{e:comm} that if $q^{2\abs{S}\abs{T}}\neq 1$, then $\mu_{S,T}=0$. 

If $\ta$ is $q$-commutative,
then its product $\mu:\ta\bdot\ta\to\ta$ is a morphism of monoids.

A comonoid is $q$-cocommutative if the dual to diagram~\eqref{e:comm} commutes.

\subsection{Modules, ideals, and quotients}\label{ss:modules}

Let $\ta$ be a monoid. 
A \emph{left $\ta$-module} is a species $\tm$ with a structure map 
\[
\lambda:\ta\bdot\tm \to \tm
\] 
which is associative and unital.
When made explicit in terms of the components 
\[
\lambda_{S,T}:\ta[S]\otimes\tm[T]\to\tm[I],
\]
the axioms are similar to~\eqref{e:nat}--\eqref{e:unit}.

The free left $\ta$-module on a species $\tp$ is $\tm:=\ta\bdot\tp$
with structure map
\[
\ta\bdot \tm = \ta\bdot\ta\bdot\tp \map{\mu\bdot\id} \ta\bdot\tp = \tm.
\]

Submonoids, (left, right, and two-sided) ideals, and quotients of $\ta$ can be defined similarly. 
Every monoid $\ta$ has a largest commutative quotient $\ta_{\mathrm{ab}}$.
Depending on the context, we refer to either $\ta_{\mathrm{ab}}$ or the
canonical quotient map $\ta\onto\ta_{\mathrm{ab}}$
as the \emph{abelianization} of $\ta$.

The dual notions exist for comonoids. 
In particular, every comonoid has a largest cocommutative subcomonoid, 
its \emph{coabelianization}.

\subsection{The convolution algebra}\label{ss:convolution}

The set $\Hom_{\Ssk}(\tp,\tq)$ of morphisms of species is a vector
space over $\Kb$ under
\[
(f+g)_I:=f_I+g_I \qand (c\cdot f)_I:=c f_I,
\]
for $f,g\in\Hom_{\Ssk}(\tp,\tq)$ and $c\in\Kb$.

Assume now that $\ta$ is a monoid and $\tc$ is a comonoid. 
The space $\Hom_{\Ssk}(\tc,\ta)$ is then an algebra over $\Kb$ 
under the \emph{convolution product}:
\begin{equation}\label{e:convolution}
(f\conv g)_I := \sum_{I=S\sqcup T} \mu_{S,T}(f_S\otimes g_T)\Delta_{S,T}.
\end{equation}
The unit is the morphism $u$ defined by $u=\iota\epsilon$. Explicitly,
\begin{equation}\label{e:unit-conv}
u_I := \begin{cases}
\iota_\emptyset\epsilon_\emptyset & \text{ if $I=\emptyset$,}\\
0 & \text{ otherwise.}
\end{cases}
\end{equation} 

If $\varphi:\ta\to\tb$ is a morphism of monoids and $\psi:\td\to\tc$ is a morphism of comonoids,
then 
\begin{equation}\label{e:mor-conv}
\Hom_{\Ssk}(\psi,\varphi): \Hom_{\Ssk}(\tc,\ta) \to \Hom_{\Ssk}(\td,\tb), \quad
f \mapsto \varphi f \psi
\end{equation}
is a morphism of algebras.

Let $\ta_i$ be a monoid and $\tc_i$ a comonoid, for $i=1,2$.
If $f_i,g_i:\tc_i\to\ta_i$ are morphisms of species, then
\begin{equation}\label{e:inter-conv}
(f_1\bdot f_2)\conv (g_1\bdot g_2) = (f_1\conv g_1)\bdot(f_2\conv g_2).
\end{equation}
The convolution product on the left is in $\Hom_{\Ssk}(\tc_1\bdot\tc_2,\ta_1\bdot\ta_2)$;
those on the right in $\Hom_{\Ssk}(\tc_i,\ta_i)$, $i=1,2$.
Here we employ the case $q=1$ of the Cauchy product of (co)monoids~\eqref{e:cau-mon}.

\subsection{Bimonoids and Hopf monoids}\label{ss:bim-hopf}

A $q$-bimonoid $\thh$ is at the same time a monoid and a comonoid with the two
structures linked by axioms~\eqref{e:comp}--\eqref{e:inverser} below.
They express the requirement that 
\[
\mu:\thh\bdot\thh\to\thh \qand \iota:\tone_{\Kb}\to\thh
\]
are morphisms of comonoids, or equivalently that
\[
\Delta:\thh\to\thh\bdot\thh \qand \epsilon:\thh\to\tone_{\Kb}
\] 
are morphisms of monoids.

Let $I=S_1\sqcup S_2=T_1\sqcup T_2$ be two decompositions of a finite set
and consider the resulting pairwise intersections:
\[
A:=S_1\cap T_1,\ B:=S_1\cap T_2,\ C:=S_2\cap T_1,\ D:=S_2\cap T_2,
\]
as illustrated below.
\begin{equation}\label{e:4sets}
\setlength{\unitlength}{.9 pt}
\begin{gathered}
\quad
\begin{picture}(100,90)(20,0)
\put(50,40){\oval(100,80)}
\put(0,40){\dashbox{2}(100,0){}}
\put(45,55){$S_1$}
\put(45,15){$S_2$}
\end{picture}
\quad
\begin{picture}(100,90)(10,0)
\put(50,40){\oval(100,80)}
\put(50,0){\dashbox{2}(0,80){}}
\put(20,35){$T_1$}
\put(70,35){$T_2$}
\end{picture}
\quad
\begin{picture}(100,90)(0,0)
\put(50,40){\oval(100,80)}
\put(0,40){\dashbox{2}(100,0){}}
\put(50,0){\dashbox{2}(0,80){}}
\put(20,55){$A$}
\put(70,55){$B$}
\put(20,15){$C$}
\put(70,15){$D$}
\end{picture}
\end{gathered}
\end{equation}
The compatibility axiom for $q$-bimonoids
states that diagrams~\eqref{e:comp}--\eqref{e:inverser} commute.
\begin{equation}\label{e:comp}
\begin{gathered}
\xymatrix@R+2pc@C-5pt{
\thh[A] \otimes \thh[B] \otimes \thh[C] \otimes \thh[D] \ar[rr]^{\id\otimes\beta_q\otimes\id} & &
\thh[A] \otimes \thh[C] \otimes \thh[B] \otimes \thh[D] \ar[d]^{\mu_{A,C}
\otimes \mu_{B,D}}\\
\thh[S_1] \otimes \thh[S_2] \ar[r]_-{\mu_{S_1,S_2}}\ar[u]^{\Delta_{A,B} \otimes
\Delta_{C,D}} & \thh[I] \ar[r]_-{\Delta_{T_1,T_2}} & \thh[T_1] \otimes
\thh[T_2]
}
\end{gathered}
\end{equation}
\begin{align}\label{e:unitr}
&
\begin{gathered}
\xymatrix@R+1pc@C+2pc{
\thh[\emptyset] \otimes \thh[\emptyset] \ar[r]^-{\epsilon_\emptyset
\otimes \epsilon_\emptyset}\ar[d]_{\mu_{\emptyset,\emptyset}} &
\Kb \otimes \Kb\\
\thh[\emptyset] \ar[r]_-{\epsilon_\emptyset} & \Kb \ar@{=}[u]\\
}
\end{gathered}
& & 
\begin{gathered}
\xymatrix@R+1pc@C+2pc{
\Kb \ar@{=}[d]\ar[r]^-{\iota_\emptyset} & \thh[\emptyset]
\ar[d]^{\Delta_{\emptyset,\emptyset}}\\
\Kb \otimes \Kb \ar[r]_-{\iota_\emptyset \otimes \iota_\emptyset} &
\thh[\emptyset] \otimes \thh[\emptyset]
}
\end{gathered}
\end{align}
\begin{equation}\label{e:inverser}
\begin{gathered}
\xymatrix@=2pc{
& \thh[\emptyset] \ar[rd]^{\epsilon_\emptyset}\\
\Kb \ar@{=}[rr]\ar[ru]^-{\iota_\emptyset} & & \Kb
}
\end{gathered}  
\end{equation}

If $\thh$ is a $q$-bimonoid, then $\thh[\emptyset]$ is an ordinary bialgebra
with structure maps $\mu_{\emptyset,\emptyset}$, $\iota_\emptyset$, $\Delta_{\emptyset,\emptyset}$ and $\epsilon_\emptyset$.

\smallskip

A $q$-Hopf monoid is a $q$-bimonoid along with
a morphism of species $\apode \colon \thh \to \thh$ (the \emph{antipode})
which is the inverse of the identity map
in the convolution algebra $\End_{\Ssk}(\thh)$ of Section~\ref{ss:convolution}. 
This requires the existence of an antipode
\[
\apode_\emptyset: \thh[\emptyset] \to \thh[\emptyset]
\]
for the bialgebra $\thh[\emptyset]$, and of a linear map
\[
\apode_I\colon \thh[I]\to\thh[I]
\]
for each nonempty finite set $I$ such that 
\begin{equation}
\label{e:apode}
\sum_{S\sqcup T=I} \mu_{S,T}(\id_S\otimes\apode_T)\Delta_{S,T} = 0
\qand
\sum_{S\sqcup T=I} \mu_{S,T}(\apode_S\otimes\id_T)\Delta_{S,T} = 0.
\end{equation}

\begin{proposition}\label{p:empty}
Let $\thh$ be a $q$-bimonoid.
\begin{enumerate}[(i)]
\item
Suppose $\thh$ is a $q$-Hopf monoid with antipode $\apode$.
Then $\thh[\emptyset]$ is a Hopf algebra with antipode $\apode_{\emptyset}$.

\item
Suppose $\thh[\emptyset]$ is a Hopf algebra and 
let $\apode_0$ denote its antipode. 
Then $\thh$ is a $q$-Hopf monoid with antipode $\apode$ given by
\[
\apode_{\emptyset} := \apode_0
\]
and
\begin{equation}\label{e:takeuchi-general}
\apode_I := \!\!\sum_{\substack{T_1\sqcup\dots \sqcup T_k = I\\ T_i\neq\emptyset\ k\geq 1}} \!\!\! (-1)^{k} \mu_{\emptyset,T_1,\emptyset,\dots,\emptyset,T_k,\emptyset}\bigl(\apode_0\otimes\id_{T_1}\!\otimes\apode_0\otimes\dots\otimes\apode_0\otimes\id_{T_k}\!\otimes\apode_0\bigr)
\Delta_{\emptyset,T_1,\emptyset,\dots,\emptyset,T_k,\emptyset}
\end{equation}
for each nonempty finite set~$I$.
\end{enumerate}
\end{proposition}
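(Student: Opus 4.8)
The whole argument takes place in the convolution algebra $\End_\Ssk(\thh)$ of Section~\ref{ss:convolution}, whose unit is $u=\iota\epsilon$. The only input beyond the bimonoid axioms is that $\thh[\emptyset]$ is an ordinary bialgebra with structure maps $\mu_{\emptyset,\emptyset},\iota_\emptyset,\Delta_{\emptyset,\emptyset},\epsilon_\emptyset$. The key preliminary observation is that evaluation at the empty set, $\mathrm{ev}_\emptyset\colon\End_\Ssk(\thh)\to\End_\Kb(\thh[\emptyset])$, $f\mapsto f_\emptyset$, is a homomorphism from the convolution algebra of $\thh$ to the convolution algebra of the bialgebra $\thh[\emptyset]$, since $(f\conv g)_\emptyset=\mu_{\emptyset,\emptyset}(f_\emptyset\otimes g_\emptyset)\Delta_{\emptyset,\emptyset}$ and $u_\emptyset=\iota_\emptyset\epsilon_\emptyset$.

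Part (i) is then immediate: if $\apode$ is a two-sided convolution inverse of $\id$, applying $\mathrm{ev}_\emptyset$ gives $\apode_\emptyset\conv\id_{\thh[\emptyset]}=\id_{\thh[\emptyset]}\conv\apode_\emptyset=\iota_\emptyset\epsilon_\emptyset$, which is exactly the antipode axiom for $\thh[\emptyset]$; hence $\thh[\emptyset]$ is a Hopf algebra with antipode $\apode_\emptyset=\apode_0$.

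For Part (ii) the plan is to factor $\id$ in $\End_\Ssk(\thh)$ into two invertible pieces: one that absorbs the empty-part antipode $\apode_0$, and one that is unipotent and therefore inverted by a locally finite geometric series. To this end I introduce three elements of $\End_\Ssk(\thh)$: the empty-supported identity $\mathsf{e}$ (with $\mathsf{e}_\emptyset=\id_{\thh[\emptyset]}$ and $\mathsf{e}_I=0$ for $I\neq\emptyset$), the empty-supported antipode $\mathsf{s}$ (with $\mathsf{s}_\emptyset=\apode_0$ and $\mathsf{s}_I=0$ for $I\neq\emptyset$), and the positive identity $\mathsf{p}$ (with $\mathsf{p}_\emptyset=0$ and $\mathsf{p}_I=\id_{\thh[I]}$ for $I\neq\emptyset$), so that $\id=\mathsf{e}+\mathsf{p}$. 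All three are natural, hence genuine morphisms of species. The Hopf algebra axioms for $\thh[\emptyset]$ say precisely that $\mathsf{e}\conv\mathsf{s}=\mathsf{s}\conv\mathsf{e}=u$, so $\mathsf{e}$ is convolution-invertible with inverse $\mathsf{s}$. A short computation with the unit and counit shows that $\mathsf{s}\conv\mathsf{p}$ is positive (its value on $\emptyset$ vanishes), and more precisely that $(\mathsf{s}\conv\mathsf{p})^{\conv k}$ is supported on compositions of $I$ into $k$ nonempty blocks, hence vanishes on $\thh[I]$ as soon as $k>\abs{I}$.

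These facts combine as follows. Using $\mathsf{e}\conv\mathsf{s}=u$ one checks the factorization
\[
\id=\mathsf{e}+\mathsf{p}=\mathsf{e}\conv(u+\mathsf{s}\conv\mathsf{p}),
\]
a product of two invertible elements: $\mathsf{e}$ because its inverse is $\mathsf{s}$, and $u+\mathsf{s}\conv\mathsf{p}$ because the geometric series $\sum_{k\geq0}(-1)^k(\mathsf{s}\conv\mathsf{p})^{\conv k}$ is locally finite by the support bound just noted. Therefore $\id$ is convolution-invertible, so $\thh$ is a $q$-Hopf monoid, and its antipode is
\[
\apode=\id^{\conv-1}=\Bigl(\sum_{k\geq0}(-1)^k(\mathsf{s}\conv\mathsf{p})^{\conv k}\Bigr)\conv\mathsf{s}=\mathsf{s}\conv\sum_{k\geq0}(-1)^k(\mathsf{p}\conv\mathsf{s})^{\conv k}.
\]
Evaluating at $\emptyset$ kills all terms with $k\geq1$ and returns $\apode_\emptyset=\apode_0$. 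The remaining task --- and the one piece requiring genuine care --- is the bookkeeping that turns this closed expression into the explicit formula~\eqref{e:takeuchi-general}: one expands the $k$-th convolution power and uses associativity of $\mu$ and coassociativity of $\Delta$ to collapse the alternating pattern $\mathsf{s},\mathsf{p},\mathsf{s},\dots,\mathsf{p},\mathsf{s}$ into a single higher product $\mu_{\emptyset,T_1,\emptyset,\dots,T_k,\emptyset}$ and higher coproduct $\Delta_{\emptyset,T_1,\emptyset,\dots,T_k,\emptyset}$ with $\apode_0$ inserted on each of the $k+1$ empty slots. I expect this collapsing step to be the main obstacle to a fully rigorous write-up, though it is routine; everything else reduces to the single structural input $\mathsf{e}\conv\mathsf{s}=u$, which is where the Hopf-algebra hypothesis on $\thh[\emptyset]$ enters. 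The argument never uses the value of $q$, so it applies uniformly to $q$-bimonoids. Alternatively, one could verify~\eqref{e:takeuchi-general} by substituting it directly into the antipode equations~\eqref{e:apode} and checking that the resulting double sum telescopes, but the factorization above makes both invertibility and the formula transparent at once.
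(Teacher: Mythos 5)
Your proof is correct, and it is essentially the same argument as the one the paper relies on (the paper gives no details, citing \cite[Proposition~8.10]{AguMah:2010} instead): formula~\eqref{e:takeuchi-general} is precisely the term-by-term expansion of your closed form $\bigl(\sum_{k\geq 0}(-1)^{k}(\mathsf{s}\conv\mathsf{p})^{\conv k}\bigr)\conv\mathsf{s}$, so your Takeuchi-style factorization $\id=\mathsf{e}\conv(u+\mathsf{s}\conv\mathsf{p})$, with $\mathsf{e}$ inverted by $\apode_0$ and the unipotent factor inverted by a locally finite geometric series, is exactly the argument behind that result. Your closing remark that the construction never invokes the braiding $\beta_q$ is also precisely why the paper can assert that the $q=1$ argument ``yields the general case.''
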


The sum is over all compositions of $I$. 
A proof of Proposition~\ref{p:empty} is given in~\cite[Proposition~8.10]{AguMah:2010}, when $q=1$.
The same argument yields the general case.

Thus, a $q$-Hopf monoid $\thh$ is equivalent to a $q$-bimonoid $\thh$ 
for which $\thh[\emptyset]$ is a Hopf algebra.

\begin{proposition}\label{p:apode-rev}
Let $\thh$ be a $q$-Hopf monoid with antipode $\apode$ and $I=S\sqcup T$. Then
the diagram
\begin{equation}\label{e:apode-rev-prod}
\begin{gathered}
\xymatrix@C+10pt{
\thh[S]\otimes\thh[T] \ar[r]^-{\apode_S\otimes\apode_T} \ar[d]_-{\mu_{S,T}} 
& \thh[S]\otimes\thh[T] \ar[r]^-{\beta_q}
& \thh[T]\otimes\thh[S]\ar[d]^{\mu_{T,S}}\\
\thh[I] \ar[rr]_-{\apode_I} & & \thh[I]
}
\end{gathered}
\end{equation}
commutes.
\end{proposition}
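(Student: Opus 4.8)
The plan is to prove that the antipode is anti-multiplicative, i.e.\ that $\apode$ reverses products, via a standard convolution-algebra argument. The key observation is that both $\apode_I\circ\mu_{S,T}$ and $\mu_{T,S}\circ\beta_q\circ(\apode_S\otimes\apode_T)$ are, in an appropriate sense, two-sided inverses of the same element in a suitable convolution algebra; uniqueness of inverses then forces them to agree.

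\emph{Setting up the convolution algebra.} First I would fix the decomposition $I=S\sqcup T$ and consider the restricted convolution on maps $\thh[S]\otimes\thh[T]\to\thh[I]$, built from the coproduct $\Delta$ on the source factors and the product $\mu$ on the target. Concretely, for two maps $f,g\colon \thh[S]\otimes\thh[T]\to\thh[I]$ one convolves using the coproducts $\Delta_{S_1,S_2}$ on $\thh[S]$ and $\Delta_{T_1,T_2}$ on $\thh[T]$ (summing over $S=S_1\sqcup S_2$, $T=T_1\sqcup T_2$), the braiding $\beta_q$ to interleave the four tensor factors, and the higher product map on $\thh[I]$. The unit of this convolution is the map $\iota\epsilon$ composed appropriately, which on $\thh[S]\otimes\thh[T]$ is $\epsilon_S\otimes\epsilon_T$ followed by $\iota_I$; since $\thh$ is connected at the empty set in the relevant sense, this is a genuine two-sided unit.

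\emph{The two candidate inverses.} The heart of the argument is to verify that $\mu_{S,T}$ has $\apode_I\circ\mu_{S,T}$ as a right inverse and $\mu_{T,S}\circ\beta_q\circ(\apode_S\otimes\apode_T)$ as a left inverse, both computed in the convolution just defined. For the first claim I would use that $\mu\colon\thh\bdot\thh\to\thh$ is a morphism of comonoids (the bimonoid compatibility axiom~\eqref{e:comp}), which lets the defining antipode equation~\eqref{e:apode} for $\thh[I]$ be pulled back along $\mu_{S,T}$. For the second claim I would use that $\apode_S$ and $\apode_T$ are the antipodes of the restrictions, together with the interchange law~\eqref{e:inter-conv} governing convolution on Cauchy products, so that $\apode_S\otimes\apode_T$ combined with the braiding produces the inverse of $\mu_{S,T}$ on the other side. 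The braiding must be inserted in exactly the right spot so that the $q$-powers match those already present in the bimonoid axiom.

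\emph{The main obstacle.} The technical heart, and the step I expect to be most delicate, is the bookkeeping of the scalar $q^{\abs{\cdot}\abs{\cdot}}$ introduced by $\beta_q$ across the multiple tensor factors, and ensuring that the braiding isomorphisms are threaded through consistently so that the two convolution computations land on the literally same map. Once both one-sided inverses are established, the conclusion is immediate: in an associative unital algebra a left inverse and a right inverse of a common element coincide, so $\apode_I\circ\mu_{S,T}=\mu_{T,S}\circ\beta_q\circ(\apode_S\otimes\apode_T)$, which is precisely the commutativity of diagram~\eqref{e:apode-rev-prod}. A cleaner packaging, which I would prefer if the notation cooperates, is to observe that $\mu\colon\thh\bdot\thh\to\thh$ is a morphism of $q$-Hopf monoids and that the antipode of a Hopf monoid is automatically a morphism intertwining the antipodes of source and target; applying this to $\mu$ and unwinding the antipode of the Cauchy product $\thh\bdot\thh$ (which incorporates the braiding via~\eqref{e:cau-mon}) yields the stated diagram directly.
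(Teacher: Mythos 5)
Your main line of argument is correct, and it is essentially the argument that underlies the paper's own treatment: the paper gives no proof of Proposition~\ref{p:apode-rev} at all, deferring to the general fact for Hopf monoids in braided monoidal categories~\cite[Proposition~1.22]{AguMah:2010}, and the proof of that general fact is precisely your convolution argument. Concretely: in the convolution algebra $\Hom_{\Ssk}(\thh\bdot\thh,\thh)$, where $\thh\bdot\thh$ carries the $\beta_q$-twisted comonoid structure, the element $\mu$ has $\apode\circ\mu$ as a right inverse (this uses only that $\mu$ is a morphism of comonoids, i.e.\ axiom~\eqref{e:comp}, together with the antipode identity~\eqref{e:apode}), and $\mu_{T,S}\circ\beta_q\circ(\apode_S\otimes\apode_T)$ as a left inverse; uniqueness of inverses in an associative unital algebra then gives the diagram~\eqref{e:apode-rev-prod}. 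One small remark: your aside that the convolution unit is a genuine unit "since $\thh$ is connected" is unnecessary --- $\iota\epsilon$ is a two-sided unit for convolution for any comonoid source and monoid target, and indeed the proposition is stated for $q$-Hopf monoids that need not be connected.

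Two cautions about the tools you chose. First, the interchange law~\eqref{e:inter-conv} cannot be invoked for the left-inverse step when $q$ is general: the paper states it only for the $q=1$ Cauchy product structures, and for $q\neq\pm 1$ it genuinely fails, because aligning the two sides produces the double braiding $\beta_q\circ\beta_q$, which is multiplication by $q^{2\abs{S}\abs{T}}\neq 1$. The left-inverse verification must instead be done directly: use coassociativity of $\Delta_{\thh\bdot\thh}$, naturality of $\beta_q$ to slide $\apode_S$ adjacent to the factor it must annihilate, and the identity~\eqref{e:apode} twice; this is exactly the delicate $q$-bookkeeping you anticipated, and it does close. Second, your proposed "cleaner packaging" should be discarded: $\mu\colon\thh\bdot\thh\to\thh$ is a morphism of comonoids but it is \emph{not} a morphism of monoids unless $\thh$ is $q$-commutative (see Section~\ref{ss:commutative}), hence it is not a morphism of $q$-Hopf monoids; moreover, for $q\neq\pm1$ the braiding $\beta_q$ is not a symmetry, so $\thh\bdot\thh$ need not be a $q$-bimonoid at all and has no antipode to unwind. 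So the convolution argument is not just the safer route --- it is the only one of your two routes that works.
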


Thus, the antipode reverses products (the reversal involves the braiding $\beta_q$).
Similarly, it reverses coproducts.
These are general results for Hopf monoids in braided monoidal categories 
(see for instance~\cite[Proposition~1.22]{AguMah:2010}) and
hence they apply to $q$-Hopf monoids.
Other such results~\cite[Section~1.2]{AguMah:2010} yield the following.
\begin{itemize}
\item
If $\thh$ is a $q$-(co)commutative $q$-Hopf monoid, 
then its antipode is an involution: $\apode^2=\id$.

\item
Let $\thh$ and $\tk$ be $q$-Hopf monoids. 
A morphism of $q$-bimonoids $\thh\to\tk$
necessarily commutes with the antipodes, 
and is thus a morphism of $q$-Hopf monoids.

\item 
Let $\thh$ be a $q$-Hopf monoid and $\ta$ a monoid. 
If $f:\thh\to\ta$ is a morphism of monoids,
then it is invertible under convolution and its inverse is $f\apode$.
If $\ta$ is commutative, then
the set of morphisms of monoids from $\thh$ to $\ta$ is a group under convolution.

\item 
The dual statement for comonoid morphisms $f:\tc\to\thh$ holds.
The inverse of $f$ is $\apode f$.
\end{itemize}
A result of Schauenburg~\cite[Corollary~5]{Sch:1998} implies the following,
confirming the earlier observation that $q$-(co)commutativity is rare when $q\neq \pm 1$.
\begin{itemize}
\item 
If $\thh$ is a $q$-(co)commutative $q$-Hopf monoid and
$q^{2\abs{I}}\neq 1$, then $\thh[I]=0$.
\end{itemize}

\subsection{Lie monoids}\label{ss:liemon}

We consider Lie monoids in the symmetric monoidal category $(\Ssk,\bdot,\beta)$.
A Lie monoid structure on a species $\tg$ consists of a morphism of species 
\[
\tg\bdot\tg\to\tg
\]
subject to antisymmetry and the Jacobi identity.
The morphism consists of a collection of linear maps
\[
\tg[S]\otimes\tg[T] \to \tg[I] 
\qquad x\otimes y \mapsto [x,y]_{S,T}
\]
one for each finite set $I$ and each decomposition $I=S\sqcup T$,
satisfying the naturality condition~\eqref{e:nat} 
(with $[x,y]_{S,T}$ replacing $\mu_{S,T}(x\otimes y)$). 
We refer to $[x,y]_{S,T}$ as the \emph{Lie bracket} of $x\in\tg[S]$ and $y\in\tg[T]$.

The antisymmetry relation states that
\begin{equation}\label{e:antisym}
[x,y]_{S,T}+[y,x]_{T,S}=0
\end{equation}
for any decomposition $I=S\sqcup T$, $x\in\tg[S]$, and $y\in\tg[T]$. 

The Jacobi identity states that
\begin{equation}\label{e:jacobi}
 [[x,y]_{R,S},z]_{R\sqcup S,T}+[[z,x]_{T,R},y]_{T\sqcup R,S}+[[y,z]_{S,T},x]_{S\sqcup T,R}=0
\end{equation}
for any decomposition $I=R\sqcup S\sqcup T$, 
$x\in\tg[R]$, $y\in\tg[S]$, and $z\in\tg[T]$.

\smallskip

Every monoid $(\ta,\mu)$ gives rise to a Lie monoid $(\ta,[\,,\,])$
with Lie bracket defined by
\begin{equation}\label{e:underlying-lie}
[x,y]_{S,T} := \mu_{S,T}(x\otimes y) - \mu_{T,S}(y\otimes x).
\end{equation}
We refer to~\eqref{e:underlying-lie} as the \emph{commutator bracket}.
This does not require the monoid $\ta$ to possess a unit.
If $\ta$ is commutative, then the commutator bracket is zero.

\subsection{Algebras as monoids}\label{ss:hopfalg}

Let $V$ be a vector space over $\Kb$ and $\tone_V$ the species defined in~\eqref{e:tone}.
If $V$ is an ordinary algebra with unit $\iota:\Kb\to V$
and product $\mu:V\otimes V\to V$, then $\tone_V$
is a monoid with $\iota_\emptyset:=\iota$, $\mu_{\emptyset,\emptyset}:=\mu$ and all other $\mu_{S,T}=0$.
Moreover, $\tone_V[\emptyset]=V$ as algebras.

In the same manner, if $V$ is a coalgebra, bialgebra, Hopf algebra, or Lie algebra,
then $\tone_V$ is a comonoid, bimonoid, Hopf monoid, or Lie monoid.

There is another way to associate a species to a vector space $V$. 
Define $\wU_V$ by
\begin{equation}\label{e:uniform}
\wU_V[I] := V
\end{equation}
for every finite set $I$. 
On a bijection $\sigma:I\to J$, $\wU_V[\sigma]$ is the identity of 
$V$. If $V$ is an algebra over $\Kb$, then $\wU_V$ is a monoid with 
$\iota_\emptyset:=\iota$ and $\mu_{S,T}:=\mu$ for all $I=S\sqcup T$.
If $V$ is a coalgebra, bialgebra, Hopf algebra, or Lie algebra,
then $\wU_V$ is a comonoid, bimonoid, Hopf monoid, or Lie monoid.
When $V$ is a Hopf algebra with antipode $\apode$, the antipode of $\wU_V$ is simply
$\apode_I := (-1)^{\abs{I}} \apode$. This follows from either~\eqref{e:apode}
or~\eqref{e:takeuchi-general}.

\section{The Hadamard product}\label{s:hadamard}

Multiplying species term by term yields the Hadamard product.
The possibility of building Hopf monoids by means of this operation is
an important feature of the category of species. We study this and
related constructions in this section.

\subsection{Species under Hadamard product}\label{ss:hadamard}

The \emph{Hadamard product} of two species $\tp$ and $\tq$ is the species
$\tp\times\tq$ defined on a finite set $I$ by
\[
(\tp\times\tq)[I] := \tp[I]\otimes\tq[I],
\]
and on bijections similarly. 
This operation turns $\Ssk$ into a symmetric monoidal category. 
The unit object is the \emph{exponential} species $\wE$ defined by 
\begin{equation}\label{e:expsp}
\wE[I]:=\Kb
\end{equation}
for all $I$. The symmetry is simply
\[
 \tp[I] \otimes \tq[I] \to \tq[I] \otimes \tp[I],
\qquad
x \otimes y \mapsto y \otimes x.
\]

(Comparing with~\eqref{e:uniform}, we see that $\wE=\wU_{\Kb}$. 
In particular, $\wE$ is a Hopf monoid. 
Its structure is further studied in Section~\ref{ss:exp}.) 

The Hadamard product of two monoids $\ta$ and $\tb$ is again a monoid.
The product of $\ta\times\tb$ is defined by
\[
\xymatrix@C-17pt{
(\ta\times\tb)[S]\otimes(\ta\times\tb)[T] \ar@{.>}[rrrr]^-{\mu_{S,T}} \ar@{=}[d] & & & &
(\ta\times\tb)[I] \ar@{=}[d]\\
(\ta[S]\otimes\tb[S])\otimes(\ta[T]\otimes\tb[T]) \ar[r]_-{\cong} & 
(\ta[S]\otimes\ta[T])\otimes(\tb[S]\otimes\tb[T]) \ar[rrr]_-{\mu_{S,T}\otimes\mu_{S,T}} & & &
\ta[I] \otimes\tb[I],
}
\]
where the first map on the bottom simply switches the middle two tensor factors.
If $\ta$ and $\tb$ are commutative, then so is $\ta\times\tb$.
Similar statements hold for comonoids.

\begin{proposition}\label{p:hadamard}
Let $p,q\in\Kb$ be arbitrary scalars.
If $\thh$ is a $p$-bimonoid and $\tk$ is a $q$-bimonoid,
then $\thh\times\tk$ is a $pq$-bimonoid.
\end{proposition}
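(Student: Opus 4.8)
The plan is to verify directly that $\thh\times\tk$, equipped with the Hadamard product described above and the dual Hadamard coproduct, satisfies the $pq$-bimonoid axioms, that is, the compatibility diagrams \eqref{e:comp}--\eqref{e:inverser} in which the braiding is $\beta_{pq}$. Two of these cost nothing. The unit and counit axioms \eqref{e:unitr} and \eqref{e:inverser} concern only the empty set, where $(\thh\times\tk)[\emptyset]=\thh[\emptyset]\otimes\tk[\emptyset]$; since $\thh[\emptyset]$ and $\tk[\emptyset]$ are bialgebras and the tensor product of bialgebras is again a bialgebra, these diagrams commute automatically. So the entire content lies in the hexagonal compatibility \eqref{e:comp}.

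For \eqref{e:comp}, I would fix $I=S_1\sqcup S_2=T_1\sqcup T_2$ with intersections $A,B,C,D$ as in \eqref{e:4sets}, and expand each of the two legs of the diagram for $\thh\times\tk$ into a single composite built from three kinds of ingredients: (i) the product and coproduct maps of $\thh$ and of $\tk$ on the relevant sets; (ii) the plain symmetry isomorphisms of $(\Veck,\otimes)$ that reshuffle the eight tensor factors $\thh[A],\tk[A],\thh[B],\tk[B],\thh[C],\tk[C],\thh[D],\tk[D]$, these being exactly the factor swaps built into the Hadamard structure maps; and (iii) a single occurrence of the braiding $\beta_{pq}$ on $(\thh\times\tk)[B]\otimes(\thh\times\tk)[C]$. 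The reshuffles in (ii) are scalar-free, so all scalars in the diagram come from (iii).

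The key step is then to factor this eight-fold diagram as a ``product'' of the corresponding four-fold hexagons for $\thh$ and for $\tk$. Using naturality of the symmetry to move every $\thh$-factor to the left of every $\tk$-factor, the braiding $\beta_{pq}$, which contributes the scalar $(pq)^{\abs{B}\abs{C}}$, becomes the conjugate by these symmetries of $\beta_p\otimes\beta_q$, thanks to the identity
\[
(pq)^{\abs{B}\abs{C}}=p^{\abs{B}\abs{C}}\,q^{\abs{B}\abs{C}},
\]
whose first factor is the scalar of $\beta_p$ on $\thh[B]\otimes\thh[C]$ and whose second is that of $\beta_q$ on $\tk[B]\otimes\tk[C]$. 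Once the two families of factors are separated in this way, the $\thh$-part of the reorganized diagram is precisely \eqref{e:comp} for $\thh$ with braiding $\beta_p$, and the $\tk$-part is \eqref{e:comp} for $\tk$ with braiding $\beta_q$. Both commute by hypothesis, and hence so does their product, which is \eqref{e:comp} for $\thh\times\tk$.

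The main obstacle is purely the bookkeeping in (ii): tracking how the symmetry isomorphisms permute the eight tensor factors along each leg and checking that the two legs use compatible permutations, so that the factorization into a $\thh$-diagram and a $\tk$-diagram is legitimate rather than an \emph{ad hoc} coincidence. A clean way to organize this, and to avoid index-chasing, is to note that the bifunctor $(U,V)\mapsto U\otimes V$ on $\Veck\times\Veck$ carries the scalar-$p^{\abs{B}\abs{C}}$ flip and the scalar-$q^{\abs{B}\abs{C}}$ flip to the scalar-$(pq)^{\abs{B}\abs{C}}$ flip; the displayed scalar identity is exactly this compatibility, and it is what makes the Hadamard product of a $p$-bimonoid and a $q$-bimonoid a $pq$-bimonoid.
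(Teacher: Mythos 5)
Your proof is correct, and it takes a genuinely different route from the one the paper points to. The paper gives no inline argument: it defers to \cite[Corollary~9.6]{AguMah:2010}, where the statement is obtained from the general theory of bilax monoidal functors --- the Hadamard functor is shown to be a bilax monoidal functor out of a product of braided monoidal categories of species (with braidings $\beta_p$ and $\beta_q$) into species with braiding $\beta_{pq}$, and bilax functors carry bimonoids to bimonoids. Your argument is the elementary unwinding of that machinery: you reduce \eqref{e:unitr}--\eqref{e:inverser} to the fact that $(\thh\times\tk)[\emptyset]=\thh[\emptyset]\otimes\tk[\emptyset]$ is a tensor product of bialgebras, and you verify the hexagon \eqref{e:comp} directly by separating the $\thh$-factors from the $\tk$-factors via scalar-free symmetries and observing that the only scalar in the diagram obeys $(pq)^{\abs{B}\abs{C}}=p^{\abs{B}\abs{C}}\,q^{\abs{B}\abs{C}}$, which matches $\beta_{pq}$ against the conjugate of $\beta_p\otimes\beta_q$. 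That identity is precisely the arithmetic core of the bilax structure in the cited reference, so the two arguments agree at bottom. What the abstract route buys is that the coherence bookkeeping you flag as the ``main obstacle'' is handled once and for all by naturality of the symmetry in $\Veck$, and the statement generalizes beyond species; what your route buys is a self-contained verification requiring no functor-theoretic apparatus. One small point worth making explicit in your write-up: the proposition also presupposes that $\thh\times\tk$ is a monoid and a comonoid, but this is established in Section~\ref{ss:hadamard}, so your reduction to the compatibility axioms alone is legitimate.
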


In particular, if $\thh$ and $\tk$ are bimonoids ($p=q=1$), then so is $\thh\times\tk$.

A proof of Proposition~\ref{p:hadamard} is given in~\cite[Corollary~9.6]{AguMah:2010}.
The analogous statement for Hopf monoids holds as well, 
in view of item (ii) in Proposition~\ref{p:empty} and 
the fact that $(\thh\times\tk)[\emptyset]$ is the tensor
product of the Hopf algebras $\thh[\emptyset]$ and $\tk[\emptyset]$. 
However, there is no simple formula expressing the antipode of $\thh\times\tk$ 
in terms of those of $\thh$ and $\tk$.

\subsection{Internal Hom}\label{ss:iH}

Given species $\tp$ and $\tq$, let $\iH(\tp,\tq)$ be the
species defined by
\[
\iH(\tp,\tq)[I] := \Hom_{\Kb}(\tp[I],\tq[I]).
\]
The latter is the space of all linear maps from the space $\tp[I]$
to the space $\tq[I]$. If $\sigma:I\to J$ is a bijection and $f\in\iH(\tp,\tq)[I]$, 
then 
\[
\iH(\tp,\tq)[\sigma](f)\in\iH(\tp,\tq)[J]
\]
is defined as the composition
\begin{equation}\label{e:iHact}
\tp[J] \map{\tp[\sigma^{-1}]} \tp[I] \map{f} \tq[I] \map{\tq[\sigma]} \tq[J].
\end{equation}

Note that there is a canonical isomorphism
\[
\tq \cong \iH(\wE,\tq).
\]

For species $\tp$, $\tq$ and $\tr$, there is a natural isomorphism
\begin{equation}\label{e:iH}
\Hom_{\Ssk}(\tp\times\tq,\tr) \cong \Hom_{\Ssk}\bigl(\tp,\iH(\tq,\tr)\bigr).
\end{equation}
This says that the functor $\iH$ is the \emph{internal Hom} in the 
symmetric monoidal category $(\Ssk,\times)$ of species under Hadamard product.
There is a canonical morphism
\begin{equation}\label{e:iHHad}
\iH(\tp_1,\tq_1)\times\iH(\tp_2,\tq_2) \to \iH(\tp_1\times\tp_2,\tq_1\times\tq_2);
\end{equation}
it is an isomorphism if $\tp_1$ and $\tp_2$ are finite-dimensional.

We let
\[
\iE(\tp) := \iH(\tp,\tp).
\]

Let $\tc$ be a comonoid and $\ta$ a monoid. 
Then $\iH(\tc,\ta)$ is a monoid as follows. 
Given $f\in\iH(\tc,\ta)[S]$ and $g\in\iH(\tc,\ta)[T]$, their product 
\[
\mu_{S,T}(f\otimes g)\in\iH(\tc,\ta)[I]
\] 
is the composition
\begin{equation}\label{e:iHprod}
\tc[I] \map{\Delta_{S,T}} \tc[S]\otimes\tc[T]
\map{f\otimes g} \ta[S]\otimes\ta[T] \map{\mu_{S,T}} \ta[I].
\end{equation}
The unit map $\iota_\emptyset:\Kb\to\iH(\tc,\ta)[\emptyset]$ sends $1\in \Kb$
to the composition 
\begin{equation*}
\tc[\emptyset] \map{\epsilon_\emptyset} \Kb \map{\iota_\emptyset} \tc[\emptyset].
\end{equation*}
Associativity and unitality for $\iH(\tc,\ta)$ follow from
(co)associativity and (co)unitality of $\ta$ (and $\tc$).

There is a connection between~\eqref{e:iHprod} and the convolution 
product~\eqref{e:convolution}; this is explained in Section~\ref{ss:seriH}.

Given $f\in\iH(\ta,\tc)[I]$, let $\Tilde{\Delta}(f)$ be the composition
\begin{equation}\label{e:iHcoprod}
\ta[S]\otimes\ta[T] \map{\mu_{S,T}} \ta[I] \map{f} \tc[I] \map{\Delta_{S,T}} 
\tc[S]\otimes\tc[T].
\end{equation}
Assume now that $\ta$ is finite-dimensional. 
We then have a canonical isomorphism
\[
\Hom_{\Kb}(\ta[S],\tc[S])\otimes \Hom_{\Kb}(\ta[T],\tc[T]) \map{\cong}
\Hom_{\Kb}(\ta[S]\otimes\ta[T],\tc[S]\otimes\tc[T]).
\]
Let 
\[
\Delta_{S,T}(f)\in\iH(\ta,\tc)[S]\otimes\iH(\ta,\tc)[T]
\] 
be the preimage of $\Tilde{\Delta}(f)$ under this isomorphism. 
Also, if $I=\emptyset$, let 
$\epsilon_\emptyset(f)\in\Kb$ be the preimage of the composition
\[
\Kb \map{\iota_\emptyset} \ta[\emptyset] \map{f} \tc[\emptyset] \map{\epsilon_\emptyset} \Kb
\]
under the canonical isomorphism
\[
\Kb \map{\cong} \Hom_{\Kb}(\Kb,\Kb).
\]
With these definitions, $\iH(\ta,\tc)$ is a comonoid.

Now let $\thh$ be a $p$-bimonoid and $\tk$ a $q$-bimonoid. 
Suppose $\thh$ is finite-dimensional. 
Combining the preceding constructions 
yields a $pq$-bimonoid structure on $\iH(\thh,\tk)$. 
In particular, for any $p$-bimonoid $\thh$, we obtain a $p^2$-bimonoid
structure on $\iE(\thh)$.

The analogous statements for Hopf monoids hold.

\subsection{Duality}\label{ss:dual}

The \emph{dual} of a species $\tp$ is the species $\tp^*$ defined by
\[
\tp^*[I] := \tp[I]^*.
\]
Equivalently, 
\[
\tp^*=\iH(\tp,\wE).
\]
Since $\wE$ is a Hopf monoid, the considerations of Section~\ref{ss:iH} apply.
Thus, the dual of a comonoid $\tc$ is a monoid with product
\[
\tc^*[S]\otimes\tc^*[T] = \tc[S]^*\otimes\tc[T]^* \to \bigl(\tc[S]\otimes\tc[T]\bigr)^* \map{(\Delta_{S,T})^*} \tc[I]^* = \tc^*[I],
\]
where the first arrow is canonical.
The dual of a finite-dimensional monoid ($q$-bimonoid, $q$-Hopf monoid)
is a comonoid ($q$-bimonoid, $q$-Hopf monoid).

A $q$-bimonoid (or $q$-Hopf monoid) $\thh$ is called \emph{self-dual}
if $\thh\cong\thh^*$ as $q$-bimonoids. In general, such an isomorphism is not unique.

 There are canonical morphisms of species
\[
\tp^* \times \tq^* \to (\tp\times\tq)^* 
\qqand
\tp^*\times\tq \to \iH(\tp,\tq)
\]
which are isomorphisms if either $\tp$ or $\tq$ is finite-dimensional. 
These are special cases of~\eqref{e:iHHad}.
These maps preserve the (co)monoid structures discussed in
Sections~\ref{ss:hadamard} and~\ref{ss:iH} (when present).
In particular, if $\thh$ is a $p$-bimonoid and $\tk$ a $q$-bimonoid,
the map
\[
\thh^*\times\tk \to \iH(\thh,\tk)
\]
is a morphism of $pq$-bimonoids. 
If $\thh$ is finite-dimensional, the isomorphism
\[
\thh^*\times\thh \to \iE(\thh)
\]
implies that the $p^2$-bimonoid $\iE(\thh)$ is self-dual.

\section{Set species and set-theoretic Hopf monoids}\label{s:set}

Many naturally-occurring Hopf monoids have a canonical basis indexed by combinatorial objects. 
Let $\thh$ be such a Hopf monoid and
suppose that the product and coproduct maps of $\thh$
preserve the canonical basis.
In this situation, the structure of $\thh$ can be described in a purely set-theoretic manner,
leading to the notion of a set-theoretic Hopf monoid.
We discuss these objects and
explain how they relate to Hopf monoids via linearization.

\subsection{Set species}\label{ss:set}

Let $\Set$ denote the category whose objects are arbitrary sets
and whose morphisms are arbitrary functions.
A \emph{set species} is a functor
\[
\Fset \longrightarrow \Set,
\]
where the category $\Fset$ is as in Section~\ref{ss:species}.
A morphism between set species is a natural transformation.
Let $\Ss$ denote the category of set species.

A set species $\rP$ is \emph{finite} 
if the set $\rP[I]$ is finite for each finite set $I$.
It is \emph{positive} if $\rP[\emptyset]=\emptyset$.

The Cauchy product of set species $\rP$ and $\rQ$ 
is the set species $\rP \bdot \rQ$ defined by
\begin{align} \label{e:cau-set} 
(\rP \bdot \rQ)[I] := \coprod_{I = S \sqcup T} \rP[S] \times \rQ[T],
\end{align}
 where $\times$ denotes the cartesian product of sets.
The Cauchy product turns $\Ss$ into a monoidal category. 
The unit object is the set species $1$
given by
\begin{align*}
1[I] &:= \begin{cases}
\{\emptyset\} & \text{if $I$ is empty,}\\
\emptyset & \text{otherwise.}
\end{cases}
\end{align*}
The set $\{\emptyset\}$ is a singleton. 

The natural transformation $\rP \bdot \rQ \to \rQ \bdot \rP$
obtained by interchanging the factors in the cartesian product is a symmetry.

\subsection{Set-theoretic monoids and comonoids}\label{ss:set-mon-com}

Monoids in $(\Ss,\bdot)$ can be described in terms similar to those in
Section~\ref{ss:monoids}: the structure involves a collection of maps
\[
\mu_{S,T} : \rP[S]\times\rP[T] \to \rP[I],
\]
one for each decomposition $I=S\sqcup T$, and a map 
$\iota_\emptyset:\{\emptyset\}\to \rP[\emptyset]$, 
subject to axioms analogous to~\eqref{e:assoc} and~\eqref{e:unit}. 
We refer to these objects as \emph{set-theoretic monoids}.

Given $x\in \rP[S]$
and $y\in \rP[T]$, let
\begin{equation}\label{e:set-prod}
x\cdot y\in \rP[I]
\end{equation}
denote the image of $(x,y)$ under $\mu_{S,T}$.
Also, let $e\in \rP[\emptyset]$ denote the image of $\emptyset$ 
under $\iota_\emptyset$.
The axioms for a set-theoretic monoid $\rP$
then acquire the familiar form
\begin{equation}\label{e:set-prod-asso}
x\cdot(y\cdot z)=(x\cdot y)\cdot z
\end{equation}
for all decompositions $I=R\sqcup S\sqcup T$ and $x\in \rP[R]$, $y\in \rP[S]$,
$z\in \rP[T]$, and
\begin{equation}\label{e:set-prod-unit}
x\cdot e= x =e\cdot x
\end{equation}
for all $x\in \rP[I]$. In particular, $\rP[\emptyset]$ is an ordinary monoid.

A set-theoretic monoid $\rP$ is commutative if
\[
x\cdot y = y\cdot x
\]
for all $I=S\sqcup T$, $x\in\rP[S]$, and $y\in\rP[T]$.

The situation for comonoids is different. 
The existence of the counit forces a comonoid 
$\rQ$ in $(\Ss,\bdot)$ to be concentrated on the empty set. Indeed, a morphism 
$\epsilon:\rQ\to 1$ entails maps $\epsilon_I:\rQ[I]\to 1[I]$, and hence we must have
$\rQ[I]=\emptyset$ for all nonempty $I$.
 
There is, nevertheless, a meaningful notion of \emph{set-theoretic comonoid}.
It consists, by definition, of a set species $\rQ$ together with a collection of maps
\[
\Delta_{S,T}: \rQ[I]\to \rQ[S]\times \rQ[T],
\]
one for each $I=S\sqcup T$, subject to the coassociative and counit axioms.
The former states that for each decomposition $I=R\sqcup S\sqcup T$, the diagram
\begin{gather}\label{e:set-coassoc}
\begin{gathered}
\xymatrix@R+1pc@C+25pt{
\rQ[R]\times\rQ[S]\times\rQ[T]\
 & \rQ[R]\times\rQ[S\sqcup T] \ar[l]_-{\id\times\Delta_{R,S}}
\\
\rQ[R\sqcup S]\times \rQ[T] \ar[u]^{\Delta_{R,S}\times\id} 
& \rQ[I] \ar[l]^-{\Delta_{R\sqcup S,T}} \ar[u]_{\Delta_{R,S\sqcup T}}
}
\end{gathered}
\end{gather}
commutes. The latter states that for each finite set $I$, the diagrams
\begin{align}\label{e:set-counit}
&
\begin{gathered}
\xymatrix@=3pc{
\rQ[I]\ar@{=}[rd] \ar[r]^-{\Delta_{\emptyset,I}} & \rQ[\emptyset]\times\rQ[I] 
\ar[d]^{\epsilon_\emptyset\times\id_I}\\
& \{\emptyset\}\times \rQ[I] 
}
\end{gathered}
& &
\begin{gathered}
\xymatrix@=3pc{
\rQ[I]\times \rQ[\emptyset] \ar[d]_{\id_I\times\epsilon_\emptyset}
& \rQ[I]\ar@{=}[ld] \ar[l]_-{\Delta_{I,\emptyset}}\\
\rQ[I] \times \{\emptyset\} &
}
\end{gathered}
\end{align}
commute, where $\epsilon_\emptyset$ denotes the unique map to the singleton 
$\{\emptyset\}$.

A set-theoretic comonoid $\rQ$ is cocommutative if for each $I=S\sqcup T$,
the diagram
\[
\xymatrix@C-15pt{
\rQ[S]\times\rQ[T] \ar[rr]^-{\cong} & & \rQ[T]\times\rQ[S]\\
& \rQ[I] \ar[ul]^{\Delta_{S,T}} \ar[ur]_{\Delta_{T,S}} &
}
\]
commutes.

Given $x\in \rQ[I]$, let
\begin{equation}\label{e:rest-cont}
(x|_S, x/_S)
\end{equation}
denote the image of $x$ under $\Delta_{S,T}$. Thus, $x|_S\in\rQ[S]$ while
$x/_S\in\rQ[T]$. We think intuitively of $x\mapsto x|_S$ as restricting the structure $x$ from $I$
to $S$, and of $x\mapsto x/_S$ as contracting or moding out $S$ from $x$ (so that
the result is a structure on $T$).

The axioms for a set-theoretic comonoid may then be reformulated as follows.
Coassociativity states that
\begin{equation}\label{e:rest-cont-asso}
(x|_{R\sqcup S})|_R=x|_R,
\quad
(x|_{R\sqcup S})/_R=(x/_R)|_S,
\quad
x/_{R\sqcup S}=(x/_R)/_S,
\end{equation}
for any decomposition $I=R\sqcup S\sqcup T$ and $x\in \rQ[I]$.
Counitality states that
\begin{equation}\label{e:rest-cont-counit}
x|_I=x=x/_{\emptyset}
\end{equation}
for any $x\in \rQ[I]$.

In particular, it follows from~\eqref{e:rest-cont-counit} that 
for $x\in \rQ[\emptyset]$ we have $\Delta_{\emptyset,\emptyset}(x)=(x,x)$.

A set-theoretic comonoid $\rQ$ is cocommutative if and only if
\[
x|_S = x/_T
\]
for every $I=S\sqcup T$ and $x\in\rQ[I]$.

\begin{proposition}\label{p:presheaf}
There is an equivalence between the category of cocommutative 
set-theoretic comonoids and
the category of presheafs on the category of finite sets and injections.
\end{proposition}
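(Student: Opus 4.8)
\textit{Proof proposal.} The plan is to show that, once cocommutativity is imposed, a set-theoretic comonoid is nothing more than a set species equipped with a coherent family of \emph{restriction} maps, and that this is exactly the data of a presheaf on finite sets and injections.

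First I would use cocommutativity to eliminate the contraction maps. Since $\rQ$ is cocommutative, $x|_S = x/_T$ for every $I=S\sqcup T$, so $x/_S$ is determined by restriction to the complementary set. Substituting this into the coassociativity relations \eqref{e:rest-cont-asso}, all three collapse to the single statement that restriction is transitive, namely $(x|_{R\sqcup S})|_R = x|_R$, while counitality \eqref{e:rest-cont-counit} becomes $x|_I = x$. Thus the whole structure is encoded by a set species $\rQ$ together with maps $\rho^I_S\colon \rQ[I]\to\rQ[S]$, $x\mapsto x|_S$, one for each subset $S\subseteq I$, natural with respect to bijections (by the comonoid naturality axiom), and satisfying $\rho^I_I=\id$ and $\rho^S_R\circ\rho^I_S=\rho^I_R$ whenever $R\subseteq S\subseteq I$.

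Next I would identify this data with a presheaf. Let $P$ denote the category of finite sets and injections. Every injection $f\colon S\into I$ factors as the bijection $\bar f\colon S\to f(S)$ induced by $f$ followed by the subset inclusion $f(S)\into I$. Given a cocommutative set-theoretic comonoid $\rQ$, I define a contravariant functor on injections by $P_{\rQ}[I]:=\rQ[I]$ on objects and $P_{\rQ}(f):=\rQ[\bar f^{-1}]\circ\rho^I_{f(S)}$ on morphisms, i.e.\ restrict to $f(S)$ and transport back along $\bar f$. Conversely, from a presheaf $P$ I would build a comonoid by setting $\rQ[I]:=P(I)$ with $\rQ[\sigma]:=P(\sigma^{-1})$ for a bijection $\sigma$ (inverting converts the contravariant bijection action into the covariant species action), taking $\rho^I_S:=P(\iota_S)$ for the inclusion $\iota_S\colon S\into I$, and defining $\Delta_{S,T}(x):=(\rho^I_S(x),\rho^I_T(x))$, with the contraction $x/_S$ recovered as $\rho^I_T(x)$ and the counit being the unique map $\rQ[\emptyset]\to\{\emptyset\}$.

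Finally I would check that these two assignments are mutually inverse and compatible with morphisms. A morphism of cocommutative comonoids is a natural transformation commuting with every $\Delta_{S,T}$, equivalently (after eliminating contraction) with every restriction $\rho^I_S$; since injections are generated by bijections and inclusions, this is precisely a natural transformation of presheaves, so the correspondence is a bijection on hom-sets. The main obstacle is verifying that $P_{\rQ}$ is genuinely functorial on \emph{composites}: for $S\xrightarrow{f}I\xrightarrow{g}J$ one must establish $P_{\rQ}(g\circ f)=P_{\rQ}(f)\circ P_{\rQ}(g)$. After factoring $f$ and $g$ each into a bijection and an inclusion, this unwinds to the transitivity $\rho^S_R\circ\rho^I_S=\rho^I_R$ of restriction together with the naturality of $\rho$ under bijections; the only real care needed is the bookkeeping of the nested images $f(S)\subseteq I$ and $g(f(S))\subseteq J$ and the compatibility of $\bar f,\bar g$ with the transported restrictions. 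Granting this, the coassociativity and counit axioms for the reconstructed $\Delta$, and the verification that the two round trips recover the original restriction data (and, via cocommutativity, the contraction data), are then routine.
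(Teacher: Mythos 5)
Your proposal is correct and follows essentially the same route the paper indicates: cocommutativity lets one discard the contraction maps, and the presheaf structure is exactly the restriction maps $x\mapsto x|_U$ together with the species action on bijections, with every injection factored as a bijection followed by a subset inclusion. The functoriality check you flag (transitivity of restriction plus naturality under bijections) is precisely the content of the coassociativity and naturality axioms, so your verification matches the construction the paper cites to~\cite[Section~8.7.8]{AguMah:2010} and Schmitt~\cite[Section~3]{Sch:1993}.
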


A \emph{presheaf} is a contravariant functor to $\Set$. A set-theoretic comonoid $\rQ$
becomes a presheaf by means of the restrictions
\[
\rQ[V] \to \rQ[U], \quad x \mapsto x|_U
\]
for $U\subseteq V$ (together with the action of $\rQ$ on bijections).
Proposition~\ref{p:presheaf} and related results are given in~\cite[Section~8.7.8]{AguMah:2010}. It originates in work of Schmitt~\cite[Section~3]{Sch:1993}.

\subsection{Set-theoretic bimonoids and Hopf monoids}\label{ss:set-bimon}

A \emph{set-theoretic bimonoid} $\rH$ is, by definition, a set-theoretic monoid
and comonoid such that the diagram
\begin{equation}\label{e:set-comp}
\begin{gathered}
\xymatrix@R+2pc@C-5pt{
\rH[A] \times \rH[B] \times \rH[C] \times \rH[D] \ar[rr]^{\cong} & &
\rH[A] \times \rH[C] \times \rH[B] \times \rH[D] \ar[d]^{\mu_{A,C}
\times \mu_{B,D}}\\
\rH[S_1] \times \rH[S_2] \ar[r]_-{\mu_{S_1,S_2}}\ar[u]^{\Delta_{A,B} \times
\Delta_{C,D}} & \rH[I] \ar[r]_-{\Delta_{T_1,T_2}} & \rH[T_1] \times
\rH[T_2]
}
\end{gathered}
\end{equation}
commutes for every choice of sets as in~\eqref{e:4sets}. The top map simply
interchanges the two middle terms. The condition may be reformulated as follows:
\begin{equation}\label{e:rest-cont-prod}
x|_A\cdot y|_C= (x\cdot y)|_{T_1}
\qand
x/_A\cdot y/_C = (x\cdot y)/_{T_1}
\end{equation}
for all $x\in \rH[S_1]$ and $y\in \rH[S_2]$. Note that the set-theoretic analogues 
of~\eqref{e:unitr} and~\eqref{e:inverser} hold automatically.

A \emph{set-theoretic Hopf monoid} $\rH$ is a set-theoretic bimonoid such that the
monoid $\rH[\emptyset]$ is a group. Its antipode is the map
\begin{equation}\label{e:set-ant}
\apode_\emptyset: \rH[\emptyset]\to\rH[\emptyset], \quad x \mapsto x^{-1}.
\end{equation}

\subsection{The Hadamard product}\label{ss:set-hadamard}

The Hadamard product of two set species $\rP$ and $\rQ$ is the set species
$\rP\times\rQ$ defined by
\[
(\rP\times\rQ)[I]=\rP[I]\times\rQ[I],
\]
where on the right-hand side $\times$ denotes the cartesian product of sets. 

Set-theoretic monoids are preserved under Hadamard products.
The same is true of set-theoretic comonoids, bimonoids and Hopf monoids.


\subsection{Linearization}\label{ss:lin}

Given a set $X$, let 
\[
\Kb X \qand \Kb^X
\]
denote the vector space with basis $X$, and the vector space of scalar functions on $X$,
respectively. We have $\Kb^X = (\Kb X)^*$.

\begin{convention}\label{con:bases}
We use 
\[
\{\BH_x \mid x\in X\} \qand \{\BM_x \mid x\in X\}
\]
to denote, respectively, the canonical basis of $\Kb X$ and the basis of $\Kb^X$ of characteristic functions ($\BM_x(y) := \delta(x,y)$). 
Thus, $\{\BH\}$ and $\{\BM\}$ are dual bases.
\end{convention}

The \emph{linearization functors}
\[
\Set \longrightarrow \Veck, \quad X \mapsto \Kb X 
\qand X\mapsto \Kb^X
\]
are covariant and contravariant, respectively. 
Composing a set species $\rP$ with the linearization functors we obtain
vector species $\Kb\rP$ and $\Kb^\rP=(\Kb\rP)^*$.

Linearization transforms cartesian products into tensor products.
Therefore, if $\rP$ is a set-theoretic monoid, then the species $\Kb\rP$ is a monoid
whose product is the linear map 
\[
\mu_{S,T}: \Kb\rP[S]\otimes \Kb\rP[T] \to \Kb\rP[I]
\quad \text{such that} \quad
\mu_{S,T}(\BH_x\otimes\BH_y) = \BH_{x\cdot y}
\]
for every $x\in\rP[S]$ and $y\in\rP[T]$.
If in addition $\rP$ is finite, then the species $\Kb^\rP$ is a comonoid
whose coproduct is the linear map 
\[
\Delta_{S,T}: \Kb^\rP[I] \to \Kb^\rP[S]\otimes \Kb^\rP[T] 
\quad \text{such that} \quad
\Delta_{S,T}(\BM_z) = \sum_{\substack{x\in\rP[S],\, y\in\rP[T]\\ x\cdot y =z}} \BM_{x} \otimes \BM_{y}
\]
for every $z\in\rP[I]$.
If $\rQ$ is a set-theoretic comonoid, then $\Kb\rQ$ is a comonoid
whose coproduct is the linear map 
\[
\Delta_{S,T}: \Kb\rQ[I] \to \Kb\rQ[S]\otimes \Kb\rQ[T] 
\quad \text{such that} \quad
\Delta_{S,T}(\BH_z) = \BH_{z|_S} \otimes \BH_{z/_S},
\]
and $\Kb^\rQ$ is a monoid whose product is the linear map
\[
\mu_{S,T}: \Kb^\rQ[S]\otimes \Kb^\rQ[T] \to \Kb^\rQ[I]
\quad \text{such that} \quad
\mu_{S,T}(\BM_x\otimes\BM_y) = \sum_{\substack{z\in \rQ[I]\\ z|_S=x,\, z/_S=y}}
 \BM_{z}.
\]

Similar remarks apply to bimonoids and Hopf monoids. 
If $\rH$ is a Hopf monoid in set species, then $\Kb\rH[\emptyset]$ is a group algebra,
and the antipode of $\Kb\rH$ exists by item (ii) in Proposition~\ref{p:empty}.
(Co)commutativity and Hadamard products are also preserved under linearization.

\section{Connected Hopf monoids}\label{s:connected}

The study of a Hopf monoid $\thh$ necessarily involves
that of the Hopf algebra $\thh[\emptyset]$.
A special yet nontrivial class of Hopf monoids consists of those 
for which this Hopf algebra is isomorphic to $\Kb$, the simplest Hopf algebra.
These are the connected Hopf monoids.

\subsection{Connected species and Hopf monoids}\label{ss:conn-hopf}

A species $\tp$ is \emph{connected} if 
\[
\dim_{\Kb} \tp[\emptyset]=1.
\]
A set species $\rP$ is connected if $\rP[\emptyset]$ is a singleton. 
In this case, the species $\Kb\rP$ is connected.

Connectedess is preserved under duality and Hadamard products.

In a connected monoid, the map $\iota_\emptyset$ is an isomorphism $\Kb\cong\ta[\emptyset]$,
and by~\eqref{e:unit} the resulting maps
\[
\ta[I] \cong \ta[I]\otimes\ta[\emptyset] \map{\mu_{I,\emptyset}} \ta[I]
\qand
\ta[I] \cong \ta[\emptyset]\otimes\ta[I] \map{\mu_{\emptyset,I}} \ta[I]
\]
are identities. Thus, to provide a monoid structure on a connected species
it suffices to specify the maps $\mu_{S,T}$ when $S$ and $T$ are nonempty.

A dual remark applies to connected comonoids $\tc$. It can be expressed as follows:
\begin{equation}\label{e:conn-unit}
\Delta_{I,\emptyset}(x) = x\otimes 1 
\qand
\Delta_{\emptyset,I}(x) = 1\otimes x,
\end{equation}
where $1$ denotes the element of $\tc[\emptyset]$ such that $\epsilon_\emptyset(1)=1$.

\begin{proposition}\label{p:conn-hopf}
A connected $q$-bimonoid is necessarily a $q$-Hopf monoid.
\end{proposition}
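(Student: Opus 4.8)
The plan is to show that the identity endomorphism of $\thh$ is invertible in the convolution algebra $\End_{\Ssk}(\thh)$ of Section~\ref{ss:convolution}; its convolution inverse is then, by definition, an antipode, so that $\thh$ is a $q$-Hopf monoid. Recall from~\eqref{e:unit-conv} that the unit of this algebra is $u=\iota\epsilon$. The key idea is that connectedness makes the difference $N:=\id-u$ locally nilpotent under the convolution product $\conv$, after which the antipode is produced as a geometric series in $N$.

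First I would record that $N$ vanishes on the empty set. Since $\dim_{\Kb}\thh[\emptyset]=1$, the structure maps $\iota_\emptyset$ and $\epsilon_\emptyset$ are mutually inverse isomorphisms by~\eqref{e:inverser}, whence $u_\emptyset=\iota_\emptyset\epsilon_\emptyset=\id_{\thh[\emptyset]}$ and therefore $N_\emptyset=0$. Next I would unwind the $n$-fold convolution power of $N$ in terms of the higher (co)product maps. Iterating~\eqref{e:convolution} gives, on a finite set $I$,
\[
(N^{\conv n})_I = \sum_{I=S_1\sqcup\cdots\sqcup S_n} \mu_{S_1,\ldots,S_n}\,(N_{S_1}\otimes\cdots\otimes N_{S_n})\,\Delta_{S_1,\ldots,S_n},
\]
the sum ranging over all decompositions of $I$ into $n$ (possibly empty) blocks. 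Because $N_\emptyset=0$, every summand that contains an empty block vanishes, so only decompositions into nonempty blocks contribute. As $I$ admits no decomposition into more than $\abs{I}$ nonempty blocks, we obtain $(N^{\conv n})_I=0$ for every $n>\abs{I}$, so $N$ is locally nilpotent.

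Local nilpotency then lets me define
\[
\apode := \sum_{n\geq 0} (-1)^n\, N^{\conv n},
\]
a sum that is finite on each space $\thh[I]$ and hence a well-defined morphism of species (a finite sum of morphisms, since each $N^{\conv n}$ is natural). A telescoping computation, using $N^{\conv 0}=u$ and the fact that $\apode$ commutes with $N$, shows $\apode\conv(u+N)=u=(u+N)\conv\apode$; that is, $\apode$ is a two-sided convolution inverse of $\id=u+N$. Writing these two identities out componentwise recovers exactly the antipode equations~\eqref{e:apode}.

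The one genuinely substantive step I expect is the passage to higher (co)products: justifying that the $n$-fold convolution power is governed by decompositions of $I$ into $n$ blocks, with the associativity and coassociativity packaged into $\mu_{S_1,\ldots,S_n}$ and $\Delta_{S_1,\ldots,S_n}$ guaranteeing that the expression is unambiguous. Granting this, the remaining ingredients—the vanishing $N_\emptyset=0$, the counting bound $n\le\abs{I}$, and the telescoping—are all routine. As an alternative short argument, I note that connectedness forces $\thh[\emptyset]\cong\Kb$, which is trivially a Hopf algebra, so the claim also follows immediately from item~(ii) of Proposition~\ref{p:empty}; indeed the construction above is precisely the connected specialization of~\eqref{e:takeuchi-general}, the Milnor--Moore antipode formula.
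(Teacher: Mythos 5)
Your proof is correct, but your main argument takes a different route from the paper's. The paper's entire proof is a one-line citation: since connectedness gives $\thh[\emptyset]\cong\Kb$, which is trivially a Hopf algebra, item (ii) of Proposition~\ref{p:empty} applies and produces the antipode via the general formula~\eqref{e:takeuchi-general}. That is precisely the ``alternative short argument'' you mention in your final sentence, so you have in effect rediscovered the paper's proof as a remark. Your primary argument --- local nilpotency of $N=\id-u$ under convolution, followed by the geometric series $\apode=\sum_{n\geq 0}(-1)^n N^{\conv n}$ --- is instead the argument the paper defers to Section~\ref{ss:takeuchi}: it is exactly the computation $\bigl(\iota\epsilon-(\iota\epsilon-\id)\bigr)^{-1}=\sum_{k\geq 0}(\iota\epsilon-\id)^{\conv k}$ mentioned there as one way to prove Takeuchi's formula~\eqref{e:antipode-r}, of which your series is the connected specialization. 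The trade-off: your construction is self-contained (it does not lean on Proposition~\ref{p:empty}, whose proof the paper outsources to the literature) and it yields the explicit antipode formula as a by-product; the paper's citation is shorter and places the connected case inside the more general criterion that a $q$-bimonoid is Hopf precisely when its $\emptyset$-component is a Hopf algebra. All the individual steps you flag --- $N_\emptyset=0$ from~\eqref{e:inverser} plus one-dimensionality, the expression of $N^{\conv n}$ via higher (co)products over decompositions into $n$ blocks, the vanishing for $n>\abs{I}$, and the telescoping identity $\apode\conv(u+N)=u=(u+N)\conv\apode$ --- are sound, and your componentwise translation does recover~\eqref{e:apode}.
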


This follows from item (ii) in Proposition~\ref{p:empty}. 
The special consideration of formulas~\eqref{e:apode} and~\eqref{e:takeuchi-general} in the
connected situation is the subject of Sections~\ref{ss:milnor-moore} and~\ref{ss:takeuchi} below.

\begin{proposition}\label{p:hopf-split}
Let $\thh$ be a connected $q$-bimonoid. 
Let $I=S\sqcup T$, $x\in\thh[S]$ and $y\in\thh[T]$. Then
\begin{equation}\label{e:hopf-split}
\Delta_{S,T}\mu_{S,T}(x\otimes y) = x\otimes y
\qand
\Delta_{T,S}\mu_{S,T}(x\otimes y) = q^{\abs{S} \abs{T}} y\otimes x.
\end{equation}
\end{proposition}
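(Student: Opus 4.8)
The plan is to derive both identities directly from the compatibility axiom \eqref{e:comp}, choosing the two decompositions appropriately and then collapsing the empty factors via connectedness \eqref{e:conn-unit}.

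For the first identity, I would apply \eqref{e:comp} to the product decomposition $S_1 = S$, $S_2 = T$ together with the coproduct decomposition $T_1 = S$, $T_2 = T$. The four pairwise intersections are then $A = S$, $B = C = \emptyset$, and $D = T$, so the right-hand side of \eqref{e:comp} reads $(\mu_{S,\emptyset}\otimes\mu_{\emptyset,T})(\id\otimes\beta_q\otimes\id)(\Delta_{S,\emptyset}\otimes\Delta_{\emptyset,T})$. Feeding in $x\otimes y$ and using \eqref{e:conn-unit}, the left vertical map sends it to $(x\otimes 1)\otimes(1\otimes y)$; the braiding $\beta_q$ then acts on the two copies of $\thh[\emptyset]$ and contributes the trivial scalar $q^{\abs{\emptyset}\abs{\emptyset}} = 1$; finally $\mu_{S,\emptyset}$ and $\mu_{\emptyset,T}$ act as identities on a connected monoid, so the output is $x\otimes y$, as required.

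For the second identity, I would keep the same product decomposition but take the opposite coproduct decomposition $T_1 = T$, $T_2 = S$. Now the intersections are $A = D = \emptyset$, $B = S$, $C = T$, and \eqref{e:comp} yields $\Delta_{T,S}\mu_{S,T} = (\mu_{\emptyset,T}\otimes\mu_{S,\emptyset})(\id\otimes\beta_q\otimes\id)(\Delta_{\emptyset,S}\otimes\Delta_{T,\emptyset})$. Tracing $x\otimes y$ as before, connectedness gives $(1\otimes x)\otimes(y\otimes 1)$; this time the braiding acts on $\thh[S]\otimes\thh[T]$ and so genuinely transposes $x$ and $y$ while producing the scalar $q^{\abs{S}\abs{T}}$, and the two unit products then absorb the empty factors, leaving $q^{\abs{S}\abs{T}}\,y\otimes x$.

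The argument is essentially bookkeeping, and the only point demanding care is identifying which tensor factors the middle braiding acts on in each case. This is precisely where the two identities diverge: in the first case the braiding is applied to $\thh[B]\otimes\thh[C] = \thh[\emptyset]\otimes\thh[\emptyset]$, so $\abs{B}\abs{C} = 0$ and no scalar appears, whereas in the second it is applied to $\thh[S]\otimes\thh[T]$, producing the factor $q^{\abs{S}\abs{T}}$ that accounts for the discrepancy between the two formulas.
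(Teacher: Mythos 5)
Your proof is correct and follows exactly the route the paper indicates: applying the compatibility axiom~\eqref{e:comp} with the choices $(T_1,T_2)=(S,T)$ and $(T_1,T_2)=(T,S)$ for the fixed product decomposition $(S_1,S_2)=(S,T)$, and collapsing the empty tensor factors via connectedness~\eqref{e:conn-unit}. The bookkeeping of the intersections $A,B,C,D$ and of the braiding scalars $q^{\abs{B}\abs{C}}$ is accurate in both cases, so nothing is missing.
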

These formulas can be deduced from the compatibility axiom~\eqref{e:comp} by making
appropriate choices of the subsets $S_i$ and $T_i$, together with~\eqref{e:conn-unit}.
These and related results are given in~\cite[Corollary~8.38]{AguMah:2010} (when $q=1$).

Proposition~\ref{p:hopf-split} fails for nonconnected bimonoids $\thh$.
For concrete examples, let $H$ be a nontrivial bialgebra and let $\thh=\tone_H$
as in Section~\ref{ss:hopfalg}, or consider the bimonoid $\tSigh$ of Section~\ref{ss:dec-bimonoid}.

\subsection{Milnor and Moore's antipode formulas}\label{ss:milnor-moore}

Suppose $\thh$ is a connected $q$-bimonoid. 
Define maps $\apode_I$ and $\apode'_I$ by induction on the finite set $I$ as follows.
Let
\[
\apode_{\emptyset} = \apode'_{\emptyset}
\]
be the identity of $\thh[\emptyset]=\Kb$, and for nonempty $I$,
\begin{align}\label{e:mm-antipode-r}
\apode_I &:= -\sum_{\substack{S\sqcup T=I\\ T\neq I}}
\mu_{S,T}(\id_S\otimes\apode_T)\Delta_{S,T},\\
\label{e:mm-antipode-l}
\apode'_I &:= -\sum_{\substack{S\sqcup T=I\\ S\neq I}}
\mu_{S,T}(\apode'_S\otimes\id_T)\Delta_{S,T}.
\end{align}

\begin{proposition}\label{p:mm-antipode}
We have that
\[
\apode=\apode'
\]
and this morphism is the antipode of $\thh$.
\end{proposition}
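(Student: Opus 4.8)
The plan is to recognize the two recursions as statements about one-sided convolution inverses, and then to invoke the uniqueness of inverses in an associative algebra. Recall from Section~\ref{ss:convolution} that $\End_{\Ssk}(\thh)$ is an algebra under the convolution product~\eqref{e:convolution}, with unit $u$ given by~\eqref{e:unit-conv}, and that by definition the antipode is the two-sided convolution inverse of $\id$.

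First I would show that the recursion~\eqref{e:mm-antipode-r} is precisely the assertion $\id\conv\apode=u$. Writing out
\[
(\id\conv\apode)_I=\sum_{S\sqcup T=I}\mu_{S,T}(\id_S\otimes\apode_T)\Delta_{S,T}
\]
and isolating the term with $S=\emptyset$ (that is, $T=I$), connectedness lets me identify $\apode_{\emptyset}$ with $\id_{\Kb}$ and, via~\eqref{e:conn-unit} together with the unit axioms, lets me evaluate that summand as $\apode_I$ itself. The remaining terms, namely those with $T\neq I$, sum to $-\apode_I$ by the very definition~\eqref{e:mm-antipode-r}; hence $(\id\conv\apode)_I=0=u_I$ for nonempty $I$, while the case $I=\emptyset$ is immediate from $\apode_{\emptyset}=\id$. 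Dually, isolating the term $T=\emptyset$ shows that the recursion~\eqref{e:mm-antipode-l} is exactly the assertion $\apode'\conv\id=u$.

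Having produced a right inverse $\apode$ and a left inverse $\apode'$ of $\id$, I would conclude by the standard argument valid in any associative unital algebra:
\[
\apode'=\apode'\conv u=\apode'\conv(\id\conv\apode)=(\apode'\conv\id)\conv\apode=u\conv\apode=\apode,
\]
using associativity of convolution. This gives $\apode=\apode'$ and exhibits the common map as a genuine two-sided inverse of $\id$, hence as the antipode. Naturality, namely that the maps $\apode_I$ assemble into a morphism of species, is inherited from the recursion, since each $\mu_{S,T}$ and each $\Delta_{S,T}$ is natural.

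I expect the only delicate point to be the bookkeeping in the boundary case: verifying that the $S=\emptyset$ summand genuinely reduces to $\apode_I$ relies on the connectedness identifications of~\eqref{e:conn-unit} and the unit axioms, rather than on any deeper input, and everything else is purely formal. I would also note that this argument reproves, in the connected setting, the existence of the antipode asserted in Proposition~\ref{p:conn-hopf}, independently of the general formula~\eqref{e:apode}.
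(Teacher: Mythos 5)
Your proposal is correct and takes essentially the same approach as the paper: the paper's entire proof is the remark that the recursions rearrange into the defining antipode equations~\eqref{e:apode}, i.e.\ that $\apode$ and $\apode'$ are one-sided convolution inverses of $\id$, whence uniqueness of inverses in the convolution algebra gives $\apode=\apode'=$ antipode. Your write-up simply fills in the details the paper leaves implicit (isolating the $S=\emptyset$, resp.\ $T=\emptyset$, summand via~\eqref{e:conn-unit} and the standard $\apode'=\apode'\conv(\id\conv\apode)=(\apode'\conv\id)\conv\apode=\apode$ argument).
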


This follows from~\eqref{e:apode}. 
There is an analogous recursive expression for the antipode of a connected
Hopf algebra, due to Milnor and Moore~\cite[Proposition~8.2]{MilMoo:1965}.

\subsection{Takeuchi's antipode formula}\label{ss:takeuchi}

Takeuchi's formula expresses the antipode in terms of the 
higher products and coproducts:

\begin{proposition}\label{p:antipode-r}
Let $\thh$ be a connected $q$-Hopf monoid with antipode $\apode$.
Then
\begin{equation}\label{e:antipode-r}
\apode_I = \sum_{F\vDash I} (-1)^{\len(F)} \mu_{F}\Delta_{F}
\end{equation}
for any nonempty finite set $I$.
\end{proposition}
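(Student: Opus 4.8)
The plan is to recognize Takeuchi's formula as a direct consequence of Milnor and Moore's recursive formulas (Proposition~\ref{p:mm-antipode}), obtained by fully unrolling the recursion. I would start from the recursion~\eqref{e:mm-antipode-r}, namely $\apode_I = -\sum_{S\sqcup T=I,\,T\neq I}\mu_{S,T}(\id_S\otimes\apode_T)\Delta_{S,T}$, and expand $\apode_T$ repeatedly using the same rule. Each application peels off one nonempty first block and introduces a sign $-1$; after iterating until the remaining piece is empty (where $\apode_\emptyset=\id$), the bookkeeping of which blocks were peeled off, in order, assembles precisely into a composition $F=(I_1,\dots,I_k)\vDash I$. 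The sign accumulated is $(-1)^{\len(F)}$, and the iterated maps compose into $\mu_F\Delta_F$ by the very definitions~\eqref{e:iterated-mu} and~\eqref{e:iterated-delta} of the higher (co)product maps together with (co)associativity.

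Concretely, I would proceed by induction on $\abs{I}$. The base case $I=\emptyset$ is immediate since the only composition is the empty one and $\apode_\emptyset=\id$. For the inductive step, substitute the inductive hypothesis
\[
\apode_T = \sum_{G\vDash T} (-1)^{\len(G)}\mu_G\Delta_G
\]
into~\eqref{e:mm-antipode-r}. A composition $F\vDash I$ with first block $S=I_1$ corresponds bijectively to a pair consisting of a nonempty $S\subsetneq I$ (or $S=I$, the one-block composition, which is the $k=1$ term handled separately) together with a composition $G=(I_2,\dots,I_k)\vDash T$ where $T=I\setminus S$. Under this correspondence $\len(F)=\len(G)+1$, which supplies the extra sign matching the leading $-$ in the recursion. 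The identity $\mu_{S,T}(\id_S\otimes\mu_G)=\mu_F$ and $(\id_S\otimes\Delta_G)\Delta_{S,T}=\Delta_F$ follow from associativity~\eqref{e:assoc} and its dual, so each summand reassembles into $(-1)^{\len(F)}\mu_F\Delta_F$. Summing over all $S$ and all $G$ reproduces exactly the sum over all $F\vDash I$.

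The main obstacle is purely organizational rather than conceptual: one must verify carefully that the map obtained by the iterated composition $\mu_{S,T}(\id_S\otimes\mu_G)\circ(\id_S\otimes\Delta_G)\Delta_{S,T}$ genuinely equals $\mu_F\Delta_F$, with no stray copies of intermediate factors and no sign errors from the connectedness normalizations. Here I would invoke the uniqueness clause in the definitions of the higher product and coproduct maps, which guarantees that any meaningful iteration collapses to the canonical $\mu_F$ and $\Delta_F$; this sidesteps an explicit and tedious diagram chase. The remaining care is simply to confirm that the $k=1$ term (corresponding to $S=I$, $F=(I)$) contributes $-\mu_I\Delta_I=-\id$ with the correct sign $(-1)^1$, which it does.
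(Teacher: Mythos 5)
Your argument is correct and coincides with one of the proofs the paper itself indicates: the paper states that the result ``may be deduced from either Proposition~\ref{p:empty} or~\ref{p:mm-antipode}'', and your induction is precisely the deduction from the Milnor--Moore recursion~\eqref{e:mm-antipode-r}, with the details (which the paper defers to its reference) filled in correctly. In particular, the two points that need care are handled properly: connectedness makes the $S=I$, $T=\emptyset$ term of the recursion equal to $-\id_I$, matching the one-block composition $F=(I)$ with sign $(-1)^1$, and the identities $\mu_{S,T}(\id_S\otimes\mu_G)=\mu_{(S)\cdot G}$ and $(\id_S\otimes\Delta_G)\Delta_{S,T}=\Delta_{(S)\cdot G}$ hold by (co)associativity and the uniqueness clause in the definition of the higher (co)product maps, so the reindexing over pairs $(S,G)$ versus compositions $F\vDash I$ with $\len(F)\geq 2$ is exactly the required bijection.
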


The sum runs over all compositions $F$ of $I$ and the
maps $\mu_{F}$ and $\Delta_{F}$
are as in~\eqref{e:iterated-mu} and~\eqref{e:iterated-delta}.
Proposition~\ref{p:antipode-r} may be deduced from either Proposition~\ref{p:empty}
or~\ref{p:mm-antipode},
or by calculating the inverse of $\id$ 
in the convolution algebra $\Hom(\thh,\thh)$ as
\[
\bigl(\iota\epsilon - (\iota\epsilon-\id)\bigr)^{-1} = \sum_{k\geq 0}(\iota\epsilon-\id)^{*k}.
\]
For more details, see~\cite[Proposition~8.13]{AguMah:2010}. 
There is an analogous formula for the antipode
of a connected Hopf algebra due to Takeuchi;
see the proof of~\cite[Lemma~14]{Tak:1971}
or~\cite[Lemma~5.2.10]{Mon:1993}.

\subsection{The antipode problem}\label{ss:ant-prob}

Cancellations frequently take place in Takeuchi's formula~\eqref{e:antipode-r}.
Understanding these cancellations is often a challenging combinatorial problem.
The antipode problem asks for an explicit, cancellation-free, formula for
the antipode of a given $q$-Hopf monoid $\thh$.
The problem may be formulated as follows:
given a linear basis of the vector space $\thh[I]$, we search for
the structure constants of $\apode_I$ on this basis.
If $\thh$ is the linearization of a set-theoretic Hopf monoid $\rH$,
then we may consider the basis $\rH[I]$ of $\thh[I]$. In this or other
cases, we may also be interested in other linear bases of $\thh[I]$, and the
corresponding structure constants.

Several instances of the antipode problem are solved in 
Sections~\ref{s:free}, ~\ref{s:freecom} and~\ref{s:examples} below.

\subsection{The primitive part}\label{ss:prim-ind}

Let $\tc$ be a connected comonoid.
The \emph{primitive part} of $\tc$ is the positive species $\Pc(\tc)$ defined by
\begin{equation}\label{e:def-prim}
\Pc(\tc)[I] := \bigcap_{\substack{I=S\sqcup T\\ S,T\neq\emptyset}}
\ker\bigl(\Delta_{S,T}:\tc[I]\to \tc[S]\otimes\tc[T]\bigr)
\end{equation}
for each nonempty finite set $I$.
An element $x\in\Pc(\tc)[I]$ is a \emph{primitive element} of $\tc[I]$.

\begin{proposition}\label{p:def-prim}
We have
\begin{equation}\label{e:gen-prim}
\Pc(\tc)[I] = \bigcap_{\substack{F\vDash I\\F\not=(I)}} \ker (\Delta_F).
\end{equation}
\end{proposition}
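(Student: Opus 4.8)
The plan is to prove the two inclusions in~\eqref{e:gen-prim} separately, with essentially all of the content lying in the reverse inclusion, where I will use coassociativity to reduce every higher coproduct to a binary one.

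First I would dispose of the inclusion $\bigcap_{F\neq(I)}\ker(\Delta_F)\subseteq\Pc(\tc)[I]$, which is immediate from the definitions. A binary decomposition $I=S\sqcup T$ with $S,T\neq\emptyset$ is the same datum as a two-block composition $(S,T)\vDash I$, and any such composition satisfies $(S,T)\neq(I)$. Hence the family of maps indexed in the intersection on the right of~\eqref{e:gen-prim} contains all the maps $\Delta_{S,T}$ appearing in the definition~\eqref{e:def-prim} of $\Pc(\tc)[I]$; intersecting over a larger family of kernels can only shrink the result, so $\bigcap_{F\neq(I)}\ker(\Delta_F)$ is contained in $\Pc(\tc)[I]$.

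For the reverse inclusion $\Pc(\tc)[I]\subseteq\bigcap_{F\neq(I)}\ker(\Delta_F)$, I would fix a primitive element $x\in\Pc(\tc)[I]$ and an arbitrary composition $F=(I_1,\ldots,I_k)\vDash I$ with $F\neq(I)$, and show $\Delta_F(x)=0$. Since $I$ is nonempty and $F\neq(I)$, the composition has $k\geq 2$ blocks, each of which is nonempty. The key step is to factor the higher coproduct $\Delta_F$ through a single binary coproduct, using the coassociativity that defines the iterated maps~\eqref{e:iterated-delta}. Writing $T:=I_2\sqcup\cdots\sqcup I_k$, one has
\[
\Delta_{I_1,I_2,\ldots,I_k} = (\id_{I_1}\otimes\Delta_{I_2,\ldots,I_k})\circ\Delta_{I_1,T}.
\]
Because $k\geq 2$, both $I_1$ and $T$ are nonempty, so $\Delta_{I_1,T}$ is one of the maps whose common kernel defines $\Pc(\tc)[I]$ in~\eqref{e:def-prim}. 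Therefore $\Delta_{I_1,T}(x)=0$, and applying the remaining factor $\id_{I_1}\otimes\Delta_{I_2,\ldots,I_k}$ gives $\Delta_F(x)=0$, as required.

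I do not expect a serious obstacle in this argument; it is a direct consequence of coassociativity. The only point demanding care is the bookkeeping that guarantees the binary split $\Delta_{I_1,T}$ has \emph{two} nonempty parts, which is precisely where the hypotheses $I\neq\emptyset$ and $F\neq(I)$ are used (forcing $k\geq 2$). Combining the two inclusions yields the equality~\eqref{e:gen-prim}.
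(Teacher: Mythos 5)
Your proof is correct and follows the same route as the paper, which simply notes that~\eqref{e:gen-prim} follows from~\eqref{e:def-prim} by coassociativity; your factorization $\Delta_F = (\id_{I_1}\otimes\Delta_{I_2,\ldots,I_k})\circ\Delta_{I_1,T}$ is precisely that coassociativity argument spelled out, together with the easy observation that binary compositions are among those indexing the intersection.
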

The intersection is over all compositions of $I$ with more than one block
and the map $\Delta_F$ as in~\eqref{e:iterated-delta}.
The result follows from~\eqref{e:def-prim} by coassociativity. 

Let $\td$ be another connected comonoid and consider the Hadamard product
$\td\times\tc$ (Section~\ref{ss:hadamard}).

\begin{proposition}\label{p:had-prim}
We have
\begin{equation}\label{e:had-prim}
\td\times\Pc(\tc) + \Pc(\td)\times\tc\subseteq \Pc(\td\times\tc).
\end{equation}
\end{proposition}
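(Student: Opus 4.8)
The plan is to write down the coproduct of the Hadamard product explicitly and observe that it factors as a tensor product of the two individual coproducts, so that the vanishing of either factor's coproduct forces the whole thing to vanish. Recall from Section~\ref{ss:hadamard} that for connected comonoids $\td$ and $\tc$ the Hadamard product $\td\times\tc$ has underlying species $(\td\times\tc)[I]=\td[I]\otimes\tc[I]$, and its coproduct
\[
\Delta_{S,T}\colon \td[I]\otimes\tc[I] \longrightarrow \bigl(\td[S]\otimes\tc[S]\bigr)\otimes\bigl(\td[T]\otimes\tc[T]\bigr)
\]
is the composite of $\Delta_{S,T}^{\td}\otimes\Delta_{S,T}^{\tc}$ with the isomorphism interchanging the two middle tensor factors. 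The essential consequence, which I would record first, is that this composite annihilates a simple tensor $x\otimes y$ as soon as \emph{one} of $\Delta_{S,T}^{\td}(x)$ or $\Delta_{S,T}^{\tc}(y)$ is zero, since the interchange isomorphism is injective and $\Delta_{S,T}^{\td}(x)\otimes\Delta_{S,T}^{\tc}(y)=0$ in that case.

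Next I would reduce to generators. As a subspecies of $\td\times\tc$, the term $\td\times\Pc(\tc)$ is $\td[I]\otimes\Pc(\tc)[I]$, spanned by elements $x\otimes p$ with $x\in\td[I]$ and $p\in\Pc(\tc)[I]$; symmetrically $\Pc(\td)\times\tc$ is spanned by the $q\otimes y$ with $q\in\Pc(\td)[I]$ and $y\in\tc[I]$. By linearity of $\Delta_{S,T}$ it suffices to show that each such generator lies in $\ker\bigl(\Delta_{S,T}\colon(\td\times\tc)[I]\to(\td\times\tc)[S]\otimes(\td\times\tc)[T]\bigr)$ for every decomposition $I=S\sqcup T$ with $S,T\neq\emptyset$, since by~\eqref{e:def-prim} the intersection of these kernels is exactly $\Pc(\td\times\tc)[I]$.

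The computation is then immediate from the first step. For a generator $x\otimes p$ with $p$ primitive, the fact that $S$ and $T$ are both nonempty gives $\Delta_{S,T}^{\tc}(p)=0$ by~\eqref{e:def-prim}, hence $\Delta_{S,T}(x\otimes p)=0$; symmetrically, for $q\otimes y$ with $q$ primitive one has $\Delta_{S,T}^{\td}(q)=0$ and therefore $\Delta_{S,T}(q\otimes y)=0$. Thus both spanning sets lie in $\bigcap_{S,T\neq\emptyset}\ker(\Delta_{S,T})=\Pc(\td\times\tc)[I]$, and since the right-hand side is a subspace, so does their sum, which is the asserted inclusion. There is no serious obstacle here: the only point demanding care is bookkeeping, namely correctly identifying $\td\times\Pc(\tc)$ and $\Pc(\td)\times\tc$ with the subspaces $\td[I]\otimes\Pc(\tc)[I]$ and $\Pc(\td)[I]\otimes\tc[I]$ and keeping track of the middle-factor interchange in the Hadamard coproduct; once the coproduct is written in the factored form above, the vanishing is a one-line consequence.
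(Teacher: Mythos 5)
Your proposal is correct and is essentially the paper's own argument: the paper proves this in one line by noting that the coproduct of $\td\times\tc$ is the tensor product of the coproducts of $\td$ and $\tc$, which is exactly the factorization you exploit. Your write-up just makes explicit the reduction to generators and the harmless role of the middle-interchange isomorphism.
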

This follows since the coproduct $\Delta_{S,T}$ of $\td\times\tc$ is the tensor
product of the coproducts of $\td$ and $\tc$.

On primitive elements, the antipode is negation:

\begin{proposition}\label{p:prim-ant}
Let $\thh$ be a connected $q$-Hopf monoid and $x$ a primitive element of $\thh[I]$. Then
\begin{equation}\label{e:prim-ant}
\apode_I(x) = -x.
\end{equation}
\end{proposition}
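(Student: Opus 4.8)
The plan is to apply the defining equation of the antipode directly to $x$ and use primitivity to collapse the resulting sum. Since $\apode$ is the convolution inverse of the identity, the first equation in~\eqref{e:apode} gives, for the nonempty set $I$,
\[
\sum_{S\sqcup T=I} \mu_{S,T}(\id_S\otimes\apode_T)\Delta_{S,T}(x) = 0.
\]
First I would split this sum according to whether $S$ and $T$ are empty or nonempty. Because $x$ is primitive, $\Delta_{S,T}(x)=0$ whenever both $S$ and $T$ are nonempty, by the defining property~\eqref{e:def-prim} of $\Pc(\thh)[I]$. Hence every such interior term vanishes, and only the two extreme decompositions $(\emptyset,I)$ and $(I,\emptyset)$ can contribute.

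Next I would evaluate these two surviving terms using connectedness. By~\eqref{e:conn-unit} we have $\Delta_{\emptyset,I}(x) = 1\otimes x$ and $\Delta_{I,\emptyset}(x) = x\otimes 1$. The term indexed by $(\emptyset,I)$ is
\[
\mu_{\emptyset,I}(\id_\emptyset\otimes\apode_I)(1\otimes x) = \apode_I(x),
\]
since $\mu_{\emptyset,I}$ acts as the identity on a connected monoid. The term indexed by $(I,\emptyset)$ is
\[
\mu_{I,\emptyset}(\id_I\otimes\apode_\emptyset)(x\otimes 1) = \mu_{I,\emptyset}(x\otimes 1) = x,
\]
using that $\thh[\emptyset]=\Kb$ is the trivial Hopf algebra, so $\apode_\emptyset=\id$ and $\apode_\emptyset(1)=1$. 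Adding the two contributions gives $\apode_I(x) + x = 0$, which is exactly~\eqref{e:prim-ant}.

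There is essentially no deep obstacle here; the only point requiring care is the correct handling of the two boundary terms, namely verifying that the empty-block products reduce to identities through the connectedness of $\thh$ and that the antipode on $\thh[\emptyset]$ fixes $1$. As an alternative route, I would note that the same conclusion follows in a single line from the Milnor--Moore recursion~\eqref{e:mm-antipode-r}: its sum ranges over $T\neq I$, forcing $S\neq\emptyset$, and primitivity then kills every term except $T=\emptyset$, leaving $\apode_I(x) = -\mu_{I,\emptyset}(x\otimes 1) = -x$.
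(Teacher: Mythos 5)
Your proof is correct and essentially the same as the paper's: the paper deduces~\eqref{e:prim-ant} from the Milnor--Moore recursion~\eqref{e:mm-antipode-r} (or from Takeuchi's formula), and your main computation from the antipode axiom~\eqref{e:apode} is just that recursion unpacked, with primitivity killing the interior terms and connectedness handling the two boundary ones. Indeed, you note the Milnor--Moore route yourself as a one-line alternative, which is precisely the paper's stated argument.
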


This follows from either of Milnor and Moore's
formulas~\eqref{e:mm-antipode-r} or~\eqref{e:mm-antipode-l},
or also from~\eqref{e:gen-prim} and Takeuchi's formula~\eqref{e:antipode-r}. 

\begin{proposition}\label{p:prim-lie}
Let $\thh$ be a connected bimonoid. Then $\Pc(\thh)$
is a Lie submonoid of $\thh$ under the commutator bracket~\eqref{e:underlying-lie}.
\end{proposition}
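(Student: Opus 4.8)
The plan is to reduce the statement to a single closure property and then verify that property using the compatibility axiom. Since every monoid becomes a Lie monoid under the commutator bracket~\eqref{e:underlying-lie}, the species $\thh$ is already a Lie monoid, so the antisymmetry relation~\eqref{e:antisym} and the Jacobi identity~\eqref{e:jacobi} hold throughout $\thh$ and are inherited by any subspecies on which the bracket restricts. Equivariance of $\Pc(\thh)$ under bijections is immediate from~\eqref{e:def-prim}. Hence to prove that $\Pc(\thh)$ is a Lie submonoid it suffices to show it is closed under the bracket: for every decomposition $I=S\sqcup T$ with $S,T\neq\emptyset$ and all $x\in\Pc(\thh)[S]$, $y\in\Pc(\thh)[T]$, the element $[x,y]_{S,T}=\mu_{S,T}(x\otimes y)-\mu_{T,S}(y\otimes x)$ lies in $\Pc(\thh)[I]$.

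The core computation is to evaluate $\Delta_{U,V}\mu_{S,T}(x\otimes y)$ for an arbitrary \emph{nontrivial} decomposition $I=U\sqcup V$ (both parts nonempty). Applying the compatibility axiom~\eqref{e:comp} with $S_1=S$, $S_2=T$, $T_1=U$, $T_2=V$, and writing $A=S\cap U$, $B=S\cap V$, $C=T\cap U$, $D=T\cap V$, this coproduct factors through $\Delta_{A,B}(x)\otimes\Delta_{C,D}(y)$. Primitivity now enters: since $x\in\Pc(\thh)[S]$ and $A\sqcup B=S$, the factor $\Delta_{A,B}(x)$ vanishes whenever $A$ and $B$ are both nonempty, so it survives only when $S\subseteq U$ or $S\subseteq V$, in which cases~\eqref{e:conn-unit} evaluates it as $x\otimes 1$ or $1\otimes x$; the same holds for $y$. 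For a nontrivial split $(U,V)$, the options ``$S,T$ both inside $U$'' and ``$S,T$ both inside $V$'' force $V=\emptyset$ or $U=\emptyset$ and are excluded. Thus $\Delta_{U,V}\mu_{S,T}(x\otimes y)$ can be nonzero only when $(U,V)=(S,T)$ or $(U,V)=(T,S)$.

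In these two surviving cases I would read the values directly from Proposition~\ref{p:hopf-split} with $q=1$, obtaining $\Delta_{S,T}\mu_{S,T}(x\otimes y)=x\otimes y$ and $\Delta_{T,S}\mu_{S,T}(x\otimes y)=y\otimes x$, and symmetrically $\Delta_{T,S}\mu_{T,S}(y\otimes x)=y\otimes x$ and $\Delta_{S,T}\mu_{T,S}(y\otimes x)=x\otimes y$. Subtracting the two summands of the commutator, the contributions cancel at $(U,V)=(S,T)$ and again at $(U,V)=(T,S)$, while at every other nontrivial split both summands already vanish. Therefore $\Delta_{U,V}[x,y]_{S,T}=0$ for all nontrivial $(U,V)$, which is exactly the defining condition~\eqref{e:def-prim} for membership in $\Pc(\thh)[I]$, establishing closure and hence the proposition.

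I expect the main obstacle to be purely organizational: correctly enumerating the four cases arising from the placement of $S$ and $T$ relative to the split $(U,V)$ and handling the empty-block evaluations through~\eqref{e:conn-unit}. Because we work in the $q=1$ setting, no braiding signs intervene, so once the case analysis is laid out the cancellation is immediate; the only point demanding genuine care is confirming that the compatibility axiom indeed routes each primitive factor entirely to one side of the coproduct.
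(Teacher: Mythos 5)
Your proposal is correct and follows essentially the same route as the paper's proof: apply the compatibility axiom~\eqref{e:comp} to $\Delta_{T_1,T_2}\mu_{S,T}(x\otimes y)$, use primitivity of $x$ and $y$ (together with~\eqref{e:conn-unit}) to kill every term except those where the split is $(S,T)$ or $(T,S)$, evaluate those via Proposition~\ref{p:hopf-split}, and observe the cancellation between the two summands of the commutator. The only cosmetic difference is that you restrict to nontrivial splits from the start, whereas the paper records the full case analysis (including $T_1=I$ or $T_2=I$) before discarding the trivial cases.
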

\begin{proof}
Let $I=S_1\sqcup S_2$ and $x_i\in\Pc(\thh)[S_i]$ for $i=1,2$. 
We need to check that the element
\[
[x_1,x_2]_{S_1,S_2} = \mu_{S_1,S_2}(x_1\otimes x_2) - \mu_{S_2,S_1}(x_2\otimes x_1)
\]
is primitive. 
Let $I=T_1\sqcup T_2$ be another decomposition. 
We have that
\[
\Delta_{T_1,T_2}\mu_{S_1,S_2}(x_1\otimes x_2) =
\begin{cases}
x_1x_2\otimes 1 & \text{ if $T_1=I$ and $T_2=\emptyset$,}\\
1\otimes x_1x_2 & \text{ if $T_1=\emptyset$ and $T_2=I$,}\\
x_1\otimes x_2 & \text{ if $T_1=S_1$ and $T_2=S_2$,}\\
x_2\otimes x_1 & \text{ if $T_1=S_2$ and $T_2=S_1$,}\\
0 & \text{ in every other case.}\\
\end{cases}
\]
We have written $x_1x_2$ for $\mu_{S_1,S_2}(x_1\otimes x_2)$ and $1$ for $\iota_\emptyset(1)$. 
These follow from~\eqref{e:comp} and~\eqref{e:hopf-split};
in the last case we used the primitivity of the $x_i$.
It follows that if $T_1$ and $T_2$ are nonempty, 
then $\Delta_{T_1,T_2}\bigl([x_1,x_2]_{S_1,S_2}\bigr)=0$ as needed.
\end{proof}

\begin{remark}
The primitive part is the first component of the \emph{coradical filtration} of $\tc$.
See~\cite[Section~8.10]{AguMah:2010} for more information
and~\cite[Section~11.9.2]{AguMah:2010}
for a word on primitive elements in the nonconnected setting.
\end{remark}

\subsection{The indecomposable quotient}\label{ss:indec}

Let $\ta$ be a connected monoid.
The \emph{indecomposable quotient} of $\ta$ is the
positive species $\Qc(\ta)$ defined by
\begin{align}\label{e:def-indec}
\Qc(\ta)[I] & := \ta[I]\bigl/\Bigl(\sum_{\substack{I=S\sqcup T\\ S,T\neq\emptyset}} 
\im\bigl(\mu_{S,T}:\ta[S]\otimes\ta[T]\to\ta[I]\bigr)\Bigr)\\\notag
& = \ta[I]\bigl/\sum_{\substack{F\vDash I\\F\not=(I)}} \im (\mu_F).
\end{align}

Primitives and indecomposables are related by duality: $\Pc(\tc)^* \cong \Qc(\tc^*)$,
and if $\ta$ is finite-dimensional, then $\Qc(\ta)^*\cong\Pc(\ta^*)$.

Assume $\ta$ is finite-dimensional and let $\tc$ be a connected comonoid.
Consider the internal Hom $\iH(\ta,\tc)$ with the comonoid structure discussed
in Section~\ref{ss:iH}.

\begin{proposition}\label{p:iH-prim}
We have
\[
\iH\bigl(\ta,\Pc(\tc)\bigr) + \iH\bigl(\Qc(\ta),\tc\bigr) \subseteq \Pc\bigl(\iH(\ta,\tc)\bigr).
\]
\end{proposition}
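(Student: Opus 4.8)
The plan is to show directly that each summand on the left lands in the kernel of every lower coproduct $\Delta_{S,T}$ of $\iH(\ta,\tc)$ with $S,T$ nonempty, which by~\eqref{e:def-prim} is precisely the condition for membership in $\Pc(\iH(\ta,\tc))$. First I would record that $\iH(\ta,\tc)$ is in fact a connected comonoid: the construction of Section~\ref{ss:iH} applies because $\ta$ is finite-dimensional, and $\iH(\ta,\tc)[\emptyset]=\Hom_{\Kb}(\ta[\emptyset],\tc[\emptyset])\cong\Kb$ since both $\ta$ and $\tc$ are connected, so $\Pc(\iH(\ta,\tc))$ is defined. I would also note that the two natural maps $\iH(\ta,\Pc(\tc))\to\iH(\ta,\tc)$ and $\iH(\Qc(\ta),\tc)\to\iH(\ta,\tc)$ are injective, so that the left-hand side is literally a sum of subspecies: postcomposition with the inclusion $\Pc(\tc)\into\tc$ and precomposition with the quotient $\ta\onto\Qc(\ta)$ are each injective on $\Hom$-spaces.

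The key step is to reformulate the coproduct of $\iH(\ta,\tc)$. For $I=S\sqcup T$ and $f\in\iH(\ta,\tc)[I]$, I would use that $\Delta_{S,T}(f)$ is, by the construction of Section~\ref{ss:iH}, the preimage of the composite $\Tilde{\Delta}(f)$ of~\eqref{e:iHcoprod} under the canonical identification
\[
\Hom_{\Kb}(\ta[S],\tc[S])\otimes\Hom_{\Kb}(\ta[T],\tc[T]) \cong \Hom_{\Kb}(\ta[S]\otimes\ta[T],\tc[S]\otimes\tc[T]),
\]
which is bijective precisely because $\ta$ is finite-dimensional. Consequently $\Delta_{S,T}(f)=0$ if and only if $\Tilde{\Delta}(f)=0$, and therefore $f\in\Pc(\iH(\ta,\tc))[I]$ if and only if $\Tilde{\Delta}(f)=0$ for every decomposition $I=S\sqcup T$ into nonempty parts.

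With this reformulation the two inclusions become immediate, treated summand by summand. An element of $\iH(\ta,\Pc(\tc))[I]$, viewed inside $\iH(\ta,\tc)[I]$, is a map $f$ with $\im(f)\subseteq\Pc(\tc)[I]$; since the final arrow $\Delta_{S,T}\colon\tc[I]\to\tc[S]\otimes\tc[T]$ of~\eqref{e:iHcoprod} vanishes on $\Pc(\tc)[I]$ by~\eqref{e:def-prim}, the composite $\Tilde{\Delta}(f)$ is zero. Dually, an element of $\iH(\Qc(\ta),\tc)[I]$ is a map $f$ factoring through $\Qc(\ta)[I]$, hence vanishing on $\sum_{S',T'\neq\emptyset}\im(\mu_{S',T'})$ by~\eqref{e:def-indec}; since the initial arrow $\mu_{S,T}\colon\ta[S]\otimes\ta[T]\to\ta[I]$ of~\eqref{e:iHcoprod} has image inside this subspace, again $\Tilde{\Delta}(f)=0$. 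Adding the two cases and using linearity of each $\Delta_{S,T}$ then yields the asserted containment.

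I expect no genuine obstacle here: the entire content lies in correctly unwinding the coproduct of $\iH(\ta,\tc)$ through the canonical isomorphism (the one place the finite-dimensionality of $\ta$ is essential) and in reading off the two subspaces as ``maps into the primitives of $\tc$'' and ``maps killing the decomposables of $\ta$.'' The only point demanding slight care is keeping the two coproducts apart notationally, namely the $\Delta_{S,T}$ of $\iH(\ta,\tc)$ applied to $f$ versus the $\Delta_{S,T}$ of $\tc$ appearing as the final arrow of~\eqref{e:iHcoprod}; once that distinction is fixed, the argument is purely formal.
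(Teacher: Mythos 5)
Your proof is correct and is exactly the definitional unwinding the paper intends: the paper states Proposition~\ref{p:iH-prim} without proof, and your argument --- that $\Delta_{S,T}(f)=0$ in $\iH(\ta,\tc)$ if and only if the composite $\Tilde{\Delta}(f)$ of~\eqref{e:iHcoprod} vanishes (using the canonical isomorphism afforded by finite-dimensionality of $\ta$), which holds for maps landing in $\Pc(\tc)$ because the final arrow kills their image, and for maps factoring through $\Qc(\ta)$ because they kill the image of the initial arrow --- is the straightforward verification. Your preliminary checks (connectedness of $\iH(\ta,\tc)$, so that $\Pc$ is defined, and injectivity of the two comparison maps, so that the left-hand side is genuinely a sum of subspecies) are appropriate and complete the argument.
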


\subsection{The Lagrange theorem}\label{ss:lagrange}

Let $\ta$ be a monoid. We consider modules and ideals as in Section~\ref{ss:modules}.
Given a submonoid $\tb$ of $\ta$, let $\tb_+\ta$ denote the right ideal
of $\ta$ generated by the positive part of $\tb$. 

\begin{theorem}\label{t:lagrange}
Let $\thh$ be a connected Hopf monoid and $\tk$ a Hopf submonoid.
Then there is an isomorphism of left $\tk$-modules
\[
\thh \cong \tk\bdot(\thh/\tk_+\thh).
\]
In particular, $\thh$ is free as a left $\tk$-module.
\end{theorem}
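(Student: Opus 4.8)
The plan is to realize the claimed isomorphism as the action map out of the free $\tk$-module on the quotient species $\tp:=\thh/\tk_+\thh$, and to prove it bijective by separating the easy half (surjectivity) from the real content (injectivity). Fix the canonical projection $\pi\colon\thh\to\tp$ and choose a section $s\colon\tp\to\thh$ of species with $\pi s=\id_{\tp}$ (splitting $\pi$ levelwise over $\Kb$). Define
\[
\theta\colon \tk\bdot\tp\to\thh,\qquad \theta=\mu\circ(\id_{\tk}\bdot\, s),
\]
so $\theta$ sends $k\otimes p\in\tk[S]\otimes\tp[T]$ to $\mu_{S,T}(k\otimes s(p))$. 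Associativity of $\mu$ makes $\theta$ a morphism of left $\tk$-modules, where $\tk\bdot\tp$ carries the free-module structure of Section~\ref{ss:modules}. It therefore suffices to show $\theta$ is an isomorphism; the final assertion follows since $\tk\bdot\tp$ is free over $\tk$ by definition.

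For surjectivity I would induct on $\abs{I}$. The case $I=\emptyset$ is immediate, as $\tk$ is connected and both sides reduce to $\Kb$. For nonempty $I$ and $x\in\thh[I]$, the element $x-s\pi(x)$ lies in $(\tk_+\thh)[I]=\sum_{I=S\sqcup T,\,S\neq\emptyset}\mu_{S,T}(\tk[S]\otimes\thh[T])$. Each such $T$ has $\abs{T}<\abs{I}$, so by induction every $h\in\thh[T]$ is in $\im\theta$; associativity of $\mu$ together with closure of $\tk$ under products then returns $\mu_{S,T}(k\otimes h)$ to $\im\theta$, whence $x\in\im\theta$.

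The heart of the argument is injectivity, which I would obtain from a unitriangularity property. Use the bidegree grading $(\tk\bdot\tp)[I]=\bigoplus_{I=S\sqcup T}\tk[S]\otimes\tp[T]$ and the maps $\Theta_{S_0,T_0}:=(\id\otimes\pi)\Delta_{S_0,T_0}\colon\thh[I]\to\thh[S_0]\otimes\tp[T_0]$. Applying the compatibility axiom~\eqref{e:comp} to $\Delta_{S_0,T_0}\mu_{S,T}(k\otimes s(p))$ expands it over the intersections $A=S\cap S_0$, $B=S\cap T_0$, $C=T\cap S_0$, $D=T\cap T_0$, with $\Delta_{A,B}(k)\in\tk[A]\otimes\tk[B]$ because $\tk$ is a subcomonoid. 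Whenever $B\neq\emptyset$ the resulting $\thh[T_0]$-factor is a product $\mu_{B,D}(k''\otimes-)$ with $k''\in\tk_+[B]$, so it lies in $\tk_+\thh$ and is annihilated by $\pi$; hence only terms with $B=\emptyset$, i.e.\ $S\subseteq S_0$, survive. For these, $\Delta_{S,\emptyset}(k)=k\otimes1$ by~\eqref{e:conn-unit}, and on the diagonal $S_0=S$ (so $T_0=T$) Proposition~\ref{p:hopf-split} yields $\Theta_{S,T}\bigl(\theta(k\otimes p)\bigr)=k\otimes p$. Thus $\bigoplus_{S_0,T_0}\Theta_{S_0,T_0}\circ\theta$ is unitriangular for the order $(S,T)\preceq(S_0,T_0)\iff S\subseteq S_0$, with the injective inclusion $\tk[S]\otimes\tp[T]\hookrightarrow\thh[S]\otimes\tp[T]$ on the diagonal. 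Given a putative nonzero $\xi\in\ker\theta$, pick a component $\xi_{S,T}\neq0$ with $S$ minimal under inclusion among the supports occurring in $\xi$; then $\Theta_{S,T}(\theta(\xi))=\xi_{S,T}\neq0$, contradicting $\theta(\xi)=0$. Hence $\theta$ is injective.

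I expect the main obstacle to be exactly this triangularity bookkeeping: controlling which of the many terms produced by~\eqref{e:comp} survive the projection $\id\otimes\pi$, and organizing the survivors by the order $\preceq$ so that the diagonal reduces, via Proposition~\ref{p:hopf-split}, to the identity. The decisive point is that $\pi$ annihilates $\tk_+\thh$, which collapses the compatibility sum to the single condition $S\subseteq S_0$. Everything else—$\tk$-linearity of $\theta$, the inductive surjectivity, and the passage to freeness—is routine once $\theta$ and the retractions $\Theta_{S_0,T_0}$ are in place.
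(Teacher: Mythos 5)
Your strategy---the multiplication map $\theta=\mu\circ(\id_{\tk}\bdot\,s)$ out of the free module $\tk\bdot\tp$, surjectivity by induction on $\abs{I}$, and injectivity via the triangularity of $(\id\otimes\pi)\Delta_{S_0,T_0}\circ\theta$ with respect to the order $S\subseteq S_0$---is sound, and it is essentially the classical Radford-style argument that the paper relies on: the paper itself gives no proof of Theorem~\ref{t:lagrange}, deferring to~\cite[Theorem~2.2]{AguLau:2012}, whose proof is modeled on Radford's freeness argument for (pointed, respectively graded connected) Hopf algebras. Your key computations check out: $\tk$-linearity of $\theta$ from associativity; the collapse of the compatibility axiom~\eqref{e:comp} to the terms with $B=S\cap T_0=\emptyset$ because $\tk$ is a subcomonoid and $\pi$ annihilates $\tk_+\thh$; and the diagonal identification $\Theta_{S,T}\circ\theta=\mathrm{incl}\otimes\id$ via Proposition~\ref{p:hopf-split} and $\pi s=\id$.

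The one genuine gap is the opening move: the existence of the section $s\colon\tp\to\thh$, which you justify by ``splitting $\pi$ levelwise over $\Kb$.'' A morphism of species is a \emph{natural} transformation, so you need $s_J\circ\tp[\sigma]=\thh[\sigma]\circ s_I$ for every bijection $\sigma\colon I\to J$; equivalently, each $s_{[n]}$ must be $\Sr_n$-equivariant. Choosing an arbitrary linear splitting of each surjection $\thh[I]\onto\tp[I]$ does not achieve this, and without naturality $\theta$ is not a morphism of species, hence not a morphism of left $\tk$-modules, and your argument only yields componentwise linear bijections rather than the claimed module isomorphism. In characteristic $0$ the gap is easily repaired: average a linear splitting of $\thh[n]\onto\tp[n]$ over $\Sr_n$ (Maschke) and transport along bijections to get a species section. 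In positive characteristic $\Kb\Sr_n$ is not semisimple, equivariant splittings of surjections of $\Sr_n$-modules need not exist, and your proof as written does not apply---note that the theorem is stated here over an arbitrary field. So you should either add the characteristic-zero hypothesis together with the averaging argument, or supply a construction of the section (or a section-free argument) valid over any field.
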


Theorem~\ref{t:lagrange} is proven in~\cite[Theorem~2.2]{AguLau:2012}.
Similar results for ordinary Hopf algebras are
well-known~\cite[Section~9.3]{Rad:2012}; these
include the familiar theorem of Lagrange from basic group theory.

We remark that versions of this result should exist for 
certain nonconnected Hopf monoids, as they do for Hopf algebras, but we
have not pursued this possibility.

\section{The free monoid}\label{s:free}

We review the explicit construction of the free monoid on a positive species,
following~\cite[Section~11.2]{AguMah:2010}. The free monoid carries
a canonical structure of $q$-Hopf monoid. 
We briefly mention the free monoid on an arbitrary species.

There is a companion notion of \emph{cofree comonoid} on a positive species
which is discussed in~\cite[Section~11.4]{AguMah:2010}, but not in this paper.

\subsection{The free monoid on a positive species}\label{ss:free}

Given a positive species $\tq$ and a composition $F=(I_1,\ldots,I_k)$ of $I$, write
\begin{equation}\label{e:sp-comp}
\tq(F) := \tq[I_1] \otimes \dots \otimes \tq[I_k].
\end{equation}
When $F$ is the unique composition of the empty set, we set 
\begin{equation}\label{e:sp-comp-empty}
\tq(F):=\Kb.
\end{equation}

Define a new species $\Tc(\tq)$ by
\[
\Tc(\tq)[I] := \bigoplus_{F\vDash I} \tq(F).
\]
A bijection $\sigma:I\to J$ transports a composition $F=(I_1,\dots,I_k)$ of $I$
into a composition $\sigma(F):=\bigl(\sigma(I_1),\dots,\sigma(I_k)\bigr)$ of $J$.
The map 
\[
\Tc(\tq)[\sigma]:\Tc(\tq)[I]\to\Tc(\tq)[J]
\] 
is the direct sum of the maps
\[
\tq(F) = \tq[I_1] \otimes \dots \otimes \tq[I_k] \map{\tq[\sigma|_{I_1}] \otimes \dots \otimes \tq[\sigma|_{I_k}]} 
\tq[\sigma(I_1)] \otimes \dots \otimes \tq[\sigma(I_k)] = \tq\bigl(\sigma(F)\bigr).
\]

In view of~\eqref{e:sp-comp-empty}, the species $\Tc(\tq)$ is connected.

Every nonempty set $I$ admits a unique composition with one block; namely, $F=(I)$.
In this case, $\tq(F)=\tq[I]$. This yields an embedding
$
\tq[I] \into \Tc(\tq)[I]
$
and thus an embedding of species 
\[
\eta_\tq: \tq \into \Tc(\tq).
\]
On the empty set, $\eta_\tq$ is (necessarily) zero.

Given $I=S\sqcup T$ and compositions $F\vDash S$ and
$G\vDash T$, there is a canonical isomorphism
\[
 \tq(F)\otimes \tq(G)\map{\cong} \tq(F\cdot G)
\]
obtained by concatenating the factors in~\eqref{e:sp-comp}.
The sum of these over all $F\vDash S$ and
$G\vDash T$ yields a map
\[
\mu_{S,T}: \Tc(\tq)[S]\otimes\Tc(\tq)[T] \to \Tc(\tq)[I].
\]
This turns $\Tc(\tq)$ into a monoid. In fact, $\Tc(\tq)$ is the \emph{free}
monoid on the positive species $\tq$, in view of the following result (a slight reformulation of~\cite[Theorem 11.4]{AguMah:2010}).

\begin{theorem}\label{t:dmunivp}
Let $\ta$ be a monoid, $\tq$ a positive species, and
$\zeta\colon \tq \to \ta$ a morphism of species.
Then there exists a unique morphism of monoids
$\hat{\zeta}\colon \Tc(\tq) \to \ta$ such that
\begin{equation*}
\begin{gathered}
\xymatrix@C+20pt{
\Tc(\tq) \ar@{.>}[r]^-{\hat{\zeta}} & \ta\\
 \tq \ar[ru]_{\zeta}\ar[u]^{\eta_\tq}
}
\end{gathered}
\end{equation*}
commutes.
\end{theorem}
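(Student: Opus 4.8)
The plan is to construct $\hat\zeta$ explicitly using the direct sum decomposition $\Tc(\tq)[I]=\bigoplus_{F\vDash I}\tq(F)$ and the higher product maps of $\ta$, and then verify that this is the unique morphism of monoids extending $\zeta$. First I would define $\hat\zeta$ on each summand. For a composition $F=(I_1,\ldots,I_k)\vDash I$, the summand $\tq(F)=\tq[I_1]\otimes\cdots\otimes\tq[I_k]$ should map to $\ta[I]$ via
\[
\tq(F)\map{\zeta_{I_1}\otimes\cdots\otimes\zeta_{I_k}}\ta[I_1]\otimes\cdots\otimes\ta[I_k]\map{\mu_{I_1,\ldots,I_k}}\ta[I],
\]
using the higher product map~\eqref{e:iterated-mu} of $\ta$. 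On the empty set, where $\tq(F)=\Kb$ for the unique (empty) composition, this uses the $k=0$ case of~\eqref{e:iterated-mu}, namely $\iota_\emptyset$. Summing over all $F\vDash I$ defines $\hat\zeta_I$, and naturality of $\hat\zeta$ follows from naturality of $\zeta$ together with the naturality condition~\eqref{e:nat} for the product maps of $\ta$.

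Next I would check that $\hat\zeta$ is a morphism of monoids and that $\hat\zeta\eta_\tq=\zeta$. The triangle is immediate: $\eta_\tq$ sends $\tq[I]$ into the summand indexed by the one-block composition $(I)$, on which $\hat\zeta$ acts by $\mu_{(I)}\zeta_I=\zeta_I$, since $\mu_{(I)}$ is the identity of $\ta[I]$. Compatibility with products reduces to the statement that for $I=S\sqcup T$, $F\vDash S$, and $G\vDash T$, the concatenation isomorphism $\tq(F)\otimes\tq(G)\cong\tq(F\cdot G)$ intertwines the two routes into $\ta[I]$. This is exactly the associativity of $\ta$: iterating the products according to $F$ and $G$ separately and then multiplying the two results via $\mu_{S,T}$ agrees with iterating according to the concatenation $F\cdot G$, because both are instances of the uniquely defined higher product $\mu_{F\cdot G}$ guaranteed by~\eqref{e:iterated-mu}.

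Finally I would establish uniqueness. Suppose $\phi\colon\Tc(\tq)\to\ta$ is any morphism of monoids with $\phi\eta_\tq=\zeta$. The key point is that $\Tc(\tq)$ is generated as a monoid by the image of $\eta_\tq$: every summand $\tq(F)$ with $F=(I_1,\ldots,I_k)$ is the image under $\mu_{I_1,\ldots,I_k}$ of $\tq[I_1]\otimes\cdots\otimes\tq[I_k]$, each tensor factor lying in $\eta_\tq(\tq)$. Since $\phi$ is a morphism of monoids it commutes with the higher products, so $\phi$ is forced on each $\tq(F)$ to equal $\mu_{I_1,\ldots,I_k}(\zeta_{I_1}\otimes\cdots\otimes\zeta_{I_k})$, which is precisely $\hat\zeta$. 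I expect the main obstacle to be bookkeeping: verifying that the generation claim and the monoid-morphism compatibility are handled uniformly across \emph{all} compositions, including the empty composition on $I=\emptyset$ where one must confirm that $\phi_\emptyset=\iota_\emptyset\epsilon$-type normalization forces $\hat\zeta_\emptyset$ on the base field $\Kb$. This empty-set case, together with the careful identification of the concatenation isomorphism with iterated associativity, is where the argument requires attention rather than routine symbol-pushing.
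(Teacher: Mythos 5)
Your proposal is correct and matches the paper's approach: the paper defines $\hat{\zeta}$ by exactly the same formula (the unit map $\iota_\emptyset$ on the empty set, and $\mu_{I_1,\ldots,I_k}(\zeta_{I_1}\otimes\cdots\otimes\zeta_{I_k})$ on each summand $\tq(F)$), deferring the verification to~\cite[Theorem~11.4]{AguMah:2010}, and your verification — the triangle via $\mu_{(I)}=\id$, multiplicativity via higher associativity, uniqueness via generation by $\eta_\tq(\tq)$ together with unit preservation forcing $\phi_\emptyset=\iota_\emptyset$ — is the standard one.
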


The map $\hat{\zeta}$ is as follows. On the empty set, it is the unit map of $\ta$:
\[
\Tc(\tq)[\emptyset] = \Kb \map{\iota_\emptyset} \ta[\emptyset].
\]
On a nonempty set $I$, it is the sum of the maps
\[
\tq(F) = \tq[I_1]\otimes\cdots\otimes\tq[I_k] \map{\zeta_{I_1}\otimes\cdots\otimes\zeta_{I_k}}
\ta[I_1]\otimes\cdots\otimes\ta[I_k]
\map{\mu_{I_1,\ldots,I_k}} \ta[I],
\]
where the higher product $\mu_{I_1,\ldots,I_k}$ is as in~\eqref{e:iterated-mu}.

\smallskip 

When there is given an isomorphism of monoids $\ta\cong\Tc(\tq)$, we say that
the positive species $\tq$ is a \emph{basis} of the (free) monoid $\ta$.

\subsection{The free monoid as a Hopf monoid}\label{ss:freeHopf}

Let $q\in\Kb$ and $\tq$ a positive species. The species $\Tc(\tq)$ admits a
canonical $q$-Hopf monoid structure, which we denote by $\Tcq(\tq)$, as follows.

As monoids, $\Tcq(\tq)=\Tc(\tq)$. In particular, $\Tcq(\tq)$ and $\Tc(\tq)$ are the same
species. The comonoid structure depends on $q$. Given $I=S\sqcup T$,
the coproduct
\[
\Delta_{S,T}: \Tcq(\tq)[I]\to\Tcq(\tq)[S]\otimes\Tcq(\tq)[T]
\]
is the sum of the maps
\begin{align*}
\tq(F) & \to \tq(F|_S)\otimes \tq(F|_T)\\
x_1\!\otimes\!\cdots\!\otimes x_k & \mapsto 
\begin{cases} 
q^{\area_{S,T}(F)} (x_{i_1}\!\otimes\!\cdots\!\otimes x_{i_j})\!\otimes\! (x_{i'_1}\!\otimes\!\cdots\!\otimes x_{i'_k}) & \text{if $S$ is $F$-admissible,}\\
0 & \text{otherwise.}
\end{cases}
\end{align*}
Here $F=(I_1,\ldots,I_k)$, $x_i\in \tq[I_i]$ for each $i$, and
$\area_{S,T}(F)$ is as in~\eqref{e:schubert-comp}.
In the admissible case, we have written $F|_S=(I_{i_1},\ldots,I_{i_j})$ and
$F|_T=(I_{i'_1},\ldots,I_{i'_k})$.

The preceding turns $\Tcq(\tq)$ into a $q$-bimonoid. Since it is connected,
it is a $q$-Hopf monoid. We have
\begin{equation}\label{e:prim-contained}
\tq\subseteq \Pc\bigl(\Tcq(\tq)\bigr);
\end{equation}
more precisely, $\tq$ maps into the primitive part under the embedding $\eta_q$.
When $q=1$, $\Tc(\tq)$ is cocommutative, and
$\Pc\bigl(\Tc(\tq)\bigr)$ is the \emph{Lie submonoid} of $\Tc(\tq)$
generated by $\tq$, see Corollaries~\ref{c:prim-in-free-1} and~\ref{c:prim-in-free}.

We return to the situation of Theorem~\ref{t:dmunivp} in the case when the
monoid $\ta$ is in fact a $q$-Hopf monoid $\thh$. Thus, we are given a
morphism of species $\zeta:\tq\to\thh$ and we consider the morphism of monoids
$\hat{\zeta}:\Tcq(\tq)\to\thh$.

\begin{proposition}\label{p:dhmunivp}
Suppose that 
\[
\im(\zeta) \subseteq \Pc(\thh).
\]
In other words, $\zeta$ maps elements of $\tq$ to primitive elements of $\thh$.
Then $\hat{\zeta}$ is a morphism of $q$-Hopf monoids.
\end{proposition}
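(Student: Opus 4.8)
The plan is to reduce the statement to a comonoid-morphism check and then to a single computation about coproducts of products of primitive elements. By Theorem~\ref{t:dmunivp}, $\hat{\zeta}$ is already a morphism of monoids, and both $\Tcq(\tq)$ and $\thh$ are connected $q$-Hopf monoids. Since a morphism of $q$-bimonoids between $q$-Hopf monoids automatically commutes with antipodes (Section~\ref{ss:bim-hopf}), it suffices to show that $\hat{\zeta}$ is also a morphism of comonoids; counit compatibility is automatic on the empty set by connectedness. Thus the whole content is the identity
\[
\Delta_{S,T}\,\hat{\zeta}_I = (\hat{\zeta}_S\otimes\hat{\zeta}_T)\,\Delta_{S,T}
\]
for each $I=S\sqcup T$. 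As $\Tcq(\tq)$ is generated as a monoid by the image of $\eta_\tq$, the elements $x_1\cdots x_k$ with $F=(I_1,\dots,I_k)\vDash I$ and $x_i\in\tq[I_i]$ span $\Tcq(\tq)[I]$, so it is enough to verify the identity on these.

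The crux is a lemma valid in any connected $q$-bimonoid: if $p_i\in\Pc(\thh)[I_i]$ for $F=(I_1,\dots,I_k)\vDash I$, then
\[
\Delta_{S,T}(p_1\cdots p_k)=
\begin{cases}
q^{\area_{S,T}(F)}\,(p_{i_1}\cdots p_{i_j})\otimes(p_{i'_1}\cdots p_{i'_l}) & \text{if $S$ is $F$-admissible,}\\
0 & \text{otherwise,}
\end{cases}
\]
where $F|_S=(I_{i_1},\dots,I_{i_j})$ and $F|_T=(I_{i'_1},\dots,I_{i'_l})$. This is exactly the formula defining the coproduct of $\Tcq(\tq)$ in Section~\ref{ss:freeHopf}. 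Granting it, I conclude as follows: since $\hat{\zeta}(x_1\cdots x_k)=\zeta(x_1)\cdots\zeta(x_k)$ and each $\zeta(x_i)$ is primitive by hypothesis, applying the lemma to the left side and the defining coproduct formula to the right side produces the same expression once $\hat{\zeta}_S\otimes\hat{\zeta}_T$ is applied, because $\hat{\zeta}$ sends the generator products $p_{i_1}\cdots p_{i_j}$ to $\zeta(x_{i_1})\cdots\zeta(x_{i_j})$. This matches the two sides of the desired identity.

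I would prove the lemma by induction on $k$. Writing $p_1\cdots p_k=\mu_{I_1,J}(p_1\otimes p')$ with $J=I_2\sqcup\cdots\sqcup I_k$ and $p'=p_2\cdots p_k$, I apply the compatibility axiom~\eqref{e:comp} with $S_1=I_1$, $S_2=J$, $T_1=S$, $T_2=T$, so that the four intersections are $A=I_1\cap S$, $B=I_1\cap T$, $C=J\cap S$, $D=J\cap T$. Primitivity of $p_1$ forces $\Delta_{A,B}(p_1)$ to vanish unless $I_1\subseteq S$ or $I_1\subseteq T$, in which case~\eqref{e:conn-unit} gives $p_1\otimes1$ or $1\otimes p_1$; the inductive hypothesis evaluates $\Delta_{C,D}(p')$ on $J$ with composition $F|_J=(I_2,\dots,I_k)$. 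Combining the admissibility of $I_1$ with that of the remaining blocks yields admissibility of $S$ with respect to $F$, and any split block makes the relevant factor vanish, giving the non-admissible case. The base case $k=1$ is primitivity together with~\eqref{e:conn-unit}, and Proposition~\ref{p:hopf-split} serves as the prototype $k=2$ computation.

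The step I expect to be the main obstacle is the bookkeeping of the powers of $q$. Each application of~\eqref{e:comp} introduces the braiding $\beta_q$ on the middle tensorands $\thh[B]\otimes\thh[C]$, contributing $q^{\abs{B}\abs{C}}=q^{\abs{I_1\cap T}\,\abs{J\cap S}}$. In the case $I_1\subseteq S$ one has $\abs{B}=0$ and this factor is trivial, matching the vanishing index-$1$ contribution to $\area_{S,T}(F)$; in the case $I_1\subseteq T$ the braiding moves $p_1$ past the $\thh[C]$-factor, contributing $q^{\abs{I_1}\abs{J\cap S}}=q^{\sum_{j\geq2}\abs{I_1\cap T}\abs{I_j\cap S}}$, which is precisely the part of $\area_{S,T}(F)$ coming from pairs $(1,j)$. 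Adding this to the exponent $\area_{C,D}(F|_J)$ produced inductively reassembles $\area_{S,T}(F)$ as in~\eqref{e:schubert-comp}. Verifying that these exponents telescope correctly, and that in the nontrivial case the braiding acts on the intended factors, is the delicate point; everything else is a routine unwinding of the definitions.
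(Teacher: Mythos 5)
Your proof is correct. For the comparison: the paper does not actually prove Proposition~\ref{p:dhmunivp} in the text; its ``proof'' is a citation to \cite[Theorem~11.10]{AguMah:2010}, so your argument is a self-contained substitute rather than a variant of an in-paper argument, and it follows the natural route available with the paper's own tools. Theorem~\ref{t:dmunivp} gives the monoid half; the fact from Section~\ref{ss:bim-hopf} that a morphism of $q$-bimonoids between $q$-Hopf monoids automatically commutes with antipodes reduces everything to the comonoid identity; that identity need only be checked on the spanning products $x_1\cdots x_k$; and your key lemma (the coproduct of a product of primitives reproduces the structure formula of $\Tcq(\tq)$) is true --- it is the $k$-factor, general-$q$ version of the computation displayed in the paper's proof of Proposition~\ref{p:prim-lie}. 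Your inductive proof of the lemma via~\eqref{e:comp} and~\eqref{e:conn-unit} is sound, and the $q$-exponents do telescope exactly as you describe. Two remarks. First, you tacitly assume $\thh$ is connected; the statement does not say so explicitly, but this is implicit in the paper ($\Pc$ is defined only for connected comonoids in Section~\ref{ss:prim-ind}, and the adjunction~\eqref{e:TP} that follows the proposition is with connected $q$-Hopf monoids), and your lemma genuinely needs it, since~\eqref{e:conn-unit} can fail for nonconnected bimonoids. Second, your induction can be bypassed entirely by citing higher compatibility: taking $G=(S,T)$ in~\eqref{e:gen-comp-conn} gives $\Delta_{S,T}\,\mu_F=\mu_{GF/G}\,\beta_q\,\Delta_{FG/F}$; primitivity of the $p_i$ kills $\Delta_{FG/F}(p_1\otimes\cdots\otimes p_k)$ unless no block of $F$ is cut by $S$, i.e.\ unless $FG=F$, which is precisely $F$-admissibility of $S$; and in that case $GF=F|_S\cdot F|_T$, so $\mu_{GF/G}$ is the pair of partial products and $\beta_q$ contributes $q^{\dist(FG,GF)}=q^{\area_{S,T}(F)}$. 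That one-step argument trades your elementary induction for a forward reference to Section~\ref{s:higher}; both are valid, and yours has the merit of using only material preceding Section~\ref{s:free}.
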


This is a special case of~\cite[Theorem~11.10]{AguMah:2010}; 
see also~\cite[Section~11.7.1]{AguMah:2010}. 
It follows from here and~\cite[Theorem~IV.1.2, item (i)]{Mac:1998}
 that the functors
\begin{equation}\label{e:TP}
\xymatrix@C+20pt{
\{\text{positive species}\} \ar@<0.6ex>[r]^-{\Tcq} & \ar@<0.6ex>[l]^-{\Pc} 
\{\text{connected $q$-Hopf monoids}\}
}
\end{equation}
form an adjunction (with $\Tcq$ being left adjoint to $\Pc$).

The antipode problem for $\Tcq(\tq)$ offers no difficulty.

\begin{theorem}
\label{t:free-apode}
The antipode of $\Tcq(\tq)$ is given by
\begin{equation}\label{e:free-apode}
\apode_I(x_1\otimes\cdots\otimes x_k) = q^{\dist(F,\opp{F})} (-1)^k x_k\otimes\cdots\otimes x_1.
\end{equation} 
\end{theorem}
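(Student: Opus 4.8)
The plan is to exploit that $\Tcq(\tq)$ is free as a monoid and that $\tq$ sits inside its primitive part, so that the basis element $x_1\otimes\cdots\otimes x_k\in\tq(F)$, with $F=(I_1,\ldots,I_k)$, is precisely the higher product $\mu_{I_1,\ldots,I_k}(x_1\otimes\cdots\otimes x_k)$ of the primitive elements $x_i\in\tq[I_i]\subseteq\Pc(\Tcq(\tq))[I_i]$. Here I use~\eqref{e:prim-contained} together with the concatenation description of the product from Section~\ref{ss:free}: the higher product on $\Tc(\tq)$ simply juxtaposes the one-block factors $x_i\in\tq\bigl((I_i)\bigr)$ into the element $x_1\otimes\cdots\otimes x_k$ of $\tq(F)$. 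Since the antipode of any $q$-Hopf monoid reverses products in the braided sense (Proposition~\ref{p:apode-rev}), the computation reduces to tracking a single order-reversal braiding together with the action of $\apode$ on each primitive factor.

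First I would iterate Proposition~\ref{p:apode-rev}. For $k=2$ it reads $\apode_I\mu_{S,T}=\mu_{T,S}\,\beta_q\,(\apode_S\otimes\apode_T)$; by induction on $k$, writing $\mu_{I_1,\ldots,I_k}=\mu_{I_1,\,I_2\sqcup\cdots\sqcup I_k}(\id\otimes\mu_{I_2,\ldots,I_k})$ and applying naturality of the braiding, one obtains
\[
\apode_I\,\mu_{I_1,\ldots,I_k}=\mu_{I_k,\ldots,I_1}\,\beta\,(\apode_{I_1}\otimes\cdots\otimes\apode_{I_k}),
\]
where $\beta$ is the braiding implementing the full order reversal of the $k$ tensor factors. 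Because each $x_i$ is primitive, Proposition~\ref{p:prim-ant} gives $\apode_{I_i}(x_i)=-x_i$, which produces the global sign $(-1)^k$, while $\mu_{I_k,\ldots,I_1}$ reassembles the reversed word into $x_k\otimes\cdots\otimes x_1$.

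It then remains to evaluate the scalar produced by $\beta$. The order-reversal braiding decomposes as a composite of elementary braidings, one for each pair $(i,j)$ with $i<j$, and on $\tq[I_i]\otimes\tq[I_j]$ such an elementary braiding contributes the factor $q^{\abs{I_i}\abs{I_j}}$ by~\eqref{e:braiding}. Hence $\beta$ scales $x_1\otimes\cdots\otimes x_k$ by $q^{\sum_{i<j}\abs{I_i}\abs{I_j}}$, and by~\eqref{e:dist-opp} this exponent is exactly $\dist(F,\opp{F})$. Combining the three contributions yields~\eqref{e:free-apode}. The main obstacle is the bookkeeping in this last step: one must verify, via coherence for the braiding (or a direct induction parallel to the one above), that the order-reversal braiding realizes every pair $(i,j)$ exactly once, so that no spurious powers of $q$ enter and the exponent is precisely $\sum_{i<j}\abs{I_i}\abs{I_j}$. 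As an alternative, the formula can be derived directly from Takeuchi's formula~\eqref{e:antipode-r} by evaluating $\mu_G\Delta_G$ on $\tq(F)$ for each composition $G$, but that route requires organizing a substantial cancellation and is considerably less transparent than the reversal argument above.
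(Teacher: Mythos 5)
Your proof is correct and follows essentially the same route as the paper's: observe that the elements of $\tq$ are primitive in $\Tcq(\tq)$ (so the case $k=1$ is~\eqref{e:prim-ant}), then iterate the product-reversal property of the antipode from Proposition~\ref{p:apode-rev}, with the braiding scalars assembling into $q^{\dist(F,\opp{F})}$ via~\eqref{e:dist-opp}. The paper only sketches these two steps, and your write-up supplies exactly the bookkeeping (the induction and the count of elementary braidings) that the sketch leaves implicit.
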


Here $F=(I_1,\dots,I_k)\vDash I$, $x_i\in\tq[I_i]$ for $i=1,\ldots,k$,
and $\dist(F,\opp{F})$ is as in~\eqref{e:dist-opp}.
In particular, $\apode_I$ sends the summand $\tq(F)$ of $\Tcq(\tq)[I]$
to the summand $\tq(\opp{F})$.

To obtain this result, 
one may first note that any $x\in\tq[I]$ is primitive in $\Tc_q(\tq)[I]$,
so the result holds when $k=1$ by~\eqref{e:prim-ant},
then one may use the fact that the antipode reverses products~\eqref{e:apode-rev-prod}.
One may also derive it as a special case of~\cite[Theorem~11.38]{AguMah:2010}.

\begin{remark}
A free monoid $\Tc(\tq)$ may carry other $q$-Hopf monoid structures than the
one discussed above. In particular, it is possible to construct such a structure
from a positive comonoid structure on $\tq$; when the latter is trivial, we arrive 
at the $q$-Hopf monoid structure discussed above.
This is discussed in~\cite[Section~11.2.4]{AguMah:2010} but not in this paper.
\end{remark}

\subsection{The free monoid on an arbitrary species}\label{ss:free-arb}

Let $\tq$ be an arbitrary species (not necessarily positive).
The free monoid $\Tc(\tq)$ on $\tq$ exists~\cite[Example~B.29]{AguMah:2010}. 
It is obtained by the same construction as the one in Section~\ref{ss:free},
employing decompositions (in the sense of Section~\ref{ss:decompositions}) 
instead of compositions. 
Theorem~\ref{t:dmunivp} continues to hold.
One has 
\[
\Tc(\tq)[\emptyset] = \Tc(\tq[\emptyset]),
\] 
the free associative unital algebra on the vector space $\tq[\emptyset]$. 
Thus, $\Tc(\tq)$ is connected if and only if $\tq$ is positive. 

If $\tq[\emptyset]$ is a coalgebra,
it is possible to turn $\Tc(\tq)$ into a $q$-bimonoid as in Section~\ref{ss:freeHopf},
employing the notion of restriction of decompositions of Section~\ref{ss:decompositions}.
If $\tq[\emptyset]\neq 0$, 
the resulting bialgebra structure on $\Tc(\tq[\emptyset])$ is not the standard one,
and $\Tc(\tq[\emptyset])$ is not Hopf. 
Hence, $\Tc(\tq)$ is not a $q$-Hopf monoid.

In the sections that follow, and in most of the paper, we restrict
our attention to the case of positive $\tq$.

\subsection{Freeness under Hadamard products}\label{ss:had-free}

Given positive species $\tp$ and $\tq$,
define a new positive species $\tp \star \tq$ by
\begin{equation}\label{e:star}
(\tp \star \tq)[I] :=
\bigoplus_{\substack{F,G\vDash I\\F\wedge G=(I)}} \tp(F)\otimes\tq(G).
\end{equation}
The sum is over all pairs of compositions of $I$
whose meet is the minimum composition. 

The following result shows that 
the Hadamard product of two free monoids is again free
and provides an explicit description for the basis of the product 
in terms of bases of the factors.

\begin{theorem}\label{t:had-free-monoid}
For any positive species $\tp$ and $\tq$,
there is a natural isomorphism of monoids
\begin{equation}\label{e:had-free-monoid}
\Tc(\tp \star \tq) \cong \Tc(\tp)\times\Tc(\tq).
\end{equation}
\end{theorem}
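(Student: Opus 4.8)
The plan is to construct the isomorphism via the universal property of the free monoid (Theorem~\ref{t:dmunivp}) and then to verify bijectivity by a combinatorial argument organized around the meet of compositions. First I would record that, as vector species, both sides decompose as $\bigoplus_{F,G\vDash I}\tp(F)\otimes\tq(G)$. For $\Tc(\tp)\times\Tc(\tq)$ this is immediate from $(\ta\times\tb)[I]=\ta[I]\otimes\tb[I]$ and the definition of $\Tc$; for $\Tc(\tp\star\tq)$ it follows by writing $(\tp\star\tq)(H)=\bigotimes_{j}(\tp\star\tq)[I_j]$ for $H=(I_1,\dots,I_m)\vDash I$, expanding each factor, and regrouping the tensor factors. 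The two indexings will be reconciled by the meet: a pair $(F,G)$ on the Hadamard side corresponds to $H:=F\wedge G$ together with the restrictions $(F|_{I_j},G|_{I_j})$ on the $\star$ side.

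To obtain a morphism of monoids I would take $\zeta\colon\tp\star\tq\to\Tc(\tp)\times\Tc(\tq)$ to be, on each component $\tp(F)\otimes\tq(G)$ with $F\wedge G=(I)$, the canonical inclusion into the $(F,G)$-summand, and let $\hat\zeta$ be its extension furnished by Theorem~\ref{t:dmunivp}. Tracing $\hat\zeta$ through the explicit form of the universal map, and using that the higher product of a Hadamard product of monoids regroups the two families of tensor factors (by signless swaps in $\Veck$) and then applies the two higher products separately, one finds that $\hat\zeta$ carries the summand of $\Tc(\tp\star\tq)[I]$ indexed by $(H,(F_j,G_j)_j)$ identically onto the $(F,G)$-summand of $(\Tc(\tp)\times\Tc(\tq))[I]$, where $F=F_1\cdots F_m$ and $G=G_1\cdots G_m$ are the concatenations. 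Since $\hat\zeta$ is a morphism of monoids by construction and acts componentwise as an identity, its bijectivity reduces to a purely combinatorial statement.

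That statement — and the main obstacle — is that
\[
(F,G)\longmapsto\bigl(F\wedge G,\ (F|_{I_j},G|_{I_j})_{1\le j\le m}\bigr)
\]
is a bijection between all pairs of compositions of $I$ and all data $(H,(F_j,G_j)_j)$ with $H=(I_1,\dots,I_m)\vDash I$ and $F_j\wedge G_j=(I_j)$ for every $j$. With $H:=F\wedge G$, both $F$ and $G$ refine $H$, so they split as concatenations through~\eqref{e:res-conc}, and the transversality $F|_{I_j}\wedge G|_{I_j}=(I_j)$ is exactly the assertion that the meet commutes with restriction to a block of a common coarsening, $(F\wedge G)|_{I_j}=F|_{I_j}\wedge G|_{I_j}$. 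I would establish this, together with its converse $(F_1\cdots F_m)\wedge(G_1\cdots G_m)=H$, from the flag description of refinement, in which a composition is a flag of subsets, refinement is inclusion of flags, and the meet is their intersection: the partial unions $I_1\cup\cdots\cup I_i$ are precisely the subsets lying in both flags, so intersecting the flags splits the identity block by block. The same compatibility of meet with concatenation, $(F^S\cdot F^T)\wedge(G^S\cdot G^T)=(F^S\wedge G^S)\cdot(F^T\wedge G^T)$ for $I=S\sqcup T$, reconfirms that $\hat\zeta$ intertwines the products, while naturality with respect to a bijection $\sigma\colon I\to J$ is immediate because meet, restriction, and concatenation are all equivariant.
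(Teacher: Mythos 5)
Your proposal is correct and follows essentially the same route as the paper: the paper constructs the isomorphism from the evident inclusion $\tp\star\tq \hookrightarrow \Tc(\tp)\times\Tc(\tq)$ via the universal property of Theorem~\ref{t:dmunivp}, exactly as you do with $\zeta$ and $\hat\zeta$, and the bijectivity argument in the cited source (\cite[Theorem~3.8]{AguMah:2012}) rests on the same combinatorial fact you prove, namely that $(F,G)\mapsto\bigl(F\wedge G,(F|_{I_j},G|_{I_j})_j\bigr)$ bijects pairs of compositions of $I$ with pairs that are transversal blockwise over a common coarsening. Your flag-intersection verification of that bijection is sound, so there is nothing to fix.
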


Theorem~\ref{t:had-free-monoid} is proven in~\cite[Theorem~3.8]{AguMah:2012}. 
The isomorphism~\eqref{e:had-free-monoid} arises from the evident inclusion
$\tp\star\tq \into \Tc(\tp)\times\Tc(\tq)$ via the universal property of $\Tc(\tp \star \tq)$
in Theorem~\ref{t:dmunivp}.
It is shown in~\cite[Proposition~3.10]{AguMah:2012} that 
this map is an isomorphism of $0$-Hopf monoids
\[
\Tc_0(\tp \star \tq) \cong \Tc_0(\tp)\times\Tc_0(\tq);
\]
the analogous statement for $q\neq 0$ does not hold.

Let $p$ and $q\in\Kb$ be arbitrary scalars.

\begin{theorem}\label{t:freeness}
Let $\thh$ be a connected $p$-Hopf monoid.
Let $\tk$ be a $q$-Hopf monoid that is free as a monoid.
Then the connected $pq$-Hopf monoid
$\thh\times\tk$ is free as a monoid.
\end{theorem}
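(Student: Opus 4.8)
The plan is to reduce to the case $\tk=\Tc(\tq)$ and then to realize $\thh\times\Tc(\tq)$ as a free monoid by constructing the multiplication map from the free monoid on its indecomposables and inverting that map with the help of the antipode of $\thh$.

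First I would reduce. Since $\tk$ is free as a monoid, fix an isomorphism of monoids $\tk\cong\Tc(\tq)$ for some positive species $\tq$. The product maps of a Hadamard product depend only on the product maps of the factors: the symmetry entering the construction in Section~\ref{ss:hadamard} is the plain switch of the two middle tensor factors, with no scalar involved. Hence the monoid $\thh\times\tk$ depends only on the monoids $\thh$ and $\tk$, so $\thh\times\tk\cong\thh\times\Tc(\tq)$ as monoids, and it suffices to show that $\ta:=\thh\times\Tc(\tq)$ is free. Throughout I retain the coproduct and the antipode of the connected Hopf monoid $\thh$; this is the one place where we use that $\thh$ is a Hopf monoid and not merely a connected monoid, since the antipode is what will produce the inverse map below.

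Next I would set up the candidate presentation. The monoid $\ta$ is connected and is graded by the length of the $\Tc(\tq)$-word (word-lengths add under the product), so its indecomposable quotient $\Qc(\ta)$ generates it, and after choosing a splitting $\tr\hookrightarrow\ta$ of $\ta\onto\Qc(\ta)$ the universal property of the free monoid (Theorem~\ref{t:dmunivp}) produces a surjection of monoids $\Phi\colon\Tc(\tr)\to\ta$. Writing a typical element of $\ta[I]$ as $h\otimes(x_1\cdots x_k)$ with $h\in\thh[I]$, $x_i\in\tq[I_i]$ and $G=(I_1,\dots,I_k)\vDash I$, one finds that it is decomposable precisely when $h$ lies in the image of a product map $\mu^{\thh}_{S,T}$ associated with a splitting of the word at some block. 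Thus $\Qc(\ta)[I]\cong\bigoplus_{G\vDash I}W_G\otimes\tq(G)$, where $W_G\subseteq\thh[I]$ is a complement transverse to $G$; this generalizes the condition $F\wedge G=(I)$ that governs the free--free case of Theorem~\ref{t:had-free-monoid}. Since $\Phi$ is already surjective, the entire problem is to prove that $\Phi$ is \emph{injective}, that is, that there are no relations among these generators.

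Finally, I would construct a left inverse to $\Phi$ by \emph{peeling off the last letter}. Given $h\otimes(x_1\cdots x_k)$, split the label $h$ across the last block using $\Delta^{\thh}_{I_1\sqcup\dots\sqcup I_{k-1},\,I_k}$ and apply $\apode$ to the factor over $I_k$, thereby splitting off a single transverse generator carrying $x_k$ while leaving a strictly shorter word; iterating produces an element of $\Tc(\tr)$. The antipode identities~\eqref{e:apode} are exactly what makes this peeling telescope, and connectedness, via Proposition~\ref{p:hopf-split}, guarantees that the recursion on word-length terminates and that the ``diagonal'' contributions are identities. The main obstacle is the verification that this map is well defined into $\Tc(\tr)$ --- that the antipode-corrected labels really do land in the transverse complements $W_G$ --- and that it is a genuine two-sided inverse of $\Phi$; I would carry this out by a triangularity argument ordered by refinement of compositions, in which connectedness supplies the leading identity term and the antipode cancels the strictly lower terms. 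The explicit isomorphism of the free--free case in Theorem~\ref{t:had-free-monoid} serves as the model computation to which the general antipode bookkeeping specializes once the $\thh$-labels are made trivial.
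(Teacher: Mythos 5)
Your opening reduction is sound and coincides with how the proof cited by the paper begins: since the Hadamard product of monoids involves only the product maps, one may replace $\tk$ by $\Tc(\tq)$. The gap is everything after that. A surjection $\Phi\colon\Tc(\tr)\to\ta$ from the free monoid on a section $\tr$ of $\ta\onto\Qc(\ta)$ exists for \emph{every} connected monoid --- for example for $\ta=\wE$, where $\Qc(\wE)\cong\wX$ and $\Phi$ is the non-injective surjection $\Tc(\wX)=\wL\onto\wE$ --- so the entire content of the theorem is the injectivity of $\Phi$, and that is precisely the step you leave as a sketch. The proposed ``peeling'' map is never shown to be well defined, independent of its many choices, or inverse to $\Phi$, and its mechanism does not hold up: the antipode identities~\eqref{e:apode} are assertions about the sum over \emph{all} decompositions $I=S\sqcup T$, so applying $\Delta^{\thh}_{S,T}$ only at the last block of the word and composing with $\apode$ on one factor invokes no identity at all. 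Proposition~\ref{p:hopf-split} does give $\Delta_{S,T}\mu_{S,T}=\id$, hence injectivity of each individual higher product map, but what freeness requires is the \emph{directness} of the sum of the subspaces $\mu_F\bigl(W\otimes\cdots\bigr)$ over all the different ways of factoring a given word, together with a coherent choice of the complements $W_G$; nothing in your sketch addresses this, and deferring it to ``a triangularity argument ordered by refinement of compositions'' is deferring the theorem itself. The paper's remark immediately after the statement --- that, in contrast with Theorem~\ref{t:had-free-monoid}, no explicit description of a basis of $\thh\times\tk$ is available in this situation --- is a warning that this direct route is not a routine verification.

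For comparison, the proof the paper relies on (\cite[Theorem~3.2]{AguMah:2012}) uses a different, choice-free device. After the same reduction to $\tk=\Tc(\tq)$, endow $\Tc(\tq)$ with the $0$-Hopf monoid structure $\Tc_0(\tq)$ of Section~\ref{ss:freeHopf}. As monoids, $\thh\times\tk\cong\thh\times\Tc_0(\tq)$; by Proposition~\ref{p:hadamard} the latter is a $0$-bimonoid (since $p\cdot 0=0$), and it is connected, hence a connected $0$-Hopf monoid. Theorem~\ref{t:0free} (the rigidity theorem for connected $0$-Hopf monoids) then yields an isomorphism $\thh\times\Tc_0(\tq)\cong\Tc_0\bigl(\Pc(\thh\times\Tc_0(\tq))\bigr)$, so in particular $\thh\times\tk$ is free as a monoid. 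In other words, the projection onto free generators that you are trying to manufacture from $\apode^{\thh}$ is, in the actual argument, supplied by the rigidity of $0$-Hopf monoids, whose engine is the $0$-compatibility axiom of the Hadamard product --- not the antipode of $\thh$ acting on one tensor factor. If you want a proof in the spirit of your sketch, the honest path is to reprove Theorem~\ref{t:0free} by hand in this special case, which is a substantial argument rather than bookkeeping.
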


This is proven in~\cite[Theorem~3.2]{AguMah:2012}. 
In contrast to Theorem~\ref{t:had-free-monoid}, this result assumes freeness
from only one of the factors to conclude
it for their Hadamard product. 
On the other hand, in this situation we do not have an
explicit description for a basis of the product.

\subsection{Freeness for $0$-bimonoids}\label{ss:0free}

The structure of a connected $0$-Hopf monoid is particularly rigid, in
view of the following result.

\begin{theorem}\label{t:0free}
Let $\thh$ be a connected $0$-Hopf monoid. Then there exists an
isomorphism of $0$-Hopf monoids
\[
\thh \iso \Tc_0\bigl(\Pc(\thh)\bigr).
\]
\end{theorem}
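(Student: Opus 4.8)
The plan is to exhibit the canonical comparison morphism and prove it is bijective. Let $\zeta\colon\Pc(\thh)\into\thh$ be the inclusion of the primitive part. Since $\thh$ is a connected $0$-Hopf monoid and $\zeta$ maps into $\Pc(\thh)$, Theorem~\ref{t:dmunivp} produces a morphism of monoids $\hat\zeta\colon\Tc_0(\Pc(\thh))\to\thh$, and Proposition~\ref{p:dhmunivp} upgrades it to a morphism of $0$-Hopf monoids. Concretely, $\hat\zeta$ sends the summand $\Pc(\thh)(F)$ of $\Tc_0(\Pc(\thh))[I]$ into $\thh[I]$ via the higher product $\mu_F$. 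It then remains to show that each $\hat\zeta_I$ is a linear isomorphism, since a bijective morphism of $0$-Hopf monoids is automatically an isomorphism.

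The technical heart is a deconcatenation lemma, and this is where the hypothesis $q=0$ is essential. For $F=(I_1,\dots,I_k)\vDash I$ and primitive elements $y_i\in\Pc(\thh)[I_i]$, I claim that $\Delta_{S,T}\mu_F(y_1\otimes\cdots\otimes y_k)$ vanishes unless $S$ is a prefix $I_1\cup\cdots\cup I_j$, in which case it equals $\mu_{F|_S}(y_1\otimes\cdots\otimes y_j)\otimes\mu_{F|_T}(y_{j+1}\otimes\cdots\otimes y_k)$. I would prove this by induction on $k$, writing $\mu_F=\mu_{I',I_k}(\mu_{(I_1,\dots,I_{k-1})}\otimes\id)$ with $I'=I_1\cup\cdots\cup I_{k-1}$ and expanding $\Delta_{S,T}\mu_{I',I_k}$ through the compatibility axiom~\eqref{e:comp}. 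In that expansion the braiding $\beta_0$ annihilates every term in which both middle blocks $B=I'\cap T$ and $C=I_k\cap S$ are nonempty, while primitivity of $y_k$ (via~\eqref{e:conn-unit}) forces the split of $I_k$ to be trivial; together these leave exactly the prefix term, and the inductive hypothesis applied to $\mu_{(I_1,\dots,I_{k-1})}$ finishes the computation. The two consequences I will use are $\Delta_F\mu_F(y_1\otimes\cdots\otimes y_k)=y_1\otimes\cdots\otimes y_k$ and, more generally, that $\Delta_G\mu_F$ of primitives is nonzero only when $G\leq F$ (so in particular $\len(G)\leq\len(F)$).

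Injectivity of $\hat\zeta_I$ then follows immediately. Suppose $\sum_{F}\mu_F(\omega_F)=0$ with $\omega_F\in\Pc(\thh)(F)$, and let $F_0$ be of maximal length among those with $\omega_{F_0}\neq 0$. Applying $\Delta_{F_0}$ and invoking the lemma, every term with $F\neq F_0$ dies (it would require $F_0\leq F$ with $\len(F)\leq\len(F_0)$, forcing $F=F_0$), while the $F_0$ term returns $\omega_{F_0}$; hence $\omega_{F_0}=0$, a contradiction. For surjectivity I will use the coproduct filtration $\thh^{\leq k}[I]:=\{x:\Delta_G(x)=0\text{ for all }\len(G)>k\}$, which is exhaustive since $\len(G)\leq\abs{I}$. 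One shows by upward induction on $k$ that $\thh^{\leq k}[I]\subseteq\im(\hat\zeta_I)$: for $x\in\thh^{\leq k}[I]$ and $\len(F)=k$, splitting one block of $F$ into two nonempty parts produces $\Delta_{F'}$ with $\len(F')=k+1$, which annihilates $x$, so every tensor factor of $\Delta_F(x)$ is primitive, i.e.\ $\Delta_F(x)\in\Pc(\thh)(F)$. Setting $w:=\sum_{\len(F)=k}\mu_F(\Delta_F(x))\in\im(\hat\zeta_I)$, the lemma gives $\Delta_G(w)=\Delta_G(x)$ for every $G$ of length $k$ (both sides vanish for $\len(G)>k$), whence $x-w\in\thh^{\leq k-1}[I]\subseteq\im(\hat\zeta_I)$ by induction, and therefore $x\in\im(\hat\zeta_I)$.

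I expect the main obstacle to be the deconcatenation lemma: carefully running the compatibility axiom~\eqref{e:comp} with $q=0$ through the inductive step and bookkeeping the four intersection blocks $A,B,C,D$. This is precisely the point at which the rigidity of $0$-Hopf monoids---the vanishing in Proposition~\ref{p:hopf-split} when $q=0$---is used, and it is what fails for $q\neq 0$; everything downstream is formal linear algebra on the length filtration of the coproduct. Equivalently, one may phrase the argument as the statement that $\hat\zeta$ induces an isomorphism on the associated graded of this filtration, with the graded pieces matched against one another by $\mu_F$ and $\Delta_F$.
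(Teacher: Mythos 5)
Your proposal is correct, but note that the paper itself does not prove this theorem: it defers to \cite[Theorem~11.49]{AguMah:2010} (and points to Loday--Ronco for the graded Hopf algebra analogue), so there is no in-paper argument to compare against; your proof is the natural self-contained one. Two remarks. First, a simplification: your deconcatenation lemma is precisely the statement that $\hat\zeta$ intertwines $\Delta_{S,T}$ of $\thh$ with the coproduct of $\Tc_0\bigl(\Pc(\thh)\bigr)$ on the summand $\Pc(\thh)(F)$, and the latter coproduct \emph{is} deconcatenation: for an $F$-admissible $S$ the coefficient $q^{\area_{S,T}(F)}$ with $q=0$ vanishes unless $\area_{S,T}(F)=0$, i.e.\ unless $S$ is a union of initial blocks of $F$. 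So the lemma follows at once from Proposition~\ref{p:dhmunivp} (which already makes $\hat\zeta$ a morphism of comonoids), and your inductive computation with axiom~\eqref{e:comp} amounts to reproving that proposition in this special case --- correct, and a reasonable choice if one wants the argument fully self-contained (the paper also cites Proposition~\ref{p:dhmunivp} from the monograph without proof), but strictly optional. Second, what cannot be shortcut, and what you execute correctly, is the linear algebra on the length filtration: injectivity by applying $\Delta_{F_0}$ for $F_0$ of maximal length among nonzero components, using that $\Delta_G\mu_F$ kills tensors of primitives unless $F$ refines $G$; and surjectivity by induction on the filtration $\thh^{\leq k}[I]=\{x : \Delta_G(x)=0 \text{ for all } \len(G)>k\}$, where coassociativity (the dual of~\eqref{e:gen-asso-conn}) shows $\Delta_F(x)\in\Pc(\thh)(F)$ for $\len(F)=k$, and $\Delta_F\mu_F=\id$ (Corollary~\ref{c:hopf-split}) shows that $w=\sum_{\len(F)=k}\mu_F\Delta_F(x)$ pushes $x$ down the filtration. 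This is a complete and valid proof.
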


Theorem~\ref{t:0free} is proven in~\cite[Theorem~11.49]{AguMah:2010}.
There is a parallel result for connected graded $0$-Hopf algebras which is
due to Loday and Ronco~\cite[Theorem~2.6]{LodRon:2006}.

Theorem~\ref{t:0free} implies that any connected $0$-Hopf monoid is free as a
monoid and cofree as a comonoid. In addition, if $\tq$ is finite-dimensional,
then the $0$-Hopf monoid $\Tc_0(\tq)$ is self-dual.
See~\cite[Section~11.10.3]{AguMah:2010} for more details.

\subsection{The free set-theoretic monoid}\label{ss:set-free}

Let $\rQ$ be a positive set species.
Given a composition $F=(I_1,\ldots,I_k)$ of $I$, write 
\[
\rQ(F) := \rQ[I_1]\times\cdots\times\rQ[I_k].
\]
The set species $\Tc(\rQ)$ defined by
\[
\Tc(\rQ)[I] := \coprod_{F\vDash I} \rQ(F)
\]
carries a set-theoretic monoid structure, and it is free on $\rQ$. 
There is a canonical isomorphism of monoids
\[
\Kb \Tc(\rQ) \cong \Tc(\Kb\rQ)
\]
arising from
\[
\Kb \bigl(\rQ(F)\bigr) \cong (\Kb \rQ)(F), \qquad
\BH_{(x_1,\ldots,x_k)} \leftrightarrow \BH_{x_1}\otimes\cdots\otimes\BH_{x_k},
\]
for $x_i\in\rQ[I_i]$ and $F$ as above.

The comonoid structure of $\Tc(\Kb\rQ)$ of Section~\ref{ss:freeHopf} is, in
general, not the linearization of a set-theoretic comonoid structure on $\Tc(\rQ)$.
For a case in which it is, see Section~\ref{ss:linear}.

\section{The free commutative monoid}\label{s:freecom}

We review the explicit construction of the free commutative monoid on a positive species,
following~\cite[Section~11.3]{AguMah:2010}. The discussion parallels that 
of Section~\ref{s:free}, with one important distinction: we deal exclusively
with Hopf monoids ($q=1$) as opposed to general $q$-Hopf monoids.
We briefly mention the free commutative monoid on an arbitrary species.

\subsection{The free commutative monoid on a positive species}\label{ss:freecom}

Given a positive species $\tq$ and a partition $X\vdash I$, write
\begin{equation}\label{e:sp-part}
\tq(X) := \bigotimes_{B\in X} \tq[B].
\end{equation}
It is not necessary to specify an ordering among the tensor factors above; 
the~\emph{unordered} tensor is well-defined in view of the fact that the tensor product 
of vector spaces is symmetric. Its elements are tensors
\[
\bigotimes_{B\in X} x_B,
\]
where $x_B\in\tq[B]$ for each $B\in X$.
 
Define a new species $\Sc(\tq)$ by
\[
\Sc(\tq)[I] := \bigoplus_{X\vdash I} \tq(X).
\]

When $X$ is the unique partition of $\emptyset$, we have $\tq(X)=\Kb$.
Thus, the species $\Sc(\tq)$ is connected.

Every nonempty $I$ admits a unique partition with one block; namely, $X=\{I\}$.
In this case, $\tq(X)=\tq[I]$. This yields an embedding
$
\tq[I] \into \Sc(\tq)[I]
$
and thus an embedding of species 
\[
\eta_\tq: \tq \into \Sc(\tq).
\]
On the empty set, $\eta_\tq$ is (necessarily) zero.

Given $I=S\sqcup T$ and partitions $X\vdash S$ and
$Y\vdash T$, there is a canonical isomorphism
\begin{equation*}
 \tq(X)\otimes \tq(Y) \map{\cong} \tq(X\sqcup Y)
\end{equation*}
in view of the definition of unordered tensor products. Explicitly,
\[
\bigl(\bigotimes_{B\in X} x_B\bigr) \otimes \bigl(\bigotimes_{C\in Y} x_C\bigr)
\mapsto \bigotimes_{D\in X\sqcup Y} x_D.
\]
The sum of these isomorphisms over all $X\vdash S$ and
$Y\vdash T$ yields a map
\[
\mu_{S,T}: \Sc(\tq)[S]\otimes\Sc(\tq)[T] \to \Sc(\tq)[I].
\]
This turns $\Sc(\tq)$ into a monoid. 
The commutativity of the diagram
\[
\xymatrix{
 \tq(X)\otimes \tq(Y) \ar[r]^-{\cong} \ar@{<->}[d]_{\cong} & \tq(X\sqcup Y) \ar@{=}[d]\\
 \tq(Y)\otimes \tq(X) \ar[r]_-{\cong} & \tq(Y\sqcup X)
 }
 \]
implies that the monoid $\Sc(\tq)$ is commutative.
In fact, $\Sc(\tq)$ is the free
commutative monoid on the positive species $\tq$, in view of the following result (a slight reformulation of~\cite[Theorem 11.13]{AguMah:2010}).

\begin{theorem}\label{t:dcmunivp}
Let $\ta$ be a commutative monoid, $\tq$ a positive species, and
$\zeta\colon \tq \to \ta$ a morphism of species.
Then there exists a unique morphism of monoids
$\hat{\zeta}\colon \Sc(\tq) \to \ta$ such that
\begin{equation*}
\begin{gathered}
\xymatrix@C+20pt{
\Sc(\tq) \ar@{.>}[r]^-{\hat{\zeta}} & \ta\\
 \tq \ar[ru]_{\zeta}\ar[u]^{\eta_\tq}
}
\end{gathered}
\end{equation*}
commutes.
\end{theorem}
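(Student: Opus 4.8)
Theorem~\ref{t:dcmunivp} is the universal property of the free commutative monoid $\Sc(\tq)$, and the plan is to mirror the proof of the corresponding statement for the free monoid (Theorem~\ref{t:dmunivp}), taking advantage at each step of the commutativity of the target $\ta$ to pass from compositions to partitions.

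Let me sketch this.

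\medskip

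The plan is to construct $\hat\zeta$ explicitly, verify it is a morphism of monoids extending $\zeta$, and then prove uniqueness. First I would define $\hat\zeta$ on the empty set to be the unit map $\iota_\emptyset:\Sc(\tq)[\emptyset]=\Kb\to\ta[\emptyset]$, as forced by the requirement that $\hat\zeta$ be a morphism of monoids. On a nonempty set $I$, I would define $\hat\zeta_I$ as the direct sum over all partitions $X\vdash I$ of the maps
\[
\tq(X) = \bigotimes_{B\in X}\tq[B] \map{\bigotimes_{B}\zeta_B} \bigotimes_{B\in X}\ta[B] \map{\mu_X} \ta[I],
\]
where $\mu_X$ is the higher product of $\ta$. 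The key point is that because $\ta$ is commutative, the higher product $\mu_{S_1,\ldots,S_k}$ does not depend on the order in which the blocks are listed; hence it descends from a map indexed by a composition to a well-defined map $\mu_X$ indexed by the underlying partition $X=\{B_1,\ldots,B_k\}$, acting on the unordered tensor $\tq(X)$. This is exactly what makes the construction compatible with the unordered tensor products used to define $\Sc(\tq)$, and it is the one place where commutativity of $\ta$ (as opposed to just being a monoid) is essential.

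\medskip

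Next I would check the two defining properties. That $\hat\zeta$ extends $\zeta$ along $\eta_\tq$ is immediate: on a nonempty $I$, the embedding $\eta_\tq$ lands in the summand indexed by the one-block partition $X=\{I\}$, on which $\hat\zeta_I$ restricts to $\mu_{\{I\}}\circ\zeta_I=\zeta_I$ since $\mu_{\{I\}}=\id$. To verify that $\hat\zeta$ is a morphism of monoids, I would unwind the product $\mu_{S,T}$ of $\Sc(\tq)$: it sends $\tq(X)\otimes\tq(Y)$ to $\tq(X\sqcup Y)$ by the canonical isomorphism, for $X\vdash S$ and $Y\vdash T$. Applying $\hat\zeta_I$ to the image and comparing with $\mu_{S,T}^{\ta}\circ(\hat\zeta_S\otimes\hat\zeta_T)$, both sides reduce to the higher product $\mu_{X\sqcup Y}$ of $\ta$ applied to $\bigl(\bigotimes_B\zeta_B\bigr)\otimes\bigl(\bigotimes_C\zeta_C\bigr)$; equality then follows from the associativity of the higher products of $\ta$, namely that $\mu_{X\sqcup Y}$ factors through $\mu_{S,T}^{\ta}\circ(\mu_X\otimes\mu_Y)$. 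I would also verify naturality, i.e. compatibility with bijections $\sigma:I\to J$, which follows routinely from the naturality condition~\eqref{e:nat} for $\ta$ and the functoriality of $\zeta$.

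\medskip

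Finally, for uniqueness I would argue that any morphism of monoids $g:\Sc(\tq)\to\ta$ with $g\,\eta_\tq=\zeta$ is forced to agree with $\hat\zeta$. The species $\Sc(\tq)$ is generated as a monoid by the image of $\eta_\tq$: concretely, an element of the summand $\tq(X)$ indexed by $X=\{B_1,\ldots,B_k\}$ is a product, under $\mu^{\Sc(\tq)}$, of the elements $\eta_\tq(x_{B_i})\in\tq[B_i]\subseteq\Sc(\tq)[B_i]$. Since $g$ is a morphism of monoids, its value on such a product is determined by its values on the generators, which are fixed to be $\zeta_{B_i}(x_{B_i})$ by the extension condition; hence $g=\hat\zeta$. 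The main subtlety throughout—and the one step deserving genuine care rather than routine verification—is confirming that $\mu_X$ is well defined independently of any ordering of the blocks of $X$, so that all the maps above factor correctly through the unordered tensor products defining $\Sc(\tq)$; once that is in place, every diagram reduces to an associativity identity for the higher products of the commutative monoid $\ta$, and these hold by the remark following~\eqref{e:iterated-mu}.
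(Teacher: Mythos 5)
Your proposal is correct and takes essentially the same route as the paper: the map you construct is exactly the $\hat{\zeta}$ described after the theorem statement (the paper defers the detailed verification to~\cite[Theorem~11.13]{AguMah:2010}), including the crucial observation that commutativity of $\ta$ is what makes $\mu_X$ well-defined on the unordered tensor $\tq(X)$. The verifications you supply (extension along $\eta_\tq$, the morphism property via higher associativity, and uniqueness via generation by the image of $\eta_\tq$) are the standard ones and are sound.
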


The map $\hat{\zeta}$ is as follows. On the empty set, it is the unit map of $\ta$:
\[
\Sc(\tq)[\emptyset] = \Kb \map{\iota_\emptyset} \ta[\emptyset].
\]
On a nonempty set $I$, it is the sum of the maps
\[
\tq(X) = \bigotimes_{B\in X} \tq[B] \map{\bigotimes_{B\in X} \zeta_{B}}
\bigotimes_{B\in X} \ta[B]
\map{\mu_{X}} \ta[I],
\]
where $\mu_{X}$ denotes the higher product $\mu_{I_1,\ldots,I_k}$ as in~\eqref{e:iterated-mu}
and $(I_1,\ldots,I_k)$ is any composition of $I$ with support $X$.
($\mu_X$ is well-defined by commutativity of $\ta$.)

\smallskip

When there is given an isomorphism of monoids $\ta\cong\Sc(\tq)$, we say that
the positive species $\tq$ is a \emph{basis} of the (free) commutative monoid $\ta$.

\subsection{The free commutative monoid as a Hopf monoid}\label{ss:freecomHopf}

The monoid $\Sc(\tq)$ admits a
canonical Hopf monoid structure. Given $I=S\sqcup T$,
the coproduct
\[
\Delta_{S,T}: \Sc(\tq)[I]\to\Sc(\tq)[S]\otimes\Sc(\tq)[T]
\]
is the sum of the maps 
\begin{align*}
\tq(X) & \to \tq(X|_S)\otimes \tq(X|_T)\\
\bigotimes_{B\in X} x_B & \mapsto 
\begin{cases} 
\bigl(\bigotimes_{B\in X|_S} x_B\bigr)\otimes \bigl(\bigotimes_{B\in X|_T} x_B\bigr) & \text{ if $S$ is $X$-admissible,}\\
0 & \text{ otherwise.}
\end{cases}
\end{align*}
Note that $S$ is $X$-admissible if and only if $X=X|_S\sqcup X|_T$.

This turns $\Sc(\tq)$ into a cocommutative bimonoid. Since it is connected,
it is a Hopf monoid. We have
\[
\Pc\bigl(\Sc(\tq)\bigr) = \tq;
\]
more precisely, the primitive part identifies with $\tq$ under the embedding $\eta_q$.

We return to the situation of Theorem~\ref{t:dcmunivp} in the case when
$\ta$ is in fact a commutative Hopf monoid $\thh$. Thus, we are given a
morphism of species $\zeta:\tq\to\thh$ and we consider the morphism of monoids
$\hat{\zeta}:\Sc(\tq)\to\thh$.

\begin{proposition}\label{p:dchmunivp}
Suppose that $\zeta$ maps elements of $\tq$ to primitive elements of $\thh$.
Then $\hat{\zeta}$ is a morphism of Hopf monoids.
\end{proposition}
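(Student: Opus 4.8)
The plan is to show that $\hat{\zeta}$ is a morphism of bimonoids; since both $\Sc(\tq)$ and $\thh$ are connected and hence Hopf monoids, such a morphism automatically commutes with the antipodes (as recorded among the general facts in Section~\ref{ss:bim-hopf}), so it will follow that $\hat{\zeta}$ is a morphism of Hopf monoids. By Theorem~\ref{t:dcmunivp} we already know that $\hat{\zeta}$ is a morphism of monoids, so the only thing I would need to verify is comonoid compatibility, namely that $\Delta_\thh\circ\hat{\zeta}=(\hat{\zeta}\bdot\hat{\zeta})\circ\Delta_{\Sc(\tq)}$ as maps $\Sc(\tq)\to\thh\bdot\thh$, together with the counit identity.

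The key idea is that both of these composites are themselves morphisms of monoids. Indeed, the coproduct of a bimonoid is a morphism of monoids, so $\Delta_\thh\circ\hat{\zeta}$ is a composite of monoid morphisms; likewise $\Delta_{\Sc(\tq)}$ is a monoid morphism, and the Cauchy product $\hat{\zeta}\bdot\hat{\zeta}$ of monoid morphisms is again one, so $(\hat{\zeta}\bdot\hat{\zeta})\circ\Delta_{\Sc(\tq)}$ is a monoid morphism as well. Because $\thh$ is commutative and we are in the symmetric case $q=1$, the Cauchy product $\thh\bdot\thh$ is itself a commutative monoid. I would then invoke the uniqueness clause in the universal property of $\Sc(\tq)$ (Theorem~\ref{t:dcmunivp}): two morphisms of monoids from $\Sc(\tq)$ into the commutative monoid $\thh\bdot\thh$ coincide as soon as they agree after restriction along $\eta_\tq\colon\tq\into\Sc(\tq)$.

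It thus suffices to compare the two composites on the generating species $\tq$. Using $\hat{\zeta}\circ\eta_\tq=\zeta$, the left-hand side restricts to $\Delta_\thh\circ\zeta$. For the right-hand side, I would use that $\eta_\tq(x)$ is primitive in $\Sc(\tq)$ for every $x\in\tq[I]$ (recall $\Pc(\Sc(\tq))=\tq$); hence by~\eqref{e:conn-unit} its coproduct $\Delta_{S,T}$ equals $\eta_\tq(x)\otimes 1$ when $S=I$, equals $1\otimes\eta_\tq(x)$ when $S=\emptyset$, and vanishes otherwise. Applying $\hat{\zeta}\bdot\hat{\zeta}$ and using that $\hat{\zeta}$ preserves units and satisfies $\hat{\zeta}\circ\eta_\tq=\zeta$, the right-hand side sends $x$ to $\zeta(x)\otimes 1$, to $1\otimes\zeta(x)$, or to $0$ in the respective cases. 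This is exactly $\Delta_{S,T}(\zeta(x))$, since $\zeta(x)$ is primitive in $\thh$ by hypothesis, so the two restrictions agree. The counit identity $\epsilon_\thh\circ\hat{\zeta}=\epsilon_{\Sc(\tq)}$ holds trivially, both species being connected and $\hat{\zeta}$ being the canonical identification $\Kb\cong\Kb$ on the empty set.

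The only step requiring care is the reduction via the universal property: one must confirm that both candidate maps really are morphisms of monoids into a \emph{commutative} monoid, which rests on the bimonoid compatibility of $\Sc(\tq)$ and $\thh$ and on the symmetry $q=1$ that renders $\thh\bdot\thh$ commutative. Once this is secured, the entire comparison collapses onto the generators and reduces to the primitivity of $\zeta(x)$, which is precisely the hypothesis; no direct manipulation of the coproduct formula on $\Sc(\tq)$ is needed.
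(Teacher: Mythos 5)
Your proof is correct. Note, however, that the paper itself gives no in-text argument for this proposition: it simply defers to \cite[Theorem~11.14]{AguMah:2010}, so your write-up is a genuinely self-contained alternative. Your route is the classical ``two monoid maps agreeing on generators'' trick: since $\Delta:\thh\to\thh\bdot\thh$ is a morphism of monoids (one of the bimonoid axioms of Section~\ref{ss:bim-hopf}), since the Cauchy product of monoid morphisms is again one, and since $\thh\bdot\thh$ is a commutative monoid (here you correctly use both the commutativity of $\thh$ and the symmetry at $q=1$), the uniqueness clause of Theorem~\ref{t:dcmunivp} collapses the comonoid-compatibility check onto the generating species $\tq$, where it reduces exactly to the primitivity hypothesis on $\zeta$ together with the easy containment $\tq\subseteq\Pc(\Sc(\tq))$ (immediate from the coproduct formula, since no proper nonempty subset is $\{I\}$-admissible). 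The more pedestrian alternative, presumably the one in the cited monograph, is a direct computation of both sides of $\Delta_{S,T}\hat{\zeta}_I=(\hat{\zeta}_S\otimes\hat{\zeta}_T)\Delta_{S,T}$ on the basis elements $\bigotimes_{B\in X}x_B$ of $\tq(X)$, using the explicit coproduct of $\Sc(\tq)$ and admissibility of $S$; that check is elementary but requires handling the unordered tensors and the splitting $X=X|_S\sqcup X|_T$ by hand. Your argument trades that combinatorial bookkeeping for the categorical facts you flag at the end, all of which are available in the paper or are standard in a symmetric monoidal category; it also makes transparent why commutativity of $\thh$ is the only structural input beyond primitivity. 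Two small remarks: the same strategy proves Proposition~\ref{p:dhmunivp} (the free, noncommutative case) even more easily, since there the universal property of $\Tc(\tq)$ needs no commutativity of the target; and your final appeal to Section~\ref{ss:bim-hopf} to upgrade ``morphism of bimonoids'' to ``morphism of Hopf monoids'' is exactly right, since both $\Sc(\tq)$ and $\thh$ are connected and hence Hopf by Proposition~\ref{p:conn-hopf}.
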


This is a special case of~\cite[Theorem~11.14]{AguMah:2010}.

\begin{theorem}\label{t:free-com-apode}
The antipode of $\Sc(\tq)$ is given by
\begin{equation}\label{e:free-com-apode}
\apode_I(z) = (-1)^{\ell(X)} z
\end{equation} 
for any $z\in \tq(X)$, where $\ell(X)$ is the number of blocks of $X$.
\end{theorem}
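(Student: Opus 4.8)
The plan is to write a typical element $z\in\tq(X)$ as a product of primitive elements and then exploit two facts already established: that the antipode negates primitives, and that it reverses products. Since $\apode_I$ is linear and the claimed formula $z\mapsto(-1)^{\ell(X)}z$ is linear in $z$, it suffices to verify the identity on the pure tensors that span $\tq(X)$.

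First I would invoke the identification $\Pc\bigl(\Sc(\tq)\bigr)=\tq$ recorded in Section~\ref{ss:freecomHopf}. Writing $X=\{B_1,\dots,B_k\}$ with $k=\ell(X)$ blocks and $z=\bigotimes_{i=1}^{k} x_{B_i}$ where $x_{B_i}\in\tq[B_i]$, each factor $x_{B_i}$ becomes, under the embedding $\eta_\tq$, a primitive element of $\Sc(\tq)[B_i]$, and the pure tensor $z$ is precisely their product:
\[
z=\mu_X\bigl(x_{B_1}\otimes\cdots\otimes x_{B_k}\bigr).
\]

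Next I would apply Proposition~\ref{p:apode-rev} iteratively. We are in the commutative case $q=1$, so $\beta_1$ is the plain symmetry $u\otimes v\mapsto v\otimes u$; combining the commuting diagram~\eqref{e:apode-rev-prod} with the commutativity axiom~\eqref{e:comm} of $\Sc(\tq)$ upgrades the anti-homomorphism property of the antipode to a genuine homomorphism property, namely $\apode_I\,\mu_{S,T}(u\otimes v)=\mu_{S,T}\bigl(\apode_S(u)\otimes\apode_T(v)\bigr)$. A short induction on the number of factors then yields
\[
\apode_I(z)=\mu_X\bigl(\apode_{B_1}(x_{B_1})\otimes\cdots\otimes\apode_{B_k}(x_{B_k})\bigr).
\]
By Proposition~\ref{p:prim-ant}, $\apode_{B_i}(x_{B_i})=-x_{B_i}$ for each $i$, since each factor is primitive. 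Pulling out the $k$ signs gives $(-1)^{k}\,\mu_X(x_{B_1}\otimes\cdots\otimes x_{B_k})=(-1)^{\ell(X)}z$, as claimed.

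The computation is routine once the two ingredients are in place, so there is no serious obstacle; the only point deserving care is the interplay between the braiding in Proposition~\ref{p:apode-rev} and commutativity, which together ensure that reversing the order of the factors is harmless and that the antipode distributes over the higher product $\mu_X$ without introducing signs of its own. An alternative route would be to specialize Takeuchi's formula~\eqref{e:antipode-r} directly, but that would require controlling the cancellations among the $\mu_F\Delta_F$ terms, which is exactly what the primitive-element argument circumvents.
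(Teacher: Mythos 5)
Your proof is correct and is essentially the paper's own argument: the paper proves Theorem~\ref{t:free-com-apode} exactly as it proves Theorem~\ref{t:free-apode}, namely by noting that elements of $\tq$ are primitive (so the antipode negates them, Proposition~\ref{p:prim-ant}) and that the antipode reverses products (Proposition~\ref{p:apode-rev}), with commutativity of $\Sc(\tq)$ making the reversal harmless. Your handling of the braiding versus the commutativity axiom~\eqref{e:comm} is precisely the point that needs care, and you got it right.
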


One may prove this result as that in Theorem~\ref{t:free-apode}.
It is a special case of~\cite[Theorem~11.40]{AguMah:2010}.

\subsection{The free commutative monoid on an arbitrary species}\label{ss:freecom-arb}

Let $\tq$ be an arbitrary species (not necessarily positive).
The free commutative monoid $\Sc(\tq)$ on $\tq$ exists. It is the Hadamard product
\[
\Sc(\tq) := \wU_{\Sc(\tq[\emptyset])} \times \Sc(\tq_+).
\]
The second factor is the free commutative monoid (as in Section~\ref{ss:freecom})
on the positive part of $\tq$. The first factor is the monoid associated to the free
commutative algebra on the space $\tq[\emptyset]$ as in~\eqref{e:uniform}.
Theorem~\ref{t:dcmunivp} continues to hold.
One has $\Sc(\tq)[\emptyset]=\Sc(\tq[\emptyset])$;
in particular, $\Sc(\tq)$ is connected if and only if
$\tq$ is positive. 

If $\tq[\emptyset]$ is a coalgebra, then $\Sc(\tq[\emptyset])$
acquires a nonstandard bialgebra structure and $\Sc(\tq)$ is a bimonoid.
If $\tq[\emptyset]\neq 0$, then $\Sc(\tq[\emptyset])$ and $\Sc(\tq)$ are not Hopf.

In the sections that follow, and in most of the paper, we restrict
our attention to the case of positive $\tq$.

\subsection{The Hadamard product of free commutative monoids}\label{ss:had-freecom}

Let $\tp$ and $\tq$ be positive species.
Define a new positive species $\tp \diamond \tq$ by
\begin{equation}\label{e:diamond}
(\tp \diamond \tq)[I] :=
\bigoplus_{\substack{X,Y\vDash I\\X\wedge Y=\{I\}}} \tp(X)\otimes\tq(Y).
\end{equation}
The sum is over all pairs of partitions of $I$
whose meet is the minimum partition. 

\begin{theorem}\label{t:had-freecom}
For any positive species $\tp$ and $\tq$,
there is a natural isomorphism of Hopf monoids
\begin{equation}\label{e:had-freecom-monoid}
\Sc(\tp \diamond \tq) \cong \Sc(\tp)\times\Sc(\tq).
\end{equation}
\end{theorem}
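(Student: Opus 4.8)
The plan is to imitate the proof of Theorem~\ref{t:had-free-monoid}, replacing the universal property of the free monoid by that of the free commutative monoid (Theorem~\ref{t:dcmunivp}); in the commutative setting the argument is in fact cleaner, since everything is governed by the meet operation in the partition lattice. First I would build a morphism of species $\iota\colon \tp\diamond\tq \to \Sc(\tp)\times\Sc(\tq)$. For a nonempty set $I$ and a pair $X,Y\vdash I$ with $X\wedge Y=\{I\}$, the summand $\tp(X)\otimes\tq(Y)$ of $(\tp\diamond\tq)[I]$ in~\eqref{e:diamond} is sent identically to the summand $\tp(X)\otimes\tq(Y)$ of $\Sc(\tp)[I]\otimes\Sc(\tq)[I]=(\Sc(\tp)\times\Sc(\tq))[I]$.

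Next I would verify that $\iota$ lands in the primitive part. Since the coproduct of a Hadamard product is the tensor product of the two coproducts, for $\xi\in\tp(X)$ and $\eta\in\tq(Y)$ the component $\Delta_{S,T}(\xi\otimes\eta)$ vanishes unless $S$ is simultaneously $X$-admissible and $Y$-admissible, that is, a union of blocks of $X$ and a union of blocks of $Y$. The general lattice fact I would isolate here is that the sets that are both $X$- and $Y$-admissible are exactly the $(X\wedge Y)$-admissible ones (a connectedness-of-components argument: admissibility propagates along any chain of shared blocks). When $X\wedge Y=\{I\}$ the only such sets are $\emptyset$ and $I$, so $\Delta_{S,T}(\xi\otimes\eta)=0$ for every decomposition $I=S\sqcup T$ with $S,T$ nonempty, and $\xi\otimes\eta$ is primitive. (The inclusions $\tp,\tq\subseteq$ primitives also follow from Proposition~\ref{p:had-prim} together with $\Pc(\Sc(\tp))=\tp$ from Section~\ref{ss:freecomHopf}, but the general pair needs the admissibility argument.)

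Because $\Sc(\tp)\times\Sc(\tq)$ is a connected commutative Hopf monoid and $\iota$ takes values in its primitive part, Theorem~\ref{t:dcmunivp} extends $\iota$ uniquely to a morphism of commutative monoids
\[
\hat\iota\colon \Sc(\tp\diamond\tq)\longrightarrow \Sc(\tp)\times\Sc(\tq),
\]
which is a morphism of Hopf monoids by Proposition~\ref{p:dchmunivp}. It then remains only to show that $\hat\iota$ is a linear isomorphism on each $I$; its inverse is then automatically a morphism of Hopf monoids, yielding the claimed isomorphism. For bijectivity I would use the same lattice fact in the form of a decomposition: for $X,Y\vdash I$, setting $Z:=X\wedge Y$, each block $B\in Z$ satisfies $X|_B\wedge Y|_B=\{B\}$, and conversely the data $\bigl(Z,(X|_B,Y|_B)_{B\in Z}\bigr)$ recovers $(X,Y)$ via $X=\bigsqcup_{B\in Z}X|_B$ and $Y=\bigsqcup_{B\in Z}Y|_B$. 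This gives a bijection between pairs $(X,Y)$ of partitions of $I$ and the data of a partition $Z\vdash I$ decorated on each block $B$ by a connected pair $(X_B,Y_B)$ with $X_B\wedge Y_B=\{B\}$, and the latter indexes exactly the summand $(\tp\diamond\tq)(Z)=\bigotimes_{B\in Z}\bigl(\tp(X_B)\otimes\tq(Y_B)\bigr)$ of $\Sc(\tp\diamond\tq)[I]$. Unwinding the construction of $\hat\iota$, on this summand it applies the higher product $\mu_Z$ of $\Sc(\tp)\times\Sc(\tq)$, which multiplies the $\Sc(\tp)$- and $\Sc(\tq)$-parts separately and so carries $\bigotimes_{B}(x_B\otimes y_B)$ to $\bigl(\bigotimes_B x_B\bigr)\otimes\bigl(\bigotimes_B y_B\bigr)\in\tp(X)\otimes\tq(Y)$, an isomorphism onto the summand indexed by $(X,Y)$ with $X\wedge Y=Z$. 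Since $Z=X\wedge Y$ is determined by the target, distinct decorated $Z$ hit distinct $(X,Y)$, so $\hat\iota_I$ is the direct sum of these isomorphisms along the bijection, hence itself an isomorphism; naturality in $I$ is immediate.

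The main obstacle is the final step, and concretely the two uses of the fact that a subset is admissible for both $X$ and $Y$ precisely when it is admissible for $X\wedge Y$: once to establish primitivity, and once to produce the block-diagonal bijection that makes $\hat\iota$ an isomorphism. Everything else is a formal consequence of the universal property and of the compatibility of the Hadamard product with the (co)monoid structures.
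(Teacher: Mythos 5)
Your proposal is correct and is essentially the paper's own argument: the paper proves Theorem~\ref{t:had-freecom} by citing ``arguments similar to those in \cite[Theorem~3.8 and Proposition~3.10]{AguMah:2012}'' (i.e., the proof of Theorem~\ref{t:had-free-monoid}), which is exactly the route you follow --- the evident inclusion $\tp\diamond\tq\into\Sc(\tp)\times\Sc(\tq)$, the universal property of $\Sc$ together with Proposition~\ref{p:dchmunivp}, and a summand-by-summand bijectivity check. Your two uses of the lattice fact (common admissibility equals $(X\wedge Y)$-admissibility), which fails for set compositions but holds for set partitions, is precisely the detail that makes the commutative case a genuine Hopf monoid isomorphism at $q=1$, unlike the free case.
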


Arguments similar to those in~\cite[Theorem~3.8 and Proposition~3.10]{AguMah:2012}
yield this result.

\subsection{The free commutative set-theoretic monoid}\label{ss:set-freecom}

Let $\rQ$ be a positive set species.
The free commutative set-theoretic monoid on $\rQ$ is the set species $\Sc(\rQ)$ defined by
\[
\Sc(\rQ)[I] := \coprod_{X\vdash I} \rQ(X),
\]
where 
\[
\rQ(X) := \prod_{B\in X} \rQ[B].
\]
There is a canonical isomorphism of monoids
\[
\Kb \Sc(\rQ) \cong \Sc(\Kb\rQ).
\]

The comonoid structure of $\Sc(\Kb\rQ)$ of Section~\ref{ss:freecomHopf} is, in
general, not the linearization of a set-theoretic comonoid structure on $\Sc(\rQ)$.
For a case in which it is, see Section~\ref{ss:exp}.

\subsection{The abelianization}\label{ss:abel}

Let $\tq$ be a positive species and
consider the free Hopf monoid $\Tc(\tq)$
and the free commutative Hopf monoid $\Sc(\tq)$ on it.
By freeness of the former (Theorem~\ref{t:dmunivp}),
there is a unique morphism of monoids
\[
\pi_{\tq}\colon \Tc(\tq)\to\Sc(\tq)
\]
such that
\[
\xymatrix{
\Tc(\tq) \ar[rr]^-{\pi_{\tq}} & & {\Sc(\tq)}\\
& \tq \ar[ru]_{\eta_\tq}\ar[lu]^{\eta_\tq}
}
\]
commutes, where $\eta_\tq$ denotes either universal arrow.

It is easy to see that $\pi_{\tq}$ is the abelianization of $\Tc(\tq)$.
Since the elements of $\tq$ are primitive, 
Proposition~\ref{p:dchmunivp} implies that $\pi_{\tq}$ is a morphism
of Hopf monoids.

On a finite set $I$, the morphism $\pi_{\tq}$ is the sum of the
canonical isomorphisms
\[
\tq(F) = \tq[I_1] \otimes \dots \otimes \tq[I_k] \isoto
\bigotimes_{B \in \supp F} \tq[B] = \tq(\supp F)
\]
over all compositions $F=(I_1,\ldots,I_k)$ of $I$.
The support is as in Section~\ref{ss:support}.
(The map $\pi_{\tq}$ is not an isomorphism
since a given partition supports many compositions.)

Since the support map does not preserve meets, the map
\[
\Tc(\tp\star\tq) \map{\cong} \Tc(\tp)\times\Tc(\tq) \map{\pi_\tp\times\pi_\tq} 
\Sc(\tp)\times\Sc(\tq) \map{\cong} \Sc(\tp\diamond\tq)
\]
(where the isomorphisms are as in~\eqref{e:star} and~\eqref{e:diamond})
does not send $\tp\star\tq$ to $\tp\diamond\tq$.

Abelianization is also meaningful in the set-theoretic context.

\section{The free Lie monoid}\label{s:freelie}

Recall the notion of a Lie monoid from Section~\ref{ss:liemon}.
We discuss two important universal constructions, 
that of the free Lie monoid on a positive species, 
and the universal enveloping monoid of a positive Lie monoid.

\subsection{The free Lie monoid}\label{ss:freelie}

We begin with some preliminary definitions.
A \emph{bracket sequence} $\alpha$ on $k$ letters is a way to parenthesize $k$ letters.
The concatenation $\alpha \cdot \beta$ of bracket sequences $\alpha$ and $\beta$
is defined in the obvious manner.
For instance, $(((a,b),c),(d,e))$ is the concatenation of $((a,b),c)$ and $(d,e)$.
The left bracket sequence is the one in which all brackets are to the left.
For instance, $(((a,b),c),d)$ is the left bracket sequence on $4$ letters.

A \emph{bracket composition} of a finite nonempty set $I$ is a pair $(F,\alpha)$,
where $F$ is a composition of $I$ and 
$\alpha$ is a bracket sequence on $\len(F)$ letters.
It is convenient to think of $\alpha$ as a bracket sequence on the blocks of $F$.
For example, $((S_1,S_2),(S_3,S_4))$ is a bracket composition.

\smallskip

Fix a positive species $\tq$.
We proceed to describe the free Lie monoid on $\tq$. 
For any bracket composition $(F,\alpha)$,
parenthesizing $\tq(F)$ using $\alpha$ yields a bracketed tensor product
which we denote by $\tq(F,\alpha)$.
Note that there is a canonical identification
\begin{equation}\label{eq:nonass}
\tq(F,\alpha) \otimes \tq(G,\beta) \isoto \tq(F\cdot G,\alpha\cdot \beta). 
\end{equation}
Now consider the positive species whose $I$-component is
\[
\bigoplus_{(F,\alpha)} \tq(F,\alpha),
\]
where the sum is over all bracket compositions of $I$.
The product~\eqref{eq:nonass} turns this space into a \emph{non-associative} monoid.
Let $\freelie(\tq)$ denote its quotient 
by the two-sided ideal generated by the relations:

For all $x\in \tq(F,\alpha)$, $y\in \tq(G,\beta)$, and $z\in \tq(H,\gamma)$, 
\begin{equation}\label{e:antisym-free}
x \otimes y + y \otimes x = 0, 
\end{equation}
and 
\begin{equation}\label{e:jacobi-free}
(x \otimes y) \otimes z + (z \otimes x) \otimes y + (y \otimes z) \otimes x = 0. 
\end{equation}
The relations~\eqref{e:antisym-free} and~\eqref{e:jacobi-free} imply that
the induced product satisfies~\eqref{e:antisym} and~\eqref{e:jacobi}.
So $\freelie(\tq)$ is a Lie monoid.
In fact, by construction, it is the free Lie monoid on $\tq$.
It satisfies the following universal property.

\begin{theorem}\label{t:freelie}
Let $\tg$ be a Lie monoid, $\tq$ a positive species, and
$\zeta\colon \tq \to \tg$ a morphism of species.
Then there exists a unique morphism of Lie monoids
$\hat{\zeta}\colon \freelie(\tq) \to \tg$ such that
\begin{equation*}
\begin{gathered}
\xymatrix@C+20pt{
\freelie(\tq) \ar@{.>}[r]^-{\hat{\zeta}} & \tg\\
 \tq \ar[ru]_{\zeta}\ar[u]^{\eta_\tq}
}
\end{gathered}
\end{equation*}
commutes. 
\end{theorem}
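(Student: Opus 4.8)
The plan is to build $\hat\zeta$ in two stages: first define it on the non-associative monoid underlying the construction of $\freelie(\tq)$, and then check that it descends across the quotient by the Lie relations. First I would define a morphism $\tilde\zeta$ from the non-associative monoid $M(\tq)$ with $I$-component $\bigoplus_{(F,\alpha)}\tq(F,\alpha)$ (the object assembled just before quotienting, with product \eqref{eq:nonass}) into $\tg$, by induction on $\len(F)$. On a one-letter bracket composition we have $\tq(F,\alpha)=\tq[I]$, and I set $\tilde\zeta:=\zeta_I$. If $\len(F)\geq 2$, the outermost bracket of $\alpha$ determines a unique factorization $(F,\alpha)=(F_1\cdot F_2,(\alpha_1,\alpha_2))$ as a product \eqref{eq:nonass}, with $F_1\vDash S$, $F_2\vDash T$ and $I=S\sqcup T$; then $\tq(F,\alpha)=\tq(F_1,\alpha_1)\otimes\tq(F_2,\alpha_2)$ and I set
\[
\tilde\zeta(x\otimes y):=[\tilde\zeta(x),\tilde\zeta(y)]_{S,T},
\]
using the Lie bracket of $\tg$. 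Naturality with respect to bijections is immediate, and by construction $\tilde\zeta$ carries the product \eqref{eq:nonass} to the bracket of $\tg$; that is, $\tilde\zeta$ is a morphism of non-associative monoids, expressing the universal property of $M(\tq)$ as the free non-associative monoid on $\tq$.

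Next I would show $\tilde\zeta$ factors through the quotient defining $\freelie(\tq)$. Applying $\tilde\zeta$ to the antisymmetry relation \eqref{e:antisym-free} gives $[\tilde\zeta(x),\tilde\zeta(y)]_{S,T}+[\tilde\zeta(y),\tilde\zeta(x)]_{T,S}$, which vanishes by antisymmetry \eqref{e:antisym} in $\tg$; applying it to the Jacobi relation \eqref{e:jacobi-free} yields exactly the left-hand side of \eqref{e:jacobi}, with $\tilde\zeta(x),\tilde\zeta(y),\tilde\zeta(z)$ in place of the arguments, hence vanishes as well. Thus $\tilde\zeta$ kills the generating relations; since it is multiplicative and the bracket of $\tg$ is bilinear (so $[0,\,\cdot\,]=[\,\cdot\,,0]=0$), it annihilates the entire two-sided ideal these relations generate, and therefore descends to a morphism of species $\hat\zeta\colon\freelie(\tq)\to\tg$. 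Because $\hat\zeta$ respects the induced products and both its source and target satisfy \eqref{e:antisym}--\eqref{e:jacobi}, it is a morphism of Lie monoids. The required triangle commutes by the base case of the induction, since $\eta_\tq$ embeds each $\tq[I]$ as the one-letter bracket compositions, on which $\hat\zeta$ restricts to $\zeta_I$.

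For uniqueness I would observe that the quotient map $M(\tq)\onto\freelie(\tq)$ is surjective and that $M(\tq)$ is generated by $\tq$ under the non-associative product; equivalently, $\freelie(\tq)$ is generated as a Lie monoid by $\im(\eta_\tq)$. Hence any morphism of Lie monoids agreeing with $\zeta$ on $\tq$ is forced, by compatibility with brackets, to agree with $\hat\zeta$ on every bracketed generator, so it coincides with $\hat\zeta$. I expect the main obstacle to be the descent step: one must verify that $\tilde\zeta$ vanishes on the whole ideal, not merely on the listed relations \eqref{e:antisym-free} and \eqref{e:jacobi-free}. This is exactly where multiplicativity of $\tilde\zeta$ combined with bilinearity of the bracket is essential, and where one must keep careful track of the index sets $S,T$ so that the brackets $[\,\cdot,\cdot\,]_{S,T}$ are applied in $\tg$ with the decompositions dictated by the outermost bracket of $\alpha$.
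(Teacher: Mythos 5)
Your proof is correct and takes essentially the approach the paper intends: the paper gives no separate argument, asserting the universal property holds ``by construction,'' and your two-stage argument---defining the evaluation morphism on the non-associative monoid $\bigoplus_{(F,\alpha)}\tq(F,\alpha)$ by induction on the outermost bracket, then checking it annihilates the generating relations (hence the whole ideal, by multiplicativity and bilinearity of the bracket) and so descends to $\freelie(\tq)$---is precisely the standard fleshing-out of that claim. Your uniqueness argument, via generation of $\freelie(\tq)$ by $\im(\eta_\tq)$ under iterated brackets, is likewise the expected one.
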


\begin{remark}
We mention that the free Lie monoid $\freelie(\tq)$ on an arbitrary species $\tq$ also exists. 
One has that $\freelie(\tq)[\emptyset]$ is the free Lie algebra 
on the vector space $\tq[\emptyset]$. 
Thus, $\freelie(\tq)$ is positive if and only if $\tq$ is positive.
\end{remark}

For any bracket composition $(F,\alpha)$, there is a map
\begin{equation}\label{eq:temp}
\tq(F,\alpha) \to \bigoplus_{G:\,\supp G=\supp F} \tq(G) 
\end{equation}
constructed by replacing each bracket in $\alpha$ by a commutator. 
For instance, if $F$ has three blocks and $\alpha$ is the left bracket sequence, then
\[
((x \otimes y) \otimes z) \mapsto 
x \otimes y \otimes z - y \otimes x \otimes z - z \otimes x \otimes y + z \otimes y \otimes x.
\]
Note that among all tensors on the right, only one begins with $x$.
This property holds whenever $\alpha$ is the left bracket sequence.
An explicit description of the action of this map on the left bracket sequence 
is provided by Lemma~\ref{l:left-brac-peakless} below.

By summing~\eqref{eq:temp} over all $(F,\alpha)$ and noting that the relations
~\eqref{e:antisym-free} and~\eqref{e:jacobi-free} map to zero,
we obtain an induced map
\begin{equation}\label{e:commutator-map}
\freelie(\tq) \to \Tc(\tq). 
\end{equation}

\begin{lemma}\label{l:fill-gap}
The map~\eqref{e:commutator-map} is injective.
\end{lemma}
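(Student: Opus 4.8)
The plan is to grade both sides by the support partition, reduce to the classical embedding of a free Lie algebra into a free associative algebra, and then conclude by a triangularity (``leading word'') argument. First I would note that the map~\eqref{eq:temp} sends $\tq(F,\alpha)$ into $\bigoplus_{G:\,\supp G=\supp F}\tq(G)$, so~\eqref{e:commutator-map} respects the decompositions of $\freelie(\tq)[I]$ and $\Tc(\tq)[I]$ into the components indexed by partitions $X\vdash I$; it therefore suffices to prove injectivity on each component with $\supp F=X=\{B_1,\dots,B_k\}$ fixed. On such a component each tensor factor carries exactly one of the spaces $\tq[B_1],\dots,\tq[B_k]$, and the construction is natural and multilinear in the tuple $(\tq[B])_{B\in X}$; since the relevant braiding is the plain symmetry, the restricted map has the form $\psi_X\otimes\id_{\tq[B_1]\otimes\cdots\otimes\tq[B_k]}$ for a linear map $\psi_X$ arising from the ``one letter per block'' case. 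As $\psi_X\otimes\id$ is injective exactly when $\psi_X$ is, I may replace each block $B$ by a single letter $x_B$, reducing the lemma to injectivity of the bracketing-to-commutator map $c$ from the multilinear component $L_k$ of the free Lie algebra on $x_1,\dots,x_k$ into $T_k:=\bigoplus_{\sigma\in\Sr_k}\Kb\,x_{\sigma(1)}\cdots x_{\sigma(k)}$.

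Next I would build a convenient spanning set of $L_k$. Using only~\eqref{e:antisym-free} and~\eqref{e:jacobi-free}, every Lie monomial rewrites as a combination of left-normed brackets $[[\cdots[x_{i_1},x_{i_2}],\dots],x_{i_k}]$, abbreviated $[x_{i_1},\dots,x_{i_k}]$. The key refinement, which I expect to be the main obstacle, is that one may further fix the first letter to be $x_1$. I would obtain this from the auxiliary claim that for any element $A$ and any left-normed bracket $[w']$ on a letter set $S'$, the product $[A,[w']]$ is a $\Zb$-linear combination of left-normed brackets $[A,y_1,\dots,y_{|S'|}]$ with $(y_1,\dots,y_{|S'|})$ an ordering of $S'$; this follows by induction on $|S'|$ from the Jacobi consequence $[A,[[w''],x]]=[[A,[w'']],x]-[[A,x],[w'']]$, treating $[A,x]$ as the new leading element in the second summand. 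Applying this claim with $A=x_1$ and using antisymmetry to rewrite $[[u],x_1]=-[x_1,[u]]$, I would conclude that every left-normed bracket is a combination of the $(k-1)!$ brackets $\ell_\pi:=[x_1,x_{\pi(2)},\dots,x_{\pi(k)}]$ indexed by permutations $\pi$ fixing $1$; hence the $\ell_\pi$ span $L_k$.

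Finally I would establish a triangularity for the images. I claim that the unique word occurring in $c(\ell_\pi)$ that begins with $x_1$ is $x_1\,x_{\pi(2)}\cdots x_{\pi(k)}$, with coefficient $+1$ --- precisely the property observed after~\eqref{eq:temp} for the left bracket sequence. This follows by induction on $k$ from $c(\ell_\pi)=c(\ell')\,x_{\pi(k)}-x_{\pi(k)}\,c(\ell')$, where $\ell'=[x_1,x_{\pi(2)},\dots,x_{\pi(k-1)}]$: the first summand contributes the single $x_1$-initial word coming from $\ell'$, and the second never begins with $x_1$ because $\pi(k)\neq1$. Since the words $x_1\,x_{\pi(2)}\cdots x_{\pi(k)}$ are pairwise distinct as $\pi$ ranges over the permutations fixing $1$, composing $c$ with the projection of $T_k$ onto the span of words beginning with $x_1$ carries the spanning family $\{\ell_\pi\}$ to distinct basis words. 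Thus the $c(\ell_\pi)$ are linearly independent, so $c$ is injective on $L_k$; unwinding the two reductions gives injectivity of~\eqref{e:commutator-map}. As a bonus, the $\ell_\pi$ then form a basis of $L_k$, so $\dim L_k=(k-1)!$.
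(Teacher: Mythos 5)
Your proof is correct and follows essentially the same route as the paper's: decompose both sides by the support partition, span the multilinear Lie component by left-normed brackets with a fixed first block (letter), and conclude via the triangularity observation that exactly one word in the expansion of such a bracket begins with that block. The only differences are cosmetic --- you make the reduction to the one-letter-per-block free Lie algebra explicit as $\psi_X\otimes\id$, and you supply the Jacobi-induction details that the paper compresses into its opening ``general observation.''
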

\begin{proof}
We first make a general observation.
Repeated use of the Leibniz identity
\[
[x,[y,z]] = [[x,y],z]-[[x,z],y]
\]
changes any bracket sequence to a combination of left bracket sequences only, 
and antisymmetry can be used to get any specified factor to the first position.

Consider the component $\freelie(\tq)[I]$.
Note that the bracket compositions in~\eqref{e:antisym-free} or~\eqref{e:jacobi-free} 
have the same underlying set partition.
So $\freelie(\tq)[I]$ splits as a direct sum over partitions of $I$. 
For each partition $X$ of $I$, fix a block $S$ of $X$.
Using the above general observation, 
any element of $\freelie(\tq)[I]$ in the $X$-component can be expressed
as a sum of elements in those $\tq(F,\alpha)$ for which the first block of $F$ is $S$
and $\alpha$ is the left bracket sequence.
Further, the image of any such $\tq(F,\alpha)$ (see~\eqref{eq:temp})
is a linear combination of the $\tq(G)$ 
(with $G$ having the same support as $F$)
in which $\tq(F)$ appears and it is the only composition which begins with $S$.
So the images of the different $\tq(F,\alpha)$ are linearly independent 
proving injectivity.
\end{proof}

The above proof shows the following.
For each partition $X$ of $I$, fix a block $S$ of $X$.
Then $\freelie(\tq)[I]$ is isomorphic to the direct sum of 
those $\tq(F,\alpha)$ for which the first block of $F$ is $S$
and $\alpha$ is the left bracket sequence. 
As a consequence, we have
\begin{equation}
\dim_\Kb (\freelie(\tq)[I]) = \sum_{X\vdash I} \dim_\Kb(\tp(X)) (\len(X)-1)!,
\end{equation}
where $\len(X)$ denotes the number of blocks of $X$.

Recall that every monoid, and in particular $\Tc(\tq)$, is a Lie monoid
via the commutator bracket~\eqref{e:underlying-lie}.
Note that the image of $\freelie(\tq)$ under~\eqref{e:commutator-map}
is precisely the Lie submonoid of $\Tc(\tq)$ 
generated by $\tq$ (the smallest Lie submonoid containing $\tq$).

\subsection{The universal enveloping monoid}

Let $\tg$ be a positive Lie monoid.
Let $\Jc(\tg)$ be the ideal of the monoid $\Tc(\tg)$ generated by the elements
\begin{equation}\label{e:ug-rel}
x\otimes y - y\otimes x - [x,y]_{S,T} \ \in \tg(S,T) \oplus \tg(T,S) \oplus \tg[I] \subseteq \Tc(\tg)[I]
\end{equation}
for $x\in\tg[S]$ and $y\in\tg[T]$,
for all nonempty finite sets $I$ and compositions $(S,T)\vDash I$.
The \emph{universal enveloping monoid} of $\tg$, denoted $\Uc(\tg)$,
is the quotient of $\Tc(\tg)$ by the ideal $\Jc(\tg)$.
It is a connected monoid.

Let
\[
\pi_\tg : \Tc(\tg) \onto \Uc(\tg)
\]
denote the quotient map, and let
\[
\eta_\tg : \tg \to \Uc(\tg)
\]
be the composition of $\pi_\tg$ with the embedding of $\tg$ in $\Tc(\tg)$.
Since~\eqref{e:ug-rel} equals $0$ in $\Uc(\tg)$,
the map $\eta_\tg$ is a morphism of Lie monoids when $\Uc(\tg)$ is viewed as
a Lie monoid via the commutator bracket~\eqref{e:underlying-lie}.

When the Lie bracket of $\tg$ is identically $0$, we have $\Uc(\tg)=\Sc(\tg)$ and
the map $\pi_\tg$ is the abelianization of Section~\ref{ss:abel}.

\begin{proposition}\label{p:ug-univ}
Let $\ta$ be a monoid, $\tg$ a positive Lie monoid, and
$\zeta\colon \tg \to \ta$ a morphism of Lie monoids.
Then there exists a unique morphism of monoids
$\hat{\zeta}\colon \Uc(\tg) \to \ta$ such that the first diagram below commutes.
\begin{align*}
& 
\begin{gathered}
\xymatrix@C+20pt{
\Uc(\tg) \ar@{.>}[r]^-{\hat{\zeta}} & \ta\\
\tg \ar[ru]_{\zeta}\ar[u]^{\eta_\tg}
}
\end{gathered}
&
\begin{gathered}
\xymatrix@C+20pt{
\Tc(\tg) \ar[dr]^{\Tilde{\zeta}} \ar[d]_{\pi_\tg}\\
\Uc(\tg) \ar[r]_-{\hat{\zeta}} & \ta
}
\end{gathered}
\end{align*}
Moreover, the second diagram above commutes as well, 
where $\Tilde{\zeta}$ is the morphism of monoids afforded by Theorem~\ref{t:dmunivp}.
\end{proposition}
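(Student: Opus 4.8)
The plan is to produce $\hat\zeta$ by showing that the monoid morphism $\Tilde\zeta\colon\Tc(\tg)\to\ta$ afforded by Theorem~\ref{t:dmunivp} factors through the quotient $\Uc(\tg)=\Tc(\tg)/\Jc(\tg)$. First I would invoke Theorem~\ref{t:dmunivp}: regarding $\zeta\colon\tg\to\ta$ merely as a morphism of species, it yields a unique morphism of monoids $\Tilde\zeta\colon\Tc(\tg)\to\ta$ whose restriction to $\tg$ (along the embedding $\tg\into\Tc(\tg)$) is $\zeta$. This is precisely the map named in the second diagram, so constructing $\hat\zeta$ amounts to checking that $\Tilde\zeta$ annihilates the ideal $\Jc(\tg)$.

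The key computation is that $\Tilde\zeta$ vanishes on the generators~\eqref{e:ug-rel}. Fix $I=S\sqcup T$, $x\in\tg[S]$ and $y\in\tg[T]$. In $\Tc(\tg)$ the element $x\otimes y\in\tg(S,T)$ is the product $\mu_{S,T}$ of the basis elements $x$ and $y$, so since $\Tilde\zeta$ is a morphism of monoids,
\[
\Tilde\zeta(x\otimes y)=\mu_{S,T}\bigl(\zeta_S(x)\otimes\zeta_T(y)\bigr),\qquad
\Tilde\zeta(y\otimes x)=\mu_{T,S}\bigl(\zeta_T(y)\otimes\zeta_S(x)\bigr),
\]
while $\Tilde\zeta([x,y]_{S,T})=\zeta_I([x,y]_{S,T})$ because $[x,y]_{S,T}\in\tg[I]$. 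Since $\zeta$ is a morphism of Lie monoids it intertwines the brackets, and the bracket on $\ta$ is the commutator~\eqref{e:underlying-lie}; hence
\[
\zeta_I([x,y]_{S,T})=\mu_{S,T}\bigl(\zeta_S(x)\otimes\zeta_T(y)\bigr)-\mu_{T,S}\bigl(\zeta_T(y)\otimes\zeta_S(x)\bigr).
\]
Subtracting shows that $\Tilde\zeta$ sends the generator~\eqref{e:ug-rel} to $0$.

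Next I would upgrade this to the full ideal. A general element of $\Jc(\tg)[I]$ is a linear combination of higher products $\mu_{R,M,N}(a\otimes g\otimes b)$, where $g$ is a generator~\eqref{e:ug-rel} on a block $M$ and $a,b\in\Tc(\tg)$ on the remaining blocks; applying the monoid morphism $\Tilde\zeta$ turns such a term into $\mu_{R,M,N}\bigl(\Tilde\zeta(a)\otimes\Tilde\zeta(g)\otimes\Tilde\zeta(b)\bigr)$, which vanishes because $\Tilde\zeta(g)=0$. Thus $\Jc(\tg)\subseteq\ker\Tilde\zeta$, and since $\pi_\tg$ is a surjective morphism of monoids with kernel $\Jc(\tg)$, the universal property of the quotient gives a unique morphism of monoids $\hat\zeta\colon\Uc(\tg)\to\ta$ with $\hat\zeta\,\pi_\tg=\Tilde\zeta$; this is the commutativity of the second diagram. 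The first diagram then follows by precomposing with the embedding $\tg\into\Tc(\tg)$, since $\hat\zeta\,\eta_\tg=\hat\zeta\,\pi_\tg\,(\tg\into\Tc(\tg))=\Tilde\zeta\,(\tg\into\Tc(\tg))=\zeta$.

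Finally, for uniqueness, if $\hat\zeta'$ is any morphism of monoids with $\hat\zeta'\,\eta_\tg=\zeta$, then $\hat\zeta'\,\pi_\tg$ is a morphism of monoids $\Tc(\tg)\to\ta$ restricting to $\zeta$ on $\tg$, so $\hat\zeta'\,\pi_\tg=\Tilde\zeta=\hat\zeta\,\pi_\tg$ by the uniqueness clause of Theorem~\ref{t:dmunivp}, whence $\hat\zeta'=\hat\zeta$ because $\pi_\tg$ is an epimorphism. I expect the only point needing care to be the passage from the generators to the full ideal, namely the observation that the kernel of a monoid morphism of species is automatically a two-sided ideal, so that vanishing on the generators already forces vanishing on $\Jc(\tg)$.
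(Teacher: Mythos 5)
Your proposal is correct and takes essentially the same approach as the paper's proof: invoke Theorem~\ref{t:dmunivp} to obtain $\Tilde{\zeta}$, verify that it annihilates the generators~\eqref{e:ug-rel} using that $\zeta$ intertwines the Lie brackets and that the bracket on $\ta$ is the commutator~\eqref{e:underlying-lie}, factor through $\pi_\tg$, and deduce uniqueness from the uniqueness of $\Tilde{\zeta}$ together with the surjectivity of $\pi_\tg$. The only difference is that you make explicit two routine points the paper leaves implicit, namely the passage from the generators to the full ideal $\Jc(\tg)$ and the verification that the induced map on the quotient is a morphism of monoids.
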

\begin{proof}
By Theorem~\ref{t:dmunivp},
there is a morphism of monoids $\Tilde{\zeta} : \Tc(\tg)\to\ta$ extending $\zeta$.
By construction, its value on the element~\eqref{e:ug-rel} is
\[
\mu_{S,T}(\zeta_S(x)\otimes \zeta_T(y)) - \mu_{T,S}(\zeta_T(y)\otimes \zeta_S(x)) - \zeta_I([x,y]_{S,T}).
\]
Further, since $\zeta$ is a morphism of Lie monoids,
\[
\zeta_I([x,y]_{S,T}) = [\zeta_S(x),\zeta_T(y)]_{S,T},
\]
and hence the above value is $0$ by~\eqref{e:underlying-lie}.
Therefore, $\Tilde{\zeta} :\Tc(\tg)\to\ta$ factors through the quotient map $\pi_\tg$
to yield a morphism of monoids $\hat{\zeta}: \Uc(\tg) \to \ta$.
The uniqueness of $\hat{\zeta}$ follows from that of $\Tilde{\zeta}$
and the surjectivity of $\pi_\tg$.
\end{proof}

This explains the usage of ``universal''.
It is true but not obvious that $\eta_\tg$ is injective.
This fact is a part of the PBW theorem (Theorem~\ref{t:pbw}).
This justifies the usage of ``envelope''.

Recall from Section~\ref{ss:freeHopf} that 
$\Tc(\tg)$ carries a canonical structure of a bimonoid.

\begin{lemma}\label{l:univ-bimonoid}
The ideal $\Jc(\tg)$ of $\Tc(\tg)$ is also a coideal.
\end{lemma}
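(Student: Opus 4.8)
The plan is to verify the two defining conditions for $\Jc(\tg)$ to be a coideal of the bimonoid $\Tc(\tg)$: that $\epsilon_\emptyset$ annihilates $\Jc(\tg)[\emptyset]$, and that $\Delta_{U,V}$ carries $\Jc(\tg)[I]$ into $\Jc(\tg)[U]\otimes\Tc(\tg)[V] + \Tc(\tg)[U]\otimes\Jc(\tg)[V]$ for every $I=U\sqcup V$. The counit condition is immediate: each generator~\eqref{e:ug-rel} lives on a nonempty set (since $(S,T)\vDash I$ forces $I\neq\emptyset$), so the two-sided ideal they generate satisfies $\Jc(\tg)[\emptyset]=0$ and the condition holds vacuously.

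For the coproduct condition, the first move is to reduce to the generators. Since $\Tc(\tg)$ is a bimonoid, $\Delta\colon\Tc(\tg)\to\Tc(\tg)\bdot\Tc(\tg)$ is a morphism of monoids, and the subspecies $\Kc:=\Jc(\tg)\bdot\Tc(\tg)+\Tc(\tg)\bdot\Jc(\tg)$ is a two-sided ideal of $\Tc(\tg)\bdot\Tc(\tg)$, because $\Jc(\tg)$ is a two-sided ideal of $\Tc(\tg)$. Writing an arbitrary element of $\Jc(\tg)$ as a sum of products $\mu(a\otimes r\otimes b)$ with $r$ a generator, multiplicativity of $\Delta$ gives $\Delta(\mu(a\otimes r\otimes b))=\mu^{\Tc(\tg)\bdot\Tc(\tg)}(\Delta a\otimes\Delta r\otimes\Delta b)$, which lands in $\Kc$ as soon as $\Delta r\in\Kc$. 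Hence it suffices to prove $\Delta_{U,V}(r)\in\Kc[I]$ for every generator $r$ and every $U\sqcup V=I$.

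The conceptual heart, which I expect to be the main point of the argument, is the observation that each generator $r=x\otimes y-y\otimes x-[x,y]_{S,T}$ is in fact a \emph{primitive} element of $\Tc(\tg)[I]$. Indeed, by~\eqref{e:prim-contained} the elements $x\in\tg[S]$ and $y\in\tg[T]$ are primitive in $\Tc(\tg)$, and $x\otimes y-y\otimes x$ is exactly their commutator bracket~\eqref{e:underlying-lie} in the bimonoid $\Tc(\tg)$; by Proposition~\ref{p:prim-lie} the primitive part is a Lie submonoid under this bracket, so $x\otimes y-y\otimes x$ is primitive. The remaining term $[x,y]_{S,T}$ lies in $\tg[I]$, which~\eqref{e:prim-contained} again places inside the primitive part. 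Thus $r$, being a difference of primitives, is primitive.

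Once $r$ is known to be primitive the coproduct condition follows with no further work: for $U,V$ both nonempty $\Delta_{U,V}(r)=0$ by the very definition of the primitive part, while the boundary cases give $\Delta_{I,\emptyset}(r)=r\otimes 1$ and $\Delta_{\emptyset,I}(r)=1\otimes r$ by connectedness~\eqref{e:conn-unit}; since $r\in\Jc(\tg)[I]$, all three outputs lie in $\Kc$. This closes the reduction and hence the proof. The only genuine care required is in checking that $\Kc$ is indeed a two-sided ideal and that the commutator in $\Tc(\tg)$ of the degree-one elements $x$ and $y$ is precisely $x\otimes y-y\otimes x$; both are routine once the free-monoid product is unwound.
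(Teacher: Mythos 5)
Your proof is correct and follows essentially the same route as the paper: both arguments reduce to showing that each generator $x\otimes y - y\otimes x - [x,y]_{S,T}$ is primitive, using~\eqref{e:prim-contained} together with Proposition~\ref{p:prim-lie}, and then conclude that an ideal generated by primitive elements is a coideal. The only difference is that you spell out the final step (the reduction via multiplicativity of $\Delta$ and the counit check) which the paper leaves implicit.
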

\begin{proof}
According to~\eqref{e:prim-contained}, $\tg\subseteq \Pc\bigl(\Tc(\tg)\bigr)$.
By Proposition~\ref{p:prim-lie}, $\Pc\bigl(\Tc(\tg)\bigr)$ is closed under the 
commutator bracket~\eqref{e:underlying-lie}. 
Hence, for any $x\in\tg[S]$ and $y\in\tg[T]$, 
both $x\otimes y - y\otimes x$ and $[x,y]_{S,T}$ belong to $\Pc\bigl(\Tc(\tg)\bigr)[I]$.
Thus, the ideal $\Jc(\tg)$ is generated by primitive elements, and hence is a coideal.
\end{proof}

As a consequence:
 
\begin{proposition}\label{p:univ-bimonoid}
There is a unique bimonoid structure on $\Uc(\tg)$ for which the map
$\pi_\tg: \Tc(\tg) \onto \Uc(\tg)$
is a morphism of bimonoids. Moreover, $\Uc(\tg)$ is cocommutative.
\end{proposition}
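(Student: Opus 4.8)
The plan is to recognize $\Jc(\tg)$ as a \emph{biideal} of $\Tc(\tg)$ and then apply the general principle that the quotient of a bimonoid by a biideal is again a bimonoid, with the quotient map a morphism of bimonoids. By construction $\Uc(\tg)=\Tc(\tg)/\Jc(\tg)$ is a quotient \emph{monoid}, so $\Jc(\tg)$ is a two-sided ideal and $\pi_\tg$ is a morphism of monoids. Lemma~\ref{l:univ-bimonoid} supplies the complementary fact that $\Jc(\tg)$ is a coideal. Moreover the generators~\eqref{e:ug-rel} live in positive components $\Tc(\tg)[I]$ with $I\neq\emptyset$, and no product landing in the empty-set component can involve such a factor; hence $\Jc(\tg)$ is positive, so $\epsilon$ vanishes on it and it is a genuine coideal.

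First I would descend the comonoid structure. Since $\Jc(\tg)$ is a coideal, $\Delta\bigl(\Jc(\tg)\bigr)\subseteq \Jc(\tg)\bdot\Tc(\tg)+\Tc(\tg)\bdot\Jc(\tg)$, so the composite $(\pi_\tg\bdot\pi_\tg)\Delta$ annihilates $\Jc(\tg)$ and therefore factors uniquely through $\pi_\tg$ as a map $\bar\Delta:\Uc(\tg)\to\Uc(\tg)\bdot\Uc(\tg)$; likewise $\epsilon$ factors through $\pi_\tg$ to a counit on $\Uc(\tg)$. By construction $\bar\Delta\,\pi_\tg=(\pi_\tg\bdot\pi_\tg)\Delta$, so $\pi_\tg$ is a morphism of comonoids, and coassociativity and counitality for $\bar\Delta$ follow from the corresponding identities for $\Delta$ together with the surjectivity of $\pi_\tg$.

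Next I would verify the compatibility axiom~\eqref{e:comp} for $\Uc(\tg)$. Both composites in that diagram are already equal in $\Tc(\tg)$. Since $\pi_\tg$ is simultaneously a morphism of monoids and of comonoids, and the relevant tensor powers of $\pi_\tg$ are surjective, the two composites for $\Uc(\tg)$ agree on the image of $\pi_\tg\otimes\pi_\tg$, which is everything; the unit and counit compatibilities~\eqref{e:unitr}--\eqref{e:inverser} hold automatically because $\Uc(\tg)$ is connected. Hence $\Uc(\tg)$ is a bimonoid and $\pi_\tg$ a morphism of bimonoids. Uniqueness is then immediate: because $\pi_\tg$ is surjective, both the product and the coproduct of $\Uc(\tg)$ are forced by the requirement that $\pi_\tg$ commute with all structure maps.

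Finally, cocommutativity is inherited. By Section~\ref{ss:freeHopf}, the bimonoid $\Tc(\tg)$ (with its $q=1$ structure) is cocommutative, that is, $\beta_1\Delta=\Delta$. Composing with $\pi_\tg\bdot\pi_\tg$, using naturality of $\beta_1$ and the identity $\bar\Delta\,\pi_\tg=(\pi_\tg\bdot\pi_\tg)\Delta$, yields $\beta_1\bar\Delta\,\pi_\tg=\bar\Delta\,\pi_\tg$, and surjectivity of $\pi_\tg$ gives $\beta_1\bar\Delta=\bar\Delta$. Thus $\Uc(\tg)$ is cocommutative. I do not expect a genuine obstacle in any of these steps: the only real content has already been extracted in Lemma~\ref{l:univ-bimonoid}, and what remains is the formal passage of the bimonoid axioms through the surjection $\pi_\tg$. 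The one point requiring the earlier lemma—that the defining relations~\eqref{e:ug-rel} are primitive and hence generate a coideal—is precisely the delicate ingredient, and it is already in hand.
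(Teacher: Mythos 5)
Your proposal is correct and follows essentially the same route as the paper: the paper deduces the proposition directly from Lemma~\ref{l:univ-bimonoid} (the ideal $\Jc(\tg)$ is a coideal, being generated by primitive elements), leaving the standard quotient-by-a-biideal argument implicit. Your write-up simply makes explicit the formal steps — positivity of $\Jc(\tg)$, descent of $\Delta$ and $\epsilon$, compatibility via surjectivity of $\pi_\tg$, and inheritance of cocommutativity from $\Tc(\tg)$ — all of which are sound.
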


We return to the situation of Proposition~\ref{p:ug-univ} in the case when
$\ta$ is in fact a Hopf monoid $\thh$. Thus, we are given a
morphism of Lie monoids $\zeta:\tg\to\thh$ and we consider the morphism of monoids
$\hat{\zeta}:\Uc(\tg)\to\thh$.
Combining Propositions~\ref{p:dhmunivp} and~\ref{p:univ-bimonoid}, 
we obtain the following.

\begin{proposition}\label{p:ug-univ-ext}
Suppose that $\zeta$ maps elements of $\tg$ to primitive elements of $\thh$.
Then $\hat{\zeta}$ is a morphism of bimonoids.
\end{proposition}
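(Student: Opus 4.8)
The plan is to reduce everything to the comonoid structure. By Proposition~\ref{p:ug-univ} we already know that $\hat{\zeta}$ is a morphism of monoids and that $\Tilde{\zeta}=\hat{\zeta}\,\pi_\tg$, where $\Tilde{\zeta}\colon\Tc(\tg)\to\thh$ is the morphism of monoids extending $\zeta$ afforded by Theorem~\ref{t:dmunivp}. Since a morphism of species that is simultaneously a morphism of monoids and of comonoids is a morphism of bimonoids, it will suffice to prove that $\hat{\zeta}$ preserves coproducts and counits.

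First I would feed $\zeta$, regarded merely as a morphism of species, into Proposition~\ref{p:dhmunivp}. Its hypothesis there is exactly $\im(\zeta)\subseteq\Pc(\thh)$, which holds by assumption; applied in the case $q=1$ to the free monoid $\Tc(\tg)$, that proposition shows that $\Tilde{\zeta}$ is in fact a morphism of Hopf monoids, and in particular that it preserves coproducts. Independently, Proposition~\ref{p:univ-bimonoid} supplies the bimonoid structure on $\Uc(\tg)$ for which $\pi_\tg$ is a morphism of bimonoids; in particular $\pi_\tg$ preserves coproducts, and it is surjective.

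The key step is then a right-cancellation against the epimorphism $\pi_\tg$. Fixing $I=S\sqcup T$ and using in turn that $\pi_\tg$ preserves coproducts, that $\Tilde{\zeta}=\hat{\zeta}\,\pi_\tg$, and that $\Tilde{\zeta}$ preserves coproducts, I would compute
\[
(\hat{\zeta}\otimes\hat{\zeta})\,\Delta_{S,T}\,\pi_\tg
=(\hat{\zeta}\otimes\hat{\zeta})(\pi_\tg\otimes\pi_\tg)\,\Delta_{S,T}
=(\Tilde{\zeta}\otimes\Tilde{\zeta})\,\Delta_{S,T}
=\Delta_{S,T}\,\Tilde{\zeta}
=\Delta_{S,T}\,\hat{\zeta}\,\pi_\tg.
\]
Because $\pi_\tg$ is surjective it is an epimorphism in the category of species, so it cancels on the right, leaving $(\hat{\zeta}\otimes\hat{\zeta})\,\Delta_{S,T}=\Delta_{S,T}\,\hat{\zeta}$ as desired. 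The counit is handled the same way, or more simply by connectedness: $\Tc(\tg)$, $\Uc(\tg)$ and $\thh$ are all connected and $\hat{\zeta}$ is the identity of $\Kb$ on the empty set, so it automatically intertwines the counits. This establishes that $\hat{\zeta}$ is a morphism of comonoids, hence of bimonoids.

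The one point I would watch carefully---and the only real (if modest) obstacle---is the legitimacy of cancelling $\pi_\tg$ on the right: this needs $\pi_\tg$ to be a genuine epimorphism so that the identity of coproduct-preserving maps descends from $\Tc(\tg)$ to $\Uc(\tg)$. This is exactly the surjectivity clause of Proposition~\ref{p:univ-bimonoid}, and it is what makes the induced bimonoid structure on $\Uc(\tg)$ the correct receptacle for the argument.
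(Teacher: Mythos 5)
Your proof is correct and follows exactly the route the paper intends: the paper's proof is precisely ``combine Proposition~\ref{p:dhmunivp} (so $\Tilde{\zeta}$ is a morphism of Hopf monoids) with Proposition~\ref{p:univ-bimonoid} (so $\pi_\tg$ is a surjective morphism of bimonoids)'' and then cancel the epimorphism $\pi_\tg$, which is what you spell out. One tiny caveat: $\thh$ is not assumed connected in this proposition, so your parenthetical connectedness shortcut for the counit is not justified as stated, but your primary argument (the same cancellation against $\pi_\tg$, or simply connectedness of $\Uc(\tg)$ together with $\hat{\zeta}$ being a morphism of monoids) handles the counit without it.
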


It follows from here and~\cite[Theorem~IV.1.2, item (i)]{Mac:1998}
that the functors
\begin{equation}\label{e:UP}
\xymatrix@C+20pt{
\{\text{positive Lie monoids}\} \ar@<0.6ex>[r]^-{\Uc} & \ar@<0.6ex>[l]^-{\Pc} 
\{\text{connected Hopf monoids}\}
}
\end{equation}
form an adjunction (with $\Uc$ being left adjoint to $\Pc$). 
This may be composed with the adjunction
\[
\xymatrix@C+20pt{
\{\text{positive species}\} \ar@<0.6ex>[r]^-{\freelie} & \ar@<0.6ex>[l]^-{\text{forget}} 
\{\text{positive Lie monoids}\}
}
\]
(arising from Theorem~\ref{t:freelie}) 
and the result must be isomorphic to the adjunction~\eqref{e:TP}. 
Hence, for any positive species $\tq$,
\begin{equation}\label{e:tc-factorize}
\Uc(\freelie(\tq))\iso\Tc(\tq)
\end{equation}
as bimonoids.

\section{Examples of Hopf monoids}\label{s:examples}

We discuss a number of important examples of connected Hopf monoids (and $q$-Hopf monoids).
In most cases at least part of the structure is set-theoretic.
Starting with the simplest Hopf monoid (the exponential Hopf monoid),
we build on more complicated examples involving combinatorial structures such as
linear orders, set partitions, simple graphs, and generalized permutahedra.
Many of these monoids are free or free commutative.
An example involving groups of unitriangular matrices is included too.
We discuss various morphisms relating them and
provide formulas for the structure maps on one or more linear bases.
Self-duality is studied and the antipode problem is solved in all but
one instance. 

Two important examples are discussed separately in Section~\ref{s:faces}.
Additional examples can be found in~\cite[Chapters~12 and~13]{AguMah:2010}.

In this section we assume that the base field $\Kb$ is of characteristic $0$. This is 
only needed for certain statements regarding self-duality, certain basis changes,
and on a few other occasions where rational numbers intervene.

\subsection{The exponential Hopf monoid}\label{ss:exp}

Let $\rE$ be the set species defined by
\[
\rE[I] := \{I\}
\]
on all finite sets $I$.
The set $\{I\}$ is a singleton canonically associated to the set $I$.
We refer to $\rE$ as the \emph{exponential} set species. It is a
set-theoretic connected Hopf monoid
with product and coproduct defined by
\begin{align*}
\rE[S]\times\rE[T] & \map{\mu_{S,T}} \rE[I] & \rE[I] &
\map{\Delta_{S,T}} \rE[S]\times\rE[T]\\
(S,T) & \xmapsto{\phantom{\mu_{S,T}}} I
& I & \xmapsto{\phantom{\Delta_{S,T}}} (S,T).
\end{align*}
The Hopf monoid axioms (Sections~\ref{ss:set-mon-com} and~\ref{ss:set-bimon})
are trivially satisfied. The Hopf monoid $\rE$ is commutative and cocommutative.

Let $\wE:=\Kb\rE$ denote the linearization of $\rE$. This agrees with~\eqref{e:expsp}.
It is a connected Hopf monoid. We follow Convention~\ref{con:bases}
by writing $\BH_I$ for the unique element of the canonical basis of $\wE[I]$. 

Let $\wX$ be the species defined by
\[
\wX[I] := \begin{cases}
\Kb & \text{if $I$ is a singleton,}\\
0 & \text{otherwise.}
\end{cases}
\]
It is positive. Note that if $X$ is a partition of $I$, then
\begin{equation*}
\wX(X) \cong 
\begin{cases}
\Kb & \text{ if all blocks of $X$ are singletons,}\\
0 & \text{ otherwise.}
\end{cases}
\end{equation*}
Moreover, in the former case, any subset $S$ of $I$ is $X$-admissible.
It follows that $\Sc(\wX)[I]\cong \wE[I]$ for all finite sets $I$.
This gives rise to a canonical isomorphism of Hopf monoids 
\[
\Sc(\wX) \cong \wE.
\]
In particular, $\wE$ is the free commutative monoid on the species $\wX$
and the primitive part is
\[
\Pc(\wE) \cong \wX.
\]

As a simple special case of~\eqref{e:free-com-apode}, we have that the antipode of 
$\wE$ is given by
\[
\apode_I(\BH_I) = (-1)^{\abs{I}} \BH_I.
\]

The Hopf monoid $\wE$ is self-dual under $\BH_I \leftrightarrow \BM_I$.

\subsection{The Hopf monoid of linear orders}\label{ss:linear}

For any finite set $I$, let $\rL[I]$ be the set of all linear orders on $I$.
If $\ell=i_1\cdots i_n$ and $\sigma:I\to J$ is a bijection, 
then $\rL[\sigma](\ell):=\sigma(i_1)\cdots\sigma(i_n)$.
In this manner, $\rL$ is a set species, called \emph{of linear orders}.
It is a set-theoretic connected Hopf monoid with product and coproduct defined by
\begin{align*}
\rL[S]\times\rL[T] & \map{\mu_{S,T}} \rL[I] & \rL[I] & \map{\Delta_{S,T}} \rL[S]\times\rL[T]\\
(\ell_1,\ell_2) & \xmapsto{\phantom{\mu_{S,T}}} \ell_1\cdot\ell_2
& \ell & \xmapsto{\phantom{\Delta_{S,T}}} (\ell |_S,\ell |_T).
\end{align*}
We have employed the operations of
concatenation and restriction from Section~\ref{ss:linear-prelim}; other notions from
that section are used below.

Consider the compatibility axiom~\eqref{e:rest-cont-prod}. 
Given linear orders $\ell_1$ on $S_1$ and $\ell_2$ on $S_2$, 
the axiom boils down to the fact that 
the concatenation of $\ell_1|_A$ and $\ell_2|_C$ agrees with 
the restriction to $T_1$ of $\ell_1\cdot \ell_2$.

The Hopf monoid $\rL$ is cocommutative but not commutative.

Let $\wL:=\Kb\rL$ denote the linearization of $\rL$. 
It is a connected Hopf monoid. The product and coproduct are
\[
\mu_{S,T}(\BH_{\ell_1}\otimes\BH_{\ell_2}) = \BH_{\ell_1\cdot\ell_2}
\qqand
\Delta_{S,T}(\BH_{\ell}) = \BH_{\ell|_S}\otimes\BH_{\ell|_T}.
\]
For the antipode of $\wL$ on the basis $\{\BH\}$, 
set $q=1$ in~\eqref{e:apode-linear} below.

For the dual Hopf monoid $\wL^*=\Kb^\rL$, we have
\[
\mu_{S,T}(\BM_{\ell_1}\otimes\BM_{\ell_2}) = \sum_{\ell:\, \text{$\ell$ a shuffle of
$\ell_1$ and $\ell_2$}} \BM_{\ell}
\]
and
\[
\Delta_{S,T}(\BM_{\ell}) = 
\begin{cases}
\BM_{\ell|_S}\otimes\BM_{\ell|_T} & \text{ if $S$ is an initial segment of $\ell$,}\\
0 & \text{ otherwise.}
\end{cases}
\]

Note that if $F$ is a composition of $I$,
\begin{equation*}
\wX(F) \cong 
\begin{cases}
\Kb & \text{ if all blocks of $F$ are singletons,}\\
0 & \text{ otherwise.}
\end{cases}
\end{equation*}
Since a set composition of $I$ into singletons amounts to a linear order on $I$, we have 
$\Tc(\wX)[I]\cong \wL[I]$ for all finite sets $I$. Moreover, for such $F$, any 
subset $S$ of $I$ is $F$-admissible.
This gives rise to a canonical isomorphism of Hopf monoids 
\[
\Tc(\wX) \cong \wL.
\]
In particular, $\wL$ is the free monoid on the species $\wX$.

By Corollary~\ref{c:prim-in-free} below, the primitive part is
\[
\Pc(\wL)\cong \freelie(\wX),
\]
the free Lie monoid on $\wX$.
We denote this species by $\tLie$.
It is the species underlying the \emph{Lie operad}. 
(The species $\wL$ underlies the \emph{associative operad}.)
For more details, see~\cite[Section~11.9 and Appendix~B]{AguMah:2010}.

\smallskip

Fix a scalar $q\in\Kb$. 
Let $\wL_q$ denote the same monoid as $\wL$,
but endowed with the following coproduct:
\[
\Delta_{S,T}: \wL_q[I] \to \wL_q[S] \otimes \wL_q[T],
\qquad
\BH_{\ell} \mapsto q^{\area_{S,T}(\ell)}\, \BH_{\ell|_S} \otimes \BH_{\ell|_T},
\]
where the Schubert cocycle $\area_{S,T}$ is as in~\eqref{e:schubert-linear}.
Then $\wL_q$ is a connected $q$-Hopf monoid. 
The following properties of the Schubert cocycle
enter in the verification of coassociativity and of axiom~\eqref{e:comp}, respectively.
\begin{gather}\label{e:cocycle}
\area_{R,S\sqcup T}(\ell) + \area_{S,T}(\ell|_{S\sqcup T}) =
\area_{R\sqcup S,T}(\ell) + \area_{R,S}(\ell|_{R\sqcup S}),\\
\label{e:mul-schub-cocycle}
\area_{T_1,T_2}(\ell_1\cdot \ell_2) = \area_{A,B}(\ell_1)+ \area_{C,D}(\ell_2) + \abs{B}\abs{C}.
\end{gather}

There is an isomorphism of $q$-Hopf monoids
\[
\Tcq(\wX) \cong \wL_q.
\]
As a special case of~\eqref{e:free-apode}, 
we have that the antipode of $\wL_q$ is given by
\begin{equation}\label{e:apode-linear}
\apode_I(\BH_{\ell}) = (-1)^{\abs{I}} q^{\binom{\abs{I}}{2}} \BH_{\opp{\ell}},
\end{equation}
where $\opp{\ell}$ denotes the opposite linear order of $\ell$.
Other proofs of this result are discussed in~\cite[Example~8.16 and Proposition~12.3]{AguMah:2010}.

Generically, the $q$-Hopf monoid $\wL_q$ is self-dual. 
In fact, we have the following result. 
Define a map $\isolinear_q:\wL_q \to (\wL_q)^*$ by
\begin{equation}\label{e:lsdual}
\isolinear_q(\BH_{\ell}) := \sum_{\ell'} q^{\dist(\ell,\ell')}\, \BM_{\ell'}.
\end{equation}
The distance $\dist(\ell,\ell')$ between linear orders $\ell$ and $\ell'$ on $I$
is as in~\eqref{e:dist-linear}.

\begin{proposition}\label{p:lsdual}
The map $\isolinear_q:\wL_q \to (\wL_q)^*$ 
is a morphism of $q$-Hopf monoids.
If $q$ is not a root of unity, then it is an isomorphism. 
Moreover, for any $q$,
$\isolinear_q = (\isolinear_q)^*$.
\end{proposition}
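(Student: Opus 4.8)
The plan is to treat the three assertions in turn, reducing the morphism and self-duality claims to a single combinatorial identity for $\dist$ together with its symmetry, and deferring the only genuinely non-formal input to a determinant computation. Since $\wL_q$ and $(\wL_q)^*$ are both connected $q$-Hopf monoids (connectedness gives $(\wL_q)^*[\emptyset]=\Kb$, and Proposition~\ref{p:conn-hopf} promotes a connected $q$-bimonoid to a $q$-Hopf monoid), a morphism of $q$-bimonoids between them automatically commutes with antipodes, by the general facts recorded in Section~\ref{ss:bim-hopf}. Thus for the first assertion it suffices to check that $\isolinear_q$ is simultaneously a morphism of monoids and of comonoids; compatibility with unit and counit is immediate, since $\isolinear_q(\BH_\emptyset)=\BM_\emptyset$.

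The computational heart is the identity
\begin{equation*}
\dist(\ell_1\cdot\ell_2,\ell') = \dist(\ell_1,\ell'|_S) + \dist(\ell_2,\ell'|_T) + \area_{S,T}(\ell'),
\end{equation*}
valid for $I=S\sqcup T$, linear orders $\ell_1$ on $S$, $\ell_2$ on $T$, and $\ell'$ on $I$. I would prove it by partitioning the inversions counted by $\dist(\ell_1\cdot\ell_2,\ell')$ according to whether the pair lies inside $S$, inside $T$, or is mixed: same-block pairs contribute the two restricted distances, while a mixed pair is automatically ordered $S$-before-$T$ by the concatenation, so it is an inversion exactly when $\ell'$ lists the $T$-element first, and these are counted by $\area_{S,T}(\ell')$ as in~\eqref{e:schubert-linear}. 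Feeding this identity into the product of $(\wL_q)^*$, namely $\mu_{S,T}(\BM_{\ell_1}\otimes\BM_{\ell_2})=\sum_{\ell'} q^{\area_{S,T}(\ell')}\BM_{\ell'}$ summed over shuffles $\ell'$ of $\ell_1$ and $\ell_2$, matches coefficients on the two sides of $\isolinear_q\,\mu=\mu(\isolinear_q\otimes\isolinear_q)$ term by term, giving the monoid-morphism property. The comonoid-morphism property is the same identity read backwards: the coproduct of $(\wL_q)^*$ carries no $q$-weight, so comparing coefficients in $\Delta_{S,T}\,\isolinear_q=(\isolinear_q\otimes\isolinear_q)\Delta_{S,T}$ yields precisely the identity with its arguments transposed, which follows once one knows $\dist$ is symmetric.

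Symmetry of $\dist$ is also exactly what delivers the self-duality claim. Because $\dist(\ell,\ell')$ counts the unordered pairs on which $\ell$ and $\ell'$ disagree, a manifestly symmetric condition, we have $\dist(\ell,\ell')=\dist(\ell',\ell)$ for all $\ell,\ell'$ and every $q$ (including $q=0$). Since $\{\BH\}$ and $\{\BM\}$ are dual bases and each $\wL_q[I]$ is finite-dimensional, the canonical identification $((\wL_q)^*)^*\cong\wL_q$ shows that the matrix of $(\isolinear_q)^*$ is the transpose of that of $\isolinear_q$, whose $(\ell',\ell)$-entry is $q^{\dist(\ell,\ell')}$ by~\eqref{e:lsdual}; symmetry of $\dist$ says this matrix equals its transpose, that is, $\isolinear_q=(\isolinear_q)^*$.

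For the isomorphism statement, the component $\isolinear_q:\wL_q[I]\to(\wL_q)^*[I]$ is represented in these bases by the matrix $\bigl(q^{\dist(\ell,\ell')}\bigr)$, which is the Varchenko matrix of the braid arrangement (equivalently Zagier's matrix for the symmetric group, via the separating-hyperplane reading of~\eqref{e:dist-linear}). I would then invoke the Varchenko determinant formula: the determinant factors as a product of terms $1-q^{m}$ with $m$ a positive integer, hence vanishes only at roots of unity, so $\isolinear_q$ is invertible on every component whenever $q$ is not a root of unity. The main obstacle is exactly this step, namely establishing that all roots of the determinant polynomial are roots of unity; this is not formal and rests on the determinant computation rather than on the Hopf-monoid structure, whereas everything else is bookkeeping with the single distance identity and its symmetry.
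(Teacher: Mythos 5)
Your proposal is correct and takes essentially the same route as the argument the paper relies on: the concatenation identity $\dist(\ell_1\cdot\ell_2,\ell')=\dist(\ell_1,\ell'|_S)+\dist(\ell_2,\ell'|_T)+\area_{S,T}(\ell')$ together with the symmetry of $\dist$ is exactly what matches the $q$-weights in the product and coproduct of $(\wL_q)^*$ and yields $\isolinear_q=(\isolinear_q)^*$, antipode compatibility is automatic for a morphism of connected $q$-bimonoids, and invertibility for $q$ not a root of unity rests on the Zagier--Varchenko determinant evaluation. This is precisely the proof the paper outsources to~\cite[Proposition~12.6]{AguMah:2010}, with the same appeal to Zagier's result for nondegeneracy.
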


Proposition~\ref{p:lsdual} is proven in~\cite[Proposition~12.6]{AguMah:2010}.
The invertibility of the map~\eqref{e:lsdual} follows from a result of Zagier; 
see~\cite[Example 10.30]{AguMah:2010} for related information.
A more general result is given in~\cite[Theorem~11.35]{AguMah:2010}.

The map $\isolinear_0$ sends $\BH_{\ell}$ to $\BM_{\ell}$
(and is an isomorphism of $0$-Hopf monoids).

The isomorphism $\isolinear_q$ gives rise to a nondegenerate pairing on $\wL_q$ 
(for $q$ not a root of unity).
The self-duality of the isomorphism translates
into the fact that the associated pairing is symmetric.

\subsection{The Hopf monoid of set partitions}\label{ss:flats}

Recall that $\rPi[I]$ denotes the set of partitions of a finite set $I$.
Below we make use of notions and notation for partitions discussed
in Section~\ref{ss:partitions}.

The species $\rPi$ \emph{of partitions} is a set-theoretic 
connected Hopf monoid with product and coproduct defined by
\begin{align*}
\rPi[S]\times\rPi[T] & \map{\mu_{S,T}} \rPi[I] & \rPi[I] & \map{\Delta_{S,T}} \rPi[S]\times\rPi[T]\\
(X_1,X_2) & \xmapsto{\phantom{\mu_{S,T}}} X_1\sqcup X_2
& X & \xmapsto{\phantom{\Delta_{S,T}}} (X |_S,X |_T).
\end{align*}
We have employed the operations of union and restriction of partitions.
The Hopf monoid $\rPi$ is commutative and cocommutative.

Let $\tPi:=\Kb\rPi$ denote the linearization of $\rPi$. 
It is a connected Hopf monoid. The product and coproduct are
\[
\mu_{S,T}(\BH_{X_1}\otimes\BH_{X_2}) = \BH_{X_1\sqcup X_2}
\qqand
\Delta_{S,T}(\BH_{X}) = \BH_{X|_S}\otimes\BH_{X|_T}.
\]

The monoid $\tPi$ is the free commutative monoid on the positive part of the exponential species: 
there is an isomorphism of monoids
\[
\Sc(\wE_+) \cong \tPi, \quad
\bigotimes_{B\in X} \BH_B \leftrightarrow \BH_X.
\]
The isomorphism does not preserve coproducts, 
since $\Delta_{S,T}(\bigotimes_{B\in X} \BH_B)=0$ when $S$ is not $X$-admissible.

\begin{theorem}\label{t:apode-flat}
The antipode of $\tPi$ is given by
\[
\apode_I(\BH_X) = \sum_{Y:\, X \leq Y} (-1)^{\len(Y)} \,
(Y/X)! \, \BH_Y.
\]
\end{theorem}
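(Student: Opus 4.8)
The plan is to start from Takeuchi's formula (Proposition~\ref{p:antipode-r}), which gives $\apode_I = \sum_{F \vDash I}(-1)^{\len(F)}\mu_F\Delta_F$, and to evaluate each term on the basis element $\BH_X$. Using the explicit coproduct $\Delta_{S,T}(\BH_X) = \BH_{X|_S}\otimes\BH_{X|_T}$ and iterating, for a composition $F = (I_1,\dots,I_k)$ one gets $\Delta_F(\BH_X) = \BH_{X|_{I_1}}\otimes\cdots\otimes\BH_{X|_{I_k}}$; applying the product $\mu_F$, which unions partitions, yields $\mu_F\Delta_F(\BH_X) = \BH_{X|_{I_1}\sqcup\cdots\sqcup X|_{I_k}}$. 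The blocks of this partition are exactly the nonempty intersections $B\cap I_j$ with $B\in X$, so it equals $X\vee\supp F$ (recall the support turns intersections into the join). Since every composition $F$ satisfies $\len(F)=\len(\supp F)$ and exactly $\len(Z)!$ compositions share a given support $Z$, I would regroup the sum as $\apode_I(\BH_X) = \sum_{Z\vdash I}(-1)^{\len(Z)}\,\len(Z)!\,\BH_{X\vee Z}$, and then collect terms by the value $Y := X\vee Z$. As $X\le X\vee Z$, only $Y$ with $X\le Y$ occur, so it remains to show that the coefficient $c_Y := \sum_{Z:\,X\vee Z=Y}(-1)^{\len(Z)}\len(Z)!$ equals $(-1)^{\len(Y)}(Y/X)!$.

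The key reduction is to describe the partitions $Z$ with $X\vee Z=Y$. Restricting the join to a block $B\in X$ gives $(X\vee Z)|_B = Z|_B$, so $X\vee Z=Y$ holds if and only if $Z|_B=Y|_B$ for every block $B$ of $X$. Concretely this means each block of $Z$ meets each block $B$ of $X$ in at most one block of $Y$; thus $Z$ is a coarsening of $Y$ in which no two $Y$-blocks lying in a common $X$-block are merged. Writing $\mathcal Y$ for the set of the $\len(Y)$ blocks of $Y$, colored by which block of $X$ contains them (color $B$ used $n_B$ times, where $n_B=\len(Y|_B)$ is the number of $Y$-blocks refining $B$), the partitions $Z$ counted by $c_Y$ correspond bijectively to set partitions of $\mathcal Y$ whose parts are \emph{rainbow} (at most one block of each color). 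Since the $\len(Z)!$ orderings convert each such set partition into an ordered one, $c_Y$ becomes $\sum_{r\ge 1}(-1)^r N_r$, where $N_r$ is the number of surjections $\mathcal Y\twoheadrightarrow\{1,\dots,r\}$ that are injective on each color class.

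Finally I would evaluate this alternating sum by a generating-function computation, which I expect to be the main (though routine) obstacle. Introducing a variable $x_B$ for each color, a single nonempty rainbow part has weight $\prod_{B}(1+x_B)-1$, and one checks that $N_r = \bigl(\prod_B n_B!\bigr)\,[\textstyle\prod_B x_B^{n_B}]\bigl(\prod_B(1+x_B)-1\bigr)^r$. Because $\prod_B(1+x_B)-1$ has zero constant term, the formal geometric series converges and
\[
\sum_{r\ge 0}(-1)^r\Bigl(\prod_B(1+x_B)-1\Bigr)^r = \frac{1}{\prod_B(1+x_B)} = \prod_B\sum_{k\ge 0}(-1)^k x_B^k.
\]
Extracting the coefficient of $\prod_B x_B^{n_B}$ gives $\prod_B(-1)^{n_B}$, whence $c_Y = \bigl(\prod_B n_B!\bigr)\prod_B(-1)^{n_B} = (-1)^{\len(Y)}(Y/X)!$, using $\len(Y)=\sum_B n_B$ and $(Y/X)!=\prod_B n_B!$. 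Substituting back into the regrouped sum then yields the claimed formula. An alternative to the generating function is a direct inclusion–exclusion passing through the functions that are merely injective on each color class, but the regularization above appears cleanest, the subtraction of the empty part being precisely what cancels against the geometric-series denominator.
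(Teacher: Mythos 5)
Your proof is correct, and it takes a genuinely different route from the paper. The paper does not prove Theorem~\ref{t:apode-flat} internally: it cites \cite[Theorem~12.47]{AguMah:2010}, and the two routes it sketches are structural --- either pass to the basis $\{\BQ\}$, where by Proposition~\ref{p:Qflat} and the isomorphism $\Sc(\wE_+)\cong\tPi$ following it the antipode is diagonal, $\apode_I(\BQ_X)=(-1)^{\len(X)}\BQ_X$, as a special case of~\eqref{e:free-com-apode}, and then change basis back to $\{\BH\}$; or embed $\tPi$ into the Hopf monoid of graphs via Proposition~\ref{p:Pi-G} and pull back Theorem~\ref{t:apode-graph}. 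You instead resolve the cancellations in Takeuchi's formula~\eqref{e:antipode-r} head-on, and every step checks out: the evaluation $\mu_F\Delta_F(\BH_X)=\BH_{X\vee\supp F}$; the regrouping by support (each partition $Z$ carries exactly $\len(Z)!$ orderings, and $\len(F)=\len(\supp F)$); the characterization of the fiber $\{Z : X\vee Z=Y\}$ via $(X\vee Z)|_B=Z|_B$ as the ``rainbow'' coarsenings of $Y$ with respect to the coloring of $Y$-blocks by $X$-blocks; the identity $N_r=\bigl(\prod_B n_B!\bigr)\,[\prod_B x_B^{n_B}]\bigl(\prod_B(1+x_B)-1\bigr)^r$ (the factor $\prod_B n_B!$ correctly converts color-support tuples into labeled placements); and the geometric-series evaluation, where including $r=0$ is harmless since every $n_B\ge 1$, giving $c_Y=(-1)^{\len(Y)}(Y/X)!$. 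What your approach buys is a self-contained, elementary proof within this paper, and in fact an explicit solution of the ``antipode problem'' in the sense of Section~\ref{ss:ant-prob}: it exhibits exactly how Takeuchi's signed sum collapses. What the paper's route buys is conceptual structure: the $\{\BQ\}$-basis argument explains the shape of the formula (antipodes of free commutative Hopf monoids are diagonal, and the factorials come from the triangular change of basis), and it yields self-duality and the primitive part of $\tPi$ in the same stroke --- though carrying that route out still requires evaluating a M\"obius-type sum such as $\sum_{Y:\,X\le Y\le W}(-1)^{\len(Y)}\mu(Y,W)$, an identity of weight comparable to your generating-function step.
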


A proof of Theorem~\ref{t:apode-flat} is given in~\cite[Theorem~12.47]{AguMah:2010}.

For the dual Hopf monoid $\tPi^*=\Kb^\rPi$, we have
\[
\mu_{S,T}(\BM_{X_1}\otimes\BM_{X_2}) = \sum_{X:\, \text{$X$ a quasi-shuffle of
$X_1$ and $X_2$}} \BM_{X}
\]
and
\[
\Delta_{S,T}(\BM_{X}) = 
\begin{cases}
\BM_{X|_S}\otimes\BM_{X|_T} & \text{ if $S$ is $X$-admissible,}\\
0 & \text{ otherwise.}
\end{cases}
\]

The Hopf monoid $\tPi$ is self-dual. There are in fact several isomorphisms
$\tPi\cong\tPi^*$. We highlight two particular ones. Define maps
$
\isoflat,\isograph \colon \tPi \to \tPi^*
$
by
\begin{equation}\label{e:pisdual}
\isoflat(\BH_Y) := \sum_X (X \vee Y)! \, \BM_X
\qqand
\isograph(\BH_Y) := \sum_{X:\, X\vee Y=\maxflat} \BM_X.
\end{equation}

\begin{proposition}\label{p:flat-self-dual}
The maps $\isoflat,\isograph \colon \tPi \to \tPi^*$
are isomorphisms of Hopf monoids. Moreover, $\isoflat=\isoflat^*$ and $\isograph=\isograph^*$.
\end{proposition}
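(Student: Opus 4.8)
The plan is to encode each of the two maps by the bilinear form it represents and to read the statement off as two facts about that form: that it pairs $\tPi$ with itself compatibly with \emph{both} the monoid and the comonoid structures (which makes the map a morphism of bimonoids, hence of Hopf monoids), and that it is symmetric and nondegenerate (which gives $\psi=\psi^*$ and bijectivity). Writing $\langle u,w\rangle := \psi(u)(w)$ for $\psi\in\{\isoflat,\isograph\}$, so that $\langle\BH_Y,\BH_X\rangle$ equals $(X\vee Y)!$ for $\isoflat$ and the indicator $[X\vee Y=\maxflat]$ for $\isograph$, the condition that $\psi$ be a morphism of monoids unwinds, using that the product of $\tPi^*$ is dual to the coproduct of $\tPi$, to the single adjointness identity
\[
\langle \BH_{Y_1}\cdot\BH_{Y_2},\, \BH_X\rangle = \langle \BH_{Y_1},\, \BH_{X|_S}\rangle\,\langle\BH_{Y_2},\, \BH_{X|_T}\rangle
\]
for $I=S\sqcup T$, $Y_1\vdash S$, $Y_2\vdash T$, $X\vdash I$; the condition that $\psi$ be a morphism of comonoids is the mirror identity with the roles of product and coproduct exchanged.

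The combinatorial heart is the lemma that $X\vee(Y_1\sqcup Y_2)=(X|_S\vee Y_1)\sqcup(X|_T\vee Y_2)$. Indeed, $\vee$ is the common refinement and every block of $Y_1\sqcup Y_2$ lies inside $S$ or inside $T$, so each block of the left join is an intersection $B\cap C$ with $B\in X$ and $C$ a block of $Y_1$ or of $Y_2$; since $B\cap C=(B\cap S)\cap C$, these recover exactly the blocks of $X|_S\vee Y_1$ (respectively $X|_T\vee Y_2$). Because the factorial is multiplicative over a disjoint union of partitions, and $P\sqcup Q=\maxflat$ holds iff $P$ and $Q$ are each the partition into singletons, both pairings satisfy the displayed identity, so $\isoflat$ and $\isograph$ are morphisms of monoids. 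Each pairing is visibly symmetric, as $X\vee Y=Y\vee X$; this simultaneously yields $\psi=\psi^*$ and, by swapping the two arguments in the monoid identity, the comonoid identity. Thus both maps are morphisms of bimonoids and, by the antipode-preservation of bimonoid morphisms noted in Section~\ref{ss:bim-hopf}, of Hopf monoids. Unit, counit, and equivariance are immediate, the first two being checked on $\emptyset$.

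For invertibility I would factor the Gram matrix on each finite-dimensional space $\tPi[I]$ through the refinement order. Writing $f(Z):=Z!$ (resp. $f(Z):=[Z=\maxflat]$) so that the matrix entry is $f(X\vee Y)$, and defining $h$ by the up-set M\"obius inversion $h(Z):=\sum_{W\geq Z}\mu(Z,W)f(W)$, the relation $\sum_{Z\geq W}h(Z)=f(W)$ yields the factorization $[\,f(X\vee Y)\,]_{X,Y}=\zeta\,\mathrm{diag}(h)\,\zeta^{\mathsf{T}}$ with $\zeta_{X,Z}=[X\leq Z]$ unitriangular. Hence the determinant equals $\prod_{Z\vdash I}h(Z)$, and it remains to see $h(Z)\neq 0$ in characteristic zero. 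For $\isograph$ one gets directly $h(Z)=\mu(Z,\maxflat)=(-1)^{\abs{I}-\len(Z)}Z\ifac\neq 0$ by~\eqref{e:partmobiusPi}. For $\isoflat$, the factorization of the interval $[Z,\maxflat]$ over the blocks of $Z$ makes $h$ block-multiplicative, reducing it to the single-block numbers $h_m=\sum_{Z\vdash[m]}(-1)^{\len(Z)-1}(\len(Z)-1)!\,Z!$, which are positive (e.g. $h_1=h_2=1$, $h_3=2$), hence nonzero.

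I expect the main obstacle to be this invertibility step. Verifying the morphism property is a short consequence of the join lemma, but showing the Gram matrices are nondegenerate requires the M\"obius factorization through the refinement order together with the explicit nonvanishing of the diagonal numbers $h(Z)$, and it is here that the standing characteristic-zero hypothesis of Section~\ref{s:examples} is essential.
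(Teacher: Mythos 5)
Your argument is correct in structure and is genuinely more self-contained than the paper's own proof, which handles $\isoflat$ by citation to~\cite[Proposition~12.48]{AguMah:2010}, treats the morphism property for $\isograph$ as a routine check, and obtains invertibility either from Dowling--Wilson~\cite[Lemma~1]{DowWil:1975} or from the $\{\BQ\}$-basis formulas $\isoflat(\BQ_X)=X\ifac\,\BP_X$ and $\isograph(\BQ_X)=(-1)^{\abs{I}-\len(X)}\,X\ifac\,\BP_X$, which visibly carry a basis to nonzero multiples of the dual basis. Your join lemma $X\vee(Y_1\sqcup Y_2)=(X|_S\vee Y_1)\sqcup(X|_T\vee Y_2)$, the multiplicativity of both weight functions over disjoint unions, the use of symmetry of the pairing to convert the monoid identity into the comonoid identity and to get $\psi=\psi^*$, and the passage from bimonoid morphisms to Hopf monoid morphisms are all fine; moreover your factorization $[\,f(X\vee Y)\,]_{X,Y}=\zeta\,\mathrm{diag}(h)\,\zeta^{\mathsf{T}}$ is exactly the mechanism behind the Dowling--Wilson lemma that the paper cites.

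There is, however, one genuine gap: in the $\isoflat$ case you need
\[
h_m=\sum_{Z\vdash[m]}(-1)^{\len(Z)-1}(\len(Z)-1)!\,Z!\neq 0
\]
for \emph{every} $m$, and you assert positivity only from the cases $m\le 3$. This cannot be waved through: alternating sums of this shape do vanish for other natural weights. For instance, taking $f\equiv 1$ instead of $f(Z)=Z!$ gives $h(Z)=\sum_{W\ge Z}\mu(Z,W)$, which is $1$ if $Z=\maxflat$ and $0$ otherwise --- the all-ones Gram matrix, which is singular --- so nonvanishing genuinely depends on the weight and requires an argument. The fix is short: by the compositional formula for exponential generating functions,
\[
\sum_{m\ge 1} h_m\,\frac{t^m}{m!}=\log\bigl(1+F(t)\bigr),
\qquad
F(t)=\sum_{n\ge 1} n!\,\frac{t^n}{n!}=\frac{t}{1-t},
\]
so the left-hand side equals $-\log(1-t)$ and $h_m=(m-1)!>0$. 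Equivalently, your block-multiplicative $h$ satisfies $h(Z)=Z\ifac$ as in~\eqref{e:Xifac}, which is precisely the coefficient in the paper's formula $\isoflat(\BQ_Z)=Z\ifac\,\BP_Z$. With this one computation inserted, your proof is complete.
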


The assertion about $\isoflat$ is proven in~\cite[Proposition~12.48]{AguMah:2010}.
One verifies without difficulty that $\isograph$ preserves products and coproducts.
Its invertibility follows from a result of Dowling and Wilson~\cite[Lemma~1]{DowWil:1975}
(or from the considerations below). 

\smallskip

Define a linear basis $\{\BQ\}$ of $\tPi[I]$ by
\[
\BH_X = \sum_{Y:\, X \leq Y} \BQ_Y.
\]
The basis $\{\BQ\}$ is determined by M\"obius inversion.
Let $\{\BP\}$ denote the linear basis of $\tPi^*$ dual to the basis $\{\BQ\}$ of $\tPi$.
Equivalently,
\[
\BP_Y = \sum_{X:\, X \leq Y} \BM_X.
\]

\begin{proposition}\label{p:Qflat}
We have
\begin{gather*}
\mu_{S,T}(\BQ_{X_1}\otimes\BQ_{X_2}) = \BQ_{X_1\sqcup X_2},\\
\Delta_{S,T}(\BQ_{X}) = 
\begin{cases}
\BQ_{X|_S}\otimes\BQ_{X|_T} & \text{ if $S$ is $X$-admissible,}\\
0 & \text{ otherwise,}
\end{cases}\\
\apode_I(\BQ_X) = (-1)^{\len(X)} \, \BQ_X.
\end{gather*}
\end{proposition}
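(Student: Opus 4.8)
The plan is to work entirely through the change of basis obtained by Möbius inversion of the defining relation $\BH_X=\sum_{Y\geq X}\BQ_Y$, namely $\BQ_X=\sum_{Y\geq X}\mu(X,Y)\,\BH_Y$, and to reduce all three assertions to the formulas for $\tPi$ in the basis $\{\BH\}$ together with the structure of a free commutative Hopf monoid. First I would establish the product formula directly. Writing $\BQ_{X_1}=\sum_{Y_1\geq X_1}\mu(X_1,Y_1)\BH_{Y_1}$ and likewise for $\BQ_{X_2}$, applying $\mu_{S,T}(\BH_{Y_1}\otimes\BH_{Y_2})=\BH_{Y_1\sqcup Y_2}$, and re-expanding each $\BH_{Y_1\sqcup Y_2}$ in the basis $\{\BQ\}$, the coefficient of $\BQ_Z$ becomes $\sum\mu(X_1,Y_1)\mu(X_2,Y_2)$ over all $Y_i\geq X_i$ with $Y_1\sqcup Y_2\leq Z$. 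The constraint $Y_1\sqcup Y_2\leq Z$ forces every block of $Z$ to lie inside $S$ or inside $T$, so this coefficient vanishes unless $S$ is $Z$-admissible; when $S$ is $Z$-admissible the condition decouples into $X_1\leq Y_1\leq Z|_S$ and $X_2\leq Y_2\leq Z|_T$, and the defining identity $\sum_{X\leq Y\leq W}\mu(X,Y)=\delta_{X,W}$ for the Möbius function collapses the coefficient to $\delta_{X_1,Z|_S}\delta_{X_2,Z|_T}$. Hence $\mu_{S,T}(\BQ_{X_1}\otimes\BQ_{X_2})=\BQ_{X_1\sqcup X_2}$.

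Iterating the product formula gives $\BQ_X=\prod_{B\in X}\BQ_{\{B\}}$, so the elements $\BQ_X$ are precisely the monomials in the $\BQ_{\{B\}}$; since they form a basis of $\tPi[I]$, the positive species $\tq$ with $\tq[I]:=\Kb\,\BQ_{\{I\}}$ realizes $\tPi$ as the free commutative monoid $\Sc(\tq)$, the isomorphism sending $\bigotimes_{B\in X}\BQ_{\{B\}}$ to $\BQ_X$.

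The substantive step is to show that each generator $\BQ_{\{I\}}$ is primitive, i.e.\ $\Delta_{S,T}(\BQ_{\{I\}})=0$ for all $I=S\sqcup T$ with $S,T\neq\emptyset$. Using $\Delta_{S,T}\BH_Y=\BH_{Y|_S}\otimes\BH_{Y|_T}$ and grouping the resulting sum by the pair $(A,B)=(Y|_S,Y|_T)$, the coefficient of $\BH_A\otimes\BH_B$ is $\sum_{Y}\mu(\{I\},Y)$ over those partitions $Y$ restricting to $A$ on $S$ and to $B$ on $T$. Such $Y$ are classified by partial matchings between the blocks of $A$ and those of $B$, and with $\mu(\{I\},Y)=(-1)^{\len(Y)-1}(\len(Y)-1)!$ from~\eqref{e:partmobiusPi} the coefficient becomes, with $a=\len A$ and $b=\len B$, the single-variable sum $\sum_{k}(-1)^{k}\binom{a}{k}\binom{b}{k}k!\,(a+b-k-1)!$ up to an overall sign. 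After dividing by $(a+b-1)!$ this is the terminating hypergeometric sum $\sum_k(-1)^k\binom{a}{k}\binom{b}{k}\big/\binom{a+b-1}{k}={}_2F_1\!\left(-a,-b;-(a+b-1);1\right)$, which by the Chu--Vandermonde identity equals $(1-a)_a/(1-a-b)_a$; since $a\geq 1$ the numerator $(1-a)_a$ contains the factor $0$, so the coefficient vanishes and $\BQ_{\{I\}}$ is primitive. I expect this identity to be the main obstacle, with the bookkeeping of which partitions $Y$ have prescribed restrictions as the delicate point; a sign-reversing involution on pairs (matching, cyclic order of blocks) would give an alternative purely combinatorial proof of the vanishing.

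Finally, with primitivity in hand, Proposition~\ref{p:dchmunivp} shows that the monoid isomorphism $\Sc(\tq)\cong\tPi$ is in fact an isomorphism of Hopf monoids. Transporting the coproduct of the free commutative Hopf monoid (Section~\ref{ss:freecomHopf}), which splits $\bigotimes_{B\in X}\BQ_{\{B\}}$ according to $X$-admissibility of $S$ and is zero otherwise, yields the stated formula for $\Delta_{S,T}(\BQ_X)$; transporting the antipode formula of Theorem~\ref{t:free-com-apode} gives $\apode_I(\BQ_X)=(-1)^{\len(X)}\BQ_X$. Alternatively, once the product formula and primitivity are known, the antipode follows at once: $\tPi$ is commutative, so $\apode$ is a morphism of monoids, and $\apode(\BQ_{\{B\}})=-\BQ_{\{B\}}$ by Proposition~\ref{p:prim-ant}, whence $\apode(\BQ_X)=\prod_{B\in X}(-\BQ_{\{B\}})=(-1)^{\len(X)}\BQ_X$.
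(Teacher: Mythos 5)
Your proof is correct. For context: the paper does not actually prove Proposition~\ref{p:Qflat} in the text --- it cites \cite[Section~12.6]{AguMah:2010} --- and it then uses the proposition to conclude that $\Sc(\wE_+)\cong\tPi$ as Hopf monoids via $\bigotimes_{B\in X}\BH_B\leftrightarrow\BQ_X$. Your argument runs in the opposite logical direction and is self-contained: you first get the product formula by M\"obius inversion (the decoupling of the constraint $Y_1\sqcup Y_2\leq Z$ into $X_1\leq Y_1\leq Z|_S$ and $X_2\leq Y_2\leq Z|_T$, valid precisely when $S$ is $Z$-admissible, is exactly right, and the M\"obius sums then collapse to $\delta_{X_1,Z|_S}\delta_{X_2,Z|_T}$); you then prove primitivity of the generators $\BQ_{\{I\}}$ by a direct count; and only then do you read off the coproduct and antipode by transporting the structure of the free commutative Hopf monoid through Proposition~\ref{p:dchmunivp}, Theorem~\ref{t:free-com-apode}, and Proposition~\ref{p:prim-ant}. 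The step carrying the real content is the vanishing of $\sum_k(-1)^k\binom{a}{k}\binom{b}{k}k!\,(a+b-k-1)!$ for $a,b\geq 1$, and it checks out: the partitions $Y$ with $Y|_S=A$ and $Y|_T=B$ are indeed classified by partial matchings between blocks of $A$ and blocks of $B$ (there are $\binom{a}{k}\binom{b}{k}k!$ of them with $\len(Y)=a+b-k$), and the Chu--Vandermonde evaluation ${}_2F_1\bigl(-a,-b;1-a-b;1\bigr)=(1-a)_a/(1-a-b)_a=0$ is legitimate here because the series terminates and no factor of $(1-a-b)_k$ vanishes in the relevant range $k\leq a$ when $b\geq 1$. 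In short, your route trades the machinery of the cited reference for one explicit binomial identity (or the sign-reversing involution you sketch); what it buys is a self-contained proof whose only nontrivial ingredient is elementary, and it reverses the paper's order of deduction by obtaining the free commutative Hopf structure as the intermediate step rather than as a corollary.
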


This is shown in~\cite[Section~12.6]{AguMah:2010}.
In addition, the maps in~\eqref{e:pisdual} satisfy
\[
\isoflat(\BQ_X) = X\ifac \, \BP_X
\qand
\isograph(\BQ_X) = (-1)^{\abs{I}-\len(X)}\, X\ifac \, \BP_X,
\]
where the coefficient $X\ifac$ is as in~\eqref{e:Xifac}. 
The second coefficient is the M\"obius function value~\eqref{e:partmobiusPi}.

The expressions for the product and coproduct on the basis $\{\BQ\}$ show that
there is an isomorphism of Hopf monoids
\[
\Sc(\wE_+) \cong \tPi, \quad
\bigotimes_{B\in X} \BH_B \leftrightarrow \BQ_X.
\]
It follows that the primitive part satisfies
\[
\Pc(\tPi) \cong \wE_+
\]
with $\Pc(\tPi)[I]$ one-dimensional spanned by the element $\BQ_{\{I\}}$.

\subsection{The Hopf monoid of simple graphs}\label{ss:graphs}

A \emph{simple graph} on $I$ is a 
collection of subsets of $I$ of cardinality $2$.
The elements of $I$ are the \emph{vertices} and the subsets are the 
(undirected) \emph{edges}. 

Let $g$ be a simple graph on $I$ and $S\subseteq I$.
The \emph{restriction} $g|_S$ is the simple graph on $S$ consisting of those 
edges of $g$ connecting elements of $S$.
Write $I=S\sqcup T$. We say that $S$ is \emph{$g$-admissible}
if no edge of $g$ connects an element of $S$ to an element of $T$.

Given $I=S\sqcup T$ and simple graphs $g$ on $S$ and $h$ on $T$,
their \emph{union} is the simple graph $g\sqcup h$ on $I$ whose edges
belong to either $g$ or $h$.

Let $\rG[I]$ denote the set of simple graphs with vertex set $I$.
The species $\rG$ \emph{of simple graphs} is a set-theoretic
connected Hopf monoid with product and coproduct defined by
\begin{align*}
\rG[S]\times\rG[T] & \map{\mu_{S,T}} \rG[I] & \rG[I] & \map{\Delta_{S,T}} \rG[S]\times\rG[T]\\
(g_1,g_2) & \xmapsto{\phantom{\mu_{S,T}}} g_1\sqcup g_2
& g & \xmapsto{\phantom{\Delta_{S,T}}} (g |_S,g |_T).
\end{align*}
The Hopf monoid $\rG$ is commutative and cocommutative.

We let $\tG:=\Kb\rG$ denote the linearization of $\rG$.
It is a connected Hopf monoid. The product and coproduct are
\[
\mu_{S,T}(\BH_{g_1}\otimes\BH_{g_2}) = \BH_{g_1\sqcup g_2}
\qqand
\Delta_{S,T}(\BH_{g}) = \BH_{g|_S}\otimes\BH_{g|_T}.
\]

Let $\tcG$ denote the subspecies of $\tG$ spanned by connected simple graphs.
The monoid $\tG$ is the free commutative monoid on $\tcG_+$:
there is an isomorphism of monoids
\[
\Sc(\tcG_+) \cong \tG, \qquad
\bigotimes_{B\in X} \BH_{g|_B} \leftrightarrow \BH_g.
\]
Here $X$ is the partition into connected components of $g$.
The isomorphism does not preserve coproducts, since $\Delta_{S,T}(\bigotimes_{B\in X} \BH_{g|_B})=0$ when $S$ is not $g$-admissible.

There is a pleasant solution to the antipode problem for $\tG$.
In order to state it, we set up some terminology.

Given a simple graph $g$ on $I$ and a partition $X\vdash I$,
let
\[
g|_X := \bigsqcup_{B\in X} g|_B.
\]
This is a simple graph on $I$. Let also $g/_X$ be the simple graph on $X$ such that
there is an edge between $B,C\in X$ if there is at least one edge in $g$
between an element of $B$ and an element of $C$.
In other words, $g/_X$ is obtained from $g$
by identifying all vertices in each block of $X$ and
removing all loops and multiple edges that may arise as a result.
The \emph{lattice of contractions} of $g$ is the set
\begin{equation*}
L(g) := \{X\vdash I \mid \text{the graph $g|_B$ is connected for each $B\in X$}\}.
\end{equation*}

An orientation of the edges of $g$ is \emph{acyclic} if it contains no oriented cycles.
Let $\acyc{g}$ denote the number of acyclic orientations of $g$.

\begin{theorem}\label{t:apode-graph}
The antipode of $\tG$ is given by
\begin{equation}\label{e:apode-graph}
\apode_I(\BH_g) = \sum_{X\in L(g)} (-1)^{\len(X)}\,\acyc{(g/_X)}\,\BH_{g|_X}.
\end{equation}
\end{theorem}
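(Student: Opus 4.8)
The plan is to start from Takeuchi's formula (Proposition~\ref{p:antipode-r}), reorganize the resulting alternating sum over compositions into a sum over partitions, and then convert the massive cancellation it exhibits into the clean acyclic-orientation count by recognizing a chromatic-polynomial evaluation. First I would specialize~\eqref{e:antipode-r} to $\tG$. Since $\Delta_F(\BH_g)=\BH_{g|_{I_1}}\otimes\cdots\otimes\BH_{g|_{I_k}}$ for $F=(I_1,\dots,I_k)$, and $\mu_F$ reassembles this into the disjoint union $\BH_{g|_{\supp F}}$, the formula becomes
\[
\apode_I(\BH_g)=\sum_{F\vDash I}(-1)^{\len(F)}\BH_{g|_{\supp F}}.
\]
Grouping the compositions by their support $X=\supp F$ (there are $\len(X)!$ orderings of the blocks, each of length $\len(X)$) produces the raw, highly cancelling expression
\[
\apode_I(\BH_g)=\sum_{X\vdash I}(-1)^{\len(X)}\,\len(X)!\;\BH_{g|_X}.
\]

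Next I would collect terms according to the graph $g|_X$ rather than the partition $X$. The key observation is that if $Y$ denotes the partition of $I$ into connected components of $g|_X$, then $Y\in L(g)$ and $g|_Y=g|_X$; moreover the assignment $Y\mapsto g|_Y$ is injective on $L(g)$, since each $Y\in L(g)$ is recovered as the connected-component partition of $g|_Y$. Thus it suffices, for each fixed $Y\in L(g)$, to compute the coefficient of $\BH_{g|_Y}$, namely the sum of $(-1)^{\len(X)}\len(X)!$ over all $X$ with $g|_X=g|_Y$. I would show that any such $X$ necessarily satisfies $X\le Y$: connectivity of each block of $Y$ forces all its vertices into a single block of $X$. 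Under the resulting identification of coarsenings of $Y$ with partitions $\bar X$ of the vertex set $V(g/_Y)$, the condition $g|_X=g|_Y$ is exactly that every block of $\bar X$ be an independent (stable) set in the contraction $g/_Y$; otherwise an edge of $g$ joining two merged $Y$-blocks would appear in $g|_X$ but not in $g|_Y$. Since $\len(X)=\len(\bar X)$, the coefficient equals
\[
\sum_{\bar X}(-1)^{\len(\bar X)}\,\len(\bar X)!,
\]
the sum ranging over partitions of $V(g/_Y)$ into independent sets of $g/_Y$.

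Finally I would recognize this signed sum as a chromatic-polynomial evaluation. Writing the chromatic polynomial as $\chi_{g/_Y}(\lambda)=\sum_j a_j\,\lambda(\lambda-1)\cdots(\lambda-j+1)$, where $a_j$ counts partitions of $V(g/_Y)$ into exactly $j$ independent sets, and using $(-1)(-2)\cdots(-j)=(-1)^j j!$, the coefficient above is precisely $\chi_{g/_Y}(-1)$. By Stanley's reciprocity theorem $\chi_{G}(-1)=(-1)^{\abs{V(G)}}\acyc{G}$, and $\abs{V(g/_Y)}=\len(Y)$, this equals $(-1)^{\len(Y)}\acyc{(g/_Y)}$. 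Summing over $Y\in L(g)$, whose graphs $g|_Y$ are pairwise distinct, gives exactly~\eqref{e:apode-graph}. The main obstacle is the bookkeeping of the middle step: rigorously proving that the partitions $X$ contributing to a fixed graph $g|_Y$ are precisely the independent-set coarsenings of $Y$ relative to $g/_Y$, so that distinct $Y\in L(g)$ never interfere and the raw Takeuchi cancellations are organized exactly into the per-$Y$ acyclic-orientation counts.
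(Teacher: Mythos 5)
Your proof is correct, but it follows a genuinely different route from the paper's. The paper does not derive Theorem~\ref{t:apode-graph} from Takeuchi's formula; it cites \cite{AguArd:2012} and \cite[Theorem~3.1]{HumMar:2012} and sketches two derivations: one from the antipode formula~\eqref{e:apode-gp} for generalized permutahedra via a morphism out of $\tG$, and one via the basis $\{\BQ\}$ of Proposition~\ref{p:Qgraph} --- the isomorphism $\Sc(\tcG_+)\cong\tG$ makes $\apode_I(\BQ_g)=(-1)^{c(g)}\BQ_g$ a special case of the free commutative antipode formula~\eqref{e:free-com-apode}, and Stanley's negative one color theorem then transfers the result to the $\BH$ basis. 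You instead specialize Takeuchi's formula~\eqref{e:antipode-r} and organize its cancellations directly: group compositions by their support, group partitions $X$ by the graph $g|_X$ (equivalently by the connected-component partition $Y\in L(g)$, with $Y\mapsto g|_Y$ injective on $L(g)$ --- the same observation the paper uses to see that the formula is cancellation-free), identify the partitions contributing to a fixed $Y$ with the partitions of $V(g/_Y)$ into independent sets, and recognize the coefficient $\sum_{\bar X}(-1)^{\len(\bar X)}\,\len(\bar X)!$ as the chromatic evaluation $\chi_{g/_Y}(-1)$, which Stanley's theorem converts to $(-1)^{\len(Y)}\acyc{(g/_Y)}$. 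The steps you flagged as delicate do check out: connectivity of the blocks of $Y$ forces every contributing $X$ to be coarser than $Y$, and $g|_X=g|_Y$ holds precisely when the merged blocks form independent sets of $g/_Y$, so distinct $Y\in L(g)$ never interfere. Your route is more elementary and self-contained (it is essentially the Humpert--Martin argument that the paper cites), needing neither the $\BQ$ basis nor freeness; the paper's route buys structural byproducts along the way --- the identification $\tG\cong\Sc(\tcG_+)$, the primitive part $\Pc(\tG)\cong\tcG_+$, and cancellation-free antipode formulas on both bases --- while both approaches ultimately rest on Stanley's reciprocity.
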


Note that if $X\in L(g)$, then $X$ can be recovered from $g|_X$ as the partition into
connected components. Hence there are no cancellations in the antipode
formula~\eqref{e:apode-graph}.

Theorem~\ref{t:apode-graph} is given in~\cite{AguArd:2012} 
and also (in the setting of Hopf algebras)
in work of Humpert and Martin~\cite[Theorem~3.1]{HumMar:2012}.
It is explained in these references 
how Stanley's negative one color theorem~\cite[Corollary~1.3]{Sta:1973} 
may be derived from this result. 
It is possible to proceed conversely, 
as we outline after Proposition~\ref{p:Qgraph} below.

For the dual Hopf monoid $\tG^*=\Kb^\rG$, the product is
\[
\mu_{S,T}(\BM_{g_1}\otimes\BM_{g_2}) = \sum_{g} \BM_{g}.
\]
The sum is over those simple graphs $g$ for which $g|_S=g_1$ and $g|_T=g_2$.
In other words, an edge of $g$ is either an edge of $g_1$,
or an edge of $g_2$, or connects an element of $S$ with an element of $T$.
The coproduct is
\[
\Delta_{S,T}(\BM_{g}) =
\begin{cases}
\BM_{g|_S}\otimes\BM_{g|_T} & \text{ if $S$ is $g$-admissible,}\\
0 & \text{ otherwise.}
\end{cases}
\]

The Hopf monoid $\tG$ is self-dual.
The following result provides a canonical duality.
The \emph{complement} of a simple graph $g$ on $I$ is the simple graph $g'$
with the complementary edge set. Define a map
$\isograph \colon \tG \to \tG^*$ by
\begin{equation}\label{e:gsdual}
\isograph(\BH_h) := \sum_{g:\,g\subseteq h'} \, \BM_g.
\end{equation}
The simple graphs $g$ in the sum are those without common edges with $h$.

\begin{proposition}\label{p:graph-self-dual}
The map $\isograph \colon \tG \to \tG^*$ 
is an isomorphism of Hopf monoids. Moreover, $\isograph=\isograph^*$.
\end{proposition}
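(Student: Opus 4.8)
The plan is to verify directly that $\isograph\colon\tG\to\tG^*$ is a morphism of bimonoids, deduce from this that it is a morphism of Hopf monoids, and then settle invertibility and self-adjointness by a separate linear-algebra argument. Naturality with respect to relabelling bijections is manifest, since complementation, restriction, and union of graphs all commute with the action of bijections $I\to J$, so from the outset $\isograph$ is a morphism of species and it remains only to check compatibility with the structure maps.

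First I would check that $\isograph$ preserves products. Fix $I=S\sqcup T$ with $h_1$ a graph on $S$ and $h_2$ a graph on $T$. The complement $(h_1\sqcup h_2)'$ inside the complete graph on $I$ consists of the non-edges of $h_1$ within $S$, the non-edges of $h_2$ within $T$, and \emph{all} crossing edges between $S$ and $T$ (since $h_1\sqcup h_2$ has none). Hence $\isograph(\BH_{h_1\sqcup h_2})=\sum_g \BM_g$ runs over graphs $g$ whose restriction to $S$ avoids $h_1$, whose restriction to $T$ avoids $h_2$, and whose crossing edges are arbitrary. Expanding $\mu_{S,T}\bigl(\isograph(\BH_{h_1})\otimes\isograph(\BH_{h_2})\bigr)$ by the product formula for $\tG^*$ (which fixes $g|_S$ and $g|_T$ while leaving crossing edges free) yields exactly the same sum. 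The coproduct is dual: applying $\Delta_{S,T}$ to $\isograph(\BH_h)$ and using that $\Delta_{S,T}(\BM_g)$ vanishes unless $S$ is $g$-admissible, i.e.\ unless $g$ has no crossing edges, restricts the sum to graphs $g=g|_S\sqcup g|_T$ with $g|_S\subseteq(h|_S)'$ and $g|_T\subseteq(h|_T)'$; this factors as $\isograph(\BH_{h|_S})\otimes\isograph(\BH_{h|_T})$, matching $(\isograph\otimes\isograph)\Delta_{S,T}(\BH_h)$. Unit and counit are preserved because on the empty set $\isograph(\BH_\emptyset)=\BM_\emptyset$. Thus $\isograph$ is a morphism of bimonoids, and since $\tG$ and $\tG^*$ are connected Hopf monoids, it automatically commutes with the antipodes (Section~\ref{ss:bim-hopf}), hence is a morphism of Hopf monoids. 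For invertibility I would examine the matrix of $\isograph$ on each finite-dimensional component $\tG[I]$: the coefficient of $\BM_g$ in $\isograph(\BH_h)$ is $1$ when $g$ and $h$ have disjoint edge sets and $0$ otherwise. Writing $N=\binom{\abs{I}}{2}$ for the number of potential edges and identifying a graph with the characteristic vector of its edge set, disjointness factorizes edge by edge, so this matrix is the $N$-fold Kronecker product of $\left(\begin{smallmatrix}1&1\\1&0\end{smallmatrix}\right)$. As that $2\times2$ matrix has determinant $-1$, the Kronecker product is nonsingular, so $\isograph$ is an isomorphism on each $\tG[I]$. Finally, the same matrix is symmetric, since disjointness of edge sets is a symmetric relation, and this symmetry is precisely the statement $\isograph=\isograph^*$.

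The main obstacle is bookkeeping rather than depth: the product and coproduct verifications hinge on carefully tracking the three kinds of edges (internal to $S$, internal to $T$, and crossing) and on matching the ``arbitrary crossing edges'' appearing in the $\tG^*$-product against the ``no crossing edges'' admissibility condition governing the $\tG^*$-coproduct. By contrast, invertibility becomes immediate once the Kronecker-product structure of the incidence matrix is recognized, which sidesteps the appeal to the Dowling--Wilson lemma used in the partition case of Proposition~\ref{p:flat-self-dual}.
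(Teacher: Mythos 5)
Your proof is correct, and it fills in exactly the verification the paper leaves to the reader ("one verifies that $\isograph$ preserves products and coproducts"); your bookkeeping of internal versus crossing edges matches the intended computation, and the passage from bimonoid morphism to Hopf monoid morphism via connectedness is the same appeal to Section~\ref{ss:bim-hopf}. Where you genuinely diverge is the invertibility step. The paper argues that $\isograph$ is \emph{triangular with respect to inclusion}: composing with the complementation $h\mapsto h'$ (a permutation of the basis), the matrix of $\isograph$ becomes $[\,g\subseteq k\,]$, the zeta matrix of the Boolean lattice of edge sets, which is unitriangular and hence invertible. You instead factor the disjointness matrix $[\,g\cap h=\emptyset\,]$ edge by edge as the $N$-fold Kronecker power of $\pmat{1&1\\1&0}$, whose determinant is $-1$, so the whole matrix has determinant $\pm 1$. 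The two arguments are close cousins (your $2\times 2$ factor becomes unitriangular after the same row permutation by complementation, so the Kronecker factorization is in effect a refinement of the triangularity observation), and both avoid the Dowling--Wilson lemma needed in the partition case of Proposition~\ref{p:flat-self-dual}. Your version has the small added benefit of exhibiting the determinant explicitly, hence invertibility over any field; your symmetry argument for $\isograph=\isograph^*$ (disjointness is a symmetric relation) is also the right, and complete, justification of the second claim, which the paper asserts without comment.
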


One verifies that $\isograph$ preserves products and coproducts.
It is triangular with respect to inclusion, hence invertible.

\smallskip

Define a linear basis $\{\BQ\}$ of $\tG[I]$ by
\[
\BH_g = \sum_{h:\, h\subseteq g} \BQ_h.
\]
The basis $\{\BQ\}$ is determined by M\"obius inversion.
Let $\{\BP\}$ denote the linear basis of $\tG^*$
dual to the basis $\{\BQ\}$ of $\tG$.
Equivalently,
\[
\BP_h = \sum_{g:\, h\subseteq g} \BM_g.
\]

\begin{proposition}\label{p:Qgraph}
We have
\begin{gather}
\label{e:prod-graphQ}
\mu_{S,T}(\BQ_{g_1}\otimes\BQ_{g_2}) = \BQ_{g_1\sqcup g_2},\\
\label{e:coprod-graphQ}
\Delta_{S,T}(\BQ_{g}) =
\begin{cases}
\BQ_{g|_S}\otimes\BQ_{g|_T} & \text{ if $S$ is $g$-admissible,}\\
0 & \text{ otherwise,}
\end{cases}\\
\label{e:apode-graphQ}
\apode_I(\BQ_g) = (-1)^{c(g)} \, \BQ_g.
\end{gather}
\end{proposition}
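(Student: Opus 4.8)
The plan is to obtain all three formulas from the known action on the $\{\BH\}$ basis together with the Möbius-inversion definition of $\{\BQ\}$. Inverting $\BH_g=\sum_{h\subseteq g}\BQ_h$ over the Boolean lattice of subgraphs of $g$ (ordered by edge inclusion) gives
\[
\BQ_g=\sum_{h\subseteq g}(-1)^{\abs{g}-\abs{h}}\,\BH_h,
\]
where $\abs{g}$ denotes the number of edges. For the product, I would substitute this expansion for $\BQ_{g_1}$ and $\BQ_{g_2}$ into $\mu_{S,T}$ and apply $\mu_{S,T}(\BH_{h_1}\otimes\BH_{h_2})=\BH_{h_1\sqcup h_2}$. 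Since $g_1$ lives on $S$ and $g_2$ on $T$, the union $g_1\sqcup g_2$ has no edges crossing $S\sqcup T$, so the pairs $(h_1,h_2)$ with $h_1\subseteq g_1$, $h_2\subseteq g_2$ biject with the subgraphs $h\subseteq g_1\sqcup g_2$ via $h_1=h|_S$, $h_2=h|_T$, and $\abs{h_1}+\abs{h_2}=\abs{h}$. The signs combine to $(-1)^{\abs{g_1\sqcup g_2}-\abs{h}}$, recovering $\BQ_{g_1\sqcup g_2}$.

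For the coproduct I would apply $\Delta_{S,T}(\BH_h)=\BH_{h|_S}\otimes\BH_{h|_T}$ to the same expansion. Splitting the edges of $g$ into those inside $S$, those inside $T$, and the crossing edges $E_{\mathrm{cross}}$, the restrictions $h|_S$ and $h|_T$ depend only on the first two kinds, so the sum over subsets of $E_{\mathrm{cross}}$ factors out as $\sum_{C\subseteq E_{\mathrm{cross}}}(-1)^{\abs{E_{\mathrm{cross}}}-\abs{C}}=(1-1)^{\abs{E_{\mathrm{cross}}}}$. This equals $0$ as soon as $g$ has a crossing edge, i.e.\ exactly when $S$ is not $g$-admissible; when $S$ is admissible it equals $1$ and the two remaining sums factor as $\BQ_{g|_S}\otimes\BQ_{g|_T}$, yielding the stated dichotomy.

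For the antipode I would argue structurally rather than by inversion. The coproduct formula just established shows that when $g$ is connected no proper nonempty $S$ is $g$-admissible, so $\BQ_g\in\Pc(\tG)[I]$. For general $g$ whose connected components partition $I$ into blocks $B_1,\dots,B_k$ (so $k=c(g)$, the number of connected components), iterating the product formula gives $\BQ_g=\BQ_{g|_{B_1}}\cdots\BQ_{g|_{B_k}}$, a product of $c(g)$ primitives. Since $\tG$ is commutative, its antipode is multiplicative (Proposition~\ref{p:apode-rev} with the trivial braiding makes $\mu_{T,S}\beta_1=\mu_{S,T}$), and on primitives it is negation by Proposition~\ref{p:prim-ant}; hence $\apode_I(\BQ_g)=\prod_i(-\BQ_{g|_{B_i}})=(-1)^{c(g)}\BQ_g$. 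The only mild obstacle is the sign bookkeeping in the coproduct step: recognizing the crossing-edge alternating sum as $(1-1)^{\abs{E_{\mathrm{cross}}}}$, which is precisely what converts the unconditional $\BH$-coproduct into the $g$-admissibility condition.
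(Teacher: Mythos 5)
Your proposal is correct, and in substance it travels the same road as the paper, just with more of the pavement laid by hand. The paper states \eqref{e:prod-graphQ} and \eqref{e:coprod-graphQ} without proof (they are exactly the M\"obius-inversion computations you carry out, including the key $(1-1)^{\abs{E_{\mathrm{cross}}}}$ collapse that produces the admissibility dichotomy), and then observes that these two formulas exhibit an isomorphism of Hopf monoids $\Sc(\tcG_+)\cong\tG$, $\bigotimes_{B\in X}\BH_{g|_B}\leftrightarrow\BQ_g$, so that \eqref{e:apode-graphQ} becomes a special case of the general antipode formula~\eqref{e:free-com-apode} for free commutative monoids. Your antipode argument --- $\BQ_g$ is primitive when $g$ is connected, $\BQ_g$ is a product of $c(g)$ such primitives in general, the antipode is multiplicative on a commutative Hopf monoid by Proposition~\ref{p:apode-rev} and is negation on primitives by Proposition~\ref{p:prim-ant} --- is precisely the proof of Theorem~\ref{t:free-com-apode} specialized to this instance, so nothing is different mathematically. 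What the two presentations buy is complementary: yours is self-contained and makes every cancellation visible, while the paper's phrasing isolates the structural fact (freeness of $\tG$ as a commutative monoid on $\tcG_+$ in the $\BQ$ basis) that it reuses immediately afterwards to identify $\Pc(\tG)\cong\tcG_+$.
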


Here $c(g)$ denotes the number of connected components of $g$.
In addition, the map in~\eqref{e:gsdual} satisfies
\[
\isograph(\BQ_g) = (-1)^{\abs{I}- c(g)} \, \BP_g.
\]

Formulas~\eqref{e:prod-graphQ} and~\eqref{e:coprod-graphQ} show that
there is an isomorphism of Hopf monoids
\[
\Sc(\tcG_+) \cong \tG, \qquad
\bigotimes_{B\in X} \BH_{g|_B} \leftrightarrow \BQ_g.
\]
The antipode formula~\eqref{e:apode-graphQ} is then a special case 
of~\eqref{e:free-com-apode}.
This in turn can be used to derive the antipode formula~\eqref{e:apode-graph}
using Stanley's negative one color theorem~\cite[Corollary~1.3]{Sta:1973}.

It also follows that the primitive part satisfies
\[
\Pc(\tG) \cong \tcG_+
\]
with $\Pc(\tG)[I]$ spanned by the elements $\BQ_g$ as $g$ runs over the connected
simple graphs on $I$.

\smallskip

The analogy in the analyses of the Hopf monoids $\tPi$ and $\tG$ can be
formalized. Let $k_I$ denote the complete graph on $I$, and
for a partition $X$ of $I$, let 
\[
k_X := \bigsqcup_{B\in X} k_B.
\]
Define a map $k:\tPi\to\tG$ by
\begin{equation}\label{e:Pi-G}
k(\BH_{X}) := \BH_{k_X}.
\end{equation}

\begin{proposition}\label{p:Pi-G}
The map $k:\tPi\to\tG$ is an injective morphism of Hopf monoids. Moreover, 
\[
\xymatrix{
\tG \ar[r]^-{\isograph} & \tG^* \ar[d]^{k^*}\\
\tPi \ar[u]^{k} \ar[r]_-{\isograph} & \tPi^* 
}
\]
commutes.
\end{proposition}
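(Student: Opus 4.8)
The plan is to verify directly that $k$ intertwines the products and coproducts on the $\BH$-bases, deduce injectivity from the combinatorics of cluster graphs, and finally chase a single basis element around the square.

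First I would check that $k$ is a morphism of bimonoids by testing the structure maps on the basis $\{\BH_X\}$. For the product, the point is the set-level identity $k_{X_1\sqcup X_2}=k_{X_1}\sqcup k_{X_2}$, so that $k\bigl(\mu_{S,T}(\BH_{X_1}\otimes\BH_{X_2})\bigr)=\BH_{k_{X_1}\sqcup k_{X_2}}=\mu_{S,T}\bigl(k(\BH_{X_1})\otimes k(\BH_{X_2})\bigr)$. For the coproduct, the corresponding identity is $(k_X)|_S=k_{X|_S}$: an edge of $k_X$ is retained in $(k_X)|_S$ precisely when both endpoints lie in $S$ and in a common block of $X$, i.e.\ in a common block of $X|_S$. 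This gives $\Delta_{S,T}\bigl(k(\BH_X)\bigr)=\BH_{k_{X|_S}}\otimes\BH_{k_{X|_T}}=(k\otimes k)\Delta_{S,T}(\BH_X)$. Since $\tPi$ and $\tG$ are connected Hopf monoids, and a morphism of bimonoids between such automatically commutes with the antipodes (Section~\ref{ss:bim-hopf}), this already makes $k$ a morphism of Hopf monoids.

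For injectivity I would observe that $X$ is recovered from $k_X$ as its partition into connected components, since each $k_B$ is connected and there are no edges of $k_X$ between distinct blocks. Hence $X\mapsto k_X$ is injective, so $k$ carries the basis $\{\BH_X\}$ to distinct basis elements $\{\BH_{k_X}\}$ of $\tG$ and is therefore injective.

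The main work is the commutativity of the square, for which I would first record the dual map on basis elements: since $k(\BH_X)=\BH_{k_X}$, one has $k^*(\BM_g)=\BM_X$ if $g=k_X$ for a (necessarily unique) partition $X$, and $k^*(\BM_g)=0$ otherwise. Chasing $\BH_Y$ through the top and right edges, using the formula~\eqref{e:gsdual} for $\isograph\colon\tG\to\tG^*$, then gives
\[
\bigl(k^*\circ\isograph\circ k\bigr)(\BH_Y)=k^*\Bigl(\sum_{g\subseteq (k_Y)'}\BM_g\Bigr)=\sum_{X:\ k_X\subseteq (k_Y)'}\BM_X.
\]
The crux is the combinatorial translation $k_X\subseteq (k_Y)'\iff X\vee Y=\maxflat$: the condition $k_X\subseteq (k_Y)'$ says $k_X$ and $k_Y$ share no edge, i.e.\ no two distinct points of $I$ lie in a common block of both $X$ and $Y$, which is exactly the statement that the common refinement $X\vee Y$ (the join in the refinement order of Section~\ref{ss:partitions}) is the discrete partition $\maxflat$. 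Comparing with the formula~\eqref{e:pisdual} for $\isograph\colon\tPi\to\tPi^*$ yields $\bigl(k^*\circ\isograph\circ k\bigr)(\BH_Y)=\sum_{X:\ X\vee Y=\maxflat}\BM_X=\isograph(\BH_Y)$, as needed. I expect the only subtlety to be keeping the refinement convention straight, so that the ``share no edge'' condition matches $\vee=\maxflat$ rather than the meet.
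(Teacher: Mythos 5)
Your proof is correct, and it follows exactly the direct-verification route the paper intends (the paper states this proposition without proof, in the same spirit as its remark for Proposition~\ref{p:graph-self-dual} that ``one verifies'' the structure is preserved). All the key points check out: the identities $k_{X_1\sqcup X_2}=k_{X_1}\sqcup k_{X_2}$ and $(k_X)|_S=k_{X|_S}$ give the bimonoid morphism, which is automatically a Hopf monoid morphism by Section~\ref{ss:bim-hopf}; and your crux equivalence $k_X\subseteq (k_Y)'\iff X\vee Y=\maxflat$ is right under the paper's convention (Section~\ref{ss:partitions}) that finer partitions are larger, so the join is the common refinement and $X\vee Y=\maxflat$ says no two distinct elements share a block in both $X$ and $Y$, i.e.\ $k_X$ and $k_Y$ have no common edge.
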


The commutativity of the diagram translates into the fact that $k$ is an
isometric embedding when $\tPi$ and $\tG$ are endowed with the pairings
associated to the isomorphisms $\isograph$.

One may use Proposition~\ref{p:Pi-G}
to deduce the antipode formula for $\tPi$ (Theorem~\ref{t:apode-flat})
from that for $\tG$ (Theorem~\ref{t:apode-graph}).

\subsection{The Hopf monoid of functions on unitriangular groups}\label{ss:class}

Let $\Fb$ be a fixed field, possibly different from the base field $\Kb$.
Given a finite set $I$ and a linear order $\ell$ on $I$,
let $\Mat(I)$ denote the algebra of matrices 
\[
\text{$A=(a_{ij})_{i,j\in I}$,
 $a_{ij}\in\Fb$ for all $i,j\in I$.}
\]
The general linear group $\GL(I)$ consists of the invertible matrices in $\Mat(I)$,
and the subgroup $\Un(I,\ell)$ consists of the \emph{upper $\ell$-unitriangular} matrices
\[
\text{$U=(u_{ij})_{i,j\in I}$,
$u_{ii}=1$ for all $i\in I$,
$u_{ij}=0$ whenever $i\geq j$ according to $\ell$.}
\]

Let $I=S_1\sqcup S_2$. Given $A=(a_{ij})\in\Mat(S_1)$ and $B=(b_{ij})\in\Mat(S_2)$, their \emph{direct sum} is the matrix
$A\oplus B=(c_{ij})\in\Mat(I)$ with entries
\[
c_{ij}= \begin{cases}
a_{ij} & \text{ if both $i,j\in S_1$,}\\
b_{ij} & \text{ if both $i,j\in S_2$,}\\
0 & \text{ otherwise.}
\end{cases}
\]

Let $\ell\in\rL[I]$. The direct sum of an $\ell|_{S_1}$-unitriangular and an
$\ell|_{S_2}$-unitriangular matrix is $\ell$-unitriangular.
The morphism of algebras
\[
\Mat(S_1)\times\Mat(S_2) \to \Mat(I),
\quad (A,B) \mapsto A\oplus B
\]
thus restricts to a morphism of groups
\begin{equation}\label{e:sigma}
\sigma_{S_1,S_2} :\Un(S_1,\ell|_{S_1})\times \Un(S_2,\ell|_{S_2}) \to \Un(I,\ell).
\end{equation}
(The dependence of $\sigma_{S_1,S_2}$ on $\ell$ is left implicit.)

Given $A=(a_{ij})\in\Mat(I)$, the \emph{principal minor} indexed by $S\subseteq I$
is the matrix 
\[
A_S =(a_{ij})_{i,j\in S}.
\]
If $U$ is $\ell$-unitriangular, then $U_S$ is $\ell|_S$-unitriangular. 
Moreover, the map
\[
\Un(I,\ell) \to \Un(S,\ell|_S),\quad U\mapsto U_S
\]
is a morphism of groups if and only if $S$ is an \emph{$\ell$-segment}:
if $i,k\in S$ and $i\leq j\leq k$ according to $\ell$, then also $j\in S$.

Let $I=S_1\sqcup S_2$, $\ell_i\in\rL[S_i]$, $i=1,2$.
Define a map
\begin{equation}\label{e:pi}
\pi_{S_1,S_2}: \Un(I,\ell_1\cdot\ell_2) \to \Un(S_1,\ell_1)\times \Un(S_2,\ell_2)
\end{equation}
by
\[
U \mapsto (U_{S_1},U_{S_2}).
\]
Note that $S_1$ is an initial segment for $\ell_1\cdot\ell_2$ and $S_2$ is a final 
segment for $\ell_1\cdot\ell_2$. 
Thus $\pi_{S_1,S_2}$ is a morphism of groups.

Let $I=S_1\sqcup S_2=T_1\sqcup T_2$ and let $A,B,C$ and $D$ be the pairwise intersections, 
as in~\eqref{e:4sets}.
We then have
\begin{equation}\label{e:direct-principal}
(U\oplus V)_{S_1} = U_A\oplus V_B
\end{equation}
and a similar statement for $S_2$, $C$, and $D$.

Assume from now on that the field $\Fb$ is finite. Thus, all groups
$\Un(I,\ell)$ of unitriangular matrices are finite.
Define a vector species $\FC(\Un)$ as follows. On a finite set $I$,
\[
\FC(\Un)[I] = \bigoplus_{\ell\in\rL[I]} \Kb^{\Un(I,\ell)}.
\]
In other words, $\FC(\Un)[I]$ is the direct sum of the spaces of functions (with values in 
$\Kb$) on all unitriangular groups on $I$. 

Let $I=S_1\sqcup S_2$ and $\ell_i\in\rL[S_i]$, $i=1,2$.
From the morphism $\pi_{S_1,S_2}$ in~\eqref{e:pi}
we obtain a linear map
\[
\Kb^{\Un(S_1,\ell_1)}\otimes \Kb^{\Un(S_2,\ell_2)}
\to \Kb^{\Un(I,\ell_1\cdot\ell_2)}.
\]
Summing over all $\ell_1\in\rL[S_1]$ and $\ell_2\in\rL[S_2]$, 
we obtain a linear map
\begin{equation}\label{e:f-prod}
\mu_{S_1,S_2} : \FC(\Un)[S_1]\otimes \FC(\Un)[S_2] \to \FC(\Un)[I].
\end{equation}
Explicitly, given functions $f:\Un(S_1,\ell_1)\to\Kb$ and $g:\Un(S_2,\ell_2)\to\Kb$, 
\[
\mu_{S_1,S_2}(f\otimes g): \Un(I,\ell_1\cdot\ell_2)\to\Kb
\]
is the function given by
\begin{equation*}
U \mapsto f(U_{S_1})g(U_{S_2}).
\end{equation*}

Similarly, from the map $\sigma_{S_1,S_2}$ in~\eqref{e:sigma} we obtain 
the components
\[
\Kb^{\Un(I,\ell)} \to \Kb^{\Un(S_1,\ell|_{S_1})} \otimes \Kb^{\Un(S_2,\ell|_{S_2})}
\]
(one for each $\ell\in\rL[I]$) of a linear map
\begin{equation}\label{e:f-coprod}
\Delta_{S_1,S_2}: \FC(\Un)[I] \to \FC(\Un)[S_1]\otimes \FC(\Un)[S_2].
\end{equation}
Explicitly, given a function $f:\Un(I,\ell)\to\Kb$, 
we have 
\[
\Delta_{S_1,S_2}(f)=\sum_i f^1_i\otimes f^2_i,
\] 
where 
\[
f^1_i:\Un(S_1,\ell|_{S_1})\to\Kb
\qand
f^2_i:\Un(S_2,\ell|_{S_2})\to\Kb
\]
are functions such that
\begin{equation}\label{e:f-coprod2}
f(U_1\oplus U_2) = \sum_i f^1_i(U) f^2_i(V)
\end{equation}
for all $U_1\in\Un(S_1,\ell|_{S_1})$ and $U_2\in\Un(S_2,\ell|_{S_2})$.

\begin{proposition}\label{p:f-unit}
With the operations~\eqref{e:f-prod} and~\eqref{e:f-coprod},
the species $\FC(\Un)$ is a connected Hopf monoid. 
It is cocommutative.
\end{proposition}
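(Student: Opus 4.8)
The plan is to recognise both structure maps as pullbacks along the group morphisms $\pi_{S_1,S_2}$ and $\sigma_{S_1,S_2}$ of~\eqref{e:pi} and~\eqref{e:sigma}, and then to deduce each Hopf-monoid axiom from a corresponding identity among these morphisms. Since $\Fb$ is finite, every group $\Un(I,\ell)$ is finite, so $\Kb^{G\times H}\cong\Kb^{G}\otimes\Kb^{H}$ and the contravariant functor $X\mapsto\Kb^{X}$ carries the group maps to honest linear maps. Under this identification the product~\eqref{e:f-prod} is the map induced by $\pi_{S_1,S_2}\colon U\mapsto(U_{S_1},U_{S_2})$, and the coproduct~\eqref{e:f-coprod} is the map induced by $\sigma_{S_1,S_2}\colon (U,V)\mapsto U\oplus V$. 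Thus every diagram to be checked may be transported, via $\Kb^{(-)}$, to a diagram of group morphisms whose commutativity is the actual content.

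First I would dispose of the monoid and comonoid axioms. Associativity of $\mu$ is the statement that iterated principal-minor extraction is unambiguous, i.e.\ $(U_{R\sqcup S})_{R}=U_{R}$ and $(U_{R\sqcup S})_{S}=U_{S}$ for $I=R\sqcup S\sqcup T$; dually, coassociativity of $\Delta$ is the associativity of direct sum, $(U\oplus V)\oplus W=U\oplus(V\oplus W)$. Unitality and counitality hold because $\Un(\emptyset)$ is the trivial group, so $\FC(\Un)[\emptyset]=\Kb$. Naturality~\eqref{e:nat} is inherited from the naturality of $U\mapsto(U_{S_1},U_{S_2})$ and of $(U,V)\mapsto U\oplus V$ under relabelling bijections. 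None of these require more than the definitions of principal minor and direct sum.

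The crux is the compatibility axiom~\eqref{e:comp} (with $q=1$, so the braiding contributes no scalar). Fix $I=S_1\sqcup S_2=T_1\sqcup T_2$ with $A,B,C,D$ as in~\eqref{e:4sets}, and feed in $\ell_1\in\rL[S_1]$, $\ell_2\in\rL[S_2]$. Applying $\Kb^{(-)}$ to~\eqref{e:comp} turns each of the two composites into the pullback of a single group morphism $\Un(T_1,\cdot)\times\Un(T_2,\cdot)\to\Un(S_1,\ell_1)\times\Un(S_2,\ell_2)$. The product-then-coproduct route yields $(P,Q)\mapsto\bigl((P\oplus Q)_{S_1},(P\oplus Q)_{S_2}\bigr)$, while the coproduct-then-product route (after the middle transposition of the four factors $\Un(A),\Un(B),\Un(C),\Un(D)$) yields $(P,Q)\mapsto\bigl(P_A\oplus Q_B,\,P_C\oplus Q_D\bigr)$. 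These two maps coincide by exactly~\eqref{e:direct-principal}, which asserts $(P\oplus Q)_{S_1}=P_A\oplus Q_B$ together with its companion for $S_2,C,D$. I would take care to check that the linear orders on the two sides agree, i.e.\ that $(\ell_1\cdot\ell_2)|_{T_1}=(\ell_1|_A)\cdot(\ell_2|_C)$ and similarly for $T_2$; this is the compatibility of concatenation and restriction recorded in Section~\ref{ss:linear-prelim}. This matching of the order bookkeeping with the matrix identity~\eqref{e:direct-principal} is the step I expect to demand the most care.

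Finally, connectedness ($\FC(\Un)[\emptyset]=\Kb$) together with Proposition~\ref{p:conn-hopf} upgrades the bimonoid to a Hopf monoid with no further work, so the antipode exists automatically. Cocommutativity follows by dualising once more: $\Delta_{T,S}$ composed with the symmetry is $\Kb^{(-)}$ applied to $(V,U)\mapsto V\oplus U$, which equals $\Delta_{S,T}$ because direct sum is insensitive to the order of its summands, $U\oplus V=V\oplus U$ in $\Un(I,\ell)$. The monoid is not commutative, since the product concatenates linear orders and $\ell_1\cdot\ell_2\neq\ell_2\cdot\ell_1$ in general.
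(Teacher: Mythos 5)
Your proposal is correct and follows essentially the same route as the paper: the paper's proof also reduces everything to the observation that the compatibility axiom~\eqref{e:comp} is the pullback (under $\Kb^{(-)}$, using finiteness of $\Fb$) of the matrix identity~\eqref{e:direct-principal}, with the remaining axioms, the order bookkeeping $(\ell_1\cdot\ell_2)|_{T_1}=(\ell_1|_A)\cdot(\ell_2|_C)$, cocommutativity via $U\oplus V=V\oplus U$, and the upgrade to a Hopf monoid via connectedness treated as routine (details deferred to~\cite{ABT:2012}). Your write-up just makes these steps explicit.
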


The compatibility between the product and the coproduct 
follows from~\eqref{e:direct-principal}. 
The construction is carried out in more detail in~\cite{ABT:2012}.

We describe the operations on the basis $\{\BM\}$ of characteristic functions
(see Convention~\ref{con:bases}). 
Let $U_i\in \Un(S_i,\ell_i)$, $i=1,2$. 
The product is
\begin{equation}\label{e:char-f-prod}
\mu_{S_1,S_2}(\BM_{U_1}\otimes \BM_{U_2}) = 
\sum_{\substack{U\in \Un(I,\ell_1\cdot\ell_2)\\U_{S_1}=U_1,\,U_{S_2}=U_2}} \BM_U.
\end{equation}
The coproduct is
\begin{equation}\label{e:char-f-coprod}
\Delta_{S_1,S_2}(\BM_{U}) = 
\begin{cases}
\BM_{U_{S_1}}\otimes \BM_{U_{S_2}} & \text{ if } U= U_{S_1} \oplus U_{S_2},\\
0 & \text{ otherwise.}
\end{cases}
\end{equation}


To a unitriangular matrix $U\in \Un(I,\ell)$ we associate a graph $g(U)$ on $I$
as follows: there is an edge between $i$ and $j$ if $i<j$ in $\ell$ and $u_{ij}\neq 0$.
For example, given nonzero entries $a,b,c\in\Fb$,
\medskip
\[
\begin{gathered}
\ell=hijk,
\quad
U=\pmat{1 & 0 & 0 & a\\
& 1 & b & c\\
& & 1 & 0\\
& & & 1}
\quad \Rightarrow \quad
g(U)=\xymatrix@R-25pt{
*{\bullet} \ar@{-}@/^3pc/[rrr] & 
*{\bullet} \ar@{-}@/^2pc/[rr] \ar@{-}@/^1pc/[r] & 
*{\bullet} & *{\bullet}\\
 h & i & j & k
}.
\end{gathered}
\]

Let $\gamma: \wL\times\tG^* \to \FC(\Un)$
be the map defined by 
\begin{equation}\label{e:f-model}
\gamma(\BH_\ell\otimes \BM_g) := \sum_{U\in\Un(I,\ell):\, g(U)=g} \BM_U.
\end{equation}

\begin{proposition}\label{p:f-model}
The map $\gamma: \wL\times\tG^* \to \FC(\Un)$ is an injective morphism of Hopf monoids.
If $\Fb$ is the field with two elements, then $\gamma$ is an isomorphism.
\end{proposition}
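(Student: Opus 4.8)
The plan is to verify that $\gamma$ is a morphism of connected bimonoids, and then to treat injectivity and the isomorphism statement separately. Since both $\wL\times\tG^*$ and $\FC(\Un)$ are connected Hopf monoids, once $\gamma$ is shown to preserve products and coproducts it will automatically commute with the antipodes, by the general fact that a morphism of bimonoids between Hopf monoids is a morphism of Hopf monoids. Naturality with respect to bijections $I\to J$ is immediate: relabeling a matrix $U$ relabels the associated graph $g(U)$ compatibly, so the defining sum~\eqref{e:f-model} is equivariant.

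The computational heart rests on the identity $g(U)|_S=g(U_S)$, namely that the edges of $g(U)$ internal to $S$ read off exactly the nonzero entries of the principal minor $U_S$ taken in the induced order. For products, I would expand $\mu_{S,T}\bigl(\gamma(\BH_{\ell_1}\otimes\BM_{g_1})\otimes\gamma(\BH_{\ell_2}\otimes\BM_{g_2})\bigr)$ using~\eqref{e:char-f-prod}; this sums $\BM_U$ over all $U\in\Un(I,\ell_1\cdot\ell_2)$ with $g(U)|_S=g_1$ and $g(U)|_T=g_2$, the freedom in the off-diagonal block of $U$ matching exactly the freedom of edges joining $S$ to $T$ in the product of $\tG^*$. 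Applying $\gamma$ to the corresponding product in $\wL\times\tG^*$ yields the same collection of $U$, so products are preserved.

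For coproducts the key point, and the step I expect to be the main obstacle, is to recognize that the condition $U=U_S\oplus U_T$ appearing in~\eqref{e:char-f-coprod} is equivalent to $S$ being $g(U)$-admissible, via~\eqref{e:direct-principal}: a nonzero off-diagonal entry of $U$ is precisely an edge of $g(U)$ joining $S$ to $T$. Thus $\Delta_{S,T}\gamma(\BH_\ell\otimes\BM_g)$ vanishes unless $S$ is $g$-admissible, matching the coproduct of $\tG^*$; and when $S$ is $g$-admissible the assignment $U\mapsto(U_S,U_T)$ is a bijection onto the pairs with $g(U_S)=g|_S$ and $g(U_T)=g|_T$, with inverse $(U_1,U_2)\mapsto U_1\oplus U_2$. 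This bijection identifies $\Delta_{S,T}\gamma(\BH_\ell\otimes\BM_g)$ with $(\gamma\otimes\gamma)\Delta_{S,T}(\BH_\ell\otimes\BM_g)$, so coproducts are preserved and $\gamma$ is a morphism of Hopf monoids.

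For injectivity, I would exploit that $\FC(\Un)[I]=\bigoplus_{\ell\in\rL[I]}\Kb^{\Un(I,\ell)}$ is graded by $\ell$ and that $\gamma(\BH_\ell\otimes\BM_g)$ lies in the summand indexed by $\ell$. For fixed $\ell$, the sets $\{U\in\Un(I,\ell):g(U)=g\}$ over varying $g$ are pairwise disjoint, and each is nonempty (take the matrix with entry $1$ exactly on the edges of $g$). Hence the images $\gamma(\BH_\ell\otimes\BM_g)$ are sums over disjoint nonempty sets of distinct basis vectors $\BM_U$, so they are linearly independent and $\gamma$ is injective. Finally, when $\Fb$ is the field with two elements, every above-diagonal entry of an $\ell$-unitriangular matrix lies in $\{0,1\}$, so $U$ is determined by the support of its strictly upper part; this makes $U\mapsto g(U)$ a bijection $\Un(I,\ell)\to\rG[I]$. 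Then each fiber in~\eqref{e:f-model} is a singleton, $\gamma$ carries the basis $\{\BH_\ell\otimes\BM_g\}$ bijectively onto the basis $\{\BM_U\}$, and therefore $\gamma$ is an isomorphism of Hopf monoids.
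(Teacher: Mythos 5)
Your proof is correct. The paper itself does not prove this proposition but defers to~\cite[Proposition~8 and Corollary~9]{ABT:2012}, and your direct basis-level verification is exactly the natural argument: the identity $g(U)|_S = g(U_S)$ handles products, the equivalence $U = U_S\oplus U_T \iff S\text{ is }g(U)\text{-admissible}$ handles coproducts (with the antipode coming for free since a morphism of bimonoids between Hopf monoids preserves antipodes), disjointness and nonemptiness of the fibers $\{U : g(U)=g\}$ give injectivity, and over $\Fb_2$ the map $U\mapsto g(U)$ is a bijection $\Un(I,\ell)\to\rG[I]$, making $\gamma$ carry basis to basis.
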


This result is given in~\cite[Proposition~8 and Corollary~9]{ABT:2012}.
(In this reference, $\tG$ is identified with its dual $\tG^*$.)

An injective morphism of Hopf monoids 
\[
\wL\times\tPi^* \to \wL\times\tG^*
\]
is described in~\cite[Section~4.6]{ABT:2012}. The composite
\[
\wL\times\tPi^* \into \wL\times\tG^* \map{\gamma} \FC(\Un)
\]
maps to a certain Hopf submonoid of \emph{superclass functions} on the unitriangular groups.

\subsection{The Hopf monoid of generalized permutahedra}\label{ss:gp}

Consider the Euclidean space
\[
\Rb^I\ :=\ \{\text{functions $x:I\to \Rb$}\}
\]
endowed with the standard inner product
\[
\langle x,y\rangle = \sum_{i\in I} x_i y_i.
\]

Let $P$ be a \emph{convex polytope} in $\Rb^I$~\cite{Zie:1995}.
The \emph{faces} of $P$ are then convex polytopes also.
We write $Q\leq P$ to indicate that $Q$ is a face of $P$.

Given a vector $v\in \Rb^I$, let $P_v$
denote the subset of $P$ where the function 
\[
\langle v,-\rangle :P\to\Rb
\] 
achieves its maximum value. Then $P_v$ is a face of $P$.

Given a face $Q$ of $P$, its \emph{normal cone} is the set
\[
Q^{\bot} := \{v\in\Rb^I \mid P_v=Q\}.
\]
The collection of normal cones
\[
\Nc(P) := \{Q^{\bot} \mid Q\leq P\}
\]
is the \emph{normal fan} of $P$.

Let $n:=\abs{I}$. 
The \emph{standard permutahedron} $\SP_I\subseteq \Rb^I$ 
is the convex hull of the $n!$ points
\[
\{x\in\Rb^I \mid \text{ the function $x : I \to [n]$ is bijective}\}.
\]
The faces of $\SP_I$ are in bijection with compositions $F$ of $I$.
For $F=(I_1,\ldots,I_k)$, the corresponding face $\SP_F$ of $\SP_I$ has as vertices
the bijections $x : I \to [n]$ such that
\[
x(I_1) = \{1,\ldots,i_1\}, \
x(I_2) = \{i_1+1,\ldots,i_1+i_2\},\ldots,
x(I_k) = \{i_1+\cdots+i_{k-1},\ldots,n\}.
\]
The normal cone of this face is the set of vectors $v\in\Rb^I$ such that
\begin{align*}
v_i = v_j & \text{ if $i$ and $j$ belong to the same block of $F$,}\\
v_i < v_j & \text{ if the block of $i$ precedes the block of $j$ in $F$.}
\end{align*}

When $I=\{a,b,c\}$, the standard permutahedron is a regular hexagon.
It lies on the plane $x_a+x_b+x_c=6$. 
The intersection of its normal fan with this plane is
shown next to it, below.
\begin{align*}
&
\begin{gathered}
\begin{picture}(85,95)(0,-45)
{\color{blue}
 \put(0,-15){\circle*{3}} \put(0,15){\circle*{3}} 
 \put(50,-15){\circle*{3}} \put(50,15){\circle*{3}} 
 \put(25,30){\circle*{3}} \put(25,-30){\circle*{3}} 
 \thicklines
 \put(0,-15){\line(0,1){30}} \put(50,-15){\line(0,1){30}} 
 \put(0,15){\line(5,3){25}}\put(25,-30){\line(5,3){25}}
 \put(0,-15){\line(5,-3){25}} \put(25,30){\line(5,-3){25}}
 }
 \put(-35,-20){$3,2,1$} \put(-35,15){$3,1,2$} 
 \put(56,-20){$1,3,2$} \put(56,15){$1,2,3$} 
 \put(12,-45){$2,3,1$} \put(12,38){$2,1,3$} 
\end{picture}
\end{gathered}
&
\begin{gathered}
\begin{picture}(85,95)(0,-45)
\red{
\begin{xy}
0;<1cm,0cm>:<.5cm,\halfrootthree cm>::
(-1.5,0); (1.5, 0) **[|(3)]@{-} ?<*@{<} ?>*@{>},
(0,-1.5); (0,1.5) **[|(3)]@{-} ?<*@{<} ?>*@{>},
(-1.5,1.5); (1.5,-1.5) **[|(3)]@{-} ?<*@{<} ?>*@{>},
(0,0)*{\bullet}, (2.6,0.3)*{\black{v_a<v_b=v_c}}
\end{xy}
}
\end{picture}
\end{gathered}
\end{align*}

A polytope $P$ in $\Rb^I$ is a \emph{generalized permutahedron} if its normal
fan is coarser than that of the standard permutahedron. In other words, each
cone $Q^{\bot}\in\Nc(P)$ must be a union of cones in $\Nc(\SP_I)$.

A generalized permutahedron is shown below, together with its normal fan.
\begin{align*}
&
\begin{gathered}
\begin{picture}(85,100)(0,-50)
{\color{blue}
\put(0,15){\circle*{3}} \put(0,-15){\circle*{3}}
\put(50,15){\circle*{3}} 
\put(25,30){\circle*{3}} 
\put(50,-45){\circle*{3}}
\put(25,30){\circle*{3}}
\thicklines
\put(0,-15){\line(0,1){30}} \put(50,15){\line(0,-1){60}} 
\put(0,15){\line(5,3){25}} 
\put(0,-15){\line(5,-3){50}} \put(25,30){\line(5,-3){25}}
}
\end{picture}
\end{gathered}
&
\begin{gathered}
\begin{picture}(85,95)(0,-45)
\red{
\begin{xy}
0;<1cm,0cm>:<.5cm,\halfrootthree cm>::
(-1.5,0); (1.5, 0) **[|(3)]@{-}?<*@{<} ?>*@{>},
(0,-1.5); (0,1.5) **[|(3)]@{-}?<*@{<} ?>*@{>},
(-1.5,1.5); (0,0) **[|(3)]@{-}?<*@{<},
(0,0)*{\bullet}
\end{xy}
}
\end{picture}
\end{gathered}
\end{align*}

Let $\rGP[I]$ denote the (infinite) set of generalized permutahedra in $\Rb^I$.

Fix a decomposition $I=S\sqcup T$. This yields a canonical identification
\[
\Rb^I = \Rb^S \times \Rb^T.
\]

\begin{proposition}\label{p:gp-prod}
If $P$ is a generalized permutahedron in $\Rb^S$ and $Q$ is another in $\Rb^T$,
then $P\times Q$ is a generalized permutahedron in $\Rb^I$.
\end{proposition}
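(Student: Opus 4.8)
The plan is to deduce the statement from two facts: how normal fans behave under Cartesian products, and the fact that the braid fan of $I$ refines the product of the braid fans of $S$ and $T$. Both are elementary once set up, and combining them is purely formal.

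First I would record the product formula for faces. For $(v,w)\in\Rb^S\times\Rb^T=\Rb^I$ one has $(P\times Q)_{(v,w)}=P_v\times Q_w$, since the linear functional $\langle(v,w),-\rangle$ splits as $\langle v,-\rangle+\langle w,-\rangle$ and the two summands are maximized independently over the two factors. Consequently every face of $P\times Q$ is a product $R\times R'$ with $R\leq P$ and $R'\leq Q$, and its normal cone is $(R\times R')^{\bot}=R^{\bot}\times (R')^{\bot}$. Thus the normal fan of the product is the product fan $\Nc(P\times Q)=\{\,C\times D\mid C\in\Nc(P),\ D\in\Nc(Q)\,\}$.

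Next comes the combinatorial heart of the argument: I claim that $\Nc(\SP_I)$ refines the product fan $\Nc(\SP_S)\times\Nc(\SP_T)$, meaning that each cone of the latter is a union of cones of the former. Using the description of the cones of $\Nc(\SP_I)$ by compositions $F\vDash I$, such a cone imposes order relations among all coordinates, whereas a cone of $\Nc(\SP_S)\times\Nc(\SP_T)$ indexed by a pair $(G,G')$ with $G\vDash S$ and $G'\vDash T$ imposes relations only within $S$ and within $T$, leaving the $S$-to-$T$ comparisons free. The cone of $(G,G')$ is then precisely the union of the cones of those $F\vDash I$ with $F|_S=G$ and $F|_T=G'$, that is, of the quasi-shuffles of $G$ and $G'$ (Section~\ref{ss:oper}). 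This exhibits each cone of the product fan as a union of cones of $\Nc(\SP_I)$, proving the claim.

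Finally I would assemble these by transitivity. Since $P$ and $Q$ are generalized permutahedra, each cone of $\Nc(P)$ is a union of cones of $\Nc(\SP_S)$ and each cone of $\Nc(Q)$ a union of cones of $\Nc(\SP_T)$; hence each cone $C\times D$ of $\Nc(P\times Q)=\Nc(P)\times\Nc(Q)$ is a union of cones of $\Nc(\SP_S)\times\Nc(\SP_T)$. By the braid refinement claim, each of those is in turn a union of cones of $\Nc(\SP_I)$, so every cone of $\Nc(P\times Q)$ is a union of cones of $\Nc(\SP_I)$. This is exactly the defining condition for $P\times Q$ to be a generalized permutahedron in $\Rb^I$. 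The one step requiring genuine care is the braid refinement claim, where the bookkeeping of the order relations between $S$- and $T$-coordinates (equivalently, the quasi-shuffle description) must be made precise; everything else is formal.
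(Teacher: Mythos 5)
Your proof is correct and follows essentially the same route as the paper, which justifies the proposition in one line by noting that the normal fan of $\SP_S\times\SP_T$ is coarser than that of $\SP_I$ and (implicitly) that $\Nc(P\times Q)=\Nc(P)\times\Nc(Q)$. Your quasi-shuffle argument simply supplies the details of that refinement claim which the paper leaves unproved.
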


This holds because the normal fan of $\SP_S\times\SP_T$ is coarser than
that of $\SP_I$. This allows us to define a map
\[
\mu_{S,T} : \rGP[S]\times\rGP[T] \to \rGP[I], \quad
(P,Q) \mapsto P\times Q.
\]

\begin{proposition}\label{p:gp-coprod}
Let $P$ be a generalized permutahedron in $\Rb^I$.
Let $v\in\Rb^I$ be a vector such that
\begin{align*}
v_i = v_j & \text{ if $i$ and $j$ belong both to $S$ or both to $T$,}\\
v_i < v_j & \text{ if $i\in S$ and $j\in T$.}
\end{align*}
Then there exist generalized permutahedra $P_1$ in $\Rb^S$ and
$P_2$ in $\Rb^T$ such that
\[
P_v = P_1\times P_2.
\]
\end{proposition}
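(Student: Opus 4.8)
The plan is to work entirely with normal fans and the standard duality between the face poset of a polytope and the cones of its normal fan, reducing the statement to a self-contained lemma asserting that a polytope whose edges are ``unmixed'' with respect to the splitting $I=S\sqcup T$ must be a product. Throughout I write a point of $\Rb^I=\Rb^S\times\Rb^T$ as $x=(x^S,x^T)$, let $\pi_S,\pi_T$ be the two coordinate projections, and let $e_i$ ($i\in I$) denote the standard basis vectors of $\Rb^I$. The hypothesis on $v$ says exactly that $v$ is constant on $S$, constant on $T$, and strictly smaller on $S$ than on $T$; in particular $v_i\neq v_j$ whenever $i$ and $j$ lie on opposite sides of the splitting. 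Set $Q:=P_v$.

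First I would record the edge structure forced by the definition. Since $P$ is a generalized permutahedron, $\Nc(\SP_I)$ refines $\Nc(P)$. Under the duality between faces of $P$ and cones of $\Nc(P)$, the edges of $P$ are dual to the codimension-one cones (walls) of $\Nc(P)$, and each such wall is a union of walls of $\Nc(\SP_I)$; as the walls of the braid fan lie on the braid hyperplanes $\{x_i=x_j\}$, every edge of $P$ is parallel to some difference $e_i-e_j$. The face $Q=P_v$ inherits this, its edges being edges of $P$. Moreover $\langle v,-\rangle$ is constant (equal to its maximum) on $Q$, so along any edge of $Q$ the quantity $v_i-v_j=\langle v,e_i-e_j\rangle$ vanishes; by the choice of $v$ this forces $i$ and $j$ onto the same side. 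Thus every edge of $Q$ is parallel to some $e_i-e_j$ with $\{i,j\}\subseteq S$ (an \emph{$S$-edge}) or $\{i,j\}\subseteq T$ (a \emph{$T$-edge}).

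The heart of the matter is the following polytopal lemma, which I would isolate and prove on its own: \emph{if every edge of a polytope $Q\subseteq\Rb^S\times\Rb^T$ is an $S$-edge or a $T$-edge, then $Q=\pi_S(Q)\times\pi_T(Q)$.} Fix a generic linear functional $\phi$ on $\Rb^S$, set $f=\langle\phi,\pi_S(-)\rangle$ (a linear functional on $\Rb^I$), and let $R$ be the face of $Q$ on which $f$ attains its maximum. By genericity $f$ is strictly monotone along every $S$-edge and constant along every $T$-edge. Hence $R$, on which $f$ is constant, has no $S$-edges; as $\pi_S$ collapses every $T$-edge and the graph of $R$ is connected, $\pi_S(R)$ is a single point $a$, namely the vertex of $\pi_S(Q)$ selected by $\phi$. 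Next, given any vertex $w$ of $Q$, a monotone $f$-increasing edge path from $w$ to an $f$-maximizer uses only strictly increasing edges, hence only $S$-edges, along which $\pi_T$ is constant; so its endpoint lies in $R$ with the same $\pi_T$-coordinate as $w$. Letting $w$ range over the vertices gives $\pi_T(R)=\pi_T(Q)$, and together with $\pi_S(R)=\{a\}$ this forces $R=\{a\}\times\pi_T(Q)$. Thus $\{a\}\times\pi_T(Q)\subseteq Q$ for every vertex $a$ of $\pi_S(Q)$; taking convex hulls in the $S$-coordinate yields $\pi_S(Q)\times\pi_T(Q)\subseteq Q$, and the reverse inclusion is trivial. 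Setting $P_1:=\pi_S(Q)$ and $P_2:=\pi_T(Q)$ gives $P_v=P_1\times P_2$.

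It remains to see that $P_1$ and $P_2$ are themselves generalized permutahedra. Because $Q=P_1\times P_2$, every edge of $P_1$ is the $\pi_S$-image of an $S$-edge of $Q$, hence parallel to some $e_i-e_j$ with $i,j\in S$; consequently every wall of $\Nc(P_1)$ lies on a braid hyperplane of $\Rb^S$, so each maximal cone of $\Nc(P_1)$ is a union of chambers of the braid arrangement on $S$. That is, $\Nc(\SP_S)$ refines $\Nc(P_1)$, so $P_1$ is a generalized permutahedron; symmetrically for $P_2$. I expect the product lemma to be the only real obstacle: the edge computation and the final factorwise check are routine normal-fan bookkeeping, whereas passing from the unmixed-edges condition to an actual product decomposition is where care is needed, the delicate point being the monotone-path argument that shows $\pi_T$ already surjects onto $\pi_T(Q)$ from the single face $R$. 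One could instead try to localize $\Nc(P)$ at the cone $\Nc(\SP_{(S,T)})$, whose quotient by its lineality is $\Nc(\SP_S)\times\Nc(\SP_T)$; but a coarsening of a product fan need not be a product fan, so the edge argument above is exactly what rules that out.
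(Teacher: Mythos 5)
Your proof is correct, but there is no proof in the paper to compare it with: Proposition~\ref{p:gp-coprod} is stated without proof, the verification of the set-theoretic Hopf monoid structure on $\rGP$ being deferred to~\cite{AguArd:2012}, and the paragraph following the statement only establishes well-definedness of $\Delta_{S,T}$ (that $(P_1,P_2)$ is independent of the choice of $v$, because all admissible $v$ lie in the braid cone $(\SP_{(S,T)})^{\bot}$ and hence in a single cone of $\Nc(P)$) --- not the product decomposition itself. So your argument must stand on its own, and it does.

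The two bookend steps are the two directions of the standard edge characterization of generalized permutahedra (all edge directions of the form $e_i-e_j$; cf.~\cite{PRW:2008}), and your wall-containment derivations of both directions are sound; combined with the observation that $\langle v,-\rangle$ is constant on $P_v$ while $v_i\neq v_j$ whenever $i,j$ lie on opposite sides of $I=S\sqcup T$, this correctly shows every edge of $Q=P_v$ is unmixed. The crux is the product lemma, and your monotone-path proof works: genericity of $\phi$ forces the $f$-maximizing face $R$ to have only $T$-edges, so $\pi_S(R)=\{a\}$ with $a$ precisely the $\phi$-maximizing face of $\pi_S(Q)$; the simplex-method fact (a non-maximizing vertex admits a strictly $f$-improving edge, since the edge directions at a vertex generate its tangent cone) makes every improving path consist of $S$-edges, along which $\pi_T$ is constant, giving $\pi_T(R)=\pi_T(Q)$; and since every point of $R$ has $S$-coordinate $a$, indeed $R=\{a\}\times\pi_T(Q)$. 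Running over all vertices $a$ of $\pi_S(Q)$ and taking convex combinations in the $S$-coordinate then yields $\pi_S(Q)\times\pi_T(Q)\subseteq Q$, with the reverse inclusion trivial. You rely implicitly on two standard facts --- that the graph of a polytope (hence of each face) is connected, and that every vertex of $\pi_S(Q)$ is the unique maximizer of some generic linear functional --- both unobjectionable. The degenerate cases $S=\emptyset$ or $T=\emptyset$, where $P_v=P$ and one factor is the single point of $\Rb^{\emptyset}$, deserve one explicit sentence but are vacuous for your lemma. In short: a correct, self-contained proof of a statement the paper outsources, whose only real content beyond known facts about edge directions is exactly the product lemma you isolated.
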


If $S$ or $T$ are empty, then $P_v=P$. Otherwise, the conditions on $v$
express precisely that $v\in (\SP_{(S,T)})^\bot$. Therefore, if $w$ is another vector 
in $(\SP_{(S,T)})^\bot$, then both 
$v$ and $w$ belong to the same cone in the fan $\Nc(P)$, and hence
$P_v=P_w$. Thus, $P_1$ and $P_2$ are independent of the choice of $v$,
and we may define a map
\[
\Delta_{S,T} : \rGP[I] \to \rGP[S]\times\rGP[T], \quad
P \mapsto (P_1,P_2).
\]

It is shown in~\cite{AguArd:2012} that, with the above structure, the species $\rGP$ is
a set-theoretic connected Hopf monoid. 
It is commutative, but not cocommutative.

Let $\tGP:=\Kb\rGP$ denote the linearization of $\rGP$. 
It is a connected Hopf monoid.
The product and coproduct are
\[
\mu_{S,T}(\BH_P\otimes\BH_Q) = \BH_{P\times Q}
\qqand
\Delta_{S,T}(\BH_P) = \BH_{P_1}\otimes\BH_{P_2}.
\]

One of the main results in~\cite{AguArd:2012} is the following.

\begin{theorem}\label{t:apode-gp}
The antipode of $\tGP$ is
\begin{equation}\label{e:apode-gp}
\apode_I(P) = (-1)^{\abs{I}}\sum_{Q \leq P} (-1)^{\dim Q} \, Q.
\end{equation}
\end{theorem}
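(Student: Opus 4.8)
The plan is to evaluate Takeuchi's formula~\eqref{e:antipode-r} term by term, reading each term off the normal fan. Writing $n=\abs{I}$, Proposition~\ref{p:antipode-r} gives
\[
\apode_I(\BH_P) = \sum_{F\vDash I} (-1)^{\len(F)}\, \mu_F\Delta_F(\BH_P),
\]
so the first step is to identify $\mu_F\Delta_F(\BH_P)$ for a single composition $F=(I_1,\dots,I_k)$. Iterating Proposition~\ref{p:gp-coprod}, the higher coproduct $\Delta_F(\BH_P)$ equals $\BH_{P_1}\otimes\cdots\otimes\BH_{P_k}$, where $P_v=P_1\times\cdots\times P_k$ for any $v$ in the relative interior of the braid cone $(\SP_F)^{\bot}$ (that is, any $v$ constant on each block $I_j$ and strictly increasing from block to block). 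Iterating Proposition~\ref{p:gp-prod}, the higher product sends this back to $\BH_{P_1\times\cdots\times P_k}=\BH_{P_v}$. Thus $\mu_F\Delta_F(\BH_P)=\BH_{P_F}$, where $P_F:=P_v$ is the face of $P$ selected by any $v\in\operatorname{relint}(\SP_F)^{\bot}$; this face is well defined because $(\SP_F)^{\bot}$ lies inside a single cone of $\Nc(P)$.

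Next I would group the resulting sum by faces. Since $v\in(\SP_F)^{\bot}$ forces $P_v=P_F$ while $v\in Q^{\bot}$ forces $P_v=Q$, we have $P_F=Q$ precisely when $(\SP_F)^{\bot}\subseteq Q^{\bot}$. Hence
\[
\apode_I(\BH_P)=\sum_{Q\leq P}\Bigl(\sum_{F:\,P_F=Q}(-1)^{\len(F)}\Bigr)\BH_Q,
\]
and every face $Q$ of $P$ occurs, since the braid cones cover $\Rb^I$ and so each $Q^{\bot}$ contains at least one of them. It remains to compute the inner coefficient $c_Q:=\sum_{F:\,(\SP_F)^{\bot}\subseteq Q^{\bot}}(-1)^{\len(F)}$.

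The crux is a sign count on the normal fan. Because $P$ is a generalized permutahedron, $\Nc(P)$ is coarser than $\Nc(\SP_I)$, so the relatively open cone $Q^{\bot}$ is the disjoint union of exactly those relatively open braid cones $(\SP_F)^{\bot}$ with $P_F=Q$. Each such cone is homeomorphic to $\Rb^{\len(F)}$, since $\dim(\SP_F)^{\bot}=\len(F)$. I would therefore read $c_Q$ as a compactly supported Euler characteristic: $\chi_c$ is additive over this partition and $\chi_c(\Rb^{j})=(-1)^{j}$, so $c_Q=\chi_c(Q^{\bot})$. As $Q^{\bot}$ is a relatively open convex cone of dimension $\dim Q^{\bot}=n-\dim Q$ (the standard complementarity between the dimension of a face and that of its normal cone in a complete normal fan), it is homeomorphic to $\Rb^{\,n-\dim Q}$, whence $c_Q=(-1)^{n-\dim Q}=(-1)^{\abs{I}}(-1)^{\dim Q}$. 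Substituting back gives exactly~\eqref{e:apode-gp}.

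The main obstacle is this final step. One must first justify that the braid cones with $P_F=Q$ partition the open cone $Q^{\bot}$, which is precisely the coarsening hypothesis defining generalized permutahedra, and then carry out the alternating dimension count. The Euler characteristic bookkeeping can be replaced, if preferred, by intersecting the cones with a generic affine hyperplane and applying the Euler relation to the resulting polytopal subdivision, or by a M\"obius-function computation on the face poset of the subdivision; in each case the content is that an open $d$-cell contributes $(-1)^d$ and that these contributions add over a cellular partition. Some care is also needed with the identity $\dim Q^{\bot}=n-\dim Q$, which relies on $\Nc(P)$ being a complete fan in $\Rb^I$.
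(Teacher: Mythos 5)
Your proof is correct. Note that the paper itself does not prove Theorem~\ref{t:apode-gp}: it defers the proof to~\cite{AguArd:2012}, and your argument is essentially the one used there — expand Takeuchi's formula~\eqref{e:antipode-r}, identify each term $\mu_F\Delta_F(\BH_P)$ with the face $P_F$ of $P$ selected by the braid cone of $F$, group terms by faces, and evaluate the coefficient of $Q$ as a signed count over the relatively open braid cones partitioning $Q^{\bot}$, i.e.\ a compactly supported Euler characteristic equal to $(-1)^{\abs{I}-\dim Q}$. The two places a full writeup would need more detail are exactly the ones you flag: the induction showing $\Delta_F(\BH_P)=\BH_{P_1}\otimes\cdots\otimes\BH_{P_k}$ with $P_1\times\cdots\times P_k=P_v$ for $v$ in the relative interior of the braid cone (a face-of-a-face argument), and the additivity of $\chi_c$ over a partition into relatively open polyhedral cones together with $\dim Q^{\bot}=\abs{I}-\dim Q$; both are standard.
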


Each face of $P$ appears once in the sum, with coefficient $\pm 1$.
The formula is thus cancellation free.

The antipode formula~\eqref{e:apode-graph} for the Hopf monoid of simple
graphs may be deduced from~\eqref{e:apode-gp} by means of a morphism
from $\tG$ to the quotient of $\tGP$ in which generalized permutahedra
with the same normal fan are identified.

\begin{remark}
Generalized permutahedra are studied in depth by Postnikov~\cite{Pos:2009},
where they are defined as \emph{deformations} of the standard permutahedron.
The definition in terms of normal fans is from work of Postnikov, Reiner, and Williams~\cite[Proposition~3.2]{PRW:2008}, where plenty of additional information is given.
The same class of polytopes is studied in Fujishige~\cite{Fuj:2005}.
The Hopf monoid of generalized permutahedra as defined above appears
in~\cite{AguArd:2012}. Several related examples are discussed in that
reference, which builds on the work of Billera, Jia, and Reiner on Hopf algebras~\cite{BJR:2006}.
\end{remark}

\section{Higher associativity and compatibility}\label{s:higher}

Let $\thh$ be a bimonoid.
The bimonoid axioms link the structure maps $\mu_{S,T}$, $\Delta_{S,T}$, 
$\iota_\emptyset$ and $\epsilon_\emptyset$ of $\thh$.
We study the implications of these on the higher structure maps.
A remarkable fact is that the four bimonoid axioms of Section~\ref{ss:bim-hopf}
can be unified into a single axiom relating the higher product and coproduct.
Associativity and unitality are similarly unified.
The combinatorics of set decompositions underlies this study
and the discussion necessitates the notions and terminology discussed in Section~\ref{ss:decompositions}, 
including those of concatenation, refinement, splitting, 
and Tits product for set decompositions.

\subsection{Higher associativity}\label{ss:gen-asso}

Given a species $\tp$, a finite set $I$ and a decomposition $F=(I_1,\ldots,I_k)$ of $I$, write
\begin{equation}\label{e:sp-decomp}
\tp(F) := \tp[I_1] \otimes \dots \otimes \tp[I_k].
\end{equation}
(This generalizes~\eqref{e:sp-comp}.)
When $F=\emptyset^0$ (the unique decomposition of the empty set without subsets), 
we set $\tp(F):=\Kb$.

Given $I=S\sqcup T$ and decompositions $F$ of $S$ and
$G$ of $T$, there is a canonical isomorphism
\begin{equation}\label{e:conc-iso}
\tp(F)\otimes \tp(G)\map{\cong} \tp(F\cdot G)
\end{equation}
obtained by concatenating the factors in~\eqref{e:sp-decomp}.
If $F=\emptyset^0$, then this is the canonical isomorphism
\[
\Kb \otimes \tp(G)\map{\cong} \tp(G);
\] 
a similar remark applies if $G=\emptyset^0$.

Assume now that $\ta$ is a monoid and $F$ is a decomposition of $I$.
We write 
\[
\mu_F : \ta(F) \to \ta[I]
\]
for the higher product maps of $\ta$, as in~\eqref{e:iterated-mu}.
Recall that $\mu_{\emptyset^0} = \iota_\emptyset$.

Given a bijection $\sigma:I\to J$, let $J_i:=\sigma(I_i)$, $\sigma_i:=\sigma|_{I_i}$,
$\sigma(F)=(J_1,\ldots,J_k)$, and $\ta(\sigma)$ the map
\[
\begin{gathered}
\xymatrix@C+60pt{
\ta(F) \ar@{.>}[r]^-{\ta(\sigma)} \ar@{=}[d] & \ta\bigl(\sigma(F)\bigr)\\
\ta[I_1]\otimes\cdots\otimes\ta[I_k] \ar[r]_-{\ta[\sigma_1]\otimes\cdots\otimes\ta[\sigma_k]} & 
\ta[J_1]\otimes\cdots\otimes\ta[J_k]. \ar@{=}[u]
}
\end{gathered}
\]
The collection $\mu_F$ satisfies the following \emph{naturality} condition.
For any such $F$ and $\sigma$, the diagram
\begin{equation}\label{e:gen-nat}
\begin{gathered}
\xymatrix@C+30pt{
\ta(F) \ar[r]^-{\mu_F} \ar[d]_{\ta(\sigma)} & \ta[I] \ar[d]^{\ta[\sigma]}\\
\ta\bigl(\sigma(F)\bigr) \ar[r]_-{\mu_{\sigma(F)}} & \ta[J]
}
\end{gathered}
\end{equation}
commutes. This follows from~\eqref{e:nat}.

Let $F$ still be as above.
Let $G$ be a decomposition of $I$ such that $F\leq G$ and
$\gamma=(G_1,\dots ,G_k)$ a splitting of the pair $(F,G)$.
In other words, each $G_j$ is a decomposition of $I_j$ and $G=G_1\cdots G_k$. 
Define a map $\mu_{G/F}^\gamma$ by means of the diagram
\begin{equation}\label{e:muFG}
\begin{gathered}
\xymatrix@C+50pt{
\ta(G) \ar@{.>}[r]^-{\mu_{G/F}^\gamma} \ar[d]_{\cong} & \ta(F)\\
\ta(G_1)\otimes\cdots\otimes\ta(G_k) \ar[r]_-{\mu_{G_1}\otimes\cdots\otimes\mu_{G_k}} & 
\ta[I_1]\otimes\cdots\otimes\ta[I_k], \ar@{=}[u]
}
\end{gathered}
\end{equation}
where the vertical isomorphism is an iteration of~\eqref{e:conc-iso}. If $F=\emptyset^0$,
then necessarily $G=\emptyset^0$ and $\gamma=(\,)$, and we agree
that the map $\mu_{G/F}^\gamma$ is the identity of $\Kb$.

Recall that if $F$ and $G$ are compositions, then the splitting $\gamma$
is uniquely determined by $(F,G)$. In this situation
we write $\mu_{G/F}$ instead of $\mu_{G/F}^\gamma$.

\emph{Higher associativity} is the following result.

\begin{proposition}\label{p:gen-asso}
Let $\ta$ be a monoid.
Let $F$ and $G$ be decompositions of $I$ with $F\leq G$.
Then, for any splitting $\gamma$ of the pair $(F,G)$, the diagram
\begin{equation}\label{e:gen-asso}
\begin{gathered}
\xymatrix{
\ta(G) \ar[r]^-{\mu_G} \ar[d]_{\mu_{G/F}^\gamma} & \ta[I]\\
\ta(F) \ar[ru]_{\mu_F}
}
\end{gathered}
\end{equation}
commutes.
\end{proposition}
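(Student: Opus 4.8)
The plan is to recognize both $\mu_G$ and the composite $\mu_F\circ\mu_{G/F}^\gamma$ as iterations of the binary product maps of $\ta$ that multiply all blocks of $G$ into $\ta[I]$, and then to appeal to the well-definedness of the higher product~\eqref{e:iterated-mu}, which is a consequence of the associativity axiom~\eqref{e:assoc} together with the unit axiom~\eqref{e:unit}. By construction~\eqref{e:muFG}, the map $\mu_{G/F}^\gamma$ multiplies, for each $j=1,\dots,k$, the tensor factors of $\ta(G)$ belonging to the segment $\ta(G_j)$ by the higher product $\mu_{G_j}$, landing in $\ta[I_j]$; applying $\mu_F$ then multiplies these $k$ partial results together. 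This is exactly one admissible way of iterating the maps $\mu_{S,T}$ (and $\iota_\emptyset$) so as to multiply all blocks of $G$, so it ought to agree with $\mu_G$.

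To make this rigorous I would induct on $k=\len(F)$. The cases $k=0$ and $k=1$ are immediate: for $k=0$ both composites reduce to $\iota_\emptyset=\mu_{\emptyset^0}$, and for $k=1$ one has $F=(I)$, $\mu_F=\id_{\ta[I]}$, $\gamma=(G)$, and $\mu_{G/F}^\gamma=\mu_G$. For $k\geq 2$, write $I=I'\sqcup I_k$ with $I'=I_1\sqcup\cdots\sqcup I_{k-1}$, put $F'=(I_1,\dots,I_{k-1})$ and $\gamma'=(G_1,\dots,G_{k-1})$, and observe that $G=G'\cdot G_k$ with $G'=G_1\cdots G_{k-1}$ a decomposition of $I'$. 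Under the concatenation isomorphism~\eqref{e:conc-iso}, definition~\eqref{e:muFG} factors $\mu_{G/F}^\gamma$ as $\mu_{G'/F'}^{\gamma'}\otimes\mu_{G_k}$, while $\mu_F$ factors as $\mu_{I',I_k}\circ(\mu_{F'}\otimes\id_{\ta[I_k]})$. By the interchange law and the inductive hypothesis $\mu_{F'}\circ\mu_{G'/F'}^{\gamma'}=\mu_{G'}$, the composite becomes $\mu_{I',I_k}\circ(\mu_{G'}\otimes\mu_{G_k})$, which equals $\mu_G$. Both this last identity and the factorization of $\mu_F$ are instances of the well-definedness of~\eqref{e:iterated-mu} rather than of the proposition itself, so no circularity arises: each step appeals only to the uniqueness of the iterated product.

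The main obstacle is the nonuniqueness of the splitting $\gamma$, which arises precisely because the empty blocks of $G$ may be distributed among the factors $G_1,\dots,G_k$ in several ways. As the example $F=\emptyset^2$, $G=\emptyset^1$ shows, the individual maps $\mu_{G_j}$, and hence $\mu_{G/F}^\gamma$ itself, genuinely depend on $\gamma$. What rescues the statement is that different choices differ only by the insertion of unit factors $\iota_\emptyset\colon\Kb\to\ta[\emptyset]$ on empty segments, and these are absorbed by $\mu_F$ through the unit axiom~\eqref{e:unit}. The role of that axiom, alongside associativity, is therefore exactly to certify that the composite is independent of $\gamma$ and coincides with the $\gamma$-free map $\mu_G$; once this bookkeeping is handled in the inductive step, the generalized associativity underlying~\eqref{e:iterated-mu} delivers the result.
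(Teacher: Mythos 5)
Your proof is correct and is essentially the paper's own argument: the paper disposes of Proposition~\ref{p:gen-asso} with the single remark that~\eqref{e:gen-asso} ``follows by repeated use of~\eqref{e:assoc} and~\eqref{e:unit}'', and your induction on $\len(F)$, peeling off the last block of $F$, is exactly that repetition made explicit. Your closing observation---that the unit axiom is what absorbs the $\gamma$-dependence created by empty blocks---correctly isolates the one point where~\eqref{e:unit}, and not just associativity, is genuinely needed.
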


Considering on the one hand $F=(R\sqcup S,T)$, $G_1=(R,S)$, $G_2=(T)$,
and on the other $F=(R, S\sqcup T)$, $G_1=(R)$, $G_2=(S,T)$, we see that
~\eqref{e:gen-asso} implies the associativity axiom~\eqref{e:assoc}.
 The unit axioms~\eqref{e:unit} also follow from~\eqref{e:gen-asso}; 
 to obtain the left unit axiom
one chooses $F=(\emptyset,I)$, $G_1=\emptyset^0$ and $G_2=(I)$.
Conversely,~\eqref{e:gen-asso} follows by repeated use of~\eqref{e:assoc} and~\eqref{e:unit}.

Thus, higher associativity encompasses both associativity and unitality. 
In fact, the preceding discussion leads to the following result.

\begin{proposition}\label{p:gen-asso-conv}
Let $\ta$ be a species equipped with a collection of maps
\[
\mu_F: \ta(F) \to \ta[I],
\]
one map for each decomposition $F$ of a finite set $I$. Then $\ta$
is a monoid with higher product maps $\mu_F$ if and only if 
naturality~\eqref{e:gen-nat} and higher associativity~\eqref{e:gen-asso} hold.
\end{proposition}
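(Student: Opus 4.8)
The plan is to prove the two implications separately. The forward implication requires essentially no new work: if $\ta$ is a monoid and the $\mu_F$ are its higher product maps in the sense of~\eqref{e:iterated-mu}, then naturality~\eqref{e:gen-nat} is precisely the property recorded immediately after that display (which is deduced from~\eqref{e:nat}), while higher associativity~\eqref{e:gen-asso} is the content of Proposition~\ref{p:gen-asso}. So I would simply cite these two facts.

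For the converse, I would start from a collection $\{\mu_F\}$ satisfying~\eqref{e:gen-nat} and~\eqref{e:gen-asso} and manufacture a monoid structure by setting $\mu_{S,T}:=\mu_{(S,T)}$ for each $I=S\sqcup T$ and $\iota_\emptyset:=\mu_{\emptyset^0}$, keeping in mind the conventions $\mu_{(I)}=\id_{\ta[I]}$ and $\mu_{\emptyset^0}=\iota_\emptyset$ that are built into the notion of higher products. The three monoid axioms then fall out of~\eqref{e:gen-asso} by specialization, following the recipe already indicated in the paragraph after Proposition~\ref{p:gen-asso}: naturality~\eqref{e:nat} is the case $F=(S,T)$ of~\eqref{e:gen-nat}; the unit axioms~\eqref{e:unit} come from applying~\eqref{e:gen-asso} to $G=(I)$ coarsened by $F=(\emptyset,I)$ and by $F=(I,\emptyset)$, using $\mu_{\emptyset^0}=\iota_\emptyset$ and $\mu_{(I)}=\id$; and associativity~\eqref{e:assoc} comes from applying~\eqref{e:gen-asso} to $G=(R,S,T)$ along the two coarsenings $(R\sqcup S,T)$ and $(R,S\sqcup T)$, equating the two resulting expressions for $\mu_{(R,S,T)}$.

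It remains to confirm that the monoid just built has the given $\mu_F$ as its higher products, so that the phrase ``with higher product maps $\mu_F$'' is justified. I would argue by induction on the length $\len(F)=k$: the cases $k\le 2$ hold by construction, and for $k\ge 3$ I would apply~\eqref{e:gen-asso} with $G=F$ and the two-block coarsening $H=(I_1\sqcup\cdots\sqcup I_{k-1},\,I_k)$, which expresses $\mu_F$ as $\mu_{I_1\sqcup\cdots\sqcup I_{k-1},\,I_k}\circ(\mu_{(I_1,\ldots,I_{k-1})}\otimes\id)$. The inductive hypothesis identifies the left tensor factor with an iterate of binary products, and the associativity established above makes all iterations agree; hence $\mu_F$ is exactly the higher product map~\eqref{e:iterated-mu}.

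The main obstacle is not any single computation but the bookkeeping forced by the empty set. Because splittings of a pair $(F,G)$ of decompositions need not be unique once empty blocks occur, I must take care that the axioms I extract do not depend on the chosen splitting; this is automatic here precisely because~\eqref{e:gen-asso} is assumed for \emph{every} splitting, so I am free to pick convenient ones. The substantive point being packaged is the equivalence of higher associativity with ordinary associativity together with unitality, and the delicate part of writing it out cleanly is keeping the $k=0$ and $k=1$ conventions synchronized between the unit map and the identity maps, since these are what link the higher data to the binary structure maps.
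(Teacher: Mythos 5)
Your proof is correct and follows essentially the same route as the paper: the paper's own justification is exactly the discussion preceding the proposition, which extracts naturality, the unit axioms, and associativity from the very specializations of~\eqref{e:gen-asso} you use ($G=(R,S,T)$ coarsened two ways, and $F=(\emptyset,I)$ with $G_1=\emptyset^0$, $G_2=(I)$), with the forward direction given by Proposition~\ref{p:gen-asso}. The induction on $\len(F)$ identifying the given $\mu_F$ with the iterated binary products, and your explicit flagging that $\mu_{(I)}=\id$ must be built into the conventions (without it the converse genuinely fails, e.g.\ take every $\mu_F$ to be ``product of first coordinates'' on $\wU_{\Kb^2}$, so that $\mu_{(I)}$ is a proper idempotent), are details the paper leaves implicit, but they complete rather than change its argument.
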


\subsection{Higher unitality}\label{ss:gen-unit}

We continue to assume that $\ta$ is a monoid and
$F=(I_1,\ldots,I_k)$ is a decomposition of $I$.
Recall that $\pos{F}$ denotes the composition of $I$ obtained by removing
from $F$ those subsets $I_i$ which are empty. 
For any such $F$, there is a canonical isomorphism
\[
\ta(\pos{F}) \cong V_1\otimes\cdots\otimes V_k,
\]
where 
\[
V_i := \begin{cases}
\ta[I_i] & \text{ if $I_i\neq\emptyset$,}\\
\Kb & \text{ if $I_i=\emptyset$.}
\end{cases}
\]
For each $i=1,\ldots,k$, let $\iota_i:V_i\to \ta[I_i]$ be the map defined by
\[
\iota_i := \begin{cases}
\id & \text{ if $I_i\neq\emptyset$,}\\
\iota_\emptyset & \text{ if $I_i=\emptyset$,}
\end{cases}
\]
where $\iota_\emptyset$ is the unit map of $\ta$.
We then define a map $\iota_F$ as follows.
\[
\xymatrix@C+50pt{
\ta(\pos{F}) \ar@{.>}[r]^-{\iota_F} \ar[d]_{\cong} & \ta(F) \ar@{=}[d]\\
V_1\otimes\cdots \otimes V_k \ar[r]_-{\iota_1\otimes\cdots\otimes\iota_k} & \ta[I_1]\otimes\cdots\otimes\ta[I_k]
}
\]

The following result is \emph{higher unitality}.

\begin{proposition}\label{p:gen-unit}
Let $\ta$ be a monoid.
For any decomposition $F$ of $I$, the diagram
\begin{equation}\label{e:gen-unit}
\begin{gathered}
\xymatrix{
\ta(F) \ar[r]^-{\mu_F} & \ta[I]\\
\ta(\pos{F}) \ar[ru]_{\mu_{\pos{F}}} \ar[u]^{\iota_{F}} 
}
\end{gathered}
\end{equation}
commutes.
\end{proposition}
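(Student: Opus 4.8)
The plan is to derive higher unitality from higher associativity (Proposition~\ref{p:gen-asso}) together with the unit axiom~\eqref{e:unit}, handling separately the degenerate case in which every block of $F$ is empty.

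Write $F=(I_1,\dots,I_k)$ and let $n_1<\dots<n_m$ be the positions of its nonempty blocks, so that $\pos{F}=(I_{n_1},\dots,I_{n_m})$. First I would treat the case $m\geq 1$, i.e.\ $\pos{F}\neq\emptyset^0$. Here I claim $\pos{F}\leq F$, via the explicit splitting $\gamma=(G_1,\dots,G_m)$ obtained by grouping each empty block of $F$ with the nonempty block that follows it, and absorbing any empty blocks trailing after $I_{n_m}$ into the last group: concretely $G_1=(I_1,\dots,I_{n_1})$, then $G_t=(I_{n_{t-1}+1},\dots,I_{n_t})$ for $1<t<m$, and $G_m=(I_{n_{m-1}+1},\dots,I_k)$. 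Each $G_t$ is a decomposition of the nonempty set $I_{n_t}$ whose unique nonempty block is $I_{n_t}$, and $G_1\cdots G_m=F$ by construction, so $\gamma$ is a splitting of $(\pos{F},F)$. Higher associativity~\eqref{e:gen-asso} then gives $\mu_F=\mu_{\pos{F}}\circ\mu^\gamma_{F/\pos{F}}$, so that the desired identity $\mu_F\circ\iota_F=\mu_{\pos{F}}$ reduces to
\[
\mu^\gamma_{F/\pos{F}}\circ\iota_F=\id_{\ta(\pos{F})}.
\]
Under the regrouping isomorphism $\ta(F)\cong\ta(G_1)\otimes\dots\otimes\ta(G_m)$ of~\eqref{e:conc-iso}, both composites factor as tensor products over $t$: by~\eqref{e:muFG} the map $\mu^\gamma_{F/\pos{F}}$ is $\mu_{G_1}\otimes\dots\otimes\mu_{G_m}$, while $\iota_F$ becomes $\iota_{G_1}\otimes\dots\otimes\iota_{G_m}$, where $\iota_{G_t}\colon\ta[I_{n_t}]\to\ta(G_t)$ inserts $\iota_\emptyset(1)$ into each empty slot of $G_t$. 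It therefore suffices to verify the single-block instance $\mu_{G_t}\circ\iota_{G_t}=\id_{\ta[I_{n_t}]}$, which multiplies an element of $\ta[I_{n_t}]$ by copies of the unit on either side; this is the iterated form of the unit axiom~\eqref{e:unit}, obtained by peeling off one empty factor at a time using associativity~\eqref{e:assoc}.

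It remains to settle the degenerate case $m=0$, that is $F=\emptyset^p$ with $p\geq 1$ and $\pos{F}=\emptyset^0$ (the subcase $p=0$ being trivial, as then $F=\pos{F}$ and $\iota_F=\id$). Here the refinement route is unavailable, since $\emptyset^0$ is refined only by itself, so I would argue directly. The claim becomes $\mu_{\emptyset^p}\bigl(\iota_\emptyset(1)^{\otimes p}\bigr)=\iota_\emptyset(1)$, which is exactly the assertion that the $p$-fold product of the unit of the algebra $\bigl(\ta[\emptyset],\mu_{\emptyset,\emptyset},\iota_\emptyset\bigr)$ equals that unit. For $p=1$ this is immediate since $\mu_{\emptyset^1}=\id$, and for $p\geq 2$ it follows by iterating $\mu_{\emptyset,\emptyset}\bigl(\iota_\emptyset(1)\otimes\iota_\emptyset(1)\bigr)=\iota_\emptyset(1)$, once again an instance of~\eqref{e:unit}.

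I expect the main obstacle to be bookkeeping with empty blocks rather than any genuine difficulty: one must choose the splitting $\gamma$ carefully so that the leading and trailing runs of empty blocks are absorbed into honest decompositions of nonempty sets, and one must recognize that the all-empty case escapes the $\pos{F}\leq F$ argument—precisely because $\emptyset^0$ admits no proper refinements—and hence must be closed by appealing directly to the unit and associativity of the base algebra $\ta[\emptyset]$.
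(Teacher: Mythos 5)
Your proof is correct, but it runs in the opposite direction from the paper's. The paper exploits the (at first counterintuitive) fact that $F\leq \pos{F}$ in the refinement preorder for decompositions: removing the empty blocks \emph{refines} $F$, and indeed $\emptyset^p\leq\emptyset^0$ for all $p\geq 1$. It then applies Proposition~\ref{p:gen-asso} once, to the pair $(F,\pos{F})$ with the canonical splitting $\can=(G_1,\dots,G_k)$ given blockwise by $G_j=\emptyset^0$ if $I_j=\emptyset$ and $G_j=(I_j)$ otherwise; since $\mu^{\can}_{\pos{F}/F}=\iota_F$ by inspection of~\eqref{e:muFG}, diagram~\eqref{e:gen-unit} \emph{is} the special case of~\eqref{e:gen-asso} with $G=\pos{F}$ and $\gamma=\can$ --- uniformly in $F$, with no case analysis and no auxiliary lemma. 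You instead use refinement in the more intuitive direction $\pos{F}\leq F$, and this choice is exactly what costs you the two extra steps you identify: the single-nonempty-block verification $\mu_{G_t}\iota_{G_t}=\id$ (peeled off via~\eqref{e:assoc} and~\eqref{e:unit}), and the separate degenerate case $F=\emptyset^p$, which escapes your argument precisely because $\emptyset^0$ admits no proper refinement. Both routes are sound; yours stays closer to elementary unit-axiom manipulations, while the paper's is shorter and establishes the sharper conceptual point it goes on to emphasize, namely that higher unitality is \emph{literally} a special case of higher associativity via the identification $\iota_F=\mu^{\can}_{\pos{F}/F}$. One cosmetic repair: your formulas for the splitting clash when $m=1$ (your $G_1$ and $G_m$ are defined differently); in that case one should simply take $G_1=(I_1,\dots,I_k)$, as your verbal description already indicates.
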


This result is in fact a special case of that in Proposition~\ref{p:gen-asso}.
To see this, note that for any decomposition $F$ of $I$
there is a canonical splitting $\can=(G_1,\dots ,G_k)$ of $(F,F_+)$;
namely, 
\[
G_j = \begin{cases}
\emptyset^0 & \text{ if $I_j=\emptyset$,}\\
(I_j) & \text{ otherwise.}
\end{cases}
\]
One then has that
\[
\iota_{F} = \mu_{F_+/F}^{\can}
\]
and therefore~\eqref{e:gen-unit} is the special case of~\eqref{e:gen-asso} in which $G=F_+$ and $\gamma=\can$.

We emphasize that, in view of the preceding discussion, higher unitality is
a special case of higher associativity.

\subsection{Higher commutativity}\label{ss:gen-comm}

Let $F=(I_1,\ldots,I_k)$ and $F'=(I'_1,\ldots,I'_k)$ be two
decompositions of $I$ which consist of the same subsets, 
possibly listed in different orders. 
Let $\sigma\in\Sr_k$ be any permutation such that 
\[
I'_i = I_{\sigma(i)}
\]
for each $i=1,\ldots,k$. The permutation $\sigma$ may not be unique due to
the presence of empty subsets. 
For any species $\tp$, there is an isomorphism
\[
\tp(F) \cong \tp(F'),\quad
x_1\otimes\cdots\otimes x_k \mapsto x_{\sigma(1)}\otimes\cdots\otimes x_{\sigma(k)}
\]
obtained by reordering the tensor factors according to $\sigma$. 
Fix a scalar $q\in\Kb$. Let
\begin{equation}\label{e:gen-braiding}
\beta_q^{\sigma}: \tp(F) \to \tp(F')
\end{equation}
be the scalar multiple of the previous map by the factor 
$
q^{\dist(F,F')},
$
with the distance function as in~\eqref{e:dist-supp}. 

If $F$ and $F'$ are compositions of $I$, then $\sigma$
is unique, and we write $\beta_q$ instead of $\beta_q^{\sigma}$.

We then have the following \emph{higher $q$-commutativity}.

\begin{proposition}\label{p:gen-comm}
Let $\ta$ be a $q$-commutative monoid.
For any decompositions $F$ and $F'$ of $I$ and permutation $\sigma$ as above,
the diagram
\begin{equation}\label{e:gen-comm}
\begin{gathered}
\xymatrix@=1.5pc{
\ta(F) \ar[dr]_{\mu_{F}} \ar[rr]^-{\beta_q^{\sigma}} & & \ta(F') \ar[dl]^{\mu_{F'}}\\
 & \ta[I] &\\
}
\end{gathered}
\end{equation}
commutes.
\end{proposition}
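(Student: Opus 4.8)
The plan is to reduce \eqref{e:gen-comm} to the case where $\sigma$ exchanges two adjacent blocks, and then to bootstrap to an arbitrary $\sigma$ using a reduced factorization into such transpositions. First I would treat the base case. Suppose $F'$ is obtained from $F=(I_1,\dots,I_k)$ by interchanging the consecutive blocks $I_i$ and $I_{i+1}$, so that $\sigma$ exchanges $i$ and $i+1$. Writing $R:=I_1\sqcup\cdots\sqcup I_{i-1}$, $S:=I_i$, $T:=I_{i+1}$, and $U:=I_{i+2}\sqcup\cdots\sqcup I_k$, higher associativity (Proposition~\ref{p:gen-asso}) lets me factor $\mu_F$ and $\mu_{F'}$ through the coarser decompositions $(R,S,T,U)$ and $(R,T,S,U)$, the splitting maps applying the products $\mu_{I_1,\dots,I_{i-1}}$ on the $R$-factors and $\mu_{I_{i+2},\dots,I_k}$ on the $U$-factors and being identical for $F$ and $F'$. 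A further use of associativity gives $\mu_{R,S,T,U}=\mu_{R,S\sqcup T,U}\circ(\id\otimes\mu_{S,T}\otimes\id)$ and the analogous identity for $(R,T,S,U)$; since $S\sqcup T=T\sqcup S$, the two middle products are related by the $q$-commutativity axiom \eqref{e:comm}, namely $\mu_{S,T}=\mu_{T,S}\circ\beta_q$.

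Chaining these identities yields $\mu_F=\mu_{F'}\circ(\id_R\otimes\beta_q\otimes\id_U)$, and it remains to identify the middle map with $\beta_q^{\sigma}$. The braiding \eqref{e:braiding} contributes the scalar $q^{\abs{S}\abs{T}}=q^{\abs{I_i}\abs{I_{i+1}}}$, and $\dist(F,F')$ computed from \eqref{e:dist-supp} equals exactly $\abs{I_i}\abs{I_{i+1}}$, since the only inverted pair is $(I_i,I_{i+1})$. Because $\id_R\otimes\beta_q\otimes\id_U$ transposes precisely the $i$-th and $(i{+}1)$-st tensor factors, while the products on the $R$- and $U$-blocks act on disjoint factors and commute with that swap, a short diagram chase shows it coincides with $\beta_q^{\sigma}$ of \eqref{e:gen-braiding}. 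This settles the proposition when $\sigma$ is an adjacent transposition.

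For general $\sigma$ I would fix a reduced word $\sigma=\tau_1\cdots\tau_m$, with $m$ the number of inversions of $\sigma$, and interpolate decompositions $F=F^{(0)},F^{(1)},\dots,F^{(m)}=F'$, each obtained from the previous one by the adjacent transposition $\tau_t$. Applying the base case at every step and composing gives $\mu_F=\mu_{F'}\circ\beta_q^{\tau_m}\circ\cdots\circ\beta_q^{\tau_1}$. The composite of the underlying reorderings is the reordering by $\sigma$, so the only point to verify is that the accumulated scalar equals $q^{\dist(F,F')}$. This is the crux, and the step I expect to be the main obstacle: because the word is reduced, each pair of blocks inverted between $F$ and $F'$ is swapped exactly once along the interpolation and each non-inverted pair is never swapped, so the exponents $\abs{I_a}\abs{I_b}$ over the swapped pairs sum to $\dist(F,F')$ by \eqref{e:dist-supp}. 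Making this precise requires the standard fact that the inversions introduced by $\tau_1,\dots,\tau_m$ are distinct and exhaust the inversion set of $\sigma$ in $\Sr_k$.

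Finally I would dispose of the ambiguity in $\sigma$. The nonempty blocks of a decomposition are distinct, so $\sigma$ is determined except for its action among the positions of empty blocks. Empty blocks contribute tensor factors $\ta[\emptyset]$, and the case $S=T=\emptyset$ of \eqref{e:comm} forces $\ta[\emptyset]$ to be a commutative algebra; moreover empty blocks contribute $0$ to every product appearing in \eqref{e:dist-supp}. Hence reordering empty factors changes neither the map nor the scalar, so $\beta_q^{\sigma}$ is independent of the admissible choice of $\sigma$ and the conclusion holds for every such $\sigma$. The same observation shows that the base-case computation is unaffected when $I_i$ or $I_{i+1}$ is empty.
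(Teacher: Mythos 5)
Your proof is correct and is essentially the paper's own argument: the text following Proposition~\ref{p:gen-comm} disposes of the proof with the single remark that \eqref{e:gen-comm} ``follows by repeated use of~\eqref{e:comm}'', and your reduction to adjacent transpositions via higher associativity, combined with the reduced-word/inversion bookkeeping that identifies the accumulated scalar with $q^{\dist(F,F')}$ from \eqref{e:dist-supp}, is exactly that repetition carried out in full. One small imprecision in your final paragraph: distinct admissible $\sigma$ do give distinct linear maps $\beta_q^{\sigma}$ when $\dim_{\Kb}\ta[\emptyset]>1$, so what is actually independent of the choice is the composite $\mu_{F'}\circ\beta_q^{\sigma}$, and for non-adjacent empty positions this uses not only commutativity of $\ta[\emptyset]$ but also that its elements commute past every $\ta[T]$ (the case $S=\emptyset$ of \eqref{e:comm}, where the scalar is $q^{0}=1$) --- both immediate, so the argument stands.
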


When $F=(S,T)$, $F'=(T,S)$ and
$\sigma$ is the transposition $(1,2)$,
the map~\eqref{e:gen-braiding} reduces to~\eqref{e:braiding}, and~\eqref{e:gen-comm}
specializes to~\eqref{e:comm}. Conversely,~\eqref{e:gen-comm} follows by
repeated use of~\eqref{e:comm}.

\begin{proposition}\label{p:gen-comm-conv}
In the situation of Proposition~\ref{p:gen-asso-conv}, the monoid $\ta$ is
commutative if and only if~\eqref{e:gen-comm} holds.
\end{proposition}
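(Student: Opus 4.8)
The plan is to prove the two implications of the biconditional separately, noting at the outset that one of them is already available. Throughout, ``commutative'' is read as $q$-commutative, matching the fixed scalar $q$ of~\eqref{e:comm} and Proposition~\ref{p:gen-comm}; the argument specializes verbatim to plain commutativity when $q=1$.

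For the forward implication, I would simply invoke Proposition~\ref{p:gen-comm}. Indeed, $q$-commutativity of $\ta$ is exactly the hypothesis of that proposition, and its conclusion is the commutativity of~\eqref{e:gen-comm} for all decompositions $F$, $F'$ of $I$ built from the same subsets and all permutations $\sigma$ relating them. Hence this direction needs nothing beyond the citation.

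For the converse, I would specialize~\eqref{e:gen-comm} to the smallest nontrivial case. Fix a decomposition $I=S\sqcup T$ and take $F=(S,T)$, $F'=(T,S)$, with $\sigma$ the transposition $(1,2)$, so that $I'_i=I_{\sigma(i)}$. Since these are length-two compositions, the higher products coincide with the structure maps: $\mu_F=\mu_{S,T}$ and $\mu_{F'}=\mu_{T,S}$. By~\eqref{e:dist-supp} we have $\dist(F,F')=\abs{S}\,\abs{T}$, so the map $\beta_q^{\sigma}$ of~\eqref{e:gen-braiding} is precisely the braiding $x\otimes y\mapsto q^{\abs{S}\abs{T}}\,y\otimes x$ of~\eqref{e:braiding}. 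Therefore diagram~\eqref{e:gen-comm} collapses to diagram~\eqref{e:comm}, and asserting its commutativity for every $I=S\sqcup T$ is exactly the definition of $q$-commutativity. This settles $\Leftarrow$.

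Because the forward implication is delegated to Proposition~\ref{p:gen-comm}, the only genuine work sits inside that earlier proof, and that is where I would expect the one delicate point to lie: the scalar bookkeeping. The idea there is to write $\sigma$ as a product of adjacent transpositions, regroup $\mu_F$ using higher associativity~\eqref{e:gen-asso} so that each such transposition exchanges two neighboring blocks $I_a,I_{a+1}$ to which~\eqref{e:comm} directly applies, and then verify that the accumulated factors $q^{\abs{I_a}\abs{I_{a+1}}}$ multiply to $q^{\dist(F,F')}$. This last check is what requires care, but it follows from the fact that $\dist(F,F')$ in~\eqref{e:dist-supp} is the size-weighted inversion count between $F$ and $F'$, which is additive under composition of adjacent transpositions. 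Since this is exactly the content already recorded in Proposition~\ref{p:gen-comm}, no new obstacle arises in the present statement, which is in the end a repackaging of~\eqref{e:comm}$\iff$\eqref{e:gen-comm}.
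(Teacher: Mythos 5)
Your proof is correct and follows the paper's own route exactly: the paper disposes of this proposition in the discussion preceding it, deducing one direction by specializing~\eqref{e:gen-comm} to $F=(S,T)$, $F'=(T,S)$ with $\sigma$ the transposition (where $\beta_q^{\sigma}$ reduces to~\eqref{e:braiding} since $\dist(F,F')=\abs{S}\,\abs{T}$), and the other by ``repeated use of~\eqref{e:comm}'', which is precisely the content of Proposition~\ref{p:gen-comm} that you cite. Your closing remarks on the scalar bookkeeping inside that earlier proposition are accurate but not needed for the present statement.
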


\subsection{Higher coassociativity and counitality}\label{ss:gen-coass}

Similar considerations to those in the preceding sections apply to a comonoid $\tc$. 
In particular, given a splitting $\gamma$ of the pair $(F,G)$, there is a map
\[
\Delta_{G/F}^\gamma: \tc(F) \to \tc(G).
\]
For any decomposition $F$, there is a map
\[
\epsilon_F: \tc(F) \to \tc(\pos{F});
\]
we have $\epsilon_{F} = \Delta_{F_+/F}^{\can}$.
If $F$ and $G$ are compositions, then there is a unique choice $\gamma$ and we write
$\Delta_{G/F}$ instead of $\Delta_{G/F}^\gamma$.

The diagrams dual to~\eqref{e:gen-asso} and~\eqref{e:gen-unit} commute. 
A species $\tc$ equipped with a collection of maps
\[
\Delta_F: \tc[I] \to \tc(F)
\]
is a comonoid with higher coproduct maps $\Delta_F$ if and only if the dual
conditions to~\eqref{e:gen-nat} and~\eqref{e:gen-asso} hold.
The comonoid is cocommutative if and only if the dual of~\eqref{e:gen-comm} holds.

\subsection{Higher compatibility}\label{ss:gen-comp}

We turn to the compatibility between the higher product and coproduct of a $q$-bimonoid.
The Tits product plays a crucial role in this discussion.

Let $F=(S_1,\ldots,S_p)$ and $G=(T_1,\ldots,T_q)$ 
be two decompositions of $I$ and consider the situation 
in~\eqref{e:pqsets}. We see from~\eqref{e:tits}
that the Tits products $FG$ and $GF$ may only differ in the order of the subsets.
Moreover, there is a canonical permutation `$\can$' that reorders the
subsets $A_{ij}$ in $FG$ into those in $GF$, namely $\can(i,j)=(j,i)$.

Let $\tp$ be a species and let
\[
\beta_q^{\can} : \tp(FG) \to \tp(GF)
\]
be the map~\eqref{e:gen-braiding} for this particular choice of reordering.
When $F=(S_1,S_2)$ and $G=(T_1,T_2)$, the map $\beta_q^{\can}$ 
coincides with the map in the top of diagram~\eqref{e:comp}.

For any decompositions $F$ and $G$ of $I$,
there is a canonical splitting $(H_1,\ldots,H_p)$ for $(F,FG)$.
Namely, 
\begin{equation}\label{e:row-splitting}
H_i:= (A_{i1},\ldots,A_{iq}),
\end{equation}
the $i$-th row in the matrix~\eqref{e:pqsets}.
Let `$\can$' also denote this splitting.

We then have the following \emph{higher compatibility}.

\begin{proposition}\label{p:gen-comp}
Let $\thh$ be a $q$-bimonoid.
For any decompositions $F$ and $G$ of $I$, the diagram
\begin{equation}\label{e:gen-comp}
\begin{gathered}
\xymatrix@R+1pc{
\thh(FG) \ar[rr]^-{\beta_q^{\can}} & & \thh(GF) \ar[d]^{\mu_{GF/G}^{\can}}\\
\thh(F) \ar[r]_-{\mu_F} \ar[u]^{\Delta_{FG/F}^{\can}} & \thh[I] \ar[r]_-{\Delta_G} & \thh(G)\\
}
\end{gathered}
\end{equation}
commutes.
\end{proposition}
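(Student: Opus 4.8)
The plan is to deduce~\eqref{e:gen-comp} from the basic compatibility axiom~\eqref{e:comp} by a reduction that peels off one row or one column of the grid of intersections $A_{ij}=S_i\cap T_j$ at a time. The key observation is that the case $p=q=2$ of~\eqref{e:gen-comp} \emph{is} axiom~\eqref{e:comp}: for $F=(S_1,S_2)$ and $G=(T_1,T_2)$ the four cells $A_{11},A_{12},A_{21},A_{22}$ are the sets $A,B,C,D$ of~\eqref{e:4sets}, the splitting~\eqref{e:row-splitting} gives $\Delta_{FG/F}^{\can}=\Delta_{A,B}\otimes\Delta_{C,D}$, the column splitting of $(G,GF)$ gives $\mu_{GF/G}^{\can}=\mu_{A,C}\otimes\mu_{B,D}$, and $\beta_q^{\can}=\id\otimes\beta_q\otimes\id$ (the only out-of-order pair is $(B,C)$, contributing $q^{\abs{B}\abs{C}}$). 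The degenerate cases $p\le 1$ or $q\le 1$ are handled first: there $FG$ and $GF$ coincide, $\beta_q^{\can}$ is the identity, and exactly one of $\Delta_{FG/F}^{\can}$, $\mu_{GF/G}^{\can}$ reduces to $\Delta_G$ or $\mu_F$ while the other becomes an identity, so~\eqref{e:gen-comp} collapses to a tautology.

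The argument then proceeds in two stages. In the first stage I fix $p=2$ and prove~\eqref{e:gen-comp} for all $q$ by induction on $q$, the base cases $q\le 2$ being those above. For $q\ge 3$, set $T'=T_1\sqcup\cdots\sqcup T_{q-1}$ and factor $\Delta_G$ through $\Delta_{T',T_q}$ using higher coassociativity (the dual of Proposition~\ref{p:gen-asso}, Section~\ref{ss:gen-coass}). Applying the already established case $q=2$ to the pair $(F,(T',T_q))$ rewrites $\Delta_{T',T_q}\,\mu_F$ in braided form; its $T'$-factor is $\mu_{C_1,C_2}$ with $C_i:=S_i\cap T'=\bigcup_{j<q}A_{ij}$, and composing the remaining coproduct $\Delta_{(T_1,\ldots,T_{q-1})}$ with $\mu_{C_1,C_2}$ is precisely the left-hand side of~\eqref{e:gen-comp} for the two-row pair $\bigl((C_1,C_2),(T_1,\ldots,T_{q-1})\bigr)$, to which the inductive hypothesis applies. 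Reassembling the resulting column products by higher associativity yields~\eqref{e:gen-comp} for $(F,G)$.

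In the second stage I prove the general statement by induction on $p$, the base $p\le 2$ being the first stage. For $p\ge 3$, set $S'=S_1\sqcup\cdots\sqcup S_{p-1}$ and factor $\mu_F=\mu_{S',S_p}\,(\mu_{(S_1,\ldots,S_{p-1})}\otimes\id)$ by higher associativity (Proposition~\ref{p:gen-asso}). Applying the two-row case to $(\,(S',S_p),G\,)$ exposes the coproduct of $\thh[S']$ along $(B_1,\ldots,B_q)$ with $B_j:=\bigcup_{i<p}A_{ij}$; composing this with $\mu_{(S_1,\ldots,S_{p-1})}$ is the left-hand side of~\eqref{e:gen-comp} for the $(p-1)$-row pair $\bigl((S_1,\ldots,S_{p-1}),(B_1,\ldots,B_q)\bigr)$, covered by the inductive hypothesis. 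A final application of higher associativity merges each partial column product with the last row's cell $A_{pj}$ into the full column product giving $T_j$. The converse implication---that~\eqref{e:gen-comp} forces the bimonoid axioms---is immediate by specializing to $p=q=2$.

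I expect the main obstacle to be purely combinatorial bookkeeping of the braiding scalars. At each reduction one must check that the partial reorderings $\beta_q^{\can}$ composed in the two stages agree with the global reordering from row-major to column-major order of the $A_{ij}$, and that the exponent $q^{\dist(FG,GF)}$ splits additively into the exponents of the sub-braidings plus the cross terms $\abs{B_j}\,\abs{A_{pj}}$ (respectively the $\abs{C_i}$ contributions). This rests on the additivity of the distance function under refinement of the grid---an identity of the same type as~\eqref{e:cocycle} and~\eqref{e:mul-schub-cocycle}---together with the naturality of the symmetry/braiding $\beta_q$ from Section~\ref{ss:cauchy}; both are routine but must be tracked with care so that the scalars match on the nose.
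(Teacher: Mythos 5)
Your strategy---derive \eqref{e:gen-comp} from the $2\times 2$ axiom \eqref{e:comp} by a double induction that peels off one row or one column of the grid at a time, using higher (co)associativity (Proposition~\ref{p:gen-asso} and its dual) to regroup, and the additivity of the distance function to match braiding scalars---is sound, and it is the route the paper itself indicates: the paper gives no detailed proof of Proposition~\ref{p:gen-comp}, asserting only that \eqref{e:gen-comp} follows from the axioms \eqref{e:comp}--\eqref{e:inverser}. Your identification of the base case ($FG=(A,B,C,D)$, $GF=(A,C,B,D)$, row and column splittings, $\beta_q^{\can}=\id\otimes\beta_q\otimes\id$) is correct, and the scalar bookkeeping you flag does close up, since $\dist(FG,GF)=\sum_{i<k,\,j>l}\abs{A_{ij}}\,\abs{A_{kl}}$ decomposes into the sub-braiding exponents plus the cross terms exactly as you describe.

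One step as written is false, however: the claim that the degenerate cases collapse to a tautology. This is correct for $p=1$ or $q=1$ (there $FG=GF$, the canonical splittings turn one side of the square into an identity, and the diagram is vacuous), but it fails for $p=0$ or $q=0$, which force $I=\emptyset$. Taking $F=\emptyset^0$ and $G=\emptyset^2$, diagram \eqref{e:gen-comp} reads $\Delta_{\emptyset,\emptyset}\,\iota_\emptyset=\iota_\emptyset\otimes\iota_\emptyset$; taking $F=\emptyset^2$ and $G=\emptyset^0$ it reads $\epsilon_\emptyset\,\mu_{\emptyset,\emptyset}=\epsilon_\emptyset\otimes\epsilon_\emptyset$; and $F=G=\emptyset^0$ gives $\epsilon_\emptyset\,\iota_\emptyset=\id_\Kb$. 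These are precisely the axioms \eqref{e:unitr} and \eqref{e:inverser}---the paper makes this point immediately after the proposition---and they are independent axioms, not formal consequences of the shape of the diagram; were they tautologies, they would be redundant in the definition of a $q$-bimonoid. Concretely, your Stage~1 base case $q=0$ (with $p=2$) is the left diagram of \eqref{e:unitr}, not a tautology. The same oversight makes your converse incomplete: specializing to $p=q=2$ recovers only \eqref{e:comp}, while recovering \eqref{e:unitr} and \eqref{e:inverser} requires the empty-set specializations above. The repair is easy: handle $p=0$ and $q=0$ by invoking \eqref{e:unitr} and \eqref{e:inverser} (iterated via coassociativity when the other decomposition has more than two blocks). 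With that patch, your induction proves the proposition.
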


Consider the case $F=(S_1,S_2)$ and $G=(T_1,T_2)$. 
Let $A$, $B$, $C$, and $D$ be as in~\eqref{e:4sets}. 
Then $FG=(A,B,C,D)$, $GF=(A,C,B,D)$, 
and~\eqref{e:gen-comp} specializes to the compatibility axiom~\eqref{e:comp}.

Now consider $F=\emptyset^0$ and $G=\emptyset^2$. Then $FG=GF=\emptyset^0$,
and following~\eqref{e:muFG} we find that the maps $\Delta_{FG/F}^\gamma$ and
$\mu_{GF/G}^\gamma$ are as follows.
\begin{align*}
&  
\begin{gathered}
\xymatrix@C+55pt{
\thh(\emptyset^0) \ar@{.>}[r]^-{\Delta_{FG/F}^\gamma=\Delta_{\emptyset^0/\emptyset^0}} 
\ar@{=}[d] & \thh(\emptyset^0)\\
\Kb \ar[r]_-{\id} & 
\Kb \ar@{=}[u]
}
\end{gathered}
& & 
\begin{gathered}
\xymatrix@C+55pt{
\thh(\emptyset^0) \ar@{.>}[r]^-{\mu_{GF/G}^\gamma=\mu_{\emptyset^0/\emptyset^2}^{(\emptyset^0,\emptyset^0)}} \ar[d]_{\cong} & \thh(\emptyset^2)\\
\Kb \otimes \Kb \ar[r]_-{\iota_\emptyset \otimes \iota_\emptyset } & 
\thh[\emptyset]\otimes\thh[\emptyset] \ar@{=}[u]
}
\end{gathered}
\end{align*}
Also, $\mu_F=\mu_{\emptyset^0}=\iota_{\emptyset}$ and 
$\Delta_G=\Delta_{\emptyset,\emptyset}$. It follows that
~\eqref{e:gen-comp} specializes in this case
to the right diagram in the compatibility axiom~\eqref{e:unitr}.

Similarly,
$F=\emptyset^2$ and $G=\emptyset^0$ yields the left diagram in~\eqref{e:unitr},
and $F=\emptyset^0$ and $G=\emptyset^0$ yields~\eqref{e:inverser}.

Thus, all compatibility axioms~\eqref{e:comp}--\eqref{e:inverser}
are special cases of~\eqref{e:gen-comp}. 
Conversely, the latter follows from the former.

\begin{proposition}\label{p:gen-comp-conv}
Let $\thh$ be a species equipped with two collections of maps
\[
\mu_F: \thh(F) \to \thh[I]
\qqand
\Delta_F:\thh[I] \to \thh(F),
\]
one map for each decomposition $F$ of a finite set $I$. Then $\thh$
is a bimonoid with higher (co)product maps $\mu_F$ ($\Delta_F$) if and only if 
the following conditions hold:
naturality~\eqref{e:gen-nat}, higher associativity~\eqref{e:gen-asso}, their duals, and
higher compatibility~\eqref{e:gen-comp}.
\end{proposition}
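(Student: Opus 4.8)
The plan is to assemble this characterization from the monoid and comonoid analogues already in hand, treating the product and coproduct data separately before reconciling them through higher compatibility. First I would dispose of the underlying monoid and comonoid structures. By Proposition~\ref{p:gen-asso-conv}, the collection $\{\mu_F\}$ makes $\thh$ a monoid with higher product maps $\mu_F$ precisely when naturality~\eqref{e:gen-nat} and higher associativity~\eqref{e:gen-asso} hold; dually, as recorded in Section~\ref{ss:gen-coass}, the collection $\{\Delta_F\}$ makes $\thh$ a comonoid with higher coproduct maps $\Delta_F$ precisely when the duals of~\eqref{e:gen-nat} and~\eqref{e:gen-asso} hold. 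Thus, under the stated hypotheses, $\thh$ is simultaneously a monoid and a comonoid, and the given maps are exactly its higher (co)product maps.

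It then remains to show that the monoid and comonoid structures are linked by the bimonoid compatibility axioms~\eqref{e:comp}--\eqref{e:inverser} if and only if higher compatibility~\eqref{e:gen-comp} holds. This equivalence is the content of the discussion preceding the statement: choosing $F=(S_1,S_2)$ and $G=(T_1,T_2)$ specializes~\eqref{e:gen-comp} to~\eqref{e:comp}, the choices $F=\emptyset^0$, $G=\emptyset^2$ and $F=\emptyset^2$, $G=\emptyset^0$ yield the two diagrams in~\eqref{e:unitr}, and $F=G=\emptyset^0$ yields~\eqref{e:inverser}. Conversely, higher compatibility~\eqref{e:gen-comp} is recovered from~\eqref{e:comp}--\eqref{e:inverser} by the same repeated-refinement argument used to deduce~\eqref{e:gen-asso} from~\eqref{e:assoc} and~\eqref{e:unit}.

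For the forward implication I would run these observations in the direction ``bimonoid $\Rightarrow$ conditions'': a bimonoid is by definition a monoid and comonoid satisfying~\eqref{e:comp}--\eqref{e:inverser}, so Proposition~\ref{p:gen-asso-conv}, its dual, and the specialization above deliver all four conditions at once. For the reverse implication I would run them the other way, using Proposition~\ref{p:gen-asso-conv} and its dual to produce the monoid and comonoid, and then reading~\eqref{e:comp}--\eqref{e:inverser} off as special cases of the assumed~\eqref{e:gen-comp}.

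I expect the only genuine obstacle to be the recovery of~\eqref{e:gen-comp} from the binary axioms, that is, the ``conversely'' half of the compatibility equivalence. This requires an induction on the number of blocks of $F$ and $G$ in which one alternately applies the binary compatibility~\eqref{e:comp}, higher associativity, and its dual, while carefully tracking how the Tits products $FG$ and $GF$ decompose along the canonical row and column splittings such as~\eqref{e:row-splitting} and how the reordering $\beta_q^{\can}$ interacts with the braiding factors $q^{\dist(F,F')}$. Everything else in the argument reduces to invoking Proposition~\ref{p:gen-asso-conv}, its dual, and the explicit specializations already recorded in Section~\ref{ss:gen-comp}.
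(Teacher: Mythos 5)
Your proposal is correct and follows essentially the same route as the paper: the paper's justification for Proposition~\ref{p:gen-comp-conv} is precisely the discussion preceding it, namely Proposition~\ref{p:gen-asso-conv} and its dual for the (co)monoid structures, the specializations $F=(S_1,S_2)$, $G=(T_1,T_2)$ and $F,G\in\{\emptyset^0,\emptyset^2\}$ recovering axioms~\eqref{e:comp}--\eqref{e:inverser} from~\eqref{e:gen-comp}, together with the asserted converse. Your identification of that converse (binary compatibility plus higher (co)associativity implies~\eqref{e:gen-comp}, by induction tracking the Tits products and the splittings as in~\eqref{e:row-splitting}) as the only step needing real work matches the paper, which likewise states it without detail.
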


The unit conditions have been subsumed under~\eqref{e:gen-asso} and~\eqref{e:gen-comp}. 
Nevertheless, certain compatibilities involving the maps $\iota_F$ and $\epsilon_F$ are worth-stating.

\begin{proposition}\label{p:gen-comp2}
Let $\thh$ be a $q$-bimonoid.
For any decomposition $F$ of $I$, the diagram
\begin{equation}\label{e:gen-comp2}
\begin{gathered}
\xymatrix@=1.5pc{
& \thh(F) \ar[dr]^{\epsilon_F}\\
\thh(\pos{F}) \ar@{=}[rr] \ar[ur]^{\iota_F} & & \thh(\pos{F}) 
}
\end{gathered}
\end{equation}
commutes.
\end{proposition}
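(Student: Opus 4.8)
The plan is to unwind the definitions of $\iota_F$ and $\epsilon_F$ recorded in Sections~\ref{ss:gen-unit} and~\ref{ss:gen-coass}, observe that each map acts one tensor slot at a time, and thereby reduce the triangle~\eqref{e:gen-comp2} to the single scalar identity supplied by the bimonoid axiom~\eqref{e:inverser}. Concretely, I would fix $F=(I_1,\ldots,I_k)$ and recall that, under the canonical identification $\thh(\pos{F})\cong V_1\otimes\cdots\otimes V_k$ with $V_i=\thh[I_i]$ when $I_i\neq\emptyset$ and $V_i=\Kb$ when $I_i=\emptyset$, the map $\iota_F$ is \emph{by definition} $\iota_1\otimes\cdots\otimes\iota_k$, where $\iota_i=\id$ on a nonempty slot and $\iota_i=\iota_\emptyset$ on an empty slot.

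Dually, I would use $\epsilon_F=\Delta_{\pos{F}/F}^{\can}$ together with the canonical splitting $\can$, whose $i$-th entry is $(I_i)$ when $I_i\neq\emptyset$ and $\emptyset^0$ when $I_i=\emptyset$. Reading off the diagram that defines $\Delta_{G/F}^\gamma$ (the dual of~\eqref{e:muFG}), the factors become $\Delta_{(I_i)}=\id$ on nonempty slots and $\Delta_{\emptyset^0}=\epsilon_\emptyset$ on empty slots, so that $\epsilon_F=\epsilon_1\otimes\cdots\otimes\epsilon_k$ with $\epsilon_i=\id_{\thh[I_i]}$ or $\epsilon_\emptyset$ accordingly. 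By functoriality of the tensor product, the composite $\epsilon_F\circ\iota_F$ then equals $(\epsilon_1\circ\iota_1)\otimes\cdots\otimes(\epsilon_k\circ\iota_k)$. On each nonempty slot this factor is $\id\circ\id=\id_{\thh[I_i]}$, while on each empty slot it is $\epsilon_\emptyset\circ\iota_\emptyset:\Kb\to\thh[\emptyset]\to\Kb$. Axiom~\eqref{e:inverser} states precisely that $\epsilon_\emptyset\iota_\emptyset=\id_\Kb$, so every factor is an identity and hence $\epsilon_F\circ\iota_F=\id_{\thh(\pos{F})}$, which is the asserted commutativity of~\eqref{e:gen-comp2}.

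There is no serious obstacle here; the statement is simply the slotwise reflection of the scalar identity~\eqref{e:inverser}, and it invokes neither higher associativity nor the full higher compatibility~\eqref{e:gen-comp}, consistent with the remark that these $\iota_F$–$\epsilon_F$ identities are worth stating separately. The only point demanding care is the bookkeeping: one must keep the inserted $\Kb$-factors straight when identifying $\thh(\pos{F})$ with $V_1\otimes\cdots\otimes V_k$, and verify that the canonical splitting $\can$ entering $\epsilon_F$ is the very one entering $\iota_F$, so that the two families of slotwise maps are indexed compatibly. Once this matching is in place, the factorwise computation above is immediate.
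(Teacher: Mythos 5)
Your proof is correct and is precisely the argument the paper leaves implicit (the paper states the proposition without proof, noting only that axiom~\eqref{e:inverser} is the case $F=\emptyset^1$): since $\iota_F=\mu_{\pos{F}/F}^{\can}$ and $\epsilon_F=\Delta_{\pos{F}/F}^{\can}$ are built slotwise from the \emph{same} canonical splitting of $(F,\pos{F})$, the composite $\epsilon_F\iota_F$ factors as the identity on each nonempty slot and as $\epsilon_\emptyset\iota_\emptyset=\id_{\Kb}$ on each empty slot, by~\eqref{e:inverser}. Nothing further is needed.
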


Axiom~\eqref{e:inverser} is the special case of~\eqref{e:gen-comp2} in which
$F=\emptyset^1$. 

For the next result, we consider a special type of refinement for set decompositions.
Let $F=(I_1,\ldots,I_k)$ be a decomposition of $I$.
Let $G$ be a decomposition of $I$ refining $F$ and with the property that
the nonempty subsets in $F$ are refined by $G$ into nonempty subsets,
and each empty set in $F$ is refined by $G$ into a nonnegative number of empty sets.
In this situation, there is a canonical choice of splitting $(G_1,\ldots,G_k)$ 
for $F\leq G$.
Namely, we choose each $G_i$ to consist either of a maximal run of contiguous nonempty sets in $G$,
or of a maximal run of contiguous empty sets. 
We denote this splitting by $\max$.

\begin{proposition}\label{p:gen-comp3}
Let $\thh$ be a $q$-bimonoid. 
For any decompositions $F$ and $G$ as above, the diagrams
\begin{align}\label{e:gen-comp3}
& 
\begin{gathered}
\xymatrix@C+2pc{
\thh(G) \ar[r]^-{\epsilon_G}\ar[d]_{\mu_{G/F}^{\mx}} &
\thh(\pos{G}) \ar[d]^{\mu_{\pos{G}/\pos{F}}}\\
\thh(F) \ar[r]_-{\epsilon_{\pos{F}}} & \thh(\pos{F})\\
}
\end{gathered}
& & 
\begin{gathered}
\xymatrix@C+2pc{
\thh(\pos{F}) \ar[d]_{\Delta_{\pos{G}/\pos{F}}} \ar[r]^-{\iota_F} & \thh(F)
\ar[d]^{\Delta_{G/F}^{\mx}}\\
\thh(\pos{G}) \ar[r]_-{\iota_G} & \thh(G)
}
\end{gathered}
\end{align}
commute.
\end{proposition}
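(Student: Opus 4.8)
The plan is to reduce both diagrams, by unfolding the definitions of all four maps, to the single fact that $\thh[\emptyset]$ is a bialgebra; concretely, that $\epsilon_\emptyset$ is an algebra map and $\iota_\emptyset$ a coalgebra map, which is the content of axioms~\eqref{e:unitr} and~\eqref{e:inverser}.

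First I would record the effect of the $\mx$ splitting. Write $F=(I_1,\ldots,I_k)$ and let $(G_1,\ldots,G_k)$ be the $\mx$ splitting of $(F,G)$, so that $G=G_1\cdots G_k$ with each $G_i$ a decomposition of $I_i$. The hypothesis on the pair $(F,G)$ produces a clean dichotomy: if $I_i\neq\emptyset$ then every block of $G_i$ is nonempty, so $G_i$ is a composition and $\pos{G_i}=G_i$; if $I_i=\emptyset$ then $G_i=\emptyset^{p_i}$ for some $p_i\geq 0$ and $\pos{G_i}=\emptyset^0$. Consequently $\pos{G}=\pos{G_1}\cdots\pos{G_k}$ keeps exactly the runs $G_i$ with $I_i\neq\emptyset$, and the unique splitting of the pair of compositions $(\pos{F},\pos{G})$ is $(\pos{G_i})_{i:\,I_i\neq\emptyset}$. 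This is the crucial structural point: no run $G_i$ mixes empty and nonempty blocks, so the counit (which acts only on empty blocks) and the products $\mu_{G_i}$ occupy disjoint tensor factors of $\thh(F)=\bigotimes_i\thh[I_i]$.

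Next I would verify the left diagram factor by factor over $i$, using $\mu_{G/F}^{\mx}=\mu_{G_1}\otimes\cdots\otimes\mu_{G_k}$ from~\eqref{e:muFG}. For an index $i$ with $I_i\neq\emptyset$, both composites act by $\mu_{G_i}$ and the counit is the identity on the factor $\thh[I_i]$, so they agree. For an index $i$ with $I_i=\emptyset$ and $G_i=\emptyset^{p_i}$, the top–right path contributes the scalar $\epsilon_\emptyset^{\otimes p_i}$ (the $p_i$ empty blocks are discarded by $\epsilon_G$ before reaching $\pos{G}$), while the left–bottom path contributes $\epsilon_\emptyset\circ\mu_{\emptyset^{p_i}}$. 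These coincide because $\epsilon_\emptyset$ is multiplicative: by induction on $p_i$, with base case $p_i=0$ given by $\epsilon_\emptyset\iota_\emptyset=\id_\Kb$ from~\eqref{e:inverser} and inductive step supplied by the left square of~\eqref{e:unitr} together with associativity~\eqref{e:assoc} of $\mu_{\emptyset^{p_i}}$. Assembling the factors over all $i$ gives commutativity of the left diagram. The right diagram is handled dually: its two composites again factor over $i$, agree on the nonempty runs, and on an empty run $G_i=\emptyset^{p_i}$ reduce to $\Delta_{\emptyset^{p_i}}\circ\iota_\emptyset=\iota_\emptyset^{\otimes p_i}$, i.e. to comultiplicativity of $\iota_\emptyset$ from the right square of~\eqref{e:unitr}, proved by the same induction.

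The main obstacle is not conceptual but organizational: one must set up the $\mx$ splitting carefully enough that both composites visibly decompose as tensor products indexed by the blocks of $F$, with empty and nonempty runs separated. Once that factorization is in place, the content is precisely the iterated bialgebra axiom on $\thh[\emptyset]$. I note that, alternatively, both diagrams can be regarded as instances of higher compatibility (Proposition~\ref{p:gen-comp}) specialized to decompositions built from $\emptyset^0$ and $\emptyset^{p}$, in the spirit of the derivations of~\eqref{e:unitr} and~\eqref{e:inverser} given after that proposition; but the direct reduction above is the most transparent.
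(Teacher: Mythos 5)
Your proof is correct. The paper states this proposition without proof (remarking only that axioms~\eqref{e:unitr} are recovered as the special case $F=(\emptyset)$, $G=(\emptyset,\emptyset)$), and your argument --- factoring both composites along the $\mx$ splitting into the blocks of $F$, so that the nonempty runs contribute identical maps while each empty run reduces to the iterated bialgebra axioms $\epsilon_\emptyset\mu_{\emptyset^{p}}=\epsilon_\emptyset^{\otimes p}$ and $\Delta_{\emptyset^{p}}\iota_\emptyset=\iota_\emptyset^{\otimes p}$ for $\thh[\emptyset]$, proved by induction from~\eqref{e:unitr} and~\eqref{e:inverser} --- is exactly the argument the paper leaves implicit.
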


Axioms~\eqref{e:unitr} are the special case of~\eqref{e:gen-comp3} in which
$F=(\emptyset)$ and $G=(\emptyset,\emptyset)$.

\subsection{The connected case}\label{ss:conn-higher}

The preceding discussion carries over to connected species.
The statements are simpler. There are additional results that are
specific to this setting. 

First of all, note that a connected species $\tp$ can be recovered from its positive part
by setting $\tp[\emptyset]=\Kb$.

If $\ta$ is a connected monoid, then for any decomposition $F$ 
the map $\iota_F:\ta(\pos{F})\to\ta(F)$ is invertible. 
Together with~\eqref{e:gen-unit} this implies that the collection of 
higher product maps $\mu_F$ is uniquely determined by the subcollection 
indexed by compositions $F$.

\begin{proposition}\label{p:gen-asso-conv-conn}
Let $\ta$ be a connected species equipped with a collection of maps
\[
\mu_F: \ta(F) \to \ta[I],
\]
one map for each composition $F$ of a nonempty finite set $I$. 
Then $\ta$ is a connected monoid with higher product maps $\mu_F$ if and only if 
naturality~\eqref{e:gen-nat} holds and the diagram
\begin{equation}\label{e:gen-asso-conn}
\begin{gathered}
\xymatrix{
\ta(G) \ar[r]^-{\mu_G} \ar[d]_{\mu_{G/F}} & \ta[I]\\
\ta(F) \ar[ru]_{\mu_F}
}
\end{gathered}
\end{equation}
commutes, for each compositions $F$ and $G$ of $I$ with $F\leq G$.
The monoid $\ta$ is commutative if and only if
\begin{equation}\label{e:gen-comm-conn}
\begin{gathered}
\xymatrix{
\ta(F) \ar[dr]_{\mu_{F}} \ar[rr]^-{\beta_q} & & \ta(F') \ar[dl]^{\mu_{F'}}\\
 & \ta[I] &\\
}
\end{gathered}
\end{equation}
commutes, for any compositions $F$ and $F'$ with $\supp F=\supp F'$.
\end{proposition}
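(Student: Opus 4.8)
The plan is to deduce this connected statement from the decomposition-indexed Propositions~\ref{p:gen-asso-conv} and~\ref{p:gen-comm-conv} already established, using the observation recorded just above: for a connected species the maps $\iota_F\colon\ta(\pos{F})\to\ta(F)$ are invertible, so higher unitality~\eqref{e:gen-unit} lets one pass freely between the full collection of higher product maps indexed by decompositions and its subcollection indexed by compositions. Throughout we use $\ta[\emptyset]=\Kb$ and take $\iota_\emptyset\colon\Kb\to\ta[\emptyset]$ to be the structural isomorphism; the length-one maps $\mu_{(I)}$ are understood to be identities, as befits higher product maps.

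For the forward implication, suppose $\ta$ is a connected monoid. Then it is a monoid, so by Proposition~\ref{p:gen-asso-conv} naturality~\eqref{e:gen-nat} and higher associativity~\eqref{e:gen-asso} hold for \emph{all} decompositions and splittings. Specializing to compositions $F\leq G$ of $I$, the splitting $\gamma$ is uniquely determined, so $\mu_{G/F}^\gamma=\mu_{G/F}$ and~\eqref{e:gen-asso} becomes exactly~\eqref{e:gen-asso-conn}. Commutativity is handled identically: if $\ta$ is moreover commutative, Proposition~\ref{p:gen-comm-conv} gives~\eqref{e:gen-comm} for all decompositions with the same underlying subsets, and restricting to compositions $F,F'$ with $\supp F=\supp F'$ (for which $\sigma$ is unique) yields~\eqref{e:gen-comm-conn}.

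For the converse I would first extend the given data to all decompositions. Setting $\ta[\emptyset]=\Kb$, the maps indexed by decompositions of $\emptyset$ are forced to be the canonical identifications of tensor powers of $\Kb$ (equivalently, the multiplication of the ground field), while for a decomposition $F$ of a nonempty set one defines $\mu_F:=\mu_{\pos{F}}\,\iota_F^{-1}$, the only choice compatible with~\eqref{e:gen-unit}. It then remains to verify naturality~\eqref{e:gen-nat} and higher associativity~\eqref{e:gen-asso} for this extended collection, after which Proposition~\ref{p:gen-asso-conv} delivers the monoid structure, visibly connected. Naturality for decompositions follows from naturality for compositions together with the equivariance of $\iota_\emptyset$. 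For higher associativity, given $F\leq G$ with a splitting $\gamma=(G_1,\ldots,G_k)$, I would apply the operation $\pos{(-)}$: by~\eqref{e:dec-comp-prod} the positive parts satisfy $\pos{F}\leq\pos{G}$ with the canonical (unique) splitting obtained from the $\pos{G_j}$, and a blockwise comparison using the definition~\eqref{e:muFG} shows that $\mu_{G/F}^\gamma$ is conjugate to $\mu_{\pos{G}/\pos{F}}$ via $\iota_F$ and $\iota_G$. Diagram~\eqref{e:gen-asso} then collapses onto~\eqref{e:gen-asso-conn} for $\pos{F}\leq\pos{G}$, which holds by hypothesis. For the commutativity clause I would invoke~\eqref{e:dec-comp-dist} to see that $\dist(F,F')=\dist(\pos{F},\pos{F}')$, so the braiding $\beta_q^\sigma$ of~\eqref{e:gen-comm} carries the same scalar as the braiding $\beta_q$ of~\eqref{e:gen-comm-conn}; the remaining reordering only permutes copies of $\Kb$, and Proposition~\ref{p:gen-comm-conv} completes the argument.

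The main obstacle I anticipate is the clause ``for \emph{every} splitting $\gamma$'' in higher associativity: splittings of $(F,G)$ fail to be unique exactly because empty subsets of $F$ may be refined into empty subsets of $G$ in several ways. The point to be checked carefully is that all of this ambiguity is concentrated in maps between tensor powers of $\ta[\emptyset]=\Kb$, which are canonical isomorphisms and hence commute past everything; once the empty blocks are absorbed in this way, the choice of $\gamma$ becomes immaterial and the reduction to the composition-indexed identity~\eqref{e:gen-asso-conn} goes through. This bookkeeping --- isolating the $\Kb$-contributions of empty blocks from the genuine tensor factors --- is the technical heart of the proof.
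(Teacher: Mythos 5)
Your proposal is correct and takes essentially the same approach as the paper: the paper's proof likewise deduces the statement from Propositions~\ref{p:gen-asso-conv} and~\ref{p:gen-comm}, using the invertibility of $\iota_F$ for a connected species (so that the decomposition-indexed maps are determined by the composition-indexed ones) and the uniqueness of splittings for compositions to collapse~\eqref{e:gen-asso} and~\eqref{e:gen-comm} onto~\eqref{e:gen-asso-conn} and~\eqref{e:gen-comm-conn}. The only difference is that the paper states this in two sentences, leaving implicit the bookkeeping about empty blocks and non-unique splittings that you spell out.
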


This follows from Propositions~\ref{p:gen-asso-conv} and~\ref{p:gen-comm}.
Since splittings are unique for compositions, diagrams~\eqref{e:gen-asso}
and~\eqref{e:gen-comm}
simplify to~\eqref{e:gen-asso-conn} and~\eqref{e:gen-comm-conn}.

For a connected comonoid $\tc$, the map $\epsilon_F:\tc(F)\to\tc(\pos{F})$ is invertible,
and we have the dual statement to Proposition~\ref{p:gen-asso-conv-conn}.

\begin{proposition}\label{p:gen-comp-conv-conn}
Let $\thh$ be a connected species equipped with two collections of maps
\[
\mu_F: \thh(F) \to \thh[I]
\qqand
\Delta_F:\thh[I] \to \thh(F),
\]
one map for each composition $F$ of a nonempty finite set $I$. Then $\thh$
is a bimonoid with higher (co)product maps $\mu_F$ ($\Delta_F$) if and only if 
the following conditions hold:
naturality~\eqref{e:gen-nat}, higher associativity~\eqref{e:gen-asso-conn}, their duals, and the diagram
\begin{equation}\label{e:gen-comp-conn}
\begin{gathered}
\xymatrix@R+1pc{
\thh(FG) \ar[rr]^-{\beta_q} & & \thh(GF) \ar[d]^{\mu_{GF/G}}\\
\thh(F) \ar[r]_-{\mu_F} \ar[u]^{\Delta_{FG/F}} & \thh[I] \ar[r]_-{\Delta_G} & \thh(G)\\
}
\end{gathered}
\end{equation}
commutes for any pair of compositions $F$ and $G$ of a nonempty finite set $I$.
\end{proposition}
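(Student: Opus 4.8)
The plan is to deduce this connected characterization from the general one in Proposition~\ref{p:gen-comp-conv}, exploiting the fact that on a connected species every map indexed by a decomposition is rigidly determined by its restriction to the associated composition. The ``only if'' direction is immediate and requires nothing beyond restricting attention to compositions: if $\thh$ is a bimonoid, then naturality is~\eqref{e:gen-nat}, higher associativity and its dual are Proposition~\ref{p:gen-asso} and its dual, and higher compatibility is Proposition~\ref{p:gen-comp}; since a splitting of a pair of \emph{compositions} is unique by~\eqref{e:res-conc}, each of~\eqref{e:gen-asso}, its dual, and~\eqref{e:gen-comp} specializes to~\eqref{e:gen-asso-conn}, its dual, and~\eqref{e:gen-comp-conn}.

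For the ``if'' direction I would first extend the prescribed maps $\mu_F$, $\Delta_F$ (defined for compositions $F$) to all decompositions. Since $\thh$ is connected we may take $\thh[\emptyset]=\Kb$, so that for each decomposition $F$ the maps $\iota_F:\thh(\pos{F})\to\thh(F)$ and $\epsilon_F:\thh(F)\to\thh(\pos{F})$ built as in Sections~\ref{ss:gen-unit} and~\ref{ss:gen-coass} are mutually inverse canonical isomorphisms, merely inserting and deleting the tensor factors $\thh[\emptyset]=\Kb$ attached to the empty blocks of $F$. I then set
\[
\mu_F := \mu_{\pos{F}}\,\epsilon_F \qquad\text{and}\qquad \Delta_F := \iota_F\,\Delta_{\pos{F}},
\]
which agree with the given maps when $F$ is a composition, since then $\pos{F}=F$ and $\iota_F=\epsilon_F=\id$. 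It remains to verify that these extended collections satisfy the four decomposition-indexed hypotheses of Proposition~\ref{p:gen-comp-conv}; granting this, $\thh$ is a bimonoid whose higher (co)product maps restrict on compositions to the prescribed $\mu_F$ and $\Delta_F$.

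Naturality~\eqref{e:gen-nat} for the extension follows from naturality on compositions together with the naturality of the canonical isomorphisms $\iota_F,\epsilon_F$. For higher associativity~\eqref{e:gen-asso}, given $F\le G$ with splitting $\gamma=(G_1,\dots,G_k)$, the central observation is that $\mu^\gamma_{G/F}$ is computed blockwise and that, because $\thh[\emptyset]=\Kb$ with every empty-block map an iterated unit $\Kb\to\Kb$, its effect does not depend on how the empty subsets are distributed among the factors $G_j$ of $\gamma$. Using $\pos{(G_1\cdots G_k)}=\pos{G_1}\cdots\pos{G_k}$ from~\eqref{e:dec-comp-prod}, one factors the triangle~\eqref{e:gen-asso} through $\epsilon_G$ and $\epsilon_F$ so that the empty blocks are stripped, leaving exactly~\eqref{e:gen-asso-conn} for $\pos{F}\le\pos{G}$. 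The dual statement is handled symmetrically, factoring through $\iota_G$ and $\iota_F$.

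The main obstacle is higher compatibility~\eqref{e:gen-comp} for the extension, where the combinatorics of the Tits product must be controlled: for arbitrary decompositions $F,G$ the products $FG$ and $GF$ carry empty intersections, and I must show the decomposition-level diagram collapses onto the composition-level diagram~\eqref{e:gen-comp-conn} for $\pos{F},\pos{G}$. Two identities make this work. First, $\pos{(FG)}=\pos{F}\,\pos{G}$ from~\eqref{e:dec-comp-prod}, so forming Tits products commutes with discarding empty blocks; and second, $\dist(FG,GF)=\dist(\pos{(FG)},\pos{(GF)})$ from~\eqref{e:dec-comp-dist}, so the braiding $\beta_q^{\can}$ carries the same scalar $q^{\dist}$ before and after deleting empties. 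Combining these with the interaction of $\iota,\epsilon$ with the canonical row-splittings, I would factor the outer square of~\eqref{e:gen-comp} through the $\iota/\epsilon$ isomorphisms so that the empty blocks contribute only identities on copies of $\Kb$ (and trivial $q$-powers), leaving precisely~\eqref{e:gen-comp-conn} for $\pos{F}$ and $\pos{G}$ in the middle. Once~\eqref{e:gen-comp} is verified, Proposition~\ref{p:gen-comp-conv} applies and finishes the proof.
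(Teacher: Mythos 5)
Your proposal is correct and takes essentially the same route as the paper, which (very tersely) reduces the connected case to the general decomposition-indexed characterization, using that for a connected species the maps $\iota_F$ and $\epsilon_F$ are inverse isomorphisms, so the composition-indexed data determine the decomposition-indexed data and the general diagrams collapse to the composition-level ones. Your explicit extension $\mu_F:=\mu_{\pos{F}}\,\epsilon_F$, $\Delta_F:=\iota_F\,\Delta_{\pos{F}}$ together with the identities~\eqref{e:dec-comp-prod} and~\eqref{e:dec-comp-dist} supplies exactly the details the paper leaves implicit.
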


This follows from Proposition~\ref{p:gen-comp}. 
The Tits products $FG$ and $GF$ in~\eqref{e:gen-comp-conn} are those for set compositions (Section~\ref{ss:faceprod}).

Connectedness has further implications on the higher structure maps.

\begin{proposition}\label{p:hopf-split-gen2}
Let $\thh$ be a connected $q$-bimonoid.
For any decomposition $F$ of $I$, the diagram
\begin{equation}\label{e:hopf-split-gen2}
\begin{gathered}
\xymatrix@=1.5pc{
\thh(F) \ar@{=}[rr] \ar[dr]_{\epsilon_F} & & \thh(F)\\
 & \thh(\pos{F}) \ar[ur]_{\iota_F} 
}
\end{gathered}
\end{equation}
commutes.
\end{proposition}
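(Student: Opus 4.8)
The plan is to unwind the definitions of $\iota_F$ and $\epsilon_F$ into tensor products indexed by the blocks of $F$, and then to check that the composite $\iota_F\circ\epsilon_F$ reduces block by block to the identity, with connectedness entering only on the empty blocks. Note first that the assertion of the diagram~\eqref{e:hopf-split-gen2} is precisely that $\iota_F\circ\epsilon_F=\id_{\thh(F)}$.

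First I would recall, writing $F=(I_1,\ldots,I_k)$ and taking the canonical splitting $\can$ of the pair $(F,\pos{F})$, that the map $\iota_F=\mu_{\pos{F}/F}^{\can}$ factors (Section~\ref{ss:gen-unit}) as $\iota_1\otimes\cdots\otimes\iota_k$, where $\iota_j=\id_{\thh[I_j]}$ when $I_j\neq\emptyset$ and $\iota_j=\iota_\emptyset\colon\Kb\to\thh[\emptyset]$ when $I_j=\emptyset$; dually, $\epsilon_F=\Delta_{\pos{F}/F}^{\can}$ factors (Section~\ref{ss:gen-coass}) as $\epsilon_1\otimes\cdots\otimes\epsilon_k$, with $\epsilon_j=\id_{\thh[I_j]}$ for $I_j\neq\emptyset$ and $\epsilon_j=\epsilon_\emptyset\colon\thh[\emptyset]\to\Kb$ for $I_j=\emptyset$. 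Hence $\iota_F\circ\epsilon_F=(\iota_1\circ\epsilon_1)\otimes\cdots\otimes(\iota_k\circ\epsilon_k)$, and on each nonempty block this factor is $\id_{\thh[I_j]}$, while on each empty block it is $\iota_\emptyset\circ\epsilon_\emptyset$. The key step is then to observe that connectedness forces $\iota_\emptyset\circ\epsilon_\emptyset=\id_{\thh[\emptyset]}$: axiom~\eqref{e:inverser} gives $\epsilon_\emptyset\circ\iota_\emptyset=\id_\Kb$, and since $\dim_\Kb\thh[\emptyset]=1$ the map $\iota_\emptyset$ is an isomorphism $\Kb\cong\thh[\emptyset]$ whose inverse is therefore $\epsilon_\emptyset$, so the reverse composite is the identity as well. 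Feeding this back into the block decomposition, every tensor factor of $\iota_F\circ\epsilon_F$ is an identity, whence $\iota_F\circ\epsilon_F=\id_{\thh(F)}$.

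I do not expect a genuine obstacle here; the one point deserving care is that this is the composite opposite to the one recorded in Proposition~\ref{p:gen-comp2}. That proposition states $\epsilon_F\circ\iota_F=\id_{\thh(\pos{F})}$, which holds for every $q$-bimonoid, whereas the present identity $\iota_F\circ\epsilon_F=\id_{\thh(F)}$ can fail when $\dim_\Kb\thh[\emptyset]>1$, since there $\iota_\emptyset\circ\epsilon_\emptyset$ is merely the projection $x\mapsto\epsilon_\emptyset(x)\,1_{\thh[\emptyset]}$. Thus connectedness is used in an essential way, and together the two propositions exhibit $\iota_F$ and $\epsilon_F$ as mutually inverse isomorphisms, in agreement with the invertibility remarks opening Section~\ref{ss:conn-higher}.
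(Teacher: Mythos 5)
Your proof is correct, and it is essentially the argument the paper leaves implicit: Proposition~\ref{p:hopf-split-gen2} is stated without proof, with the surrounding text only noting that together with Proposition~\ref{p:gen-comp2} it exhibits $\iota_F$ and $\epsilon_F$ as inverse maps, and the intended justification is precisely your blockwise reduction to $\iota_\emptyset\circ\epsilon_\emptyset=\id_{\thh[\emptyset]}$, which connectedness forces since $\iota_\emptyset$ is then an isomorphism $\Kb\cong\thh[\emptyset]$ whose two-sided inverse is $\epsilon_\emptyset$ by axiom~\eqref{e:inverser}. Your closing remark also correctly isolates the role of connectedness, in contrast with the composite $\epsilon_F\circ\iota_F=\id$ of Proposition~\ref{p:gen-comp2}, which holds for arbitrary $q$-bimonoids.
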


Together with Proposition~\ref{p:gen-comp2}, this says that for a connected
$q$-bimonoid the maps $\iota_F$ and $\epsilon_F$ are inverse.

\begin{proposition}\label{p:hopf-split-gen-gen}
Let $\thh$ be a connected $q$-bimonoid.
For any compositions $F$ and $G$ of $I$ with $\supp F=\supp G$, the diagram
\begin{equation}\label{e:hopf-split-supp}
\begin{gathered}
\xymatrix@=1.5pc{
\thh(F) \ar[rr]^-{\beta_q} \ar[dr]_{\mu_F} & & \thh(G)\\
 & \thh[I] \ar[ur]_{\Delta_G }
 }
\end{gathered}
\end{equation}
commutes.
\end{proposition}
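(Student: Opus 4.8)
The plan is to obtain the asserted commutativity as the special case of higher compatibility (Proposition~\ref{p:gen-comp}, in its connected form~\eqref{e:gen-comp-conn}) in which both Tits products degenerate. Since $F$ and $G$ are compositions of $I$ with $\supp F=\supp G$, they have the same blocks, which are nonempty and pairwise disjoint, hence distinct; so there is a unique permutation $\sigma$ with $G=\sigma(F)$ blockwise, and the map $\beta_q\colon\thh(F)\to\thh(G)$ in the statement is the reordering determined by $\sigma$ scaled by $q^{\dist(F,G)}$, as in~\eqref{e:gen-braiding} and~\eqref{e:dist-supp}.

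First I would record the effect of the support hypothesis on the Tits product. By~\eqref{e:supp-tits}, $GF=G$ is equivalent to $\supp F\leq\supp G$ and $FG=F$ to $\supp G\leq\supp F$; since $\supp F=\supp G$ both inclusions hold, giving $FG=F$ and $GF=G$ as compositions. Concretely, in the matrix of intersections $A_{ij}=I_i\cap I'_j$ from~\eqref{e:tits}, distinct blocks are disjoint, so each row of the matrix has exactly one nonempty entry, namely the block itself; listing the nonempty $A_{ij}$ in lexicographic order therefore recovers $F$ and $G$ verbatim.

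Next I would feed $F$ and $G$ into~\eqref{e:gen-comp-conn}, valid for the connected $q$-bimonoid $\thh$, whose commutativity reads
\[
\Delta_G\circ\mu_F=\mu_{GF/G}\circ\beta_q\circ\Delta_{FG/F}.
\]
Using $FG=F$ and $GF=G$, the two splitting maps collapse: $\Delta_{FG/F}=\Delta_{F/F}=\id_{\thh(F)}$ and $\mu_{GF/G}=\mu_{G/G}=\id_{\thh(G)}$, since a composition refines itself only through single-block factors and the corresponding iterated (co)products are identities (see~\eqref{e:muFG} and the convention following~\eqref{e:iterated-delta}). Hence $\Delta_G\circ\mu_F=\beta_q$, where now $\beta_q\colon\thh(FG)\to\thh(GF)$ is read as a map $\thh(F)\to\thh(G)$.

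Finally I would identify this $\beta_q$ with the one in the statement. The canonical reordering $\can(i,j)=(j,i)$ underlying the compatibility braiding, once the empty intersections are discarded, is exactly the unique permutation $\sigma$ taking the block order of $F$ to that of $G$, and its scalar is $q^{\dist(FG,GF)}=q^{\dist(F,G)}$; thus the two braidings coincide and $\Delta_G\mu_F=\beta_q$, as claimed. The only genuine bookkeeping — and the point I would be most careful about — is precisely this last identification, that the braiding built from the canonical reordering in higher compatibility is literally the higher-commutativity braiding~\eqref{e:gen-braiding} attached to the pair $(F,G)$; the degeneration of the splittings established in the previous step is what makes the match exact. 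As a consistency check, the two-block instance $F=(S,T)$, $G=(T,S)$ reproduces the second identity of Proposition~\ref{p:hopf-split}, and an alternative inductive proof factoring $\sigma$ into adjacent transpositions and applying that proposition together with higher (co)associativity is available as well.
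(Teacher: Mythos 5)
Your proof is correct and follows exactly the paper's route: the paper's proof consists of the single remark that the claim "follows from~\eqref{e:gen-comp-conn}, since in this case $FG=F$ and $GF=G$," and your argument is precisely this specialization with the details (the collapse of $\Delta_{FG/F}$ and $\mu_{GF/G}$ to identities, and the identification of the two braidings via $\dist(FG,GF)=\dist(F,G)$) filled in. The extra bookkeeping you supply is accurate and consistent with the paper's conventions.
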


This follows from~\eqref{e:gen-comp-conn}, since in this case $FG=F$ and $GF=G$.

The following special cases are worth-stating.

\begin{corollary}\label{c:hopf-split}
Let $\thh$ be a connected $q$-bimonoid. 
For any composition $F$ of $I$, the diagrams
\begin{align*}
&
\begin{gathered}
\xymatrix@=1.5pc{
\thh(F) \ar@{=}[rr] \ar[dr]_{\mu_F} & & \thh(F)\\
& \thh[I] \ar[ur]_{\Delta_F} 
}
\end{gathered}
& &
\begin{gathered}
\xymatrix@=1.5pc{
\thh(F) \ar[rr]^-{\beta^{\omega}_q} \ar[dr]_{\mu_F} & & \thh(\opp{F})\\
& \thh[I] \ar[ur]_{\Delta_{\opp{F}} }
}
\end{gathered}
\end{align*}
commute. 
\end{corollary}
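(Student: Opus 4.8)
The plan is to read off both diagrams as special cases of Proposition~\ref{p:hopf-split-gen-gen}, which already records the identity $\Delta_G\mu_F=\beta_q$ for any pair of compositions $F,G$ of $I$ with $\supp F=\supp G$. Since opposition merely reverses the listing of blocks and leaves the underlying set of blocks unchanged, $\supp\opp{F}=\supp F$; hence both choices $G=F$ and $G=\opp{F}$ are legitimate inputs to that proposition. The entire argument therefore reduces to identifying what the reordering map $\beta_q$ of~\eqref{e:gen-braiding} becomes in each of these two cases. Note that for compositions the matching permutation is unique, so $\beta_q$ (rather than $\beta_q^\sigma$) is unambiguous by the remark in Section~\ref{ss:gen-comm}.

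First I would set $G=F$. Here the permutation matching the blocks of $F$ with themselves is the identity, and $\dist(F,F)=0$ by~\eqref{e:dist-supp}, so the scalar $q^{\dist(F,F)}$ equals $1$. Thus $\beta_q\colon\thh(F)\to\thh(F)$ is the identity map, and Proposition~\ref{p:hopf-split-gen-gen} yields $\Delta_F\mu_F=\id$, which is exactly the left-hand diagram.

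Next I would set $G=\opp{F}$. The permutation carrying $F=(I_1,\dots,I_k)$ to $\opp{F}=(I_k,\dots,I_1)$ is the order-reversing permutation $\omega$, and the accompanying scalar is $q^{\dist(F,\opp{F})}$, with $\dist(F,\opp{F})=\sum_{1\le i<j\le k}\abs{I_i}\abs{I_j}$ as in~\eqref{e:dist-opp}. By the definition in~\eqref{e:gen-braiding} this is precisely the map $\beta^{\omega}_q$ appearing in the statement, so Proposition~\ref{p:hopf-split-gen-gen} gives $\Delta_{\opp{F}}\mu_F=\beta^{\omega}_q$, the right-hand diagram.

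The argument is entirely formal once Proposition~\ref{p:hopf-split-gen-gen} is in hand, and there is no genuine obstacle. The only point requiring a moment's care is the bookkeeping of the braiding scalar: one must check that the distance statistic collapses to $0$ in the first case and to~\eqref{e:dist-opp} in the second, both of which are immediate from the definitions in Sections~\ref{ss:schubert} and~\ref{ss:gen-comm}.
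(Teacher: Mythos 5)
Your proposal is correct and follows exactly the paper's own route: the paper likewise deduces both diagrams from Proposition~\ref{p:hopf-split-gen-gen} (equation~\eqref{e:hopf-split-supp}) by taking $G=F$ and $G=\opp{F}$, with $\beta_q$ reducing to the identity in the first case and to $\beta^{\omega}_q$ (for $\omega$ the longest permutation, with scalar $q^{\dist(F,\opp{F})}$) in the second. Your extra care with the braiding scalar and the uniqueness of the matching permutation is exactly the bookkeeping the paper leaves implicit.
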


These follow from~\eqref{e:hopf-split-supp} 
by setting $G=F$ and $G=\opp{F}$.
The map $\beta^{\omega}_q$ is as in~\eqref{e:gen-braiding} for $\omega$ the 
longest permutation. Explicitly, the conditions in Corollary~\ref{c:hopf-split} are
\begin{equation}\label{e:hopf-split2}
\Delta_F\mu_F(h) = h \qqand
\Delta_{\opp{F}}\mu_F(h) = q^{\dist(F,\opp{F})}\,\opp{h}
\end{equation}
for any $h\in\thh(F)$, where, if $h=x_1\otimes\cdots\otimes x_k$, then
$\opp{h}:=x_k\otimes\cdots\otimes x_1$.

\section{The bimonoids of set (de)compositions}\label{s:faces}

We now discuss two important bimonoids $\tSig$ and $\tSigh$ 
indexed by set compositions and set decompositions respectively.
The former is connected and hence a Hopf monoid,
while the latter is not a Hopf monoid.
Both structures are set-theoretic.
We also write down formulas for the higher (co)product maps.
This is particularly compelling since the higher (co)product maps
are also indexed by set decompositions
(and the Tits product appears in the coproduct description).
 
We make use of notions and notation for (de)compositions discussed
in Section~\ref{s:prelim}. We work over a characteristic $0$ field.

\subsection{The Hopf monoid of set compositions}\label{ss:faces}

Recall that $\rSig[I]$ denotes the set of compositions of a finite set $I$.

The species $\rSig$ \emph{of compositions}
is a set-theoretic connected Hopf monoid with product and coproduct defined by
\begin{equation}\label{e:faces-def-set}
\begin{aligned}
\rSig[S]\times\rSig[T] & \map{\mu_{S,T}} \rSig[I] & \rSig[I] & \map{\Delta_{S,T}} \rSig[S]\times\rSig[T]\\
(F_1,F_2) & \xmapsto{\phantom{\mu_{S,T}}} F_1\cdot F_2 \qquad
& F & \xmapsto{\phantom{\Delta_{S,T}}} (F|_S,F|_T).
\end{aligned}
\end{equation}
We have employed the operations of
concatenation and restriction of compositions.

The Hopf monoid $\rSig$ is cocommutative but not commutative.

We let $\tSig:=\Kb\rSig$ denote the linearization of $\rSig$.
It is a connected Hopf monoid. The product and coproduct are
\begin{equation}\label{e:faces-def}
\mu_{S,T}(\BH_{F_1}\otimes\BH_{F_2}) = \BH_{F_1\cdot F_2}
\qqand
\Delta_{S,T}(\BH_{F}) = \BH_{F|_S}\otimes\BH_{F|_T}.
\end{equation}
For the antipode of $\tSig$ on the basis $\{\BH\}$, set $q=1$ in Theorem~\ref{t:apode-face} below.

For the dual Hopf monoid $\tSig^*=\Kb^\rSig$, we have
\[
\mu_{S,T}(\BM_{F_1}\otimes\BM_{F_2}) = \sum_{F:\, \text{$F$ a quasi-shuffle of
$F_1$ and $F_2$}} \BM_{F}
\]
and
\[
\Delta_{S,T}(\BM_{F}) =
\begin{cases}
\BM_{F|_S}\otimes\BM_{F|_T} & \text{ if $S$ is an initial segment of $F$,}\\
0 & \text{ otherwise.}
\end{cases}
\]

The monoid $\tSig$ is free on the positive part of the exponential species: 
there is an isomorphism of monoids
\[
\Tc(\wE_+) \cong \tSig, \quad
 \BH_{I_1}\otimes\cdots\otimes\BH_{I_k} \leftrightarrow \BH_F
\]
for $F=(I_1,\ldots,I_k)$ a composition of $I$.
The isomorphism does not preserve coproducts, since $\Delta_{S,T}( \BH_{I_1}\otimes\cdots\otimes\BH_{I_k})=0$ when $S$ is not $F$-admissible.

\smallskip

Define a linear basis $\{\BQ\}$ of $\tSig[I]$ by
\begin{equation}\label{e:H-Q}
\BH_F = \sum_{G:\,F\leq G} \frac{1}{(G/F)!}\, \BQ_G.
\end{equation}
The basis $\{\BQ\}$ is determined by triangularity. In fact, one may show that
\begin{equation}\label{e:Q-H}
\BQ_F = \sum_{G:\,F\leq G} (-1)^{\len(G)-\len(F)} \frac{1}{\len(G/F)}\, \BH_G.
\end{equation}

\begin{proposition}\label{p:Qface}
We have
\begin{gather}
\label{e:Qprod}
\mu_{S,T}(\BQ_{F_1}\otimes\BQ_{F_2}) = \BQ_{F_1\cdot F_2},\\
\label{e:Qcoprod}
\Delta_{S,T}(\BQ_{F}) =
\begin{cases}
\BQ_{F|_S}\otimes\BQ_{F|_T} & \text{ if $S$ is $F$-admissible,}\\
0 & \text{ otherwise,}
\end{cases}\\
\label{e:Qant}
\apode_I(\BQ_F) = (-1)^{\abs{F}} \, \BQ_{\opp{F}}.
\end{gather}
\end{proposition}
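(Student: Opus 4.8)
The plan is to treat the three formulas in turn, deriving the coproduct and antipode formulas from the product formula together with the primitivity of the single-block elements $\BQ_{(I)}$.

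First I would establish \eqref{e:Qprod} by a direct computation on the $\{\BH\}$ basis. Expanding both $\BQ_{F_1}$ and $\BQ_{F_2}$ via \eqref{e:Q-H} and applying $\mu_{S,T}(\BH_{G_1}\otimes\BH_{G_2})=\BH_{G_1\cdot G_2}$ from \eqref{e:faces-def}, one is reduced to comparing the result with the $\{\BH\}$-expansion of $\BQ_{F_1\cdot F_2}$. The key combinatorial point is that, since the blocks of $F_1$ precede those of $F_2$ in $F_1\cdot F_2$, a composition $G$ refines $F_1\cdot F_2$ if and only if $G=G_1\cdot G_2$ with $G_1:=G|_S\geq F_1$ and $G_2:=G|_T\geq F_2$; this gives a bijection $\{G:F_1\cdot F_2\leq G\}\leftrightarrow\{(G_1,G_2):F_1\leq G_1,\ F_2\leq G_2\}$. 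Under it $\len(G)=\len(G_1)+\len(G_2)$ and $\len(G/(F_1\cdot F_2))=\len(G_1/F_1)\,\len(G_2/F_2)$, so the two expansions agree term by term.

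The crux of the argument is the primitivity of $\BQ_{(I)}$, that is, $\Delta_{S,T}(\BQ_{(I)})=0$ for every $I=S\sqcup T$ with $S,T\neq\emptyset$. Here I would expand $\BQ_{(I)}=\sum_{G\vDash I}(-1)^{\len(G)-1}\,\tfrac{1}{\len(G)}\,\BH_G$ from \eqref{e:Q-H}, apply the coproduct $\Delta_{S,T}(\BH_G)=\BH_{G|_S}\otimes\BH_{G|_T}$, and group the terms according to the pair $(A,B):=(G|_S,G|_T)$. The compositions $G$ giving a fixed pair are exactly the quasi-shuffles of $A$ and $B$, so the coefficient of $\BH_A\otimes\BH_B$ depends only on $a=\len(A)$ and $b=\len(B)$ and equals $\Phi(a,b):=\sum_j q(a,b,j)\,(-1)^{j-1}/j$, where $q(a,b,j)$ is the number of quasi-shuffles of an $a$-block and a $b$-block composition having $j$ blocks. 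Since $\sum_{a,b}q(a,b,j)\,x^ay^b=(x+y+xy)^j$, summing over $j$ gives
\[
\sum_{a,b\geq 0}\Phi(a,b)\,x^ay^b=\sum_{j\geq 1}\frac{(-1)^{j-1}}{j}(x+y+xy)^j=\log\bigl((1+x)(1+y)\bigr)=\log(1+x)+\log(1+y),
\]
which contains no monomial with both $a\geq 1$ and $b\geq 1$. Hence $\Phi(a,b)=0$ whenever $a,b\geq 1$, which is exactly the desired vanishing since $S,T\neq\emptyset$ force $a,b\geq 1$.

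With these two facts in hand the remaining formulas follow formally. Iterating \eqref{e:Qprod} gives $\BQ_F=\mu_{I_1,\dots,I_k}(\BQ_{(I_1)}\otimes\cdots\otimes\BQ_{(I_k)})$ for $F=(I_1,\dots,I_k)$. For \eqref{e:Qcoprod} I would use that $\Delta$ is a morphism of monoids and that, by primitivity and connectedness \eqref{e:conn-unit}, $\Delta(\BQ_{(I_j)})=\BQ_{(I_j)}\otimes 1+1\otimes\BQ_{(I_j)}$; expanding the product in $\tSig\bdot\tSig$ (where $q=1$ makes the braiding trivial, so no signs or reorderings occur) leaves in the $(S,T)$-component exactly the single term indexed by $J=\{\,j:I_j\subseteq S\,\}$, which survives precisely when $S=\bigcup_{j\in J}I_j$, that is, when $S$ is $F$-admissible, and then equals $\BQ_{F|_S}\otimes\BQ_{F|_T}$; otherwise the sum is empty and the coproduct is $0$. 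For \eqref{e:Qant} I would invoke that the antipode reverses products (Proposition~\ref{p:apode-rev}, with trivial braiding since $q=1$) together with $\apode(\BQ_{(I_j)})=-\BQ_{(I_j)}$ from Proposition~\ref{p:prim-ant}; applying these to $\BQ_F=\BQ_{(I_1)}\cdots\BQ_{(I_k)}$ yields $\apode_I(\BQ_F)=(-1)^k\BQ_{(I_k)}\cdots\BQ_{(I_1)}=(-1)^{\len(F)}\BQ_{\opp F}$, which is \eqref{e:Qant} (here $\abs F=\len F$). The main obstacle is the primitivity step; once the generating-function identity above is recognized, everything else is bookkeeping.
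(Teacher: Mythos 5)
Your proof is correct, and your ordering of the steps avoids any circularity. Since the paper states Proposition~\ref{p:Qface} without giving a proof, the comparison is with the route it implicitly intends: your verification of \eqref{e:Qprod} via the bijection \eqref{e:res-conc} (refinements of $F_1\cdot F_2$ correspond to pairs of refinements of $F_1$ and $F_2$, with lengths adding and the relative lengths $\len(G/F)$ multiplying) is the standard computation, and the crux — primitivity of $\BQ_{(I)}$ — is where your argument differs in flavor from the paper's machinery. You prove it by a direct quasi-shuffle count, using $\sum_{a,b} q(a,b,j)\,x^a y^b=(x+y+xy)^j$ and $\log\bigl((1+x)(1+y)\bigr)=\log(1+x)+\log(1+y)$, whereas the paper later (Section~\ref{ss:first-euler}) obtains the same fact conceptually by recognizing $\BQ_{(I)}=\euler{I}=\log(\univ)_I$ and invoking that logarithms of group-like series are primitive (Theorem~\ref{t:exp-log}); your generating-function identity is precisely an elementary unpacking of that statement, with the advantage of being self-contained and available at this point of the paper, before the series formalism is introduced. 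Note also that the alternative derivations of primitivity mentioned after Proposition~\ref{p:primE} use \eqref{e:Qcoprod} itself, so your decision to prove primitivity first and only then deduce \eqref{e:Qcoprod} is the correct logical order. The remaining deductions are sound: \eqref{e:Qcoprod} follows from $\Delta$ being a morphism of monoids, $\Delta(\BQ_{(I_j)})=\BQ_{(I_j)}\otimes 1+1\otimes\BQ_{(I_j)}$ (primitivity plus \eqref{e:conn-unit}), and the uniqueness of $J=\{j: I_j\subseteq S\}$; and \eqref{e:Qant} follows from product reversal (Proposition~\ref{p:apode-rev}, with trivial braiding at $q=1$) and $\apode_{I_j}(\BQ_{(I_j)})=-\BQ_{(I_j)}$ (Proposition~\ref{p:prim-ant}). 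Your reading of $\abs{F}$ as $\len(F)$ is the intended one, consistent with the antipode formula \eqref{e:free-apode} for free Hopf monoids under the isomorphism $\Tc(\wE_+)\cong\tSig$ that the proposition itself yields.
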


There is an isomorphism of Hopf monoids
\[
\Tc(\wE_+) \cong \tSig, \quad
 \BH_{I_1}\otimes\cdots\otimes\BH_{I_k} \leftrightarrow \BQ_F
\]
for $F=(I_1,\ldots,I_k)$ a composition of $I$.
It follows that the primitive part of $\tSig$
is the Lie submonoid generated by the elements $\BQ_{(I)}$.
For instance,
\[
\BQ_{(I)}, \ \BQ_{(S,T)}-\BQ_{(T,S)}, \
\BQ_{(R,S,T)} - \BQ_{(R,T,S)} - \BQ_{(S,T,R)} + \BQ_{(T,S,R)}
\]
are primitive elements.

\smallskip

Fix a scalar $q\in\Kb$. Let $\tSig_q$ denote the same monoid as $\tSig$,
but endowed with the following coproduct:
\[
\Delta_{S,T}: \tSig_q[I] \to \tSig_q[S] \otimes \tSig_q[T],
\qquad
\BH_{F} \mapsto q^{\area_{S,T}(F)}\, \BH_{F|_S} \otimes \BH_{F|_T},
\]
where the Schubert cocycle $\area_{S,T}$ is as in~\eqref{e:schubert-comp}.
Then $\tSig_q$ is a connected $q$-Hopf monoid. 

\begin{theorem}\label{t:apode-face}
The antipode of $\tSig_q$ is given by
\[
\apode_I(\BH_F) = q^{\dist(F,\opp{F})} \sum_{G:\, \opp{F} \leq G} (-1)^{\len(G)} \, \BH_G.
\]
\end{theorem}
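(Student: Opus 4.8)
The plan is to start from Takeuchi's formula (Proposition~\ref{p:antipode-r}, equation~\eqref{e:antipode-r}),
\[
\apode_I(\BH_F) = \sum_{F'\vDash I} (-1)^{\len(F')}\, \mu_{F'}\Delta_{F'}(\BH_F),
\]
and to analyze the resulting sum combinatorially, using the Tits product to identify the terms. First I would record the effect of the higher structure maps on the basis. The higher product $\mu_{F'}$ is iterated concatenation, while the higher coproduct of $\tSig_q$ is iterated restriction weighted by a power of $q$; iterating the cocycle identity~\eqref{e:cocycle} shows that for $F'=(J_1,\dots,J_m)$ one has $\Delta_{F'}(\BH_F) = q^{\sigma_{F'}(F)}\,\BH_{F|_{J_1}}\otimes\cdots\otimes\BH_{F|_{J_m}}$, where
\[
\sigma_{F'}(F):=\sum_{a<b}\bigl|\{(x,y)\in J_a\times J_b: \text{$x$ lies in a strictly later block of $F$ than $y$}\}\bigr|.
\]
Concatenating the restrictions $F|_{J_1},\dots,F|_{J_m}$ produces precisely the Tits product $F'F$ (compare~\eqref{e:tits}), so that $\mu_{F'}\Delta_{F'}(\BH_F)=q^{\sigma_{F'}(F)}\BH_{F'F}$ and
\[
\apode_I(\BH_F)=\sum_{F'\vDash I}(-1)^{\len(F')}\,q^{\sigma_{F'}(F)}\,\BH_{F'F}.
\]

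Next I would collect terms according to the value $G=F'F$ and carry out the cancellation; this is the heart of the argument. Using $F^2=F$ one gets $GF=F'FF=F'F=G$, so every $G$ that occurs satisfies $GF=G$, i.e. each block of $G$ sits inside a block of $F$ (by~\eqref{e:supp-tits}); write $f(r)$ for the index of the block of $F$ containing the $r$-th block $B_r$ of $G$. A direct inspection shows that $F'F=G$ holds exactly when $F'$ is obtained from $G$ by grouping consecutive blocks into runs on each of which $f$ is strictly increasing. Hence one is forced to cut $G$ at every position where $f$ weakly decreases, and may cut or not at every \emph{ascent} of $f$. Moreover, since a pair with $r<r'$ and $f(r)>f(r')$ can never lie in a common strictly increasing run, the contribution of such a pair to $\sigma_{F'}(F)$ is present regardless of the chosen cuts, so $\sigma_{F'}(F)=\sum_{r<r',\,f(r)>f(r')}|B_r|\,|B_{r'}|$ depends only on $G$. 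Summing $(-1)^{\len(F')}$ over the fiber then factors as a global sign times $\prod_{\text{ascents}}(1-1)$, which vanishes unless $f$ has no ascent; when it has none the only $F'$ is $G$ itself, contributing $(-1)^{\len(G)}$.

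Finally I would identify the surviving index set and the surviving $q$-power. The condition that $f$ has no ascent, i.e. is weakly decreasing, together with $GF=G$, says exactly that the blocks of $G$ inside $I_k$ come first, then those inside $I_{k-1}$, and so on, which is precisely $\opp{F}\leq G$. For such $G$ the pairs contributing to $\sigma_{F'}(F)$ are exactly those lying in distinct blocks of $F$, so
\[
\sigma_{F'}(F)=\sum_{i>i'}|I_i|\,|I_{i'}|=\dist(F,\opp{F})
\]
by~\eqref{e:dist-opp}, independently of $G$. Combining the three steps yields
\[
\apode_I(\BH_F)=q^{\dist(F,\opp{F})}\sum_{G:\,\opp{F}\leq G}(-1)^{\len(G)}\,\BH_G,
\]
as claimed. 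I expect the main obstacle to be the second step: correctly pinning down the fiber $\{F':F'F=G\}$ and verifying that the weight $\sigma_{F'}(F)$ is constant along it, so that the alternating sum of signs telescopes to the $(1-1)$-type cancellation; the remaining manipulations are routine bookkeeping with the Tits product and the distance statistic. As a consistency check, the formula is transparent for $F=(I)$, where $(I)$ is the Tits unit so $F'F=F'$ and $\sigma_{F'}(F)=0$, recovering $\apode_I(\BH_{(I)})=\sum_{G\vDash I}(-1)^{\len(G)}\BH_G$ with no cancellation at all.
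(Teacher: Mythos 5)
Your proof is correct, and I checked its three key steps in detail: (i) iterating the coproduct of $\tSig_q$ does give $\mu_{F'}\Delta_{F'}(\BH_F)=q^{\sigma_{F'}(F)}\BH_{F'F}$, since the Schubert cocycle for compositions satisfies the same additivity as~\eqref{e:cocycle} and concatenating the restrictions $F|_{J_1},\dots,F|_{J_m}$ is exactly the Tits product $F'F$; (ii) your description of the fiber $\{F':F'F=G\}$ is right --- writing $f(r)$ for the index of the $F$-block containing the $r$-th block $B_r$ of $G$, a cut is forced at every weak descent of $f$ and optional at every ascent, and since a pair $r<r'$ with $f(r)>f(r')$ can never lie inside one strictly increasing run, the exponent $\sigma_{F'}(F)=\sum_{r<r',\,f(r)>f(r')}\abs{B_r}\,\abs{B_{r'}}$ is constant on the fiber; (iii) the alternating sum over the fiber is $(-1)^{1+d}(1-1)^{\abs{A}}$ ($d$ the number of weak descents, $A$ the set of ascents), which annihilates every $G$ with an ascent, and the surviving $G$ (those with $f$ weakly decreasing and each block inside a block of $F$) are precisely the $G$ with $\opp{F}\leq G$, for which the exponent collapses to $\sum_{i>i'}\abs{I_i}\,\abs{I_{i'}}=\dist(F,\opp{F})$. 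Note that the paper itself contains no proof of this theorem --- it defers to~\cite[Theorems~12.21 and~12.24]{AguMah:2010} --- and your derivation (Takeuchi's formula, grouping terms by the Tits product, and cancellation over ascents) is essentially the argument of that reference, so you have supplied in full the proof the paper omits. One further remark: at $q=1$ there is a shortcut via Proposition~\ref{p:Qface}, since $\apode_I(\BQ_F)=(-1)^{\len(F)}\BQ_{\opp{F}}$ and the change of basis~\eqref{e:H-Q} then yields the $\BH$-formula; but that basis change has rational coefficients and no $\BQ$-basis is available for general $q$, so your route is the one that works at the stated level of generality.
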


This result is proven in~\cite[Theorems~12.21 and~12.24]{AguMah:2010}.
The distance function is as in~\eqref{e:dist-opp}.

Generically, the $q$-Hopf monoid $\tSig_q$ is self-dual. 
In fact, we have the following result. 
Define a map $\isolinear_q:\tSig_q \to (\tSig_q)^*$ by
\begin{equation}\label{e:Sigsdual}
 \isolinear_q(\BH_{F}) := \sum_{F'} (FF')!\, q^{\dist(F,F')}\, \BM_{F'}.
\end{equation}

\begin{proposition}\label{p:Sigsdual}
The map $\isolinear_q:\tSig_q \to (\tSig_q)^*$ 
is a morphism of $q$-Hopf monoids.
If $q$ is not an algebraic number, then it is an isomorphism. 
Moreover, for any $q$,
$\isolinear_q = (\isolinear_q)^*$.
\end{proposition}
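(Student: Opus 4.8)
The plan is to package $\isolinear_q$ as a bilinear pairing and reduce every claim to identities from Section~\ref{s:prelim}. Set
\[
\langle \BH_F,\BH_{F'}\rangle := (FF')!\,q^{\dist(F,F')},
\]
so that $\isolinear_q(\BH_F)=\sum_{F'}\langle \BH_F,\BH_{F'}\rangle\,\BM_{F'}$ (the $\{\BH\}$ and $\{\BM\}$ being dual bases by Convention~\ref{con:bases}). A standard dualization shows that $\isolinear_q$ is a morphism of $q$-bimonoids precisely when this pairing is a Hopf pairing, namely
\[
\langle \mu_{S,T}(\BH_{F_1}\otimes\BH_{F_2}),\BH_{F'}\rangle=\langle \BH_{F_1}\otimes\BH_{F_2},\Delta_{S,T}\BH_{F'}\rangle
\]
together with its coproduct analogue, and that $\isolinear_q=(\isolinear_q)^*$ precisely when the pairing is symmetric.

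First I would prove symmetry, since it yields $\isolinear_q=(\isolinear_q)^*$ and also lets me verify only one of the two Hopf-pairing conditions. As $FF'$ and $F'F$ share the same support, $(FF')!=(F'F)!$; and $\dist(F,F')=\dist(F',F)$ because an unordered pair $\{a,b\}$ contributes to either distance exactly when it lies in distinct blocks of both $F$ and $F'$ with opposite relative order, a condition symmetric in $F$ and $F'$. Hence the pairing is symmetric, proving the self-duality assertion and reducing the coproduct compatibility to the product one.

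Next I would establish the product compatibility. For $F_1\vDash S$, $F_2\vDash T$ and $F'\vDash I$, the required equality unpacks, using~\eqref{e:faces-def} and the coproduct of $\tSig_q$, into a factorial identity and an exponent identity. For factorials, the blocks of $(F_1\cdot F_2)F'$ are exactly the nonempty intersections of the $F_1$-blocks with $F'|_S$ together with those of the $F_2$-blocks with $F'|_T$, giving $((F_1\cdot F_2)F')!=(F_1(F'|_S))!\,(F_2(F'|_T))!$. For exponents, sorting the inversions counted by $\dist(F_1\cdot F_2,F')$ according to whether their two endpoints both lie in $S$, both in $T$, or straddle $S$ and $T$ yields
\[
\dist(F_1\cdot F_2,F')=\dist(F_1,F'|_S)+\dist(F_2,F'|_T)+\area_{S,T}(F'),
\]
the straddling inversions being counted precisely by the Schubert cocycle~\eqref{e:schubert-comp}. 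These two identities give the product compatibility; the coproduct one follows by symmetry, and the unit/counit conditions are immediate as $\tSig_q$ is connected. Being then a morphism of connected $q$-bimonoids, $\isolinear_q$ automatically commutes with antipodes (Section~\ref{ss:bim-hopf}), hence is a morphism of $q$-Hopf monoids.

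Finally, invertibility. On the finite-dimensional space $\tSig_q[I]$ the map $\isolinear_q$ is represented, in the bases $\{\BH_F\}$ and $\{\BM_{F'}\}$, by the matrix $M_I(q)=\big((FF')!\,q^{\dist(F,F')}\big)_{F,F'\vDash I}$, whose entries are monomials in $q$ with positive integer coefficients; thus $\det M_I(q)\in\Qb[q]$, and its roots are algebraic. Consequently, once one knows $\det M_I(q)\not\equiv 0$, the map $\isolinear_q$ is invertible for every non-algebraic $q$, and a bijective morphism of $q$-Hopf monoids is an isomorphism. The main obstacle is exactly the nonvanishing $\det M_I(q)\not\equiv 0$: this is a genuine determinant evaluation, parallel to the linear-order case where the matrix $\big(q^{\dist(\ell,\ell')}\big)$ is nonsingular by a theorem of Zagier (cf.\ Proposition~\ref{p:lsdual}). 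The factorial weights present here shift the exceptional locus from the roots of unity to the algebraic numbers, and the required nonvanishing can be extracted from the more general determinant computation in~\cite[Theorem~11.35]{AguMah:2010}.
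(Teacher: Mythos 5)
Your verification of the morphism and self-duality assertions is correct and essentially complete, and it supplies genuine content: the paper's own ``proof'' of Proposition~\ref{p:Sigsdual} consists entirely of a citation to \cite[Proposition~12.26]{AguMah:2010}. Both of your combinatorial identities are right. The blocks of $(F_1\cdot F_2)F'$ are precisely the blocks of $F_1(F'|_S)$ together with those of $F_2(F'|_T)$, which gives the factorial identity; and sorting the inversions counted by $\dist(F_1\cdot F_2,F')$ according to whether the two elements lie both in $S$, both in $T$, or one in each gives the cocycle identity, the straddling pairs being counted by $\area_{S,T}(F')$ exactly as in~\eqref{e:schubert-comp}. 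Likewise, $\supp(FF')=\supp(F'F)$ and the symmetry of $\dist$ give symmetry of the pairing, hence $\isolinear_q=(\isolinear_q)^*$, and this correctly halves the Hopf-pairing verification. The reduction of invertibility to the nonvanishing of $\det M_I(q)\in\Zb[q]$, with ``$q$ not algebraic'' then sufficing, is also sound.

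The one step I would not accept as written is the deferral of $\det M_I(q)\not\equiv 0$ to \cite[Theorem~11.35]{AguMah:2010}. In this paper that theorem is invoked as the generalization of the Zagier-based invertibility underlying Proposition~\ref{p:lsdual}, i.e., for pairings on free monoids $\Tcq(\tq)$ built from the generating species, whose degeneracy loci consist of roots of unity. The pairing~\eqref{e:Sigsdual} is not of that shape: it is nonzero on \emph{every} pair $(F,F')$, and $\tSig_q$ with its restriction coproduct is not the same $q$-bimonoid as $\Tcq(\wE_+)$ with the coproduct of Section~\ref{ss:freeHopf} (the $\BH$-basis identification fails to preserve coproducts, as the paper notes already for $q=1$), so the theorem does not apply off the shelf. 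A small computation confirms the mismatch: for $I=\{a,b\}$, ordering the compositions as $(\{a,b\})$, $(\{a\},\{b\})$, $(\{b\},\{a\})$, one gets
\[
M_I(q)=\pmat{2 & 1 & 1\\ 1 & 1 & q\\ 1 & q & 1},
\qquad
\det M_I(q)=2q(1-q),
\]
which vanishes at $q=0$ --- a point no root-of-unity criterion excludes (indeed $\isolinear_0$ \emph{is} invertible for $\wL_0$). This is precisely why the hypothesis in the proposition is ``not algebraic'' rather than ``not a root of unity''. So either prove the nonvanishing directly (it is a Varchenko-type determinant evaluation, genuinely nontrivial), or defer it to the reference the paper actually designates for this statement, namely \cite[Proposition~12.26]{AguMah:2010}, with the related determinant theory in \cite[Section~10.15.2]{AguMah:2010}.
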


Proposition~\ref{p:Sigsdual} is proven in~\cite[Proposition~12.26]{AguMah:2010}.
See~\cite[Section~10.15.2]{AguMah:2010} for related information.

With $q=1$, the map in~\eqref{e:Sigsdual} is not invertible. It satisfies
\[
 \isolinear(\BQ_F) = (-1)^{\abs{I}-\len(F)}\, (\supp F)\ifac 
 \sum_{G:\,\supp G =\supp F} \BP_G,
\]
where $\{\BP\}$ is the basis of $\tSig^*$ dual to the basis $\{\BQ\}$ of $\tSig$.
The coefficient is the M\"obius function value~\eqref{e:partmobiusPi}.
The support of a composition is defined in Section~\ref{ss:support}. 

Regarding a linear order as a composition into singletons, we may view $\rL$
as a subspecies of $\rSig$.
The analogy between the present discussion and that for linear orders in
Section~\ref{ss:linear} is explained by the observation that in fact 
$\wL_q$ is in this manner a $q$-Hopf submonoid of $\tSig_q$. 
Let $j:\wL_q\into\tSig_q$ be the inclusion. The diagram
\[
\xymatrix{
\tSig_q \ar[r]^-{\isoflat_q} & (\tSig_q)^* \ar[d]^{j^*}\\
\wL_q \ar[r]_-{\isoflat_q} \ar[u]^{j} & (\wL_q)^* 
}
\]
commutes. 
The maps $\isoflat$ are as in~\eqref{e:lsdual} and~\eqref{e:Sigsdual}.

Let $\pi: \tSig \onto \tPi$ be the map defined by
\begin{equation}\label{e:Sig-Pi}
\pi(\BH_F) := \BH_{\supp F}.
\end{equation}

\begin{proposition}\label{p:Sig-Pi}
The map $\pi: \tSig \onto \tPi$ is a surjective morphism of Hopf monoids.
Moreover, we have
\[
\pi(\BQ_F) = \BQ_{\supp F}
\]
and the diagram
\[
\xymatrix{
\tSig \ar[r]^-{\isoflat} \ar[d]_{\pi} & \tSig^*\\
\tPi \ar[r]_-{\isoflat} & \tPi^* \ar[u]_{\pi^*}
}
\]
commutes.
\end{proposition}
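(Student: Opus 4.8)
The plan is to treat the three assertions in turn, reducing each to a short computation with the combinatorics of support developed in Section~\ref{ss:support}.

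First I would prove that $\pi$ is a morphism of Hopf monoids. Since $\tSig$ and $\tPi$ are the linearizations of the set-theoretic Hopf monoids $\rSig$ and $\rPi$, and $\pi$ is (by~\eqref{e:Sig-Pi}) the linearization of the support map $\supp\colon\rSig\to\rPi$, it suffices by Section~\ref{ss:lin} to check that $\supp$ respects the set-theoretic structures. For products this is the identity $\supp(F_1\cdot F_2)=\supp F_1\sqcup\supp F_2$ (concatenation lists the blocks of $F_1$ followed by those of $F_2$, and forgetting order yields their union), and for coproducts it is $\supp(F|_S)=(\supp F)|_S$; both are immediate from the definitions in Sections~\ref{ss:oper} and~\ref{ss:partitions}. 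The unit and counit conditions hold trivially as both monoids are connected, and surjectivity holds because every partition is the support of any composition obtained by ordering its blocks. Being a morphism of bimonoids between Hopf monoids, $\pi$ automatically commutes with the antipodes by the general fact recorded in Section~\ref{ss:bim-hopf}, so it is a morphism of Hopf monoids.

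Next I would establish $\pi(\BQ_F)=\BQ_{\supp F}$. Because $\pi$ is in particular a morphism of comonoids, it carries primitive elements to primitive elements, so $\pi(\BQ_{(I)})$ lies in $\Pc(\tPi)[I]$; as the latter is one-dimensional and spanned by $\BQ_{\{I\}}$ (Section~\ref{ss:flats}), we may write $\pi(\BQ_{(I)})=c_I\,\BQ_{\{I\}}$ for a scalar $c_I$. To pin down $c_I$, recall from~\eqref{e:Q-H} that $\BQ_F$ equals $\BH_F$ plus a combination of basis elements $\BH_G$ indexed by compositions $G$ strictly refining $F$; applying $\pi$ and using $\pi(\BH_G)=\BH_{\supp G}$, the only term contributing $\BH_{\{I\}}$ is $G=(I)$, so the coefficient of $\BH_{\{I\}}$ in $\pi(\BQ_{(I)})$ equals $1$, forcing $c_I=1$. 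Finally, writing $F=(I_1,\dots,I_k)=(I_1)\cdots(I_k)$ and using the multiplicativity $\BQ_F=\mu_{I_1,\dots,I_k}(\BQ_{(I_1)}\otimes\cdots\otimes\BQ_{(I_k)})$ from~\eqref{e:Qprod} together with the fact that $\pi$ is a morphism of monoids, I obtain $\pi(\BQ_F)=\mu_{I_1,\dots,I_k}(\BQ_{\{I_1\}}\otimes\cdots\otimes\BQ_{\{I_k\}})=\BQ_{\{I_1,\dots,I_k\}}=\BQ_{\supp F}$, the middle equality being the multiplicativity of $\{\BQ\}$ in $\tPi$ (Proposition~\ref{p:Qflat}).

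For the commuting square I would verify the equality $\isoflat=\pi^*\isoflat\,\pi$ of maps $\tSig\to\tSig^*$ on the basis $\{\BH\}$, where the outer $\isoflat$ is the map of $\tSig$ and the inner one that of $\tPi$. By~\eqref{e:Sigsdual} at $q=1$ the left-hand side sends $\BH_F$ to $\sum_{F'}(FF')!\,\BM_{F'}$. For the right-hand side, $\pi(\BH_F)=\BH_{\supp F}$, then~\eqref{e:pisdual} gives $\isoflat(\BH_{\supp F})=\sum_X (X\vee\supp F)!\,\BM_X$, and finally $\pi^*$ (the transpose of $\pi$, which sends $\BM_X$ to $\sum_{F':\,\supp F'=X}\BM_{F'}$) produces $\sum_{F'}(\supp F'\vee\supp F)!\,\BM_{F'}$. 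Comparing coefficients of $\BM_{F'}$, the two sides agree precisely because $\supp(FF')=\supp F\vee\supp F'$ and the support preserves factorials, whence $(FF')!=(\supp(FF'))!=(\supp F\vee\supp F')!$; both facts are recorded in Section~\ref{ss:support}. The only genuine bookkeeping is keeping the correspondences concatenation/union and Tits-product/join straight; once the factorial–join identity $(FF')!=(\supp F\vee\supp F')!$ is in hand the square closes immediately, so I expect this identity to be the crux of the last part.
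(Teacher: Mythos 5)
Your proof is correct, and all three steps check out against the combinatorial facts you cite (support respects concatenation/union and restriction; bimonoid morphisms between Hopf monoids commute with antipodes; unitriangularity of $\{\BQ\}$ over $\{\BH\}$; $\supp(FF')=\supp F\vee\supp F'$ and $(\supp G)!=G!$). The paper itself gives no proof of Proposition~\ref{p:Sig-Pi}; it only remarks afterwards that $\pi$ is an instance of abelianization, $\pi=\pi_{\wE_+}$ in the notation of Section~\ref{ss:abel}, which suggests getting the first assertion abstractly: $\tSig\cong\Tc(\wE_+)$, $\tPi\cong\Sc(\wE_+)$, and $\pi_{\wE_+}$ is a morphism of Hopf monoids by Proposition~\ref{p:dchmunivp}. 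Your route is more self-contained: you get the Hopf-morphism property by linearizing the set-theoretic support morphism $\rSig\to\rPi$ (Section~\ref{ss:lin}), at the price of checking the set-theoretic axioms by hand, which here are one-line identities. Note that the abelianization route is not actually shorter for the remaining claims: the identification of $\pi$ with $\pi_{\wE_+}$ uses the $\BH$-basis isomorphisms, which are isomorphisms of monoids only, so transferring the Hopf structure and proving $\pi(\BQ_F)=\BQ_{\supp F}$ still requires a basis-reconciliation computation essentially equivalent to your primitivity-plus-multiplicativity argument. Your reduction of the commuting square to the single identity $(FF')!=(\supp F\vee\supp F')!$ is exactly the crux, and it is justified by the two support facts recorded in Section~\ref{ss:support}.
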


The maps $\isoflat$ are as in~\eqref{e:pisdual} and~\eqref{e:Sigsdual} (with $q=1$).
The map $\pi$ is an instance of abelianization: 
in the notation of Section~\ref{ss:abel}, $\pi=\pi_{\wE_+}$.

\subsection{The bimonoid of set decompositions}\label{ss:dec-bimonoid}

Some of the considerations of Section~\ref{ss:faces} can be extended to all 
decompositions of finite sets, as opposed to only those into nonempty subsets (compositions).

Recall that $\rSigh[I]$ denotes the set of decompositions of a finite set $I$.
The set-theoretic bimonoid structure on $\rSig$ can be extended to $\rSigh$ by means
of the same formulas as in~\eqref{e:faces-def-set}: the product is given by concatenation
and the coproduct by restriction of decompositions.

The set $\rSigh[\emptyset]$ consists of the decompositions $\emptyset^p$, where
$p$ is a nonnegative integer.
As with any set-theoretic bimonoid, this set is an
ordinary monoid under $\mu_{\emptyset,\emptyset}$
and the map $\Delta_{\emptyset,\emptyset}$ must be the diagonal.
The latter statement is witnessed by the fact that 
$\emptyset^p|_\emptyset = \emptyset^p$ for all $p$. 
The product $\mu_{\emptyset,\emptyset}$ is concatenation, so
as discussed in~\eqref{e:empty-monoid}, $\rSigh[\emptyset]$ is isomorphic to
the monoid of nonnegative integers under addition.
Since this is not a group, the set-theoretic bimonoid $\rSigh$ is not a Hopf monoid.

Let $\tSigh:=\Kb\rSigh$. Then $\tSigh$ is a bimonoid but not a Hopf monoid.
The product and coproduct are as for $\tSig$:
\[\mu_{S,T}(\BH_{F_1}\otimes\BH_{F_2}) = \BH_{F_1\cdot F_2}
\qqand
\Delta_{S,T}(\BH_{F}) = \BH_{F|_S}\otimes\BH_{F|_T},
\]
where $F_1$, $F_2$ and $F$ are decompositions.

The map $\upsilon: \tSigh \to \tSig$ defined by
\begin{equation}\label{e:dec-comp}
\upsilon(\BH_F) := \BH_{\pos{F}}
\end{equation}
is a surjective morphism of bimonoids. (It is set-theoretic.)

\subsection{The higher (co)product in $\tSig$ and $\tSigh$}\label{ss:it-sig}

The following result describes the higher product and coproduct of 
the Hopf monoid $\tSig$ of compositions, namely
\[
\mu_F: \tSig(F) \to \tSig[I]
\qand
\Delta_F: \tSig[I] \to \tSig(F).
\]
Here $F$ is a composition of $I$ and the notation is as in~\eqref{e:iterated-mu} and~\eqref{e:iterated-delta};
see also Section~\ref{ss:gen-asso}. 
It suffices to consider compositions because $\tSig$ is connected (see Section~\ref{ss:conn-higher}).

Let $F=(I_1,\ldots,I_k)\vDash I$.
Given a composition $G$ refining $F$, let $G_i:=G|_{I_i}$. 
Define an element
\[
\BH_{G/F} := \BH_{G_1}\otimes\cdots\otimes\BH_{G_k} \in \tSig[I_1]\otimes\cdots\otimes\tSig[I_k] = \tSig(F).
\]
In view of~\eqref{e:res-conc},
the elements $\BH_{G/F}$ form a linear basis of $\tSig(F)$
as $G$ varies over the refinements of $F$.

\begin{proposition}\label{p:it-sig}
For any compositions $F\leq G$ of $I$,
\begin{equation}\label{e:prod-it-sig}
\mu_F(\BH_{G/F}) = \BH_G.
\end{equation}
For any pair of compositions $F$ and $G$ of $I$,
\begin{equation}\label{e:coprod-it-sig}
\Delta_F(\BH_G) = \BH_{FG/F}.
\end{equation}
\end{proposition}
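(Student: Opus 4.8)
The plan is to reduce both identities to the defining formulas~\eqref{e:faces-def} on the $\BH$-basis, combined with higher associativity (Proposition~\ref{p:gen-asso}) and its dual. For the product~\eqref{e:prod-it-sig}, I would first note that since $\mu_{S,T}(\BH_{F_1}\otimes\BH_{F_2})=\BH_{F_1\cdot F_2}$ is concatenation, iterating the product maps carries $\BH_{G_1}\otimes\cdots\otimes\BH_{G_k}$ to $\BH_{G_1\cdots G_k}$, the $\BH$-basis element of the concatenated composition. Higher associativity ensures that $\mu_F$ is independent of the bracketing, so a short induction on $k$ in any convenient order suffices. When $F\leq G$ with $G_i=G|_{I_i}$, the order-preserving bijection~\eqref{e:res-conc} states exactly that $G$ is reconstructed from its pieces as $G_1\cdots G_k$; hence $\mu_F(\BH_{G/F})=\BH_{G_1\cdots G_k}=\BH_G$.

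For the coproduct~\eqref{e:coprod-it-sig}, the dual computation gives $\Delta_F(\BH_G)=\BH_{G|_{I_1}}\otimes\cdots\otimes\BH_{G|_{I_k}}$: coassociativity lets me iterate $\Delta_{S,T}(\BH_G)=\BH_{G|_S}\otimes\BH_{G|_T}$, while the nesting property of restriction, $(G|_{I_1\sqcup\cdots\sqcup I_j})|_{I_i}=G|_{I_i}$ for $i\leq j$, collapses the repeated restrictions to the single restrictions $G|_{I_i}$. It then remains to match this tensor with $\BH_{FG/F}$, which by definition is $\BH_{(FG)|_{I_1}}\otimes\cdots\otimes\BH_{(FG)|_{I_k}}$. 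Thus the whole statement comes down to the combinatorial identity $(FG)|_{I_i}=G|_{I_i}$ for each block $I_i$ of $F$.

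This identity is the step I expect to require the most care, though it is elementary once the Tits product is unwound. Writing $G=(T_1,\ldots,T_q)$, the product $FG$ lists the nonempty intersections $A_{ij}=I_i\cap T_j$ in lexicographic order of $(i,j)$, as in~\eqref{e:tits}. Because the blocks $I_1,\ldots,I_k$ of $F$ are pairwise disjoint, the only blocks of $FG$ meeting $I_i$ are the $A_{ij}$ with $j$ ranging over $1,\ldots,q$; restricting $FG$ to $I_i$ therefore keeps precisely the sequence $(I_i\cap T_1,\ldots,I_i\cap T_q)$ with empty terms deleted and in their original order, which is exactly $G|_{I_i}$. Substituting $(FG)|_{I_i}=G|_{I_i}$ then yields $\Delta_F(\BH_G)=\BH_{FG/F}$ and completes the proof.
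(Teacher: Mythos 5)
Your proof is correct and matches the argument the paper leaves implicit: the proposition is stated there without proof, with only the remark that~\eqref{e:faces-def} is the binary special case $F=(S,T)$, and the intended argument is precisely your iteration of those binary formulas using (co)associativity and the transitivity of restriction. The one substantive step — the identity $(FG)|_{I_i}=G|_{I_i}$ for each block $I_i$ of $F$ — you verify correctly from the disjointness of the blocks of $F$ and the lexicographic ordering in~\eqref{e:tits}.
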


Formulas~\eqref{e:faces-def} are the special case of~\eqref{e:prod-it-sig} and~\eqref{e:coprod-it-sig} for which $F=(S,T)$. 
Note that in this case $FG=G|_S\cdot G|_T$.

\smallskip

Now consider the bimonoid $\tSigh$ of decompositions. 
The higher product and coproduct of $\tSigh$ are given by essentially 
the same formulas as~\eqref{e:prod-it-sig} and~\eqref{e:coprod-it-sig},
but adjustments are necessary. We turn to them.
 
Let $F=(I_1,\ldots,I_k)$ be a decomposition of $I$. 
Let $G$ be a decomposition of $I$ such that $F\leq G$ and 
let $\gamma=(G_1,\dots,G_k)$ be a splitting of $(F,G)$.
Define an element
\[
\BH_{G/F}^{\gamma} := \BH_{G_1}\otimes\cdots\otimes\BH_{G_k} \in \tSigh[I_1]\otimes\cdots\otimes\tSigh[I_k] = \tSigh(F).
\]
In view of~\eqref{e:conc-dec},
the elements $\BH_{G/F}^{\gamma}$ form a linear basis of $\tSigh(F)$
as $G$ varies over decompositions satisfying $F\leq G$ and 
$\gamma$ over all splittings of $(F,G)$.

Now let $F$ and $G$ be two arbitrary decompositions of $I$.
Recall from~\eqref{e:row-splitting} that
there is a canonical choice of splitting for $(F,FG)$, denoted `$\can$'.
Hence the element $\BH_{FG/F}^{\can}\in\tSigh(F)$ is defined.

\begin{proposition}\label{p:it-sigh}
Let $F$ and $G$ be decompositions of $I$ with $F\leq G$.
Then, for any splitting $\gamma$ of $(F,G)$,
\begin{equation}\label{e:prod-it-sigh}
\mu_F(\BH_{G/F}^{\gamma}) = \BH_G.
\end{equation}
For any pair of decompositions $F$ and $G$ of $I$,
\begin{equation}\label{e:coprod-it-sigh}
\Delta_F(\BH_G) = \BH_{FG/F}^{\can}.
\end{equation}
\end{proposition}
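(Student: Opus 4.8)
The plan is to deduce both identities by unwinding the higher product and coproduct of $\tSigh$ into iterates of the binary operations, which on $\tSigh$ are concatenation and restriction of decompositions (Section~\ref{ss:dec-bimonoid}):
\[
\mu_{S,T}(\BH_{F_1}\otimes\BH_{F_2}) = \BH_{F_1\cdot F_2}
\qquad
\Delta_{S,T}(\BH_{F}) = \BH_{F|_S}\otimes\BH_{F|_T}.
\]
Everything then reduces to two elementary combinatorial facts about decompositions: that concatenation is associative, and that restriction is transitive in the sense that $(G|_U)|_V = G|_V$ whenever $V\subseteq U$. Both are immediate from the definitions, keeping in mind that for decompositions the empty intersections are retained.

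For \eqref{e:prod-it-sigh} I would argue by induction on the number $k$ of blocks of $F=(I_1,\dots,I_k)$, exactly as in the proof of Proposition~\ref{p:it-sig} (the product formulas for $\tSig$ and $\tSigh$ are literally the same). The base cases $k=0$, where $\mu_{\emptyset^0}=\iota_\emptyset$ sends the unit to $\BH_{\emptyset^0}$, and $k=1$, where $\mu_{(I)}=\id$, are immediate. For the inductive step, write $\mu_F = \mu_{I_1,\,I_2\sqcup\cdots\sqcup I_k}\circ(\id\otimes\mu_{(I_2,\dots,I_k)})$; if $\gamma=(G_1,\dots,G_k)$ splits $(F,G)$ then $(G_2,\dots,G_k)$ splits $\bigl((I_2,\dots,I_k),\,G_2\cdots G_k\bigr)$, and applying the inductive hypothesis followed by the binary product yields $\BH_{G_1\cdot(G_2\cdots G_k)}=\BH_{G_1\cdots G_k}$. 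Since $\gamma$ is a splitting, $G=G_1\cdots G_k$ by definition, so this equals $\BH_G$. The point worth emphasizing is that the answer is $\BH_G$ for \emph{every} splitting $\gamma$: associativity of concatenation makes the iterated product insensitive both to the bracketing used to form $\mu_F$ and to the choice of $\gamma$. Equivalently, one may invoke higher associativity (Proposition~\ref{p:gen-asso}) to identify $\mu_F$ with total concatenation and read off the result.

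For \eqref{e:coprod-it-sigh} I would first iterate the restriction coproduct. The same induction, now using higher coassociativity (the dual of Proposition~\ref{p:gen-asso}), shows
\[
\Delta_F(\BH_G) = \BH_{G|_{I_1}}\otimes\cdots\otimes\BH_{G|_{I_k}},
\]
where the transitivity $(G|_U)|_V=G|_V$ is precisely what collapses the nested restrictions produced at each stage. It then remains to identify this tensor with $\BH_{FG/F}^{\can}$. Writing $G=(T_1,\dots,T_q)$ and $A_{ij}:=I_i\cap T_j$ as in~\eqref{e:pqsets}, the restriction of $G$ to $I_i$ is $G|_{I_i}=(A_{i1},\dots,A_{iq})$ with empty intersections kept; but this is exactly the $i$-th row $H_i$ of the Tits-product matrix, i.e.\ the $i$-th component of the canonical splitting $\can$ of $(F,FG)$ defined in~\eqref{e:row-splitting}. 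Hence $\Delta_F(\BH_G)=\BH_{H_1}\otimes\cdots\otimes\BH_{H_k}=\BH_{FG/F}^{\can}$, as claimed.

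The computations are routine; the only genuine content is the final identification in the coproduct case, namely that blockwise restriction of $G$ along $F$ reproduces exactly the rows of the matrix $(A_{ij})$ defining $FG$. This is where the Tits product enters, and it is the one step requiring a match of two combinatorial descriptions rather than a formal manipulation. The extra care relative to the composition case of Proposition~\ref{p:it-sig} lies entirely in tracking empty blocks: splittings are no longer unique, which is why \eqref{e:prod-it-sigh} must be asserted for all $\gamma$, and the empty intersections must be retained so that each row $H_i$ is a genuine decomposition of $I_i$.
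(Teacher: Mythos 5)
Your proof is correct and takes the route the paper intends: Proposition~\ref{p:it-sigh} is stated without proof precisely because it follows from the unwinding you carry out — iterated concatenation for the product (making the answer $\BH_{G_1\cdots G_k}=\BH_G$ independent of the splitting $\gamma$), iterated restriction together with $(G|_U)|_V=G|_V$ for the coproduct, and the identification of $G|_{I_i}=(I_i\cap T_1,\ldots,I_i\cap T_q)$ with the $i$-th row $H_i$ of the matrix~\eqref{e:pqsets} defining $FG$, which is indeed the only substantive step. Your attention to retained empty blocks and non-unique splittings is exactly the adjustment the paper flags when passing from $\tSig$ to $\tSigh$.
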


\section{Series and functional calculus}\label{s:series}

Every species gives rise to a vector space of \emph{series},
which is an algebra when the species is a monoid.
For the exponential species, this identifies with the
algebra of formal power series. When the species is a connected
bimonoid, one can define group-like series and primitive series. The former
constitutes a group and the latter a Lie algebra, and they are in correspondence 
by means of the exponential and the logarithm.

We work over a field $\Kb$ of characteristic $0$.

\subsection{Series of a species}\label{ss:series}

Let $\tq$ be a species. 
Define the space of \emph{series} of $\tq$ as
\[
\Ser(\tq) := \lim \tq,
\]
the limit of the functor $\tq:\Fset\to\Veck$. Explicitly,
a series $s$ of $\tq$ is a collection of elements 
\[
s_I\in \tq[I],
\]
one for each finite set $I$, such that
\begin{equation}\label{e:series-nat}
\tq[\sigma](s_I) = s_J
\end{equation}
for each bijection $\sigma:I\to J$. 
The vector space structure is given by
\[
(s+t)_I:= s_I+t_I \qqand (c\cdot s)_I:= c\, s_I
\]
for $s,t\in\Ser(\tq)$ and $c\in\Kb$. 

Condition~\eqref{e:series-nat} implies that 
each $s_{[n]}$ is an $\Sr_n$-invariant element of $\tq[n]$, 
and in fact there is a linear isomorphism
\begin{equation}\label{e:ser-completion}
\Ser(\tq) \cong \prod_{n\geq 0} \tq[n]^{\Sr_n},
\qquad
s \mapsto (s_{[n]})_{n\geq 0}.
\end{equation}

Let $\wE$ be the exponential species~\eqref{e:expsp}.
We have an isomorphism of vector spaces
\begin{equation}\label{e:series-mor}
\Ser(\tq) \cong \Hom_{\Ssk}(\wE,\tq).
\end{equation}
Explicitly, a series $s$ of $\tq$ corresponds to the morphism of species
\[
\wE \to \tq, \quad \BH_{I} \mapsto s_I.
\]

Let $f:\tp\to\tq$ be a morphism of species and $s$ a series of $\tp$.
Define $f(s)$ by
\[
f(s)_I := f_I(s_I).
\]
Since $f$ commutes with bijections~\eqref{e:mor-sp}, $f(s)$ is a series of $\tq$.
In this manner, $\Ser$ defines a functor from species to vector spaces.
Moreover, for any species $\tp$ and $\tq$, there is a linear map
\begin{equation}
\Ser(\tp)\otimes\Ser(\tq) \to \Ser(\tp\bdot\tq) 
\end{equation}
which sends $s\otimes t$ to the series 
whose $I$-component is 
\[
\sum_{I=S\sqcup T} s_S\otimes t_T.
\]
This turns $\Ser$ into a \emph{braided lax monoidal functor}.
Such a functor preserves monoids, commutative monoids, and Lie monoids
(see, for instance,~\cite[Propositions~3.37 and~4.13]{AguMah:2010}).
These facts are elaborated below.

Let $\ta$ be a monoid. The \emph{Cauchy product}
 $s\conv t$ of two series $s$ and $t$ of $\ta$ is defined by
\begin{equation}\label{e:convseries}
(s\conv t)_I := \sum_{I=S\sqcup T} \mu_{S,T}(s_S\otimes t_T).
\end{equation}
This turns the space of series $\Ser(\ta)$ into an algebra.
The unit is the series $u$ defined by
\begin{equation}\label{e:unitseries}
u_I = \begin{cases}
\iota_\emptyset(1) & \text{ if $I=\emptyset$,}\\
0 & \text{ otherwise.}
\end{cases}
\end{equation}
In this case, $\Hom_{\Ssk}(\wE,\ta)$ is an algebra under the
convolution product of Section~\ref{ss:convolution}, and~\eqref{e:series-mor} is 
an isomorphism of algebras 
\[
\Ser(\ta) \cong \Hom_{\Ssk}(\wE,\ta).
\]
If the monoid $\ta$ is commutative, then 
the algebra $\Ser(\ta)$ is commutative.

If $\tg$ is a Lie monoid, then $\Ser(\tg)$ is a Lie algebra.
The Lie bracket $[s, t]$
of two series $s$ and $t$ of $\tg$ is defined by
\begin{equation}\label{e:bracseries}
[s, t]_I := \sum_{I=S\sqcup T} [s_S, t_T]_{S,T}.
\end{equation}

In view of~\eqref{e:series-nat}, 
a series $s$ of $\wE$ is of the form
\[
s_I = a_{\abs{I}} \BH_I,
\]
where $a_n$ is an arbitrary scalar sequence.
The species $\wE$ is a Hopf monoid (Section~\ref{ss:exp})
and hence $\Ser(\wE)$ is an algebra. If we identify $s$ 
with the formal power series
\[
\sum_{n\geq 0} a_n \frac{\varx^n}{n!},
\]
then the Cauchy product~\eqref{e:convseries} 
corresponds to the usual product of formal power series,
and we obtain an isomorphism of algebras
\begin{equation}\label{e:ser-exp}
\Ser(\wE) \cong \Kb\llb \varx \rrb.
\end{equation}
In this sense, series of $\wE$ are \emph{exponential generating functions}.

Let $s_1$ and $s_2$ be series of $\tq_1$ and $\tq_2$, respectively.
Their \emph{Hadamard product} $s_1\times s_2$
is defined by
\[
(s_1\times s_2)_I := (s_1)_I\otimes (s_2)_I.
\]
It is a series of $\tq_1\times\tq_2$.
This gives rise to a linear map
\begin{equation}\label{e:hadseries}
\Ser(\tq_1)\otimes\Ser(\tq_2) \to \Ser(\tq_1\times\tq_2), \qquad
s_1\otimes s_2 \mapsto s_1\times s_2.
\end{equation}
In general, this map is not injective
and not a morphism of algebras under the Cauchy product
(when the $\tq_i$ are monoids). This occurs already for $\tq_1=\tq_2=\wE$;
indeed, the Hadamard product~\eqref{e:hadseries} reduces in this case 
to the familiar Hadamard product (of exponential generating functions).

\subsection{Exponential series}\label{ss:expser}

Let $\ta$ be a monoid. A series $e$ of $\ta$ is \emph{exponential} if
\begin{equation}\label{e:expser1}
\mu_{S,T}(e_S\otimes e_T) = e_I
\end{equation}
for each $I=S\sqcup T$, and
\begin{equation}\label{e:expser2}
 \iota_\emptyset(1) = e_\emptyset.
\end{equation}
 
The unit series $u$~\eqref{e:unitseries} is exponential.

Let $\Exs(\ta)$ be the set of exponential series of $\ta$.
Under~\eqref{e:series-mor}, exponential series correspond to morphisms of monoids $\wE\to\ta$. 
Since $\wE$ is generated by $\wX$, 
such a series is completely determined by the element 
\[
e_{[1]} \in \ta[1].
\]
Conversely, for an element $e_1\in \ta[1]$ to give rise to an exponential series, 
it is necessary and sufficient that
\begin{equation}\label{e:expser3}
\mu_{\{1\},\{2\}}(e_1\otimes e_2) = \mu_{\{2\},\{1\}}(e_2\otimes e_1),
\end{equation}
where $e_2:=\ta[\sigma](e_1)$ and $\sigma$ is the unique bijection $\{1\}\to\{2\}$.

If the monoid $\ta$ is free, then the only element that satisfies~\eqref{e:expser3} is $e_1=0$,
and the corresponding exponential series is $u$.
Thus,
\[
\Exs(\ta) = \{u\}
\]
in this case.

If the monoid $\ta$ is commutative, then any element satisfies~\eqref{e:expser3}.
Moreover, exponential series are closed under the Cauchy product, 
and this corresponds to addition in $\ta[1]$. Thus, in this case,
\[
(\Exs(\ta),\conv) \cong (\ta[1],+).
\]
In particular, $\Exs(\ta)$ is a group (a subgroup of the group of invertible elements
of the algebra $\Ser(\ta)$). The inverse of $e$ is given by
\[
(e^{-1})_I := (-1)^{\abs{I}} e_I.
\]
These statements can be deduced from more general properties 
of convolution products (see the end of Section~\ref{ss:bim-hopf}).

For the exponential species $\wE$, we have 
\begin{equation}\label{e:exp-exp}
(\Exs(\wE),\conv) \cong (\Kb,+).
\end{equation}
If $e(c)\in\Exs(\wE)$ corresponds to $c\in\Kb$, then
\begin{equation}\label{e:exp-c}
e(c)_I = c^{\abs{I}} \BH_I.
\end{equation}
Under the identification~\eqref{e:ser-exp}, $e(c)$
corresponds to the formal power series
\[
\exp(c\varx) = \sum_{n\geq 0} c^n \frac{\varx^n}{n!}.
\]

Suppose $\ta_1$ and $\ta_2$ are monoids.
If $e_1$ and $e_2$ are exponential series of $\ta_1$ and $\ta_2$, respectively, 
then $e_1\times e_2$ is an exponential series of $\ta_1\times\ta_2$.
In general, the map
\begin{equation}\label{e:hadexpser}
\Exs(\ta_1)\times\Exs(\ta_2) \to \Exs(\ta_1\times\ta_2), \qquad
(e_1,e_2) \mapsto e_1\times e_2
\end{equation}
is not injective and not a morphism of groups
(when the $\ta_i$ are commutative monoids).
This occurs already for $\ta_1=\ta_2=\wE$;
in this case~\eqref{e:hadexpser} identifies under~\eqref{e:exp-exp}
with the multiplication of $\Kb$.
The Hadamard product
\[
\Exs(\ta_1)\times\Ser(\ta_2) \to \Ser(\ta_1\times\ta_2)
\]
distributes over the Cauchy product:
\[
e\times(s\conv t) = (e\times s)\conv(e\times t).
\]

Recall that $\wE\times\tq\cong\tq$ for any species $\tq$.
If $s\in\Ser(\tq)$ and $c\in\Kb$, then the Hadamard product
\[
e(c)\times s\in\Ser(\wE\times\tq)
\] 
corresponds to the series of $\tq$ given by
\[
(e(c)\times s)_I := c^{\abs{I}}\, s_I.
\]
In this manner, series of $\tq$ arise in one-parameter families.

Let $\rQ$ be a finite set-theoretic comonoid and consider the monoid $\Kb^\rQ$ as 
in Section~\ref{ss:lin}.
This monoid contains a distinguished exponential series $e$ defined by
\begin{equation}\label{e:dist-expser}
e_I := \sum_{x\in\rQ[I]} \BM_x.
\end{equation}

\subsection{Group-like series}\label{ss:glike}

Let $\tc$ be a comonoid. A series $g$ of $\tc$ is \emph{group-like} if
\begin{equation}\label{e:glike1}
\Delta_{S,T}(g_I) = g_S\otimes g_T
\end{equation}
for each $I=S\sqcup T$, and
\begin{equation}\label{e:glike2}
\epsilon_\emptyset(g_\emptyset) = 1.
\end{equation}
Note that it follows from~\eqref{e:glike1} plus counitality that either~\eqref{e:glike2} holds,
or $g_I=0$ for every $I$.
 
Let $\Gls(\tc)$ be the set of group-like series of $\tc$.
Under~\eqref{e:series-mor}, group-like series correspond to morphisms of comonoids $\wE\to\tc$.

Let $\thh$ be a bimonoid. 
The unit series $u$~\eqref{e:unitseries} is group-like.
It follows from~\eqref{e:comp} that 
$\Gls(\thh)$ is closed under the Cauchy product~\eqref{e:convseries}. 
Thus, $\Gls(\thh)$ is an ordinary monoid (a submonoid of the multiplicative monoid
of the algebra $\Ser(\thh)$).
If $\thh$ is a Hopf monoid, then $\Gls(\thh)$ is a group. 
The inverse of $g$ is obtained by composing with the antipode of $\thh$:
\[
(g^{-1})_I = \apode_I(g_I).
\]
This follows from~\eqref{e:apode}. 
(These statements can also be deduced from more general properties of convolution products; 
see the end of Section~\ref{ss:bim-hopf}.)

Suppose the bimonoid $\thh$ is connected. It follows from~\eqref{e:inverser}
and~\eqref{e:hopf-split} that if a series of $\thh$ is exponential, then it is group-like. 
Thus, 
\begin{equation}\label{e:exp-glike}
\Exs(\thh)\subseteq\Gls(\thh).
\end{equation}
For the Hopf monoid $\wE$, the maps $\mu_{S,T}$ and $\Delta_{S,T}$ are inverse, and hence
a series of $\wE$ is exponential if and only if it is group-like. Therefore, under
the identification~\eqref{e:ser-exp}, group-like series of $\wE$
correspond to the formal power series of the form $\exp(c\varx)$,
where $c\in\Kb$ is an arbitrary scalar, and we have an isomorphism of groups
\begin{equation}\label{e:gl-exp}
(\Gls(\wE),\conv) \cong (\Kb,+).
\end{equation}

Suppose $\tc_1$ and $\tc_2$ are comonoids.
If $g_1$ and $g_2$ are group-like series of $\tc_1$ and $\tc_2$, respectively, 
then $g_1\times g_2$ is a group-like series of $\tc_1\times\tc_2$, and we obtain a map
\begin{equation}\label{e:had-gls}
\Gls(\tc_1)\times\Gls(\tc_2) \to \Gls(\tc_1\times\tc_2).
\end{equation}
In particular, if $g$ is a group-like series of $\tc$ and $c$ is a scalar,
then $e(c)\times g$ is another group-like series of $\tc$.
Group-like series thus arise in one-parameter families. 

Let $\rP$ be a finite set-theoretic monoid and consider the comonoid $\Kb^\rP$ as 
in Section~\ref{ss:lin}.
This comonoid contains a distinguished group-like series $g$ defined by
\begin{equation}\label{e:dist-glike-1}
g_I := \sum_{x\in\rP[I]} \BM_x.
\end{equation}

\subsection{Primitive series}\label{ss:primser}

Let $\tc$ be a comonoid and $g$ and $h$ two group-like series of $\tc$.
A series $x$ of $\tc$ is $(g,h)$-\emph{primitive} if
\begin{equation}\label{e:primser-del}
\Delta_{S,T}(x_I) = g_S\otimes x_T + x_S\otimes h_T
\end{equation}
for each $I=S\sqcup T$, and
\begin{equation}\label{e:primser-eps}
\epsilon_\emptyset(x_\emptyset) = 0.
\end{equation}

Let $\tc$ be a connected comonoid. In this case, $\tc$ possesses a distinguished
group-like series $d$ determined by 
\begin{equation}\label{e:dist-glike-2}
d_I := \begin{cases}
\epsilon_{\emptyset}^{-1}(1) & \text{ if $I=\emptyset$,}\\
0 & \text{ if $I\neq\emptyset$.}
\end{cases}
\end{equation}
A series $x$ of $\tc$ is $(d,d)$-primitive if and only if
\begin{equation}\label{e:primser1}
\Delta_{S,T}(x_I) = 0
\end{equation}
for each $I=S\sqcup T$ with $S,T\neq\emptyset$, and
\begin{equation}\label{e:primser2}
x_\emptyset = 0.
\end{equation}
In this case, we simply say that the series $x$ is primitive.

Let $\Prs(\tc)$ be the set of primitive series of $\tc$.
Under~\eqref{e:series-mor}, primitive series correspond to morphisms of species 
$\wE\to\Pc(\tc)$, where $\Pc(\tc)$ is the primitive part of $\tc$ (Section~\ref{ss:prim-ind}).
In other words,
\begin{equation}\label{e:primser-part}
\Prs(\tc) = \Ser\bigl(\Pc(\tc)\bigr).
\end{equation}

The primitive part of the exponential species is $\Pc(\wE)=\wX$; hence we have an
isomorphism of abelian Lie algebras
\[
\Prs(\wE) \cong \Kb.
\]

The Hadamard product of an arbitrary series with a primitive series is primitive.
We obtain a map
\[
\Ser(\tc_1)\times\Prs(\tc_2) \to \Prs(\tc_1\times\tc_2),
\]
where the $\tc_i$ are connected comonoids.

Let $\thh$ be a bimonoid. The series $u$ of $\thh$ is group-like, regardless
of whether $\thh$ is connected. A series $x$ of $\thh$ is $(u,u)$-primitive if and
only if~\eqref{e:primser1} holds, and in addition
\begin{equation}\label{e:primser3}
\begin{gathered}
\Delta_{\emptyset,\emptyset}(x_\emptyset) = 
x_\emptyset\otimes \iota_\emptyset(1) + \iota_\emptyset(1)\otimes x_\emptyset,\\
\Delta_{I,\emptyset}(x_I)=x_I\otimes\iota_\emptyset(1),\\
\Delta_{\emptyset,I}(x_I)=\iota_\emptyset(1)\otimes x_I,
\end{gathered}
\end{equation}
for $I$ nonempty. In particular,
$x_\emptyset$ must be a primitive element of the bialgebra $\thh[\emptyset]$.

If in addition $\thh$ is connected, then $u=d$, so
$(u,u)$-primitive series coincide with primitive series.
Note also that, in this case, the last two conditions in~\eqref{e:primser3}
follow from counitality (the duals of~\eqref{e:unit}),
and the first one is equivalent to~\eqref{e:primser2}.

\begin{proposition}\label{p:transport}
Let $\thh$ be a bimonoid. Let $f$, $g$ and $h$ be group-like series of $\thh$,
and $x$ a $(g,h)$-primitive series of $\thh$. 
Then the series $f\conv x$ and $x\conv f$ are $(f\conv g,f\conv h)$- and
$(g\conv f,h\conv f)$-primitive, respectively.
\end{proposition}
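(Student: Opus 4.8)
The plan is to prove the statement by a direct computation with the defining relations, treating $f\conv x$ in full detail; the case of $x\conv f$ then follows by an entirely symmetric argument. First I would record what must be checked: by the definition of a $(g',h')$-primitive series I must establish, for every decomposition $I=S\sqcup T$, the identity $\Delta_{S,T}\bigl((f\conv x)_I\bigr)=(f\conv g)_S\otimes(f\conv x)_T+(f\conv x)_S\otimes(f\conv h)_T$, together with the normalization $\epsilon_\emptyset\bigl((f\conv x)_\emptyset\bigr)=0$. Note this covers the possibly-nonconnected case uniformly, since the trivial decompositions ($S=\emptyset$ or $T=\emptyset$) are instances of the same computation.

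For the coproduct identity I would expand $(f\conv x)_I=\sum_{I=A\sqcup B}\mu_{A,B}(f_A\otimes x_B)$ via \eqref{e:convseries} and apply $\Delta_{S,T}$ term by term. The engine is the compatibility axiom \eqref{e:comp} (with $q=1$, so the braiding is the symmetry interchanging the middle factors): writing $A_1=A\cap S$, $A_2=A\cap T$, $B_1=B\cap S$, $B_2=B\cap T$, it rewrites $\Delta_{S,T}\mu_{A,B}$ as $\mu_{A_1,B_1}\otimes\mu_{A_2,B_2}$ precomposed with the middle swap and with $\Delta_{A_1,A_2}\otimes\Delta_{B_1,B_2}$. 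Into this I would substitute the group-like relation $\Delta_{A_1,A_2}(f_A)=f_{A_1}\otimes f_{A_2}$ from \eqref{e:glike1} and the primitive relation $\Delta_{B_1,B_2}(x_B)=g_{B_1}\otimes x_{B_2}+x_{B_1}\otimes h_{B_2}$ from \eqref{e:primser-del}. After the middle swap and the two products, each fixed $(A,B)$ contributes $\mu_{A_1,B_1}(f_{A_1}\otimes g_{B_1})\otimes\mu_{A_2,B_2}(f_{A_2}\otimes x_{B_2})$ plus $\mu_{A_1,B_1}(f_{A_1}\otimes x_{B_1})\otimes\mu_{A_2,B_2}(f_{A_2}\otimes h_{B_2})$.

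The decisive bookkeeping step is then the reindexing: the data of a decomposition $I=A\sqcup B$ together with its induced splittings $A=A_1\sqcup A_2$ and $B=B_1\sqcup B_2$ is precisely an independent pair of decompositions $S=A_1\sqcup B_1$ and $T=A_2\sqcup B_2$. Under this bijection the first contribution sums to $(f\conv g)_S\otimes(f\conv x)_T$ and the second to $(f\conv x)_S\otimes(f\conv h)_T$, which is exactly the claim. The empty-set condition is immediate: $(f\conv x)_\emptyset=\mu_{\emptyset,\emptyset}(f_\emptyset\otimes x_\emptyset)$, and applying $\epsilon_\emptyset$ and using the left diagram of \eqref{e:unitr} gives $\epsilon_\emptyset(f_\emptyset)\,\epsilon_\emptyset(x_\emptyset)=1\cdot 0=0$ by \eqref{e:glike2} and \eqref{e:primser-eps}. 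I expect the only genuine obstacle to be organizing this reindexing cleanly and tracking which tensor slot lands in $S$ versus $T$ after the symmetry is applied; once that dictionary is fixed, both terms factor exactly as needed. For $x\conv f$ the identical scheme applies with the two summands of \eqref{e:primser-del} sitting in the left factor, producing the $(g\conv f,h\conv f)$-primitivity of $x\conv f$.
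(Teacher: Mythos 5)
Your proposal is correct and follows essentially the same route as the paper's proof: expand the convolution, apply the compatibility axiom~\eqref{e:comp} to $\Delta_{S,T}\mu_{A,B}$, substitute the group-like relation~\eqref{e:glike1} for $f$ and the primitivity relation~\eqref{e:primser-del} for $x$, and reindex the sum as a pair of independent decompositions of $S$ and $T$; the empty-set condition via~\eqref{e:unitr} is likewise how the paper disposes of~\eqref{e:primser-eps}. The only difference is cosmetic (notation, and your spelling out of the $\epsilon_\emptyset$ check and the symmetric case $x\conv f$, both of which the paper leaves to the reader).
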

\begin{proof}
Let $I=S'\sqcup T'$. 
We calculate using~\eqref{e:comp},~\eqref{e:glike1} and~\eqref{e:primser-del}.
\begin{multline*}
\Delta_{S',T'}\bigl((f\conv x)_I\bigr) = 
\sum_{I=S\sqcup T} \Delta_{S',T'}\mu_{S,T}(f_S\otimes x_T)\\
 = \sum_{\substack{S'=A\sqcup C\\T'=B\sqcup D}}
(\mu_{A,C}\otimes\mu_{B,D})(\id\otimes\beta\otimes\id)
(\Delta_{A,B}\otimes\Delta_{C,D}) (f_S\otimes x_T)\\
= \sum_{\substack{S'=A\sqcup C\\T'=B\sqcup D}}
\mu_{A,C}(f_A\otimes g_C)\otimes\mu_{B,D}(f_B\otimes x_D)
+ \mu_{A,C}(f_A\otimes x_C)\otimes\mu_{B,D}(f_B\otimes h_D)\\
= (f\conv g)_{S'}\otimes (f\conv x)_{T'} + (f\conv x)_{S'}\otimes (f\conv h)_{T'}.
\end{multline*}
This proves~\eqref{e:primser-del} for $f\conv x$. 
Condition~\eqref{e:primser-eps} follows similarly, using~\eqref{e:unitr}.
\end{proof}

Let $\thh$ be a connected bimonoid. Since $\Pc(\thh)$ is a Lie monoid
(Proposition~\ref{p:prim-lie}), we have that $\Prs(\thh)$ is a Lie algebra
under the bracket~\eqref{e:bracseries}.
It is a Lie subalgebra of $\Ser(\thh)$, where the latter is equipped with the
commutator bracket corresponding to the Cauchy product.
Explicitly,
\begin{equation}\label{e:commbracseries}
[x, y]_I := \sum_{I=S\sqcup T} \mu_{S,T}(x_S\otimes y_T) - \mu_{T,S}(y_T\otimes x_S)
\end{equation}
for $x,y\in\Prs(\thh)$.

\subsection{Complete Hopf algebras}\label{ss:comphopf}

Let $\thh$ be a Hopf monoid. In view of~\eqref{e:ser-completion},
the space of series of $\thh$ identifies with the completion of the graded space
\[
\cKcb(\thh):= \bigoplus_{n\geq 0} \thh[n]^{\Sr_n}
\]
(with respect to the filtration by degree). 
The summand is the invariant subspace of $\thh[n]$ under the action of $\Sr_n$.
In~\cite[Chapter~15]{AguMah:2010},
it is shown that $\cKcb(\thh)$ carries a structure of graded Hopf algebra.
One may thus turn the space $\Ser(\thh)$ into a \emph{complete Hopf algebra}
in the sense of Quillen~\cite[Appendix~A.2]{Qui:1969}. 
Then $\Gls(\thh)$ and $\Prs(\thh)$ identify with the group of group-like
elements and the Lie algebra of primitive elements
of this complete Hopf algebra, respectively.

\subsection{Series of the linear order species}\label{ss:serL}

Consider the Hopf monoid $\wL$ of linear orders from Section~\ref{ss:linear}.

In view of~\eqref{e:series-nat}, 
a series $s$ of $\wL$ is of the form
\[
s_I = a_{\abs{I}} \sum_{\ell\in\rL[I]} \BH_\ell,
\]
where $a_n$ is an arbitrary scalar sequence.
If we identify $s$ 
with the formal power series
\[
\sum_{n\geq 0} a_n\,\varx^n,
\]
then the Cauchy product~\eqref{e:convseries} 
corresponds to the usual product of formal power series,
and we obtain an isomorphism of algebras
\begin{equation}\label{e:ser-L}
\Ser(\wL) \cong \Kb\llb \varx \rrb.
\end{equation}
In this sense, series of $\wL$ are \emph{ordinary generating functions}.
Contrast with~\eqref{e:ser-exp}.

Since $\wL$ is free, the only exponential series is the unit series $u$~\eqref{e:unitseries}.
The corresponding formal power series is the constant $1$.

For a series $s$ as above we have
\[
\Delta_{S,T}(s_I) = a_{\abs{I}} \binom{\abs{I}}{\abs{S}} \sum_{\substack{\ell_1\in\rL[S]\\\ell_2\in\rL[T]}} \BH_{\ell_1}\otimes\BH_{\ell_2}.
\]
For $s$ to be group-like, 
we must have 
\[
a_n a_m = \binom{n+m}{n} a_{n+m}
\] 
for all $n,m\geq 0$, and $a_0=1$.
Therefore, each scalar $c\in\Kb$ gives rise to a group-like series $g(c)\in\Gls(\wL)$ defined by
\[
g(c)_I := \frac{c^{\abs{I}}}{\abs{I}!} \sum_{\ell\in\rL[I]} \BH_{\ell},
\]
and every group-like series is of this form.
This gives rise to an isomorphism of groups
\[
(\Gls(\wL),\conv) \cong (\Kb,+).
\]

For $s$ to be primitive, we must have $a_n=0$ for every $n>1$, and $a_0=0$.
Therefore, each scalar $c\in\Kb$ gives rise to a primitive series $x(c)\in\Prs(\wL)$ defined by
\[
x(c)_I := \begin{cases}
c\, \BH_{i} & \text{ if $I=\{i\}$,}\\
0 & \text{ if not,}
\end{cases}
\]
and every primitive series is of this form.
This gives rise to an isomorphism of (abelian) Lie algebras
\[
\Prs(\wL) \cong \Kb.
\]
Note that, by~\eqref{e:primser-part},
\[
\Prs(\wL) = \Ser(\tLie),
\]
since the species $\tLie$ is the primitive part of $\wL$. 
We deduce from the above and~\eqref{e:ser-completion}
the well-known result that
\[
\tLie[n]^{\Sr_n} = 
\begin{cases}
\Kb & \text{ if $n=1$,}\\
0 & \text{ if not.}
\end{cases}
\]

The formal power series corresponding under~\eqref{e:ser-L} to $g(c)$ and $x(c)$ are,
respectively,
\[
\exp(c\varx) \qand c\varx.
\]
In Theorem~\ref{t:exp-log} below we show that, 
for any connected bimonoid $\thh$, 
group-like and primitive series of $\thh$ are in correspondence by means of
suitable extensions of the logarithm and the exponential.

\subsection{Formal functional calculus}\label{ss:fun-calc}

Let $\ta$ be a monoid. Recall from~\eqref{e:iterated-mu} 
and~\eqref{e:sp-comp} that given
a composition $F=(I_1,\ldots,I_k)$ of a finite set $I$, the space $\ta(F)$
and the map
\[
\mu_F: \ta(F) \to \ta[I]
\]
are defined (see also Section~\ref{ss:gen-asso}). 
In this situation, given a series $s$ of $\ta$, 
define an element $s_F\in\ta(F)$ by
\[
s_F := s_{I_1}\otimes\cdots\otimes s_{I_k}.
\]

Let 
\[
a(\varx) = \sum_{n\geq 0} a_n \varx^n
\]
be a formal power series. 
Given a series $s$ of $\ta$ such that 
\begin{equation}\label{e:ser-vanish}
s_\emptyset=0,
\end{equation} 
define another series $a(s)$ of $\ta$ by
\begin{equation}\label{e:calc-s}
a(s)_I := \sum_{F\vDash I} a_{\len(F)} \mu_F(s_F)
\end{equation}
for every finite set $I$. In particular, $a(s)_\emptyset = a_0\,\iota_\emptyset(1)$.
It follows that formal power series with $a_0=0$
operate in this manner on the space of series of $\ta$ satisfying~\eqref{e:ser-vanish}.

\begin{proposition}
If $a(\varx)=\varx^k$ for a nonnegative integer $k$, then $a(s)=s^{\conv k}$. 
In particular,
if $a(\varx)=1$, then $a(s)=u$, the unit series~\eqref{e:unitseries},
and if $a(\varx)=\varx$, then $a(s)=s$. 
\end{proposition}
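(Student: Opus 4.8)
The plan is to prove the sharper statement that for every integer $k\geq 0$,
\[
(s^{\conv k})_I = \sum_{\substack{F\vDash I\\ \len(F)=k}} \mu_F(s_F)
\]
for all finite sets $I$, and then to observe that the right-hand side is exactly $a(s)_I$ when $a(\varx)=\varx^k$: in that case the coefficients in~\eqref{e:calc-s} are $a_n=\delta_{n,k}$, so all compositions of length $\neq k$ drop out. The two ``in particular'' assertions are then simply the cases $k=0$ (where $s^{\conv 0}=u$) and $k=1$.

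First I would establish the displayed formula by induction on $k$. For $k=0$, recall that $s^{\conv 0}$ is the unit $u$ of the algebra $\Ser(\ta)$; on the other side the only composition of $I$ with no blocks is the empty composition $\emptyset^0$ of $I=\emptyset$, for which $\mu_{\emptyset^0}=\iota_\emptyset$ is applied to the empty tensor $1\in\Kb$, reproducing $u$ as in~\eqref{e:unitseries}. The case $k=1$ is immediate, since $(I)$ is the unique one-block composition, $\mu_{(I)}=\id$, and $s_{(I)}=s_I$.

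For the inductive step I would write $s^{\conv k}=s\conv s^{\conv(k-1)}$ and expand the outer Cauchy product~\eqref{e:convseries}:
\[
(s^{\conv k})_I=\sum_{I=S\sqcup T}\mu_{S,T}\bigl(s_S\otimes(s^{\conv(k-1)})_T\bigr).
\]
Here the hypothesis $s_\emptyset=0$ annihilates every term with $S=\emptyset$, so $S$ ranges only over nonempty subsets. Substituting the induction hypothesis for $(s^{\conv(k-1)})_T$ and applying higher associativity (Proposition~\ref{p:gen-asso}) in the form $\mu_{S,T}(\id_S\otimes\mu_G)=\mu_{(S)\cdot G}$ for $G\vDash T$, each summand becomes $\mu_{(S)\cdot G}(s_{(S)\cdot G})$, where $(S)\cdot G$ is a $k$-block composition of $I$ whose first block is $S$. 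Every $k$-block composition $F$ of $I$ arises in this way exactly once — take $S$ to be its first block and $G$ the composition of $T=I\setminus S$ formed by the remaining blocks — so the double sum collapses to $\sum_{\len(F)=k}\mu_F(s_F)$, completing the induction.

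The argument is essentially bookkeeping, and the one place that warrants care is the passage from decompositions to compositions: the use of $s_\emptyset=0$ to discard an empty first block, together with the fact (already built into the induction hypothesis) that the inner sum involves only genuine compositions of $T$, so no empty block can reappear. The associativity identity invoked is precisely Proposition~\ref{p:gen-asso} for the refinement $(S,T)\leq (S)\cdot G$ with its evident splitting, so no input beyond the earlier sections is required.
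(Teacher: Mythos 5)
Your proof is correct and follows essentially the same route as the paper: both establish the identity $(s^{\conv k})_I = \sum_{F\vDash I,\,\len(F)=k}\mu_F(s_F)$ by iterating the Cauchy product~\eqref{e:convseries}, use $s_\emptyset=0$ to pass from decompositions to compositions, and then compare with~\eqref{e:calc-s}. Your explicit induction with higher associativity merely fleshes out what the paper compresses into ``it follows from~\eqref{e:convseries}.''
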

\begin{proof}
It follows from~\eqref{e:convseries} that
\[
(s^{\conv k})_I = \sum_{\substack{F\vDash I,\\\len(F)=k}} \mu_F(s_F).
\]
Since $s_\emptyset=0$, the sum is over compositions rather than decompositions.
Now compare with~\eqref{e:calc-s}. 
\end{proof}

If $b(\varx)$ is another formal power series, the sum $(a+b)(\varx)$
and the product $(a\,b)(\varx)$ are defined. 
If in addition $b_0=0$, the composition $(a\circ b)(\varx)$ is defined.

\begin{proposition}[Functional calculus]
\label{p:fun-calc}
Let $a(\varx)$ and $b(\varx)$ be formal power series and $s$ a series of $\ta$
satisfying~\eqref{e:ser-vanish}.
\begin{enumerate}[(i)]
\item If $c$ is a scalar, then $(c\,a)(s)= c\,a(s)$.
\item $(a+b)(s) = a(s)+b(s)$.
\item\label{i:prod} 
$(a\,b)(s) = a(s)\conv b(s)$. 
\item\label{i:comp}
If $b_0=0$, then $(a\circ b)(s) = a\bigl(b(s)\bigr)$.
\item\label{i:nat}
If $f:\ta\to\tb$ is a morphism of monoids, then $f\bigl(a(s)\bigr) = a\bigl(f(s)\bigr)$.
\end{enumerate}
\end{proposition}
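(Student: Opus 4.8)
The plan is to treat all five statements with a single underlying mechanism---higher associativity of the monoid $\ta$ (Proposition~\ref{p:gen-asso})---which asserts that concatenating compositions corresponds to iterating products, i.e. $\mu_{S,T}(\mu_{F_1}\otimes\mu_{F_2}) = \mu_{F_1\cdot F_2}$ and, more generally, $\mu_G(\mu_{F_1}\otimes\cdots\otimes\mu_{F_m}) = \mu_{F_1\cdots F_m}$ via the concatenation isomorphism~\eqref{e:conc-iso}. Items (i) and (ii) require no more than inspecting the defining formula~\eqref{e:calc-s}: the coefficients $a_n$ enter linearly, so rescaling or adding the power series rescales or adds the corresponding series termwise. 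Item (v) is equally direct: since $f$ is a morphism of monoids it commutes with every higher product, $f_I\circ\mu_F = \mu_F\circ(f_{I_1}\otimes\cdots\otimes f_{I_k})$, and applying $f_I$ to~\eqref{e:calc-s} turns each $\mu_F(s_F)$ into $\mu_F(f(s)_F)$ block by block.

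For item (iii) I would first expand $(a(s)\conv b(s))_I$ using~\eqref{e:convseries} and~\eqref{e:calc-s}, obtaining a triple sum over decompositions $I=S\sqcup T$, compositions $F_1\vDash S$ and $F_2\vDash T$, with weight $a_{\len(F_1)}b_{\len(F_2)}$ and term $\mu_{S,T}(\mu_{F_1}(s_{F_1})\otimes\mu_{F_2}(s_{F_2}))$. Higher associativity collapses this term to $\mu_{F_1\cdot F_2}(s_{F_1\cdot F_2})$. The key combinatorial step is the bijection between triples $(S\sqcup T=I,\,F_1\vDash S,\,F_2\vDash T)$ and pairs $(F\vDash I,\,i)$ with $0\le i\le\len(F)$, where $F=F_1\cdot F_2$ and $i=\len(F_1)$ records the cut between the blocks coming from $S$ and those from $T$ (the endpoints $i=0,\len(F)$ being the unit terms where one factor is empty). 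Re-indexing the sum this way, the weight attached to a fixed $F$ becomes $\sum_{i+j=\len(F)}a_ib_j=(ab)_{\len(F)}$, which is exactly the coefficient appearing in $(a\,b)(s)_I$.

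Item (iv) is where the real work lies. Writing $t:=b(s)$, the hypothesis $b_0=0$ guarantees $t_\emptyset=b_0\iota_\emptyset(1)=0$, so $a(t)$ is defined. I would expand $a(t)_I=\sum_{G\vDash I}a_{\len(G)}\mu_G(t_G)$ and substitute $t_{J_l}=\sum_{F'_l\vDash J_l}b_{\len(F'_l)}\mu_{F'_l}(s_{F'_l})$ into each tensor factor of $t_G$. Higher associativity again collapses the nested products $\mu_G(\mu_{F'_1}\otimes\cdots\otimes\mu_{F'_m})$ into $\mu_F(s_F)$ for the concatenation $F=F'_1\cdots F'_m$, which ranges precisely over the compositions refining $G$. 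Interchanging the order of summation, the coefficient of $\mu_F(s_F)$ becomes $\sum_{G\le F}a_{\len(G)}\prod_l b_{\len(F|_{J_l})}$, the product running over the blocks $J_l$ of $G$ with $F|_{J_l}$ the induced sub-composition. The remaining obstacle---and the crux of the whole proposition---is the identity that this coefficient depends only on $k:=\len(F)$ and equals $(a\circ b)_k$. I expect to prove it by observing that a coarsening $G$ of a fixed $F$ with $k$ blocks is the same datum as a composition of the integer $k$ into contiguous runs of block-lengths $c_1,\dots,c_m$; summing $a_m\prod_l b_{c_l}$ over all such runs reproduces $\sum_m a_m[\varx^k]b(\varx)^m=[\varx^k]\,a(b(\varx))$, where $b_0=0$ is used once more so that only positive parts $c_l\ge1$ occur. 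Matching this against $(a\circ b)(s)_I=\sum_F(a\circ b)_{\len(F)}\mu_F(s_F)$ completes the argument.
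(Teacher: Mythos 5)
Your proposal is correct, and on item (iii) --- the only item the paper actually proves --- it follows exactly the paper's argument: expand $(a(s)\conv b(s))_I$ via~\eqref{e:convseries} and~\eqref{e:calc-s}, collapse $\mu_{S,T}(\mu_{F_1}\otimes\mu_{F_2})$ to $\mu_{F_1\cdot F_2}$ by associativity, and re-index over pairs $(H\vDash I,\ i+j=\len(H))$. The paper dismisses (i), (ii), (iv), (v) without comment, so your treatment of (iv) is genuinely additional content; it is sound as written: the substitution of $t=b(s)$ into~\eqref{e:calc-s}, the collapse of nested products by higher associativity, the identification of the resulting index set with refinements $G\leq F$ (using the uniqueness of splittings for compositions,~\eqref{e:res-conc}), and the final identity $\sum_{G\leq F} a_{\len(G)}\prod_l b_{\len(F|_{J_l})} = [\varx^{\len(F)}]\,a(b(\varx))$ via integer compositions, where $b_0=0$ ensures both that only positive parts occur and that $a\circ b$ is well-defined. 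The one point worth making explicit is the $\emptyset$-component check in (iii) and (iv) (e.g.\ $(a\circ b)_0=a_0$ matches $a(t)_\emptyset=a_0\,\iota_\emptyset(1)$), which your re-indexing covers but which deserves a sentence.
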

\begin{proof}
We verify assertion~\eqref{i:prod}. 
According to~\eqref{e:convseries} and~\eqref{e:calc-s}, we have
\begin{align*}
\bigl(a(s)\conv b(s)\bigr)_I & = \sum_{\substack{I=S\sqcup T\\F\vDash S,\,G\vDash T}} 
\!\! a_{\len(F)} b_{\len(G)} \mu_{S,T}\bigl(\mu_F(s_F)\otimes\mu_G(s_G)\bigr) \\
& = \sum_{\substack{H\vDash I\\i+j=\len(H)}} \!\! \! a_i b_j \mu_H(s_H)\\ 
& = (a\, b)(s)_I.  
\end{align*}
We used associativity in the form $\mu_{S,T}(\mu_F\otimes\mu_G) = \mu_H$ for $H=F\cdot G$, 
and the fact that in this case $s_H = s_F\otimes s_G$.
\end{proof}

\subsection{Exponential, logarithm, and powers}\label{ss:exp-log}

Consider the formal power series
\[
\exp(\varx) = \sum_{n\geq 0} \frac{\varx^n}{n!}
\qqand
\mathrm{l}(\varx) := \log(1-\varx) = - \sum_{n\geq 1} \frac{\varx^n}{n}. 
\]
Let $\ta$ be a monoid. 
If $t$ is a series of $\ta$ such that 
\begin{equation}\label{e:ser-nonvanish}
t_\emptyset =\iota_\emptyset(1),
\end{equation} 
define
\[
\log(t) := \mathrm{l}(u-t).
\]
Then $\log(t)$ is a series satisfying~\eqref{e:ser-vanish}, and for nonempty $I$,
\begin{equation}\label{e:calc-log}
\log(t)_I = -\sum_{F\vDash I} \frac{(-1)^{\len(F)}}{\len(F)}\, \mu_F(t_F).
\end{equation}
If $s$ is a series of $\ta$ satisfying~\eqref{e:ser-vanish}, 
then $\exp(s)$ is a series satisfying~\eqref{e:ser-nonvanish},
and more generally,
\begin{equation}\label{e:calc-exp}
\exp(s)_I = \sum_{F\vDash I} \frac{1}{\len(F)!}\, \mu_F(s_F).
\end{equation}

The following is a basic property of functional calculus.

\begin{proposition}\label{p:exp-log}
Let $s_1$ and $s_2$ be series of $\ta$ satisfying~\eqref{e:ser-vanish}.
If $[s_1,s_2]=0$, then
\begin{equation}\label{e:exp-add}
\exp(s_1+s_2) = \exp(s_1)\conv\exp(s_2).
\end{equation}
Let $t_1$ and $t_2$ be series of $\ta$ satisfying~\eqref{e:ser-nonvanish}.
If $[t_1,t_2]=0$, then
\begin{equation}\label{e:log-add}
\log(t_1\conv t_2) = \log(t_1)+\log(t_2).
\end{equation}
\end{proposition}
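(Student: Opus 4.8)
The plan is to move everything inside the convolution algebra $\Ser(\ta)$, where both identities become the classical formal power series facts $\exp(\varx+\varx')=\exp(\varx)\exp(\varx')$ and its inverse; the only real content is that the commutation hypotheses let us treat $s_1,s_2$ (resp.\ $t_1,t_2$) as commuting elements. First I would record the bridge between functional calculus and the Cauchy product. The preceding proposition gives $a(s)=s^{\conv k}$ when $a(\varx)=\varx^k$; combined with Proposition~\ref{p:fun-calc}(i)--(ii) this yields, for $a(\varx)=\sum_n a_n\varx^n$ and any $s$ satisfying~\eqref{e:ser-vanish}, the component-wise identity $a(s)=\sum_{n\ge 0}a_n\,s^{\conv n}$. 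For each finite set $I$ this is a \emph{finite} sum, since only compositions of $I$ (of length at most $\abs{I}$) contribute to $a(s)_I$ via~\eqref{e:calc-s}; hence there is no convergence issue and all the rearrangements below are legitimate. In particular $\exp(s)=\sum_{n\ge 0}\tfrac{1}{n!}s^{\conv n}$.

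For~\eqref{e:exp-add} I would then argue as follows. By definition of the commutator bracket~\eqref{e:commbracseries}, the hypothesis $[s_1,s_2]=0$ says precisely $s_1\conv s_2=s_2\conv s_1$, so the ordinary binomial theorem holds in the (associative, unital) algebra $\Ser(\ta)$: for every $n$, $(s_1+s_2)^{\conv n}=\sum_{i+j=n}\binom{n}{i}\,s_1^{\conv i}\conv s_2^{\conv j}$. Substituting this into $\exp(s_1+s_2)=\sum_n\tfrac{1}{n!}(s_1+s_2)^{\conv n}$ and regrouping the (finite, for each $I$) sum over pairs $(i,j)$ gives $\sum_{i,j\ge 0}\tfrac{1}{i!\,j!}\,s_1^{\conv i}\conv s_2^{\conv j}$, which factors as $\bigl(\sum_i\tfrac{1}{i!}s_1^{\conv i}\bigr)\conv\bigl(\sum_j\tfrac{1}{j!}s_2^{\conv j}\bigr)=\exp(s_1)\conv\exp(s_2)$. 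Note that commutativity enters exactly at the binomial step and nowhere else.

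For~\eqref{e:log-add} I would set $s_i:=\log(t_i)=\mathrm{l}(u-t_i)$, which satisfies $(s_i)_\emptyset=0$ since $(u-t_i)_\emptyset=0$ by~\eqref{e:ser-nonvanish}. Applying the bridge identity with $s=u-t_i$ shows $\log(t_i)=\sum_{k\ge 1}\tfrac1k (u-t_i)^{\conv k}$ lies in the closed subalgebra of $\Ser(\ta)$ generated by $u$ and $t_i$; since $t_1\conv t_2=t_2\conv t_1$, elements of the subalgebra generated by $t_1$ commute with those generated by $t_2$, whence $[\log t_1,\log t_2]=0$. Now~\eqref{e:exp-add} gives $\exp(\log t_1+\log t_2)=\exp(\log t_1)\conv\exp(\log t_2)=t_1\conv t_2$, where I use that $\exp$ and $\log$ are mutually inverse (itself a consequence of Proposition~\ref{p:fun-calc}(iv) applied to the formal power series identities $\exp\circ\mathrm{l}=1-\varx$ and $\mathrm{l}\circ(1-\exp)=\varx$). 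Applying $\log$ to both sides and using $\log\circ\exp=\id$ on series vanishing at $\emptyset$ yields $\log(t_1\conv t_2)=\log t_1+\log t_2$. The step I expect to require the most care is this inheritance of commutativity, $[\log t_1,\log t_2]=0$ from $[t_1,t_2]=0$; everything else is bookkeeping, made painless by the fact that each component $\Ser(\ta)_I=\ta[I]$ sees only finitely many terms.
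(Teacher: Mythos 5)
Your proof is correct, and there is no proof in the paper to compare it with: the paper states Proposition~\ref{p:exp-log} without proof, introducing it only as ``a basic property of functional calculus.'' Your argument supplies exactly the details that phrase leaves implicit, and it is the natural route: the componentwise-finite expansion $a(s)=\sum_{n\geq 0}a_n\,s^{\conv n}$ turns functional calculus into algebra in $\Ser(\ta)$; the hypothesis $[s_1,s_2]=0$ says precisely that $s_1\conv s_2=s_2\conv s_1$, so the binomial theorem in the associative unital algebra $\Ser(\ta)$ yields~\eqref{e:exp-add}; and~\eqref{e:log-add} follows by applying~\eqref{e:exp-add} to $\log t_1$, $\log t_2$ and invoking the inversion $\exp\circ\log=\id$, which, as you correctly note, rests only on statement~\eqref{i:comp} of Proposition~\ref{p:fun-calc} (it is the first assertion of Theorem~\ref{t:exp-log}, proved there independently of Proposition~\ref{p:exp-log}), so there is no circularity. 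Your handling of the one genuinely delicate point---that $[t_1,t_2]=0$ forces $[\log t_1,\log t_2]=0$---is also sound: on any fixed component $I$, each $\log t_i$ agrees with a finite polynomial in $u$ and $t_i$ (only compositions of length at most $\abs{I}$ contribute), and polynomials in commuting elements commute, so the commutator vanishes component by component. One trivial slip, harmless to the argument: $\log(t_i)=\mathrm{l}(u-t_i)=-\sum_{k\geq 1}\tfrac1k\,(u-t_i)^{\conv k}$; you dropped the minus sign, which does not matter since at that step you only need $\log t_i$ to lie in the commutant of the subalgebra generated by $u$ and $t_{3-i}$.
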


\begin{theorem}\label{t:exp-log}
For any monoid $\ta$, the maps
\begin{equation}\label{e:exp-log1}
\xymatrix@C+15pt{
\{s\in\Ser(\ta) \mid s_\emptyset = 0\} \ar@<0.6ex>[r]^-{\exp} & \ar@<0.6ex>[l]^-{\log} 
\{t\in\Ser(\ta) \mid t_\emptyset = \iota_\emptyset(1)\} 
}
\end{equation}
are inverse bijections. Moreover, if $\thh$ is a connected bimonoid,
these maps restrict to inverse bijections
\begin{equation}\label{e:exp-log2}
\xymatrix@C+15pt{
\Prs(\thh) \ar@<0.6ex>[r]^-{\exp} & \ar@<0.6ex>[l]^-{\log} 
\Gls(\thh). 
}
\end{equation}
\end{theorem}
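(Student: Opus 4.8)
The plan is to prove the two assertions in turn, obtaining the refinement~\eqref{e:exp-log2} from the basic bijection~\eqref{e:exp-log1}.

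\textbf{Part 1: the bijections for a monoid.} I would first note that $\exp$ and $\log$ respect the constant-term conditions: by~\eqref{e:calc-exp} one has $\exp(s)_\emptyset=\iota_\emptyset(1)$, while $\log(t)=\mathrm{l}(u-t)$ with $(u-t)_\emptyset=0$, so $\log(t)_\emptyset=0$ because $\mathrm{l}$ has vanishing constant term. The two round-trips then follow purely from functional calculus. Since the constant series $1$ and the identity $\varx$ evaluate to $u$ and to the identity (the evaluation of monomials preceding Proposition~\ref{p:fun-calc}), additivity gives $(1-\varx)(r)=u-r$ for any $r$ with $r_\emptyset=0$. Hence for $t$ with $t_\emptyset=\iota_\emptyset(1)$,
\[
\exp(\log(t))=\exp\bigl(\mathrm{l}(u-t)\bigr)=(\exp\circ\mathrm{l})(u-t)=(1-\varx)(u-t)=u-(u-t)=t,
\]
where the second equality is the composition rule of Proposition~\ref{p:fun-calc} (legitimate as $\mathrm{l}$ has no constant term) and the third uses the power-series identity $\exp\log(1-\varx)=1-\varx$. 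Symmetrically, writing $E(\varx)=\exp(\varx)$ one has $u-\exp(s)=(1-E)(s)$, and
\[
\log(\exp(s))=\mathrm{l}\bigl((1-E)(s)\bigr)=\bigl(\mathrm{l}\circ(1-E)\bigr)(s)=\varx(s)=s,
\]
using $\log\bigl(1-(1-\exp\varx)\bigr)=\varx$. This gives~\eqref{e:exp-log1}.

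\textbf{Part 2: setup of the dictionary.} To pass between primitive and group-like series I would exploit that the coproduct $\Delta:\thh\to\thh\bdot\thh$ is a morphism of monoids, so functional calculus is natural with respect to it: $\Delta(a(s))=a(\Delta(s))$ by Proposition~\ref{p:fun-calc}. The lax monoidal structure on $\Ser$ furnishes a map $\boxtimes:\Ser(\thh)\otimes\Ser(\thh)\to\Ser(\thh\bdot\thh)$ with $(a\boxtimes b)_I=\sum_{I=S\sqcup T}a_S\otimes b_T$; by~\eqref{e:inter-conv} it obeys the interchange law $(a\boxtimes b)\conv(c\boxtimes d)=(a\conv c)\boxtimes(b\conv d)$, and the one-sided maps $a\mapsto a\boxtimes u$ and $a\mapsto u\boxtimes a$ are algebra morphisms induced by the unital inclusions $\thh\to\thh\bdot\thh$ (identity on one factor, unit on the other), hence commute with $\exp$ and $\log$. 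The defining conditions then read $t\text{ group-like}\iff\Delta(t)=t\boxtimes t$, and $x\text{ primitive}\iff\Delta(x)=x\boxtimes u+u\boxtimes x$, the latter using~\eqref{e:conn-unit}.

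\textbf{Part 2: the two inclusions.} With this dictionary both directions are short. If $s$ is primitive, then $\Delta(s)=s\boxtimes u+u\boxtimes s$ is a sum of two series satisfying~\eqref{e:ser-vanish} which commute, since by the interchange law their Cauchy products in either order equal $s\boxtimes s$; so naturality and Proposition~\ref{p:exp-log} give
\[
\Delta(\exp s)=\exp(\Delta s)=\exp(s\boxtimes u)\conv\exp(u\boxtimes s)=(\exp s\boxtimes u)\conv(u\boxtimes\exp s)=\exp(s)\boxtimes\exp(s),
\]
so $\exp(s)$ is group-like. Dually, if $t$ is group-like then $\Delta(t)=t\boxtimes t=(t\boxtimes u)\conv(u\boxtimes t)$ is a product of commuting series satisfying~\eqref{e:ser-nonvanish}, and Proposition~\ref{p:exp-log} with naturality yields $\Delta(\log t)=\log(t)\boxtimes u+u\boxtimes\log(t)$, i.e. $\log(t)$ is primitive. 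Since $\exp$ and $\log$ are already mutually inverse by Part 1, these inclusions show they restrict to the inverse bijections~\eqref{e:exp-log2}. The routine part is Part 1; the crux is the dictionary of Part 2 together with the observation that $a\mapsto a\boxtimes u$ and $a\mapsto u\boxtimes a$ are algebra maps to which additivity and naturality of functional calculus apply, after which the interchange law does the work. A slicker but less self-contained alternative is to invoke Section~\ref{ss:comphopf}: as $\exp(s)=\sum_{k\geq0}\tfrac1{k!}s^{\conv k}$ and $\log(t)=-\sum_{k\geq1}\tfrac1k(u-t)^{\conv k}$ are exactly the exponential and logarithm in the complete Hopf algebra $\Ser(\thh)$, whose group-like and primitive elements are $\Gls(\thh)$ and $\Prs(\thh)$, the restriction~\eqref{e:exp-log2} is the standard correspondence between primitives and group-likes there.
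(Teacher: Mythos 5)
Your proposal is correct and takes essentially the same route as the paper: Part 1 is exactly the paper's appeal to the composition rule (item (iv) of Proposition~\ref{p:fun-calc}), and Part 2 reproduces, in series language, the paper's second proof of Theorem~\ref{t:exp-log-iH} (of which the present statement is the special case $\tc=\wE$). Your $\boxtimes$, its interchange law, and the commuting one-sided factors $s\boxtimes u$, $u\boxtimes s$ correspond precisely to the paper's use of $f\bdot u$, $u\bdot f$, \eqref{e:inter-conv}, and \eqref{e:exp-add} in the convolution algebra $\Hom_{\Ssk}(\tc\bdot\tc,\tk\bdot\tk)$, so the only (harmless) differences are that you inline the specialization to series and verify both directions explicitly rather than deferring one to ``the remaining verifications are similar.''
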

\begin{proof}
The first assertion follows from statement~\eqref{i:comp} in Proposition~\ref{p:fun-calc}.
The second assertion is a special case of~\eqref{exp-log-P} (the case $\tc=\wE$),
for which we provide a direct proof below.
\end{proof}

\begin{remark} 
Theorem~\ref{t:exp-log} may be seen as a special case 
of a result of Quillen for complete Hopf algebras~\cite[Appendix~A, Proposition~2.6]{Qui:1969}, 
in view of the considerations in Section~\ref{ss:comphopf}.
\end{remark}

Let $c\in\Kb$ be a scalar. 
Consider the formal power series
\[
p_c(x) := (1+x)^c = \sum_{n\geq 0}\binom{c}{n}\,x^n,
\]
where 
\begin{equation}\label{e:binomial}
\binom{c}{n}:=\frac{1}{n!}c(c-1)\ldots\bigl(c-(n-1)\bigr)\in\Kb.
\end{equation}
Let $\ta$ be a monoid. 
If $t$ is a series of $\ta$ satisfying~\eqref{e:ser-nonvanish}, 
define 
\[
t^c:= p_c(t-u).
\] 
Then $t^c$ is a series satisfying~\eqref{e:ser-nonvanish}, and for nonempty $I$,
\begin{equation}\label{e:calc-pow}
t^c_I = \sum_{F\vDash I} \binom{c}{\len(F)}\,\mu_F(t_F).
\end{equation}

In view of Proposition~\ref{p:fun-calc}, the identities
\[
(1+x)^{c+d} = (1+x)^c\,(1+x)^d,
\
\bigl((1+x)^c\bigr)^d = (1+x)^{cd}
\pand
(1+x)^c = \exp\bigl(c\log(1+x)\bigr)
\]
in the algebra of formal power series imply the identities
\begin{equation}\label{e:pow-add}
t^{c+d} = t^c\conv t^d,
\quad
(t^c)^d = t^{cd}
\qand
t^c = \exp(c\,\log t)
\end{equation}
in the algebra of series of $\ta$. 
In particular, $t$ is invertible in the algebra $\Ser(\ta)$,
with $t^{-1}$ satisfying~\eqref{e:ser-nonvanish} and
\[
t^{-1}_I = \sum_{F\vDash I} (-1)^{\len(F)}\,\mu_F(t_F).
\]
In addition, $t^c=t^{\conv c}$ for all integers $c$.

Let $\thh$ be a connected bimonoid and $g$ a group-like series of $\thh$.
It follows from~\eqref{e:exp-log2} and the third identity in~\eqref{e:pow-add} that 
\begin{equation}\label{e:glike-pow}
g^c\in\Gls(\thh)
\end{equation} 
for every $c\in\Kb$. 
Thus, group-like series are closed under powers by arbitrary scalars.

\subsection{Curves and derivatives}\label{ss:curves}

Let $\tp$ be a species. 
A (polynomial) \emph{curve} on $\tp$ is a function
\[
\gamma: \Kb \to \Ser(\tp)
\]
whose components $\gamma_I: \Kb \to \tp[I]$ are polynomial, for all finite sets $I$.
The component $\gamma_I$ is defined by
$\gamma_I(c):=\gamma(c)_I$ for all $c\in\Kb$.

The \emph{derivative} $\gamma'$ of a curve $\gamma$ is defined by
\[
\gamma'_I(c) := \frac{d}{dc}\gamma_I(c),
\]
where the latter is the usual derivative of a polynomial function.
When $\Kb=\Rb$ is the field of real numbers, one may also speak of smooth
curves $\gamma:(a,b)\to\Ser(\tp)$ defined on a real interval, and of their derivatives.

The derivative of a formal power series $a(\varx) = \sum_{n\geq 0} a_n \varx^n$ is
the formal power series
\begin{equation}\label{e:ser-deriv}
a'(\varx) := \sum_{n\geq 0} (n+1) a_{n+1} \varx^n. 
\end{equation}

Let $\gamma$ be a curve on a monoid $\ta$ for which each series 
$\gamma(c)$ satisfies~\eqref{e:ser-vanish}. Given a formal power series $a(\varx)$,
the curve $a\circ\gamma$ is defined (by means of functional calculus) as
\[
(a\circ\gamma)(c) := a\bigl(\gamma(c)\bigr),
\]
for each $c\in\Kb$.

Derivatives of curves and of power series are linked by the \emph{chain rule}.
This requires a commutativity assumption.

\begin{proposition}[Chain rule]\label{p:chain-rule}
Let $\gamma$ be a curve as above. Suppose that
\[
[\gamma(c),\gamma'(c)]=0
\]
for all $c\in\Kb$. 
Then
\begin{equation}\label{e:chain-rule}
(a\circ\gamma)'(c) = 
a'\bigl(\gamma(c)\bigr)\conv \gamma'(c) = \gamma'(c)\conv a'\bigl(\gamma(c)\bigr).
\end{equation}
\end{proposition}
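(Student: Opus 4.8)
The plan is to reduce the statement to the ordinary Leibniz rule together with the power-series chain rule, with the commutativity hypothesis $[\gamma(c),\gamma'(c)]=0$ as the one essential ingredient. Fix a scalar $c\in\Kb$ and abbreviate $s:=\gamma(c)$ and $s':=\gamma'(c)$, so that $s_\emptyset=0$ by~\eqref{e:ser-vanish} and likewise $s'_\emptyset=\frac{d}{dc}s_\emptyset=0$. The goal is to differentiate $(a\circ\gamma)(c)=a(s)=\sum_{n\geq0}a_n\,s^{\conv n}$ with respect to $c$ and recognize the result as $a'(s)\conv s'$.

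First I would establish a product rule for the Cauchy product along a curve: for curves $\gamma_1,\gamma_2$ on $\ta$ one has $\frac{d}{dc}\bigl(\gamma_1(c)\conv\gamma_2(c)\bigr)=\gamma_1'(c)\conv\gamma_2(c)+\gamma_1(c)\conv\gamma_2'(c)$. This is immediate from~\eqref{e:convseries}: on a fixed finite set $I$ the component equals the finite sum $\sum_{I=S\sqcup T}\mu_{S,T}\bigl(\gamma_1(c)_S\otimes\gamma_2(c)_T\bigr)$, each map $\mu_{S,T}$ is linear and independent of $c$, and each $\gamma_i(c)_U$ is a polynomial in $c$, so the usual Leibniz rule applies factorwise in the tensor product. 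Iterating the two-factor rule gives $\frac{d}{dc}s^{\conv n}=\sum_{m=1}^{n}s^{\conv(m-1)}\conv s'\conv s^{\conv(n-m)}$.

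The key step is then to collapse this sum using the hypothesis. Since $s'\conv s=s\conv s'$, the series $s'$ commutes with every convolution power $s^{\conv j}$ (by associativity and induction on $j$), so each of the $n$ summands equals $s'\conv s^{\conv(n-1)}=s^{\conv(n-1)}\conv s'$. Hence $\frac{d}{dc}s^{\conv n}=n\,s'\conv s^{\conv(n-1)}=n\,s^{\conv(n-1)}\conv s'$. This is the only place the commutativity assumption is used.

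Finally I would assemble these over $n$. For a fixed finite set $I$, every composition $F\vDash I$ satisfies $\len(F)\leq\abs{I}$, so by~\eqref{e:calc-s} the component $a(s)_I=\sum_{n\geq0}a_n\,(s^{\conv n})_I$ is a finite sum and thus a polynomial in $c$ that may be differentiated term by term. Reindexing the definition~\eqref{e:ser-deriv} of $a'$ shows, via functional calculus (Proposition~\ref{p:fun-calc}), that $a'(s)=\sum_{k\geq1}k\,a_k\,s^{\conv(k-1)}$; the $n=0$ term $s^{\conv0}=u$ is constant in $c$ and drops out. Combining with the previous step yields $(a\circ\gamma)'(c)=\sum_{k\geq1}a_k\,k\,s'\conv s^{\conv(k-1)}=s'\conv a'(s)$, and since $s'$ commutes with each $s^{\conv(k-1)}$ it commutes with $a'(s)$, giving also $s'\conv a'(s)=a'(s)\conv s'$ and hence~\eqref{e:chain-rule}. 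The routine inputs — the factorwise Leibniz rule and the local finiteness legitimizing term-by-term differentiation — present no difficulty; the genuine point is the commutativity collapse of the $n$ summands in the third step.
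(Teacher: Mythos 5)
Your proof is correct, and its skeleton is the same as the paper's: differentiate termwise via the Leibniz rule, use the hypothesis $[\gamma(c),\gamma'(c)]=0$ to normalize the position of the differentiated factor, and reassemble the result into $a'$. The difference is one of packaging. The paper works entirely componentwise: it expands $(a\circ\gamma)'_I(c)$ via~\eqref{e:calc-s} as a sum over compositions $G=(I_1,\ldots,I_k)\vDash I$ with the derivative landing in each tensor slot of $\gamma_G(c)$, uses commutativity to show that, after summing over all $G$ of a fixed length $k$, the differentiated factor can be moved to the last slot (picking up the factor $k$), and then peels off the last block ($T=I_k$, $F=(I_1,\ldots,I_{k-1})\vDash I\setminus T$) to recognize $a'\bigl(\gamma(c)\bigr)\conv\gamma'(c)$ as computed from~\eqref{e:convseries}. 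You instead work at the level of the algebra $\Ser(\ta)$: writing $a(s)=\sum_n a_n s^{\conv n}$, proving the power rule $\frac{d}{dc}\,s^{\conv n}=n\,s'\conv s^{\conv(n-1)}$ from the two-factor Leibniz rule plus the commutation of $s'$ with powers of $s$, and matching against $a'(s)=\sum_{k\geq 1}k\,a_k\,s^{\conv(k-1)}$. Your collapse of the $n$ summands is exactly the paper's slot-moving argument, stated invariantly; what your version buys is that the combinatorial bookkeeping (the reindexing over compositions) is absorbed into associativity of $\conv$, making it transparent where commutativity is used, at the modest cost of having to justify the routine analytic points (componentwise finiteness of all sums, so that termwise differentiation and commuting $s'$ past the infinite sum defining $a'(s)$ are legitimate), which you do.
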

\begin{proof}
The $\emptyset$-components are zero, so we assume $I$ to be nonempty.
Applying~\eqref{e:calc-s} to~\eqref{e:ser-deriv},
\[
a'(s)_I = \sum_{F\vDash I} \,\bigl(\len(F)+1\bigr)a_{\len(F)+1}\mu_F(s_F).
\]
Substituting $s=\gamma(c)$ and calculating using~\eqref{e:convseries},
\begin{equation}\label{e:first-calc}
\Bigl(a'\bigl(\gamma(c)\bigr)\conv \gamma'(c)\Bigr)_I 
= \sum_{I=S\sqcup T} \mu_{S,T}\bigg(\sum_{F\vDash S} \,\bigl(\len(F)+1\bigr)a_{\len(F)+1}\mu_F\bigl(\gamma_F(c)\bigr) \otimes \gamma_T'(c)\bigg).
\end{equation}
Hypothesis~\eqref{e:ser-vanish} implies that $\gamma'_\emptyset = 0$. 
Thus, we may assume that $T\neq\emptyset$ in the previous sum.

On the other hand, the derivative of $(a\circ\gamma)(c)$ is
\begin{equation}\label{e:second-calc}
(a\circ\gamma)'_I(c) = \sum_{G\vDash I} a_{\len(G)}\mu_G\bigl(\gamma_G(c)\bigr)' 
=\sum_{G\vDash I} a_{\len(G)}\mu_G\bigl((\gamma_G)'(c)\bigr).
\end{equation}
(Since $\mu_G$ is linear, it commutes with the derivative.)
If $G=(I_1,\ldots,I_k)$, then
\[
\gamma_G(c) = \gamma_{I_1}(c)\otimes\cdots\otimes\gamma_{I_k}(c)
\]
and 
\[
(\gamma_G)'(c) = \sum_{i=1}^k \gamma_{I_1}(c)\otimes\cdots\otimes\gamma_{I_i}'(c)\otimes\cdots
\otimes\gamma_{I_k}(c).
\]
Commutativity allows us to move the differentiated factor in~\eqref{e:second-calc}
to the right end, as we now verify. By hypothesis,
\[
\sum_{I=S_1\sqcup S_2} \mu_{S_1,S_2}\bigl(\gamma_{S_1}'(c)\otimes
\gamma_{S_2}(c)\bigr) = \sum_{I=S_1\sqcup S_2} \mu_{S_1,S_2}\bigl(\gamma_{S_1}(c)\otimes
\gamma_{S_2}'(c)\bigr).
\]
It follows that, for any $i=1,\ldots,k$,
\[
\sum_{G:\,\len(G)=k} \mu_G\bigl(\gamma_{I_1}(c)\otimes\cdots\otimes\gamma_{I_i}'(c)\otimes\cdots
\otimes\gamma_{I_k}(c)\bigr) = \sum_{G:\,l(G)=k} \mu_G\bigl(
\gamma_{I_1}(c)\otimes\cdots\otimes\gamma_{I_k}'(c)\bigr),
\]
and hence
\[
\sum_{G:\,l(G)=k} \mu_G\bigl(\gamma_G'(c)\bigr) = 
k \bigg(\sum_{G:\,\len(G)=k} \mu_G\bigl(
\gamma_{I_1}(c)\otimes\cdots\otimes\gamma_{I_k}'(c)\bigr)\bigg).
\]
Substituting back in~\eqref{e:second-calc} we obtain
\[
(a\circ\gamma)'_I(c) = \sum_{G\vDash I} \len(G) a_{\len(G)}
\mu_G\bigl(\gamma_{I_1}(c)\otimes\cdots\otimes\gamma_{I_k}'(c)\bigr).
\]
Letting $T=I_k$, $S=I\setminus T$, and
$F=(I_1,\ldots,I_{k-1})\vDash S$, and employing associativity, we obtain~\eqref{e:first-calc},
as needed.
\end{proof}

Let $\tc$ be a comonoid and $\gamma$ a curve on $\Ser(\tc)$. 
When the series $\gamma(c)$ is group-like for every $c\in\Kb$, we say that
$\gamma$ is a group-like curve.

\begin{proposition}\label{p:glikecurve}
Let $\gamma$ be a group-like curve on a comonoid $\tc$. 
For any $c_0\in\Kb$, the series $\gamma'(c_0)$ is $(\gamma(c_0),\gamma(c_0))$-primitive.
\end{proposition}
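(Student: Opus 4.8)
The plan is to obtain both defining conditions of a $(\gamma(c_0),\gamma(c_0))$-primitive series, namely \eqref{e:primser-del} and \eqref{e:primser-eps}, by differentiating the group-like identities \eqref{e:glike1} and \eqref{e:glike2} for $\gamma$ with respect to the curve parameter and evaluating at $c_0$. The essential observation is that the structure maps $\Delta_{S,T}$ and $\epsilon_\emptyset$ of $\tc$ are fixed linear maps, independent of $c$, so differentiation in $c$ commutes with applying them, while the tensor product contributes a Leibniz term.

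First I would fix a decomposition $I=S\sqcup T$. Since $\gamma$ is a group-like curve, condition \eqref{e:glike1} reads $\Delta_{S,T}(\gamma_I(c)) = \gamma_S(c)\otimes\gamma_T(c)$ for every $c\in\Kb$; this is an identity of polynomial curves valued in $\tc[S]\otimes\tc[T]$. Because $\Delta_{S,T}$ is linear and does not depend on $c$, I can differentiate under it and then apply the product rule for the derivative of a tensor product of polynomial curves:
\[
\Delta_{S,T}\bigl(\gamma'_I(c_0)\bigr)
= \frac{d}{dc}\Big|_{c_0}\Delta_{S,T}\bigl(\gamma_I(c)\bigr)
= \frac{d}{dc}\Big|_{c_0}\bigl(\gamma_S(c)\otimes\gamma_T(c)\bigr)
= \gamma'_S(c_0)\otimes\gamma_T(c_0) + \gamma_S(c_0)\otimes\gamma'_T(c_0).
\]
This is exactly \eqref{e:primser-del} with $g=h=\gamma(c_0)$ and $x=\gamma'(c_0)$, since $\gamma_S(c_0)=\gamma(c_0)_S$ and $\gamma'_T(c_0)=\gamma'(c_0)_T$, and addition is commutative.

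For the counit condition I would differentiate \eqref{e:glike2}: the scalar-valued function $c\mapsto\epsilon_\emptyset(\gamma_\emptyset(c))$ is constantly equal to $1$, and since $\epsilon_\emptyset$ is linear and $c$-independent,
\[
\epsilon_\emptyset\bigl(\gamma'_\emptyset(c_0)\bigr)
= \frac{d}{dc}\Big|_{c_0}\epsilon_\emptyset\bigl(\gamma_\emptyset(c)\bigr)
= \frac{d}{dc}\Big|_{c_0}(1)=0,
\]
which is \eqref{e:primser-eps}. Together these establish that $\gamma'(c_0)$ is $(\gamma(c_0),\gamma(c_0))$-primitive.

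There is no substantial obstacle here; the only point requiring care is the justification of the Leibniz rule for the tensor product of vector-space-valued polynomial curves. This is routine—expanding each component in a fixed basis of $\tc[S]$ and $\tc[T]$ reduces it to the ordinary product rule for polynomials—but it is the one place where the curve-and-derivative formalism of Section~\ref{ss:curves} is genuinely invoked, so I would state it explicitly rather than leave it implicit.
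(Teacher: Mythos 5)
Your proof is correct and rests on the same core idea as the paper's: differentiate the group-like identity \eqref{e:glike1} in the curve parameter, using linearity of $\Delta_{S,T}$ to commute it past the derivative, and apply the Leibniz rule to the tensor factor; the counit condition \eqref{e:primser-eps} comes from differentiating the constant $1$. Where you genuinely diverge is in how the derivative is handled. The paper argues ``in the smooth case'': it writes $\gamma'_I(c_0)$ as a limit of difference quotients, uses that the linear map $\Delta_{S,T}$ commutes with limits, and performs the standard add-and-subtract splitting of the product rule --- an argument that literally makes sense only when $\Kb=\Rb$, with the polynomial case over a general field of characteristic $0$ left implicit. You instead stay entirely algebraic: the derivative is the formal derivative of polynomial curves, differentiation commutes with the fixed ($c$-independent) linear maps $\Delta_{S,T}$ and $\epsilon_\emptyset$, and the Leibniz rule for tensor products of vector-space-valued polynomial curves is justified by expanding in bases and reducing to the product rule for ordinary polynomials. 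Your version buys uniformity --- it proves the proposition exactly as stated, for polynomial curves over an arbitrary $\Kb$ of characteristic $0$, with no appeal to analysis --- while the paper's version buys brevity and also covers the smooth non-polynomial curves it allows over $\Rb$. Your decision to flag the tensor-product Leibniz rule as the one step requiring explicit justification is well placed; that is precisely the point the paper's limit computation is silently encoding.
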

\begin{proof} 
We argue in the smooth case, where we have
\[
\gamma'_I(c_0) = \lim_{c\to 0} \,\frac{\gamma_I(c+c_0) - \gamma_I(c_0)}{c}. 
\]
Since $\Delta_{S,T}$ is linear, it commutes with limits. 
Hence,
\begin{align*}
\Delta_{S,T}\bigl(\gamma'_I(c_0)\bigr) 
& = \lim_{c\to 0} \,\frac{1}{c} \Delta_{S,T}\bigl(\gamma_I(c+c_0) - \gamma_I(c_0)\bigr)\\
& = \lim_{c\to 0} \,\frac{1}{c} \bigl(\gamma_S(c+c_0)\otimes\gamma_T(c+c_0)
 - \gamma_S(c_0)\otimes \gamma_T(c_0) \bigr)\\
& = \lim_{c\to 0} \,\frac{1}{c}\, \gamma_S(c+c_0)\otimes\bigl(\gamma_T(c+c_0)-\gamma_T(c_0)\bigr)\\
& \qquad \qquad \qquad + \lim_{c\to 0} \,\frac{1}{c}\, \bigl(\gamma_S(c+c_0)-\gamma_S(c_0)\bigr)\otimes \gamma_T(c_0)\\
& = \gamma_S(c_0)\otimes \gamma'_T(c_0) + \gamma'_S(c_0)\otimes \gamma_T(c_0),
\end{align*}
proving~\eqref{e:primser-del}. 
Condition~\eqref{e:primser-eps} follows similarly.
\end{proof}

\begin{corollary}\label{c:glikecurve}
Let $\thh$ be a Hopf monoid and $\gamma$ a group-like curve on $\thh$.
Fix $c_0\in\Kb$, let $g:=\gamma(c_0)$ and $x:=\gamma'(c_0)$.
Then the series
\[
g^{-1}\conv x \qand x\conv g^{-1}
\]
are $(u,u)$-primitive. 
In particular, if $\thh$ is connected, then both series are primitive.
\end{corollary}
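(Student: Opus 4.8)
The plan is to deduce Corollary~\ref{c:glikecurve} directly from
Proposition~\ref{p:glikecurve} and Proposition~\ref{p:transport},
treating the curve machinery as already delivering the needed primitivity
at a single parameter value and then transporting it by the group-like series
$g^{-1}$.

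First I would fix $c_0\in\Kb$ and set $g:=\gamma(c_0)$ and $x:=\gamma'(c_0)$,
exactly as in the statement. By Proposition~\ref{p:glikecurve}, the series
$x$ is $(g,g)$-primitive. Since $\gamma$ is a group-like curve on the Hopf
monoid $\thh$, the series $g=\gamma(c_0)$ is group-like, and because $\thh$ is
a Hopf monoid, $\Gls(\thh)$ is a group (see Section~\ref{ss:glike}); hence
$g^{-1}$ is again group-like, with $(g^{-1})_I = \apode_I(g_I)$. Thus the
hypotheses of Proposition~\ref{p:transport} are in place, with $f:=g^{-1}$ and
the pair $(g,h)=(g,g)$.

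Next I would apply Proposition~\ref{p:transport} twice. For the series
$g^{-1}\conv x$, the proposition gives that it is
$(g^{-1}\conv g,\,g^{-1}\conv g)$-primitive; and $g^{-1}\conv g = u$, the unit
series, since $g^{-1}$ is the inverse of $g$ in the group $\Gls(\thh)$.
Therefore $g^{-1}\conv x$ is $(u,u)$-primitive. Symmetrically, for
$x\conv g^{-1}$ the proposition yields
$(g\conv g^{-1},\,g\conv g^{-1})$-primitivity, and $g\conv g^{-1}=u$ as well,
so $x\conv g^{-1}$ is $(u,u)$-primitive. This establishes the first assertion.

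Finally, for the last sentence I would invoke the discussion at the end of
Section~\ref{ss:primser}: when $\thh$ is connected one has $u=d$, so
$(u,u)$-primitive series coincide with primitive series. Hence both
$g^{-1}\conv x$ and $x\conv g^{-1}$ are primitive in that case, completing the
proof. There is no serious obstacle here; the corollary is essentially a
formal consequence of the two cited propositions once one observes the
cancellation $g^{-1}\conv g = g\conv g^{-1} = u$. The only point requiring a
line of care is the identification of $(u,u)$-primitivity with primitivity in
the connected setting, which is precisely the remark already recorded in
Section~\ref{ss:primser}.
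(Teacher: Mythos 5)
Your proposal is correct and follows exactly the paper's own argument: apply Proposition~\ref{p:glikecurve} to get that $x$ is $(g,g)$-primitive, then transport by $g^{-1}$ on either side via Proposition~\ref{p:transport}, using $g^{-1}\conv g = g\conv g^{-1} = u$ and the identification of $(u,u)$-primitive with primitive series in the connected case. The only difference is that you spell out the steps (invertibility of $g$ in $\Gls(\thh)$, the cancellation, the remark from Section~\ref{ss:primser}) that the paper leaves implicit.
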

\begin{proof}
By Proposition~\ref{p:glikecurve}, $x$ is $(g,g)$-primitive.
Transporting it by $g^{-1}$ on either side we obtain a series that is $(u,u)$-primitive,
according to Proposition~\ref{p:transport}. 
\end{proof}

Corollary~\ref{c:glikecurve} affords a construction of primitive series
out of group-like curves.
\[
\psfrag{u}{$u$}
\psfrag{g}{$g$}
\psfrag{x}{\small $x$}
\psfrag{y}{\small $g^{-1}\conv x$}
\includegraphics[width=3in]{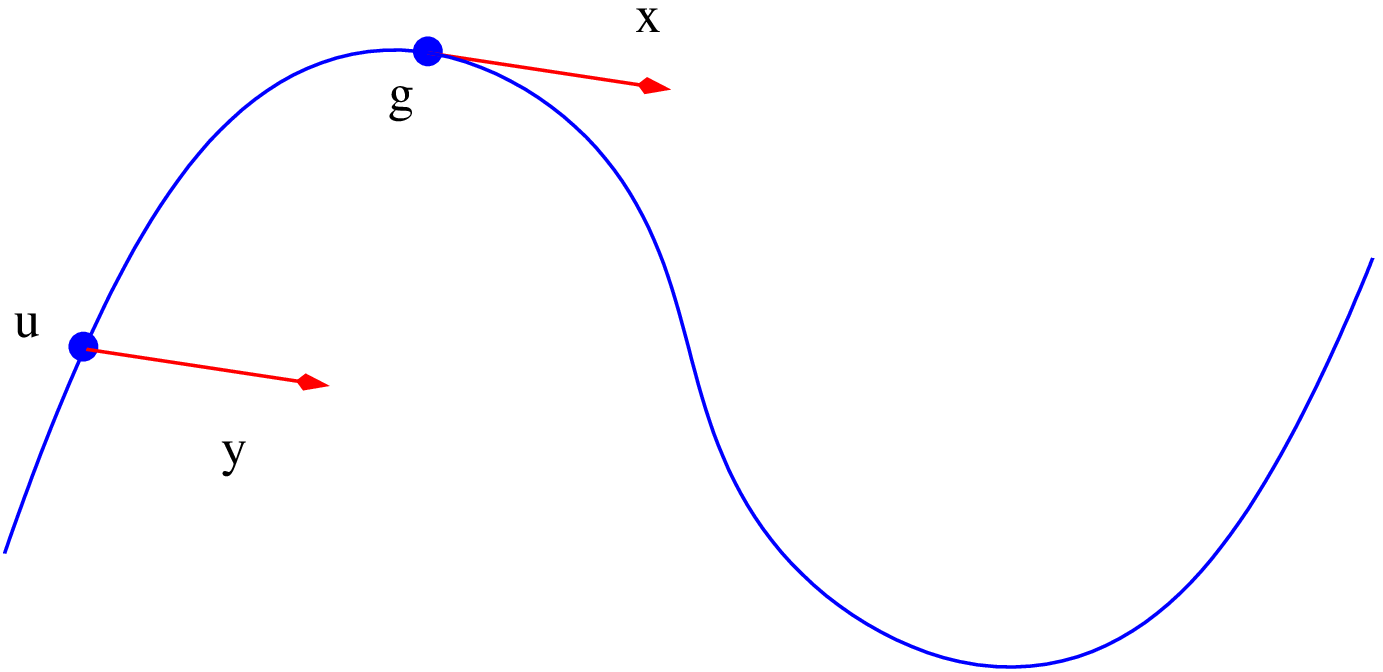}
\]

In the next section we study the result of this construction when applied to certain
special curves.

\subsection{Two canonical curves}\label{ss:can-curve}

Let $t$ be a series of a species $\tp$. There is a curve
$\cancur{t}$ on $\tp$ defined by
\begin{equation}\label{e:can-curve1}
\cancur{t}_I(c) := c^{\abs{I}}\, t_I
\end{equation}
for every finite set $I$. 
It is clearly polynomial. 

Let $\ta$ be a monoid and assume that $t$ satisfies~\eqref{e:ser-nonvanish}. 
There is then another curve $\onepar{t}$ on $\ta$ defined by
\begin{equation}\label{e:can-curve2}
\onepar{t}(c) := t^c.
\end{equation}
Formulas~\eqref{e:binomial} and~\eqref{e:calc-pow} show that
$\onepar{t}$ is polynomial. We say that $\onepar{t}$ is
the \emph{one-parameter subgroup generated by $t$}.

We have 
\[
\cancur{t}(1)= t = \onepar{t}(1) 
\qqand
\cancur{t}(0)= u = \onepar{t}(0).
\] 
This is illustrated below.

\[
\psfrag{0}{\tiny $(c=0)$}
\psfrag{1}{\tiny $(c=1)$}
\psfrag{u}{$u$}
\psfrag{t}{$t$}
\psfrag{A}{$\onepar{t}$}
\psfrag{B}{$\cancur{t}$}
\includegraphics[width=2.2in]{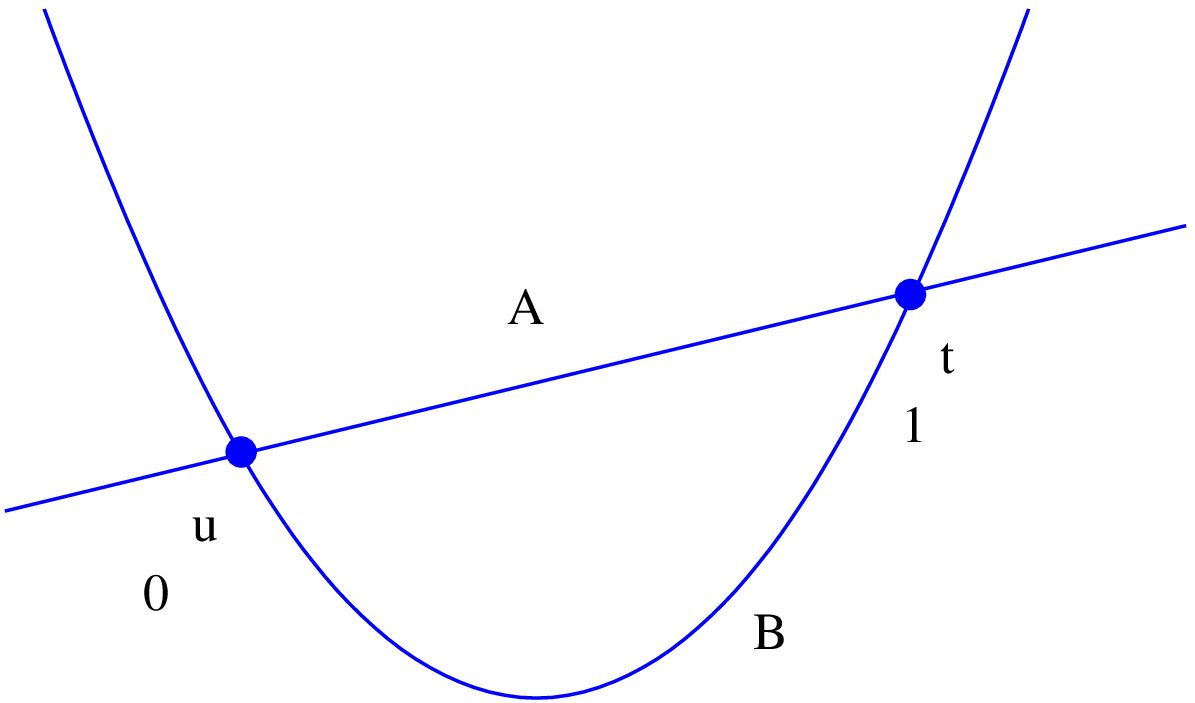}
\]
 
We proceed to apply the construction of Corollary~\ref{c:glikecurve}
to these curves.
 
We first describe the derivative of the curve $\cancur{t}$.
The \emph{number operator} is the morphism of species $\degmap:\tp\to\tp$ 
defined by
\begin{equation}\label{e:degmap}
\degmap_I:\tp[I] \to \tp[I],
\quad
\degmap_I(x) = \abs{I}\,x.
\end{equation}
Note that
\begin{equation}\label{e:degmap-der}
 \cancur{t}'(1) = \degmap(t).
\end{equation}

\begin{corollary}\label{c:glike-dynkin}
Let $g$ be a group-like series of a Hopf monoid $\thh$. 
Then the series
\[
g^{-1}\conv \degmap(g) \qand \degmap(g)\conv g^{-1}
\]
are $(u,u)$-primitive. 
In particular, if $\thh$ is connected, then both series are primitive.
\end{corollary}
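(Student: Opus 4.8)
The plan is to obtain this as an immediate application of Corollary~\ref{c:glikecurve} to the canonical curve $\cancur{g}$ of~\eqref{e:can-curve1}. The first step is to produce the right group-like curve. Taking $t=g$ in~\eqref{e:can-curve1}, the curve $\cancur{g}$ has components $\cancur{g}_I(c)=c^{\abs{I}}\,g_I$. The key observation is that this is precisely the Hadamard product $e(c)\times g$, since $(e(c)\times s)_I=c^{\abs{I}}\,s_I$ for any series $s$, as recorded in Section~\ref{ss:expser}. Thus $\cancur{g}$ is the one-parameter family $c\mapsto e(c)\times g$.

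Next I would check that $\cancur{g}$ is a group-like curve, i.e. that $\cancur{g}(c)$ is group-like for every $c\in\Kb$. This follows at once from the identification above together with the fact, noted after~\eqref{e:had-gls}, that the Hadamard product of a group-like series with $e(c)$ is again group-like. Since $g$ is group-like by hypothesis, each $\cancur{g}(c)=e(c)\times g$ is group-like, so $\cancur{g}$ is a group-like curve in the sense of the definition preceding Proposition~\ref{p:glikecurve}. I would then evaluate the curve and its derivative at $c_0=1$: on one hand $\cancur{g}_I(1)=1^{\abs{I}}\,g_I=g_I$, so $\cancur{g}(1)=g$; on the other hand formula~\eqref{e:degmap-der} gives $\cancur{g}'(1)=\degmap(g)$. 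Applying Corollary~\ref{c:glikecurve} with $\gamma=\cancur{g}$, $c_0=1$, $g=\gamma(1)$, and $x=\gamma'(1)=\degmap(g)$ then yields directly that $g^{-1}\conv\degmap(g)$ and $\degmap(g)\conv g^{-1}$ are $(u,u)$-primitive, and primitive when $\thh$ is connected.

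The argument amounts to recognizing the correct curve, so there is no substantial obstacle beyond confirming the two identities $\cancur{g}(c)=e(c)\times g$ and $\cancur{g}'(1)=\degmap(g)$, both of which are already available in the excerpt. The conceptual point worth stressing is that $\cancur{g}$ is the radial reparametrization of $g$ whose tangent vector at $c=1$ is exactly the number operator applied to $g$; this is what causes $\degmap$ to be the natural operator producing a primitive series here, in parallel with the companion curve $\onepar{g}$ of~\eqref{e:can-curve2} analyzed in the surrounding discussion.
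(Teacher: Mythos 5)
Your proposal is correct and follows exactly the paper's own proof: the paper likewise applies Corollary~\ref{c:glikecurve} to the curve $\cancur{g}$, verifies group-likeness via~\eqref{e:had-gls}, and uses $\cancur{g}(1)=g$ together with $\cancur{g}'(1)=\degmap(g)$. The only difference is that you spell out the identification $\cancur{g}(c)=e(c)\times g$ explicitly, which the paper leaves implicit in its citation of~\eqref{e:had-gls}.
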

\begin{proof}
We apply Corollary~\ref{c:glikecurve} to the curve $\cancur{g}$.
In view of~\eqref{e:had-gls}, this is a group-like curve.
Since $\cancur{g}(1)=g$ and $\cancur{g}'(1)=\degmap(g)$, the result follows.
\end{proof}

We turn to the derivative of the one-parameter subgroup $\onepar{t}$.

\begin{lemma}\label{l:pow-der}
Let $t$ be a series of a monoid $\ta$ satisfying~\eqref{e:ser-nonvanish}. 
Then, for any $c\in\Kb$,
\begin{equation}\label{e:pow-der}
\onepar{t}'(c) = t^c\conv \log t = \log t \conv t^c.
\end{equation}
\end{lemma}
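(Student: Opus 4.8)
The plan is to recognize $\onepar{t}(c) = t^c$ as a one-variable ``formal functional calculus'' curve and differentiate it using the chain rule (Proposition~\ref{p:chain-rule}). Since $\onepar{t}(c) = p_c(t-u)$ where $p_c(\varx) = (1+\varx)^c$, the cleanest route is to observe that the scalar $c$ enters only through the exponent, so I should work with the third identity in~\eqref{e:pow-add}, namely $t^c = \exp(c\,\log t)$. First I would set $s := \log t$, which by the discussion around~\eqref{e:calc-log} is a series satisfying~\eqref{e:ser-vanish} (i.e.\ $s_\emptyset = 0$), and consider the curve $\gamma(c) := \onepar{t}(c) = \exp(c\,s)$.

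The key computation is then a differentiation of $\exp(c\,s)$ in $c$. I would introduce the auxiliary curve $\delta(c) := c\,s$, whose derivative is $\delta'(c) = s$ (constant in $c$), and write $\gamma = \exp\circ\,\delta$. To apply the chain rule~\eqref{e:chain-rule} with $a(\varx) = \exp(\varx)$, I must verify the commutativity hypothesis $[\delta(c),\delta'(c)] = 0$; but $\delta(c) = c\,s$ and $\delta'(c) = s$ are scalar multiples of the same series $s$, so their commutator~\eqref{e:commbracseries} vanishes by bilinearity and antisymmetry of the bracket. The chain rule then gives
\[
\gamma'(c) = \exp'\!\bigl(\delta(c)\bigr)\conv \delta'(c) = \exp\bigl(c\,s\bigr)\conv s,
\]
using that $\exp' = \exp$ as formal power series (from~\eqref{e:ser-deriv}). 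Substituting back $\exp(c\,s) = t^c$ and $s = \log t$ yields $\onepar{t}'(c) = t^c\conv\log t$. The chain rule~\eqref{e:chain-rule} simultaneously supplies the other order $\delta'(c)\conv\exp'(\delta(c)) = \log t\conv t^c$, giving both equalities at once.

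The main obstacle I anticipate is purely bookkeeping: I must confirm that the curve $\gamma(c) = \exp(c\,s)$ is genuinely polynomial in $c$ in each component $\gamma_I$, so that Proposition~\ref{p:chain-rule} applies (it is stated for polynomial curves). This follows from~\eqref{e:calc-exp}, since $\exp(c\,s)_I = \sum_{F\vDash I} \frac{1}{\len(F)!}\,\mu_F\bigl((c\,s)_F\bigr) = \sum_{F\vDash I}\frac{c^{\len(F)}}{\len(F)!}\,\mu_F(s_F)$, a finite polynomial in $c$ because only compositions $F$ of the fixed finite set $I$ contribute and each contributes $c^{\len(F)}$. An alternative, avoiding the chain rule altogether, would be to differentiate this explicit polynomial expression termwise and reorganize the sum using~\eqref{e:convseries}; but invoking Proposition~\ref{p:chain-rule} is both shorter and conceptually transparent, so that is the route I would take, relegating the polynomiality check and the vanishing of the commutator to one-line verifications.
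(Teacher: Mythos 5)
Your proof is correct and follows essentially the same route as the paper's: both write $t^c=\exp(c\log t)$, apply the chain rule (Proposition~\ref{p:chain-rule}) to the linear curve $c\mapsto c\log t$, and note that the commutator hypothesis holds trivially since the curve and its derivative are scalar multiples of $\log t$. Your additional checks (polynomiality of $\exp(c\,s)$ and $\exp'=\exp$) are fine but are left implicit in the paper, as they are immediate.
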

\begin{proof}
Let $\gamma$ be the curve defined by
\[
\gamma(c) = c \log t.
\]
We have 
\[
\onepar{t}(c) = t^c = \exp(c \log t) = \exp\bigl(\gamma(c)\bigr).
\]
Since $\gamma'(c)=\log t$, we have $[\gamma(c),\gamma'(c)]=0$, and the chain
rule~\eqref{e:chain-rule} applies. We deduce that
\[
\onepar{t}'(c) = \exp\bigl(\gamma(c)\bigr) \conv \gamma'(c) = t^c\conv \log t.
\]
For the same reason, $\onepar{t}'(c) = \log t \conv t^c$.
\end{proof}

Let us apply the construction of Corollary~\ref{c:glikecurve} to the curve $\onepar{g}$,
where $g$ is a group-like series of a connected Hopf monoid $\thh$. 
First of all, the curve is group-like by~\eqref{e:glike-pow}. 
Lemma~\ref{l:pow-der} implies that $\onepar{g}'(1) = g\conv \log g$. 
The conclusion is then that $\log g$ is a primitive series of $\thh$. 
We thus recover a part of Theorem~\ref{t:exp-log}.

The curves $\onepar{t}$ enjoy special properties, 
as discussed next.

\begin{proposition}\label{p:one-par}
Let $\thh$ be a connected bimonoid and $\gamma$ a curve on $\thh$
such that $\gamma(0)=u$.
The following conditions are equivalent.
\begin{enumerate}[(i)]
\item\label{e:one-par1} $\gamma(c+d) = \gamma(c)\conv\gamma(d)$ for all $c$ and $d\in\Kb$.
\item\label{e:one-par2} There is a series $s$ satisfying~\eqref{e:ser-vanish}
such that $\gamma'(c) = \gamma(c)\conv s$ for all $c\in\Kb$.
\item\label{e:one-par3} There is a series $t$ satisfying~\eqref{e:ser-nonvanish}
such that $\gamma=\onepar{t}$.
\end{enumerate}
Assume (any) one of these conditions holds. Then
\[
t=\gamma(1), \quad s=\gamma'(0) \qand s=\log t.
\]
In addition,
\[
\text{$\gamma$ is a group-like curve} \iff t\in\Gls(\thh) \iff s\in\Prs(\thh).
\]
\end{proposition}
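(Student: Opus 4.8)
The plan is to establish the cycle of implications (i) $\Rightarrow$ (ii) $\Rightarrow$ (iii) $\Rightarrow$ (i) and then read off the supplementary identifications. Throughout I would work componentwise: each $\gamma_I$ is a polynomial map $\Kb\to\thh[I]$, so the defining identities are polynomial identities in the relevant scalar variables, which I may differentiate, and I may argue by induction on $\abs{I}$.

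For (i) $\Rightarrow$ (ii), I would differentiate the functional equation $\gamma(c+d)=\gamma(c)\conv\gamma(d)$ with respect to $d$ and set $d=0$; since $\conv$ is bilinear by~\eqref{e:convseries} and $\gamma(c)$ is constant in $d$, this gives $\gamma'(c)=\gamma(c)\conv\gamma'(0)$, so $s:=\gamma'(0)$ works. To check $s_\emptyset=0$, note that on the empty set the identity reads $\gamma_\emptyset(c+d)=\gamma_\emptyset(c)\gamma_\emptyset(d)$ in $\thh[\emptyset]=\Kb$, with $\gamma_\emptyset(0)=1$; a polynomial $p$ with $p(c+d)=p(c)p(d)$ and $p(0)=1$ must be the constant $1$ (compare leading coefficients in $c$), whence $\gamma'_\emptyset\equiv 0$. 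The implication (iii) $\Rightarrow$ (i) is immediate from the first identity in~\eqref{e:pow-add}: $\gamma(c+d)=t^{c+d}=t^c\conv t^d=\gamma(c)\conv\gamma(d)$.

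The crux is (ii) $\Rightarrow$ (iii), an initial value problem. Given $s$ with $s_\emptyset=0$, I would set $t:=\exp(s)$, so that $t_\emptyset=\iota_\emptyset(1)$ and $\log t=s$ by Theorem~\ref{t:exp-log}. Lemma~\ref{l:pow-der} then gives $\onepar{t}'(c)=t^c\conv\log t=\onepar{t}(c)\conv s$ with $\onepar{t}(0)=u$, so $\onepar{t}$ solves the same system as $\gamma$. It remains to prove uniqueness for the linear equation $\beta'(c)=\beta(c)\conv s$ with $\beta(0)=0$. I expect the main obstacle to be organizing this induction cleanly: because $s_\emptyset=0$, the summand indexed by $T=\emptyset$ drops out of $(\beta\conv s)_I=\sum_{I=S\sqcup T}\mu_{S,T}\bigl(\beta_S(c)\otimes s_T\bigr)$, so $\beta'_I(c)$ depends only on the components $\beta_S$ with $\abs{S}<\abs{I}$. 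Inducting on $\abs{I}$ yields $\beta'_I\equiv 0$, and then $\beta_I\equiv 0$ from $\beta_I(0)=0$; hence $\gamma=\onepar{t}=\onepar{\exp s}$, which is (iii).

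Finally I would record the identifications under any of the conditions. From (iii), $\gamma(1)=t^{1}=t$, so $t=\gamma(1)$; evaluating the relation in (ii) at $c=0$ gives $\gamma'(0)=\gamma(0)\conv s=u\conv s=s$, so $s=\gamma'(0)$; and $s=\log t$ is the relation $t=\exp(s)$ from the construction in (ii) $\Rightarrow$ (iii), read through the inverse bijections of Theorem~\ref{t:exp-log}. For the last equivalence, $\gamma$ is a group-like curve iff $t=\gamma(1)\in\Gls(\thh)$: one direction is evaluation at $c=1$, and the other is~\eqref{e:glike-pow}, which shows $t^c=\gamma(c)\in\Gls(\thh)$ for all $c$ once $t$ is group-like. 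The remaining equivalence $t\in\Gls(\thh)\iff s=\log t\in\Prs(\thh)$ is precisely the restricted bijection~\eqref{e:exp-log2} of Theorem~\ref{t:exp-log}.
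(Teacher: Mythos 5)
Your proof is correct and takes essentially the same route as the paper's: differentiate at $d=0$ for (i)$\Rightarrow$(ii), set $t=\exp(s)$ and combine Lemma~\ref{l:pow-der} with uniqueness for the linear initial value problem for (ii)$\Rightarrow$(iii), use the first identity in~\eqref{e:pow-add} for (iii)$\Rightarrow$(i), and deduce the final statements from~\eqref{e:exp-log2} and~\eqref{e:glike-pow}. The only difference is that you spell out two details the paper leaves implicit --- the verification that $s_\emptyset=0$ and the induction on $\abs{I}$ proving uniqueness of solutions of $\beta'(c)=\beta(c)\conv s$, $\beta(0)=0$ --- and both are correct.
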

\begin{proof}
Assume~\eqref{e:one-par1} holds. Fixing $c$ and taking derivatives at $d=0$
we obtain~\eqref{e:one-par2}. Assume the latter holds.
If $t=\exp s$, then $\onepar{t}$ is a solution
of the differential equation in
~\eqref{e:one-par2}, according to~\eqref{e:pow-der}. 
Since $\gamma(0)=u=\onepar{t}(0)$, uniqueness of solutions of such equations
guarantees that $\gamma=\onepar{t}$,
proving that~\eqref{e:one-par3} holds. Finally, the latter
implies~\eqref{e:one-par1} by the first identity in~\eqref{e:pow-add}.

The last statement follows from~\eqref{e:exp-log2} and~\eqref{e:glike-pow}.
\end{proof}

Note also that, in the situation of Proposition~\ref{p:one-par}, 
$\gamma(c)\conv s = s\conv\gamma(c)$ for all $c\in\Kb$.

\subsection{Series of the internal Hom species}\label{ss:seriH}

Recall the functors $\iH$ and $\iE$ from Section~\ref{ss:iH}. 
Let $\tp$ and $\tq$ be species.
A series of the species $\iH(\tp,\tq)$ is precisely a morphism of
species from $\tp$ to $\tq$:
\begin{equation}\label{e:seriH}
\Ser\bigl(\iH(\tp,\tq)\bigr) = \Hom_{\Ssk}(\tp,\tq).
\end{equation}
This follows by comparing~\eqref{e:mor-sp}
and~\eqref{e:iHact}, or by using~\eqref{e:iH} and~\eqref{e:series-mor}.
We may also recover~\eqref{e:series-mor} by setting $\tp=\wE$ in~\eqref{e:seriH}.
Setting $\tp=\tq$ we obtain
\[
\Ser\bigl(\iE(\tp)\bigr) = \End_{\Ssk}(\tp).
\]

If $\ta$ is a monoid and $\tc$ is a comonoid, 
then $\iH(\tc,\ta)$ is a monoid (Section~\ref{ss:iH}) and 
hence $\Ser\bigl(\iH(\tc,\ta)\bigr)$ is an algebra under the Cauchy product~\eqref{e:convseries}.
On the other hand, the space $\Hom_{\Ssk}(\tc,\ta)$ is an algebra 
under the convolution product of Section~\ref{ss:convolution}. 
Building on~\eqref{e:seriH}, we have that these two are the same algebra:
$\Ser\bigl(\iH(\tc,\ta)\bigr) = \Hom_{\Ssk}(\tc,\ta)$.
In particular, the unit series $u$~\eqref{e:unitseries} coincides in this case with
the morphism~\eqref{e:unit-conv}.

Let $\thh$ and $\tk$ be connected bimonoids.
Assume that $\thh$ is finite-dimensional. 
As explained in Section~\ref{ss:iH}, 
$\iH(\thh,\tk)$ is then a finite-dimensional connected bimonoid. 
We then have, by~\eqref{e:exp-glike},
\[
\Exs\bigl(\iH(\thh,\tk)\bigr)\subseteq\Gls\bigl(\iH(\thh,\tk)\bigr).
\]
This inclusion can be refined as follows. 
We let 
\[
\Hom_{\Mo{\Ssk}}(\thh,\tk)
\]
denote the subset of $\Hom_{\Ssk}(\thh,\tk)$ consisting of morphisms of monoids. 
We similarly use $\Hom_{\Co{\Ssk}}(\thh,\tk)$ for comonoid morphisms.

\begin{proposition}\label{p:exp-mor-glike}
For $\thh$ and $\tk$ as above, we have
\[
\Exs\bigl(\iH(\thh,\tk)\bigr)\subseteq \Hom_{\Mo{\Ssk}}(\thh,\tk) \subseteq \Gls\bigl(\iH(\thh,\tk)\bigr)
\]
and
\[
\Exs\bigl(\iH(\thh,\tk)\bigr)\subseteq \Hom_{\Co{\Ssk}}(\thh,\tk) \subseteq \Gls\bigl(\iH(\thh,\tk)\bigr).
\]
\end{proposition}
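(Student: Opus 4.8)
The plan is to use the identification $\Ser\bigl(\iH(\thh,\tk)\bigr) = \Hom_{\Ssk}(\thh,\tk)$ of~\eqref{e:seriH} to regard a series of $\iH(\thh,\tk)$ as a morphism of species $f\colon\thh\to\tk$ with $s_I=f_I$, and then to translate each of the four membership conditions into an identity among the maps $f_I$. First I would unwind, from Section~\ref{ss:iH}, the bimonoid structure of $\iH(\thh,\tk)$, recording which factor contributes its product and which its coproduct: by~\eqref{e:iHprod} (with $\thh$ as comonoid and $\tk$ as monoid) the product sends $f_S\otimes f_T$ to $\mu^{\tk}_{S,T}(f_S\otimes f_T)\Delta^{\thh}_{S,T}$, while by~\eqref{e:iHcoprod} (with $\thh$ as monoid and $\tk$ as comonoid, using finite-dimensionality of $\thh$) the coproduct $\Delta^{\iH}_{S,T}(f_I)$ is, under the canonical isomorphism, transpose to $\Delta^{\tk}_{S,T}f_I\mu^{\thh}_{S,T}$. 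Reading off~\eqref{e:expser1}--\eqref{e:expser2} and~\eqref{e:glike1}--\eqref{e:glike2}, this shows that $s$ is exponential exactly when $f_I = \mu^{\tk}_{S,T}(f_S\otimes f_T)\Delta^{\thh}_{S,T}$ for all $I=S\sqcup T$, and group-like exactly when $\Delta^{\tk}_{S,T}f_I\mu^{\thh}_{S,T} = f_S\otimes f_T$; whereas $f$ is a morphism of monoids (resp.\ comonoids) precisely when $f_I\mu^{\thh}_{S,T} = \mu^{\tk}_{S,T}(f_S\otimes f_T)$ (resp.\ $\Delta^{\tk}_{S,T}f_I = (f_S\otimes f_T)\Delta^{\thh}_{S,T}$).

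With this dictionary in hand, every inclusion becomes a one-line consequence of the splitting identities of Proposition~\ref{p:hopf-split}, namely $\Delta^{\thh}_{S,T}\mu^{\thh}_{S,T}=\id$ and $\Delta^{\tk}_{S,T}\mu^{\tk}_{S,T}=\id$ (both bimonoids being connected, with $q=1$). For $\Exs\bigl(\iH(\thh,\tk)\bigr)\subseteq\Hom_{\Mo{\Ssk}}(\thh,\tk)$ I would precompose the exponential identity with $\mu^{\thh}_{S,T}$ and cancel $\Delta^{\thh}_{S,T}\mu^{\thh}_{S,T}=\id$; dually, for $\Exs\bigl(\iH(\thh,\tk)\bigr)\subseteq\Hom_{\Co{\Ssk}}(\thh,\tk)$ I would postcompose with $\Delta^{\tk}_{S,T}$ and cancel $\Delta^{\tk}_{S,T}\mu^{\tk}_{S,T}=\id$. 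For $\Hom_{\Mo{\Ssk}}(\thh,\tk)\subseteq\Gls\bigl(\iH(\thh,\tk)\bigr)$ I would substitute the monoid-morphism identity into $\Delta^{\tk}_{S,T}f_I\mu^{\thh}_{S,T}$ and again invoke $\Delta^{\tk}_{S,T}\mu^{\tk}_{S,T}=\id$; for $\Hom_{\Co{\Ssk}}(\thh,\tk)\subseteq\Gls\bigl(\iH(\thh,\tk)\bigr)$ I would substitute the comonoid-morphism identity and invoke $\Delta^{\thh}_{S,T}\mu^{\thh}_{S,T}=\id$. The empty-set clauses (the normalizations $s_\emptyset=\iota_\emptyset(1)$ and $\epsilon_\emptyset(s_\emptyset)=1$, and the unit/counit parts of the morphism conditions) all collapse to the identity $\epsilon_\emptyset\iota_\emptyset=\id_\Kb$ furnished by axiom~\eqref{e:inverser}, since connectedness makes $\iota_\emptyset$ and $\epsilon_\emptyset$ mutually inverse isomorphisms $\Kb\cong\thh[\emptyset]$ and $\Kb\cong\tk[\emptyset]$.

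The real work here is bookkeeping rather than mathematics, and the main obstacle is getting the dictionary right: the coproduct of $\iH(\thh,\tk)$ is defined only as a transpose under the canonical isomorphism $\Hom(\thh[S],\tk[S])\otimes\Hom(\thh[T],\tk[T])\cong\Hom(\thh[S]\otimes\thh[T],\tk[S]\otimes\tk[T])$ (whence the finite-dimensionality hypothesis on $\thh$), and one must keep straight that $\thh$ contributes its coproduct to the product of $\iH(\thh,\tk)$ but its product to the coproduct, and conversely for $\tk$. Once these reformulations are established, the four inclusions are genuinely dual in pairs: interchanging $\mu$ and $\Delta$ swaps the exponential and group-like conditions, while interchanging the roles of $\thh$ and $\tk$ swaps monoid and comonoid morphisms. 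I would therefore write out two of the inclusions in full and deduce the remaining two by this symmetry.
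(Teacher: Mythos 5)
Your proposal is correct and follows essentially the same route as the paper: both translate the four membership conditions, via the identification $\Ser\bigl(\iH(\thh,\tk)\bigr)=\Hom_{\Ssk}(\thh,\tk)$ and the structure maps~\eqref{e:iHprod} and~\eqref{e:iHcoprod}, into the identities $\mu_{S,T}(f_S\otimes f_T)\Delta_{S,T}=f_I$, $\mu_{S,T}(f_S\otimes f_T)=f_I\mu_{S,T}$, $(f_S\otimes f_T)\Delta_{S,T}=\Delta_{S,T}f_I$, and $f_S\otimes f_T=\Delta_{S,T}f_I\mu_{S,T}$, and then deduce the inclusions by cancelling $\Delta_{S,T}\mu_{S,T}=\id$ from Proposition~\ref{p:hopf-split}. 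The only cosmetic difference is that the paper phrases the conclusion as ``the first condition implies the second and the third, and either of these implies the fourth,'' while you verify the same four implications one by one.
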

\begin{proof}
Let $f\in \Ser\bigl(\iH(\thh,\tk)\bigr) = \Hom_{\Ssk}(\thh,\tk)$.
In view of~\eqref{e:iHprod} and~\eqref{e:iHcoprod}, the conditions 
expressing that $f$ belongs to each of the sets 
\[
\Exs\bigl(\iH(\thh,\tk)\bigr), \
\Hom_{\Mo{\Ssk}}(\thh,\tk), \ \Hom_{\Co{\Ssk}}(\thh,\tk),
\pand \Gls\bigl(\iH(\thh,\tk)\bigr)
\] 
are, respectively,
\begin{gather*}
\mu_{S,T}(f_S\otimes f_T)\Delta_{S,T} = f_I,\\
\mu_{S,T}(f_S\otimes f_T) = f_I \mu_{S,T},\\
(f_S\otimes f_T)\Delta_{S,T} = \Delta_{S,T}f_I,\\
f_S\otimes f_T = \Delta_{S,T}f_I\mu_{S,T},
\end{gather*}
in addition to $f_\emptyset=\iota_\emptyset\epsilon_\emptyset$.
The first condition implies the second and the third, and either of these implies
the fourth, in view of~\eqref{e:hopf-split}.
\end{proof}

Let $\thh$ and $\tk$ be connected bimonoids with $\thh$ finite-dimensional, 
and $f:\thh\to\tk$ a morphism of species.
Proposition~\ref{p:exp-mor-glike} implies that if $f$ is a morphism of
either monoids or comonoids,
then it is a group-like series of $\iH(\thh,\tk)$. 
The same conclusion holds if $f$ is
an antimorphism of either monoids or comonoids. 
For this one may use a similar argument 
to that in the proof of Proposition~\ref{p:exp-mor-glike}, 
using the second identity in~\eqref{e:hopf-split}.
In particular, both the identity and the antipode of $\thh$ are
group-like series of $\iE(\thh)$.


\smallskip

Let us denote the antipodes of $\thh$, $\tk$, and $\iH(\thh,\tk)$ by
$\apode^\thh$, $\apode^\tk$, and $\apode^\iH$, respectively.

\begin{corollary}\label{c:apode-iH}
Let $\thh$ and $\tk$ be as above.
If $f:\thh\to\tk$ is a morphism of monoids, then
$\apode^\iH(f) = f \apode^\thh$.
If $f$ is a morphism of comonoids, then
$\apode^\iH(f) = \apode^\tk f$.
\end{corollary}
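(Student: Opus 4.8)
The plan is to identify $\apode^\iH(f)$ with the convolution inverse of $f$ and then invoke the two inversion formulas recalled at the end of Section~\ref{ss:bim-hopf}. Since $\thh$ is finite-dimensional and $\thh,\tk$ are connected bimonoids, $\iH(\thh,\tk)$ is a connected bimonoid, hence a Hopf monoid by Proposition~\ref{p:conn-hopf}, so its antipode $\apode^\iH$ exists. First I would record, via Proposition~\ref{p:exp-mor-glike}, that a morphism of monoids (respectively comonoids) $f:\thh\to\tk$ is a group-like series of $\iH(\thh,\tk)$, so that the whole discussion of group-like series applies to $f$.

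The crucial step is to recognize that $\apode^\iH(f)$ is exactly the group inverse of $f$ in $\Gls\bigl(\iH(\thh,\tk)\bigr)$. Indeed, for a group-like series $g$ of a Hopf monoid the group inverse is computed componentwise by the antipode, $(g^{-1})_I = \apode^\iH_I(g_I)$ (Section~\ref{ss:glike}); in the notation of Section~\ref{ss:series} this reads $g^{-1} = \apode^\iH(g)$. On the other hand, the group law on $\Gls\bigl(\iH(\thh,\tk)\bigr)$ is the Cauchy product of series, which under the identification $\Ser\bigl(\iH(\thh,\tk)\bigr) = \Hom_{\Ssk}(\thh,\tk)$ coincides with the convolution product (Section~\ref{ss:seriH}). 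Consequently $\apode^\iH(f)$ equals the convolution inverse of $f$ in $\Hom_{\Ssk}(\thh,\tk)$.

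Finally I would apply the inversion formulas from the end of Section~\ref{ss:bim-hopf}. If $f$ is a morphism of monoids, then, viewing $\thh$ as a Hopf monoid and $\tk$ merely as a monoid, the convolution inverse of $f$ is $f\apode^\thh$; hence $\apode^\iH(f) = f\apode^\thh$. If instead $f$ is a morphism of comonoids, the dual statement (the antipode acting on the left) identifies the convolution inverse as $\apode^\tk f$, whence $\apode^\iH(f) = \apode^\tk f$. This disposes of both cases.

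The main obstacle is conceptual rather than computational: one must be certain that the abstract group inverse in $\Gls\bigl(\iH(\thh,\tk)\bigr)$, realized by applying $\apode^\iH$ componentwise, is literally the same element of $\Hom_{\Ssk}(\thh,\tk)$ as the convolution inverse produced by $f\apode^\thh$ or $\apode^\tk f$. Securing this rests on carefully matching the Cauchy product of series with the convolution product and on confirming that $\apode^\iH(f)$ is the componentwise operation $\bigl(\apode^\iH_I\circ f_I\bigr)_I$; once these identifications are in place, the corollary follows with no further calculation.
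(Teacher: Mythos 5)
Your proof is correct and follows essentially the same route as the paper's own: both arguments note (via Proposition~\ref{p:exp-mor-glike}) that $f$ is a group-like series of $\iH(\thh,\tk)$, identify $\apode^\iH(f)$ with its convolution inverse, and then read off that inverse as $f\apode^\thh$ (resp.\ $\apode^\tk f$) from the inversion formulas at the end of Section~\ref{ss:bim-hopf}. The only difference is that you spell out the identification of the Cauchy product on $\Ser\bigl(\iH(\thh,\tk)\bigr)$ with convolution on $\Hom_{\Ssk}(\thh,\tk)$, which the paper leaves implicit.
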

\begin{proof}
Since in both cases $f$ is a group-like series of $\iH(\thh,\tk)$,
the series $\apode^\iH(f)$ is its inverse with respect to convolution.
But we know from Section~\ref{ss:bim-hopf} that this inverse
is as stated.
\end{proof}

The previous result can also be shown using 
Takeuchi's formula~\eqref{e:antipode-r} and Corollary~\ref{c:hopf-split}:
For $f$ a morphism of monoids,
\begin{multline*}
\apode^\iH_I(f_I) = \sum_{F\vDash I} (-1)^{\len(F)} \mu_{F}\Delta_{F}f_I\mu_{F}\Delta_{F} 
= \sum_{F\vDash I} (-1)^{\len(F)} \mu_{F}\Delta_{F}\mu_{F}f_F\Delta_{F}\\
= \sum_{F\vDash I} (-1)^{\len(F)} \mu_{F}f_F\Delta_{F} 
= \sum_{F\vDash I} (-1)^{\len(F)} f_I\mu_{F}\Delta_{F} 
= f_I\apode^\thh_I.
\end{multline*}
Takeuchi's formula is used in the first and last steps,
$f_I\mu_{F}=\mu_{F}f_F$ is used twice, and $\Delta_{F}\mu_{F}=\id$ is used once.

\begin{remark}
Connectedness is essential in Corollary~\ref{c:apode-iH}.
If $H$ and $K$ are Hopf algebras, with $H$ finite-dimensional,
then 
\[
\Hom(H,K)\cong H^*\otimes K
\] 
is a Hopf algebra, and the antipode
sends any linear map $f:H\to K$ to $\apode^K f \apode^H$, in contrast
to Corollary~\ref{c:apode-iH}. 
\end{remark}


{}From Proposition~\ref{p:iH-prim} we know that
\[
\iH\bigl(\thh,\Pc(\tk)\bigr) \subseteq \Pc\bigl(\iH(\thh,\tk)\bigr)
\qand
\iH\bigl(\Qc(\thh),\tk\bigr) \subseteq \Pc\bigl(\iH(\thh,\tk)\bigr).
\]
Passing to series we obtain, in view of~\eqref{e:primser-part} and~\eqref{e:seriH}, 
\[
\Hom_{\Ssk}\bigl(\thh,\Pc(\tk)\bigr) \subseteq \Prs\bigl(\iH(\thh,\tk)\bigr)
\qand
\Hom_{\Ssk}\bigl(\Qc(\thh),\tk\bigr) \subseteq \Prs\bigl(\iH(\thh,\tk)\bigr).
\]

We now show that under the $\exp$-$\log$ correspondence of Section~\ref{ss:exp-log}
\[
\xymatrix@C+15pt{
\Prs\bigl(\iH(\thh,\tk)\bigr) \ar@<0.6ex>[r]^-{\exp} & \ar@<0.6ex>[l]^-{\log} 
\Gls\bigl(\iH(\thh,\tk)\bigr)
}
\]
the space $\Hom_{\Ssk}\bigl(\thh,\Pc(\tk)\bigr)$ corresponds with
$\Hom_{\Co{\Ssk}}(\thh,\tk)$ (provided $\thh$ is cocommutative),
and the space $\Hom_{\Ssk}\bigl(\Qc(\thh),\tk\bigr)$ corresponds with
$\Hom_{\Mo{\Ssk}}(\thh,\tk)$ (provided $\tk$ is commutative).
These results hold in greater generality, as stated below.
In particular, we no longer require finite-dimensionality.

\smallskip

Let $\ta$ be a monoid and $\tc$ be a comonoid. Then $\iH(\thh,\ta)$
and $\iH(\tc,\tk)$ are monoids under convolution. The $\exp$ and $\log$
maps below act on the spaces of series of these monoids, namely
$\Hom_{\Ssk}(\thh,\ta)$ and $\Hom_{\Ssk}(\tc,\tk)$.

\begin{theorem}\label{t:exp-log-iH}
Let $\tc$ be a cocommutative comonoid and $\tk$ a connected bimonoid.
The maps $\exp$ and $\log$ restrict to inverse bijections
\begin{equation}\label{exp-log-P}
\xymatrix@C+15pt{
\Hom_{\Ssk}\bigl(\tc,\Pc(\tk)\bigr) \ar@<0.6ex>[r]^-{\exp} & \ar@<0.6ex>[l]^-{\log} 
\Hom_{\Co{\Ssk}}(\tc,\tk).
}
\end{equation}
Let $\thh$ be a connected bimonoid and $\ta$ a commutative monoid.
The maps $\exp$ and $\log$ restrict to inverse bijections
\begin{equation}\label{exp-log-Q}
\xymatrix@C+15pt{
\Hom_{\Ssk}\bigl(\Qc(\thh),\ta\bigr)\Bigr) \ar@<0.6ex>[r]^-{\exp} & \ar@<0.6ex>[l]^-{\log} 
\Hom_{\Mo{\Ssk}}(\thh,\ta).
}
\end{equation}
\end{theorem}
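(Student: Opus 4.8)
The plan is to work inside the convolution algebra $A:=\Hom_{\Ssk}(\tc,\tk)$, which by~\eqref{e:seriH} is the algebra of series of the monoid $\iH(\tc,\tk)$, so that the operations $\exp$ and $\log$ of Section~\ref{ss:exp-log} act on it. The first part of Theorem~\ref{t:exp-log} already supplies inverse bijections between $\{f\in A\mid f_\emptyset=0\}$ and $\{t\in A\mid t_\emptyset=\iota_\emptyset\epsilon_\emptyset\}$. Every morphism $\tc\to\Pc(\tk)$ has vanishing $\emptyset$-component, since $\Pc(\tk)$ is positive; and a comonoid morphism $g$ satisfies $\epsilon_\emptyset g_\emptyset=\epsilon_\emptyset$, whence $g_\emptyset=\iota_\emptyset\epsilon_\emptyset$ because $\tk$ is connected (so that $\iota_\emptyset$ and $\epsilon_\emptyset$ are mutually inverse by~\eqref{e:inverser}). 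Thus it remains to prove the two inclusions $\exp\bigl(\Hom_{\Ssk}(\tc,\Pc(\tk))\bigr)\subseteq\Hom_{\Co{\Ssk}}(\tc,\tk)$ and $\log\bigl(\Hom_{\Co{\Ssk}}(\tc,\tk)\bigr)\subseteq\Hom_{\Ssk}(\tc,\Pc(\tk))$.

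The central device is to recast the (quadratic) comultiplicativity condition as a convolution identity in $B:=\Hom_{\Ssk}(\tc,\tk\bdot\tk)=\Ser\bigl(\iH(\tc,\tk\bdot\tk)\bigr)$. A series $g\in A$ is comultiplicative exactly when $\Delta^{\tk}\circ g=(g\bdot g)\circ\Delta^{\tc}$ in $B$. Now $\Delta^{\tk}\colon\tk\to\tk\bdot\tk$ is a morphism of monoids (Section~\ref{ss:bim-hopf}), so post-composition with it is an algebra map $A\to B$, and by the naturality of functional calculus (Proposition~\ref{p:fun-calc}) we get $\Delta^{\tk}\circ\exp(f)=\exp(\Delta^{\tk}\circ f)$. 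For the other side I would introduce the two monoid morphisms $\tk\to\tk\bdot\tk$ given by $z\mapsto z\otimes 1$ and $z\mapsto 1\otimes z$, together with the induced algebra maps $\jmath_1,\jmath_2\colon A\to B$. A direct check using~\eqref{e:cau-mon} and~\eqref{e:inter-conv} gives, for \emph{every} $g$, the factorization $(g\bdot g)\circ\Delta^{\tc}=\jmath_1(g)\conv\jmath_2(g)$. This is the crucial point: the doubling $g\mapsto(g\bdot g)\circ\Delta^{\tc}$ is not linear and so cannot be pushed through $\exp$ directly, but once it is written as a convolution of the two genuinely functorial pieces $\jmath_1(g),\jmath_2(g)$, functional calculus can be transported one factor at a time.

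With this in hand the argument is short. Since $\tc$ is cocommutative, $\jmath_1(h)$ and $\jmath_2(h)$ commute in $B$ for any $h$, because their two convolution orders differ by the transposition of the tensor factors of $\Delta^{\tc}$, which cocommutativity equates. For the forward inclusion, let $f\in\Hom_{\Ssk}(\tc,\Pc(\tk))$ and $g=\exp(f)$. Primitivity of the values $f_I$ (annihilation of $\Delta_{S,T}$ for $S,T\neq\emptyset$) together with~\eqref{e:conn-unit} gives the componentwise identity $\Delta^{\tk}\circ f=\jmath_1(f)+\jmath_2(f)$ in $B$. Hence, by naturality of $\exp$ for $\jmath_1,\jmath_2$ and the commuting relation through~\eqref{e:exp-add},
\[
\Delta^{\tk}\circ g=\exp\bigl(\Delta^{\tk}\circ f\bigr)=\exp\bigl(\jmath_1(f)+\jmath_2(f)\bigr)=\jmath_1(g)\conv\jmath_2(g)=(g\bdot g)\circ\Delta^{\tc},
\]
so $g$ is comultiplicative; the counit condition was settled above, so $g\in\Hom_{\Co{\Ssk}}(\tc,\tk)$. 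For the reverse inclusion, let $g$ be a comonoid morphism and $f=\log(g)$. Then $\Delta^{\tk}\circ g=(g\bdot g)\circ\Delta^{\tc}=\jmath_1(g)\conv\jmath_2(g)$; applying $\log$, using naturality for $\Delta^{\tk},\jmath_1,\jmath_2$ and~\eqref{e:log-add} (legitimate since $\jmath_1(g),\jmath_2(g)$ commute), yields $\Delta^{\tk}\circ f=\jmath_1(f)+\jmath_2(f)=f\otimes 1+1\otimes f$, which says precisely that each $f_I$ is primitive; with $f_\emptyset=0$ from Theorem~\ref{t:exp-log} this gives $f\in\Hom_{\Ssk}(\tc,\Pc(\tk))$. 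Together with Theorem~\ref{t:exp-log}, the two inclusions yield~\eqref{exp-log-P}.

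Finally, the bijection~\eqref{exp-log-Q} is obtained by the formally dual argument, carried out in $\Hom_{\Ssk}(\thh,\ta)$ and $\Hom_{\Ssk}(\thh\bdot\thh,\ta)$: one reverses all arrows, replacing the monoid morphism $\Delta^{\tk}$ by the comonoid morphism $\mu^{\thh}\colon\thh\bdot\thh\to\thh$ (pre-composition now giving the algebra map, by~\eqref{e:mor-conv}), replacing $\jmath_1,\jmath_2$ by the algebra maps induced by the comonoid morphisms $\id\bdot\epsilon$ and $\epsilon\bdot\id\colon\thh\bdot\thh\to\thh$, replacing primitivity by the condition of factoring through $\Qc(\thh)$ (annihilation of decomposables), and replacing cocommutativity of $\tc$ by commutativity of $\ta$, which is exactly what forces the two relevant factors to commute. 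No finite-dimensionality is used anywhere, so the result holds in the stated generality. I expect the main obstacle to be precisely the non-linearity of the doubling $g\mapsto g\bdot g$: everything hinges on replacing it by the convolution $\jmath_1\conv\jmath_2$, so that the exponential and logarithm can be moved through each factor separately.
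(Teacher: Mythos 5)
Your proposal is correct and is essentially the paper's second proof of Theorem~\ref{t:exp-log-iH}: your maps $\jmath_1(f)$ and $\jmath_2(f)$ coincide with the paper's $(f\bdot u)\Delta$ and $(u\bdot f)\Delta$, your coderivation identity $\Delta^{\tk}\circ f=\jmath_1(f)+\jmath_2(f)$ is the paper's~\eqref{e:derivation}, and your key factorization $(g\bdot g)\circ\Delta^{\tc}=\jmath_1(g)\conv\jmath_2(g)$ repackages the paper's computation $\exp(f\bdot u)\conv\exp(u\bdot f)=\exp(f)\bdot\exp(f)$ followed by pre-composition with $\Delta^{\tc}$. The only (equivalent) difference is where cocommutativity enters: you use it to make $\jmath_1,\jmath_2$ commute in $\Hom_{\Ssk}(\tc,\tk\bdot\tk)$, while the paper uses it to make $\Delta^{\tc}$ a comonoid morphism so that pre-composition is an algebra map.
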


We provide two proofs of this result, each one with its own advantage.
The first one employs the higher forms of the bimonoid axioms of Section~\ref{s:higher}
and the combinatorics of the Tits algebra.
It extends to more general settings (beyond the scope of this paper, 
but hinted at in Section~\ref{ss:braid}) where the Tits product is defined.
The second proof relies on functional calculus and extends to a context
such as that of complete Hopf algebras.
It is essentially the same proof as those in~\cite[Appendix~A, Proposition~2.6]{Qui:1969} and~\cite[Theorem~9.4]{Sch:1994}.

\begin{firstproof}
In view of~\eqref{e:exp-log1}, it suffices to
show that $\exp$ and $\log$ map as stated. 
We show that if $f:\tc\to\Pc(\tk)$ is a morphism of species, 
then $\exp(f):\tc\to\tk$ is a morphism of comonoids. 
The remaining verifications are similar.

We have to show that
\[
\Delta_{S,T} \exp(f)_I = \bigl(\exp(f)_S\otimes \exp(f)_T\bigr)\Delta_{S,T}
\]
for $I=S\sqcup T$. We may assume that $S$ and $T$ are nonempty.

According to~\eqref{e:iHcoprod} and~\eqref{e:calc-exp},
\[
\exp(f)_I = \sum_{F\vDash I} \frac{1}{\len(F)!}\, \mu_F f_F \Delta_F.
\]
Let $G$ denote the composition $(S,T)$. 
In the following calculation, 
we make use of the higher compatibility~\eqref{e:gen-comp-conn} for $\tk$.
\[
\Delta_{S,T} \exp(f)_I = \sum_{F\vDash I} \frac{1}{\len(F)!}\, \Delta_G \mu_F f_F \Delta_F
= \sum_{F\vDash I} \frac{1}{\len(F)!}\, \mu_{GF/G} \beta \Delta_{FG/F} f_F \Delta_F.
\]
Since $f$ maps to the primitive part, $\Delta_{FG/F} f_F=0$ unless $FG=F$.
In this case, letting $F_1:=F|_S$ and $F_2:=F|_T$, we have
\[
GF = F_1\cdot F_2 
\qand
\mu_{GF/G} = \mu_{F_1}\otimes\mu_{F_2},
\]
and also
\[
\beta \Delta_{FG/F} f_F = \beta f_F = (f_{F_1}\otimes f_{F_2})\beta.
\]
Therefore,
\[
\Delta_{S,T} \exp(f)_I = \sum_{\substack{F_1\vDash S\\F_2\vDash T}}
\frac{1}{\bigl(\len(F_1) +\len(F_2)\bigr)!} (\mu_{F_1}\otimes\mu_{F_2})(f_{F_1}\otimes f_{F_2})\sum_{\substack{F:\,FG=F\\F|_S=F_1,\, F|_T=F_2}}\beta \Delta_F.
\]
Since $\tc$ is cocommutative, 
\[
\beta\Delta_F=\Delta_{F_1\cdot F_2}=(\Delta_{F_1}\otimes\Delta_{F_2})\Delta_G,
\]
by the dual of~\eqref{e:gen-comm-conn}. 
The number of terms in the last sum is
\[
\binom{\len(F_1) +\len(F_2)}{\len(F_1)}.
\]
Therefore,
\begin{align*}
\Delta_{S,T} \exp(f)_I & =
\sum_{\substack{F_1\vDash S\\F_2\vDash T}}
\frac{1}{\len(F_1)!}\,\frac{1}{\len(F_2)!}\,
\bigl((\mu_{F_1}f_{F_1}\Delta_{F_1})\otimes(\mu_{F_2}f_{F_2}\Delta_{F_2})\bigr)\Delta_G\\
& = \bigl(\exp(f)_S \otimes \exp(f)_T\bigr)\Delta_{S,T},
\end{align*}
as needed. \qed
\end{firstproof}

We remark that a morphism of species $\Qc(\thh)\to \ta$ is the same 
as a \emph{derivation} $\thh\to\ta$ when $\ta$ is endowed with the trivial
$\thh$-bimodule structure. 
Dually, a morphism of species $f:\tc\to \Pc(\tk)$ is the same as a morphism of species
$f:\tc\to\tk$ such that
\begin{equation}\label{e:derivation}
\Delta f = (f\bdot u + u\bdot f)\Delta,
\end{equation}
where $u$ is the unit in the convolution algebra $\Hom_{\Ssk}(\tc,\tk)$, as in~\eqref{e:unit-conv}. 
This says that $f$ is a coderivation from $\tc$ viewed as a trivial $\tk$-bicomodule
to $\tk$.

\begin{secondproof}
As in the first proof, we start from $f:\tc\to\Pc(\tk)$ and show that $\exp(f):\tc\to\tk$
is a morphism of comonoids. 
We deduce this fact by calculating
\begin{multline*}
\Delta \exp(f) = \exp(\Delta f) = \exp\bigl((f\bdot u + u\bdot f)\Delta\bigr) = \exp(f\bdot u + u\bdot f)\Delta =\\
\bigl(\exp(f\bdot u)\conv \exp( u\bdot f) \bigr)\Delta =
\bigl( \exp(f)\bdot \exp(f) \bigr) \Delta.
\end{multline*}

The exponentials in the middle terms are calculated in the convolution algebra
$\Hom_{\Ssk}(\tc\bdot\tc,\tk\bdot\tk)$. The Cauchy product of (co)monoids is
as in~\eqref{e:cau-mon} (with $q=1$).

The first equality holds by 
functoriality of the convolution algebra~\eqref{e:mor-conv} and naturality of functional
calculus (item~\eqref{i:nat} in Proposition~\ref{p:fun-calc}), 
since $\Delta:\tk\to\tk\bdot\tk$ is a morphism of monoids. 
The third holds by the same reason, since 
$\Delta:\tc\to\tc\bdot\tc$ is a morphism of comonoids (this uses that $\tc$ is cocommutative). 
The second holds by~\eqref{e:derivation}. 
The fourth holds by~\eqref{e:exp-add}, since
$f\bdot u$ and $u\bdot f$ commute in the convolution algebra
$\Hom_{\Ssk}(\tc\bdot\tc,\tk\bdot\tk)$ by~\eqref{e:inter-conv}. 
We complete the proof by justifying the last equality.

In this same algebra, we have
\[
(f\bdot u)^{\conv n} = f^{\conv n}\bdot u.
\]
This follows again from~\eqref{e:inter-conv}. This implies
\[
\exp(f\bdot u) = \exp(f)\bdot u \quad \text{ and similarly }\quad
\exp(u\bdot f) = u\bdot \exp(f).
\]
Hence, 
\[
\exp(f\bdot u)\conv \exp(u\bdot f) = \bigl(\exp(f)\bdot u\bigr)\conv \bigl(u\bdot \exp(f)\bigr)
= \exp(f)\bdot \exp(f)
\]
by another application of~\eqref{e:inter-conv}. \qed
\end{secondproof}

\begin{proposition}\label{p:exp-log-iH}
Let $\thh$ be a connected bimonoid and $f:\thh\to\thh$ a morphism of species
with $f_\emptyset=\iota_\emptyset\epsilon_\emptyset$.
Then $\log(f)$ and $f$ agree when restricted to the primitive part $\Pc(\thh)$, and 
also when followed by the canonical projection to $\Qc(\thh)$.
\begin{align*}
&
\begin{gathered}
\xymatrix@C+40pt{
\thh \ar@<0.6ex>[r]^-{f} \ar@<-0.6ex>[r]_-{\log(f)} & \thh\\
\Pc(\thh) \xyinc[u] \ar@/_/[ru]_{f=\log(f)}
}
\end{gathered}
&
\begin{gathered}
\xymatrix@C+40pt{
\thh \ar@<0.6ex>[r]^-{f} \ar@<-0.6ex>[r]_-{\log(f)}
 \ar@/_/[rd]_{f=\log(f)} & \thh \xyonto[d]\\
 & \Qc(\thh)
}
\end{gathered}
\end{align*}
\end{proposition}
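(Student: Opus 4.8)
The plan is to read both assertions off the explicit functional-calculus formula for $\log(f)$, using that primitivity annihilates the higher-length terms on the source side and indecomposability annihilates them on the target side. First I would note that the hypothesis $f_\emptyset=\iota_\emptyset\epsilon_\emptyset$ says precisely that $f$, regarded as a series of the monoid $\iE(\thh)=\iH(\thh,\thh)$ under convolution (Section~\ref{ss:seriH}), satisfies~\eqref{e:ser-nonvanish} relative to the unit series $u=\iota\epsilon$, since $u_\emptyset=\iota_\emptyset\epsilon_\emptyset$. Hence $\log(f)$ is defined, and~\eqref{e:calc-log} gives, for nonempty $I$,
\[
\log(f)_I = -\sum_{F\vDash I} \frac{(-1)^{\len(F)}}{\len(F)}\, \mu_F(f_F),
\]
where $\mu_F$ is the higher product of $\iE(\thh)$. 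Unwinding the convolution structure~\eqref{e:iHprod}, for $F=(I_1,\ldots,I_k)$ this term is the endomorphism $\mu_F\,(f_{I_1}\otimes\cdots\otimes f_{I_k})\,\Delta_F$ of $\thh[I]$, with $\mu_F,\Delta_F$ the higher (co)product maps of $\thh$. The one point needing care is this rewriting together with the bookkeeping at $F=(I)$: there $\mu_{(I)}=\Delta_{(I)}=\id$ and the coefficient is $-\tfrac{(-1)^1}{1}=1$, so the single-block term is exactly $f_I$.

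For the first claim I would restrict to $x\in\Pc(\thh)[I]$. By~\eqref{e:gen-prim} one has $\Delta_F(x)=0$ for every composition $F\neq(I)$, so every summand with $\len(F)\geq 2$ kills $x$ and the sum collapses to the $F=(I)$ term; thus $\log(f)_I(x)=f_I(x)$, which is the equality $\log(f)|_{\Pc(\thh)}=f|_{\Pc(\thh)}$.

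For the second claim I would post-compose with the canonical projection $\thh[I]\onto\Qc(\thh)[I]$. By~\eqref{e:def-indec} the image of $\mu_F$ for any $F\neq(I)$ is contained in the subspace being quotiented out, so every summand with $\len(F)\geq 2$ becomes $0$ in $\Qc(\thh)[I]$; once more only the $F=(I)$ term survives, giving the asserted equality after projection. This second argument is formally dual to the first, with indecomposables on the target replacing primitives on the source. I do not expect a genuine obstacle here: the entire content is the vanishing of the higher-length terms, so the only real work is transporting~\eqref{e:calc-log} correctly into the convolution monoid $\iE(\thh)$ and checking that the length-one term reproduces $f_I$ with coefficient $1$.
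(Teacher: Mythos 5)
Your proposal is correct and follows essentially the same route as the paper's proof: apply the functional-calculus formula~\eqref{e:calc-log} in the convolution monoid $\iE(\thh)$, note that the length-one term is $f_I$ with coefficient $1$, and use~\eqref{e:gen-prim} to kill all terms with $\len(F)\geq 2$ on primitive inputs. The paper only writes out the primitive-part case and leaves the dual statement for $\Qc(\thh)$ implicit, whereas you spell it out via~\eqref{e:def-indec}; that added detail is sound and exactly the intended dual argument.
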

\begin{proof}
Let $x\in\Pc(\thh)[I]$. From~\eqref{e:calc-log}, we calculate 
\[
\log(f)_I(x) = - \sum_{F\vDash I} \frac{(-1)^{\len(F)}}{\len(F)}\, \mu_F f_F \Delta_F(x) 
= f_I(x),
\]
using~\eqref{e:gen-prim}.
\end{proof}

If $f:\thh\to\thh $ is a morphism of comonoids, then it preserves the primitive part,
and we deduce from Proposition~\ref{p:exp-log-iH} that
$\log(f)$ and $f$ restrict to the same map
on $\Pc(\thh)$. 
\begin{align*}
&
\begin{gathered}
\xymatrix@C+20pt{
\thh \ar@<0.6ex>[r]^-{f} \ar@<-0.6ex>[r]_-{\log(f)} & \thh\\
\Pc(\thh) \xyinc[u] \ar[r]_{f=\log(f)} & \Pc(\thh) \xyinc[u]
}
\end{gathered}
&
\begin{gathered}
\xymatrix@C+20pt{
\thh \ar@<0.6ex>[r]^-{f} \ar@<-0.6ex>[r]_-{\log(f)} \xyonto[d] & \thh \xyonto[d]\\
\Qc(\thh) \ar[r]_{f=\log(f)} & \Qc(\thh)
}
\end{gathered}
\end{align*}
If $f$ is a morphism of monoids, then $\log(f)$ and $f$ induce the same map on the
indecomposable quotient $\Qc(\thh)$. 
Specializing further, and combining with Theorem~\ref{t:exp-log-iH}, we obtain the
following result.

\begin{corollary}\label{c:exp-log-iH}
Let $\thh$ be a connected bimonoid. 
If $\thh$ is cocommutative, then $\log(\id)$ maps to $\Pc(\thh)$ and is in fact 
a projection from $\thh$ onto $\Pc(\thh)$.
If $\thh$ is commutative, then $\log(\id)$ factors through $\Qc(\thh)$ and splits the
canonical projection $\thh\onto\Qc(\thh)$.
\end{corollary}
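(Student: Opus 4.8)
The plan is to deduce both assertions by specializing the two preceding results, Theorem~\ref{t:exp-log-iH} and Proposition~\ref{p:exp-log-iH}, to the case $f=\id$ and $\tc=\ta=\tk=\thh$. The only features of $\id$ that I need are that it is trivially a morphism of comonoids and a morphism of monoids, and that $\id_\emptyset=\iota_\emptyset\epsilon_\emptyset$, which holds because $\thh$ is connected (so $\thh[\emptyset]=\Kb$ and $\iota_\emptyset\epsilon_\emptyset=\id_\Kb$ by~\eqref{e:inverser}); this last point is exactly the hypothesis required to invoke Proposition~\ref{p:exp-log-iH}. The genuine content having already been established, the corollary is a bookkeeping specialization.

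For the cocommutative case, I would first note that $\id\in\Hom_{\Co{\Ssk}}(\thh,\thh)$. Applying the bijection~\eqref{exp-log-P} with $\tc=\tk=\thh$ (legitimate since $\thh$ is both cocommutative and a connected bimonoid) places $\log(\id)$ in $\Hom_{\Ssk}\bigl(\thh,\Pc(\thh)\bigr)$; that is, $\log(\id)$ maps $\thh$ into $\Pc(\thh)$. Next, Proposition~\ref{p:exp-log-iH} asserts that $\log(\id)$ and $\id$ agree on $\Pc(\thh)$, so $\log(\id)$ restricts to the identity of $\Pc(\thh)$. These two facts together show that $\log(\id)$ is idempotent with image exactly $\Pc(\thh)$, hence a projection of $\thh$ onto its primitive part.

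For the commutative case I would argue dually. Now $\id\in\Hom_{\Mo{\Ssk}}(\thh,\thh)$, so the bijection~\eqref{exp-log-Q} (valid because $\thh$ is connected and, as $\ta=\thh$, commutative) places $\log(\id)$ in $\Hom_{\Ssk}\bigl(\Qc(\thh),\thh\bigr)$; in other words $\log(\id)$ factors through the canonical projection $\pi\colon\thh\onto\Qc(\thh)$. Reading Proposition~\ref{p:exp-log-iH} on the indecomposable quotient side shows that $\log(\id)$ and $\id$ induce the same map on $\Qc(\thh)$, and $\id$ induces the identity there, so $\pi\circ\log(\id)=\pi$. Writing the factorization as $\log(\id)=\bar{s}\circ\pi$ for some $\bar{s}\colon\Qc(\thh)\to\thh$, the identity $\pi\circ\log(\id)=\pi$ becomes $\pi\circ\bar{s}\circ\pi=\pi$, and since $\pi$ is surjective this forces $\pi\circ\bar{s}=\id_{\Qc(\thh)}$; thus $\bar{s}$ splits the canonical projection, as claimed.

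I do not anticipate a serious obstacle, since all the analytic work lives in Theorem~\ref{t:exp-log-iH} and Proposition~\ref{p:exp-log-iH}. The single point demanding care is matching each hypothesis with the correct half of the two dual statements — pairing cocommutativity with the comonoid/primitive branch~\eqref{exp-log-P} and commutativity with the monoid/indecomposable branch~\eqref{exp-log-Q} — and then correctly translating ``$\log(\id)$ factors through $\Qc(\thh)$ and induces the identity on it'' into the precise statement that the induced map $\bar{s}$ is a section of the projection $\pi$.
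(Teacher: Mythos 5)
Your proof is correct and follows exactly the paper's route: the paper obtains this corollary precisely by specializing Theorem~\ref{t:exp-log-iH} and Proposition~\ref{p:exp-log-iH} to $f=\id$, pairing cocommutativity with the branch~\eqref{exp-log-P} and commutativity with~\eqref{exp-log-Q}. Your only addition is to spell out details the paper leaves implicit (the verification that $\id_\emptyset=\iota_\emptyset\epsilon_\emptyset$, and the deduction of $\pi\circ\bar{s}=\id_{\Qc(\thh)}$ from $\pi\circ\bar{s}\circ\pi=\pi$ via surjectivity of $\pi$), both of which are handled correctly.
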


We provide another (though similar) proof of 
this result in Corollary~\ref{c:char-op-1euler}.

\section{The characteristic operations (Hopf powers)}\label{s:action}

The considerations of Section~\ref{s:higher} show that the combinatorics
of set decompositions is intimately linked to the notion of bimonoid,
with set compositions playing the same role in relation to connected bimonoids.
On the other hand, in Section~\ref{s:faces}
we discussed two particular bimonoids $\tSig$ and $\tSigh$ which themselves
are built out of set (de)compositions.
This double occurrence of set (de)compositions acquires formal meaning
in this section, where we show that elements of $\tSig$ give rise to operations
on connected bimonoids, and elements of $\tSigh$ to operations
on arbitrary bimonoids.

\subsection{The characteristic operations on a bimonoid}\label{ss:can-action}

Let $\thh$ be a bimonoid and $h\in\thh[I]$. 
Given an element $z\in\tSigh[I]$, write
\[
z=\sum_{F} a_F \BH_F
\]
for some $a_F\in\Kb$, with $F$ running over the decompositions of $I$
(and all but a finite number of $a_F$ equal to zero).
Define an element $z\act h\in\thh[I]$ by
\begin{equation}\label{e:can-action}
z\act h := \sum_{F} a_F (\mu_F\Delta_F)(h).
\end{equation}
Here $\mu_F$ and $\Delta_F$ denote the higher product and coproduct of $\thh$.
We refer to $z\act h$ as the \emph{characteristic operation} of $z$ on $h$. 

In particular, for a decomposition $F$ of $I$, we have
\begin{equation}\label{e:can-actionF}
\BH_F\act h := (\mu_F\Delta_F)(h).
\end{equation}

We may take $\thh=\tSigh$, and~\eqref{e:can-action} results in an operation 
on each space $\tSigh[I]$. According to the following result,
this is simply the linearization of the Tits product
of Section~\ref{ss:decompositions}.

\begin{proposition}\label{p:ast-tits}
For any decompositions $F$ and $G$ of $I$,
\begin{equation}\label{e:ast-tits}
\BH_F\act \BH_G = \BH_{FG}.
\end{equation}
\end{proposition}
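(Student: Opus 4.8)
The plan is to apply the characteristic operation formula~\eqref{e:can-actionF} directly to the element $\BH_G\in\tSigh[I]$ and identify the result with $\BH_{FG}$. Since $\BH_F\act\BH_G = (\mu_F\Delta_F)(\BH_G)$, where $\mu_F$ and $\Delta_F$ are the higher product and coproduct of the bimonoid $\tSigh$ itself, the key is to trace $\BH_G$ through the coproduct $\Delta_F$ and then through the product $\mu_F$. The two ingredients I expect to need are Proposition~\ref{p:it-sigh}, which describes the higher (co)product maps of $\tSigh$ on the $\BH$-basis, and the combinatorics of the canonical splitting from Section~\ref{ss:gen-comp}.

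First I would compute $\Delta_F(\BH_G)$. By the coproduct formula~\eqref{e:coprod-it-sigh} in Proposition~\ref{p:it-sigh}, we have
\[
\Delta_F(\BH_G) = \BH_{FG/F}^{\can},
\]
where $\can$ is the canonical splitting of the pair $(F, FG)$ recorded in~\eqref{e:row-splitting}: writing $F=(S_1,\ldots,S_p)$ and $G=(T_1,\ldots,T_q)$, the $i$-th component of the splitting is the row $H_i := (A_{i1},\ldots,A_{iq})$ of intersections $A_{ij}=S_i\cap T_j$. Thus $\Delta_F(\BH_G)\in\tSigh(F)=\tSigh[S_1]\otimes\cdots\otimes\tSigh[S_p]$ is the elementary tensor $\BH_{H_1}\otimes\cdots\otimes\BH_{H_p}$, where each $H_i$ is precisely the restriction $G|_{S_i}$ viewed as a decomposition of $S_i$.

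Next I would apply $\mu_F$ to this tensor. By the product formula~\eqref{e:prod-it-sigh}, for any splitting $\gamma$ of a pair $(F,G')$ with $F\le G'$ one has $\mu_F(\BH_{G'/F}^{\gamma})=\BH_{G'}$. Here the splitting $\can=(H_1,\ldots,H_p)$ witnesses $F\le FG$, since concatenating the rows $H_1\cdots H_p$ reproduces exactly the listing~\eqref{e:tits} of the nonempty-and-empty intersections $A_{ij}$ in lexicographic order, which is $FG$ (recall that for decompositions all $pq$ intersections are retained). Therefore
\[
\mu_F(\Delta_F(\BH_G)) = \mu_F(\BH_{FG/F}^{\can}) = \BH_{FG},
\]
which is the desired identity~\eqref{e:ast-tits}.

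The one point requiring care—and the main obstacle—is the verification that the concatenation $H_1\cdots H_p$ of the rows equals the decomposition $FG$, i.e.\ that the canonical splitting of $(F,FG)$ is genuinely a splitting in the sense of Section~\ref{ss:decompositions} and that its concatenation recovers $FG$. This is really a bookkeeping check that the lexicographic listing of the $A_{ij}$ in~\eqref{e:tits} coincides, block for block, with the concatenation of the rows $H_i=(A_{i1},\ldots,A_{iq})$; it is immediate from the definitions but must be stated, since for decompositions the relation $F\le G$ is only a preorder and splittings are generally non-unique. Once this identification is in place, the proof is just the composition of~\eqref{e:coprod-it-sigh} and~\eqref{e:prod-it-sigh}, with no further computation needed.
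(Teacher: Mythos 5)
Your proposal is correct and follows exactly the paper's own proof: apply~\eqref{e:coprod-it-sigh} to get $\Delta_F(\BH_G)=\BH_{FG/F}^{\can}$, then~\eqref{e:prod-it-sigh} to get $\mu_F(\BH_{FG/F}^{\can})=\BH_{FG}$. The extra bookkeeping you flag—that concatenating the rows $H_1\cdots H_p$ of the canonical splitting recovers $FG$—is implicit in the paper's definition of the canonical splitting~\eqref{e:row-splitting} and in Proposition~\ref{p:it-sigh}, so it adds care but no new content.
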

\begin{proof}
We have, by~\eqref{e:prod-it-sigh} and~\eqref{e:coprod-it-sigh},
\[
\BH_F\act \BH_G = (\mu_F\Delta_F)(\BH_G) = \mu_F(\BH_{FG/F}^{\can}) = \BH_{FG}. \qedhere
\]
\end{proof}

Endow $\tSigh[I]$ with the corresponding algebra structure (the monoid algebra
of the monoid of compositions under the Tits product). The unit element is $\BH_{(I)}$
and the product is as in~\eqref{e:ast-tits}. We call $\tSigh[I]$ the \emph{Tits algebra}
of decompositions.

Consider the map
\begin{equation}\label{e:map-action}
\tSigh[I]\otimes\thh[I] \to \thh[I],
\quad
z\otimes h\mapsto z\act h.
\end{equation}
The result below shows that, when $\thh$ is cocommutative,
$\act$ is a left action of the algebra $\tSigh[I]$ on the space $\thh[I]$
(and a right action when $\thh$ is commutative). 

\begin{theorem}\label{t:can-action}
The following properties hold.
\begin{enumerate}[(i)]
\item For any $h\in\thh[I]$,
\begin{equation}\label{e:can-action-unit}
\BH_{(I)}\act h = h.
\end{equation}
\item If $\thh$ is cocommutative, then for any $z,w\in\tSigh[I]$ and
$h\in\thh[I]$,
\begin{equation}\label{e:can-action-asso}
(z\act w)\act h = z\act(w\act h).
\end{equation}
\item If $\thh$ is commutative, then for any $z,w\in\tSigh[I]$ and
$h\in\thh[I]$,
\begin{equation}\label{e:can-action-asso-op}
(z\act w)\act h = w\act(z\act h).
\end{equation}
\end{enumerate}
\end{theorem}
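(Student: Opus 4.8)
The plan is to prove all three statements from a single source: the higher compatibility axiom for the bimonoid $\thh$, in the form of Proposition~\ref{p:gen-comp}. The characteristic operation $\BH_F\act h=\mu_F\Delta_F(h)$ is built from higher products and coproducts, and the composite $(\BH_F\act)\circ(\BH_G\act)$ involves $\mu_F\Delta_F\mu_G\Delta_G$; the middle $\Delta_F\mu_G$ is exactly what~\eqref{e:gen-comp} rewrites in terms of the Tits product. So the strategy is to establish the key identity
\begin{equation*}
\BH_F\act(\BH_G\act h)=\BH_{FG}\act h
\end{equation*}
for arbitrary decompositions $F,G$ of $I$ when $\thh$ is cocommutative, and then extend by bilinearity to all $z,w\in\tSigh[I]$.

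\textbf{Key steps.} First I would dispose of (i): by definition $\BH_{(I)}\act h=\mu_{(I)}\Delta_{(I)}(h)$, and since $(I)$ is the one-block decomposition, both $\mu_{(I)}$ and $\Delta_{(I)}$ are the identity of $\thh[I]$ (the $k=1$ cases in~\eqref{e:iterated-mu} and~\eqref{e:iterated-delta}), giving~\eqref{e:can-action-unit} immediately. The heart is (ii). Writing out the left side,
\begin{equation*}
\BH_F\act(\BH_G\act h)=\mu_F\Delta_F\,\mu_G\Delta_G(h),
\end{equation*}
I would apply higher compatibility~\eqref{e:gen-comp} to the inner composite $\Delta_G\mu_G$. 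Wait---the orientation must be chosen carefully: the diagram~\eqref{e:gen-comp} rewrites $\Delta_G\mu_F$ (reading $F$ then $G$), so I would match the factors so that the interface $\Delta_F\mu_G$ is replaced via~\eqref{e:gen-comp} applied with the roles of the two decompositions set to $G$ (below) and $F$ (the coproduct side), producing $\mu_{FG/F}^{\can}$, the braiding $\beta^{\can}$, and $\Delta_{GF/G}^{\can}$. Cocommutativity of $\thh$ lets me discard the braiding $\beta^{\can}$ (the dual of Proposition~\ref{p:gen-comm} / the cocommutativity clause of Proposition~\ref{p:gen-comp-conv}), since permuting the tensor factors of a cocommutative coproduct changes nothing. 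After this substitution, higher associativity~\eqref{e:gen-asso} collapses $\mu_F\circ\mu_{FG/F}^{\can}=\mu_{FG}$ and its dual collapses $\Delta_{GF/G}^{\can}\circ\Delta_G=\Delta_{GF}$; combined with Proposition~\ref{p:ast-tits}'s computation that these higher maps assemble into the Tits product, the whole composite becomes $\mu_{FG}\Delta_{FG}(h)=\BH_{FG}\act h$. Finally, using Proposition~\ref{p:ast-tits} ($\BH_F\act\BH_G=\BH_{FG}$), the right side $(\BH_F\act\BH_G)\act h$ equals $\BH_{FG}\act h$ as well, so the basis identity reads $(\BH_F\act\BH_G)\act h=\BH_F\act(\BH_G\act h)$; bilinearity in $z,w$ then yields~\eqref{e:can-action-asso} in full.

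\textbf{Part (iii) and the main obstacle.} Statement (iii) is the commutative mirror: I would run the identical argument but now use \emph{commutativity} of $\thh$ to eliminate the braiding on the product side, which forces the factors $F,G$ to swap order and gives $\BH_F\act(\BH_G\act h)=\BH_{GF}\act h$; since $\BH_{GF}=\BH_G\act\BH_F=(\BH_F\act\BH_G)$ read with reversed operation, this produces~\eqref{e:can-action-asso-op}. The step I expect to be delicate is bookkeeping the braiding and the canonical splittings in~\eqref{e:gen-comp}: one must verify that the permutation $\can$ reordering the intersection blocks $A_{ij}$, together with the Schubert-cocycle power $q^{\dist}$ hidden in $\beta_q^{\can}$, genuinely collapses to the identity under (co)commutativity at $q=1$---i.e.\ that the $q=1$ specialization makes $\beta^{\can}$ the plain reordering isomorphism, which cocommutativity then absorbs. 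Matching $FG$ versus $GF$ orientations between the two applications of~\eqref{e:gen-comp} and Proposition~\ref{p:ast-tits} is the only place where a sign or order error could creep in, so I would track those indices explicitly rather than wave hands.
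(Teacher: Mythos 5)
Your proposal is correct and takes essentially the same route as the paper's own proof: reduce to basis elements $\BH_F$, $\BH_G$ by linearity, compute $(z\act w)\act h$ via Proposition~\ref{p:ast-tits}, expand $z\act(w\act h)=\mu_F(\Delta_F\mu_G)\Delta_G(h)$, rewrite the interface $\Delta_F\mu_G$ by higher compatibility~\eqref{e:gen-comp}, collapse with higher (co)associativity~\eqref{e:gen-asso}, and absorb the braiding $\beta^{\can}$ using cocommutativity (resp.\ commutativity, which swaps $FG$ for $GF$ and yields (iii)). The only differences are cosmetic: you absorb the braiding before collapsing the associativity maps rather than after, and your worry about the $q$-power hidden in $\beta_q^{\can}$ is moot since the theorem concerns bimonoids ($q=1$), where the braiding is the plain reordering isomorphism.
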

\begin{proof}
We show~\eqref{e:can-action-asso}. 
By linearity, we may assume $z=\BH_F$ and $w=\BH_G$, where
$F$ and $G$ are decompositions of $I$. We then have, by~\eqref{e:ast-tits},
\[
(z\act w)\act h = \BH_{FG}\act h = (\mu_{FG}\Delta_{FG})(h).
\]
On the other hand,
\[
z\act(w\act h) = (\mu_F\Delta_F)\bigl((\mu_G\Delta_G)(h)\bigr) = \bigl(\mu_F(\Delta_F\mu_G)\Delta_G\bigr)(h).
\]
By higher compatibility~\eqref{e:gen-comp}, the preceding equals
\[
\bigl(\mu_F\mu_{FG/F}\beta^{\can}\Delta_{GF/G}\Delta_G)(h)
\]
and by higher (co)associativity~\eqref{e:gen-asso} and cocommutativity (the dual of~\eqref{e:gen-comm}) this equals
\[
(\mu_{FG}\beta^{\can}\Delta_{GF})(h) = (\mu_{FG}\Delta_{FG})(h),
\]
as needed.
\end{proof}

The following result links the bimonoid structures of $\tSigh$ and $\thh$
through the characteristic operations. First, we extend~\eqref{e:map-action}
to a map
\[
\tSigh(F)\otimes\thh(F) \to \thh(F)
\]
for any decomposition $F=(I_1,\ldots,I_k)$, by defining
\begin{equation}\label{e:can-action2}
(z_1\otimes\cdots\otimes z_k)\act (h_1\otimes\cdots\otimes h_k) :=
(z_1\act h_1)\otimes\cdots\otimes (z_k\act h_k)
\end{equation}
for $z_i\in\tSigh[I_i]$ and $h_i\in\thh[I_i]$. Let $G$ be a decomposition
such that $F_+\leq G_+$. It follows that
\begin{equation}\label{e:can-actionFG}
\BH_{G/F}^{\gamma}\act h = (\mu_{G/F}^{\gamma}\Delta_{G/F}^{\gamma})(h)
\end{equation}
for all splittings $\gamma$ of $(F,G)$, and $h\in\thh(F)$.

\begin{theorem}\label{t:distributivity}
Let $\thh$ be a bimonoid and $F$ a decomposition of $I$.
The following properties are satisfied.
\begin{enumerate}[(i)]
\item For any $z\in\tSigh(F)$ and
$h\in\thh[I]$,
\begin{equation}\label{e:right-dist}
\mu_F(z)\act h = \mu_F\bigl(z\act \Delta_F(h)\bigr),
\end{equation}

\item If $\thh$ is commutative, then for any $z\in\tSigh[I]$ and
$h\in\thh(F)$, 
\begin{equation}\label{e:left-dist}
z\act\mu_F(h) = \mu_F\bigl(\Delta_F(z)\act h\bigr).
\end{equation}

\item If $\thh$ is cocommutative, then for any $z\in\tSigh[I]$ and
$h\in\thh[I]$,
\begin{equation}\label{e:delta-star}
\Delta_F(z\act h) = \Delta_F(z)\act \Delta_F(h).
\end{equation}

\end{enumerate}
\end{theorem}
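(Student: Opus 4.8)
The plan is to prove all three parts by the same mechanism used in the proof of Theorem~\ref{t:can-action}: reduce to basis elements by linearity, rewrite every instance of $\act$ in terms of the higher product and coproduct maps of $\thh$ via~\eqref{e:can-actionFG}, translate the structure maps of $\tSigh$ using Proposition~\ref{p:it-sig} (the formulas~\eqref{e:prod-it-sigh} and~\eqref{e:coprod-it-sigh}), and then match the two sides using the higher forms of the bimonoid axioms from Section~\ref{s:higher}. Throughout, the decomposition $\gamma=\can$ will be the canonical splitting of~\eqref{e:row-splitting}, and $\beta_q^{\can}$ the reordering map of~\eqref{e:gen-braiding} sending $\thh(FG)$ to $\thh(GF)$ (or its reverse).

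First I would handle~\eqref{e:right-dist}, which needs no commutativity hypothesis. By linearity I may take $z=\BH_{G/F}^{\gamma}$ with $\gamma=(G_1,\dots,G_k)$ a splitting of $(F,G)$. Then $\mu_F(z)=\BH_G$ by~\eqref{e:prod-it-sigh}, so the left side is $\mu_G\Delta_G(h)$. For the right side, $z\act\Delta_F(h)=\bigl(\mu_{G/F}^{\gamma}\Delta_{G/F}^{\gamma}\bigr)\Delta_F(h)$ by~\eqref{e:can-actionFG}, and applying $\mu_F$ gives $\mu_F\mu_{G/F}^{\gamma}\Delta_{G/F}^{\gamma}\Delta_F(h)$. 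Higher associativity~\eqref{e:gen-asso} collapses $\mu_F\mu_{G/F}^{\gamma}=\mu_G$, and its dual collapses $\Delta_{G/F}^{\gamma}\Delta_F=\Delta_G$, so both sides equal $\mu_G\Delta_G(h)$. This part is clean and uses only higher (co)associativity.

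Parts~\eqref{e:left-dist} and~\eqref{e:delta-star} are mutually dual, and each mirrors the computation proving~\eqref{e:can-action-asso}. For~\eqref{e:delta-star}, taking $z=\BH_G$, I would compute the left side $\Delta_F\mu_G\Delta_G(h)$ by applying higher compatibility~\eqref{e:gen-comp} (with the roles of $F,G$ swapped) to rewrite $\Delta_F\mu_G=\mu_{FG/F}^{\can}\beta_q^{\can}\Delta_{GF/G}^{\can}$, then coassociativity to get $\mu_{FG/F}^{\can}\beta_q^{\can}\Delta_{GF}(h)$; the right side is $\BH_{FG/F}^{\can}\act\Delta_F(h)=\mu_{FG/F}^{\can}\Delta_{FG/F}^{\can}\Delta_F(h)=\mu_{FG/F}^{\can}\Delta_{FG}(h)$ using~\eqref{e:coprod-it-sigh},~\eqref{e:can-actionFG}, and coassociativity. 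Cocommutativity (the dual of~\eqref{e:gen-comm}) supplies $\Delta_{FG}=\beta_q^{\can}\Delta_{GF}$, equating the two. The proof of~\eqref{e:left-dist} is the exact dual: rewrite the left side $\mu_G\Delta_G\mu_F(h)$ via~\eqref{e:gen-comp} as $\mu_{GF}\beta_q^{\can}\Delta_{FG/F}^{\can}(h)$ after higher associativity, rewrite the right side as $\mu_{FG}\Delta_{FG/F}^{\can}(h)$, and use higher commutativity~\eqref{e:gen-comm}, $\mu_{FG}=\mu_{GF}\beta_q^{\can}$, to conclude.

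The main obstacle I anticipate is purely organizational rather than conceptual: keeping the canonical splittings, the reordering maps $\beta_q^{\can}$, and the assignment of the roles ``$F$'' and ``$G$'' in the higher compatibility axiom~\eqref{e:gen-comp} consistent across the two sides of each identity. Because we work with decompositions rather than compositions, I must also check that the refinement hypothesis $F_+\leq(FG)_+$ needed to invoke~\eqref{e:can-actionFG} genuinely holds (it follows from $F\leq FG$, a listed property of the Tits product), so that $\BH_{FG/F}^{\can}\act(-)$ is defined. Once the bookkeeping is fixed, each verification is a short chain of the displayed identities, exactly as in Theorem~\ref{t:can-action}.
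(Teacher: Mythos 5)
Your proposal is correct and follows essentially the same route as the paper: the paper proves part (i) exactly as you do (reduce to $z=\BH_{G/F}^{\gamma}$, apply~\eqref{e:can-actionFG}, and collapse via higher (co)associativity), and then dismisses (ii) and (iii) with ``the remaining identities can be proven by similar arguments.'' Your explicit verifications of (ii) and (iii), modeled on the proof of~\eqref{e:can-action-asso} in Theorem~\ref{t:can-action} with the correct bookkeeping of $\beta_q^{\can}$ and the roles of $F$ and $G$ in~\eqref{e:gen-comp}, are precisely the intended ``similar arguments.''
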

\begin{proof}
We prove~\eqref{e:right-dist}. 
We may assume $z=\BH_{G/F}^{\gamma}$ for 
$\gamma$ a splitting of $(F,G)$, $F_+\leq G_+$. We have
\begin{multline*}
\mu_F\bigl(z\act \Delta_F(h)\bigr) =
(\mu_F\mu_{G/F}^{\gamma}\Delta_{G/F}^{\gamma}\Delta_F)(h)= (\mu_{G}\Delta_{G})(h) = \BH_G \act h =
\mu_F(z)\act h.
\end{multline*}
We used~\eqref{e:can-actionFG}, higher (co)associativity~\eqref{e:gen-asso} and formula~\eqref{e:prod-it-sig}.

The remaining identities can be proven by similar arguments.
\end{proof}

\begin{remark}
The conditions in Theorem~\ref{t:distributivity} may be interpreted as axioms
for a module over a ring in the setting of $2$-monoidal categories, 
with~\eqref{e:right-dist} and~\eqref{e:left-dist} playing the role of right and left distributivity of multiplication over addition. 
We do not pursue this point here. 
Related results are given by Hazewinkel~\cite[Section~11]{Haz:2009}, 
Patras and Schocker~\cite[Corollary~22]{PatSch:2006},
and Thibon~\cite[Formula~(52)]{Thi:2001}, among others.
\end{remark}

Let $\thh$ be a bimonoid and $z\in\tSigh[I]$. 
Let $\facematrix_I(z): \thh[I] \to \thh[I]$ be the map defined by
\begin{equation}\label{e:canz}
\facematrix_I(z)(h) := z\act h.
\end{equation}

We have $\facematrix_I(z)\in \End_{\Kb}(\thh[I])$, and we obtain a morphism of species
\[
\facematrix: \tSigh \to \iE(\thh).
\]
Recall from Section~\ref{ss:iH}
that $\iE(\thh)$ is a monoid.

\begin{proposition}\label{p:Psi-mor}
The map $\facematrix$ is a morphism of monoids. 
If $\thh$ is cocommutative, then each
\[
\facematrix_I : \tSigh[I] \to \End_{\Kb}(\thh[I])
\]
is a morphism from the Tits algebra to the algebra of linear endomorphisms (under
ordinary composition). 
If $\thh$ is commutative, then it is an antimorphism.
\end{proposition}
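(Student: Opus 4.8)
The plan is to handle the three claims separately, keeping careful track of which algebra structure is in play in each: the first assertion concerns the convolution monoid structure on $\iE(\thh)$ from Section~\ref{ss:iH}, whereas the two claims about each $\facematrix_I$ concern the ordinary composition algebra on $\End_\Kb(\thh[I])$ paired with the Tits algebra on $\tSigh[I]$. A preliminary observation I would record is that on a basis element $\BH_F\in\tSigh[I]$ the map $\facematrix_I(\BH_F)$ is exactly $\mu_F\Delta_F$, by~\eqref{e:can-actionF}.

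For the first assertion, that $\facematrix$ is a morphism of monoids in species, I would argue by linearity and reduce to checking the product compatibility on basis elements $\BH_{F_1}\in\tSigh[S]$ and $\BH_{F_2}\in\tSigh[T]$, where $I=S\sqcup T$. The product in $\tSigh$ is concatenation, so the left-hand side is $\facematrix_I(\BH_{F_1\cdot F_2})=\mu_{F_1\cdot F_2}\Delta_{F_1\cdot F_2}$. By the description~\eqref{e:iHprod} of the product of $\iE(\thh)=\iH(\thh,\thh)$, the right-hand side is the composite $\mu_{S,T}\bigl((\mu_{F_1}\Delta_{F_1})\otimes(\mu_{F_2}\Delta_{F_2})\bigr)\Delta_{S,T}$. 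The decomposition $F_1\cdot F_2$ refines $(S,T)$ with canonical splitting $(F_1,F_2)$, so higher associativity (Proposition~\ref{p:gen-asso}, via~\eqref{e:muFG}) gives $\mu_{F_1\cdot F_2}=\mu_{S,T}(\mu_{F_1}\otimes\mu_{F_2})$, and the dual higher coassociativity of Section~\ref{ss:gen-coass} gives $\Delta_{F_1\cdot F_2}=(\Delta_{F_1}\otimes\Delta_{F_2})\Delta_{S,T}$; substituting and regrouping yields the two sides are equal. The unit condition reduces to $\facematrix_\emptyset(\BH_{\emptyset^0})=\mu_{\emptyset^0}\Delta_{\emptyset^0}=\iota_\emptyset\epsilon_\emptyset$, which is precisely the unit of $\iE(\thh)$ from Section~\ref{ss:iH}. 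I emphasize that this half uses neither commutativity nor cocommutativity.

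For the two statements about each $\facematrix_I$, I would deduce them directly from Theorem~\ref{t:can-action} and Proposition~\ref{p:ast-tits}. By linearity it suffices to evaluate on basis elements $\BH_F,\BH_G\in\tSigh[I]$; their Tits product is $\BH_{FG}$, which by Proposition~\ref{p:ast-tits} equals $\BH_F\act\BH_G$. Hence for $h\in\thh[I]$ one has $\facematrix_I(\BH_F\BH_G)(h)=(\BH_F\act\BH_G)\act h$. When $\thh$ is cocommutative, Theorem~\ref{t:can-action}(ii) rewrites this as $\BH_F\act(\BH_G\act h)=\facematrix_I(\BH_F)\bigl(\facematrix_I(\BH_G)(h)\bigr)$, so $\facematrix_I$ is an algebra morphism, and the unit $\BH_{(I)}$ is sent to the identity by Theorem~\ref{t:can-action}(i). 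When $\thh$ is commutative, Theorem~\ref{t:can-action}(iii) instead gives $\BH_G\act(\BH_F\act h)$, exhibiting $\facematrix_I$ as an antimorphism.

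The essential mathematical content is already packaged in Theorem~\ref{t:can-action} and in the higher-(co)associativity machinery of Section~\ref{s:higher}, so I expect the proof to be chiefly an unwinding of definitions. The only genuinely delicate point, and the place I would be most careful, is the bookkeeping in the first part: namely, verifying that the canonical splitting $(F_1,F_2)$ invoked in the higher-associativity step is the one implicit in the convolution product~\eqref{e:iHprod}, and keeping straight at each stage which pair of structures (convolution versus composition, concatenation/Tits versus $\End$) is being compared. No new argument beyond these ingredients is anticipated.
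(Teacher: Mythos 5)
Your proposal is correct and follows essentially the same route as the paper: the product compatibility is exactly the special case $F=(S,T)$ of the distributivity identity~\eqref{e:right-dist} (which the paper cites, and which is itself proved by the same higher (co)associativity computation you carry out on basis elements), and the (anti)morphism statements for $\facematrix_I$ are read off from Proposition~\ref{p:ast-tits} and Theorem~\ref{t:can-action}, just as in the paper. The only difference is presentational: you inline the needed instance of~\eqref{e:right-dist} instead of invoking it.
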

\begin{proof} 
The fact that $\facematrix$ is a morphism of monoids follows from~\eqref{e:right-dist}. 
The last statements rephrase
the fact that in these cases the Tits algebra acts (from the left or from the right)
on $\thh[I]$ (Theorem~\ref{t:can-action}).
\end{proof}

\subsection{The characteristic operations on arbitrary bialgebras}\label{ss:can-action-bialg}

Let $H$ be an arbitrary bialgebra over $\Kb$. Let $\mu,\iota,\Delta,\epsilon$
denote the structure maps. 
We employ Sweedler's notation in the form
\[
\Delta(h) = \sum h_1\otimes h_2
\]
for $h\in H$.

For each integer $n\geq 1$, the higher product
\[
\mu^{(n)} : H^{\otimes(n+1)} \to H
\]
is well-defined by associativity. 
One also defines
\[
\mu^{(-1)}=\iota \qand \mu^{(0)}=\id.
\]
The higher coproducts are defined dually.

Given $h\in H$ and $p\in\Nb$, define an element $h^{(p)}\in H$ by
\[
h^{(p)} := (\mu^{(p-1)}\Delta^{(p-1)})(h).
\]
In the recent Hopf algebra literature, the operations $h\mapsto h^{(p)}$
are called \emph{Hopf powers}~\cite{NgSch:2008} or \emph{Sweedler powers}~\cite{KMN:2012,KSZ:2006}. 
The term \emph{characteristic} is used in~\cite[Section~1]{GerSch:1991} and~\cite[Section~1]{Pat:1993} for these operations on graded bialgebras.
They enjoy the following properties. 
Let $h,k\in H$ and $p,q\in\Nb$.

\begin{enumerate}[(i)]
\item $h^{(1)} = h$.
\item \label{e:Haction}
If $H$ is either commutative or cocommutative, then 
$h^{(pq)} = (h^{(p)})^{(q)}$.
\item $h^{(p+q)} = \sum h_1^{(p)} h_2^{(q)}$.
\item If $H$ is commutative, then 
$(hk)^{(p)} = h^{(p)} k^{(p)}$.
\item If $H$ is cocommutative, then 
$\Delta(h^{(p)}) = \sum h_1^{(p)}\otimes h_2^{(p)}$.
\end{enumerate}

These properties are special cases of those in Theorems~\ref{t:can-action}
and~\ref{t:distributivity}. 
They arise by choosing $\thh=\tone_H$ (as in Section~\ref{ss:hopfalg}) and 
$F=\emptyset^2$. In this case, $\thh[\emptyset]=H$, $\mu_F=\mu$, and 
$\Delta_F=\Delta$. For instance,~\eqref{e:Haction} follows from~\eqref{e:can-action-asso}
in view of the fact that the Tits product in $\rSigh[\emptyset]$ corresponds
to multiplication in $\Nb$. 
More general properties follow by choosing $F=\emptyset^p$.

\subsection{The characteristic operations on a connected bimonoid}\label{ss:can-action-conn}

Let $\thh$ be a connected bimonoid and $h\in\thh[I]$. Given an element $z\in\tSig[I]$,
formula~\eqref{e:can-action} defines an element $z\act h\in\thh[I]$
(the sum is now over the compositions of $I$).
As before, we refer to this element as the characteristic operation of $z$ on $h$.

The set $\rSig[I]$ of compositions of $I$
is a monoid under the Tits product~\eqref{e:tits}
and $\tSig[I]$ is the algebra of this monoid.
We use $\act$ to denote its product, as in~\eqref{e:ast-tits}.
We call $\tSig[I]$ the \emph{Tits algebra} of compositions.

The results of Proposition~\ref{p:ast-tits} and Theorem~\ref{t:can-action} continue to hold
for the characteristic operations of $\tSig$ on $\thh$, with the same proofs.
In particular, when $\thh$ is cocommutative, $\act$ defines an
action of the algebra $\tSig[I]$ on the space $\thh[I]$.

To the result of Proposition~\ref{p:ast-tits} we may add the following.

\begin{proposition}\label{p:H-Q-proj}
For any compositions $F$ and $G$ of $I$,
\begin{equation}\label{e:H-Q-proj}
\BH_F\act \BQ_G=
\begin{cases}
\BQ_{FG} & \text{ if } GF=G,\\
0 & \text{ if } GF>G.
\end{cases}
\end{equation}
In particular, if $F$ and $G$ have the same support,
then $\BH_G\act \BQ_F = \BQ_G$.
\end{proposition}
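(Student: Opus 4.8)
The plan is to prove the formula~\eqref{e:H-Q-proj} by translating the characteristic operation $\BH_F\act\BQ_G$ into the higher product and coproduct of $\tSig$ and then exploiting the relationship between the two bases $\{\BH\}$ and $\{\BQ\}$ expressed in~\eqref{e:H-Q} and~\eqref{e:Q-H}. By definition~\eqref{e:can-actionF}, we have $\BH_F\act\BQ_G=(\mu_F\Delta_F)(\BQ_G)$, so the first step is to compute $\Delta_F(\BQ_G)$ using the coproduct formula~\eqref{e:Qcoprod} in its higher form. Iterating~\eqref{e:Qcoprod}, the higher coproduct $\Delta_F(\BQ_G)$ will vanish unless $G$ is compatible with the decomposition prescribed by $F$ in the appropriate admissibility sense, and otherwise will produce a tensor of $\BQ$-basis elements obtained by restricting $G$ to the blocks of $F$. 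The condition governing nonvanishing should turn out to be exactly $GF=G$, which by~\eqref{e:supp-tits} says each block of $G$ sits inside a block of $F$; this is the combinatorial heart of the first case.

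First I would make precise the higher analogue of~\eqref{e:Qcoprod}. Writing $F=(I_1,\ldots,I_k)$, higher coassociativity (Section~\ref{ss:gen-coass}) gives $\Delta_F(\BQ_G)=\Delta_{I_1,\ldots,I_k}(\BQ_G)$, and applying~\eqref{e:Qcoprod} repeatedly shows this equals $\BQ_{G|_{I_1}}\otimes\cdots\otimes\BQ_{G|_{I_k}}$ precisely when each successive restriction is admissible, and $0$ otherwise. I expect the admissibility of all the required restrictions to be equivalent to the single condition $GF=G$: when each block of $G$ is contained in a block of $F$, every $G|_{I_i}$ is a genuine sub-composition and the restrictions are admissible at each stage; conversely if some block of $G$ straddles two blocks of $F$ then $GF>G$ and admissibility fails, forcing the coproduct to be $0$. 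This is where I would invoke~\eqref{e:supp-tits} to identify $GF=G$ with the statement $\supp F\leq\supp G$, i.e. each block of $G$ lies in a block of $F$.

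Once $\Delta_F(\BQ_G)=\BQ_{G|_{I_1}}\otimes\cdots\otimes\BQ_{G|_{I_k}}$ is established in the case $GF=G$, the second step is to apply $\mu_F$ using the product formula~\eqref{e:Qprod} in its higher form. Higher associativity together with~\eqref{e:Qprod} yields $\mu_F(\BQ_{G|_{I_1}}\otimes\cdots\otimes\BQ_{G|_{I_k}})=\BQ_{G|_{I_1}\cdots G|_{I_k}}$, the concatenation of the restricted compositions. The remaining task is purely combinatorial: to identify the concatenation $G|_{I_1}\cdots G|_{I_k}$ with the Tits product $FG$. Indeed, $FG$ lists the nonempty intersections $S_i\cap T_j$ in lexicographic order of $(i,j)$ as in~\eqref{e:tits}, which is exactly the concatenation of the restrictions $G|_{I_i}$ across the blocks $I_i$ of $F$ taken in order; when $GF=G$ each block of $G$ lies entirely within one block of $F$, so no block of $G$ is split and the intersections recover the blocks of $G$ regrouped by $F$. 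This gives $\BH_F\act\BQ_G=\BQ_{FG}$, as claimed.

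The main obstacle I anticipate is the careful bookkeeping in the vanishing case, namely verifying that $GF>G$ forces at least one restriction $G|_{I_i}$ to be inadmissible and hence makes $\Delta_F(\BQ_G)=0$. One must be attentive to the fact that admissibility is required at \emph{each} stage of the iterated coproduct, not merely for the final restriction, so I would argue stagewise or reduce to the statement that $\Delta_{S,T}(\BQ_G)=0$ whenever $S$ is not $G$-admissible and then show that failure of $GF=G$ propagates to failure of admissibility somewhere in the iteration. The final assertion about compositions with the same support is then immediate: if $\supp F=\supp G$ then in particular $GF=G$ (each block of $G$ equals, hence lies in, a block of $F$), and $FG=G$ by the first bullet characterization following~\eqref{e:supp-tits}, so~\eqref{e:H-Q-proj} specializes to $\BH_G\act\BQ_F=\BQ_G$ after swapping the roles of $F$ and $G$.
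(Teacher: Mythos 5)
Your proof of the displayed formula~\eqref{e:H-Q-proj} is correct and is essentially the paper's own argument: the paper's proof consists of the single line ``This follows from Proposition~\ref{p:Qface} and~\eqref{e:ast-tits}'', and what you wrote is exactly the fleshing-out of that citation --- iterate~\eqref{e:Qcoprod} to get vanishing of $\Delta_F(\BQ_G)$ unless every block of $G$ lies inside a block of $F$ (i.e.\ $GF=G$ by~\eqref{e:supp-tits}), then apply the iterated~\eqref{e:Qprod} and identify the concatenation $G|_{I_1}\cdots G|_{I_k}$ with the Tits product $FG$.

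There is, however, a small slip in your last paragraph, in the deduction of the ``in particular'' statement. When $\supp F=\supp G$ one has $FG=F$ and $GF=G$, \emph{not} $FG=G$: for example $F=(S,T)$, $G=(T,S)$ gives $FG=(S,T)=F$ and $GF=(T,S)=G$. (Your appeal to the characterization $F\leq G\iff FG=G$ does not apply, since $\supp F=\supp G$ forces $F\leq G$ only when $F=G$.) The corrected deduction reads: applying~\eqref{e:H-Q-proj} with the roles of $F$ and $G$ interchanged, the nonvanishing condition is $FG=F$, which holds since each block of $F$ equals, hence lies in, a block of $G$; therefore $\BH_G\act\BQ_F=\BQ_{GF}=\BQ_G$, the last equality because $GF=G$. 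This is a one-line fix and does not affect the substance of your argument.
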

\begin{proof}
This follows from Proposition~\ref{p:Qface} and~\eqref{e:ast-tits}.
\end{proof}

Consider the morphism of bimonoids $\upsilon:\tSigh \to \tSig$
of~\eqref{e:dec-comp}. As noted in~\eqref{e:dec-comp-prod}, 
the map $\upsilon:\tSigh[I] \to \tSig[I]$ preserves Tits products.
The following result shows that, when specialized to connected bimonoids, 
the characteristic operations factor through $\upsilon$.

\begin{proposition}\label{p:can-action-factor}
Let $\thh$ be a connected bimonoid.
For any $z\in\tSigh[I]$ and $h\in\thh[I]$,
\[
z\act h = \upsilon(z)\act h.
\]
\end{proposition}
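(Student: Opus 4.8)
The plan is to reduce to basis elements and then exploit that, for a connected bimonoid, inserting units into empty blocks and deleting them via counits are mutually inverse operations. By linearity it suffices to treat $z=\BH_F$ for a single decomposition $F$ of $I$. Since $\upsilon(\BH_F)=\BH_{\pos{F}}$ by~\eqref{e:dec-comp} and the characteristic operations are given by~\eqref{e:can-actionF}, the claim $z\act h=\upsilon(z)\act h$ becomes the identity of linear maps
\[
\mu_F\Delta_F = \mu_{\pos{F}}\Delta_{\pos{F}} : \thh[I]\to\thh[I].
\]

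First I would recall the two factorizations supplied by higher unitality and its dual. Higher unitality (Proposition~\ref{p:gen-unit}) gives $\mu_F\iota_F=\mu_{\pos{F}}$, where $\iota_F:\thh(\pos{F})\to\thh(F)$ is the unit-insertion map; the dual statement (higher counitality, Section~\ref{ss:gen-coass}) gives $\epsilon_F\Delta_F=\Delta_{\pos{F}}$, where $\epsilon_F:\thh(F)\to\thh(\pos{F})$ is the counit-deletion map. Because $\thh$ is connected, Propositions~\ref{p:gen-comp2} and~\ref{p:hopf-split-gen2} together say that $\iota_F$ and $\epsilon_F$ are mutually inverse: $\epsilon_F\iota_F=\id$ on $\thh(\pos{F})$ and $\iota_F\epsilon_F=\id$ on $\thh(F)$.

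Then I would assemble these facts. From $\mu_F\iota_F=\mu_{\pos{F}}$ together with $\iota_F\epsilon_F=\id$ one obtains $\mu_F=\mu_F\iota_F\epsilon_F=\mu_{\pos{F}}\epsilon_F$; dually, from $\epsilon_F\Delta_F=\Delta_{\pos{F}}$ together with $\iota_F\epsilon_F=\id$ one obtains $\Delta_F=\iota_F\epsilon_F\Delta_F=\iota_F\Delta_{\pos{F}}$. Substituting,
\[
\mu_F\Delta_F = (\mu_{\pos{F}}\epsilon_F)(\iota_F\Delta_{\pos{F}}) = \mu_{\pos{F}}(\epsilon_F\iota_F)\Delta_{\pos{F}} = \mu_{\pos{F}}\Delta_{\pos{F}},
\]
where the last equality uses $\epsilon_F\iota_F=\id$. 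This is precisely the required identity, and applying both sides to $h$ gives $\BH_F\act h=\BH_{\pos{F}}\act h$.

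I expect no serious obstacle: the argument is a short diagram chase built entirely from results already established in Section~\ref{s:higher}. The only care needed is bookkeeping of the directions of $\iota_F$ and $\epsilon_F$ and keeping track of which of the two inverse relations (the one that is the identity on $\thh(F)$ versus the one on $\thh(\pos{F})$) is invoked at each step, so that each appears exactly where its domain matches.
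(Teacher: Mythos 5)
Your proof is correct and takes essentially the same route as the paper's: reduce by linearity to $z=\BH_F$, then combine higher unitality $\mu_F\iota_F=\mu_{\pos{F}}$, its dual $\epsilon_F\Delta_F=\Delta_{\pos{F}}$, and the connectedness fact that $\iota_F$ and $\epsilon_F$ are inverse (Propositions~\ref{p:gen-comp2} and~\ref{p:hopf-split-gen2}). The paper simply inserts $\iota_F\epsilon_F=\id$ once, writing $\mu_{\pos{F}}\Delta_{\pos{F}}=\mu_F\iota_F\epsilon_F\Delta_F=\mu_F\Delta_F$, whereas you rewrite $\mu_F$ and $\Delta_F$ separately and cancel $\epsilon_F\iota_F$; the difference is purely presentational.
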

\begin{proof}
We may assume $z=\BH_F$ for $F$ a decomposition of $I$. 
We then have
\[
\upsilon(z)\act h = \BH_{\pos{F}}\act h = (\mu_{\pos{F}}\Delta_{\pos{F}})(h) =
(\mu_F\iota_F\epsilon_F\Delta_F)(h)=(\mu_F\Delta_F)(h) = z\act h.
\]
We used~\eqref{e:gen-unit}, its dual, and~\eqref{e:hopf-split-gen2}. 
(The latter uses connectedness.)
\end{proof}

On connected bimonoids, the characteristic operations enjoy additional
properties to those in Theorem~\ref{t:distributivity}.

\begin{proposition}\label{p:distributivity-conn}
Let $\thh$ be a connected bimonoid and $F$ a composition of $I$.
The following properties are satisfied.
\begin{enumerate}[(i)]
\item For any $z\in\tSig(F)$ and
$h\in\thh[I]$,
\begin{equation}\label{e:left-codist}
z\act\Delta_F(h) = \Delta_F\bigl(\mu_F(z)\act h\bigr).
\end{equation}

\item For any $z\in\tSig[I]$ and
$h\in\thh(F)$,
\begin{equation}\label{e:right-codist}
\Delta_F(z)\act h = \Delta_F\bigl(z\act \mu_F(h)\bigr),
\end{equation}

\item For any $z\in\tSig(F)$ and
$h\in\thh(F)$,
\begin{equation}\label{e:mu-star}
\mu_F(z\act h) = \mu_F(z)\act \mu_F(h).
\end{equation}

\end{enumerate}
\end{proposition}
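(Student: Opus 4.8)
The plan is to verify each identity at the level of the composite linear maps, after using linearity to reduce $z$ to a basis element. For parts (i) and (iii) I would write $z=\BH_{G/F}$ with $F\le G$ (the basis of $\tSig(F)$ described before Proposition~\ref{p:it-sig}), so that $z\act(-)$ becomes the composite $\mu_{G/F}\Delta_{G/F}$ by~\eqref{e:can-actionFG}. The only inputs needed are higher associativity~\eqref{e:gen-asso-conn} in the two forms $\mu_G=\mu_F\mu_{G/F}$ and $\Delta_G=\Delta_{G/F}\Delta_F$, the splitting formulas $\mu_F(\BH_{G/F})=\BH_G$ and $\Delta_F(\BH_G)=\BH_{FG/F}$ from~\eqref{e:prod-it-sig}--\eqref{e:coprod-it-sig}, and the connectedness relation $\Delta_F\mu_F=\id$ furnished by Corollary~\ref{c:hopf-split}.

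For (i) one finds that the left side equals $\mu_{G/F}\Delta_{G/F}\Delta_F(h)=\mu_{G/F}\Delta_G(h)$ by coassociativity, while the right side is $\Delta_F(\BH_G\act h)=\Delta_F\mu_G\Delta_G(h)=\Delta_F\mu_F\mu_{G/F}\Delta_G(h)=\mu_{G/F}\Delta_G(h)$, using associativity and $\Delta_F\mu_F=\id$. For (iii) the left side is $\mu_F\mu_{G/F}\Delta_{G/F}(h)=\mu_G\Delta_{G/F}(h)$, whereas the right side is $\BH_G\act\mu_F(h)=\mu_G\Delta_G\mu_F(h)=\mu_G\Delta_{G/F}\Delta_F\mu_F(h)=\mu_G\Delta_{G/F}(h)$. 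Neither computation requires commutativity; it is entirely bookkeeping with the two halves of higher (co)associativity and the split relation.

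The hard part will be (ii). Here I would take $z=\BH_G$ for an arbitrary composition $G$, so that by~\eqref{e:coprod-it-sig} the left side is $\Delta_F(\BH_G)\act h=\BH_{FG/F}\act h=\mu_{FG/F}\Delta_{FG/F}(h)$, while the right side is $\Delta_F\mu_G\Delta_G\mu_F(h)$; the obstacle is that $G$ need not refine $F$, so $\Delta_G\mu_F$ cannot simply be cancelled. The key step is to invoke higher compatibility~\eqref{e:gen-comp-conn} twice. Applied to the pair $(F,G)$ it gives $\Delta_G\mu_F=\mu_{GF/G}\,\beta\,\Delta_{FG/F}$, and combined with $\mu_G\mu_{GF/G}=\mu_{GF}$ (higher associativity, valid since $G\le GF$) this rewrites the right side as $\Delta_F\mu_{GF}\,\beta\,\Delta_{FG/F}(h)$. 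Applying~\eqref{e:gen-comp-conn} to the pair $(GF,F)$, and using the band identities $F\cdot GF=FG$ and $GF\cdot F=GF$ (so that the splitting $\Delta_{GF/GF}$ is the identity), gives $\Delta_F\mu_{GF}=\mu_{FG/F}\,\beta'$. Since $\thh$ is connected (so $q=1$), both $\beta$ and $\beta'$ are the plain reorderings induced by the involution $\can(i,j)=(j,i)$, whence $\beta'\beta=\id$ and the right side collapses to $\mu_{FG/F}\Delta_{FG/F}(h)$, matching the left. I expect the main delicacy to lie in correctly identifying the Tits products $FG$ and $GF$ together with the canonical splittings and braidings in the two applications of compatibility, and in checking that these braidings are mutually inverse at $q=1$.
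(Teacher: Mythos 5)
Your proposal is correct; all three identities are established, and the computations check out against the paper's machinery. The difference from the paper is one of routing rather than substance, but it is worth recording. For (i) and (iii) the paper does not compute at all: it obtains \eqref{e:left-codist} by applying $\Delta_F$ to both sides of \eqref{e:right-dist} (Theorem~\ref{t:distributivity}(i)) and invoking $\Delta_F\mu_F=\id$ from \eqref{e:hopf-split2}, and it obtains \eqref{e:mu-star} by substituting $\mu_F(h)$ for $h$ in \eqref{e:right-dist} and using \eqref{e:hopf-split2} again; your direct verifications with $z=\BH_{G/F}$ use exactly the same ingredients (higher (co)associativity, \eqref{e:prod-it-sig}--\eqref{e:coprod-it-sig}, $\Delta_F\mu_F=\id$), so you are in effect re-proving \eqref{e:right-dist} along the way rather than citing it. For (ii) the two arguments genuinely diverge in the middle. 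The paper applies higher compatibility to the pairs $(F,G)$ and $(G,F)$, producing $\mu_{FG/F}\beta\,\Delta_{GF/G}\mu_{GF/G}\,\beta\,\Delta_{FG/F}$, and then cancels the inner factor $\Delta_{GF/G}\mu_{GF/G}=\id$ by the blockwise connectedness identity \eqref{e:hopf-split2} for $GF/G$, finishing with $\beta^2=\id$. You instead apply compatibility to $(F,G)$, absorb $\mu_G\mu_{GF/G}=\mu_{GF}$ by higher associativity, and then apply compatibility a second time to the pair $(GF,F)$, where the left-regular-band identities $F(GF)=FGF=FG$ and $(GF)F=GF$ force the relative coproduct $\Delta_{GF/GF}$ to be the identity; connectedness then enters only through the axiom \eqref{e:gen-comp-conn} itself (which, in its composition form, already encodes $\Delta_F\mu_F=\id$), and the final cancellation $\beta'\beta=\id$ at $q=1$ plays the role of the paper's $\beta^2=\id$. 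The paper's version is shorter; yours makes the role of the Tits-monoid band identities explicit and avoids a separate appeal to the blockwise hopf-split cancellation, which is a perfectly sound trade.
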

\begin{proof}
Formula~\eqref{e:left-codist} follows by applying $\Delta_F$ to both
sides of~\eqref{e:right-dist}, in view of the first formula in~\eqref{e:hopf-split2}.
Formula~\eqref{e:mu-star} follows
by replacing $h$ in~\eqref{e:right-dist} for $\mu_F(h)$
and employing~\eqref{e:hopf-split2}.

We prove~\eqref{e:right-codist}.
We may assume $z=\BH_G$ for $G$ a composition of $I$. We have
\begin{multline*}
\Delta_{F}\bigl(z\act \mu_{F}(h)\bigr) =
(\Delta_{F}\mu_{G}\Delta_{G}\mu_{F})(h) =\\
(\mu_{FG/F}\beta\Delta_{GF/G}\mu_{GF/G}\beta\Delta_{FG/F})(h) =
(\mu_{FG/F}\beta^2\Delta_{FG/F})(h) =\\
(\mu_{FG/F}\Delta_{FG/F})(h) = \BH_{FG/F}\act h =
\Delta_{F}(z)\act h.
\end{multline*}
We used the higher compatibility~\eqref{e:gen-comp} and~\eqref{e:hopf-split2}
for $GF/G$, as well as the fact that $\beta^2=\id$.
\end{proof}

Let $\thh$ be a connected bimonoid. 
By Proposition~\ref{p:can-action-factor}, the map $\facematrix: \tSigh \to \iE(\thh)$
factors through $\upsilon:\tSigh \to \tSig$. 
We also use $\facematrix$ to denote the resulting map
\[
\facematrix: \tSig \to \iE(\thh).
\]
Recall from Section~\ref{ss:iH}
that $\iE(\thh)$ is a monoid, and moreover
a Hopf monoid if $\thh$ is finite-dimensional.

\begin{proposition}\label{p:Psi-mor-conn}
The map $\facematrix$ is a morphism of monoids, and moreover of Hopf monoids if $\thh$
is finite-dimensional. 
If $\thh$ is cocommutative, then each
\[
\facematrix_I : \tSig[I] \to \End_{\Kb}(\thh[I])
\]
is a morphism from the Tits algebra to the algebra of linear endomorphisms (under
ordinary composition). 
If $\thh$ is commutative, then it is an antimorphism.
\end{proposition}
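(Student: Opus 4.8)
The plan is to establish the statement of Proposition~\ref{p:Psi-mor-conn} by reducing everything to the already-proven Proposition~\ref{p:Psi-mor}, via the factorization through $\upsilon$ furnished by Proposition~\ref{p:can-action-factor}. First I would recall that $\facematrix:\tSigh\to\iE(\thh)$ is a morphism of monoids by Proposition~\ref{p:Psi-mor}, and that by Proposition~\ref{p:can-action-factor} this map factors as $\tSigh\xrightarrow{\upsilon}\tSig\xrightarrow{\facematrix}\iE(\thh)$, where the second arrow is the induced map also denoted $\facematrix$. Since $\upsilon:\tSigh\to\tSig$ is a surjective morphism of bimonoids (equation~\eqref{e:dec-comp}) and in particular a surjective morphism of monoids, the induced map $\facematrix:\tSig\to\iE(\thh)$ must itself be a morphism of monoids: the defining square for a monoid morphism commutes for $\tSig$ because it commutes for $\tSigh$ and $\upsilon$ is an epimorphism of monoids. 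This handles the first assertion.

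For the Hopf monoid refinement when $\thh$ is finite-dimensional, I would invoke the fact recorded in Section~\ref{ss:bim-hopf} that a morphism of $q$-bimonoids between $q$-Hopf monoids automatically commutes with antipodes, hence is a morphism of $q$-Hopf monoids. Thus it suffices to upgrade $\facematrix:\tSig\to\iE(\thh)$ from a monoid morphism to a bimonoid morphism, i.e.\ to check that it also preserves coproducts and counit. The coproduct compatibility is exactly formula~\eqref{e:delta-star} of Theorem~\ref{t:distributivity} (valid here since $\tSig$ acts through $\upsilon$ by Proposition~\ref{p:can-action-factor}): writing out $\Delta_{S,T}$ of $\iE(\thh)$ from Section~\ref{ss:iH} shows that $\facematrix$ intertwines the coproduct of $\tSig$ with that of $\iE(\thh)$. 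Together with the trivial counit check on the empty set, this makes $\facematrix$ a morphism of bimonoids, and since both $\tSig$ and $\iE(\thh)$ are connected $q$-Hopf monoids (the latter being a Hopf monoid by the remarks at the end of Section~\ref{ss:iH}), it is a morphism of Hopf monoids.

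Finally, for the algebra (anti)morphism statements on each component $\facematrix_I:\tSig[I]\to\End_\Kb(\thh[I])$, these simply restate Theorem~\ref{t:can-action}. When $\thh$ is cocommutative, equation~\eqref{e:can-action-asso} says $(z\act w)\act h=z\act(w\act h)$, i.e.\ $\facematrix_I(z\act w)=\facematrix_I(z)\circ\facematrix_I(w)$, so $\facematrix_I$ is an algebra morphism from the Tits algebra (with unit $\BH_{(I)}$, acting as the identity by~\eqref{e:can-action-unit}) to $\End_\Kb(\thh[I])$ under composition. When $\thh$ is commutative, equation~\eqref{e:can-action-asso-op} gives $(z\act w)\act h=w\act(z\act h)$, i.e.\ $\facematrix_I(z\act w)=\facematrix_I(w)\circ\facematrix_I(z)$, the antimorphism condition; unitality again follows from~\eqref{e:can-action-unit}. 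The one point requiring care is that all invocations of Theorem~\ref{t:distributivity} and Theorem~\ref{t:can-action} for $\tSig$ (rather than $\tSigh$) are legitimate, which is precisely the content of Proposition~\ref{p:can-action-factor}; I expect this bookkeeping — ensuring the decomposition-level identities descend correctly to the composition level through the surjection $\upsilon$ — to be the only genuine obstacle, and it is mild since $\upsilon$ preserves all the relevant structure.
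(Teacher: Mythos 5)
Your overall strategy---descend from Proposition~\ref{p:Psi-mor} through $\upsilon$ for the monoid part, and read off the algebra (anti)morphism statements on each $\facematrix_I$ from Theorem~\ref{t:can-action}---matches the paper, and those parts of your argument are correct. The gap is in the Hopf (comonoid) part: you justify coproduct compatibility by \eqref{e:delta-star}, which is item (iii) of Theorem~\ref{t:distributivity} and carries the hypothesis that $\thh$ be \emph{cocommutative}. The proposition asserts that $\facematrix$ is a morphism of Hopf monoids for an arbitrary finite-dimensional connected bimonoid $\thh$, with no cocommutativity assumption, so this citation does not cover the claim. Moreover the failure is not merely one of justification: \eqref{e:delta-star} is actually false without cocommutativity. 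For instance, in $\thh=\wL^*$ (commutative, not cocommutative), take $I=\{a,b\}$, $F=(\{a\},\{b\})$, $z=\BH_{(\{b\},\{a\})}$, and $h=\BM_{ab}$; then $z\act h=\mu_{(\{b\},\{a\})}\Delta_{(\{b\},\{a\})}(\BM_{ab})=0$, so $\Delta_F(z\act h)=0$, while $\Delta_F(z)\act\Delta_F(h)=\BM_a\otimes\BM_b\neq 0$. So your step genuinely fails in the generality required, and contrary to your closing remark, the bookkeeping through $\upsilon$ is not the only obstacle.

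The identity that is actually equivalent to ``$\facematrix$ preserves coproducts''---once one unwinds the definition \eqref{e:iHcoprod} of the coproduct of $\iE(\thh)$---is $\Delta_F(z)\act h=\Delta_F\bigl(z\act\mu_F(h)\bigr)$ for $h\in\thh(F)$ and $F=(S,T)$, which is \eqref{e:right-codist} of Proposition~\ref{p:distributivity-conn}. That identity holds for \emph{every} connected bimonoid (its proof uses connectedness via \eqref{e:hopf-split2}, not cocommutativity), and it is exactly what the paper's proof cites. Your route can be salvaged only under cocommutativity, where \eqref{e:delta-star} combined with $\Delta_F\mu_F=\id$ (Corollary~\ref{c:hopf-split}) does yield \eqref{e:right-codist}; in general, replace the appeal to \eqref{e:delta-star} by an appeal to \eqref{e:right-codist}, and the rest of your argument (automatic compatibility with antipodes for bimonoid morphisms between Hopf monoids, connectedness of $\tSig$ and the Hopf structure on $\iE(\thh)$) goes through.
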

\begin{proof} 
The fact that $\facematrix$ is a morphism of comonoids follows
from~\eqref{e:right-codist}. 
The remaining statements follow from Proposition~\ref{p:Psi-mor} and the fact that $\upsilon$ preserves all the structure.
\end{proof}

\begin{remark}\label{r:patras}
Connected graded Hopf algebras also carry characteristic operations.
In this context, the role of the Hopf monoid $\tSig$
is played by the Hopf algebra of \emph{noncommutative symmetric functions}, 
and that of the Tits algebra by \emph{Solomon's descent algebra} of the symmetric group. 
The latter is the invariant subalgebra of $\tSig[n]$ under the action of $\Sr_n$ and hence
has a linear basis indexed by compositions of the integer $n$.

Noncommutative symmetric functions are studied in a series of papers
starting with \cite{GKLLRT} by Gelfand et al and including~\cite{Thi:2001}
by Thibon. The descent algebra (of a finite
Coxeter group) is introduced by Solomon in~\cite{Sol:1976}.
The theory of characteristic operations on graded Hopf algebras appears 
in work of Patras~\cite{Pat:1994}.
It also occurs implicitly in a number of places in the literature.
The operation of Solomon's descent algebra on the Hopf algebra of
noncommutative symmetric functions is considered in~\cite[Section~5.1]{GKLLRT}.
\end{remark}

\subsection{Primitive operations}\label{ss:action-prim}

We study the effect of operating on a cocommutative connected bimonoid $\thh$
by primitive elements of $\tSig$. 
We write 
\begin{equation}\label{e:typical}
z=\sum_{F\vDash I} a_F \BH_F 
\end{equation}
for a typical element of $\tSig[I]$.
 
\begin{theorem}\label{t:action-prim}
Let $\thh$ and $z$ be as above, and $h\in\thh[I]$. 
\begin{enumerate}[(i)]
\item If $z\in\Pc(\tSig)[I]$, then $z\act h\in\Pc(\thh)[I]$.
\item If in addition $h\in\Pc(\thh)[I]$, then $z\act h = a_{(I)} h$.
\end{enumerate}
\end{theorem}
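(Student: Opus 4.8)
The plan is to prove both parts by exploiting the cocommutativity of $\thh$ together with the structural results already established for the characteristic operations. For part (i), I want to show that $z \act h$ is primitive whenever $z$ is primitive. By Proposition~\ref{p:def-prim} it suffices to check that $\Delta_G(z \act h) = 0$ for every composition $G \neq (I)$. Since $\thh$ is cocommutative, formula~\eqref{e:delta-star} of Theorem~\ref{t:distributivity} gives $\Delta_G(z \act h) = \Delta_G(z) \act \Delta_G(h)$. The key observation will be that $\Delta_G(z)$ lands in a tensor product $\tSig(G) = \tSig[S_1] \otimes \dots \otimes \tSig[S_k]$, and because $z$ is primitive in $\tSig[I]$, its image under $\Delta_G$ must vanish for every composition $G$ with more than one block (this is exactly the defining property~\eqref{e:gen-prim} of $\Pc(\tSig)[I]$, applied inside the bimonoid $\tSig$). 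Therefore $\Delta_G(z) = 0$ for $G \neq (I)$, forcing $\Delta_G(z \act h) = 0$, which establishes primitivity.

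For part (ii), I would first reduce to basis elements by linearity, writing $z = \sum_F a_F \BH_F$ as in~\eqref{e:typical} and analyzing $\BH_F \act h$ for $h$ primitive. Using~\eqref{e:can-actionF}, $\BH_F \act h = (\mu_F \Delta_F)(h)$. The crucial input is that $h \in \Pc(\thh)[I]$ means $\Delta_F(h) = 0$ for every composition $F \neq (I)$, again by~\eqref{e:gen-prim}. Hence the only surviving term in the sum defining $z \act h$ is the one indexed by $F = (I)$, where $\mu_{(I)}$ and $\Delta_{(I)}$ are both the identity of $\thh[I]$ (by the conventions following~\eqref{e:iterated-mu} and~\eqref{e:iterated-delta}). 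This immediately yields $z \act h = a_{(I)} h$, as claimed.

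The main subtlety, and the step I would treat most carefully, is the justification in part (i) that the primitivity of $z$ as an element of $\tSig[I]$ translates into the vanishing of $\Delta_G(z)$ for $G \neq (I)$. One must be clear that $\Pc(\tSig)[I]$ is the primitive part of the \emph{bimonoid} $\tSig$, so that Proposition~\ref{p:def-prim} applies with $\tSig$ itself in the role of the connected comonoid $\tc$; the higher coproducts $\Delta_G$ appearing there are those of $\tSig$, which is precisely what~\eqref{e:delta-star} produces on the left-hand tensor factor. Once this identification is made, the argument is a clean application of~\eqref{e:delta-star} and~\eqref{e:gen-prim}, and no further computation is needed. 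I expect part (ii) to be essentially immediate once part (i) is in place, since it relies only on the vanishing of higher coproducts on primitive elements of $\thh$ rather than on any cocommutativity hypothesis.
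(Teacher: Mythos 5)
Your proposal is correct and follows essentially the same route as the paper's own proof: part (i) is the identity $\Delta_F(z\act h)=\Delta_F(z)\act\Delta_F(h)$ from~\eqref{e:delta-star} combined with the vanishing of $\Delta_F(z)$ for $F\neq(I)$ guaranteed by~\eqref{e:gen-prim}, and part (ii) is the term-by-term computation in which only $\BH_{(I)}\act h=h$ survives because $\Delta_F(h)=0$ for $F\neq(I)$. The only difference is expository: the paper compresses part (i) into the single citation of~\eqref{e:delta-star}, whereas you spell out the identification of $\Pc(\tSig)[I]$ via~\eqref{e:gen-prim}, which is exactly the implicit content of that citation.
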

\begin{proof}
Part (i) follows from~\eqref{e:delta-star}.
Regarding (ii), we have
\[
z\act h = a_{(I)} (\BH_{(I)}\act h) + \sum_{\substack{F\vDash I\\F\neq(I)}} a_F (\BH_F\act h)
= a_{(I)} h.
\]
We used~\eqref{e:can-action-unit} and the fact that $\Delta_F(h)=0$
for $F\neq(I)$,
which holds by~\eqref{e:gen-prim} since $h$ is primitive.
\end{proof} 

It is convenient to restate the above result
in terms of the map $\facematrix_I(z)$ of~\eqref{e:canz}.

\begin{theorem}\label{t:act-to-prim}
Let $\thh$ and $z$ be as in Theorem~\ref{t:action-prim}.
\begin{enumerate}[(i)]
\item If $z\in\Pc(\tSig)[I]$, then 
$\facematrix_I(z)(\thh[I])\subseteq \Pc(\thh)[I]$.
\item If in addition $a_{(I)}\neq 0$, then $\facematrix_I(a_{(I)}^{-1} z)$ is a projection from 
$\thh[I]$ onto $\Pc(\thh)[I]$.
\end{enumerate}
\end{theorem}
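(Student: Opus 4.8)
The plan is to derive Theorem~\ref{t:act-to-prim} as a direct reformulation of Theorem~\ref{t:action-prim}, since the map $\facematrix_I(z)$ is by definition~\eqref{e:canz} just the linear endomorphism $h\mapsto z\act h$. For part (i), I would simply observe that the assertion $\facematrix_I(z)(\thh[I])\subseteq\Pc(\thh)[I]$ is the statement that $z\act h\in\Pc(\thh)[I]$ for every $h\in\thh[I]$, which is precisely part (i) of Theorem~\ref{t:action-prim}. So this part requires no new argument beyond unwinding the notation.

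For part (ii), I would set $z':=a_{(I)}^{-1}z$ and verify the two defining properties of a projection onto $\Pc(\thh)[I]$: first that $\facematrix_I(z')$ lands in $\Pc(\thh)[I]$, and second that it restricts to the identity there. The first property is immediate from part (i) applied to $z'$, noting that $z'\in\Pc(\tSig)[I]$ because $\Pc(\tSig)[I]$ is a subspace and $z'$ is a scalar multiple of $z$ (this uses $a_{(I)}\neq 0$ so that $z'$ is defined). For the identity-on-primitives property, I would take $h\in\Pc(\thh)[I]$ and compute $\facematrix_I(z')(h)=z'\act h$. The coefficient of $\BH_{(I)}$ in $z'$ is $a_{(I)}^{-1}a_{(I)}=1$, so part (ii) of Theorem~\ref{t:action-prim} applied to $z'$ gives $z'\act h=1\cdot h=h$. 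Hence $\facematrix_I(z')$ is the identity on its image $\Pc(\thh)[I]$, and combined with the first property it is a projection.

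The two properties together—image contained in $\Pc(\thh)[I]$ and acting as the identity on $\Pc(\thh)[I]$—are exactly what it means for an idempotent linear map to be a projection onto $\Pc(\thh)[I]$; idempotency $\facematrix_I(z')^2=\facematrix_I(z')$ follows formally since for any $x\in\thh[I]$ we have $\facematrix_I(z')(x)\in\Pc(\thh)[I]$ by the first property and then $\facematrix_I(z')$ fixes it by the second. I do not anticipate a genuine obstacle here: the entire content has already been established in Theorem~\ref{t:action-prim}, and the work consists only in recognizing that the normalization $a_{(I)}^{-1}$ turns the scalar $a_{(I)}$ appearing in part (ii) of that theorem into $1$, yielding the identity action on primitives. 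The only point meriting a word of care is the remark that $\Pc(\tSig)[I]$ is closed under scalar multiplication, so that $z'$ still satisfies the hypothesis of Theorem~\ref{t:action-prim}; this is clear from the definition of the primitive part as an intersection of kernels~\eqref{e:gen-prim}.
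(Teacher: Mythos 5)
Your proposal is correct and is essentially the paper's own reasoning: the paper gives no separate proof, presenting Theorem~\ref{t:act-to-prim} as a direct restatement of Theorem~\ref{t:action-prim} via the map $\facematrix_I(z)$ of~\eqref{e:canz}, which is precisely what your normalization $z'=a_{(I)}^{-1}z$ and the identity-on-primitives argument spell out. The idempotency argument (image lands in $\Pc(\thh)[I]$ by part (i), then fixed pointwise by part (ii)) is the intended, and correct, way to see that the operator is a projection onto $\Pc(\thh)[I]$.
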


\begin{remark}
Statement (i) in Theorem~\ref{t:act-to-prim} says that
$\facematrix\bigl(\Pc(\tSig)\bigr)\subseteq \iH\bigl(\thh,\Pc(\thh))$.
Since morphisms of comonoids preserve primitive elements, 
Proposition~\ref{p:Psi-mor-conn} implies that (when $\thh$ is finite-dimensional)
$\facematrix\bigl(\Pc(\tSig)\bigr)\subseteq \Pc\bigl(\iH(\thh)\bigr)$.
This inclusion, however, is weaker than the previous, in view of
Proposition~\ref{p:iH-prim}.
\end{remark}

The above result has a converse:

\begin{proposition}\label{p:act-to-prim-conv}
Let $z\in\tSig[I]$ be an element such that
\[
\facematrix_I(z)(\thh[I]) \subseteq \Pc(\thh)[I]
\]
for every cocommutative connected bimonoid $\thh$.
Then 
\[
z\in \Pc(\tSig)[I].
\]
\end{proposition}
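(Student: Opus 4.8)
The plan is to apply the hypothesis to a single universal test object, namely $\thh=\tSig$ itself, evaluated on the unit of the Tits algebra. The species $\tSig$ is connected and cocommutative (Section~\ref{ss:faces}), so it is an admissible choice of $\thh$ in the statement of the proposition. This is the whole idea: the hypothesis is strong enough to be probed against $\tSig$, and on $\tSig$ the characteristic operation is nothing but the Tits product.

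First I would specialize the inclusion $\facematrix_I(z)(\thh[I])\subseteq\Pc(\thh)[I]$ to $\thh=\tSig$, obtaining $\facematrix_I(z)(\tSig[I])\subseteq\Pc(\tSig)[I]$. In particular, applying $\facematrix_I(z)$ to the element $\BH_{(I)}\in\tSig[I]$, we conclude that $z\act\BH_{(I)}\in\Pc(\tSig)[I]$.

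Next I would compute $z\act\BH_{(I)}$ directly. Writing $z=\sum_F a_F\BH_F$ and invoking Proposition~\ref{p:ast-tits}, which identifies the characteristic operation of $\tSig$ on itself with the Tits product, we find
\[
z\act\BH_{(I)}=\sum_F a_F\,\BH_F\act\BH_{(I)}=\sum_F a_F\,\BH_{F(I)}=\sum_F a_F\BH_F=z,
\]
since $(I)$ is the unit of the Tits monoid and hence $F(I)=F$ for every composition $F$ of $I$. Combining this identity with the previous step yields $z=z\act\BH_{(I)}\in\Pc(\tSig)[I]$, which is exactly the desired conclusion.

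I do not expect a genuine obstacle here; the content lies entirely in selecting the correct specialization. The only points that require care are confirming that $\tSig$ is cocommutative (so that the hypothesis indeed applies to it) and recalling that $\BH_{(I)}$ is the unit element for $\act$ in the Tits algebra $\tSig[I]$; both facts are recorded earlier in the paper, the former in Section~\ref{ss:faces} and the latter following Proposition~\ref{p:ast-tits}. Once these are in hand, evaluating the operation at the identity simply recovers $z$, and the converse drops out.
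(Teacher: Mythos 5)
Your proof is correct and is essentially identical to the paper's own argument: both specialize the hypothesis to $\thh=\tSig$ (valid since $\tSig$ is a cocommutative connected bimonoid) and then use the Tits-product identity of Proposition~\ref{p:ast-tits} to conclude $z=z\act\BH_{(I)}\in\Pc(\tSig)[I]$. No gaps; the extra care you take in verifying that $(I)$ is the unit of the Tits monoid is exactly the step the paper cites via~\eqref{e:ast-tits}.
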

\begin{proof}
Since $\tSig$ is a cocommutative connected bimonoid, we may apply the hypothesis
to $\thh=\tSig$. 
Hence $z\act \BH_G\in\Pc(\tSig)[I]$ for all compositions $G$.
But in view of~\eqref{e:ast-tits} we have $z=z\act \BH_{(I)}$,
so $z\in\Pc(\tSig)[I]$.
\end{proof}

We may consider the operation of $\tSig$ on itself.

\begin{corollary}\label{c:action-prim}
Let $z$ be as in~\eqref{e:typical}.
If $z$ is a primitive element of $\tSig[I]$, then $z\act z = a_{(I)} z$.
\end{corollary}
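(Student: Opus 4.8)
The plan is to recognize this as the immediate specialization of Theorem~\ref{t:action-prim} in which the ambient bimonoid is $\tSig$ itself and the operand coincides with the operator. Recall from Section~\ref{ss:faces} that $\tSig$ is a connected cocommutative Hopf monoid; in particular it is a cocommutative connected bimonoid, so it is a legitimate choice of $\thh$ in Theorem~\ref{t:action-prim}. Since $\tSig$ is cocommutative, Theorem~\ref{t:can-action} guarantees that $\act$ is a genuine left action of the Tits algebra $\tSig[I]$ on $\tSig[I]$, so the self-operation $z\act z$ is well-defined.

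First I would take $\thh=\tSig$ and $h:=z$ in Theorem~\ref{t:action-prim}. By hypothesis $z$ is primitive, i.e.\ $z\in\Pc(\tSig)[I]$, so the hypothesis of part~(i) holds; moreover, with the choice $h=z$ we also have $h\in\Pc(\tSig)[I]=\Pc(\thh)[I]$, so the additional hypothesis of part~(ii) is met as well. Applying part~(ii) then yields
\[
z\act z = a_{(I)}\, z,
\]
where $a_{(I)}$ is the coefficient of $\BH_{(I)}$ in the expansion~\eqref{e:typical} of $z$, exactly as claimed.

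There is no real obstacle here: the statement follows by a single substitution into an already-proved theorem. The only points worth checking are bookkeeping ones, namely that $\tSig$ satisfies the standing hypotheses (cocommutative, connected, a bimonoid) and that the primitivity of $z$ supplies \emph{both} roles — as the primitive operator of part~(i) and as the primitive operand of part~(ii) — simultaneously. Both are immediate, so the proof is a one-line deduction.
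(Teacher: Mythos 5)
Your proposal is correct and is exactly the paper's proof: the paper also deduces the corollary from item (ii) of Theorem~\ref{t:action-prim} by setting $\thh=\tSig$ and $h=z$. Your additional remarks verifying that $\tSig$ is a cocommutative connected bimonoid and that $z$ plays both the operator and operand roles are sound bookkeeping, nothing more is needed.
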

\begin{proof}
This follows from item (ii) in Theorem~\ref{t:action-prim} by letting $\thh=\tSig$, $h=z$.
\end{proof}

Thus, a primitive element in the Hopf monoid $\tSig$ either satisfies $z\act z=0$, or
(if $a_{(I)}\neq 0$) is a quasi-idempotent in the Tits algebra $(\tSig[I],\act)$. 
If $a_{(I)}=1$, it is an idempotent.

The link between the Hopf monoid structure of $\tSig$ and the algebra
structure of $(\tSig[I],\act)$ is further witnessed by the following result.

\begin{corollary}\label{c:prim-deal}
The primitive part $\Pc(\tSig)[I]$ is a right ideal of the Tits algebra $(\tSig[I],\act)$.
Moreover, if $z$ as in~\eqref{e:typical} is any primitive element with $a_{(I)}\neq 0$,
then $\Pc(\tSig)[I]$ is the right ideal of $(\tSig[I],\act)$ generated by $z$.
\end{corollary}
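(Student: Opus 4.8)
The plan is to derive both assertions directly from Theorems~\ref{t:action-prim} and~\ref{t:act-to-prim} by taking the operating bimonoid to be $\tSig$ itself. The key observation is that $\tSig$ is a cocommutative connected bimonoid (since $\rSig$ is cocommutative), so it is a legitimate choice of $\thh$ in those results; moreover, by Proposition~\ref{p:ast-tits} the characteristic operation $\act$ of $\tSig[I]$ on $\tSig[I]$ is precisely the product of the Tits algebra $(\tSig[I],\act)$. Thus statements about $\act$-ideals translate into statements about characteristic operations, for which the earlier theorems provide exactly the needed information.

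For the first claim, I would fix $z\in\Pc(\tSig)[I]$ and an arbitrary $w\in\tSig[I]$. Applying item (i) of Theorem~\ref{t:action-prim} with $\thh=\tSig$ and $h=w$ yields $z\act w\in\Pc(\tSig)[I]$. Since $z\act w$ is the product of $z$ and $w$ in the Tits algebra, this is exactly the statement that $\Pc(\tSig)[I]$ is stable under right multiplication, i.e.\ a right ideal of $(\tSig[I],\act)$.

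For the second claim, I would first note that $(\tSig[I],\act)$ is unital with unit $\BH_{(I)}$, so the right ideal it generates by $z$ is $z\act\tSig[I]$. One inclusion is immediate: by~\eqref{e:can-action-unit} we have $z=z\act\BH_{(I)}\in\Pc(\tSig)[I]$, and since $\Pc(\tSig)[I]$ is a right ideal (first part), $z\act\tSig[I]\subseteq\Pc(\tSig)[I]$. For the reverse inclusion I would invoke item (ii) of Theorem~\ref{t:act-to-prim}: because $z$ is primitive with $a_{(I)}\neq 0$, the map $\facematrix_I(a_{(I)}^{-1}z)$ is a projection from $\tSig[I]$ onto $\Pc(\tSig)[I]$. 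A projection restricts to the identity on its image, so for any $p\in\Pc(\tSig)[I]$ one has $\facematrix_I(a_{(I)}^{-1}z)(p)=p$, that is, $a_{(I)}^{-1}(z\act p)=p$, whence $p=z\act(a_{(I)}^{-1}p)\in z\act\tSig[I]$. This gives $\Pc(\tSig)[I]\subseteq z\act\tSig[I]$, and combining the two inclusions completes the proof.

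I do not expect a genuine obstacle here, as the substantive work is carried by the previously established Theorems~\ref{t:action-prim} and~\ref{t:act-to-prim}. The only points requiring care are bookkeeping ones: verifying that $\tSig$ satisfies the hypotheses (cocommutative, connected) so that these theorems apply with $\thh=\tSig$, correctly identifying the $\act$-product with the Tits-algebra product via Proposition~\ref{p:ast-tits}, and exploiting that $a_{(I)}^{-1}z$ is the scalar rescaling appearing in the idempotent of Theorem~\ref{t:act-to-prim}. Keeping the ``right ideal'' orientation consistent with the left-action convention (valid because $\tSig$ is cocommutative) is the one place where a sign or side convention could slip.
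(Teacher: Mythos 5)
Your proof is correct and follows essentially the same route as the paper: both apply Theorem~\ref{t:action-prim} (equivalently, its restatement Theorem~\ref{t:act-to-prim}) with $\thh=\tSig$, using item (i) for the right-ideal property and the inclusion $z\act\tSig[I]\subseteq\Pc(\tSig)[I]$, and item (ii) to write each primitive $h$ as $a_{(I)}^{-1}(z\act h)$ for the reverse inclusion. The only quibble is a citation: the identity $z=z\act\BH_{(I)}$ follows from~\eqref{e:ast-tits} (since $F(I)=F$ in the Tits monoid), not from~\eqref{e:can-action-unit}, which gives the unit acting on the \emph{left}; this is immaterial since $\BH_{(I)}$ is a two-sided unit.
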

\begin{proof}
Applying statement (i) in Theorem~\ref{t:action-prim} with $\thh=\tSig$ and $h$ an arbitrary
element of $\tSig[I]$, we deduce that the right ideal generated by $z$
is contained in $\Pc(\tSig)[I]$. Applying item (ii) to an arbitrary element $h$ of
$\Pc(\tSig)[I]$, we deduce $h = a_{(I)}^{-1} (z\act h)$, and hence the converse inclusion. 
\end{proof}

Such primitive elements do exist;
examples are given in Sections~\ref{ss:first-euler} and~\ref{ss:dynkin}.

The following is a necessary condition for an element of $\tSig$ to be primitive.

\begin{proposition}\label{p:xyz-flat-sum}
Suppose $z$ as in~\eqref{e:typical} is any primitive element with $a_{(I)}=1$.
Then for any partition $X$ of $I$,
\begin{equation}\label{e:xyz-flat-sum}
\sum_{F:\,\supp F = X} a_F = \mu(\minflat,X),
\end{equation}
where $\mu$ denotes the M\"obius function of the partition lattice~\eqref{e:partmobiusPi}.
\end{proposition}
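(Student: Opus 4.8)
The plan is to transport $z$ along the abelianization $\pi\colon\tSig\onto\tPi$ of~\eqref{e:Sig-Pi} and to exploit the fact that the primitive part of $\tPi$ is one-dimensional. By Proposition~\ref{p:Sig-Pi}, $\pi$ is a morphism of Hopf monoids, hence in particular a morphism of comonoids; since morphisms of comonoids preserve primitive elements, the hypothesis $z\in\Pc(\tSig)[I]$ gives $\pi(z)\in\Pc(\tPi)[I]$. Recall from Section~\ref{ss:flats} that $\Pc(\tPi)[I]$ is one-dimensional, spanned by $\BQ_{\minflat}$. Therefore $\pi(z)=c\,\BQ_{\minflat}$ for some scalar $c$, and it remains to compute $c$ and to expand both sides in the $\BH$-basis.

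First I would expand the left-hand side directly. From $z=\sum_{F\vDash I}a_F\BH_F$ and $\pi(\BH_F)=\BH_{\supp F}$ I obtain
\[
\pi(z)=\sum_{X\vdash I}\Bigl(\sum_{F:\,\supp F=X}a_F\Bigr)\BH_X .
\]
Next I would expand $\BQ_{\minflat}$ in the $\BH$-basis. Möbius inversion of the defining relation $\BH_X=\sum_{Y:\,X\leq Y}\BQ_Y$ yields $\BQ_X=\sum_{Y:\,X\leq Y}\mu(X,Y)\BH_Y$, and in particular, since $\minflat$ is the minimum of $\rPi[I]$,
\[
\BQ_{\minflat}=\sum_{Y\vdash I}\mu(\minflat,Y)\,\BH_Y .
\]

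To pin down $c$ I would compare the coefficients of $\BH_{\minflat}$. On the left, the only composition $F$ with $\supp F=\minflat$ is $F=(I)$, so that coefficient equals $a_{(I)}=1$; on the right it equals $c\,\mu(\minflat,\minflat)=c$. Hence $c=1$, so $\pi(z)=\BQ_{\minflat}$, and comparing the coefficients of an arbitrary $\BH_X$ in the two expansions gives precisely $\sum_{F:\,\supp F=X}a_F=\mu(\minflat,X)$, as claimed.

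The argument is short because all the real work has been done beforehand: the genuine input is the identification of $\Pc(\tPi)[I]$ as the line $\Kb\BQ_{\minflat}$ together with the fact that $\pi$ is a comonoid map collapsing the possibly high-dimensional $\Pc(\tSig)[I]$ onto it. The only point requiring care---and the step I expect to be the mildest obstacle---is getting the direction of the Möbius inversion right and checking that $(I)$ is the unique composition with one-block support, so that the normalization $a_{(I)}=1$ forces $c=1$; everything else is bookkeeping in the $\BH$-basis.
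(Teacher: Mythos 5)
Your proof is correct, and it is genuinely different from the one in the paper. The paper argues entirely inside the Tits algebra $(\tSig[I],\act)$: writing $f(X)=\sum_{\supp F=X}a_F$, it observes that for any composition $F$ with support $Y>\minflat$ the coefficient of $\BH_F$ in $\BH_F\act z=\sum_G a_G\BH_{FG}$ equals $\sum_{X\leq Y}f(X)$ (since $FG=F$ exactly when $\supp G\leq\supp F$), and this vanishes because $\BH_F\act z=\mu_F\Delta_F(z)=0$ by primitivity; together with $f(\minflat)=1$ this is precisely the defining recursion of $\mu(\minflat,-)$, so $f(X)=\mu(\minflat,X)$. You instead push $z$ forward along the abelianization $\pi\colon\tSig\onto\tPi$ of Proposition~\ref{p:Sig-Pi} and exploit that $\Pc(\tPi)[I]$ is the line spanned by $\BQ_{\minflat}$, then compare $\BH$-expansions after the Möbius inversion $\BQ_{\minflat}=\sum_Y\mu(\minflat,Y)\BH_Y$. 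Both arguments are legitimate given what precedes the proposition in the paper (Proposition~\ref{p:Sig-Pi} and the description of $\Pc(\tPi)$ appear in earlier sections, so there is no circularity). The trade-off: your route is more structural and makes transparent that the statement is just the computation of the image of $z$ under abelianization, but it leans on imported facts (Proposition~\ref{p:Qflat} and the identification $\Pc(\tPi)\cong\wE_+$, which the paper quotes from elsewhere); the paper's route needs only the elementary identity $\BH_F\act\BH_G=\BH_{FG}$, primitivity, and the Möbius recursion, which is why it generalizes to settings where a Tits product exists but no analogue of $\tPi$ with a known primitive part is available.
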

\begin{proof}
Denote the left-hand side of~\eqref{e:xyz-flat-sum} by $f(X)$. 
Since $a_{(I)}=1$, we have $f(\minflat)=1$.
Moreover, for any $Y > \minflat$,
\[
\sum_{X:\,X \leq Y} f(X)=0.
\]
(Let $F$ be any face with support $Y$.
Then the coefficient of $\BH_F$ in $\BH_F\act z$ is the left-hand side above,
and it is zero since $z$ is primitive.)
These conditions imply $f(X)=\mu(\minflat,X)$ as required. 
\end{proof}

\subsection{The cumulants of a connected bimonoid}\label{ss:cumulant}

Let $\thh$ be a finite-dimensional connected bimonoid. 
Recall from~\eqref{e:sp-part}
that
\[
\thh(X) = \bigotimes_{B\in X} \thh[B]
\]
for each partition $X$ of $I$.
The \emph{cumulants} of $\thh$ are the integers $\eigmult{X}(\thh)$ defined by
\begin{equation}\label{e:mult}
\sum_{Y:\,Y\geq X} \eigmult{Y}(\thh) = \dim_{\Kb} \thh(X),
\end{equation}
or equivalently, by
\begin{equation}\label{e:mult-r}
\eigmult{X}(\thh) = \sum_{Y:\,Y\geq X} \mu(X,Y) \,\dim_{\Kb} \thh(Y),
\end{equation}
where $\mu$ denotes the M\"obius function of the partition lattice.
The $n$-th cumulant is
\[
\eigmult{n}(\thh) := \eigmult{\minflat}(\thh),
\]
where $\abs{I}=n$ and $\minflat$ is the partition of $I$ with one block.
Thus,
\begin{equation}\label{e:cumulant}
\eigmult{n}(\thh) = \sum_{Y\vdash I} \mu(\minflat,Y) \,\dim_{\Kb} \thh(Y).
\end{equation}
One can deduce from~\eqref{e:mobiusPi} that
\begin{equation}\label{e:mult-r-lump}
\eigmult{X}(\thh) = \prod_{B\in X} \eigmult{\abs{B}}(\thh).
\end{equation}
 
\begin{proposition}\label{p:dim-h-prim}
For any finite-dimensional cocommutative connected bimonoid $\thh$,
the dimension of its primitive part is
\begin{equation}\label{e:dim-h-prim}
\dim_{\Kb} \Pc(\thh)[I] = \eigmult{\abs{I}}(\thh).
\end{equation}
\end{proposition}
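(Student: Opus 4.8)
I want to compute $\dim_\Kb \Pc(\thh)[I]$ by exhibiting a projection from $\thh[I]$ onto $\Pc(\thh)[I]$ whose trace I can evaluate. The key tool is the characteristic operation. Since $\thh$ is cocommutative and connected, Theorem~\ref{t:can-action} tells me that $\tSig[I]$ acts on $\thh[I]$ via $\facematrix_I$, and Theorem~\ref{t:act-to-prim}(ii) says that if $z\in\Pc(\tSig)[I]$ has $a_{(I)}=1$, then $\facematrix_I(z)$ is a \emph{projection} of $\thh[I]$ onto $\Pc(\thh)[I]$. Hence
\[
\dim_\Kb \Pc(\thh)[I] = \trace\bigl(\facematrix_I(z)\bigr)
\]
for any such $z$. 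The strategy is to pick a convenient primitive $z$, compute this trace in terms of the higher operations $\mu_F\Delta_F$, and match the answer with~\eqref{e:cumulant}.

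**The key steps.**
First I would fix a primitive element $z=\sum_{F\vDash I} a_F\BH_F$ with $a_{(I)}=1$ (such exist by Sections~\ref{ss:first-euler} and~\ref{ss:dynkin}, e.g.\ the first Eulerian idempotent $\euler{1}$). By~\eqref{e:canz} and~\eqref{e:can-action},
\[
\facematrix_I(z) = \sum_{F\vDash I} a_F\,\mu_F\Delta_F,
\]
so by linearity $\trace\bigl(\facematrix_I(z)\bigr)=\sum_{F\vDash I} a_F\,\trace(\mu_F\Delta_F)$. The central computation is to evaluate $\trace(\mu_F\Delta_F)$ on $\thh[I]$. For $F=(I_1,\dots,I_k)$, the map $\Delta_F\colon\thh[I]\to\thh(F)=\thh[I_1]\otimes\cdots\otimes\thh[I_k]$ followed by $\mu_F$ back to $\thh[I]$ has a trace I expect to depend only on the support $X=\supp F$: using the splitting property~\eqref{e:hopf-split2}, namely $\Delta_F\mu_F=\id$ on $\thh(F)$, together with cocommutativity, I can argue that $\mu_F\Delta_F$ is conjugate to a map whose trace equals $\dim_\Kb\thh(\supp F)$. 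Concretely, since $\Delta_F\mu_F=\id_{\thh(F)}$, the composite $\mu_F\Delta_F$ is idempotent, so its trace equals the rank, which is $\dim_\Kb\thh(F)=\dim_\Kb\thh(\supp F)$. Thus
\[
\trace\bigl(\facematrix_I(z)\bigr)=\sum_{F\vDash I} a_F\,\dim_\Kb\thh(\supp F)
=\sum_{X\vdash I}\Bigl(\sum_{F:\,\supp F=X} a_F\Bigr)\dim_\Kb\thh(X).
\]
Now I invoke Proposition~\ref{p:xyz-flat-sum}, which gives $\sum_{F:\,\supp F=X} a_F=\mu(\minflat,X)$. Substituting yields exactly $\sum_{X\vdash I}\mu(\minflat,X)\dim_\Kb\thh(X)$, which is $\eigmult{\abs{I}}(\thh)$ by~\eqref{e:cumulant}.

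**The main obstacle.**
The delicate point is justifying $\trace(\mu_F\Delta_F)=\dim_\Kb\thh(F)$. The clean route is to observe that $\mu_F\Delta_F$ is idempotent: by the first identity in~\eqref{e:hopf-split2} we have $\Delta_F\mu_F=\id$, hence $(\mu_F\Delta_F)(\mu_F\Delta_F)=\mu_F(\Delta_F\mu_F)\Delta_F=\mu_F\Delta_F$. An idempotent endomorphism of a finite-dimensional space has trace equal to its rank, and its image is the image of $\mu_F$, which has dimension $\dim_\Kb\thh(F)$ since $\mu_F$ is split injective (again by $\Delta_F\mu_F=\id$). This is where finite-dimensionality of $\thh$ is essential. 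The remaining bookkeeping—regrouping the sum over compositions $F$ into a sum over partitions $X$ via the support map, and the equality $\dim_\Kb\thh(F)=\dim_\Kb\thh(\supp F)$—is routine, since $\thh(F)$ and $\thh(\supp F)$ are the same tensor product up to reordering of factors.
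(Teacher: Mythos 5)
Your proposal is correct and follows essentially the same route as the paper's own proof: choose a primitive $z$ with $a_{(I)}=1$, use Theorem~\ref{t:act-to-prim} to get a projection onto $\Pc(\thh)[I]$, compute its trace termwise via the idempotency of $\mu_F\Delta_F$ (equivalently, of $\BH_F$ in the Tits algebra) and the splitting $\Delta_F\mu_F=\id_{\thh(F)}$ from~\eqref{e:hopf-split2}, regroup by support, and finish with Proposition~\ref{p:xyz-flat-sum} and~\eqref{e:cumulant}. Your justification that $\trace(\mu_F\Delta_F)=\dim_\Kb\thh(F)$ is exactly the argument the paper uses, just spelled out in slightly more detail.
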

\begin{proof}
Let $z$ as in~\eqref{e:typical} be any primitive element of $\tSig[I]$ with $a_{(I)}=1$.
(As mentioned above, such elements exist.)
Then $\facematrix_I(z)$ is a projection onto $\Pc(\thh)[I]$, by Theorem~\ref{t:act-to-prim}.
Hence the dimension of its image equals its trace, and
\[
\dim_{\Kb} \Pc(\thh)[I] = \sum_F a_F \trace \facematrix_I(\BH_F).
\]
Since $\BH_F$ is idempotent, the trace of $\facematrix_I(\BH_F)$
equals the dimension of its image.
Now, $\facematrix_I(\BH_F)=\mu_F\Delta_F$ and $\Delta_F\mu_F=\id_{\thh(F)}$ by~\eqref{e:hopf-split2}. 
Hence the dimension of the image of $\facematrix_I(\BH_F)$ equals the dimension of $\thh(F)$.
So 
\[
\trace \facematrix_I(\BH_F) = \dim_{\Kb}\thh(F).
\]
Since $\thh(F)\cong \thh(\supp F)$, we have
\[
\dim_{\Kb} \Pc(\thh)[I] = \sum_F a_F \dim_{\Kb} \thh(F) = \sum_X \Bigl(\sum_{F:\,\supp F=X} a_F \Bigr)\,\dim_{\Kb} \thh(X).
\]
Now, by Proposition~\ref{p:xyz-flat-sum} and~\eqref{e:cumulant},
\[
\dim_{\Kb} \Pc(\thh)[I] = \sum_X \mu(\minflat,X) \,\dim_{\Kb} \thh(X) = \eigmult{\abs{I}}(\thh). 
\qedhere
\]
\end{proof}

It follows from~\eqref{e:mult-r-lump} and~\eqref{e:dim-h-prim} 
that the integers $\eigmult{n}(\thh)$ are nonnegative, 
a fact not evident from their definition. Here are some simple examples,
for the Hopf monoids $\wE$, $\wL$ and $\tPi$ of Section~\ref{s:examples}.
\[
\eigmult{n}(\wL)=(n-1)!, \qquad 
\eigmult{n}(\wE)= \begin{cases}
1 & \text{ if $n=1$,}\\
0 & \text{ otherwise,}
\end{cases}
\qquad
\eigmult{n}(\tPi) = 1.
\]

\begin{remark}\label{r:cumulant}
Suppose the integer $\dim_{\Kb} \thh[n]$ is the $n$-th \emph{moment} of a random variable $Z$. 
Then the integer $\eigmult{n}(\thh)$
is the $n$-th \emph{cumulant} of $Z$ in the classical sense~\cite{Fis:1929,FisWis:1931}.
Proposition~\ref{p:dim-h-prim} implies that
if $\thh$ is cocommutative, then all cumulants are nonnegative. For example,
$\dim_{\Kb} \tPi[n]$ is the $n$-th Bell number. 
This is the $n$-th moment of a Poisson variable of parameter $1$. 
Also, $\dim_{\Kb} \wL[n]=n!$ is the $n$-th moment of an exponential variable
of parameter $1$, and $\dim_{\Kb} \wE[n]=1$ is the $n$-th moment of the Dirac
measure $\delta_1$.
\end{remark}

\subsection{Group-like operations}\label{ss:action-glike}

Let $s$ be a series of $\tSigh$ and $\thh$ a bimonoid.
Building on~\eqref{e:canz}, the operation $\facematrix(s):\thh\to\thh$ is defined by
\[
\facematrix_I(s_I):\thh[I]\to\thh[I], \quad h\mapsto s_I\act h.
\]
If $\thh$ is connected, the same formula defines the operation of 
a series $s$ of $\tSig$ on $\thh$.

\begin{theorem}\label{t:action-glike-ser}
Let $g$ be a group-like series of $\tSigh$.
If $\thh$ is (co)commutative, 
then $\facematrix(g)$ is a morphism of (co)monoids.
\end{theorem}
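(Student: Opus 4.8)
The plan is to show that when $\thh$ is cocommutative, $\facematrix(g)$ is a morphism of comonoids (the commutative case being dual). The statement to prove is that for a group-like series $g$ of $\tSigh$ and a cocommutative bimonoid $\thh$, the map $\facematrix(g):\thh\to\thh$ defined by $h\mapsto g_I\act h$ commutes with the coproduct, i.e. for each $I=S\sqcup T$,
\[
\Delta_{S,T}\bigl(g_I\act h\bigr) = \bigl(g_S\act h_1\bigr)\otimes\bigl(g_T\act h_2\bigr),
\]
where $\Delta_{S,T}(h)=\sum h_1\otimes h_2$. The two ingredients I expect to use are the \emph{distributivity} identity~\eqref{e:delta-star} from Theorem~\ref{t:distributivity}(iii), valid because $\thh$ is cocommutative, and the \emph{group-like} condition~\eqref{e:glike1} on $g$, which says $\Delta_{S,T}(g_I)=g_S\otimes g_T$ inside $\tSigh$. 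I also need that $\facematrix(g)_\emptyset$ is the identity on $\thh[\emptyset]$, which follows from~\eqref{e:glike2} together with~\eqref{e:can-action-unit}, recognizing that the only decomposition contributing nontrivially in the $\emptyset$-component is accounted for by the unit $\BH_{(I)}$.

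First I would reduce to nonempty $S,T$ and apply~\eqref{e:delta-star}, which gives
\[
\Delta_F(g_I\act h) = \Delta_F(g_I)\act \Delta_F(h)
\]
for the decomposition $F=(S,T)$. Here the right-hand side uses the extended operation~\eqref{e:can-action2} of $\tSigh(F)$ on $\thh(F)$, which acts factor-by-factor. Next I would substitute the group-like identity $\Delta_{S,T}(g_I)=g_S\otimes g_T$ (this is~\eqref{e:glike1}, the defining property of a group-like series in the comonoid $\tSigh$), obtaining
\[
\Delta_{S,T}(g_I\act h) = (g_S\otimes g_T)\act \Delta_{S,T}(h).
\]
By the definition~\eqref{e:can-action2} of the operation on $\tSigh(F)$, writing $\Delta_{S,T}(h)=\sum h_1\otimes h_2$, the right side equals $\sum (g_S\act h_1)\otimes(g_T\act h_2) = \bigl(\facematrix(g)_S\otimes\facematrix(g)_T\bigr)\Delta_{S,T}(h)$, which is exactly the comonoid-morphism condition. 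To complete the verification that $\facematrix(g)$ is a morphism of comonoids I would also check compatibility with the counit, namely $\epsilon_\emptyset\,\facematrix(g)_\emptyset = \epsilon_\emptyset$; this is immediate from~\eqref{e:glike2} and~\eqref{e:can-action-unit}.

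The commutative case is dual and follows by the same template, using~\eqref{e:right-dist} (or its analogue) in place of~\eqref{e:delta-star}: one shows $\facematrix(g)$ commutes with products by combining distributivity over $\mu_F$ with the group-like condition, then checks the unit. \textbf{The main obstacle} I anticipate is purely bookkeeping: making sure the extended operation~\eqref{e:can-action2} is invoked correctly so that a group-like element of the \emph{comonoid} $\tSigh$ (whose coproduct splits $g_I$ into $g_S\otimes g_T$) matches up tensor-factor-by-tensor-factor with the action on $\thh(F)$. The conceptual content—that distributivity plus the multiplicativity built into ``group-like'' force multiplicativity of the operation—is clean; the care lies in tracking which braiding conventions are absorbed into~\eqref{e:delta-star} and confirming no extra scalar from $\beta_q$ intrudes, which here it does not since we work with the symmetric ($q=1$) bimonoid structure on $\tSigh$.
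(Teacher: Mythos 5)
Your proof is correct and takes essentially the same route as the paper, which likewise derives coproduct-preservation (cocommutative case) from~\eqref{e:delta-star} and product-preservation (commutative case) from the distributivity identity, combined in each case with group-likeness of $g$ and the factor-by-factor action~\eqref{e:can-action2}. One small citation fix: the identity needed in the commutative case is~\eqref{e:left-dist}, i.e.\ $z\act\mu_F(h)=\mu_F\bigl(\Delta_F(z)\act h\bigr)$, not~\eqref{e:right-dist}, since it is the \emph{coproduct} of $g_I$ in $\tSigh$ that the group-like condition controls.
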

\begin{proof}
The fact that $\facematrix(g)$ preserves products follows from~\eqref{e:left-dist},
and that it preserves coproducts from~\eqref{e:delta-star}. These formulas in fact
show that all higher (co)products are preserved; in particular (co)units are preserved.
\end{proof}

The \emph{universal} series $\univh$ of $\tSigh$ is defined by
\begin{equation}\label{e:expeuler}
\univh_I := \BH_{(I)}
\end{equation}
for all finite sets $I$. In particular, $\univh_\emptyset:=\BH_{\emptyset^1}$. 

\begin{lemma}\label{l:glikeH}
The universal series $\univh$ is group-like.
\end{lemma}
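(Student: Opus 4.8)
The plan is to verify the two defining conditions of a group-like series from Section~\ref{ss:glike} directly, since both follow immediately from the explicit description of the comonoid structure of $\tSigh$ together with the behavior of the one-block decomposition under restriction. First I would recall that $\univh$ is genuinely a series: for a bijection $\sigma\colon I\to J$ one has $\tSigh[\sigma](\BH_{(I)}) = \BH_{(\sigma(I))} = \BH_{(J)} = \univh_J$, so the equivariance condition~\eqref{e:series-nat} holds and $\univh\in\Ser(\tSigh)$ is well-defined.

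The heart of the verification is condition~\eqref{e:glike1}. Recall from Section~\ref{ss:dec-bimonoid} that the coproduct of $\tSigh$ is given by restriction of decompositions, $\Delta_{S,T}(\BH_F) = \BH_{F|_S}\otimes\BH_{F|_T}$, where $F|_S$ keeps empty intersections as in Section~\ref{ss:decompositions}. I would apply this to the one-block decomposition $F=(I)$ of a set $I=S\sqcup T$. Since the only block is $I$ itself, the restriction formula $F|_S = (S\cap I)$ gives $F|_S = (S)$, and likewise $F|_T = (T)$, because $S,T\subseteq I$. Therefore
\[
\Delta_{S,T}(\univh_I) = \Delta_{S,T}(\BH_{(I)}) = \BH_{(S)}\otimes\BH_{(T)} = \univh_S\otimes\univh_T,
\]
which is exactly~\eqref{e:glike1}. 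For condition~\eqref{e:glike2} I would note that $\univh_\emptyset = \BH_{\emptyset^1}$ and that the counit of the linearized set-theoretic comonoid $\tSigh=\Kb\rSigh$ sends every basis element to $1$ (Section~\ref{ss:lin}), so $\epsilon_\emptyset(\univh_\emptyset)=1$.

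There is no serious obstacle here; this is a direct computation. The only point requiring a little care is the treatment of the empty set, namely making sure that $(S)$ is interpreted as the decomposition $\emptyset^1$ of $\emptyset$ when $S=\emptyset$ (and likewise for $T$), so that the identity $\BH_{(\emptyset)} = \BH_{\emptyset^1} = \univh_\emptyset$ is consistent with the stipulated value $\univh_\emptyset := \BH_{\emptyset^1}$. Once that bookkeeping is in place, both group-like conditions are satisfied and the lemma follows.
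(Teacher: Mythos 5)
Your proof is correct and takes essentially the same route as the paper, whose entire argument is the one-line computation $\Delta_{S,T}(\BH_{(I)}) = \BH_{(S)}\otimes\BH_{(T)}$ via the restriction formula for the coproduct of $\tSigh$. The extra bookkeeping you supply (equivariance, the counit condition, and identifying $(\emptyset)$ with $\emptyset^1$ so that $\univh_\emptyset = \BH_{\emptyset^1}$ is consistent) is all accurate and merely makes explicit what the paper leaves implicit.
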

\begin{proof}
Indeed, by~\eqref{e:faces-def}, $\Delta_{S,T}(\BH_{(I)}) = \BH_{(S)}\otimes\BH_{(T)}$.
\end{proof}

By~\eqref{e:can-action-unit}, the universal series operates on $\thh$ as the identity:
\begin{equation}\label{e:univ-id}
\facematrix(\univh) = \id.
\end{equation}

The terminology is justified by the following results.

\begin{theorem}\label{t:univ-series}
Let $s$ be a series of a monoid $\ta$. 
Then there exists a unique morphism of monoids $\zeta:\tSigh\to\ta$ such that
\begin{equation}\label{e:univ-series1}
\zeta(\univh) = s.
\end{equation}
Moreover, $\zeta_I: \tSigh[I] \to \ta[I]$ is given by
\begin{equation}\label{e:univ-series2}
\zeta_I(\BH_F) := \mu_F(s_F)
\end{equation}
for any decomposition $F$ of $I$
\end{theorem}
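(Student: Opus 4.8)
The plan is to prove this universal property of $\tSigh$ by exhibiting the explicit formula~\eqref{e:univ-series2} and verifying that it does the job, then showing uniqueness. The statement asserts that $\tSigh$ is the \emph{free monoid} on a single ``generating series'' $\univh$, in the sense that a monoid morphism out of $\tSigh$ is determined by, and can be prescribed arbitrarily by, where it sends $\univh$. Since $\univh_I=\BH_{(I)}$ and the elements $\BH_F$ form a linear basis of $\tSigh[I]$, any linear map $\zeta_I$ is determined by its values on the $\BH_F$, so the content is that the only consistent choice compatible with the monoid structure is~\eqref{e:univ-series2}.

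First I would verify that the assignment $\zeta_I(\BH_F):=\mu_F(s_F)$ really defines a morphism of species $\zeta:\tSigh\to\ta$. Naturality~\eqref{e:gen-nat} follows from the naturality of the higher product maps $\mu_F$ together with the fact that $s$ is a series, so $\ta[\sigma](s_I)=s_J$ and hence $\ta(\sigma)(s_F)=s_{\sigma(F)}$. Next I would check that $\zeta$ preserves products. On basis elements this amounts to showing
\[
\zeta_I\bigl(\mu_{S,T}(\BH_{F_1}\otimes\BH_{F_2})\bigr)
= \mu_{S,T}\bigl(\zeta_S(\BH_{F_1})\otimes\zeta_T(\BH_{F_2})\bigr)
\]
for $F_1$ a decomposition of $S$ and $F_2$ of $T$. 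The left side is $\zeta_I(\BH_{F_1\cdot F_2})=\mu_{F_1\cdot F_2}(s_{F_1\cdot F_2})$ by~\eqref{e:faces-def}, while the right side is $\mu_{S,T}(\mu_{F_1}(s_{F_1})\otimes\mu_{F_2}(s_{F_2}))$. These coincide by higher associativity (Proposition~\ref{p:gen-asso}): taking the decomposition $F=(S,T)$ refined by $G=F_1\cdot F_2$ with the evident splitting gives $\mu_{S,T}(\mu_{F_1}\otimes\mu_{F_2})=\mu_{F_1\cdot F_2}$, and under concatenation $s_{F_1\cdot F_2}=s_{F_1}\otimes s_{F_2}$. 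I would also confirm unitality, i.e.\ that $\zeta_\emptyset$ sends the unit $\BH_{\emptyset^0}$ correctly; since $\mu_{\emptyset^0}=\iota_\emptyset$, we get $\zeta_\emptyset(\BH_{\emptyset^0})=\iota_\emptyset(1)=u_\emptyset$, as required for a monoid morphism.

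Then I would check~\eqref{e:univ-series1}: evaluating at $\univh$ means $\zeta(\univh)_I=\zeta_I(\univh_I)=\zeta_I(\BH_{(I)})=\mu_{(I)}(s_{(I)})=s_I$, since $\mu_{(I)}$ is the identity of $\ta[I]$ and $s_{(I)}=s_I$. For uniqueness, suppose $\zeta'$ is any monoid morphism with $\zeta'(\univh)=s$. Every decomposition $F=(I_1,\ldots,I_k)$ of $I$ satisfies $\BH_F=\mu_F(\BH_{(I_1)}\otimes\cdots\otimes\BH_{(I_k)})$, because concatenating the one-block compositions $(I_j)$ reproduces $F$ via the product~\eqref{e:faces-def}. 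Applying the monoid morphism $\zeta'$ and using that it commutes with higher products (a consequence of preserving $\mu_{S,T}$, made precise by~\eqref{e:gen-nat} and~\eqref{e:gen-asso}), we get $\zeta'_I(\BH_F)=\mu_F(\zeta'_{I_1}(\BH_{(I_1)})\otimes\cdots\otimes\zeta'_{I_k}(\BH_{(I_k)}))=\mu_F(s_{I_1}\otimes\cdots\otimes s_{I_k})=\mu_F(s_F)$, which is exactly~\eqref{e:univ-series2}. Hence $\zeta'=\zeta$ on a basis, and so everywhere by linearity.

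The main obstacle I anticipate is bookkeeping rather than conceptual: carefully matching the indexing of higher products under concatenation and confirming that the splitting data in Proposition~\ref{p:gen-asso} is handled correctly for \emph{decompositions} (as opposed to compositions), where splittings need not be unique. The key point to emphasize is that both the product formula in $\tSigh$ and the higher associativity of $\ta$ are phrased so that $\mu_{F_1\cdot F_2}=\mu_{S,T}(\mu_{F_1}\otimes\mu_{F_2})$ holds on the nose, so no choice of splitting actually enters the verification; the empty-block subtleties discussed after~\eqref{e:conc-dec} are absorbed because $\univh_\emptyset=\BH_{\emptyset^1}$ is handled uniformly by the single formula $\zeta_I(\BH_F)=\mu_F(s_F)$.
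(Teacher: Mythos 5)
Your proof is correct. It differs in packaging from the paper's, whose entire proof is one sentence: the theorem "is a reformulation of the fact that $\tSigh$ is the free monoid on the species $\wE$." That is, the paper combines the isomorphism $\tSigh\cong\Tc(\wE)$ (the free monoid on an arbitrary species, built from decompositions as in Section~\ref{ss:free-arb}) with the identification $\Ser(\ta)\cong\Hom_{\Ssk}(\wE,\ta)$ of~\eqref{e:series-mor}, under which the series $s$ becomes a morphism of species $\wE\to\ta$; Theorem~\ref{t:dmunivp} then produces the unique monoid morphism $\hat{\zeta}$, and its explicit description is precisely~\eqref{e:univ-series2}. What you do instead is re-derive this universal property from scratch in the case at hand: your product-preservation check via higher associativity (Proposition~\ref{p:gen-asso}, applied to $F=(S,T)$, $G=F_1\cdot F_2$ with the evident splitting) and your uniqueness check via $\BH_F=\mu_F\bigl(\BH_{(I_1)}\otimes\cdots\otimes\BH_{(I_k)}\bigr)$ are exactly the two halves of the proof that $\Tc(\wE)$ is free, inlined rather than cited. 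The paper's route is shorter and makes clear that the theorem is nothing beyond freeness; yours is self-contained and has the virtue of making explicit the points the paper leaves implicit in its appeal to Section~\ref{ss:free-arb} --- in particular that the unit $\BH_{\emptyset^0}$, the series value $\univh_\emptyset=\BH_{\emptyset^1}$, and the non-uniqueness of splittings for decompositions cause no trouble, since higher associativity holds for every splitting and only one particular splitting is ever needed.
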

\begin{proof}
This is a reformulation of the fact that 
$\tSigh$ is the free monoid on the species $\wE$.
\end{proof}

\begin{theorem}\label{t:univ-series2}
Let $g$ be a group-like series of a bimonoid $\thh$. 
Then the unique morphism of monoids $\zeta:\tSigh\to\thh$ such that
\begin{equation}\label{e:univ-series3}
\zeta(\univh) = g
\end{equation}
is in fact a morphism of bimonoids. Moreover, 
$\zeta_I: \tSigh[I] \to \thh[I]$ is given by
\begin{equation}\label{e:univ-series4}
\zeta_I(z) := z\act g_I.
\end{equation}
\end{theorem}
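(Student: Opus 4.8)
The plan is to deduce Theorem~\ref{t:univ-series2} from the preceding Theorem~\ref{t:univ-series}, treating it as an upgrade: the earlier result already supplies a unique morphism of \emph{monoids} $\zeta:\tSigh\to\thh$ with $\zeta(\univh)=g$, together with the explicit formula $\zeta_I(\BH_F)=\mu_F(g_F)$. So two things remain to be checked. First I would verify that this formula agrees with the characteristic-operation formula~\eqref{e:univ-series4}, namely $\zeta_I(z)=z\act g_I$. Second, and this is the substance of the theorem, I would show that the monoid morphism $\zeta$ is in fact a morphism of \emph{comonoids}, hence of bimonoids.

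For the first point, it suffices by linearity to evaluate both sides on a basis element $\BH_F$. Unwinding~\eqref{e:can-actionF}, the characteristic operation gives $\BH_F\act g_I=(\mu_F\Delta_F)(g_I)$. Since $g$ is group-like, the higher coproduct satisfies $\Delta_F(g_I)=g_{I_1}\otimes\cdots\otimes g_{I_k}=g_F$ (this is the iterated form of~\eqref{e:glike1}, obtained by repeatedly applying the defining relation of a group-like series and using the convention $g_\emptyset=\iota_\emptyset(1)$ from~\eqref{e:glike2}). Therefore $\BH_F\act g_I=\mu_F(g_F)=\zeta_I(\BH_F)$, matching~\eqref{e:univ-series2}. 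This identifies $\zeta_I(z)=z\act g_I$ and simultaneously reconciles the two formulas.

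The main work is establishing that $\zeta$ preserves coproducts, and here I would invoke Theorem~\ref{t:action-glike-ser} only after noting it requires a (co)commutativity hypothesis that we do not have; so instead I would argue directly from higher compatibility. The cleanest route: show that $\zeta$ is built from $g$ group-like precisely so that $\Delta_{S,T}\zeta_I=(\zeta_S\otimes\zeta_T)\Delta_{S,T}$ on $\tSigh[I]$. Evaluating on $\BH_F$, the left side is $\Delta_{S,T}\mu_F(g_F)$, and by higher compatibility~\eqref{e:gen-comp} this rewrites in terms of $\mu_{GF/G}^{\can}\,\beta_q^{\can}\,\Delta_{FG/F}^{\can}$ applied to $g_F$; the right side is $(\zeta_S\otimes\zeta_T)$ applied to $\Delta_{S,T}(\BH_F)=\BH_{F|_S}\otimes\BH_{F|_T}$, which equals $\mu_{F|_S}(g_{F|_S})\otimes\mu_{F|_T}(g_{F|_T})$. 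The group-like property of $g$ is exactly what makes the reordered, split coproduct $\Delta_{FG/F}^{\can}(g_F)$ factor as a tensor of the $g$-values over the rows of the intersection matrix~\eqref{e:pqsets}, after which higher associativity~\eqref{e:gen-asso} reassembles the two sides into agreement.

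The hard part will be bookkeeping the Tits-product combinatorics in higher compatibility~\eqref{e:gen-comp} when $G=(S,T)$: one must track how the canonical splitting and the braiding $\beta_q^{\can}$ interact with the tensor factors $g_{A_{ij}}$ and confirm the multiplicativity of $g$ under concatenation collapses everything correctly. A more economical alternative, which I would present as the streamlined argument, is to avoid the explicit matrix manipulation entirely by observing that $\zeta$ corresponds, under the free-monoid adjunction, to the group-like series $g$, and then to appeal to the characterization that a monoid morphism out of the free monoid $\tSigh\cong\Tc(\wE_+)$ preserves coproducts if and only if the image of each generator $\BH_{(I)}$ is group-like---which is precisely the content of $\zeta(\univh)=g\in\Gls(\thh)$. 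I expect the direct compatibility computation to be the genuine obstacle, while the adjunction reformulation makes the preservation of coproducts essentially automatic from Lemma~\ref{l:glikeH} and the group-like hypothesis on $g$.
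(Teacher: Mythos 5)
Your first step---establishing~\eqref{e:univ-series4} by computing $\zeta_I(\BH_F)=\mu_F(g_F)=\mu_F\Delta_F(g_I)=\BH_F\act g_I$---is correct and coincides with the paper's argument, up to one misstatement: \eqref{e:glike2} says $\epsilon_\emptyset(g_\emptyset)=1$, not $g_\emptyset=\iota_\emptyset(1)$; the latter is false in a non-connected bimonoid (in $\tSigh$ itself, $\univh_\emptyset=\BH_{\emptyset^1}\neq\BH_{\emptyset^0}=\iota_\emptyset(1)$). This is harmless, since $\Delta_F(g_I)=g_F$ follows by iterating~\eqref{e:glike1}, which holds also for decompositions with empty parts. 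The genuine gap is in the second half. The ``streamlined alternative'' you ultimately endorse is circular: the asserted characterization---that a morphism of monoids out of $\tSigh$ preserves coproducts if and only if it sends $\univh$ to a group-like series---is not available anywhere in the paper, and its nontrivial direction \emph{is} Theorem~\ref{t:univ-series2}. Nothing about freeness makes it automatic: $\tSigh$ is the free monoid on $\wE$, not on $\wE_+$ (it is $\tSig$ that satisfies $\Tc(\wE_+)\cong\tSig$), and the coproduct of $\tSigh$ (restriction of decompositions) is not the free-bimonoid coproduct of Section~\ref{ss:freeHopf}, under which generators would be primitive rather than group-like. So there is no universal property of the \emph{comonoid} structure of $\tSigh$ to invoke, and Lemma~\ref{l:glikeH} by itself yields only the trivial converse direction.

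The direct computation you sketch, and then set aside as ``the genuine obstacle,'' is in fact the viable route, and the bookkeeping is short. Take $G=(S,T)$. Higher compatibility~\eqref{e:gen-comp} gives $\Delta_{S,T}\mu_F=\mu_{GF/G}^{\can}\,\beta^{\can}\,\Delta_{FG/F}^{\can}$. Applying $\Delta_{FG/F}^{\can}$ to $g_F$ means applying $\Delta_{I_i\cap S,\,I_i\cap T}$ to each factor $g_{I_i}$, which by~\eqref{e:glike1} yields $g_{FG}$; the reordering $\beta^{\can}$ (here $q=1$) sends this to $g_{GF}$; and since the canonical splitting of $(G,GF)$ is $(F|_S,F|_T)$, the map $\mu_{GF/G}^{\can}$ is $\mu_{F|_S}\otimes\mu_{F|_T}$, so one lands on $\mu_{F|_S}(g_{F|_S})\otimes\mu_{F|_T}(g_{F|_T})=(\zeta_S\otimes\zeta_T)(\BH_{F|_S}\otimes\BH_{F|_T})=(\zeta_S\otimes\zeta_T)\Delta_{S,T}(\BH_F)$, as required. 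For comparison, the paper avoids this computation by a device you explicitly rejected: although~\eqref{e:delta-star} requires cocommutativity, that hypothesis can be \emph{arranged}---the components of $g$ span a cocommutative subcomonoid, so $g$ lies in the coabelianization of $\thh$, a cocommutative sub-bimonoid through which $\zeta$ factors (each $\zeta_I(\BH_F)=\mu_F(g_F)$ is a product of components of $g$); one may then assume $\thh$ cocommutative and conclude $\Delta_F\zeta_I(z)=\Delta_F(z)\act g_F=\zeta_F\Delta_F(z)$ directly from~\eqref{e:delta-star}. So your instinct to discard the cocommutativity-based results was the wrong turn; either complete the compatibility computation above or use the coabelianization reduction, but do not appeal to a ``characterization'' that is the theorem itself.
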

\begin{proof}
Let $F$ be a decomposition of $I$. 
We have, by~\eqref{e:univ-series3},
\[
\zeta_I(\BH_F) = \mu_F(g_F) = \mu_F\Delta_F(g_I) = \BH_F\act g_I,
\]
and this proves~\eqref{e:univ-series4}.
To prove that $\zeta$ is a morphism of comonoids, we may assume
that $\thh$ is cocommutative, since $g$ belongs to the 
coabelianization of $\thh$. 
In this case we may apply~\eqref{e:delta-star} and this yields
\[
\Delta_F\zeta_I(z) = \Delta_F(z\act g_I) = \Delta_F(z)\act\Delta_F(g_I) =
\Delta_F(z)\act g_F = \zeta_F\Delta_F(z),
\]
as required.
\end{proof}

From Theorem~\ref{t:univ-series} we deduce a bijection between
the set of series of $\ta$ and the set of monoid morphisms from $\tSigh$ to $\ta$
given by
\[
\Hom_{\Mo{\Ssk}}(\tSigh,\ta) \to \Ser(\ta), \quad \zeta \mapsto \zeta(\univh).
\]
The inverse maps a series $s$ to the morphism $\zeta$ in~\eqref{e:univ-series2}.

The set $\Ser(\ta)$ is an algebra under the Cauchy product~\eqref{e:convseries}
and the space of species morphisms $\Hom_{\Ssk}(\tSigh,\ta)$ 
is an algebra under the convolution product~\eqref{e:convolution}. 
These operations do not correspond to each other.
Indeed, the subset $\Hom_{\Mo{\Ssk}}(\tSigh,\ta)$ is not closed under 
either addition or convolution. 

Similarly, from Theorem~\ref{t:univ-series2} we deduce a bijection between
the set of group-like series of $\thh$ and 
the set of bimonoid morphisms from $\tSigh$ to $\thh$ given by
\begin{equation}\label{mor-glike}
\Hom_{\Bo{\Ssk}}(\tSigh,\thh) \to \Gls(\thh), \quad \zeta \mapsto \zeta(\univh).
\end{equation}
If $\thh$ is commutative, then $\Hom_{\Bo{\Ssk}}(\tSigh,\thh)$ is a group under
convolution, and the above bijection is an isomorphism of monoids.
This follows from~\eqref{e:left-dist}.

\smallskip

The results of this section hold for group-like series of $\tSig$
in relation to connected bimonoids $\thh$. The role of the universal series
is played by $\univ := \upsilon(\univh)$.
As in~\eqref{e:expeuler}, we have
\begin{equation}\label{e:expeuler3}
\univ_I := \BH_{(I)}
\end{equation}
for nonempty $I$, and $\univ_\emptyset=\BH_{\emptyset^0}$
(where
$\emptyset^0$ is the composition of $\emptyset$ with no blocks).

\begin{remark}
Theorem~\ref{t:univ-series2} is analogous to~\cite[Theorem~4.1]{ABS:2006}.
The latter result is formulated in dual terms and for graded connected Hopf algebras.
In this context, the expression for the morphism $\zeta$ in terms of the
characteristic operations appears in~\cite[Proposition~3.7]{HerHsi:2009}.
Many of these ideas appear in work of Hazewinkel~\cite[Section~11]{Haz:2009}.
(Warning: Hazewinkel uses \emph{curve} for what we call group-like series.)
One may consider the bijection~\eqref{mor-glike} in the special case $\thh=\tSigh$.
The analogue for graded connected Hopf algebras is then a bijection between
endomorphisms of the Hopf algebra of noncommutative symmetric functions,
and group-like series therein. This is used extensively in~\cite{KLT:1997} under
the name \emph{transformations of alphabets}.
\end{remark}

\section{Classical idempotents}\label{s:convpower}

This section introduces a number of idempotent elements 
of the Tits algebra of compositions. 
We start by studying the first Eulerian idempotent.
This is in fact a primitive element of $\tSig$ and it operates
on cocommutative connected bimonoids as the logarithm of the identity.

We then introduce a complete orthogonal family of idempotents for the Tits algebra. 
The family is indexed by set partitions. We call these the Garsia-Reutenauer idempotents.
The idempotent indexed by the partition with one block is the first Eulerian.
Lumping according to the length of the partitions
yields the higher Eulerian idempotents.

In the Tits algebra of compositions,
we define elements indexed by integers $p$,
whose characteristic operation on a connected bimonoid $\thh$
is the $p$-th convolution power of the identity. 
There is a simple expression for these elements 
in terms of the higher Eulerian idempotents which diagonalizes 
their operation on $\thh$.


We introduce the Dynkin quasi-idempotent and describe its operation on connected bimonoids.
We establish the Dynkin-Specht-Wever theorem for connected Hopf monoids.

\subsection{The first Eulerian idempotent}\label{ss:first-euler}

We proceed to define a primitive series of $\tSig$.
We call it the \emph{first Eulerian idempotent} and denote it by $\euler{}$.

Set $\euler{\emptyset}=0$, and for each nonempty finite set $I$
define an element $\euler{I}\in\tSig[I]$ by
\begin{equation}\label{e:first-euler}
\euler{I} := \sum_{F\vDash I} (-1)^{\len(F)-1} \frac{1}{\len(F)} \,\BH_F.
\end{equation}

Recall the group-like series $\univ$ of $\tSig$~\eqref{e:expeuler3}. 
It is given by $\univ_I := \BH_{(I)}$. 
It follows that
\begin{equation}\label{e:euler-expeuler}
\euler{}=\log(\univ),
\end{equation}
with the logarithm of a series as in~\eqref{e:calc-log}.

\begin{proposition}\label{p:primE}
The series $\euler{}$ of $\tSig$ is primitive and
the element $\euler{I}$ is an idempotent of the Tits algebra $(\tSig[I],\act)$.
\end{proposition}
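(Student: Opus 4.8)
The plan is to establish the two assertions separately, using the machinery of functional calculus and the characteristic operations developed in the excerpt. The key observation is that $\euler{}=\log(\univ)$ by~\eqref{e:euler-expeuler}, so I can transfer facts about the logarithm of the group-like series $\univ$ into facts about the element $\euler{I}$.

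For \emph{primitivity} of the series $\euler{}$, first I would recall from Lemma~\ref{l:glikeH} (applied to $\tSig$ via $\upsilon$, or directly: $\Delta_{S,T}(\BH_{(I)})=\BH_{(S)}\otimes\BH_{(T)}$) that $\univ$ is a group-like series of $\tSig$. Since $\tSig$ is a connected bimonoid, Theorem~\ref{t:exp-log} gives that $\log$ and $\exp$ restrict to inverse bijections between $\Prs(\tSig)$ and $\Gls(\tSig)$, as in~\eqref{e:exp-log2}. Applying $\log$ to the group-like series $\univ$ therefore yields a primitive series, and this series is exactly $\euler{}$ by~\eqref{e:euler-expeuler}. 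This settles the first assertion cleanly.

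For \emph{idempotency} of $\euler{I}$ in the Tits algebra $(\tSig[I],\act)$, I would invoke Corollary~\ref{c:action-prim}. That result states that if $z$ as in~\eqref{e:typical} is primitive, then $z\act z = a_{(I)}\,z$. Here $z=\euler{I}$, and from the defining formula~\eqref{e:first-euler} the coefficient $a_{(I)}$ of $\BH_{(I)}$ (corresponding to the unique length-one composition $F=(I)$) is $(-1)^{1-1}\tfrac{1}{1}=1$. Since I have just shown $\euler{}$ is primitive, each component $\euler{I}$ is a primitive element of $\tSig[I]$, so Corollary~\ref{c:action-prim} gives $\euler{I}\act\euler{I} = 1\cdot\euler{I}=\euler{I}$, which is precisely idempotency.

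The main obstacle is essentially bookkeeping rather than conceptual difficulty: I must make sure the identification $\euler{}=\log(\univ)$ is legitimate, i.e.\ that the functional-calculus formula~\eqref{e:calc-log} for $\log$ applied to $\univ$ (which satisfies $\univ_\emptyset=\BH_{(\emptyset)}=\iota_\emptyset(1)$, the condition~\eqref{e:ser-nonvanish}) reproduces the coefficients $(-1)^{\len(F)-1}/\len(F)$ in~\eqref{e:first-euler}. This is a direct comparison: in~\eqref{e:calc-log} one has $\log(\univ)_I = -\sum_{F\vDash I}\frac{(-1)^{\len(F)}}{\len(F)}\mu_F(\univ_F)$, and since $\univ_{I_i}=\BH_{(I_i)}$ the higher product $\mu_F(\univ_F)=\BH_F$ by~\eqref{e:faces-def}, giving exactly~\eqref{e:first-euler}. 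Everything else is an immediate application of results already proven, so no genuinely new estimate or construction is required.
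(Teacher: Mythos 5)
Your proof is correct and is essentially identical to the paper's own argument: primitivity of $\euler{}$ follows from $\euler{}=\log(\univ)$ with $\univ$ group-like via Theorem~\ref{t:exp-log}, and idempotency of $\euler{I}$ then follows from Corollary~\ref{c:action-prim} since the coefficient of $\BH_{(I)}$ is $1$. The extra verification that functional calculus applied to $\univ$ reproduces~\eqref{e:first-euler} is sound bookkeeping that the paper records as the identity~\eqref{e:euler-expeuler}.
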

\begin{proof}
As the logarithm of a group-like series, the series $\euler{}$ is primitive 
(Theorem~\ref{t:exp-log}).
Thus, each $\euler{I}\in\Pc(\tSig)[I]$, and by Corollary~\ref{c:action-prim}
the element $\euler{I}$ is an idempotent of $(\tSig[I],\act)$ (since the coefficient of $\BH_{(I)}$
in $\euler{I}$ is $1$).
\end{proof}

Comparing~\eqref{e:first-euler} 
with the definition of the basis $\{\BQ\}$ of $\tSig$~\eqref{e:Q-H},
we see that, for $I$ nonempty,
\begin{equation}\label{e:first-euler-Q}
\euler{I} = \BQ_{(I)}.
\end{equation}

The primitivity of the $\euler{I}$ may be deduced in other ways.
We may derive it from~\eqref{e:first-euler-Q} and the coproduct formula~\eqref{e:Qcoprod}, 
using the fact that no proper subset of $I$ is $(I)$-admissible.
Alternatively, applying~\eqref{e:H-Q-proj} with $G=(I)$,
we have $\mu_F(\Delta_F(\euler{I})) = \BH_F \act \euler{I} = 0$ for $F\not=(I)$,
so $\euler{I}$ is primitive since $\mu_F$ is injective by Corollary~\ref{c:hopf-split}.

We turn to the characteristic operation~\eqref{e:can-action} of $\euler{I}$
on connected bimonoids.

\begin{corollary}\label{c:char-op-1euler}
For any connected bimonoid $\thh$, 
\begin{equation}\label{e:car-op-1euler}
\facematrix(\univ) = \id
\qand 
\facematrix(\euler{}) = \log(\id).
\end{equation}
Moreover, if $\thh$ is cocommutative, then $\log(\id):\thh\to\thh$ is a projection onto 
the primitive part $\Pc(\thh)$. 
\end{corollary}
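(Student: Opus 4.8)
The plan is to derive everything from the identity $\facematrix(\univ)=\id$, which is already recorded in equation~\eqref{e:univ-id}, together with the functoriality of functional calculus and the results on primitive operations from Section~\ref{ss:action-prim}. First I would establish the two equalities in~\eqref{e:car-op-1euler}. The first, $\facematrix(\univ)=\id$, is immediate from~\eqref{e:univ-id}. For the second, I would combine $\euler{}=\log(\univ)$ (equation~\eqref{e:euler-expeuler}) with the fact that the characteristic operation map $\facematrix:\tSig\to\iE(\thh)$ is a morphism of monoids (Proposition~\ref{p:Psi-mor-conn}). Since $\facematrix$ is a morphism of monoids, it intertwines the Cauchy product on series with the convolution product in $\End_{\Ssk}(\thh)=\Ser(\iE(\thh))$, and hence by naturality of functional calculus (item~\eqref{i:nat} of Proposition~\ref{p:fun-calc}) it commutes with $\log$:
\[
\facematrix(\euler{}) = \facematrix(\log(\univ)) = \log(\facematrix(\univ)) = \log(\id).
\]

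For the final assertion, suppose $\thh$ is cocommutative. The key point is that $\euler{}$ is a primitive series of $\tSig$ (Proposition~\ref{p:primE}), so each $\euler{I}$ lies in $\Pc(\tSig)[I]$ and has coefficient $a_{(I)}=1$ on $\BH_{(I)}$ (visible directly from~\eqref{e:first-euler}). I would then invoke Theorem~\ref{t:act-to-prim}: since $\euler{I}\in\Pc(\tSig)[I]$ and $a_{(I)}=1\neq 0$, the map $\facematrix_I(a_{(I)}^{-1}\euler{I})=\facematrix_I(\euler{I})$ is a projection from $\thh[I]$ onto $\Pc(\thh)[I]$. Assembling these projections over all finite sets $I$ gives that $\facematrix(\euler{})=\log(\id)$ is a projection of $\thh$ onto $\Pc(\thh)$, which is exactly the claim.

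The work is almost entirely bookkeeping; the only genuine subtlety is justifying the commutation of $\facematrix$ with $\log$ in the second display. One must be careful that Proposition~\ref{p:fun-calc} applies to the morphism of monoids $\facematrix:\tSig\to\iE(\thh)$, viewing series of $\tSig$ and of $\iE(\thh)$ under the Cauchy product, and that $\univ$ satisfies $\univ_\emptyset=\BH_{\emptyset^0}$ so that the logarithm is defined (the series $\facematrix(\univ)=\id$ correspondingly satisfies condition~\eqref{e:ser-nonvanish}, since on the empty set $\id$ is $\iota_\emptyset\epsilon_\emptyset$). Once these hypotheses are checked, item~\eqref{i:nat} of Proposition~\ref{p:fun-calc} yields the identity $\facematrix(\log(\univ))=\log(\facematrix(\univ))$ directly. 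The hard part, then, is not any computation but simply verifying that the functional-calculus machinery is being applied to the right monoid morphism with the right vanishing conditions; everything else follows by citing Theorem~\ref{t:act-to-prim}.
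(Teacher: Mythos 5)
Your proposal is correct and follows essentially the same route as the paper's proof: the first equality from~\eqref{e:univ-id}, the second from $\euler{}=\log(\univ)$ together with Proposition~\ref{p:Psi-mor-conn} and naturality of functional calculus (item~\eqref{i:nat} of Proposition~\ref{p:fun-calc}), and the projection statement from Theorem~\ref{t:act-to-prim}. Your additional checks (that $\univ$ satisfies~\eqref{e:ser-nonvanish} and that $a_{(I)}=1$ in $\euler{I}$) are exactly the hypotheses the paper leaves implicit, so nothing is missing.
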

\begin{proof}
The assertion about $\univ$ follows from~\eqref{e:univ-id}.
Formula~\eqref{e:car-op-1euler} then follows from naturality of functional calculus
(statement~\eqref{i:nat} in Proposition~\ref{p:fun-calc}), 
since $\facematrix$ is a morphism of monoids (Proposition~\ref{p:Psi-mor-conn}). 
The last statement follows from Theorem~\ref{t:act-to-prim}.
\end{proof}
 
\begin{remark}
Recall from~\eqref{e:xyz-flat-sum} that for an element of $\tSig[I]$ to be primitive, 
the sum of the coefficients of compositions with support $X$ must be $\mu(\minflat,X)$.
So a natural way to construct a primitive is to let 
these coefficients be equal.
Since there are $\len(X)!$ such compositions, 
each coefficient must be
\[
\frac{\mu(\minflat,X)}{\len(X)!} = (-1)^{\len(X)-1} \frac{1}{\len(X)}
\]
by~\eqref{e:partmobiusPi}. 
This is precisely the coefficient used to define $\euler{I}$ in~\eqref{e:first-euler}. 
This observation may be used to extend the definition of the
first Eulerian idempotent to more general settings (beyond the scope of this paper, 
but hinted at in Section~\ref{ss:braid}).
\end{remark}

\begin{remark}
The first Eulerian idempotent belongs to Solomon's descent algebra
(and hence to the symmetric group algebra). 
As such, it appears in work of Hain~\cite{Hai:1986}.
Its action on graded Hopf algebras is considered by Schmitt~\cite[Section~9]{Sch:1994}.
Patras and Schocker~\cite[Definition~31]{PatSch:2006} consider this element in the same context as here.
It is the first among the higher Eulerian idempotents. 
These elements are discussed in Section~\ref{ss:higher-euler} below, 
and additional references to the literature 
(which also pertain to the first Eulerian idempotent) are given there.
\end{remark}

\subsection{The Garsia-Reutenauer idempotents}

Fix a nonempty set $I$.
The element $\euler{I}$ is a part of a family of elements indexed by partitions of $I$:
For each $X\vdash I$, let $\euler{X}\in\tSig[I]$ be the element defined by
\begin{equation}\label{e:gar-reu}
\euler{X} := \frac{1}{\len(X)!} \sum_{F:\,\supp F=X} \BQ_F.
\end{equation}
We call these elements the \emph{Garsia-Reutenauer idempotents}.
It follows from~\eqref{e:Q-H} that
\begin{equation}
\euler{X} = \frac{1}{\len(X)!} \sum_{F:\,\supp F=X} \ \sum_{G:\,F\leq G} (-1)^{\len(G)-\len(F)} \frac{1}{\len(G/F)}\, \BH_G.
\end{equation}
Setting $X=\minflat$ recovers the first Eulerian: 
$\euler{\minflat}=\euler{I}$.

\begin{theorem}\label{t:idempotent-family}
For any nonempty set $I$, we have
\begin{gather}\label{e:idem-sum}
\sum_{X\vdash I} \euler{X} = \BH_{\minflat},\\
\label{e:idem-ortho}
\euler{X}\act\euler{Y}=
\begin{cases}
\euler{X} & \text{ if } X=Y,\\
0 & \text{ if } X\not=Y.
\end{cases}
\end{gather}
\end{theorem}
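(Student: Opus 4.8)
The plan is to prove the two identities separately: the completeness relation \eqref{e:idem-sum} falls out of the change of basis, while the orthogonality relation \eqref{e:idem-ortho} I would reduce to a single scalar identity that is ultimately $\log\exp=\id$. For \eqref{e:idem-sum}, I first sum the definition \eqref{e:gar-reu} over all partitions; since the support preserves length ($\len(\supp F)=\len F$) this gives $\sum_{X\vdash I}\euler{X}=\sum_{F\vDash I}\frac{1}{\len(F)!}\BQ_F$. On the other hand, specializing \eqref{e:H-Q} to the minimum composition $F=(I)$ — which is refined by every $G$ — and using $(G/(I))!=\len(G)!$ yields $\BH_{(I)}=\sum_{G\vDash I}\frac{1}{\len(G)!}\BQ_G$. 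As $\BH_{\minflat}=\BH_{(I)}$ is the unit of the Tits algebra, comparing the two sums proves \eqref{e:idem-sum}.

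For \eqref{e:idem-ortho} the starting point is a clean formula for $\BH_F\act\euler{Y}$. By Proposition~\ref{p:H-Q-proj}, $\BH_F\act\BQ_G=\BQ_{FG}$ when $\supp F\le\supp G$ and $0$ otherwise; and when $\supp F\le Y$ the fiber of $G\mapsto FG$ over a fixed $H$ with $F\le H$ and $\supp H=Y$ consists of shuffles, of cardinality $\len(Y)!/(Y/\supp F)!$. Hence, for $\supp F\le Y$,
\[
\BH_F\act\euler{Y}=\frac{1}{(Y/\supp F)!}\sum_{\substack{F\le H\\ \supp H=Y}}\BQ_H,
\]
and $\BH_F\act\euler{Y}=0$ otherwise. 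Expanding $\euler{X}=\frac{1}{\len(X)!}\sum_{\supp F'=X}\BQ_{F'}$ and each $\BQ_{F'}$ in the $\BH$-basis via \eqref{e:Q-H}, I would read off that the coefficient of $\BQ_H$ (for $\supp H=Y$) in $\BQ_{F'}\act\euler{Y}$ is the interval sum
\[
S(F',H):=\sum_{F'\le F\le H}\frac{(-1)^{\len(F)-\len(F')}}{\len(F/F')\,(Y/\supp F)!},
\]
which vanishes unless $F'\le H$.

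The crux is to evaluate $S(F',H)$. Using the order isomorphism \eqref{e:res-conc}, the interval $[F',H]$ factors as a product over the blocks $B$ of $\supp F'$ of the intervals $[(B),H|_B]$, and each of $\len(F)-\len(F')$, $\len(F/F')$ and $(Y/\supp F)!$ splits as a product over these blocks. Therefore $S(F',H)=\prod_{B}s(H|_B)$, where for a composition $D$ of a set one sets $s(D):=\sum_{C\le D}\frac{(-1)^{\len(C)-1}}{\len(C)\,(\supp D/\supp C)!}$. Reindexing the coarsenings $C\le D$ of a composition $D$ of length $n$ by the integer compositions $\alpha$ of $n$ (contiguous groupings), so that $(\supp D/\supp C)!=\prod_j\alpha_j!$, turns $s(D)$ into $\sum_{\alpha}\frac{(-1)^{\len(\alpha)-1}}{\len(\alpha)}\prod_j\frac{1}{\alpha_j!}$, whose generating function is $\sum_{r\ge1}\frac{(-1)^{r-1}}{r}(\exp(\varx)-1)^r=\log(\exp(\varx))=\varx$. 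Thus $s(D)=1$ if $\len(D)=1$ and $s(D)=0$ otherwise.

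Finally I would assemble: for $\supp H=Y$,
\[
[\BQ_H]\bigl(\euler{X}\act\euler{Y}\bigr)=\frac{1}{\len(X)!}\sum_{\substack{\supp F'=X\\ F'\le H}}\ \prod_{B\in\supp F'}s(H|_B).
\]
This product is nonzero only when every block $B$ of $X$ is a single block of $Y$, i.e. when $X=Y$ and $F'=H$, in which case each factor equals $1$ and the coefficient is $1/\len(X)!=[\BQ_H]\euler{X}$. Hence $\euler{X}\act\euler{Y}=\euler{X}$ for $X=Y$ and $0$ for $X\neq Y$, proving \eqref{e:idem-ortho}. I expect the main obstacle to be the two reductions in the third paragraph — recognizing the block factorization $S(F',H)=\prod_B s(H|_B)$ and evaluating the scalar $s(D)$ — after which the statement follows mechanically; it is pleasant that $s(D)$ collapses by the identity $\log\exp=\id$, the very mechanism that makes $\euler{}$ primitive.
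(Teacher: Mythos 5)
Your proof is correct. Note that the paper gives no proof of Theorem~\ref{t:idempotent-family} at all: the accompanying remark defers to Garsia--Reutenauer (for the descent-algebra analogue indexed by integer partitions) and to Brown and Saliola, whose constructions are semigroup-theoretic --- idempotent families in the Tits algebra of a hyperplane arrangement, or more generally of a left regular band, obtained by lifting along the support map $\supp:\tSig[I]\to\tPi[I]$. Your argument is instead a self-contained verification inside the paper's own toolkit, and every step checks out: \eqref{e:idem-sum} is immediate from \eqref{e:H-Q} at $F=(I)$ together with $\len(\supp F)=\len(F)$; the multinomial fiber count for $G\mapsto FG$ is valid because the image is exactly $\{H : F\leq H,\ \supp H=Y\}$ (using $F\leq FG$, $FH=H$ for $F\leq H$, and~\eqref{e:supp-tits} through Proposition~\ref{p:H-Q-proj}), yielding the key formula $\BH_F\act\euler{Y}=\tfrac{1}{(Y/\supp F)!}\sum_{F\leq H,\,\supp H=Y}\BQ_H$, which refines the paper's~\eqref{e:proj-euler}; the factorization $S(F',H)=\prod_{B}s(H|_B)$ is legitimate since \eqref{e:res-conc} is an isomorphism of posets onto $\{G : F'\leq G\}$ under which the sign, $\len(F/F')$ and $(Y/\supp F)!$ all split blockwise; and the collapse $s(D)=1$ or $0$ according as $\len(D)=1$ or not follows from $\sum_{r\geq 1}\tfrac{(-1)^{r-1}}{r}(\exp(\varx)-1)^r=\varx$, the same $\log\exp=\id$ mechanism behind the primitivity of $\euler{}$, exactly as you observe. (A small simplification is available in the easy cases: when $X\not\leq Y$ the sum defining $\BQ_{F'}\act\euler{Y}$ is zero termwise, and when $X=Y$ one gets $\BQ_{F'}\act\euler{X}=\BQ_{F'}$ directly, recovering~\eqref{e:E-Q-iso}; your interval-sum cancellation is genuinely needed only for $X<Y$, and your uniform treatment handles all cases at once.) What your route buys is an explicit combinatorial proof from the $\{\BH\}$/$\{\BQ\}$ change of basis with no outside input; what the cited semigroup-theoretic route buys is uniformity over arbitrary left regular bands, at the cost of less explicit formulas.
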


Theorem~\ref{t:idempotent-family} states that the family $\{\euler{X}\}$,
as $X$ runs over the partitions of $I$,
is a complete system of orthogonal idempotents in
the Tits algebra $(\tSig[I],\act)$. 
(They are moreover \emph{primitive} in the sense that 
they cannot be written as a sum of nontrivial orthogonal idempotents.)

\smallskip

Proposition~\ref{p:H-Q-proj} and~\eqref{e:gar-reu} imply that
for any composition $F$ with support $X$,
\begin{equation}\label{e:proj-euler}
\BH_F \act \euler{X} = \BQ_F.
\end{equation}
Note from~\eqref{e:supp-tits} and~\eqref{e:Q-H} that 
$\BQ_G \act \BH_F = \BQ_G$ whenever $F$ and $G$ have the same support;
so we also have $\euler{X} \act \BH_F = \euler{X}$.
One can now deduce that $\BQ_F$ is an idempotent and
\begin{equation}\label{e:E-Q-iso}
\euler{X}\act\BQ_F=\euler{X} \qand \BQ_F\act\euler{X}=\BQ_F.
\end{equation}

\begin{remark}
The Garsia-Reutenauer idempotents $\euler{X}$ (indexed by set partitions)
possess analogues indexed by integer partitions 
which play the same role for graded Hopf algebras as the $\euler{X}$ do for Hopf monoids. 
They are elements of Solomon's descent algebra and 
appear in~\cite[Theorem~3.2]{GarReu:1989} and~\cite[Section~9.2]{Reu:1993}.
 
Brown~\cite[Equations (24) and (27)]{Bro:2000} and Saliola~\cite[Section~1.5.1]{Sal:2006} or~\cite[Section~5.1]{Sal:2009} construct many families of orthogonal idempotents
for the Tits algebra of a central hyperplane arrangement.
The Garsia-Reutenauer idempotents constitute one particular family 
that occurs for the braid arrangement. 
Brown worked in the generality of left regular bands; 
for further generalizations, see the work of Steinberg~\cite{Ste:2006}.
\end{remark}

\subsection{The higher Eulerian idempotents}\label{ss:higher-euler}

Let $k\geq 1$ be any integer. 
Set $\euler{k,\emptyset}=0$, and for each nonempty set $I$ define 
\begin{equation}\label{e:higher-euler}
\euler{k,I} := \sum_{X:\,\len(X)=k} \euler{X}.
\end{equation}
The sum is over all partitions of $I$ of length $k$.
This defines the series $\euler{k}$ of $\tSig$.
We call it the $k$-th Eulerian idempotent.
The series $\euler{1}$ is the first Eulerian idempotent
as defined in Section~\ref{ss:first-euler}. 
By convention, we set $\euler{0}$ to be the unit series~\eqref{e:unitseries}.

The Cauchy product~\eqref{e:convseries} of the series $\euler{1}$ with itself 
can be computed using~\eqref{e:Qprod} and~\eqref{e:first-euler-Q}:
\[
(\euler{1} \conv \euler{1})_I = 
\sum_{\substack{I=S\sqcup T\\ S,T\neq\emptyset}} \mu_{S,T}(\BQ_{\{S\}}\otimes\BQ_{\{T\}}) = 
\sum_{\substack{I=S\sqcup T\\ S,T\neq\emptyset}} \BQ_{(S,T)} = 2!\,\euler{2,I}.
\]
The last step used~\eqref{e:gar-reu} and~\eqref{e:higher-euler}.
More generally, by the same argument, we deduce 
\begin{equation}\label{e:power-1euler}
\euler{k} = \frac{1}{k!}\, \euler{1}^{\conv k}. 
\end{equation}
The right-hand side involves the $k$-fold Cauchy power of $\euler{1}$.

Recall from Corollary~\ref{c:char-op-1euler} that 
the characteristic operation of $\euler{1}$ on a connected bimonoid is $\log \id$.
Since $\facematrix$ is a morphism of monoids (Proposition~\ref{p:Psi-mor-conn}), 
it preserves Cauchy products of series and we obtain the following result.

\begin{proposition}\label{p:conv-power-log-id}
The characteristic operation of $\euler{k}$ on a connected bimonoid is
\begin{equation}\label{e:conv-power-log-id}
\facematrix(\euler{k}) = \frac{1}{k!} (\log\id)^{\conv k}. 
\end{equation}
\end{proposition}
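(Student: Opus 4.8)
The plan is to deduce Proposition~\ref{p:conv-power-log-id} directly from the two facts already assembled just before it. The first is the relation~\eqref{e:power-1euler}, which expresses the higher Eulerian idempotent as a Cauchy power of the first:
\[
\euler{k} = \frac{1}{k!}\, \euler{1}^{\conv k}.
\]
The second is Proposition~\ref{p:Psi-mor-conn}, which asserts that for a connected bimonoid $\thh$ the map $\facematrix:\tSig\to\iE(\thh)$ is a morphism of monoids, hence of the associated algebras of series: it sends the Cauchy product of series of $\tSig$ to the Cauchy product (i.e.\ convolution) of the corresponding operations on $\thh$.

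First I would apply $\facematrix$ to both sides of~\eqref{e:power-1euler}. Since $\facematrix$ is linear, the scalar $\tfrac{1}{k!}$ passes through, giving
\[
\facematrix(\euler{k}) = \frac{1}{k!}\,\facematrix\bigl(\euler{1}^{\conv k}\bigr).
\]
Next I would invoke the fact that $\facematrix$ preserves Cauchy products. Because $\facematrix$ is a morphism of monoids, the induced map on series intertwines the Cauchy product~\eqref{e:convseries} on $\Ser(\tSig)$ with the Cauchy product on $\Ser(\iE(\thh))$, which under the identification of Section~\ref{ss:seriH} is precisely the convolution product of endomorphisms of $\thh$. Therefore $\facematrix\bigl(\euler{1}^{\conv k}\bigr) = \facematrix(\euler{1})^{\conv k}$.

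Finally I would substitute the value of $\facematrix(\euler{1})$ supplied by Corollary~\ref{c:char-op-1euler}, namely $\facematrix(\euler{1}) = \log\id$, to obtain
\[
\facematrix(\euler{k}) = \frac{1}{k!}\,(\log\id)^{\conv k},
\]
which is~\eqref{e:conv-power-log-id}. There is essentially no obstacle here: the entire content has been front-loaded into~\eqref{e:power-1euler}, Proposition~\ref{p:Psi-mor-conn}, and Corollary~\ref{c:char-op-1euler}. The one point requiring a word of care is the precise sense in which ``$\facematrix$ preserves Cauchy products of series'': $\facematrix$ is a morphism of monoids of species, and one must note that $\Ser$ is a lax monoidal functor (Section~\ref{ss:series}), so applying $\Ser$ yields an algebra homomorphism from $\Ser(\tSig)$ with its Cauchy product to $\Ser(\iE(\thh))=\End_{\Ssk}(\thh)$ with convolution. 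Making that identification explicit is the only step beyond pure formula-chasing, and it is exactly the remark already recorded in the sentence preceding the proposition.
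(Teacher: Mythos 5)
Your proposal is correct and is essentially identical to the paper's own argument: the paper derives \eqref{e:power-1euler} by the same Cauchy-power computation and then states, in the sentence immediately preceding the proposition, that since $\facematrix$ is a morphism of monoids (Proposition~\ref{p:Psi-mor-conn}) it preserves Cauchy products of series, so the result follows from $\facematrix(\euler{1})=\log\id$ (Corollary~\ref{c:char-op-1euler}). Your extra remark making explicit that $\Ser$ is lax monoidal, hence turns the monoid morphism $\facematrix$ into an algebra map $\Ser(\tSig)\to\Ser(\iE(\thh))=\End_{\Ssk}(\thh)$, is exactly the identification the paper leaves implicit.
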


\begin{remark}
The higher Eulerian idempotents appear in the works of 
Gerstenhaber and Schack~\cite[Theorem~1.2]{GerSch:1987},
~\cite[Section~3]{GerSch:1991},
Hanlon~\cite{Han:1990}, Loday~\cite[Proposition~2.8]{Lod:1989}, 
Patras~\cite[Section~II.2]{Pat:1991},
and Reutenauer~\cite[Section~3]{Reu:1986},~\cite[Section~3.2]{Reu:1993},
among other places. 
A related idempotent (the sum of all higher Eulerian idempotents except the first) 
goes back to Barr~\cite{Bar:1968}; see~\cite[Theorem~1.3]{GerSch:1987}.

These idempotents give rise to various \emph{Hodge-type} decompositions including
the $\lambda$-decomposition of Hochschild and cyclic homology of
commutative algebras~\cite{FeiTsy:1987,GerSch:1991,Lod:1989} and
a similar decomposition of Hadwiger's group of polytopes~\cite{Pat:1991}.
\end{remark}

\subsection{The convolution powers of the identity}

In the Tits algebra of decompositions $\tSigh[I]$,
define for any nonnegative integer $p$,
\begin{equation}\label{e:loday-lift}
\BHh_{p,I} := \sum_{F:\,\len(F)=p} \BH_F.
\end{equation}
The sum is over all decompositions $F$ of $I$ of length $p$. 
The resulting series of $\tSigh$ is denoted $\BHh_p$.

\begin{proposition}
We have
\begin{equation}\label{e:loday-lift-identity}
\BHh_{p,I}\act \BHh_{q,I} = \BHh_{pq,I}.
\end{equation} 
\end{proposition}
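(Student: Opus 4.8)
The plan is to reduce the identity to a statement about the Tits product of set decompositions, leveraging the characteristic-operation framework developed in Section~\ref{s:action}. The key observation is that $\BHh_{p,I}$ lives in the Tits algebra $(\tSigh[I],\act)$, whose product is the linearization of the Tits product of decompositions, by Proposition~\ref{p:ast-tits}. Thus $\BHh_{p,I}\act\BHh_{q,I}$ expands, by bilinearity of $\act$, as
\[
\BHh_{p,I}\act \BHh_{q,I} = \sum_{\substack{F:\,\len(F)=p\\ G:\,\len(G)=q}} \BH_F\act \BH_G = \sum_{\substack{F:\,\len(F)=p\\ G:\,\len(G)=q}} \BH_{FG}.
\]
So the entire problem becomes the combinatorial claim that, as $F$ ranges over decompositions of length $p$ and $G$ over decompositions of length $q$, the Tits products $FG$ realize each decomposition of length $pq$ exactly once, and realize nothing else.

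First I would examine the structure of $FG$ for $F=(S_1,\ldots,S_p)$ and $G=(T_1,\ldots,T_q)$. By the definition of the Tits product on decompositions (Section~\ref{ss:decompositions}), $FG$ is the listing of all $pq$ intersections $A_{ij}=S_i\cap T_j$ in lexicographic order of $(i,j)$, with \emph{all} intersections kept (including empty ones). Hence $FG$ always has length exactly $pq$, which shows the right-hand side is supported on decompositions of length $pq$, matching the left side of~\eqref{e:loday-lift-identity}. The remaining and essential point is to show that the map $(F,G)\mapsto FG$ is a \emph{bijection} from pairs $(F,G)$ of lengths $(p,q)$ onto decompositions $H$ of length $pq$.

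The heart of the argument is constructing the inverse. Given a decomposition $H=(A_{11},\ldots,A_{1q},\ldots,A_{p1},\ldots,A_{pq})$ of length $pq$, indexed canonically by pairs $(i,j)$ in lexicographic order, I would recover $F$ by grouping the blocks into $p$ consecutive runs of $q$ and taking unions, namely $S_i:=\bigcup_{j=1}^q A_{ij}$, and recover $G$ by the transverse grouping $T_j:=\bigcup_{i=1}^p A_{ij}$. One checks directly that $F=(S_1,\ldots,S_p)$ and $G=(T_1,\ldots,T_q)$ are decompositions of $I$ of lengths $p$ and $q$ respectively (the $S_i$ are pairwise disjoint with union $I$ since the $A_{ij}$ are, and likewise for the $T_j$), and that $S_i\cap T_j=A_{ij}$ because the $A_{i'j'}$ are pairwise disjoint, so the only block common to the $i$-th row and the $j$-th column is $A_{ij}$ itself. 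Reading off $FG$ then returns $H$ exactly, and conversely starting from $(F,G)$ and forming $FG$ before regrouping returns $(F,G)$. This establishes the bijection, whence each length-$pq$ decomposition appears once on the right-hand side, giving $\BHh_{pq,I}$.

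The main obstacle I anticipate is the careful bookkeeping around empty blocks: because the Tits product on decompositions retains empty intersections, the bijection genuinely uses the \emph{ordered} structure and the fixed lexicographic indexing, and one must verify that distinct pairs $(F,G)$ never produce the same ordered decomposition $FG$ even when many $A_{ij}$ are empty. This is precisely where decompositions behave better than compositions: the length is rigidly $pq$ and the positions $(i,j)$ are recoverable from the index in the sequence, so no collapsing occurs. Once this indexing is made precise the injectivity and surjectivity are immediate, and the identity follows. I would present the inverse construction explicitly, as it simultaneously delivers both directions of the bijection and makes the role of retaining empty blocks transparent.
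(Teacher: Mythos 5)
Your proof is correct and follows essentially the same route as the paper: expand via Proposition~\ref{p:ast-tits}, observe that the Tits product of decompositions of lengths $p$ and $q$ has length exactly $pq$, and invert the map $(F,G)\mapsto FG$ by recovering $F$ from the row unions and $G$ from the column unions of the matrix of blocks. Your explicit verification that $S_i\cap T_j=A_{ij}$ (using pairwise disjointness of the blocks) just fills in the detail the paper leaves implicit.
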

\begin{proof}
Let $F$ be a decomposition of $I$ of length $p$, 
and $G$ one of length $q$.
Then their Tits product $FG$ has length $pq$.
If we arrange the blocks of $FG$ in a matrix as in~\eqref{e:pqsets},
we recover $F$ by taking the union of the blocks in each row of 
this matrix, and we similarly recover $G$ from the columns.
Thus, the Tits product sets up a bijection between pairs of decompositions
of lengths $p$ and $q$, and decompositions of length $pq$.
The result follows.
\end{proof}

It is clear from the definitions~\eqref{e:can-action} and~\eqref{e:loday-lift} that
the characteristic operation of $\BHh_p$ is the $p$-th convolution power of the identity:

\begin{proposition}\label{p:conv-power-id}
For any bimonoid, we have
\begin{equation}\label{e:conv-power-id}
\facematrix(\BHh_p) = \id^{\conv p}.
\end{equation}
\end{proposition}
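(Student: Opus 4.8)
The plan is to verify the identity $\facematrix(\BHh_p) = \id^{\conv p}$ by evaluating both sides on an arbitrary element $h \in \thh[I]$ and comparing. Since both sides are morphisms of species into $\iE(\thh)$, it suffices to check equality componentwise on each space $\thh[I]$, and the computation reduces to unwinding the two definitions that are already on the table: the definition~\eqref{e:can-action} of the characteristic operation, with the series $\BHh_p$ given explicitly by~\eqref{e:loday-lift}, and the formula~\eqref{e:convseries} for the Cauchy product iterated $p$ times.

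First I would compute the left-hand side. By~\eqref{e:loday-lift}, $\BHh_{p,I} = \sum_{F:\,\len(F)=p} \BH_F$, the sum over all decompositions of $I$ into exactly $p$ (possibly empty) blocks. Applying the characteristic operation~\eqref{e:can-action} and using~\eqref{e:can-actionF} for each basis element, I get
\[
\facematrix_I(\BHh_{p,I})(h) = \BHh_{p,I} \act h = \sum_{F:\,\len(F)=p} (\mu_F \Delta_F)(h).
\]
Next I would compute the right-hand side. The $p$-th convolution power of the identity is obtained by iterating~\eqref{e:convseries}; one sees directly that
\[
(\id^{\conv p})_I = \sum_{I = S_1 \sqcup \cdots \sqcup S_p} \mu_{S_1,\ldots,S_p}(\id_{S_1}\otimes\cdots\otimes\id_{S_p})\,\Delta_{S_1,\ldots,S_p},
\]
where the sum is over all decompositions $I = S_1 \sqcup \cdots \sqcup S_p$ into $p$ ordered blocks (empty blocks allowed, since we are in the arbitrary bimonoid setting where $\tSigh$ and decompositions are the relevant indexing objects). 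Recognizing $\mu_{S_1,\ldots,S_p} = \mu_F$ and $\Delta_{S_1,\ldots,S_p} = \Delta_F$ for the decomposition $F = (S_1,\ldots,S_p)$, this is exactly the same sum $\sum_{F:\,\len(F)=p}(\mu_F\Delta_F)(h)$, so the two expressions coincide.

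The remaining point — and the only place where any care is needed — is to confirm that the indexing sets genuinely match: the decompositions $F$ of length $p$ appearing in~\eqref{e:loday-lift} are precisely the ordered sequences $(S_1,\ldots,S_p)$ with $S_1\sqcup\cdots\sqcup S_p = I$ arising in the iterated convolution, including those with empty blocks, and the higher maps $\mu_F$, $\Delta_F$ of $\thh$ indexed by such decompositions are exactly the iterated products and coproducts of the convolution power. This is a bookkeeping identification rather than a genuine obstacle; it is guaranteed by the conventions on higher (co)product maps indexed by decompositions set up in Section~\ref{ss:gen-asso} (and the fact, recorded there, that $\mu_{\emptyset^0} = \iota_\emptyset$ handles empty blocks consistently). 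As the statement itself remarks, the result is clear from the definitions~\eqref{e:can-action} and~\eqref{e:loday-lift}, so no further structural input — associativity, compatibility, or (co)commutativity — is required; the proof is a direct matching of two sums over the same index set.
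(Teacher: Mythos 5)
Your proposal is correct and is essentially the paper's own argument: the paper gives no formal proof, stating only that the identity ``is clear from the definitions~\eqref{e:can-action} and~\eqref{e:loday-lift},'' and your computation simply makes that explicit by unwinding both sides into the same sum $\sum_{F:\,\len(F)=p}\mu_F\Delta_F$ over decompositions of length $p$ (empty blocks included). The one small caveat is that identifying the iterated convolution $\id^{\conv p}$ with this sum does quietly use (co)associativity, via the well-definedness of the higher maps $\mu_F$, $\Delta_F$, so ``no structural input at all'' is a slight overstatement, but this is harmless since those conventions are already in place.
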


We deduce that when composing convolution powers of the identity of a bimonoid
that is either commutative or cocommutative, the exponents multiply:
\begin{equation}\label{e:conv-power-pq}
\id^{\conv p} \id^{\conv q} = \id^{\conv pq}.
\end{equation}
This follows by applying $\facematrix$ to~\eqref{e:loday-lift-identity}, 
using~\eqref{e:conv-power-id}, 
and the fact that in this situation 
\[
\facematrix_I:\tSig[I]\to\End_{\Kb}(\thh[I])
\] 
either preserves or reverses products (Proposition~\ref{p:Psi-mor}).

Formula~\eqref{e:conv-power-pq} generalizes property~\eqref{e:Haction} of Hopf powers 
(Section~\ref{ss:can-action-bialg}).

Under the morphism $\upsilon: \tSigh[I] \onto \tSig[I]$, 
the element $\BHh_{p,I}$ maps to
\begin{equation}\label{e:loday}
\BH_{p,I} := \sum_{F\vDash I} \,\binom{p}{\len(F)} \,\BH_F.
\end{equation}
The sum is over all compositions $F$ of $I$; 
the binomial coefficient accounts for
the number of ways to turn $F$ into a decomposition of $I$ of length $p$ by adding empty blocks.
Since $\upsilon$ preserves Tits products, 
it follows from~\eqref{e:loday-lift-identity} that
\begin{equation}\label{e:loday-identity}
\BH_{p,I}\act \BH_{q,I} = \BH_{pq,I}.
\end{equation}

It follows from~\eqref{e:conv-power-id} that on a connected bimonoid,
\begin{equation}\label{e:conv-power-id-conn}
\facematrix(\BH_p) = \id^{\conv p}.
\end{equation}

Comparing with~\eqref{e:expeuler3}, we see that $\BH_1=\univ$, and more
generally
\begin{equation}\label{e:HG}
\BH_p = \univ^{\conv p}.
\end{equation}
The latter is the Cauchy product in the algebra of series of $\tSig$.

\begin{theorem}\label{t:loday-diag}
We have
\begin{equation}\label{e:loday-diag}
\BH_p = \sum_{k\geq 0} \,p^k \,\euler{k}.
\end{equation}
\end{theorem}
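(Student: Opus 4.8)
The plan is to prove the identity $\BH_p = \sum_{k\geq 0} p^k\,\euler{k}$ as an identity of series of $\tSig$, by expanding the right-hand side in terms of the Garsia-Reutenauer idempotents and comparing with the known expression for $\BH_p$ via $\univ^{\conv p}$. The two main ingredients are already at hand: formula~\eqref{e:HG}, which says $\BH_p = \univ^{\conv p}$, and formula~\eqref{e:euler-expeuler}, which says $\euler{} = \log(\univ)$. These together suggest the cleanest route is through functional calculus on series.

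First I would use the fact that $\euler{}=\log(\univ)$ together with the power-series identity $t^c = \exp(c\log t)$ from~\eqref{e:pow-add}. Applying this with $t=\univ$ and $c=p$ gives
\[
\BH_p = \univ^{\conv p} = \exp\bigl(p\,\log(\univ)\bigr) = \exp\bigl(p\,\euler{}\bigr).
\]
Now I would expand the exponential using~\eqref{e:calc-exp}, or more conveniently use property~\eqref{e:power-1euler}, namely $\euler{k} = \frac{1}{k!}\,\euler{1}^{\conv k}$, where $\euler{1}=\euler{}$ is the first Eulerian idempotent. Since the exponential of a series is defined by $\exp(s) = \sum_{k\geq 0}\frac{1}{k!}\,s^{\conv k}$, substituting $s = p\,\euler{}$ yields
\[
\exp(p\,\euler{}) = \sum_{k\geq 0}\frac{1}{k!}\,(p\,\euler{})^{\conv k}
= \sum_{k\geq 0}\frac{p^k}{k!}\,\euler{}^{\conv k}
= \sum_{k\geq 0} p^k\,\euler{k},
\]
where the last equality is exactly~\eqref{e:power-1euler}. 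This chain of equalities gives the claimed formula directly.

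An alternative, more combinatorial route that avoids invoking~\eqref{e:HG} would be to expand $\sum_{k\geq 0} p^k\,\euler{k}$ using the definition $\euler{k,I} = \sum_{\len(X)=k}\euler{X}$ and then regroup by the completeness relation $\sum_X \euler{X}=\BH_{\minflat}$ from Theorem~\ref{t:idempotent-family}; but this would require re-deriving the coefficient $\binom{p}{\len(F)}$ in~\eqref{e:loday} from scratch, so the functional-calculus argument is preferable. The main obstacle, such as it is, is bookkeeping: one must ensure that the manipulations $t^c=\exp(c\log t)$ and the expansion of $\exp$ are legitimate for series rather than merely formal power series. This is precisely guaranteed by Proposition~\ref{p:fun-calc} and the identities in~\eqref{e:pow-add}, which hold because $\univ_\emptyset = \iota_\emptyset(1)$ (so $\univ$ satisfies~\eqref{e:ser-nonvanish}) and $\euler{}$ satisfies~\eqref{e:ser-vanish}. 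Thus the scalar $c=p$ may be any element of $\Kb$, and the identity in fact holds for arbitrary $c$, with the integer case $c=p\in\Nb$ being the statement of the theorem.
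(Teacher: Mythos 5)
Your proof is correct and follows essentially the same route as the paper's: both apply functional calculus to the formal power series identity $x^p = \exp(p\log x) = \sum_{k\geq 0}\frac{p^k}{k!}(\log x)^k$ with $x=\univ$, and then invoke~\eqref{e:euler-expeuler},~\eqref{e:power-1euler}, and~\eqref{e:HG} to identify the terms. The only difference is presentational — you spell out the intermediate steps (including the justification that $\univ$ satisfies~\eqref{e:ser-nonvanish} so that the identities of~\eqref{e:pow-add} apply) which the paper leaves implicit.
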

\begin{proof}
Consider the identity 
\[
x^{p} = \exp(p\log x) = \sum_{k\geq 0} \frac{p^k}{k!} (\log x)^k
\]
 in the algebra of formal power series.
Now apply functional calculus with $x=\univ$ in the algebra of series
of $\tSig$, 
and use~\eqref{e:euler-expeuler},~\eqref{e:power-1euler}, and~\eqref{e:HG}.
\end{proof}

Observe that~\eqref{e:loday} defines $\BH_p$ for any integer $p$.
In particular, for $p=-1$,
\begin{equation}\label{e:loday-1}
\BH_{-1,I} = \sum_{F\vDash I} \,\binom{-1}{\len(F)} \,\BH_F = 
\sum_{F\vDash I} \,(-1)^{\len(F)} \,\BH_F.
\end{equation}
By polynomiality, the identities~\eqref{e:loday-identity}--\eqref{e:loday-diag}
continue to hold for all scalars $p$ and $q$. In particular, from~\eqref{e:loday-diag}
we deduce that
\begin{equation}\label{e:loday-diag-1}
\BH_{-1} = \sum_{k\geq 1} (-1)^k \euler{k},
\end{equation}
and from~\eqref{e:conv-power-id-conn} that
this element operates on a connected bimonoid as the antipode:
\begin{equation}\label{e:char-apode}
\facematrix(\BH_{-1}) = \apode.
\end{equation}
Together with~\eqref{e:loday-1}, 
this yields another proof of Takeuchi's formula~\eqref{e:antipode-r}.

\begin{remark}
The elements $\BH_{p,[n]}$ belong to Solomon's descent algebra
(and hence to the symmetric group algebra). 
In connection to the higher Eulerian idempotents, they are considered in
~\cite[Section~1]{GerSch:1991},~\cite[Definition~1.6 and Theorem~1.7]{Lod:1989},
and~\cite[Section~II]{Pat:1991}.
They are also considered in~\cite[Lemma~1]{BayDia:1992} 
and~\cite[Proposition~2.3]{BHR:1999} in connection to riffle shuffles. 
The connection to convolution powers of the identity of a graded Hopf algebra
is made in~\cite[Section~1]{GerSch:1991},
~\cite[Section~4.5]{Lod:1998} and~\cite[Section~1]{Pat:1993}.
A discussion from the point of view of $\lambda$-rings is given in~\cite[Section~5]{Pat:2003}.
\end{remark}

\subsection{The Dynkin quasi-idempotent}\label{ss:dynkin}

Fix a nonempty finite set $I$. 
Given a composition $F=(I_1,\ldots,I_k)$ of $I$, let
\[
\omega(F) := I_k
\]
denote its last block.

For each $i\in I$, define an element $\pdynkin_i\in\tSig[I]$ by
\begin{equation}\label{e:pre-dynkin}
\pdynkin_i := \sum_{F:\,i\in\omega(F)} \,(-1)^{\len(F) - 1} \,\BH_F.
\end{equation}
The sum is over all compositions $F$ of $I$ whose last block contains $i$.
The sum of these elements defines another element of $\tSig[I]$
\begin{equation}\label{e:dynkin}
\dynkin_I := \sum_{i\in I} \pdynkin_i.
\end{equation}
We call it the \emph{Dynkin quasi-idempotent}.
It follows that
\begin{equation}
\dynkin_I = \sum_{F\vDash I} \,(-1)^{\len(F) - 1} \,\abs{\omega(F)}\, \BH_F, 
\end{equation}
where $\abs{\omega(F)}$ denotes the size of the last block of $F$.
The elements $\dynkin_I$ define a series $\dynkin$ of $\tSig$. 
By convention, $\dynkin_\emptyset=0$.
The following result shows that this series is primitive.

\begin{proposition}\label{p:dynkin-proj}
The elements $\pdynkin_i$ (and hence $\dynkin_I$) are primitive elements of $\tSig[I]$.
\end{proposition}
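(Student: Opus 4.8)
The plan is to show that each element $\pdynkin_i$ defined in~\eqref{e:pre-dynkin} is primitive, since $\dynkin_I$ is their sum and $\Pc(\tSig)[I]$ is a linear subspace. By Proposition~\ref{p:def-prim}, it suffices to verify that $\Delta_G(\pdynkin_i) = 0$ for every composition $G \neq (I)$; equivalently, using the characteristic operation on $\tSig$ itself and the injectivity of $\mu_F$ (Corollary~\ref{c:hopf-split}), it suffices to show that $\BH_G \act \pdynkin_i = 0$ for all $G \neq (I)$. My preferred route, however, avoids even this reduction and works directly with the coproduct: fix a decomposition $I = S \sqcup T$ with $S, T \neq \emptyset$ and compute $\Delta_{S,T}(\pdynkin_i)$ using the coproduct formula~\eqref{e:faces-def}, namely $\Delta_{S,T}(\BH_F) = \BH_{F|_S} \otimes \BH_{F|_T}$.

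The key computational step is to organize the sum over compositions $F$ with $i \in \omega(F)$ according to the pair $(F|_S, F|_T)$ of restrictions. First I would fix a target pair $(F_1, F_2)$ with $F_1 \vDash S$ and $F_2 \vDash T$, and understand which $F$ restrict to this pair and satisfy $i \in \omega(F)$. Each such $F$ is a quasi-shuffle of $F_1$ and $F_2$ (each block of $F$ is a block of $F_1$, a block of $F_2$, or a union of one of each), and the condition $i \in \omega(F)$ forces the last block of $F$ to contain $i$; the sign $(-1)^{\len(F)-1}$ must then be tracked across all quasi-shuffles refining $(F_1, F_2)$. The heart of the argument is a sign-reversing cancellation: for a fixed interleaving pattern, merging or splitting the final blocks of the two strands produces quasi-shuffles whose lengths differ by one and whose signs are therefore opposite, so their contributions to the coefficient of $\BH_{F_1} \otimes \BH_{F_2}$ cancel in pairs. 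The parameter $i$ enters by pinning down which strand's last block is forced to be terminal, and the upshot is that for every $(F_1, F_2)$ the total coefficient vanishes, giving $\Delta_{S,T}(\pdynkin_i) = 0$.

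I expect the main obstacle to be the bookkeeping in this cancellation: setting up an explicit sign-reversing involution on the set of quasi-shuffles $F$ of $(F_1, F_2)$ with $i \in \omega(F)$, and checking that it is well-defined (a fixed-point-free involution pairing $F$ of length $\len(F)$ with one of length $\len(F) \pm 1$). A clean way to package this is to recall~\eqref{e:first-euler-Q} and the description of the basis $\{\BQ\}$: one can alternatively try to express $\pdynkin_i$ in terms of the $\BQ_F$ and invoke the coproduct formula~\eqref{e:Qcoprod}, where the admissibility condition makes primitivity more transparent. Either way, an economical alternative is to deduce primitivity from the already-established Leibniz/left-bracketing description of $\dynkin$ (its operation as the left bracketing operator on products of primitives, foreshadowed in Table~\ref{table:euler-log} and Corollary~\ref{c:dynkin-action}): since $\dynkin$ operates through iterated commutators, its image lands in the Lie submonoid, hence in the primitive part, by Proposition~\ref{p:prim-lie}. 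I would present the direct coproduct computation as the primary proof and note the structural argument as a cross-check, since the former is self-contained at this point in the text while the latter anticipates later results.
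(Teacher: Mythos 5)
Your main line of attack is the same as the paper's: reduce to the individual $\pdynkin_i$, fix $I=S\sqcup T$ with $S,T\neq\emptyset$, expand $\Delta_{S,T}(\pdynkin_i)$ via~\eqref{e:faces-def}, identify the coefficient of $\BH_{F_1}\otimes\BH_{F_2}$ as $\sum_K(-1)^{\len(K)-1}$ summed over the quasi-shuffles $K$ of $F_1$ and $F_2$ with $i\in\omega(K)$, and kill this sum with a sign-reversing involution. The gap is in the one involution you actually propose: ``merging or splitting the final blocks of the two strands'' is not the correct pairing and leaves terms unmatched. Suppose $i\in\omega(F_2)$ (if $i$ lies in neither last block there are no admissible $K$ and the coefficient is trivially zero). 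Admissibility forces the last block of $K$ to be $\omega(F_2)$ or $\omega(F_1)\cup\omega(F_2)$, but $\omega(F_1)$ may sit anywhere earlier in $K$, either alone or merged with \emph{any} block of $F_2$, not just with $\omega(F_2)$. The involution that works (and is the paper's) toggles the status of $\omega(F_1)$: if $\omega(F_1)$ stands alone in $K$, merge it with the block immediately following it --- this block exists because $\omega(F_1)\not\ni i$ cannot be last, and it is necessarily a block of $F_2$; conversely, if $\omega(F_1)$ is merged, split it off and place it just before its partner. Concretely, for $F_1=(A)$, $F_2=(B_1,B_2)$ with $i\in B_2$, the admissible quasi-shuffles are $(A,B_1,B_2)$, $(B_1,A,B_2)$, $(A\cup B_1,B_2)$, $(B_1,A\cup B_2)$, paired as $(A,B_1,B_2)\leftrightarrow(A\cup B_1,B_2)$ and $(B_1,A,B_2)\leftrightarrow(B_1,A\cup B_2)$; the first pair merges $A$ with $B_1$, not with the final block $B_2$, so your stated pairing misses it. With this correction your computation closes and coincides with the paper's proof.

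A brief comment on your proposed structural cross-check: it can be made rigorous, and there is indeed no circularity (Proposition~\ref{p:pdynkin-action}, Theorem~\ref{t:dynkin-action} and~\eqref{e:pdynkin-gen} do not use the present proposition), but knowing that $\facematrix_I(\dynkin_I)$ lands in $\Pc(\thh)$ for $\thh=\wL$, or on products of primitives in a general $\thh$, does not by itself make $\dynkin_I$ primitive. You must run the argument on $\thh=\tSig$ itself: the $\BQ_F$ are products of the primitives $\BQ_{(I_j)}$ and span $\tSig[I]$, so~\eqref{e:pdynkin-gen} together with Proposition~\ref{p:prim-lie} gives $\pdynkin_j\act\BQ_F\in\Pc(\tSig)[I]$, and then expanding $\BH_{(I)}$ in the $\BQ$ basis via~\eqref{e:H-Q} and using $\pdynkin_j=\pdynkin_j\act\BH_{(I)}$ exhibits $\pdynkin_j$ as a linear combination of primitives --- this is precisely the mechanism of Proposition~\ref{p:act-to-prim-conv}.
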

\begin{proof}
Let $I=S\sqcup T$ be a decomposition with $S,T\not=\emptyset$.
Let $F\vDash S$ and $G\vDash T$. 
Then by~\eqref{e:faces-def}, 
the coefficient of $\BH_F\otimes\BH_G$ in $\Delta_{S,T}(\pdynkin_i)$ is
\[
\sum_K \,(-1)^{\len(K) - 1}.
\]
The sum is over all quasi-shuffles $K$ of $F$ and $G$ such that
$i\in\omega(K)$.
(Quasi-shuffles are defined in Section~\ref{ss:oper}.) 
We show below that this sum is zero.

We refer to the $K$ which appear in the sum as admissible quasi-shuffles.
If $i$ does not belong to either $\omega(F)$ or $\omega(G)$,
then there is no admissible $K$ and the sum is zero.
So suppose that $i$ belongs to (say) $\omega(G)$.
For any admissible quasi-shuffle $K$ in which $\omega(F)$ appears by itself,
merging this last block of $F$ with the next block 
(which exists and belongs to $G$) yields another admissible quasi-shuffle say $K'$.
Note that $K$ can be recovered from $K'$.
Admissible quasi-shuffles can be paired off in this manner and
those in a pair contribute opposite signs to the above sum.
\end{proof}

Combining Proposition~\ref{p:dynkin-proj}
with Theorem~\ref{t:act-to-prim} and Corollary~\ref{c:action-prim}
we obtain the following result.

\begin{corollary}\label{c:dynkin-proj}
For each $i\in I$, the element $\pdynkin_i$ is an idempotent of the Tits algebra $\tSig[I]$.
The element $\dynkin_I$ is a quasi-idempotent of the Tits algebra:
\[
\dynkin_I\act \dynkin_I=\abs{I}\,\dynkin_I.
\]
For any cocommutative connected bimonoid $\thh$,
the characteristic operations
$\facematrix_I(\pdynkin_i)$ and $\facematrix_I(\frac{1}{\abs{I}}\dynkin_I)$ 
are projections from $\thh[I]$ onto $\Pc(\thh)[I]$.
\end{corollary}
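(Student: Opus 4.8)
The plan is to deduce Corollary~\ref{c:dynkin-proj} by combining the primitivity of the elements $\pdynkin_i$ and $\dynkin_I$ (Proposition~\ref{p:dynkin-proj}) with the general machinery already established for primitive elements of $\tSig$. The key observation is that each $\pdynkin_i$ and $\dynkin_I$ is written in the basis $\{\BH_F\}$ as in~\eqref{e:typical}, so I first need to read off the coefficient $a_{(I)}$ of $\BH_{(I)}$ in each.

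\begin{proof}
By Proposition~\ref{p:dynkin-proj}, each $\pdynkin_i$ is a primitive element of $\tSig[I]$. Inspecting~\eqref{e:pre-dynkin}, the coefficient of $\BH_{(I)}$ is obtained from the composition $F=(I)$, which has $\len(F)=1$ and last block $\omega(F)=I\ni i$; hence $a_{(I)}=(-1)^{1-1}=1$. By Corollary~\ref{c:action-prim} applied to the primitive element $\pdynkin_i$ with $a_{(I)}=1$, we conclude that $\pdynkin_i\act\pdynkin_i=\pdynkin_i$, so $\pdynkin_i$ is an idempotent of $(\tSig[I],\act)$.

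For $\dynkin_I=\sum_{i\in I}\pdynkin_i$, the coefficient of $\BH_{(I)}$ is $\sum_{i\in I}1=\abs{I}$. Writing $\dynkin_I=\abs{I}\bigl(\tfrac{1}{\abs{I}}\dynkin_I\bigr)$, the element $\tfrac{1}{\abs{I}}\dynkin_I$ is primitive (being a scalar multiple of a primitive element) with coefficient of $\BH_{(I)}$ equal to $1$. Corollary~\ref{c:action-prim} then gives
\[
\tfrac{1}{\abs{I}}\dynkin_I\act\tfrac{1}{\abs{I}}\dynkin_I=\tfrac{1}{\abs{I}}\dynkin_I,
\]
which upon multiplying through by $\abs{I}^2$ and using bilinearity of $\act$ yields $\dynkin_I\act\dynkin_I=\abs{I}\,\dynkin_I$, as claimed.

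Finally, let $\thh$ be a cocommutative connected bimonoid. Since $\pdynkin_i$ is primitive with $a_{(I)}=1\neq 0$, part (ii) of Theorem~\ref{t:act-to-prim} shows that $\facematrix_I(\pdynkin_i)$ is a projection from $\thh[I]$ onto $\Pc(\thh)[I]$. Likewise $\tfrac{1}{\abs{I}}\dynkin_I$ is primitive with $a_{(I)}=1$, so by the same part of Theorem~\ref{t:act-to-prim}, $\facematrix_I\bigl(\tfrac{1}{\abs{I}}\dynkin_I\bigr)$ is a projection from $\thh[I]$ onto $\Pc(\thh)[I]$.
\end{proof}

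The main obstacle, such as it is, lies entirely in bookkeeping rather than in any deep argument: one must correctly identify the coefficient $a_{(I)}$ in each case and be careful with the scalar normalization of $\dynkin_I$, since Theorem~\ref{t:act-to-prim}(ii) and Corollary~\ref{c:action-prim} require $a_{(I)}=1$ exactly (and $a_{(I)}\neq 0$ respectively). The quasi-idempotent relation $\dynkin_I\act\dynkin_I=\abs{I}\,\dynkin_I$ is then a direct consequence of the idempotency of the normalized element, and the projection statements follow mechanically once primitivity is in hand. No new estimates or constructions are needed; everything reduces to invoking the already-proven results about primitive elements operating on cocommutative connected bimonoids.
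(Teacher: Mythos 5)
Your proof is correct and follows the paper's own route exactly: the paper obtains this corollary by combining Proposition~\ref{p:dynkin-proj} with Theorem~\ref{t:act-to-prim} and Corollary~\ref{c:action-prim}, which is precisely what you do, with the coefficient bookkeeping $a_{(I)}=1$ for $\pdynkin_i$ and $a_{(I)}=\abs{I}$ for $\dynkin_I$ made explicit. The only superfluous step is your normalization of $\dynkin_I$ before invoking Corollary~\ref{c:action-prim}: that corollary holds for an arbitrary coefficient $a_{(I)}$ (it does not require $a_{(I)}=1$), so it yields $\dynkin_I\act\dynkin_I=\abs{I}\,\dynkin_I$ directly.
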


The operations of $\pdynkin_i$ and $\dynkin_I$ on $\thh$ 
can be described in terms of the antipode of $\thh$; 
these results do not require cocommutativity.

\begin{proposition}\label{p:pdynkin-action}
For any connected bimonoid and any $i\in I$,
\begin{equation}\label{e:pdynkin-action}
\facematrix_I(\pdynkin_i) = \sum_{\substack{I=S\sqcup T\\ i\in T}}
\mu_{S,T}(\apode_S\otimes\id_T)\Delta_{S,T}.
\end{equation}
\end{proposition}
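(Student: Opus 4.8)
The plan is to prove Proposition~\ref{p:pdynkin-action} by unfolding the definition of the characteristic operation and recognizing the resulting signed sum over compositions as an instance of Milnor and Moore's antipode formula. First I would start from the definition~\eqref{e:pre-dynkin} of $\pdynkin_i$ and apply~\eqref{e:can-action}, which gives
\[
\facematrix_I(\pdynkin_i) = \sum_{F:\,i\in\omega(F)} (-1)^{\len(F)-1}\, \mu_F\Delta_F.
\]
The key observation is that the compositions $F$ appearing here are precisely those whose last block $\omega(F)$ contains $i$. I would organize this sum according to the last block: write each such $F$ as $F'\cdot(T)$, where $T=\omega(F)$ is a block containing $i$ and $F'$ is a composition of the complement $S:=I\setminus T$ (allowing $F'$ to be the empty composition when $S=\emptyset$).

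Next I would factor the higher maps using associativity and coassociativity. For $F=F'\cdot(T)$ we have $\len(F)=\len(F')+1$, and the higher product and coproduct split as
\[
\mu_F = \mu_{S,T}(\mu_{F'}\otimes\id_T)
\qand
\Delta_F = (\Delta_{F'}\otimes\id_T)\Delta_{S,T},
\]
by~\eqref{e:iterated-mu} and~\eqref{e:iterated-delta} (see also Section~\ref{ss:gen-asso}). Substituting and collecting the sign, the sum becomes
\[
\facematrix_I(\pdynkin_i) = \sum_{\substack{I=S\sqcup T\\ i\in T}} \mu_{S,T}\Bigl(\bigl(\sum_{F'\vDash S} (-1)^{\len(F')}\mu_{F'}\Delta_{F'}\bigr)\otimes\id_T\Bigr)\Delta_{S,T}.
\]
Here I would treat the inner sum over $F'\vDash S$ carefully: when $S\neq\emptyset$ it is exactly Takeuchi's formula~\eqref{e:antipode-r} for $\apode_S$, and when $S=\emptyset$ the only term is the empty composition, for which $\mu_{F'}\Delta_{F'}=\id$ on $\thh[\emptyset]=\Kb$, matching $\apode_\emptyset=\id$ in the connected case. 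In both cases the inner sum equals $\apode_S$, yielding the claimed formula~\eqref{e:pdynkin-action}.

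The main obstacle I anticipate is the bookkeeping around the empty and singleton cases, together with making the factorization of $\mu_F$ and $\Delta_F$ rigorous: I must ensure that every composition $F$ with $i\in\omega(F)$ is counted exactly once under the decomposition $F\mapsto(F',T)$, and that the sign $(-1)^{\len(F)-1}=(-1)^{\len(F')}$ is correctly redistributed so that the inner sum reproduces Takeuchi's formula verbatim rather than a shifted-sign variant. An alternative and perhaps cleaner route, which I would mention as a remark, is to recognize directly that $\facematrix$ is a morphism of monoids (Proposition~\ref{p:Psi-mor-conn}) and that $\pdynkin_i$, being a signed sum over compositions with prescribed last block, is the image under $\facematrix$ of a convolution expression; but the direct computation via Takeuchi's formula is the most transparent and requires only results already established in the excerpt.
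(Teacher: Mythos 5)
Your proposal is correct and follows essentially the same route as the paper's own proof: both decompose each composition $F$ with $i\in\omega(F)$ as a composition of $S=I\setminus\omega(F)$ followed by the last block $T=\omega(F)$, factor $\mu_F$ and $\Delta_F$ through $\mu_{S,T}$ and $\Delta_{S,T}$ via higher (co)associativity, and identify the inner signed sum with Takeuchi's formula~\eqref{e:antipode-r} for $\apode_S$. Your explicit treatment of the $S=\emptyset$ case (where the inner sum reduces to the identity on $\thh[\emptyset]=\Kb$, agreeing with $\apode_\emptyset=\id$) is a minor point the paper glosses over with ``Note that $S$ may be empty,'' but it is the same argument.
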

\begin{proof}
According to~\eqref{e:pre-dynkin}, we have
\[
\facematrix_I(\pdynkin_i) = \sum_{F:\,i\in\omega(F)} (-1)^{\len(F)-1} \mu_F\Delta_F,
\]
where the sum is over those compositions $F$ of $I$ for which $i$ lies
in the last block. Given such $F$, let $T:=\omega(F)$ be its last block and 
let $S$ be
the union of the other blocks.
Note that $S$ may be empty.
Then $F$ consists of a composition $G$ of $S$ followed by $T$. By
(co)associativity~\eqref{e:gen-asso-conn},
\[
\mu_F = \mu_{S,T}(\mu_G\otimes\id_T)
\qand
\Delta_F = (\Delta_G\otimes\id_T)\Delta_{S,T}.
\]
Therefore,
\begin{align*}
\facematrix_I(\pdynkin_i) & = \sum_{\substack{I=S\sqcup T\\ i\in
T}}\sum_{G} (-1)^{\len(G)}
\mu_{S,T}(\mu_G\otimes\id_T)(\Delta_G\otimes\id_T)\Delta_{S,T}\\
& = \sum_{\substack{I=S\sqcup T\\ i\in
T}}\mu_{S,T}\Bigl(\sum_{G}(-1)^{\len(G)}(\mu_G\Delta_G)\otimes\id_T\Bigr)\Delta_{S,T}\\
& = \sum_{\substack{I=S\sqcup T\\ i\in T}}
\mu_{S,T}(\apode_S\otimes\id_T)\Delta_{S,T},
\end{align*}
by Takeuchi's formula~\eqref{e:antipode-r}.
\end{proof}


\begin{corollary}\label{c:dynkin-action}
For any connected bimonoid,
\begin{equation}\label{e:dynkin-action}
\facematrix(\dynkin) = \apode\conv \degmap,
\end{equation}
the convolution product~\eqref{ss:convolution} between the antipode and
the number operator~\eqref{e:degmap}.
\end{corollary}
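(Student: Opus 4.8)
The plan is to obtain the corollary as an immediate consequence of Proposition~\ref{p:pdynkin-action}, which already records the characteristic operation of each $\pdynkin_i$ in terms of the antipode. Since $\dynkin_I = \sum_{i\in I}\pdynkin_i$ by~\eqref{e:dynkin}, and the characteristic operation $z\mapsto\facematrix_I(z)$ is linear in its first argument by~\eqref{e:canz} and~\eqref{e:can-action}, I would begin by writing
\[
\facematrix_I(\dynkin_I) = \sum_{i\in I}\facematrix_I(\pdynkin_i)
= \sum_{i\in I}\ \sum_{\substack{I=S\sqcup T\\ i\in T}} \mu_{S,T}(\apode_S\otimes\id_T)\Delta_{S,T},
\]
where the second equality is the formula from Proposition~\ref{p:pdynkin-action}.

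Next I would interchange the two summations, grouping the terms by the decomposition $I=S\sqcup T$ rather than by the element $i$. For a fixed decomposition the condition $i\in T$ is met by exactly $\abs{T}$ choices of $i$, and the summand is otherwise independent of $i$; hence the double sum collapses to
\[
\facematrix_I(\dynkin_I) = \sum_{I=S\sqcup T} \abs{T}\, \mu_{S,T}(\apode_S\otimes\id_T)\Delta_{S,T}.
\]

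The final step is to recognize the right-hand side as a convolution product. By the definition of the number operator~\eqref{e:degmap} we have $\degmap_T = \abs{T}\,\id_T$, so $\abs{T}\,\mu_{S,T}(\apode_S\otimes\id_T) = \mu_{S,T}(\apode_S\otimes\degmap_T)$; comparing with the convolution formula~\eqref{e:convolution} applied to the pair $(\apode,\degmap)$ then gives
\[
\facematrix_I(\dynkin_I) = \sum_{I=S\sqcup T}\mu_{S,T}(\apode_S\otimes\degmap_T)\Delta_{S,T} = (\apode\conv\degmap)_I,
\]
which is the asserted identity.

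I do not expect a genuine obstacle: the substantive content lives entirely in Proposition~\ref{p:pdynkin-action}, and the corollary reduces to summing over $i$ and counting. The only points deserving care are the bookkeeping in the interchange of summations and the observation that, exactly as in Proposition~\ref{p:pdynkin-action}, no cocommutativity hypothesis enters — the identity $\facematrix(\dynkin)=\apode\conv\degmap$ holds for every connected bimonoid, with the projection-onto-$\Pc(\thh)$ interpretation of Corollary~\ref{c:dynkin-proj} appearing only once cocommutativity is imposed.
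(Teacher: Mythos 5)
Your proof is correct and is essentially the paper's own argument: both sum the formula of Proposition~\ref{p:pdynkin-action} over $i\in I$, interchange the order of summation so that the decomposition $I=S\sqcup T$ is outermost, and then absorb the resulting factor $\abs{T}$ into $\degmap_T$ to recognize the convolution product $(\apode\conv\degmap)_I$. Your closing remark that no cocommutativity is needed also matches the paper's statement, which asserts the identity for any connected bimonoid.
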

\begin{proof}
In view of~\eqref{e:pdynkin-action},
\begin{align*}
\facematrix_I(\dynkin_I) & =\sum_{i\in I} \sum_{\substack{I=S\sqcup T\\ i\in T}}
\mu_{S,T}(\apode_S\otimes\id_T)\Delta_{S,T}\\
& = \sum_{I=S\sqcup T} \sum_{i\in T} \mu_{S,T}(\apode_S\otimes\id_T)\Delta_{S,T}\\
& = \sum_{I=S\sqcup T} \mu_{S,T}(\apode_S\otimes \degmap_T)\Delta_{S,T}\\
& = (\apode\conv \degmap)_I. \qedhere
\end{align*}
\end{proof}

It follows from~\eqref{e:char-apode} and~\eqref{e:dynkin-action} that
\begin{equation}\label{e:dynkin-univ}
\dynkin = \univ^{-1}\conv \degmap(\univ).
\end{equation}
In other words, the Dynkin quasi-idempotent arises from the
construction of Corollary~\ref{c:glike-dynkin} applied to the universal series.

\begin{remark}
The Dynkin quasi-idempotent is classical.
The analogue of formula~\eqref{e:dynkin-action} for graded Hopf algebras
appears in work of Patras and Reutenauer~\cite[Section~3]{PatReu:2002}
and (less explicitly) in work of von Waldenfels~\cite{Wal:1966}.
The analogue of~\eqref{e:dynkin-univ} is the definition of the Dynkin quasi-idempotent
by Thibon et al~\cite[Section~2.1]{KLT:1997},~\cite[Section~2.1]{Thi:2001}.
(In these references, this element is also called the \emph{power sum of the first kind}.)
Aubry studies the operator $\facematrix_I(\dynkin_I)$ in~\cite[Section~6]{Aub:2010}.
The operators $\facematrix_I(\pdynkin_i)$ are considered in unpublished notes
of Nantel Bergeron; related ideas appear in Fisher's thesis~\cite{Fis:2010}.
The idempotents $\pdynkin_i$ are specific to the setting of Hopf monoids.
\end{remark}

\subsection{The Dynkin-Specht-Wever theorem}\label{ss:bracketing}

Recall the bimonoid of linear orders $\wL$ (Section~\ref{ss:linear}).
We proceed to describe the characteristic operation 
of the Dynkin quasi-idempotent on $\wL$.
In the discussion, $I$ always denotes a nonempty set.

To begin with, note that the commutator bracket~\eqref{e:underlying-lie} of $\wL$
satisfies
\begin{equation}\label{eq:comm-bracket-L}
[\BH_{l_1},\BH_{l_2}] := \BH_{l_1\cdot l_2} - \BH_{l_2\cdot l_1} 
\end{equation}
for any $I=S\sqcup T$, and linear orders $\ell_1$ on $S$ and $\ell_2$ on $T$.

\begin{lemma}\label{l:left-brac-peakless}
Let $\ell=i_1i_2\cdots i_n$ be a linear order on $I$. Then
\begin{equation}\label{e:left-brac-peakless}
[\dots[\BH_{i_1},\BH_{i_2}],\dots,\BH_{i_n}] 
= \sum_{\substack{I=S\sqcup T\\ i_1\in T}} (-1)^{\abs{S}}\,\BH_{\opp{\ell|_S}\cdot \ell|_{T}}.
\end{equation}
\end{lemma}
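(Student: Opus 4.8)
The plan is to prove the identity~\eqref{e:left-brac-peakless} by induction on $n=\abs{I}$, peeling off the outermost bracket. The base case $n=1$ is immediate: the left side is just $\BH_{i_1}$, and the right side has only the term $S=\emptyset$, $T=I$, which gives $(-1)^0\,\BH_{\ell}=\BH_{i_1}$. For the inductive step, I would write the left-bracketing on $n$ letters as $[w,\BH_{i_n}]$, where $w=[\dots[\BH_{i_1},\BH_{i_2}],\dots,\BH_{i_{n-1}}]$ is the left-bracketing on the first $n-1$ letters. Let $\ell'=i_1\cdots i_{n-1}$ be the linear order on $I':=I\setminus\{i_n\}$. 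By the induction hypothesis, $w$ equals the sum over decompositions $I'=S'\sqcup T'$ with $i_1\in T'$ of $(-1)^{\abs{S'}}\BH_{\opp{\ell'|_{S'}}\cdot\,\ell'|_{T'}}$.

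Next I would expand the outer commutator using~\eqref{eq:comm-bracket-L}, namely $[\BH_{m},\BH_{i_n}]=\BH_{m\cdot i_n}-\BH_{i_n\cdot m}$ applied to each $\BH_{m}$ appearing in $w$, where $m=\opp{\ell'|_{S'}}\cdot\,\ell'|_{T'}$. Appending $i_n$ on the right (the term $\BH_{m\cdot i_n}$) corresponds to placing $i_n$ at the end of the $T$-part: it yields a decomposition $I=S\sqcup T$ with $S=S'$ and $T=T'\sqcup\{i_n\}$, with $i_n$ last in $\ell|_T$, and the sign is unchanged. Prepending $i_n$ on the left and negating (the term $-\BH_{i_n\cdot m}$) corresponds to placing $i_n$ at the front of the reversed $S$-part: this yields $S=S'\sqcup\{i_n\}$ with $i_n$ first in $\opp{\ell|_S}$, i.e.\ last in $\ell|_S$, and the extra minus sign is exactly the factor $(-1)$ needed to pass from $(-1)^{\abs{S'}}$ to $(-1)^{\abs{S}}=(-1)^{\abs{S'}+1}$. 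I would check that every decomposition $I=S\sqcup T$ with $i_1\in T$ arises exactly once in this way, by classifying according to whether $i_n\in S$ or $i_n\in T$ and noting that in each case $i_n$ must occupy the distinguished last position of $\ell|_S$ (resp.\ $\ell|_T$) since $i_n$ is the maximal element of $\ell$; this is the content of the restriction $\opp{\ell|_S}\cdot\ell|_T$ in the target formula, which only records the relative order within $S$ and within $T$.

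The one point requiring care, and the part I expect to be the main obstacle, is the bookkeeping of the sign and of \emph{where} $i_n$ lands relative to the existing letters. The subtlety is that the formula~\eqref{e:left-brac-peakless} fixes the internal orders $\opp{\ell|_S}$ and $\ell|_T$ as restrictions of the \emph{original} $\ell$, so I must verify that appending $i_n$ at the right of $m$ genuinely produces the word $\opp{\ell|_S}\cdot\ell|_{T'\sqcup\{i_n\}}$ (with $i_n$ correctly last in the $T$-block) and that prepending $i_n$ produces $\opp{\ell|_{S'\sqcup\{i_n\}}}\cdot\ell|_T$ (with $i_n$ correctly last in $\ell|_S$, hence first in the reversal). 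Because $i_n$ is the largest letter of $\ell$, it is automatically the last element of whichever of $\ell|_S$, $\ell|_T$ it belongs to, so no interleaving with the other letters occurs and the restriction words are preserved; this is precisely what makes the induction close. The condition $i_1\in T$ is maintained throughout since $i_1\neq i_n$ and the two operations never move $i_1$. Summing the two contributions gives exactly the right-hand side of~\eqref{e:left-brac-peakless}, completing the induction.
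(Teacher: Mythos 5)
Your proof is correct and is essentially the paper's argument: the paper expands all the brackets at once via~\eqref{eq:comm-bracket-L} and characterizes the resulting $2^{n-1}$ signed terms (in each resulting word, every $i_j$ either precedes or succeeds all of $i_1,\ldots,i_{j-1}$, the prepended letters forming $S$ and contributing one sign each), which is exactly the prepend/append bookkeeping you carry out one letter at a time. Your induction on the outermost bracket, resting on the observation that $i_n$ necessarily lands at the end of whichever of $\ell|_S$, $\ell|_T$ it belongs to so that the restriction words are preserved, is a rigorous packaging of that same expansion.
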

\begin{proof}
Expanding $\bigl[[\BH_{i_1},\BH_{i_2}],\ldots,\BH_{i_n}\bigr]$ using~\eqref{eq:comm-bracket-L} 
we obtain a sum of $2^{n-1}$ elements of the form $\pm\BH_{\ell'}$.
The linear orders $\ell'$ are characterized by the following property:
In $\ell'$, any $i_j$ either precedes all of $i_1,\ldots,i_{j-1}$ or succeeds all of them.
Write $I=S\sqcup T$, 
where $S$ consists of those $i_j$ for which $i_j$ preceeds $i_1,\ldots,i_{j-1}$.
By convention, $i_1\in T$, so the sign in front of $\ell'$ is the parity of $\abs{S}$.
Also note that $\ell' = \opp{\ell|_S}\cdot \ell|_{T}$,
that is, in $\ell'$, the elements of $S$ appear first
but reversed from the order in which they appear in $\ell$,
followed by the elements of $T$ appearing in the same order as in $\ell$.
\end{proof}

Recall the element $\pdynkin_i$ from~\eqref{e:pre-dynkin}.

\begin{proposition}\label{p:kdynkin-action}
Let $i_1i_2\cdots i_n$ be a linear order on $I$. Then, for any $j\in I$,
\begin{equation}\label{e:pdynkin-linear}
\pdynkin_j \act \BH_{i_1i_2\cdots i_n} =
\begin{cases}
[\dots[\BH_{i_1},\BH_{i_2}],\dots,\BH_{i_n}] & \text{ if } i_1=j,\\
0 & \text{ otherwise}.
\end{cases}
\end{equation}
\end{proposition}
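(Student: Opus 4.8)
The plan is to compute the characteristic operation $\pdynkin_j \act \BH_{i_1i_2\cdots i_n}$ directly from the definition~\eqref{e:can-action}, using the explicit description of the higher product and coproduct of $\wL$. Recall that $\pdynkin_j = \sum_{F:\,j\in\omega(F)} (-1)^{\len(F)-1} \BH_F$, so by~\eqref{e:can-actionF} we have
\[
\pdynkin_j \act \BH_{\ell} = \sum_{F:\,j\in\omega(F)} (-1)^{\len(F)-1} (\mu_F\Delta_F)(\BH_\ell),
\]
where $\ell = i_1\cdots i_n$. For $\wL$, the higher coproduct is $\Delta_F(\BH_\ell) = \BH_{\ell|_{I_1}}\otimes\cdots\otimes\BH_{\ell|_{I_k}}$ for $F=(I_1,\ldots,I_k)$, and the higher product is concatenation, so $\mu_F\Delta_F(\BH_\ell) = \BH_{\ell|_{I_1}\cdot\,\cdots\,\cdot\ell|_{I_k}}$. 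Thus each term reshuffles $\ell$ into a linear order obtained by restricting to the blocks of $F$ and concatenating.

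The first key step is to show the operation vanishes when $j\neq i_1$. The idea is a sign-reversing involution: I would pair up compositions $F$ with $j\in\omega(F)$ according to whether $i_1$ sits alone as a singleton first block or is merged into the block immediately following it. Since $i_1$ is the $\ell$-least element, restricting to any block and concatenating always places $i_1$ at the front of whichever block contains it; when $i_1\notin\omega(F)$, splitting off or absorbing the initial occurrence of $i_1$ produces a partner composition giving the same reshuffled linear order with opposite sign (the lengths differ by one). This cancellation kills every term precisely when $j\neq i_1$, since then $i_1$ never lies in the last block and the pairing is total.

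The second key step handles $j=i_1$, where I want to match the answer with the left-bracketing $[\dots[\BH_{i_1},\BH_{i_2}],\dots,\BH_{i_n}]$. Here I would invoke Lemma~\ref{l:left-brac-peakless}, which expands the iterated bracket as $\sum_{I=S\sqcup T,\,i_1\in T}(-1)^{\abs{S}}\BH_{\opp{\ell|_S}\cdot\,\ell|_T}$. The plan is to set up a bijection between pairs $(S,T)$ appearing there and the surviving compositions $F$ with $i_1\in\omega(F)$ in the $\act$-expansion: given $F$ with last block $\omega(F)\ni i_1$, collect the earlier blocks, and because $i_1$ is $\ell$-minimal the only way it can land in the last block after restriction-and-concatenation is for the preceding blocks to be singletons arranged so as to reverse an initial descent pattern. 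Matching the reshuffled linear order $\ell|_{I_1}\cdot\,\cdots\,\cdot\ell|_{I_k}$ with $\opp{\ell|_S}\cdot\ell|_T$ and checking that the sign $(-1)^{\len(F)-1}$ agrees with $(-1)^{\abs{S}}$ completes the identification.

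The main obstacle is the bookkeeping in the second step: establishing the exact correspondence between compositions $F$ (with $i_1$ in the last block) and the subset-pairs $(S,T)$ of Lemma~\ref{l:left-brac-peakless}, together with the verification that signs and the resulting linear orders match term by term. The cleanest route is probably to not fight this combinatorics head-on but to reduce to the already-proved Proposition~\ref{p:pdynkin-action}, which gives $\facematrix_I(\pdynkin_j) = \sum_{I=S\sqcup T,\,j\in T}\mu_{S,T}(\apode_S\otimes\id_T)\Delta_{S,T}$ for any connected bimonoid, hence for $\wL$. Applying this to $\BH_\ell$ and using the explicit coproduct $\Delta_{S,T}(\BH_\ell)=\BH_{\ell|_S}\otimes\BH_{\ell|_T}$ together with the antipode formula~\eqref{e:apode-linear} (with $q=1$), namely $\apode_S(\BH_{\ell|_S})=(-1)^{\abs{S}}\BH_{\opp{\ell|_S}}$, yields
\[
\pdynkin_j\act\BH_\ell = \sum_{\substack{I=S\sqcup T\\ j\in T}} (-1)^{\abs{S}}\,\BH_{\opp{\ell|_S}\cdot\,\ell|_T}.
\]
When $j=i_1$ this is exactly the right-hand side of~\eqref{e:left-brac-peakless}, giving the iterated bracket. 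When $j\neq i_1$, the element $i_1$ lies in $S$ for some terms and in $T$ for others, and I expect a further sign-reversing involution (moving $i_1$ between being the last element of $S$ and the first of $T$, using its $\ell$-minimality so that $\opp{\ell|_S}\cdot\ell|_T$ is unchanged) to force the sum to zero. This reduction sidesteps most of the direct combinatorics and isolates the only real check to the $j\neq i_1$ cancellation.
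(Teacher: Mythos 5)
Your final route—applying Proposition~\ref{p:pdynkin-action} to $\wL$, using its explicit coproduct, antipode, and product to get $\sum_{I=S\sqcup T,\,j\in T}(-1)^{\abs{S}}\BH_{\opp{\ell|_S}\cdot\,\ell|_T}$, invoking Lemma~\ref{l:left-brac-peakless} when $j=i_1$, and cancelling terms when $j\neq i_1$ via the involution that moves $i_1$ between the end of $S$ and the start of $T$—is exactly the paper's proof. The involution you ``expect'' does work precisely as you describe, since $\ell$-minimality of $i_1$ keeps $\opp{\ell|_S}\cdot\ell|_T$ unchanged while flipping the sign, so the proposal is correct and essentially identical to the paper's argument.
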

\begin{proof}
Put $\ell:=i_1i_2\cdots i_n$.
By~\eqref{e:pdynkin-action},
\[
\pdynkin_j\act \BH_{\ell} = \sum_{\substack{I=S\sqcup T\\ j\in T}}
\mu_{S,T}(\apode_S\otimes\id_T)\Delta_{S,T}(\BH_{\ell})
= \sum_{\substack{I=S\sqcup T\\ j\in T}} (-1)^{\abs{S}}\,\BH_{\opp{\ell|_S}\cdot \ell|_{T}}.
\]
The second step used the explicit formulas
for the product, coproduct and antipode of $\wL$.
The claim in the case $i_1=j$ follows from Lemma~\ref{l:left-brac-peakless}.
Now suppose $i_1\neq j$.
Then $i_1$ may appear either in $S$ or in $T$.
In the former case, $i_1$ is the last element of $\opp{\ell|_S}$,
while in the latter, $i_1$ is the first element of $\ell|_{T}$.
Thus, for two decompositions $I=S\sqcup T$ and $I=S'\sqcup T'$ 
which differ only in the location of $i_1$, we have
$\opp{\ell|_S}\cdot \ell|_{T} = \opp{\ell|_{S'}}\cdot \ell|_{T'}$
and $(-1)^{\abs{S}}+(-1)^{\abs{S'}}=0$.
Thus $\pdynkin_j\act \BH_{\ell} = 0$.
\end{proof}

Combining~\eqref{e:dynkin} and~\eqref{e:pdynkin-linear}, 
we obtain the following result.

\begin{theorem}\label{t:dynkin-action}
For any linear order $i_1i_2\cdots i_n$ on $I$,
\begin{equation}\label{e:dynkin-linear}
\dynkin_I \act \BH_{i_1i_2\cdots i_n} = [\dots[\BH_{i_1},\BH_{i_2}],\dots,\BH_{i_n}].
\end{equation}
\end{theorem}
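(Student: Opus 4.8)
The plan is to derive Theorem~\ref{t:dynkin-action} directly from the two pieces already assembled in the preceding two results, namely the decomposition~\eqref{e:dynkin} of $\dynkin_I$ as a sum over $i\in I$ of the elements $\pdynkin_i$, and the explicit operation of each $\pdynkin_i$ on a linear order computed in Proposition~\ref{p:kdynkin-action}. Since the characteristic operation $z\mapsto \facematrix_I(z)$ is linear in $z\in\tSig[I]$, applying it to both sides of~\eqref{e:dynkin} gives
\[
\dynkin_I \act \BH_{i_1i_2\cdots i_n} = \sum_{j\in I} \bigl(\pdynkin_j \act \BH_{i_1i_2\cdots i_n}\bigr).
\]
This reduces the theorem to evaluating each summand.

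The key step is then to invoke Proposition~\ref{p:kdynkin-action}, which tells us precisely that $\pdynkin_j \act \BH_{i_1i_2\cdots i_n}$ equals the left-bracketed expression $[\dots[\BH_{i_1},\BH_{i_2}],\dots,\BH_{i_n}]$ when $i_1=j$, and vanishes otherwise. Thus in the sum over $j\in I$, only the single term $j=i_1$ survives. First I would record that observation, and then conclude that the entire sum collapses to
\[
\dynkin_I \act \BH_{i_1i_2\cdots i_n} = \pdynkin_{i_1}\act \BH_{i_1i_2\cdots i_n} = [\dots[\BH_{i_1},\BH_{i_2}],\dots,\BH_{i_n}],
\]
which is exactly~\eqref{e:dynkin-linear}.

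Honestly, there is no genuine obstacle remaining at this stage: all the substantive work has already been carried out in Lemma~\ref{l:left-brac-peakless} (the expansion of the iterated left bracket in the $\{\BH\}$ basis of $\wL$) and in Proposition~\ref{p:kdynkin-action} (the cancellation argument showing $\pdynkin_j$ annihilates $\BH_{\ell}$ unless $j$ heads the order). The present theorem is the clean packaging of those facts once one sums over $i\in I$. If anything, the only point requiring a moment's care is the bookkeeping that the surviving index is $i_1$, the \emph{first} element of the linear order, matching the condition $i\in\omega(F)$ in~\eqref{e:pre-dynkin} after the antipode reversal; but this is already encoded in the statement of Proposition~\ref{p:kdynkin-action} and needs no re-derivation. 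I would therefore present the proof as a two-line combination of~\eqref{e:dynkin} and~\eqref{e:pdynkin-linear}, making explicit only the collapse of the sum to its single nonzero term.
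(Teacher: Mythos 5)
Your proposal is correct and is precisely the paper's own argument: the paper proves Theorem~\ref{t:dynkin-action} by combining the decomposition~\eqref{e:dynkin} with Proposition~\ref{p:kdynkin-action}, so that in the sum over $j\in I$ only the term $j=i_1$ survives. There is nothing to add; the substantive work indeed lives in Lemma~\ref{l:left-brac-peakless} and Proposition~\ref{p:kdynkin-action}, exactly as you observe.
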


This result shows that $\dynkin_I$ operates on $\wL[I]$ as the \emph{left bracketing}
\[
\BH_{i_1i_2\cdots i_n} \mapsto [\dots[\BH_{i_1},\BH_{i_2}],\dots,\BH_{i_n}].
\]
This operator on $\wL[I]$
(rather than the element $\dynkin_I$)
is sometimes taken as the definition of the Dynkin quasi-idempotent.

As a special case of Corollary~\ref{c:dynkin-proj}, we have the following result.
Recall that $\tLie[I]=\Pc(\wL)[I]$.

\begin{corollary}\label{c:DSW}
The image of the left bracketing operator on $\wL[I]$ is $\tLie[I]$.
In addition, $\tLie[I]$ is the eigenspace of eigenvalue $\abs{I}$ of this operator.
\end{corollary}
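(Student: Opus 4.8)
The plan is to obtain Corollary~\ref{c:DSW} as a direct specialization of the general results already established. Recall that $\tLie[I]=\Pc(\wL)[I]$ by the identification of the primitive part of $\wL$ with the free Lie monoid. By Theorem~\ref{t:dynkin-action}, the characteristic operation $\facematrix_I(\dynkin_I)$ on $\wL[I]$ is precisely the left bracketing operator
\[
\BH_{i_1i_2\cdots i_n} \mapsto [\dots[\BH_{i_1},\BH_{i_2}],\dots,\BH_{i_n}].
\]
Thus the two assertions of the corollary are statements about the image and an eigenspace of $\facematrix_I(\dynkin_I)$, and the strategy is to read them off from Corollary~\ref{c:dynkin-proj} together with Corollary~\ref{c:action-prim}.

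First I would verify the hypotheses of Corollary~\ref{c:dynkin-proj}: the Hopf monoid $\wL$ is connected and cocommutative (Section~\ref{ss:linear}), so that corollary applies. It states that $\facematrix_I(\tfrac{1}{\abs{I}}\dynkin_I)$ is a projection from $\wL[I]$ onto $\Pc(\wL)[I]=\tLie[I]$. Since $\facematrix_I(\tfrac{1}{\abs{I}}\dynkin_I)=\tfrac{1}{\abs{I}}\facematrix_I(\dynkin_I)$, the image of the left bracketing operator $\facematrix_I(\dynkin_I)$ coincides with the image of this projection, namely $\tLie[I]$. This gives the first assertion.

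For the eigenspace statement, I would argue in two inclusions. On one hand, if $h\in\tLie[I]=\Pc(\wL)[I]$, then applying item (ii) of Theorem~\ref{t:action-prim} to the primitive element $\dynkin_I$ (whose coefficient of $\BH_{(I)}$ is $\abs{I}$, since the only composition whose last block contains a prescribed $i$ and has length $1$ is $(I)$, contributing $\abs{\omega((I))}=\abs{I}$) yields $\dynkin_I\act h=\abs{I}\,h$; hence every primitive is an eigenvector of eigenvalue $\abs{I}$. On the other hand, suppose $\facematrix_I(\dynkin_I)(h)=\abs{I}\,h$. Then $h=\facematrix_I(\tfrac{1}{\abs{I}}\dynkin_I)(h)$ lies in the image of the projection, which is $\tLie[I]$. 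Combining the two inclusions, $\tLie[I]$ is exactly the eigenspace of eigenvalue $\abs{I}$, completing the proof.

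The routine points to check are the value of the coefficient $a_{(I)}=\abs{I}$ in $\dynkin_I$ and the compatibility of scalar multiplication with $\facematrix_I$; neither presents a genuine obstacle. The only mild subtlety is ensuring that the eigenvalue-$\abs{I}$ condition forces membership in the image rather than merely in a $\dynkin_I$-invariant subspace, but this follows immediately from the fact that a nonzero eigenvector of a projection (after the scalar normalization by $\abs{I}$) must be a fixed point and hence lie in the image. So the main work has already been done in the preceding results, and this corollary is essentially a translation of Corollary~\ref{c:dynkin-proj} and Theorem~\ref{t:action-prim} into the language of the left bracketing operator.
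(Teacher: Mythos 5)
Your proposal is correct and follows essentially the same route as the paper: the paper obtains Corollary~\ref{c:DSW} precisely as a specialization of Corollary~\ref{c:dynkin-proj} to the cocommutative connected Hopf monoid $\wL$, with Theorem~\ref{t:dynkin-action} identifying $\facematrix_I(\dynkin_I)$ with the left bracketing operator. Your explicit verification of the coefficient $a_{(I)}=\abs{I}$, and the two-inclusion eigenspace argument via Theorem~\ref{t:action-prim}(ii) and the projection property, simply spell out the details the paper leaves implicit.
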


This is the analogue for Hopf monoids of the classical \emph{Dynkin-Specht-Wever} theorem
~\cite[Theorem~2.3]{Ree:1958}.

\smallskip

We turn to the left bracketing operator on an arbitrary connected bimonoid $\thh$.

Given a composition $F=(I_1,\ldots,I_k)$ of a finite set $I$
and elements $x_i\in\thh[I_i]$. Recall that $\mu_F:\thh(F)\to\thh[I]$
denotes the higher product of $\thh$~\eqref{e:iterated-mu}.
Let us write 
\[
(x)_F:= \mu_F(x_1\otimes \cdots \otimes x_k).
\]
Thus, $(x)_F\in\thh[I]$ is the product of the $x_i$ in the order specified by $F$.
The \emph{left bracketing} of the $x_i$ is the element $[x]_F\in\thh[I]$ defined by
\[
[x]_F := [\dots[x_{1},x_{2}],\dots,x_{k}].
\]
It is obtained by iterating the commutator bracket~\eqref{e:underlying-lie} of $\thh$.

Let $\alpha(F):=I_1$ denote the first block of $F$. 

We then have the following results, 
generalizing~\eqref{e:left-brac-peakless}--\eqref{e:dynkin-linear}. 
(Similar proofs apply.)

\begin{equation}\label{e:left-brac-gen}
[x]_F = \sum (-1)^{\abs{S}}\, (x)_{\opp{F|_S}\cdot F|_{T}},
\end{equation}
where the sum is over all decompositions $I=S\sqcup T$ 
for which $T$ is $F$-admissible and $\alpha(F)\subseteq T$. (Note that $S$ may be empty.)
If the $x_i$ are primitive, then, for any $j\in I$,
\begin{equation}\label{e:pdynkin-gen}
\pdynkin_j \act (x)_F =
\begin{cases}
[x]_F & \text{ if } j\in\alpha(F),\\
0 & \text{ otherwise}.
\end{cases}
\end{equation}
Under the same hypothesis,
\begin{equation}\label{e:dynkin-gen}
\dynkin_I \act (x)_F = \abs{\alpha(F)}\,[x]_F.
\end{equation}

Recall the cocommutative Hopf monoid $\Tc(\tq)$ from Section~\ref{ss:freeHopf}.

\begin{corollary}\label{c:prim-in-free-1}
The primitive part $\Pc(\Tc(\tq))$ is the smallest Lie submonoid of $\Tc(\tq)$ containing $\tq$. 
\end{corollary}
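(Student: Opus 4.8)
The plan is to show two inclusions. Let $\tL$ denote the smallest Lie submonoid of $\Tc(\tq)$ containing $\tq$. First I would establish $\tL\subseteq\Pc(\Tc(\tq))$. By~\eqref{e:prim-contained} we know $\tq\subseteq\Pc(\Tc(\tq))$, and by Proposition~\ref{p:prim-lie} the primitive part $\Pc(\Tc(\tq))$ is a Lie submonoid of $\Tc(\tq)$ under the commutator bracket. Since $\tL$ is by definition the \emph{smallest} Lie submonoid containing $\tq$, it must be contained in any Lie submonoid containing $\tq$, and in particular $\tL\subseteq\Pc(\Tc(\tq))$. This direction is essentially immediate from the cited results.

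The reverse inclusion $\Pc(\Tc(\tq))\subseteq\tL$ is the substantive part, and this is where the Dynkin machinery enters. The idea is that a general element of $\Tc(\tq)[I]$ is a linear combination of products $(x)_F=\mu_F(x_1\otimes\cdots\otimes x_k)$ with each $x_i\in\tq[I_i]$ primitive (indeed $\tq$ maps into the primitive part and the summands $\tq(F)$ span $\Tc(\tq)[I]$). I would apply the Dynkin idempotent $\frac{1}{\abs I}\dynkin_I$ via its characteristic operation. By Corollary~\ref{c:dynkin-proj}, for the cocommutative connected bimonoid $\thh=\Tc(\tq)$ the operator $\facematrix_I\bigl(\frac{1}{\abs I}\dynkin_I\bigr)$ is a projection of $\Tc(\tq)[I]$ onto $\Pc(\Tc(\tq))[I]$. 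Hence every primitive element $z$ satisfies $z=\facematrix_I\bigl(\frac{1}{\abs I}\dynkin_I\bigr)(z)$, so it suffices to show that $\dynkin_I\act(x)_F$ lies in $\tL[I]$ for each product $(x)_F$ of primitives. But formula~\eqref{e:dynkin-gen} gives exactly
\[
\dynkin_I\act(x)_F = \abs{\alpha(F)}\,[x]_F,
\]
where $[x]_F=[\dots[x_1,x_2],\dots,x_k]$ is an iterated commutator bracket of elements of $\tq$. Such an iterated bracket manifestly belongs to $\tL$, since $\tL$ contains $\tq$ and is closed under the Lie bracket. Summing over the expansion of $z$ in the $(x)_F$ then places $z$ in $\tL[I]$.

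I expect the main obstacle to be bookkeeping rather than conceptual: one must be careful that the characteristic operation $\facematrix_I(\dynkin_I)$ is genuinely computed on products of primitives, so that~\eqref{e:dynkin-gen} applies termwise, and that the projection property of Corollary~\ref{c:dynkin-proj} is invoked with the correct normalization $\frac{1}{\abs I}$. One should also confirm that it is legitimate to reduce a primitive element to a combination of products $(x)_F$ of \emph{primitive} factors; this is fine because $\Tc(\tq)[I]=\bigoplus_{F\vDash I}\tq(F)$ and each $\tq(F)$ consists of tensors of elements of $\tq$, which are primitive by~\eqref{e:prim-contained}. Applying the projection to such a decomposition and using~\eqref{e:dynkin-gen} term by term then expresses the primitive element as a linear combination of iterated brackets of elements of $\tq$, completing the inclusion $\Pc(\Tc(\tq))\subseteq\tL$ and hence the desired equality.
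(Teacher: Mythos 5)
Your proof is correct and follows essentially the same route as the paper's: one inclusion via~\eqref{e:prim-contained} and Proposition~\ref{p:prim-lie}, and the reverse inclusion by expanding a primitive element over the spanning products $(x)_F$ and applying the Dynkin operator, invoking Corollary~\ref{c:dynkin-proj} and~\eqref{e:dynkin-gen}. The only difference is that you spell out the bookkeeping the paper leaves implicit.
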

\begin{proof}
Recall from Proposition~\ref{p:prim-lie} that $\Pc(\Tc(\tq))$ is a Lie submonoid of $\Tc(\tq)$.
Moreover, $\tq\subseteq \Pc(\Tc(\tq))$, 
so it only remains to show that every primitive element 
belongs to the Lie submonoid generated by $\tq$.
Accordingly, pick a primitive element $y$.
The elements $(x)_F \in\tq(F)$ span $\Tc(\tq)$, as the $x_i\in\tq[I_i]$ vary.
So express $y$ as a linear combination of these elements.
Now apply $\facematrix(\dynkin_I)$ to this equation and use Corollary~\ref{c:dynkin-proj}
and~\eqref{e:dynkin-gen}. 
This expresses $\abs{I}\,y$ (and hence $y$) 
as a linear combination of the $[x]_F$.
\end{proof}

\begin{remark}
There is a ``right version'' of the Dynkin quasi-idempotent.
It is defined as follows.
Let $\opp{\pdynkin}_i$ be as in~\eqref{e:pre-dynkin},
where the sum is now over all set compositions $F$
whose first block $\alpha(F)$ contains $i$.
Let $\opp{\dynkin}_I$ be the sum of these over all $i$:
\[
\opp{\dynkin}_I=\sum_{F\vDash I} \,(-1)^{\len(F) - 1} \,\abs{\alpha(F)}\, \BH_F.
\]
This defines another primitive series $\opp{\dynkin}$ of $\tSig$.
It satisfies
\begin{equation}\label{e:dynkin-univ-opp}
\opp{\dynkin} = \degmap(\univ) \conv \univ^{-1}.
\end{equation} 
Its characteristic operation on a product of primitive elements 
is given by ``right bracketing''. 
\end{remark}

\section{The Poincar\'e-Birkhoff-Witt
and Cartier-Milnor-Moore theorems}\label{s:structure}
 
We establish analogues of the Poincar\'e-Birkhoff-Witt (PBW) and Cartier-Milnor-Moore (CMM) theorems for connected Hopf monoids.
These results appeared in the work of Joyal~\cite{Joy:1986} and Stover~\cite{Sto:1993},
with a precursor in the work of Barratt~\cite{Bar:1978}.
We provide here an approach based on the Garsia-Reutenauer idempotents.
For connected Hopf algebras, proofs of these classical results can be
found in~\cite[Section~7]{MilMoo:1965} and \cite[Appendix~B]{Qui:1969}. 

Let $\Kb$ be a field of characteristic $0$.

\subsection{The canonical decomposition of a cocommutative connected bimonoid}\label{ss:can-decomp}

Let $\thh$ be a cocommutative connected bimonoid. 
Let $I$ be any nonempty set.
Consider the characteristic operation 
of the first Eulerian idempotent $\euler{I}$ on $\thh$.
By Corollary~\ref{c:char-op-1euler},
\begin{equation}\label{e:prelim}
\facematrix_I(\euler{I})(\thh[I]) = \Pc(\thh)[I]. 
\end{equation}
More generally, the characteristic operation 
of the Garsia-Reutenauer idempotent $\euler{X}$ on $\thh$
is related to the primitive part of $\thh$ as follows.

\begin{proposition}\label{p:higher-euler-prim}
For any composition $F$ of $I$ with support $X$, 
the map $\Delta_F$ restricts to an isomorphism
\[
\facematrix_I(\euler{X})(\thh[I]) \map{\iso} \Pc(\thh)(F).
\] 
\end{proposition}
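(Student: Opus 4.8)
The plan is to exhibit an explicit two-sided inverse to the restriction of $\Delta_F$ to $N:=\facematrix_I(\euler{X})(\thh[I])$, built from the higher product $\mu_F$ together with the projection $\facematrix_I(\euler{X})$. Everything rests on one key lemma: the higher coproduct of $\tSig$ sends the Garsia-Reutenauer idempotent to a tensor of first Eulerian idempotents,
\[
\Delta_F(\euler{X}) = \euler{I_1}\otimes\cdots\otimes\euler{I_k},
\]
where $F=(I_1,\ldots,I_k)$ and $\euler{I_j}=\BQ_{(I_j)}$. I would prove this by expanding $\euler{X}=\tfrac{1}{\len(X)!}\sum_{G:\,\supp G=X}\BQ_G$ from \eqref{e:gar-reu} and iterating the coproduct formula \eqref{e:Qcoprod}: since $\supp G=X=\supp F$, the blocks of $G$ are exactly those of $F$, so every intermediate splitting is $G$-admissible and no term is lost; moreover each restriction $G|_{I_j}$ equals $(I_j)$, whence $\Delta_F(\BQ_G)=\bigotimes_j\euler{I_j}$ for all $\len(X)!$ of the $G$, and the prefactor $\tfrac{1}{\len(X)!}$ yields the claim.

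First I would check that $\Delta_F$ carries $N$ into $\Pc(\thh)(F)$. For $h\in N$ we have $h=\euler{X}\act h$, so cocommutativity and \eqref{e:delta-star} give $\Delta_F(h)=\Delta_F(\euler{X})\act\Delta_F(h)=(\euler{I_1}\otimes\cdots\otimes\euler{I_k})\act\Delta_F(h)$; by \eqref{e:can-action2} this applies $\facematrix_{I_j}(\euler{I_j})$ in each tensor slot, and since each such operator projects onto $\Pc(\thh)[I_j]$ (Corollary~\ref{c:char-op-1euler}), the result lands in $\bigotimes_j\Pc(\thh)[I_j]=\Pc(\thh)(F)$. The candidate inverse is
\[
b:\Pc(\thh)(F)\to N,\qquad p\mapsto\facematrix_I(\euler{X})\bigl(\mu_F(p)\bigr),
\]
which indeed lands in $N$ because $\facematrix_I(\euler{X})$ is a projection with image $N$.

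It then remains to verify that the two composites are identities. For $b\circ\Delta_F=\id_N$, given $h\in N$ I would compute $\mu_F\Delta_F(h)=\BH_F\act h=\BH_F\act(\euler{X}\act h)=(\BH_F\act\euler{X})\act h=\BQ_F\act h$, using \eqref{e:can-actionF}, associativity \eqref{e:can-action-asso}, and \eqref{e:proj-euler}; applying $\facematrix_I(\euler{X})$ and using \eqref{e:E-Q-iso} then gives $\euler{X}\act(\BQ_F\act h)=(\euler{X}\act\BQ_F)\act h=\euler{X}\act h=h$. For $\Delta_F\circ b=\id_{\Pc(\thh)(F)}$, given $p\in\Pc(\thh)(F)$, formulas \eqref{e:delta-star} and \eqref{e:hopf-split2} yield $\Delta_F(b(p))=\Delta_F(\euler{X})\act\Delta_F\mu_F(p)=(\euler{I_1}\otimes\cdots\otimes\euler{I_k})\act p$, and since each $\euler{I_j}$ fixes primitive elements (Theorem~\ref{t:action-prim}(ii), the coefficient of $\BH_{(I_j)}$ in $\euler{I_j}$ being $1$), this equals $p$.

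The main obstacle is bookkeeping rather than depth: one must keep straight the interplay among the Tits-algebra products ($\BH_F\act\euler{X}=\BQ_F$, $\euler{X}\act\BQ_F=\euler{X}$), the higher (co)associativity that permits regrouping of characteristic operations (valid because $\thh$ is cocommutative, Theorem~\ref{t:can-action}), and the splitting identity $\Delta_F\mu_F=\id$ on $\thh(F)$. The lemma $\Delta_F(\euler{X})=\bigotimes_j\euler{I_j}$ is the one genuinely new computation, and confirming that every intermediate splitting in the iterated coproduct is admissible—so that no summand is annihilated—is the step that most needs care.
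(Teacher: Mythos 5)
Your proof is correct, but it takes a genuinely different route from the paper's. The paper works on the product side: starting from $\BQ_F=\BQ_F\act\BH_F$ and using \eqref{e:Qprod}, \eqref{e:can-actionF} and \eqref{e:mu-star}, it derives the identity $\BQ_F\act h=\mu_F\bigl((\BQ_{(I_1)}\otimes\cdots\otimes\BQ_{(I_k)})\act\Delta_F(h)\bigr)$, concludes that $\Delta_F$ and $\mu_F$ restrict to \emph{inverse} isomorphisms between $\facematrix_I(\BQ_F)(\thh[I])$ and $\Pc(\thh)(F)$, and only at the end transfers the statement to the image of $\euler{X}$ via \eqref{e:E-Q-iso} together with the computation $\Delta_F(\euler{X}\act h)=\Delta_F(\BQ_F\act h)$. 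You instead work on the coproduct side and with $\euler{X}$ directly: your key lemma $\Delta_F(\euler{X})=\euler{I_1}\otimes\cdots\otimes\euler{I_k}$ is correct and is not stated in the paper --- each of the $\len(X)!$ compositions $G$ with $\supp G=X$ has the same blocks as $F$, so admissibility never fails in the iterated coproduct, $\Delta_F(\BQ_G)=\BQ_{(I_1)}\otimes\cdots\otimes\BQ_{(I_k)}$ for every such $G$, and the prefactor in \eqref{e:gar-reu} cancels the count --- and, combined with \eqref{e:delta-star}, it plays exactly the role that the product identity plays in the paper. Your inverse $\facematrix_I(\euler{X})\circ\mu_F$ is likewise a real variation: $\mu_F$ alone carries $\Pc(\thh)(F)$ onto the image of $\BQ_F$, not that of $\euler{X}$, and your post-composition with the projection corrects for this, whereas the paper keeps $\mu_F$ as the inverse and lets \eqref{e:E-Q-iso} absorb the discrepancy. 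Both arguments ultimately consume the same inputs ($\Delta_F\mu_F=\id$ from \eqref{e:hopf-split2}, the Tits-algebra identities \eqref{e:proj-euler} and \eqref{e:E-Q-iso}, associativity of the action under cocommutativity, and the projection property of $\euler{I_j}=\BQ_{(I_j)}$), but yours buys a clean quotable identity in $\tSig$ and a fully explicit two-sided inverse, while the paper's buys the sharper intermediate fact that $\mu_F$ and $\Delta_F$ are already mutually inverse on the $\BQ_F$-image. One point worth making explicit in your write-up: \eqref{e:delta-star} is stated for $\tSigh$ acting on cocommutative bimonoids, and its validity for $\tSig$ acting on connected ones (which you use, as does the paper in the proof of Theorem~\ref{t:action-prim}) follows from Proposition~\ref{p:can-action-factor} and the fact that $\upsilon$ is a morphism of bimonoids.
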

\begin{proof}
Write $F=(I_1,\dots,I_k)$.
Since $\BQ_F=\BQ_F\act \BH_F$,
\begin{align*}
\BQ_F\act h & =\BQ_F\act \BH_F \act h\\
& =\mu_F(\BQ_{(I_1)}\otimes \dots \otimes \BQ_{(I_k)}) \act (\mu_F(\Delta_F(h)))\\
& =\mu_F((\BQ_{(I_1)}\otimes \dots \otimes \BQ_{(I_k)})\act \Delta_F(h))
\end{align*}
for any $h\in\thh[I]$.
This used~\eqref{e:Qprod}, \eqref{e:can-actionF} and~\eqref{e:mu-star}.
Now by Corollary~\ref{c:hopf-split}, $\Delta_F\mu_F$ is the identity,
so $\Delta_F$ is surjective (onto $\thh(F)$) and $\mu_F$ is injective, and
by~\eqref{e:first-euler-Q} and~\eqref{e:prelim},
\[
\facematrix_{I_j}(\BQ_{(I_j)})(\thh[I_j]) = \Pc(\thh)[I_j].
\] 
One can deduce from here that 
the maps $\Delta_F$ and $\mu_F$ restrict to inverse isomorphisms:
\[
\Delta_F : \facematrix_I(\BQ_F)(\thh[I]) \map{\iso} \Pc(\thh)(F)
\qand
\mu_F : \Pc(\thh)(F) \map{\iso} \facematrix_I(\BQ_F).
\]
We know from~\eqref{e:E-Q-iso} that 
$\facematrix_I(\euler{X})$ and $\facematrix_I(\BQ_F)$ are canonically isomorphic under
$\euler{X}\act h \leftrightarrow \BQ_F\act h$.
Further,
\[
\Delta_F(\euler{X}\act h) = \Delta_F\mu_F\Delta_F(\euler{X}\act h) = \Delta_F(\BH_F\act\euler{X}\act h) = \Delta_F(\BQ_F\act h).
\]
The last step used~\eqref{e:proj-euler}.
The claim follows.
\end{proof}

\begin{theorem}\label{t:euler-decomp}
For any cocommutative connected bimonoid $\thh$,
\begin{equation}\label{e:euler-decomp}
\thh[I] = \bigoplus_X \facematrix_I(\euler{X})(\thh[I]),
\end{equation}
and, if $\thh$ is finite-dimensional, then
\begin{equation}\label{e:euler-decomp-dim}
\dim \facematrix_I(\euler{X})(\thh[I]) = \eigmult{X}(\thh),
\end{equation}
with $\eigmult{X}(\thh)$ as in~\eqref{e:mult-r}.
\end{theorem}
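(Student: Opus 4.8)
The plan is to prove the two assertions of Theorem~\ref{t:euler-decomp} in sequence, leveraging the orthogonal idempotent family established in Theorem~\ref{t:idempotent-family} together with the isomorphism of Proposition~\ref{p:higher-euler-prim}. For the direct sum decomposition~\eqref{e:euler-decomp}, the strategy is to transport the completeness and orthogonality relations of the Garsia-Reutenauer idempotents in the Tits algebra $(\tSig[I],\act)$ directly to their characteristic operations on $\thh[I]$ via the morphism $\facematrix_I$ of Proposition~\ref{p:Psi-mor-conn}.

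First I would record that, since $\thh$ is cocommutative, $\facematrix_I:\tSig[I]\to\End_{\Kb}(\thh[I])$ is a morphism of algebras (from the Tits algebra to endomorphisms under composition), by Proposition~\ref{p:Psi-mor-conn}. Applying $\facematrix_I$ to the completeness relation~\eqref{e:idem-sum} yields
\[
\sum_{X\vdash I} \facematrix_I(\euler{X}) = \facematrix_I(\BH_{\minflat}) = \id_{\thh[I]},
\]
where the last equality holds because $\BH_{\minflat}$ is the partition of $I$ into singletons, so its characteristic operation is $\mu_{\maxflat}\Delta_{\maxflat}$, which is the identity by~\eqref{e:hopf-split2} (the first identity in Corollary~\ref{c:hopf-split}). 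Applying $\facematrix_I$ to the orthogonality relation~\eqref{e:idem-ortho} shows that the operators $\facematrix_I(\euler{X})$ are orthogonal idempotents in $\End_{\Kb}(\thh[I])$. A family of orthogonal idempotents summing to the identity induces a direct sum decomposition of the ambient space into the images of the individual idempotents; this is a standard fact of linear algebra and establishes~\eqref{e:euler-decomp}, with each summand $\facematrix_I(\euler{X})(\thh[I])$ being the image of the projection $\facematrix_I(\euler{X})$.

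For the dimension count~\eqref{e:euler-decomp-dim} in the finite-dimensional case, the plan is to invoke Proposition~\ref{p:higher-euler-prim}, which furnishes, for any composition $F$ with $\supp F = X$, an isomorphism $\facematrix_I(\euler{X})(\thh[I]) \iso \Pc(\thh)(F)$. Since $\Pc(\thh)(F) = \bigotimes_{B\in X} \Pc(\thh)[B]$ and the primitive part has $\dim_{\Kb}\Pc(\thh)[B] = \eigmult{\abs{B}}(\thh)$ by Proposition~\ref{p:dim-h-prim} (using cocommutativity), I would compute
\[
\dim_{\Kb}\facematrix_I(\euler{X})(\thh[I]) = \prod_{B\in X} \eigmult{\abs{B}}(\thh) = \eigmult{X}(\thh),
\]
where the final equality is exactly the multiplicativity~\eqref{e:mult-r-lump} of the cumulants. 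This matches the expression~\eqref{e:mult-r} via the definition~\eqref{e:mult}, completing the proof.

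The main obstacle I anticipate is not in the algebra but in correctly marshalling Proposition~\ref{p:higher-euler-prim}: its statement gives an isomorphism onto $\Pc(\thh)(F)$ for a \emph{chosen} composition $F$ refining to the given support, and I must be careful that the dimension obtained is genuinely independent of the choice of $F$ among those with $\supp F = X$ (it is, since $\Pc(\thh)(F)\cong\Pc(\thh)(\supp F)$ depends only on the multiset of block sizes). The cocommutativity hypothesis enters in two essential places — in making $\facematrix_I$ an algebra morphism and in applying Proposition~\ref{p:dim-h-prim} — so I would flag both dependencies explicitly rather than treat them as automatic.
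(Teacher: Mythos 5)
Your proposal takes the same route as the paper: the decomposition~\eqref{e:euler-decomp} is obtained by pushing the complete orthogonal family of Theorem~\ref{t:idempotent-family} through $\facematrix_I$, which is an algebra morphism by cocommutativity (Proposition~\ref{p:Psi-mor-conn}), and the dimension count is exactly the paper's: Proposition~\ref{p:higher-euler-prim} identifies the $X$-summand with $\Pc(\thh)(F)$ for any $F$ with $\supp F=X$, after which Proposition~\ref{p:dim-h-prim} and~\eqref{e:mult-r-lump} give $\eigmult{X}(\thh)$. That part, including your observation that the answer does not depend on the choice of $F$, is correct.

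There is, however, one genuinely incorrect step. You justify $\facematrix_I(\BH_{\minflat})=\id_{\thh[I]}$ by saying that ``$\BH_{\minflat}$ is the partition of $I$ into singletons, so its characteristic operation is $\mu_{\maxflat}\Delta_{\maxflat}$, which is the identity by~\eqref{e:hopf-split2}.'' This is wrong on both counts. In the paper's conventions $\minflat=\{I\}$ is the partition with a \emph{single} block---it is the minimum of $\rPi[I]$, the partition into singletons being the maximum $\maxflat$---so $\BH_{\minflat}$ stands for $\BH_{(I)}$, the basis element indexed by the one-block composition, i.e.\ the unit of the Tits algebra $(\tSig[I],\act)$. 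Moreover, even granting your reading, the identity you cite would not deliver what you need: \eqref{e:hopf-split2} says $\Delta_F\mu_F=\id$ on $\thh(F)$, not $\mu_F\Delta_F=\id$ on $\thh[I]$. For a composition $F$ into singletons the operator $\mu_F\Delta_F$ is far from the identity; for instance on $\wL[I]$, taking $F=\ell$ a linear order, one has $\mu_\ell\Delta_\ell(\BH_m)=\BH_\ell$ for \emph{every} linear order $m$, a rank-one projection. The correct justification is immediate: since $\BH_{(I)}$ is the unit of the Tits algebra, $\BH_{(I)}\act h=h$ by~\eqref{e:can-action-unit} (equivalently, $\facematrix(\univ)=\id$ as in~\eqref{e:univ-id}), whence $\sum_{X\vdash I}\facematrix_I(\euler{X})=\id_{\thh[I]}$. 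With this substitution your argument for~\eqref{e:euler-decomp}, and hence the whole proof, is complete and agrees with the paper's.
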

\begin{proof}
The decomposition~\eqref{e:euler-decomp} follows from Theorem~\ref{t:idempotent-family}.
For the second part, 
let $F=(I_1,\dots,I_k)$ be any composition with support $X$.
By Proposition~\ref{p:higher-euler-prim}, 
the dimension of the $X$-summand is the dimension of $\Pc(\thh)(F)$.
The latter is the product of the dimensions of $\Pc(\thh)[I_j]$.
Applying Proposition~\ref{p:dim-h-prim} and~\eqref{e:mult-r-lump} 
yields~\eqref{e:euler-decomp-dim}. 
\end{proof}

\begin{theorem}\label{t:pbw+cmm}
For any cocommutative connected bimonoid $\thh$, 
there is an isomorphism of comonoids
\begin{equation}\label{e:pbw+cmm-r}
\Sc(\Pc(\thh)) \isoto \thh.
\end{equation}
\end{theorem}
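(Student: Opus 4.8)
The plan is to construct the comonoid isomorphism explicitly from the Garsia--Reutenauer idempotents and the canonical decomposition of Theorem~\ref{t:euler-decomp}, which does essentially all the work. First I would define the candidate map $\Sc(\Pc(\thh))\to\thh$. By freeness of $\Sc$ (Theorem~\ref{t:dcmunivp}) applied to the inclusion of species $\Pc(\thh)\into\thh$, there is a canonical morphism of monoids
\[
\varphi\colon \Sc(\Pc(\thh)) \to \thh,
\]
determined on the summand $\Pc(\thh)(X)$ of $\Sc(\Pc(\thh))[I]$ by the higher product $\mu_X$ of $\thh$. Concretely, for a partition $X$ with blocks $B$ and primitive elements $p_B\in\Pc(\thh)[B]$, the map sends $\bigotimes_{B\in X} p_B$ to $(p)_F$ for any composition $F$ with $\supp F=X$; this is well defined since the $p_B$ are primitive and $\thh$ is cocommutative, so Proposition~\ref{p:gen-comm} (higher $q$-commutativity, here $q=1$) guarantees independence of the chosen ordering $F$. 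Note $\varphi$ need not respect products of $\thh$, but it is a morphism of comonoids precisely when its restriction to $\Pc(\thh)$ is a coderivation into $\Pc(\thh)$, which holds because $\Pc(\thh)$ consists of primitives; this is the comonoid half of Theorem~\ref{t:exp-log-iH} (equivalently Proposition~\ref{p:dchmunivp}, since elements of $\Pc(\thh)$ are primitive).

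Next I would prove $\varphi$ is an isomorphism by matching it against the decomposition~\eqref{e:euler-decomp}. Fix a nonempty $I$. Theorem~\ref{t:euler-decomp} gives
\[
\thh[I] = \bigoplus_{X\vdash I} \facematrix_I(\euler{X})(\thh[I]),
\]
and Proposition~\ref{p:higher-euler-prim} identifies each summand, via $\mu_F$ for any composition $F$ with $\supp F=X$, with $\Pc(\thh)(F)\cong\Pc(\thh)(X)$. On the other hand $\Sc(\Pc(\thh))[I]=\bigoplus_{X\vdash I}\Pc(\thh)(X)$ by construction. The plan is to check that $\varphi$ carries the $X$-summand $\Pc(\thh)(X)$ of the source isomorphically onto the $X$-summand $\facematrix_I(\euler{X})(\thh[I])$ of the target. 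Unwinding the definitions, $\varphi$ on $\Pc(\thh)(X)$ is exactly the map $\mu_F$ of Proposition~\ref{p:higher-euler-prim} (after the identification $\Pc(\thh)(X)\cong\Pc(\thh)(F)$), which that proposition asserts is an isomorphism onto $\facematrix_I(\euler{X})(\thh[I])$. Since $\varphi$ is block-diagonal with respect to the two direct-sum decompositions indexed by the same poset $\rPi[I]$, and each block is an isomorphism, $\varphi_I$ is an isomorphism for every $I$; on the empty set both sides are $\Kb$ and $\varphi_\emptyset=\id$.

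The main obstacle I anticipate is verifying cleanly that $\varphi$ is genuinely block-diagonal with respect to the partition grading, i.e.\ that $\varphi$ sends $\Pc(\thh)(X)$ into $\facematrix_I(\euler{X})(\thh[I])$ and not into other summands. This is where the orthogonality relation~\eqref{e:idem-ortho} and the identity $\BH_F\act\euler{X}=\BQ_F$ from~\eqref{e:proj-euler} enter: one shows that $\mu_F$ applied to $\Pc(\thh)(F)$ lands in the image of $\facematrix_I(\euler{X})$ by checking that $\euler{X}$ acts as the identity on such products, while $\euler{Y}$ annihilates them for $Y\neq X$. The computation in the proof of Proposition~\ref{p:higher-euler-prim}, using $\BQ_F=\BQ_F\act\BH_F$ together with~\eqref{e:mu-star}, supplies exactly this, so the obstacle is really a matter of assembling that local statement into the global direct-sum compatibility rather than proving anything new. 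Once block-diagonality and fiberwise bijectivity are in hand, naturality in $I$ is immediate from the naturality of all the structure maps involved, completing the proof that~\eqref{e:pbw+cmm-r} is an isomorphism of comonoids.
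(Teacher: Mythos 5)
There is a genuine gap at the very first step: your map $\varphi$ is not well defined. You invoke Theorem~\ref{t:dcmunivp} (the universal property of $\Sc$) for the inclusion $\Pc(\thh)\into\thh$, but that theorem requires the target monoid to be \emph{commutative}, and a cocommutative connected bimonoid $\thh$ need not be commutative. In fact no surjective morphism of monoids $\Sc(\Pc(\thh))\to\thh$ can exist for noncommutative $\thh$, since the image of a commutative monoid under a monoid morphism is a commutative submonoid; this is also why your proposal contradicts itself when it first calls $\varphi$ a morphism of monoids and then concedes it ``need not respect products.'' Your concrete fallback definition --- send $\bigotimes_{B\in X}p_B$ to $(p)_F$ for \emph{any} composition $F$ with $\supp F=X$ --- fails for the same reason: Proposition~\ref{p:gen-comm} (higher commutativity) is a statement about \emph{commutative} monoids, and cocommutativity of $\thh$ does not make products of primitive elements independent of the ordering. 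A counterexample is $\thh=\wL$: it is cocommutative, the elements $\BH_i,\BH_j$ are primitive, yet $\mu_{\{i\},\{j\}}(\BH_i\otimes\BH_j)=\BH_{ij}\neq\BH_{ji}=\mu_{\{j\},\{i\}}(\BH_j\otimes\BH_i)$. What cocommutativity gives is that the commutator of primitives is again primitive (Proposition~\ref{p:prim-lie}), not that it vanishes.

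The repair is to symmetrize, which is exactly what the paper's isomorphism does: on the summand $\Pc(\thh)(X)$ it is the average $\frac{1}{\len(X)!}\sum_{F:\,\supp F=X}\mu_F\beta_F$ over \emph{all} compositions with support $X$, as in~\eqref{e:pbw+cmm+explicit}, rather than a single choice of $F$. Once the map is so corrected, the remainder of your plan --- identifying the image of the $X$-summand with $\facematrix_I(\euler{X})(\thh[I])$ via Proposition~\ref{p:higher-euler-prim}, the computation resting on $\BQ_F=\BH_F\act\euler{X}$ from~\eqref{e:proj-euler}, and assembling the blocks through the decomposition~\eqref{e:euler-decomp} --- is precisely the paper's argument, so the fix converges to the published proof rather than yielding an alternative route. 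Note also that the corrected map can only be a morphism of comonoids (verified in the paper via the higher compatibility axiom~\eqref{e:gen-comp}, splitting into the cases where $S$ is or is not $X$-admissible), never of monoids; this is the reason the theorem asserts a comonoid isomorphism only, and any proof strategy that builds the map as a monoid morphism is structurally doomed for noncommutative $\thh$.
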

Explicitly, the isomorphism is given by the composite
\begin{equation}\label{e:pbw+cmm+explicit}
\Pc(\thh)(X) \into \thh(X) \map{\frac{1}{\len(X)!}\sum_F \beta_F}
\bigoplus_{F:\,\supp F=X} \thh(F) \map{\sum_F \mu_F} \thh[I],
\end{equation}
where, for $F=(I_1,\ldots,I_k)$ with $\supp F = X$,
the map $\beta_F$ is the canonical isomorphism
\begin{equation}\label{e:betaF}
\thh(X) = \bigotimes_{B \in X} \thh[B] \isoto
\thh[I_1] \otimes \dots \otimes \thh[I_k] = \thh(F).
\end{equation}
\begin{proof}
Let $V$ be the (isomorphic) image of $\Pc(\thh)(X)$ 
inside the direct sum in~\eqref{e:pbw+cmm+explicit}.
Consider the composite map
\[
\facematrix_I(\euler{X})(\thh[I]) \map{(\frac{1}{\len(F)!}\Delta_F)_{F}} V 
\map{\sum_{F} \mu_F} \facematrix_I(\euler{X})(\thh[I]).
\]
In both maps $F$ ranges over all compositions with support $X$.
The first map is an isomorphism by Proposition~\ref{p:higher-euler-prim}.
Using~\eqref{e:gar-reu} and~\eqref{e:proj-euler},
one can deduce that the composite is the identity:
\begin{align*}
\sum_{F:\,\supp F=X} \frac{1}{\len(F)!}\, \mu_F\Delta_F (\euler{X} \act h) & = 
\sum_{F:\,\supp F=X} \frac{1}{\len(F)!}\, \BH_F \act \euler{X} \act h\\
& = \sum_{F:\,\supp F=X} \frac{1}{\len(F)!}\, \BQ_F \act h = \euler{X} \act h. 
\end{align*}
So the second map is an isomorphism as well.
It follows that the image of~\eqref{e:pbw+cmm+explicit} is precisely $\facematrix_I(\euler{X})(\thh[I])$.
Combining with the decomposition~\eqref{e:euler-decomp}
establishes the isomorphism~\eqref{e:pbw+cmm-r}.

To check that this is a morphism of comonoids, 
we employ the coproduct formula of $\Sc(\Pc(\thh))$ given in Section~\ref{ss:freecomHopf}.
The calculation is split into two cases, 
depending on whether $S$ is $X$-admissible or not.
Both cases can be handled using the higher compatibility axiom~\eqref{e:gen-comp} 
for $F$ arbitrary and $G=(S,T)$.
We omit the details.
\end{proof}

\begin{remark}
The analogue of Theorem~\ref{t:pbw+cmm} for graded Hopf algebras 
can be proved using the analogues of the Garsia-Reutenauer idempotents 
indexed by integer partitions.
A proof of a similar nature is given by 
Patras~\cite[Proposition~III.5]{Pat:1993} and Cartier~\cite[Section~3.8]{Car:2007}. 
\end{remark}

\subsection{PBW and CMM}\label{ss:PBW-CMM}

Let $\tg$ be any positive species.
For any partition $X$, consider the map
\[
\tg(X) \map{\frac{1}{\len(X)!}\sum_F \beta_F} \bigoplus_{F:\,\supp F=X} \tg(F),
\]
where $\beta_F$ is as in~\eqref{e:betaF}.
Summing over all $X$ yields a map
\begin{equation}\label{e:norm-norm}
\Sc(\tg) \to \Tc(\tg).
\end{equation}
It is straightforward to check that this is a morphism of comonoids.

Now let $\tg$ be a positive Lie monoid.
Composing the above map with the quotient map $\Tc(\tg)\to\Uc(\tg)$ yields a morphism
\begin{equation}\label{e:pbw}
\Sc(\tg) \to \Uc(\tg)
\end{equation}
of comonoids.

\begin{theorem}[PBW]\label{t:pbw}
For any positive Lie monoid $\tg$,
the map~\eqref{e:pbw} is an isomorphism of comonoids.
In particular, the canonical map $\eta_\tg : \tg \to \Uc(\tg)$ is injective.
\end{theorem}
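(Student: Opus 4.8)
The plan is to deduce the PBW isomorphism from the structural results already established for cocommutative connected bimonoids, chiefly Theorem~\ref{t:pbw+cmm}, rather than to attack the map~\eqref{e:pbw} by a direct basis computation. First I would observe that $\Uc(\tg)$ is a cocommutative connected bimonoid: this is exactly the content of Proposition~\ref{p:univ-bimonoid}, which also tells us that the quotient map $\pi_\tg:\Tc(\tg)\onto\Uc(\tg)$ is a morphism of bimonoids. Being a cocommutative connected bimonoid, $\Uc(\tg)$ is a Hopf monoid (Proposition~\ref{p:conn-hopf}) and Theorem~\ref{t:pbw+cmm} applies to it, yielding an isomorphism of comonoids $\Sc\bigl(\Pc(\Uc(\tg))\bigr)\isoto\Uc(\tg)$. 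The strategy is therefore to identify $\Pc(\Uc(\tg))$ with $\tg$ itself (via $\eta_\tg$) and to check that, under these identifications, the map~\eqref{e:pbw} coincides with the explicit comonoid isomorphism~\eqref{e:pbw+cmm+explicit}.

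The key steps, in order, are as follows. First I would note that the map~\eqref{e:pbw} is a morphism of comonoids, as already asserted in the text preceding the statement, so it suffices to prove bijectivity; moreover, since both source and target are connected, it is enough to check that~\eqref{e:pbw} is injective on each $\tg(X)$-summand and that the images over distinct $X$ are independent and exhaust $\Uc(\tg)[I]$. The natural approach is to compare~\eqref{e:pbw} with the composite~\eqref{e:pbw+cmm+explicit} for $\thh=\Uc(\tg)$. For this comparison I must show that $\eta_\tg$ maps $\tg$ into the primitive part $\Pc(\Uc(\tg))$ and that the induced map is an isomorphism $\tg\isoto\Pc(\Uc(\tg))$. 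That $\eta_\tg(\tg)\subseteq\Pc(\Uc(\tg))$ follows because $\tg\subseteq\Pc(\Tc(\tg))$ by~\eqref{e:prim-contained} and $\pi_\tg$ is a morphism of comonoids (hence preserves primitivity). Granting the isomorphism $\tg\isoto\Pc(\Uc(\tg))$, the diagram defining~\eqref{e:pbw} through $\Sc(\tg)\to\Tc(\tg)\onto\Uc(\tg)$ factors as $\Sc(\tg)\isoto\Sc(\Pc(\Uc(\tg)))\isoto\Uc(\tg)$, the last arrow being~\eqref{e:pbw+cmm-r}, and the result follows. The final clause, injectivity of $\eta_\tg$, is then immediate, since $\eta_\tg$ is the restriction of the isomorphism~\eqref{e:pbw} to the length-one summand $\tg=\tg(\{I\})$.

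The main obstacle I expect is precisely the identification $\tg\isoto\Pc(\Uc(\tg))$, because this surjectivity statement is essentially equivalent to the injectivity of $\eta_\tg$ that the theorem is meant to establish, so there is a real danger of circularity. The honest way to break the circle is to prove the isomorphism~\eqref{e:pbw} as a bijection of species \emph{first}, using the decomposition~\eqref{e:euler-decomp} and the dimension count~\eqref{e:euler-decomp-dim} applied to $\thh=\Uc(\tg)$, and only afterwards read off injectivity of $\eta_\tg$ as a corollary. Concretely, the Garsia-Reutenauer idempotents act on $\Uc(\tg)[I]$ and, by Theorem~\ref{t:euler-decomp}, split it into summands indexed by partitions $X$; by Proposition~\ref{p:higher-euler-prim} the $X$-summand is isomorphic via $\Delta_F$ to $\Pc(\Uc(\tg))(F)$ for any $F$ with $\supp F=X$. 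The crux is then to verify that the composite~\eqref{e:pbw+cmm+explicit}, restricted along $\tg\hookrightarrow\Pc(\Uc(\tg))$, lands in and surjects onto this summand; surjectivity onto the length-one summands forces $\eta_\tg(\tg)=\Pc(\Uc(\tg))$, which simultaneously closes the gap and yields the injectivity of $\eta_\tg$. Establishing this compatibility of~\eqref{e:pbw} with the canonical decomposition — that $\pi_\tg$ carries the normalized symmetrization~\eqref{e:norm-norm} of $\tg$ precisely onto the image of $\euler{X}$ — is where the genuine work lies, and it will rest on~\eqref{e:mu-star}, \eqref{e:Qprod}, and the explicit form of the Garsia-Reutenauer idempotents in~\eqref{e:gar-reu}.
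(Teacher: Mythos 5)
You correctly identified the circularity lurking in the identification $\tg\cong\Pc(\Uc(\tg))$, but your proposed escape does not break it. The dimension count \eqref{e:euler-decomp-dim}, applied to $\thh=\Uc(\tg)$, expresses the size of each Garsia-Reutenauer summand through the cumulants $\eigmult{X}(\Uc(\tg))$, and these are defined by M\"obius inversion from the numbers $\dim_{\Kb}\Uc(\tg)(Y)$ --- precisely the unknowns that PBW is meant to compute. Everything that Theorems~\ref{t:euler-decomp} and~\ref{t:pbw+cmm} say about $\Uc(\tg)$ is internal to $\Uc(\tg)$ and would remain true if the ideal $\Jc(\tg)$ were ``too big'' and collapsed part of $\tg$: one would still have $\Sc(\Pc(\Uc(\tg)))\cong\Uc(\tg)$, only with a smaller primitive part. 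Moreover, your closing inference is a non sequitur: $\eta_\tg(\tg)=\Pc(\Uc(\tg))$ is a statement about the \emph{image} of $\eta_\tg$ (and this surjectivity can indeed be proved, e.g.\ by the Dynkin argument of Corollary~\ref{c:prim-in-free-1}), but it says nothing about the kernel of $\eta_\tg$, so it cannot ``yield the injectivity of $\eta_\tg$''. With surjectivity of $\tg\to\Pc(\Uc(\tg))$ alone, your factorization $\Sc(\tg)\onto\Sc(\Pc(\Uc(\tg)))\isoto\Uc(\tg)$ proves only that \eqref{e:pbw} is surjective.

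What is genuinely missing is a lower bound on $\Uc(\tg)$, i.e.\ a map out of it to something of known size, and this is how the paper argues. It constructs (by induction on the rank of compositions) a surjection $\psi_\tg:\Tc(\tg)\onto\Sc(\tg)$ such that the composite of \eqref{e:pbw} with $\psi_\tg$ equals $\pi_\tg$; injectivity of \eqref{e:pbw} then reduces to the containment $\Jc(\tg)\subseteq\ker\psi_\tg$. This containment is transported along surjections of Lie monoids (a surjection $\tg'\onto\tg$ induces a surjection $\Jc(\tg')\onto\Jc(\tg)$, by the generating relations \eqref{e:ug-rel}), which reduces everything to the free case $\tg=\freelie(\tq)$. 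There your strategy does work, and it is exactly what the paper does: $\Uc(\freelie(\tq))\cong\Tc(\tq)$ by \eqref{e:tc-factorize}, and Lemma~\ref{l:fill-gap} supplies the injectivity $\freelie(\tq)\into\Pc(\Tc(\tq))$ that is unavailable for general $\tg$. So your outline is correct only as the terminal step of the argument; without $\psi_\tg$ and the reduction to free Lie monoids, the injectivity half of the theorem remains unproved.
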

\begin{proof}
The first step is to construct a surjective map $\psi_\tg$
fitting into a commutative diagram
\[
\xymatrix@=1pc{
& \Tc(\tg) \ar@{.>>}[ld]_{\psi_\tg}\xyonto[rd]^{\pi_\tg}\\
\Sc(\tg) \ar[rr] & & \Uc(\tg).
}
\]
The map on each summand $\tg(F)$ is defined by an induction on the rank of $F$.
We omit the details.
The existence of $\psi_\tg$ shows that~\eqref{e:pbw} is surjective.

To show that~\eqref{e:pbw} is injective,
we need to show that $\Jc(\tg)$, the kernel of $\pi_\tg$,
is contained in the kernel of $\psi_\tg$.
Suppose $\tg' \onto \tg$ is a surjective map of Lie monoids,
and suppose that this result holds for $\tg'$.
Then the result holds for $\tg$ as well.
To see this, consider the commutative diagram
\[
\xymatrix{
\Jc(\tg') \ar[r]\xyonto[d] & \Tc(\tg') \ar[r]^{\psi_{\tg'}}\xyonto[d] & \Sc(\tg') \xyonto[d]\\
\Jc(\tg) \ar[r] & \Tc(\tg) \ar[r]_{\psi_\tg} & \Sc(\tg).
}
\]
The key observation is that the map $\Jc(\tg') \to \Jc(\tg)$ is surjective.
(This can be deduced using the generating relations~\eqref{e:ug-rel}.)
Then the top composite being zero implies that
the bottom composite is also zero as required.

Next note that for any positive Lie monoid $\tg$,
the canonical map $\freelie(\tg) \to \tg$ 
(defined using the Lie structure of $\tg$) is a surjective morphism of Lie monoids.
So it suffices to prove injectivity for free Lie monoids.
Accordingly, let $\tg=\freelie(\tq)$.
The map~\eqref{e:pbw} is the composite
\[
\Sc(\tg) \to \Sc(\Pc(\Uc(\tg))) \map{\iso} \Uc(\tg).
\]
The second map is an isomorphism by Theorem~\ref{t:pbw+cmm}.
The first map is induced by $\tg \to \Pc(\Uc(\tg))$.
In the free Lie monoid case, $\Uc(\tg)=\Tc(\tq)$ by~\eqref{e:tc-factorize}.
So by Lemma~\ref{l:fill-gap}, this map (and hence the induced map) is injective.
\end{proof}

\begin{theorem}[CMM]\label{t:cmm}
The functors 
\[
\xymatrix@C+20pt{
\{\textup{positive Lie monoids}\} \ar@<0.6ex>[r]^-{\Uc} & \ar@<0.6ex>[l]^-{\Pc} 
\{\textup{cocommutative connected bimonoids}\}
}
\]
form an adjoint equivalence.
\end{theorem}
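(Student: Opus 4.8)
The plan is to establish the adjoint equivalence by showing that the unit and counit of the adjunction~\eqref{e:UP} are isomorphisms. Recall that Proposition~\ref{p:ug-univ-ext} already gives the adjunction $\Uc\dashv\Pc$ between positive Lie monoids and connected Hopf monoids; since the image of $\Uc$ consists of cocommutative bimonoids (Proposition~\ref{p:univ-bimonoid}) and every connected bimonoid is automatically a Hopf monoid (Proposition~\ref{p:conn-hopf}), the adjunction restricts to the functors displayed in the statement. So the only task is to prove that for every cocommutative connected bimonoid $\thh$ the counit
\[
\Uc(\Pc(\thh)) \to \thh
\]
is an isomorphism, and that for every positive Lie monoid $\tg$ the unit $\tg \to \Pc(\Uc(\tg))$ is an isomorphism.

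First I would treat the counit. The map $\Uc(\Pc(\thh))\to\thh$ is the morphism of bimonoids afforded by Proposition~\ref{p:ug-univ-ext} applied to the inclusion $\Pc(\thh)\into\thh$ (which lands in primitives by definition). To see it is an isomorphism, I would compare it with the comonoid isomorphism $\Sc(\Pc(\thh))\isoto\thh$ of Theorem~\ref{t:pbw+cmm} and the PBW comonoid isomorphism $\Sc(\Pc(\thh))\to\Uc(\Pc(\thh))$ of Theorem~\ref{t:pbw} (taking $\tg=\Pc(\thh)$). These fit into a triangle
\[
\xymatrix@C+15pt{
\Sc(\Pc(\thh)) \ar[r] \ar[rd]_{\iso} & \Uc(\Pc(\thh)) \ar[d]\\
& \thh
}
\]
which I would check commutes, since all three maps are built from the same data, namely the higher products $\mu_F$ applied to primitive elements, as displayed in~\eqref{e:pbw+cmm+explicit} and~\eqref{e:norm-norm}. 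The diagonal and the top arrow are isomorphisms of comonoids, so the counit is an isomorphism of comonoids; being also a morphism of monoids, it is an isomorphism of bimonoids.

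For the unit $\tg\to\Pc(\Uc(\tg))$, injectivity is exactly the last assertion of Theorem~\ref{t:pbw}. For surjectivity I would argue that $\Pc(\Uc(\tg))$ is generated as a Lie monoid by $\tg$: apply the now-established counit isomorphism with $\thh=\Uc(\tg)$ (which is cocommutative connected), giving $\Uc(\Pc(\Uc(\tg)))\iso\Uc(\tg)$; combined with injectivity of the units, this forces $\Pc(\Uc(\tg))=\tg$. Concretely, one shows that any primitive element of $\Uc(\tg)=\Tc(\tg)/\Jc(\tg)$ lifts to the Lie submonoid generated by $\tg$, using the Dynkin projection: by~\eqref{e:dynkin-gen} the operation $\facematrix(\dynkin)$ sends a product of primitives $(x)_F$ to a multiple of the left bracketing $[x]_F$, exactly as in the proof of Corollary~\ref{c:prim-in-free-1}, so every primitive is a Lie combination of elements of $\tg$.

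The main obstacle I expect is verifying commutativity of the comparison triangle at the level of the explicit symmetrization map, and more subtly, confirming that the counit really coincides with the map $\Uc(\Pc(\thh))\to\thh$ arising from universality rather than merely agreeing with it up to the comonoid structure. The delicate point is that Theorems~\ref{t:pbw} and~\ref{t:pbw+cmm} only produce \emph{comonoid} isomorphisms, whereas the desired equivalence must be compatible with the \emph{monoid} (indeed bimonoid) structure; bridging this gap is precisely what forces one to use the universal property of $\Uc$ (Proposition~\ref{p:ug-univ-ext}) to guarantee the counit is a bimonoid map, and then to invoke the comonoid isomorphisms only to conclude bijectivity. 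Once bijectivity and the bimonoid-morphism property are both in hand, naturality of unit and counit is routine, and the triangle identities follow formally from the adjunction of Proposition~\ref{p:ug-univ-ext}, completing the proof that the functors form an adjoint equivalence.
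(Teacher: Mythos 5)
Your proposal is correct, and for two of its three ingredients it coincides with the paper's proof: the adjunction is obtained exactly as in the paper by restricting~\eqref{e:UP} to the full subcategory of cocommutative connected bimonoids, and the counit is handled identically, by observing that the comonoid isomorphism $\Sc(\Pc(\thh))\isoto\thh$ of Theorem~\ref{t:pbw+cmm} factors as $\Sc(\Pc(\thh))\to\Uc(\Pc(\thh))\to\thh$ with the first map the PBW isomorphism~\eqref{e:pbw}, forcing the second (a bimonoid morphism by Proposition~\ref{p:ug-univ-ext}) to be an isomorphism. Where you diverge is the unit. The paper argues formally: by the triangle identity, $\Uc(\tg)\to\Uc(\Pc(\Uc(\tg)))\to\Uc(\tg)$ is the identity; the second map is an isomorphism by the counit step, hence so is the first, and by the PBW embedding this restricts to $\tg\isoto\Pc(\Uc(\tg))$. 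You instead prove surjectivity of the unit computationally, by adapting the proof of Corollary~\ref{c:prim-in-free-1}: every primitive element of the cocommutative connected bimonoid $\Uc(\tg)$ is, after applying the characteristic operation $\facematrix_I(\dynkin_I)$ and using Corollary~\ref{c:dynkin-proj} and~\eqref{e:dynkin-gen}, a Lie combination of (images of) elements of $\tg$; since $\eta_\tg$ is a morphism of Lie monoids its image is already a Lie submonoid, so $\Pc(\Uc(\tg))$ equals that image, and injectivity from Theorem~\ref{t:pbw} finishes the argument. One caveat: your purely formal sentence for this step (``the counit isomorphism at $\Uc(\tg)$ combined with injectivity of the units forces $\Pc(\Uc(\tg))=\tg$'') is not by itself a proof---an injection into an abstractly isomorphic species need not be onto---so the Dynkin argument is doing the real work; it is a legitimate and self-contained substitute for the paper's terse ``restricts to an isomorphism'' step, at the cost of invoking the characteristic-operation machinery of Section~\ref{s:convpower} and characteristic~$0$ (which the paper assumes in this section anyway).
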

In other words, the functors form an adjunction for which 
the unit and counit are isomorphisms
\begin{equation}\label{e:adj-equiv}
\Uc(\Pc(\thh)) \isoto \thh
\qand
\tg \isoto \Pc(\Uc(\tg)).
\end{equation}
\begin{proof}
The adjunction follows from~\eqref{e:UP}, 
since (connected) cocommutative bimonoids form
a full subcategory of the category of (connected) bimonoids.

The isomorphism~\eqref{e:pbw+cmm-r} can be expressed as the composite
\[
\Sc(\Pc(\thh)) \to \Uc(\Pc(\thh)) \to \thh.
\]
The first map is obtained by setting $\tg:=\Pc(\thh)$ in~\eqref{e:pbw}
and hence is an isomorphism by Theorem~\ref{t:pbw}.
Hence the second map which is as in~\eqref{e:adj-equiv}
is also an isomorphism.

By the adjunction property,
\[
\Uc(\tg) \to \Uc(\Pc(\Uc(\tg))) \to \Uc(\tg)
\]
is the identity.
From the above, we know that the second map is an isomorphism.
Hence so is the first.
Now by Theorem~\ref{t:pbw}, a Lie monoid embeds in its universal enveloping monoid.
So this map restricts to an isomorphism $\tg \isoto \Pc(\Uc(\tg))$ as required.
\end{proof}

Applying $\Pc$ to~\eqref{e:tc-factorize} and using~\eqref{e:adj-equiv}, we deduce:

\begin{corollary}\label{c:prim-in-free}
There is an isomorphism of Lie monoids
\begin{equation}\label{e:prim-in-free}
\Pc(\Tc(\tq))\iso\freelie(\tq).
\end{equation} 
\end{corollary}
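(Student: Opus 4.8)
The plan is to read off the statement directly from the Cartier--Milnor--Moore equivalence together with the identification~\eqref{e:tc-factorize}. First I would recall that~\eqref{e:tc-factorize} provides an isomorphism of bimonoids $\Uc(\freelie(\tq))\iso\Tc(\tq)$. Since $\Pc$ is a functor sending a morphism of bimonoids to a morphism of the associated Lie monoids (the primitive part is a Lie submonoid under the commutator bracket by Proposition~\ref{p:prim-lie}, and morphisms of bimonoids preserve primitives and the bracket), applying $\Pc$ to this isomorphism yields an isomorphism of Lie monoids
\[
\Pc\bigl(\Uc(\freelie(\tq))\bigr)\iso\Pc(\Tc(\tq)).
\]

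Next I would invoke the counit isomorphism of Theorem~\ref{t:cmm}, recorded in~\eqref{e:adj-equiv} as $\tg\isoto\Pc(\Uc(\tg))$, specialized to the positive Lie monoid $\tg:=\freelie(\tq)$. This gives an isomorphism of Lie monoids $\freelie(\tq)\isoto\Pc\bigl(\Uc(\freelie(\tq))\bigr)$. Composing the two displayed isomorphisms then produces the desired isomorphism
\[
\freelie(\tq)\iso\Pc\bigl(\Uc(\freelie(\tq))\bigr)\iso\Pc(\Tc(\tq)),
\]
which is precisely~\eqref{e:prim-in-free}.

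There is essentially no obstacle remaining at this stage: all of the substance has already been absorbed into the proofs of the PBW theorem (Theorem~\ref{t:pbw}), of CMM (Theorem~\ref{t:cmm}), and of the factorization~\eqref{e:tc-factorize}. The only point worth a word of care is that every arrow in the composite is genuinely an isomorphism of \emph{Lie monoids} and not merely of species: the first arrow is the counit of an adjoint equivalence, hence a Lie monoid isomorphism by construction, while the second is $\Pc$ applied to a bimonoid isomorphism, and $\Pc$ takes values in Lie monoids. Once this is observed, the composite is the required isomorphism, and its naturality in $\tq$ follows from the naturality of both~\eqref{e:tc-factorize} and the counit of the equivalence.
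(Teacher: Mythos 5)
Your proof is correct and takes essentially the same route as the paper, which deduces the corollary in one line by applying $\Pc$ to~\eqref{e:tc-factorize} and invoking the isomorphism $\tg\isoto\Pc(\Uc(\tg))$ of~\eqref{e:adj-equiv} with $\tg=\freelie(\tq)$. The only slip is terminological: since $\Uc$ is left adjoint to $\Pc$, the map $\tg\isoto\Pc(\Uc(\tg))$ is the \emph{unit} of the adjunction, not the counit.
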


More precisely, the image of the map~\eqref{e:commutator-map}
identifies $\freelie(\tq)$ with the primitive part of $\Tc(\tq)$.
In particular, $\Pc(\Tc(\tq))$ is the Lie submonoid of $\Tc(\tq)$ generated by $\tq$.
This latter result was obtained by different means in Corollary~\ref{c:prim-in-free-1}.

\begin{remark}
Theorem~\ref{t:cmm} is due to Stover~\cite[Proposition~7.10 and Theorem~8.4]{Sto:1993}.
This result does not require characteristic $0$.
The map~\eqref{e:pbw} in the PBW isomorphism
is the analogue of Quillen's map in the classical theory
~\cite[Appendix~B, Theorem~2.3]{Qui:1969}.
Theorem~\ref{t:pbw} is due to Joyal~\cite[Section~4.2, Theorem~2]{Joy:1986}.
He deduces it from the classical PBW; he does not mention comonoids though.
Stover~\cite[Theorem~11.3]{Sto:1993} uses a different map to
show that $\Sc(\tg)$ and $\Uc(\tg)$ are isomorphic;
his map is not a morphism of comonoids.
\end{remark}


\section{The dimension sequence of a connected Hopf monoid}
\label{s:genfun}

In this section, all species are assumed to be finite-dimensional.

We consider three formal power series associated to a species $\tp$:
\begin{align*}
\expo{\tp}{\varx} &:= \sum_{n\geq 0} \dim_{\Kb} \tp[n]\, \frac{\varx^n}{n!},\\
\type{\tp}{\varx} &: = \sum_{n\geq 0} \dim_{\Kb} \tp[n]_{S_n}\, \varx^n,\\
\ordi{\tp}{\varx} &: = \sum_{n\geq 0} \dim_{\Kb} \tp[n]\, \varx^n.
\end{align*}
They are the \emph{exponential}, \emph{type}, and \emph{ordinary generating functions}, respectively. 
In the second function, the coefficient of $t^n$ 
is the dimension of the space of coinvariants of $\tp[n]$ under the action of $\Sr_n$.

For example,
\begin{align*}
\expo{\wE}{\varx}& = \exp(\varx), & 
\expo{\wL}{\varx} & = \frac{1}{1-\varx}, & 
\expo{\tPi}{\varx} & = \exp\bigl(\exp(\varx)-1),\\
\type{\wE}{\varx} & =\frac{1}{1-\varx}, & 
\type{\wL}{\varx} & =\frac{1}{1-\varx}, & 
\type{\tPi}{\varx} & =\prod_{k\geq 1} \frac{1}{1-\varx^k}.
\end{align*}
These results are given in~\cite{BerLabLer:1998}.
More generally, for any positive species $\tq$,
\begin{align*}
\expo{\Sc(\tq)}{\varx} & = \exp\bigl(\expo{\tq}{\varx}\bigr), &
\expo{\Tc(\tq)}{\varx} & = \frac{1}{1-\expo{\tq}{\varx}},\\
\type{\Sc(\tq)}{\varx} & = \prod_{k\geq 1} \exp\bigl(\frac{1}{k}\type{\tq}{\varx^k}\bigr), &
\type{\Tc(\tq)}{\varx} & = \frac{1}{1-\type{\tq}{\varx}}.
\end{align*}
These follow from~\cite[Theorem~1.4.2]{BerLabLer:1998}.

As for the ordinary generating function, note that for any species $\tp$
\[
\ordi{\tp}{\varx} = \expo{\tp\times \wL}{\varx} = \type{\tp\times \wL}{\varx}.
\]

The following results assert that certain power series associated to a connected
Hopf monoid $\thh$ have nonnegative coefficients. 
They impose nontrivial inequalities on the dimension sequence of $\thh$.

\begin{theorem}\label{t:ordi-boolean}
Let $\thh$ be a connected $q$-Hopf monoid. Then
\begin{equation*}
1-\frac{1}{\ordi{\thh}{\varx}} \in \Nb\llb \varx\rrb.
\end{equation*}
\end{theorem}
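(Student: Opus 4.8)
The plan is to reduce the statement to an assertion about a single Hopf monoid, namely the internal endomorphism Hopf monoid $\iE(\thh)$, and then extract a combinatorial generating-function identity. First I would recall from the discussion of ordinary generating functions that $\ordi{\thh}{\varx}=\ordi{\thh\times\wL}{\varx}$ rewritten via $\ordi{\tp}{\varx}=\expo{\tp\times\wL}{\varx}$; but the cleaner route is to work directly with $\thh\times\wL$, which by Theorem~\ref{t:freeness} is free as a monoid, since $\wL\cong\Tc(\wX)$ is free. Write $\thh\times\wL\cong\Tc(\tq)$ for some positive species $\tq$, the basis of the free monoid. Then the key input is the product formula
\[
\expo{\Tc(\tq)}{\varx} = \frac{1}{1-\expo{\tq}{\varx}},
\]
recorded in Section~\ref{s:genfun}.

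Next I would identify the left-hand series. Since $\ordi{\thh}{\varx}=\expo{\thh\times\wL}{\varx}=\expo{\Tc(\tq)}{\varx}$, the product formula gives
\[
\ordi{\thh}{\varx} = \frac{1}{1-\expo{\tq}{\varx}},
\]
and therefore
\[
1-\frac{1}{\ordi{\thh}{\varx}} = \expo{\tq}{\varx}.
\]
The right-hand side is the exponential generating function of an honest (finite-dimensional, positive) species $\tq$, so all its coefficients $\dim_{\Kb}\tq[n]$ are nonnegative integers. This is exactly the assertion that $1-\tfrac{1}{\ordi{\thh}{\varx}}\in\Nb\llb\varx\rrb$, since the coefficient of $\varx^n$ is $\dim_{\Kb}\tq[n]$. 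I would take care to note that $\tq[\emptyset]=0$ forces the constant term to vanish, consistent with $\ordi{\thh}{\varx}$ having constant term $1$ (connectedness), so the series $1/\ordi{\thh}{\varx}$ is well-defined as a formal power series with constant term $1$.

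The main obstacle, and the step requiring genuine care, is justifying that $\thh\times\wL$ is free \emph{and} that its basis species $\tq$ is finite-dimensional with the right generating function. Theorem~\ref{t:freeness} asserts freeness as a monoid (taking $\tk=\wL$, which is a $1$-Hopf monoid free as a monoid, and $\thh$ a connected $p$-Hopf monoid, so $\thh\times\wL$ is a connected $p$-Hopf monoid free as a monoid), but it explicitly does \emph{not} supply a basis; so I must argue abstractly that any free monoid on a positive species has a well-defined basis $\tq\cong\Qc(\thh\times\wL)$, and that finite-dimensionality of $\thh\times\wL$ in each degree forces $\tq$ to be finite-dimensional in each degree. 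Here I would use that for a free monoid $\Tc(\tq)$ the indecomposable quotient recovers $\tq$, so $\tq$ is determined and inherits finite-dimensionality degreewise from $\thh\times\wL$. Once this is in place, the generating-function manipulation is purely formal and the nonnegativity is immediate. I would also remark that the argument is characteristic-free and uses $q=p$ only through the hypothesis that $\thh$ is a $q$-Hopf monoid, the scalar playing no role in the dimension count.
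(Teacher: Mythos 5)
Your overall strategy is exactly the paper's: invoke Theorem~\ref{t:freeness} (with $\tk=\wL\cong\Tc(\wX)$) to conclude that $\thh\times\wL$ is free as a monoid, write $\thh\times\wL\cong\Tc(\tq)$ for a positive species $\tq$, and convert the free-monoid generating function formula into the statement about the Boolean transform. However, there is a genuine slip at the final step, where you pass from generating functions to coefficients. You work with exponential generating functions and conclude from
\[
1-\frac{1}{\ordi{\thh}{\varx}} = \expo{\tq}{\varx}
\]
that ``the coefficient of $\varx^n$ is $\dim_{\Kb}\tq[n]$.'' It is not: by definition $\expo{\tq}{\varx}=\sum_{n\geq 0}\dim_{\Kb}\tq[n]\,\varx^n/n!$, so the coefficient of $\varx^n$ is $\dim_{\Kb}\tq[n]/n!$, which a priori is only a nonnegative rational. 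As written, your argument proves $1-1/\ordi{\thh}{\varx}\in\Qb_{\geq 0}\llb\varx\rrb$, not membership in $\Nb\llb\varx\rrb$.

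The gap can be closed in two ways. First, integrality is automatic: $\ordi{\thh}{\varx}$ has integer coefficients and constant term $1$ (connectedness), so its reciprocal, and hence $1-1/\ordi{\thh}{\varx}$, lies in $\Zb\llb\varx\rrb$; combined with your nonnegativity this yields the claim (and shows, incidentally, that $n!$ divides $\dim_{\Kb}\tq[n]$). Second, and more cleanly, argue as the paper does with \emph{type} generating functions: $\ordi{\thh}{\varx}=\type{\thh\times\wL}{\varx}=\type{\Tc(\tq)}{\varx}=\bigl(1-\type{\tq}{\varx}\bigr)^{-1}$, whence $1-1/\ordi{\thh}{\varx}=\type{\tq}{\varx}$, whose coefficients $\dim_{\Kb}\tq[n]_{\Sr_n}$ are dimensions of coinvariant spaces and hence nonnegative integers with no further argument. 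Your preliminary points are fine: the identity $\ordi{\thh}{\varx}=\expo{\thh\times\wL}{\varx}$ is correct, and the well-definedness and degreewise finite-dimensionality of the basis $\tq$ (via $\tq\cong\Qc(\thh\times\wL)$ together with the blanket finite-dimensionality assumption of Section~\ref{s:genfun}) need no more than the remark you make.
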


Theorem~\ref{t:ordi-boolean} is given in~\cite[Theorem~4.4]{AguMah:2012}. 
The proof goes as follows.
By Theorem~\ref{t:freeness}, $\thh\times\wL$ is free. 
Hence there is a positive species $\tq$ such that
\[
\ordi{\thh}{\varx} = \type{\thh\times \wL}{\varx} = \type{\Tc(\tq)}{\varx} = \frac{1}{1-\type{\tq}{\varx}},
\]
and this gives the result.
It can be stated as follows: 
the \emph{Boolean transform} of the dimension sequence of $\thh$ is nonnegative.

\begin{theorem}\label{t:ordi-type}
Let $\thh$ be a connected Hopf monoid. 
Then
\begin{equation*}
\ordi{\thh}{\varx} \Big/ \type{\thh}{\varx} \in \Nb\llb \varx\rrb.
\end{equation*}
\end{theorem}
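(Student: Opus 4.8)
The plan is to realize the quotient $\ordi{\thh}{\varx}/\type{\thh}{\varx}$ as the type generating function $\type{\tm}{\varx}$ of a single species $\tm$, whence nonnegativity is automatic since each $\dim_\Kb\tm[n]_{\Sr_n}\in\Nb$. Two elementary facts make this possible. First, the type generating function is multiplicative over the Cauchy product: computing coinvariants of an induced module gives $\dim_\Kb(\tp\bdot\tq)[n]_{\Sr_n}=\sum_k\dim_\Kb\tp[k]_{\Sr_k}\,\dim_\Kb\tq[n-k]_{\Sr_{n-k}}$, so $\type{\tp\bdot\tq}{\varx}=\type{\tp}{\varx}\,\type{\tq}{\varx}$. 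Second, if $\tk$ is any Hopf monoid with $\tk[n]\cong\Kb\Sr_n$ (the regular representation) for all $n$, then $(\thh\times\tk)[n]=\thh[n]\otimes\tk[n]$ has $\Sr_n$-coinvariants isomorphic to $\thh[n]$, so $\type{\thh\times\tk}{\varx}=\ordi{\thh}{\varx}$. It therefore suffices to find such a $\tk$ together with a Cauchy factorization $\thh\times\tk\cong\thh\bdot\tm$ of species. Note that the ordinary‑generating‑function statement Theorem~\ref{t:ordi-boolean} used freeness of $\thh\times\wL$; but freeness only records the integers $\dim_\Kb\thh[n]$ and is blind to the $\Sr_n$-module structure that $\type{\thh}{\varx}$ detects, so a genuinely different device is needed here.

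That device is Lagrange's theorem (Theorem~\ref{t:lagrange}), which supplies exactly such a Cauchy factorization once $\thh$ is exhibited as a Hopf submonoid of $\thh\times\tk$. First I would take $\tk:=\wL^*$, the dual of the Hopf monoid of linear orders: since $\wL[n]=\Kb\Sr_n$ is the regular representation and the dual of the regular representation is again regular, $\wL^*[n]\cong\Kb\Sr_n$, as required. Next I would build a Hopf embedding $\wE\into\wL^*$. The forgetful map $\wL\onto\wE$, $\BH_\ell\mapsto\BH_I$, is a surjective morphism of Hopf monoids (it respects concatenation and restriction); dualizing these finite‑dimensional Hopf monoids (Section~\ref{ss:dual}) reverses it to an injection $\wE^*\into\wL^*$, and $\wE$ is self‑dual (Section~\ref{ss:exp}), giving $\wE\into\wL^*$. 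Hadamard‑multiplying by $\thh$, and using that the Hadamard product preserves Hopf monoids and their morphisms (Proposition~\ref{p:hadamard}), produces a Hopf embedding $\thh\cong\thh\times\wE\into\thh\times\wL^*$. Passing to the dual is essential: there is no Hopf embedding $\wE\into\wL$ directly (the natural order‑symmetrization map is not a morphism of comonoids).

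Finally, $\thh$ is a Hopf submonoid of the connected Hopf monoid $\thh\times\wL^*$ (connected, as connectedness is preserved under duality and Hadamard products, Section~\ref{ss:conn-hopf}), so Theorem~\ref{t:lagrange} gives an isomorphism of left $\thh$-modules, in particular of species,
\[
\thh\times\wL^*\cong\thh\bdot\tm,\qquad \tm:=(\thh\times\wL^*)\big/\thh_+(\thh\times\wL^*).
\]
Taking type generating functions and using multiplicativity over $\bdot$ yields $\type{\thh\times\wL^*}{\varx}=\type{\thh}{\varx}\,\type{\tm}{\varx}$, while the regular‑representation computation of the first paragraph gives $\type{\thh\times\wL^*}{\varx}=\ordi{\thh}{\varx}$. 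Combining, $\ordi{\thh}{\varx}/\type{\thh}{\varx}=\type{\tm}{\varx}\in\Nb\llb\varx\rrb$, as claimed. I expect the main obstacle to be conceptual rather than computational: recognizing that freeness of $\thh\times\wL$ is the wrong input and that the correct move is to embed $\thh$ as a Hopf \emph{sub}monoid of $\thh\times\wL^*$ and invoke Lagrange; the remaining points (coinvariants of an induced module, and $\wL^*[n]\cong\Kb\Sr_n$) are routine.
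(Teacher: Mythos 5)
Your proof is correct and is essentially the paper's own argument in dual form: the paper applies the dual of Theorem~\ref{t:lagrange} to the surjective morphism of Hopf monoids $\thh\times\wL \map{\id\times\pi_{\wX}} \thh\times\wE\cong\thh$, whereas you apply Theorem~\ref{t:lagrange} itself to the Hopf embedding $\thh\cong\thh\times\wE\into\thh\times\wL^*$ obtained by dualizing $\pi_{\wX}$. Both routes then rest on the same two facts---multiplicativity of the type generating function over the Cauchy product, and $\type{\tp\times\wL}{\varx}=\ordi{\tp}{\varx}$, which holds equally for $\wL^*$ because $\wL^*[n]\cong\Kb\Sr_n$---so, since all species in this section are assumed finite-dimensional, dualizing the map instead of the theorem is an equivalent and equally valid maneuver.
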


This result is given in~\cite[Corollary~3.4]{AguLau:2012}. 
For the proof one applies the dual of Theorem~\ref{t:lagrange} 
to the surjective morphism of Hopf monoids
\[
\thh\times \wL \map{\id\times\pi_{\wX}} \thh\times\wE \cong \thh. 
\]
The same argument yields
\begin{equation*}
\ordi{\thh}{\varx} \Big/ \expo{\thh}{\varx} \in \Qb_{\geq 0}\llb \varx\rrb,
\end{equation*}
but as shown in~\cite[Section~4.6]{AguMah:2012},
this result follows from Theorem~\ref{t:ordi-boolean}.

\begin{theorem}\label{t:binomial-set}
Let $\rH$ be a nontrivial set-theoretic connected Hopf monoid and $\thh=\Kb\rH$. 
Then
\begin{equation*}
\exp(-\varx) \expo{\thh}{\varx} \in \Qb_{\geq 0}\llb \varx\rrb 
\qqand
(1-\varx) \type{\thh}{\varx} \in \Nb\llb \varx\rrb.
\end{equation*}
\end{theorem}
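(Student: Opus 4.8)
The plan is to reduce both statements to a structural fact about set-theoretic connected Hopf monoids: each space $\thh[I]=\Kb\rH[I]$ carries a distinguished basis element coming from the unique element of $\rH[\emptyset]$ transported along inclusions, and more usefully, that the monoid structure exhibits $\thh$ as built up from an underlying positive piece. Concretely, I would look for a positive set species $\rQ$ whose ``free-like'' completion accounts for the growth of $\dim_\Kb\thh[n]$. The two conclusions are the \emph{binomial transform} nonnegativity ($\exp(-\varx)\expo{\thh}{\varx}$) and the \emph{Boolean-type} statement $(1-\varx)\type{\thh}{\varx}$, so I expect two parallel arguments, one on exponential generating functions (where $\exp(-\varx)$ divides out the contribution of the exponential species $\wE$) and one on type generating functions (where $(1-\varx)$ divides out $\wL$ at the level of types, since $\type{\wL}{\varx}=\frac{1}{1-\varx}$).

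First I would treat the type statement. The factor $(1-\varx)$ is $1/\type{\wE}{\varx}=1/\type{\wL}{\varx}$, and dividing $\type{\thh}{\varx}$ by $\type{\wE}{\varx}$ is exactly the operation appearing implicitly in the Lagrange-type results: by the argument used for Theorem~\ref{t:ordi-type}, applied to the surjection $\thh\times\wL\xrightarrow{\id\times\pi_{\wX}}\thh\times\wE\cong\thh$ or to a suitable analogue at the level of set-theoretic structures, one extracts a quotient species whose dimensions are the coefficients of $(1-\varx)\type{\thh}{\varx}$. The key point specific to the \emph{set-theoretic} hypothesis is that this quotient is itself (the linearization of) a set species, so its ``dimensions'' are cardinalities and hence nonnegative \emph{integers}, which is why the conclusion lands in $\Nb\llb\varx\rrb$ rather than merely $\Qb_{\geq0}\llb\varx\rrb$. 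I would make this precise by identifying the relevant indecomposables combinatorially, using that in a set-theoretic connected Hopf monoid the product $\mu_{S,T}$ is a map of sets, so the image of the multiplication maps is a genuine subset and the complement of ``decomposable'' elements is a well-defined set.

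For the exponential (binomial-transform) statement, the factor $\exp(-\varx)$ is $1/\expo{\wE}{\varx}$, and $\exp(-\varx)\expo{\thh}{\varx}=\expo{\thh}{\varx}/\expo{\wE}{\varx}$. Here I would again use a decomposition of $\thh$ relative to $\wE$: since $\rH$ is a set-theoretic Hopf monoid, there is a canonical group-like behaviour that lets one split off a copy of $\wE$ multiplicatively. The cleanest route is probably to show that $\thh$, as a comonoid, has $\wE$ as a retract (via the counit-like structure and the nontriviality assumption, which guarantees a nonzero element in degree one), so that $\expo{\thh}{\varx}$ is divisible by $\exp(\varx)$ with a power series quotient having nonnegative \emph{rational} coefficients; the nonnegativity would follow from expressing the quotient's coefficients as dimensions of an associated species (an inclusion–exclusion / M\"obius computation over the Boolean lattice of subsets, which is where the binomial coefficients and alternating signs combine to count a genuine set). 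Both halves follow the template established for Theorems~\ref{t:ordi-boolean} and~\ref{t:ordi-type}.

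The main obstacle I anticipate is upgrading nonnegativity of \emph{rational} coefficients (the automatic output of dividing generating functions) to nonnegativity of \emph{integer} coefficients, and more fundamentally, producing an honest combinatorial model—a set species—whose cardinalities realize the transformed sequence. The Lagrange/freeness machinery gives quotient \emph{vector} species for free, but to get $\Nb\llb\varx\rrb$ I must verify that the set-theoretic structure of $\rH$ descends to these quotients, i.e.\ that the relevant kernels and images are linearizations of set-level constructions. This is exactly the step where the hypothesis ``set-theoretic and nontrivial'' is indispensable, and I would spend most of the proof checking that the retraction onto $\wE$ (for the binomial transform) and the indecomposables quotient (for the $(1-\varx)$ statement) are compatible with linearization, so that their dimension sequences are cardinalities and therefore nonnegative integers.
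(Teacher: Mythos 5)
There is a genuine gap: your proposal circles the right template (a Lagrange-type factorization of $\thh$ relative to $\wE$) but never produces the one structural fact on which the whole proof rests, namely the \emph{canonical surjection of Hopf monoids} $\thh\onto\wE$. Because $\rH$ is set-theoretic, the unique morphism of set species $\rH\to\rE$ linearizes to a morphism of Hopf monoids $\thh\to\wE$, $\BH_x\mapsto\BH_I$ for $x\in\rH[I]$; nontriviality forces $\rH[I]\neq\emptyset$ for \emph{every} $I$ (an element over some nonempty $J$ restricts via $\Delta$ to elements over singletons, and products of these give elements over any $I$), so this morphism is surjective. Applying the dual of Theorem~\ref{t:lagrange} to it (equivalently, applying Theorem~\ref{t:lagrange} to the Hopf submonoid $\wE\cong\wE^*\into\thh^*$) produces a species $\tq$ with $\thh^*\cong\wE\bdot\tq$, whence
\[
\expo{\thh}{\varx}=\exp(\varx)\,\expo{\tq}{\varx}
\qqand
\type{\thh}{\varx}=\frac{1}{1-\varx}\,\type{\tq}{\varx},
\]
and both conclusions follow at once because $\tq$ is an honest finite-dimensional species.

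Your substitutes for this step do not work. For the exponential statement you propose exhibiting $\wE$ as a comonoid \emph{retract} of $\thh$: a retract only yields a direct-sum splitting of species, hence an additive relation $\expo{\thh}{\varx}=\exp(\varx)+\cdots$, not the multiplicative factorization that divisibility by $\exp(\varx)$ requires; only module-freeness (Lagrange) gives a Cauchy-product factorization. (Note also that the canonical map goes \emph{out of} $\thh$, not into it: $\BH_I\mapsto\sum_{x\in\rH[I]}\BH_x$ is generally not a morphism of monoids $\wE\to\Kb\rH$.) For the type statement you cite the surjection $\thh\times\wL\to\thh\times\wE\cong\thh$; that is the surjection behind Theorem~\ref{t:ordi-type} and yields $\ordi{\thh}{\varx}\big/\type{\thh}{\varx}$, not $(1-\varx)\type{\thh}{\varx}$. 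Finally, your plan to ``spend most of the proof'' showing that the relevant quotients are linearizations of set species is both unnecessary and likely unprovable: integrality in the second statement is automatic, since the type generating function of \emph{any} species lies in $\Nb\llb\varx\rrb$ (its coefficients are dimensions of coinvariant spaces), and the first statement only claims membership in $\Qb_{\geq 0}\llb\varx\rrb$. The set-theoretic hypothesis is not what forces integrality; it is what creates the morphism $\thh\onto\wE$, which for a general connected Hopf monoid need not exist. Likewise, inclusion–exclusion over the Boolean lattice cannot by itself prove nonnegativity of the binomial transform; the alternating sums become manifestly nonnegative only once identified with $\dim_{\Kb}\tq[n]$ via the Lagrange factorization.
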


This is given in~\cite[Corollary~3.6]{AguLau:2012}. 
Nontrivial means that $\rH\neq 1$. 
In this case one must have $\rH[I]\neq\emptyset$ for all $I$. 
Then $\thh$ surjects onto $\wE$ canonically (as Hopf monoids) and one may again apply
Theorem~\ref{t:lagrange} to conclude the result.
The first condition states that the \emph{binomial transform} of the
sequence $\dim_{\Kb} \thh = \abs{\rH[n]}$ is nonnegative. 
The second that the sequence $\dim_{\Kb} \thh[n]_{\Sr_n} = \abs{\rH[n]^{\Sr_n}}$ is weakly increasing.

\begin{theorem}\label{t:log-nonneg}
Let $\thh$ be a cocommutative connected Hopf monoid. Then
\[
\log \expo{\thh}{\varx} \in \Qb_{\geq 0}\llb \varx\rrb.
\]
\end{theorem}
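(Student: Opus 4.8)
The plan is to identify $\log\expo{\thh}{\varx}$ with the exponential generating function of the primitive part $\Pc(\thh)$ and then invoke the nonnegativity of the cumulants established in Section~\ref{ss:cumulant}. First I would observe that, $\thh$ being a cocommutative connected Hopf monoid, Theorem~\ref{t:pbw+cmm} furnishes an isomorphism of comonoids $\Sc(\Pc(\thh))\cong\thh$. An isomorphism of comonoids is in particular an isomorphism of species, so it preserves the dimensions $\dim_{\Kb}\thh[n]$, whence $\expo{\thh}{\varx}=\expo{\Sc(\Pc(\thh))}{\varx}$.

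Next I would apply the composition formula for exponential generating functions. The species $\Pc(\thh)$ is positive, and finite-dimensional (being a subspecies of the finite-dimensional $\thh$), so the identity $\expo{\Sc(\tq)}{\varx}=\exp\bigl(\expo{\tq}{\varx}\bigr)$ recorded earlier in this section applies with $\tq=\Pc(\thh)$, giving
\[
\expo{\thh}{\varx}=\exp\bigl(\expo{\Pc(\thh)}{\varx}\bigr).
\]
Since $\thh$ is connected, $\expo{\thh}{\varx}$ has constant term $1$ while $\expo{\Pc(\thh)}{\varx}$ has zero constant term, so taking formal logarithms is legitimate and yields
\[
\log\expo{\thh}{\varx}=\expo{\Pc(\thh)}{\varx}=\sum_{n\geq 1}\dim_{\Kb}\Pc(\thh)[n]\,\frac{\varx^n}{n!}.
\]
By Proposition~\ref{p:dim-h-prim}, $\dim_{\Kb}\Pc(\thh)[n]=\eigmult{n}(\thh)$, the $n$-th cumulant of $\thh$, and as remarked immediately after that proposition these integers are nonnegative. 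Hence every coefficient of $\log\expo{\thh}{\varx}$ is a nonnegative rational, which is exactly the assertion $\log\expo{\thh}{\varx}\in\Qb_{\geq 0}\llb\varx\rrb$.

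There is essentially no computational obstacle: the entire content lies in assembling three earlier results in the right order, namely the PBW/CMM comonoid isomorphism, the composition formula for exponential generating functions, and the nonnegativity of the cumulants. The only point that merits care is that Theorem~\ref{t:pbw+cmm} provides merely an isomorphism of comonoids rather than of Hopf monoids; this is harmless because it is used solely to compare the underlying dimension sequences. I would also emphasize that cocommutativity is indispensable, as it is precisely the hypothesis under which both the isomorphism of Theorem~\ref{t:pbw+cmm} and the nonnegativity in Proposition~\ref{p:dim-h-prim} are available.
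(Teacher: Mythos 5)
Your proof is correct and follows the paper's own argument: the paper likewise deduces $\log \expo{\thh}{\varx} = \expo{\Pc(\thh)}{\varx}$ from Theorem~\ref{t:pbw+cmm} (via the composition formula for exponential generating functions) and concludes immediately. The only difference is your final appeal to Proposition~\ref{p:dim-h-prim} and cumulant nonnegativity, which is a harmless redundancy --- once the logarithm is identified with $\expo{\Pc(\thh)}{\varx}$, its coefficients $\dim_{\Kb}\Pc(\thh)[n]/n!$ are nonnegative rationals simply because they are dimensions divided by factorials.
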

Theorem~\ref{t:pbw+cmm} implies
$\log \expo{\thh}{\varx} = \expo{\Pc(\thh)}{\varx}$, whence the result.

\begin{remark}
Let $\thh$ be a connected Hopf monoid. 
Suppose the integer $\dim_{\Kb} \thh[n]$ is the $n$-th \emph{moment} 
of a random variable $Z$ (possibly noncommutative) in the classical sense. 
Theorem~\ref{t:log-nonneg} may be rephrased as follows:
if $\thh$ is cocommutative, then the \emph{classical cumulants} of $Z$ are nonnegative. 
See the remark at the end of Section~\ref{r:cumulant}.
There is a notion of \emph{Boolean cumulants} and another of \emph{free cumulants}~\cite{Leh:2002,SpeWor:1995}. 
Lehner obtains expressions for these as sums of classical cumulants~\cite[Theorem~4.1]{Leh:2002}; 
hence their nonnegativity holds for cocommutative $\thh$.
However, Theorem~\ref{t:ordi-boolean} yields the stronger assertion
that the Boolean cumulants are nonnegative for any connected Hopf monoid $\thh$.
We do not know if nonnegativity of the free cumulants continues to hold for arbitrary $\thh$.
For information on free cumulants, see~\cite{NicSpe:2006}.
\end{remark}


\bibliographystyle{amsplain}

\end{document}